\numberwithin{equation}{section}
\numberwithin{figure}{section}
\theoremstyle{plain}
\newtheorem{thm}{\protect\theoremname}[section]
\theoremstyle{remark}
\newtheorem*{rem*}{\protect\remarkname}
\newtheorem{rem}[thm]{\protect\remarkname}
\theoremstyle{plain}
\newtheorem{prop}[thm]{\protect\propositionname}
\theoremstyle{definition}
\newtheorem{defn}[thm]{\protect\definitionname}
\theoremstyle{plain}
\newtheorem{lem}[thm]{\protect\lemmaname}
\newtheorem{cor}[thm]{\protect\corname}
\providecommand{\definitionname}{Definition}
\providecommand{\lemmaname}{Lemma}
\providecommand{\propositionname}{Proposition}
\providecommand{\remarkname}{Remark}
\providecommand{\theoremname}{Theorem}
\providecommand{\corname}{Corollary}
\providecommand{\exname}{Example}
\begin{document}
\global\long\def\E{\mathbb{E}}
\global\long\def\Unif{\mathrm{Unif}}
\global\long\def\c{\mathfrak{c}}
\global\long\def\Q{\mathbf{\mathbb{Q}}}%
\global\long\def\R{\mathbf{\mathbb{R}}}%
\global\long\def\C{\mathbf{\mathbb{C}}}%
\global\long\def\Z{\mathbf{\mathbb{Z}}}%
\global\long\def\D{\mathbf{\mathbb{D}}}%
\global\long\def\N{\mathbf{\mathbb{N}}}%
\global\long\def\proj{\mathbb{proj}}%
\global\long\def\T{\mathbb{T}}%
\global\long\def\Im{\mathrm{Im}}%
\global\long\def\Re{\mathrm{Re}}%
\global\long\def\eval{\mathrm{eval}}%
\global\long\def\irr{\mathrm{irr}}%
\global\long\def\err{\mathrm{err}}%
\global\long\def\id{\mathrm{id}}%
\global\long\def\H{\mathcal{H}}%
\global\long\def\M{\mathbb{M}}%
\global\long\def\P{\mathcal{P}}%
\global\long\def\L{\mathcal{L}}%
\global\long\def\F{\mathcal{\mathcal{F}}}%
\global\long\def\s{\sigma}%
\global\long\def\Rc{\mathcal{R}}%
\global\long\def\W{\tilde{W}}%
\global\long\def\avg{\mathrm{avg}}%
\global\long\def\dist{\mathrm{dist}}%

\global\long\def\G{\mathcal{G}}%
\global\long\def\d{\partial}%
 
\global\long\def\jp#1{\langle#1\rangle}%
\global\long\def\ul#1{\underline{#1}}%
\global\long\def\norm#1{\|#1\|}%
\global\long\def\mc#1{\mathcal{\mathcal{#1}}}%
\global\long\def\diam{\mathrm{diam}}%
\global\long\def\rank{\mathrm{rank}}%
\global\long\def\vol{\mathrm{Vol}}%
\global\long\def\Alt{\mathrm{Alt}}%
\global\long\def\Seq{\mathrm{Seq}}%
\global\long\def\Span{\mathrm{Span}}%
\global\long\def\Char{\mathrm{Char}}%

\global\long\def\Right{\Rightarrow}%
\global\long\def\Left{\Leftarrow}%
\global\long\def\les{\lesssim}%
\global\long\def\hook{\hookrightarrow}%

\global\long\def\err{\mathrm{err}}%
\global\long\def\rad{\mathrm{rad}}%

\global\long\def\lcm{\mathrm{lcm}}%
\global\long\def\env{\mathrm{Env}}%
\global\long\def\re{\mathrm{re}}%
\global\long\def\im{\mathrm{im}}%

\global\long\def\d{\partial}%
 
\global\long\def\jp#1{\langle#1\rangle}%
\global\long\def\norm#1{\|#1\|}%
\global\long\def\ol#1{\overline{#1}}%
\global\long\def\wt#1{\widehat{#1}}%
\global\long\def\tilde#1{\widetilde{#1}}%

\global\long\def\br#1{(#1)}%
\global\long\def\Bb#1{\Big(#1\Big)}%
\global\long\def\bb#1{\big(#1\big)}%
\global\long\def\lr#1{\left(#1\right)}%

\global\long\def\ve{\varepsilon}%
\global\long\def\la{\lambda}%
\global\long\def\al{\alpha}%
\global\long\def\be{\beta}%
\global\long\def\ga{\gamma}%
\global\long\def\La{\Lambda}%
\global\long\def\De{\Delta}%
\global\long\def\na{\nabla}%
\global\long\def\ep{\epsilon}%
\global\long\def\fl{\flat}%
\global\long\def\sh{\sharp}%
\global\long\def\calN{\mathcal{N}}%
\global\long\def\supp{\mathrm{supp}}%
\global\long\def\UAP{\mathrm{UAP}}%

\global\long\def\Q{\mathbf{\mathbb{Q}}}%
\global\long\def\R{\mathbf{\mathbb{R}}}%
\global\long\def\C{\mathbf{\mathbb{C}}}%
\global\long\def\Z{\mathbf{\mathbb{Z}}}%
\global\long\def\N{\mathbf{\mathbb{N}}}%
\global\long\def\T{\mathbb{T}}%
\global\long\def\Im{\mathrm{Im}}%
\global\long\def\Re{\mathrm{Re}}%
\global\long\def\H{\mathcal{H}}%
\global\long\def\P{\mathbb{P}}%
\global\long\def\F{\mathcal{F}}%
\global\long\def\G{\mathcal{G}}%
\global\long\def\s{\sigma}%

\global\long\def\d{\partial}%
 
\global\long\def\jp#1{\langle#1\rangle}%
\global\long\def\norm#1{\|#1\|}%
\global\long\def\mc#1{\mathcal{\mathcal{#1}}}%

\global\long\def\les{\lesssim}%
\global\long\def\hook{\hookrightarrow}%
\global\long\def\weak{\rightharpoonup}%
\global\long\def\supp{\mathrm{supp}}%
\global\long\def\loc{\mathrm{loc}}%

\global\long\def\err{\mathrm{err}}%
\global\long\def\poly{\mathrm{poly}}%
 
\global\long\def\jp#1{\langle#1\rangle}%
\global\long\def\norm#1{\|#1\|}%
\global\long\def\tilde#1{\widetilde{#1}}%

\global\long\def\la{\lambda}%
\global\long\def\al{\alpha}%
\global\long\def\be{\beta}%
\global\long\def\ga{\gamma}%
\global\long\def\La{\Lambda}%
\global\long\def\De{\Delta}%
\global\long\def\na{\nabla}%

\global\long\def\rc{\mathrm{rc}}%

\title[GWP of the cubic NLS on $\T^{2}$]{Global well-posedness of the cubic nonlinear Schr\"odinger equation on $\T^{2}$}

\author{Sebastian Herr}
\address{Fakultat f\"ur Mathematik, Universit\"at Bielefeld, Postfach 10 01 31, 33501 Bielefeld, Germany}
\email{herr@math.uni-bielefeld.de}
\author{Beomjong Kwak}
\address{Department of Mathematical Sciences, KAIST, 291 Daehak-ro, Yuseong-gu, Daejeon, Korea}
\email{beomjong@kaist.ac.kr}
\subjclass[2020]{35Q41}

\begin{abstract}
    We prove global well-posedness for the cubic nonlinear Schr\"odinger equation for periodic initial data in the mass-critical dimension $d=2$ for initial data of arbitrary size in the defocusing case and data below the ground state threshold in the focusing case. 
   The result is based on a new inverse Strichartz inequality, which is proved by using incidence geometry and additive combinatorics, in particular, the inverse theorems for Gowers uniformity norms by Green-Tao-Ziegler. This allows to transfer the analogous results of Dodson for the non-periodic mass-critical NLS to the periodic setting.
   In addition, we construct an approximate periodic solution that implies the sharpness of the results.
\end{abstract}

\maketitle
\section{Introduction and main results}\label{sec:intro}
 Consider the square torus $\T^2=\R^2/(2\pi\Z)^2$. In this paper we prove the global well-posedness of the Cauchy problem of the nonlinear Schr{\"o}dinger equation (NLS) on $\T^{2}$
\begin{equation}
\begin{split}
i\partial_t u+\De u&=\pm|u|^{2}u,\\
u(0)&=u_{0}\in H^{s}(\T^{2}),
\end{split}\tag{{NLS}}\label{eq:NLS}
\end{equation}
with \emph{defocusing} nonlinearity $\mc N(u)=\left|u\right|^{2}u$ or \emph{focusing} nonlinearity $\mc N(u)=-\left|u\right|^{2}u$.

On $\R^2$ the cubic NLS is scale invariant in $L^2(\R^2)$, therefore it is referred to as the mass-critical problem. In the last decade, complete global well-posedness and scattering theory in critical space $L^2(\R^2)$ have been developed in the seminal work of Dodson \cite{dodson-foc,dodson-def}.

In the case of NLS focusing on $\R^{2}$, there exists a ground state solution $Q$, that is, the unique positive radially symmetric Schwartz solution to $\De Q-Q=-Q^3$. See 
\cite{berestycki-lions} concerning the existence, \cite{Kwong89} for a proof of uniqueness, and \cite{Wei} for the relation to the Gagliardo-Nirenberg inequality. This gives rise to the solution $u(t,x)=e^{it}Q(x)$ of NLS and by pseudo-conformal invariance this yields an explicit finite time blow-up solution. Therefore, in the focus case, $\|Q\|_{L^2(\R^2)}$ is the natural threshold for the size of the initial data for global existence (also called the ground state threshold).

The corresponding theory in the periodic setting for the defocusing and focusing NLS Cauchy problem has been open since the work of Bourgain \cite{bourgain1993fourier}, where the first subcritical local well-posedness and global existence in the energy space $H^1(\T^2)$ (below the ground state threshold in the focusing case) were established. Very recently, small data global well-posedness in the full subcritical range has been proven \cite{herr2024strichartz}. In the present paper, we consider the large data problem and establish the analogue of Dodson's results in the periodic setting.
Note that solutions do not converge to a free solution in the periodic problem, i.e., there is no scattering \cite[Appendix]{ckstt}.

\begin{thm}[GWP for defocusing NLS]
\label{thm:GWP defocus}Let $s>0$. The defocusing \eqref{eq:NLS} is globally well-posed for initial data $u_0 \in H^s(\T^2)$.
Moreover, we have the following quantitative bound:
Let $M>0$ and $T>0$. For $u_0\in H^s(\T^2)$ such that $\norm{u_0}_{L^2(\T^2)}\le M$, the solution $u$ to the defocusing \eqref{eq:NLS} with the initial data $u_0$ satisfies
\begin{equation}\label{eq:GWP defocus L4 bound}
\norm{u}_{L^4_{t,x}([0,T)\times\T^2)}\les_{s,M,T}\left(\log\norm{u_0}_{H^s}\right)^{1/4}.
\end{equation}
\end{thm}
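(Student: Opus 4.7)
My plan is to execute the argument in three layers: subcritical local well-posedness in $H^s$, an a priori bound on the Strichartz norm $L^4_{t,x}$, and persistence of higher regularity. Subcritical local well-posedness for $s>0$ follows from Bourgain's $X^{s,b}$ framework and gives a maximal existence interval; once $\norm{u}_{L^4_{t,x}([0,T)\times\T^2)}$ is bounded on $[0,T)$, standard Strichartz/$X^{s,b}$ estimates on $\T^2$ propagate the $H^s$ norm on this interval, so that the lifespan cannot close up. The theorem therefore reduces to the stated quantitative $L^4_{t,x}$ bound.

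For the a priori bound I would follow Dodson's contradiction-and-profile-decomposition scheme, as adapted from $\R^2$. Assume the bound fails; one then extracts from a saturating sequence of initial data a nontrivial concentration profile via the new \emph{inverse Strichartz inequality} announced in the abstract: if a bounded sequence $f_n\in L^2(\T^2)$ produces linear evolutions $e^{it\De}f_n$ whose $L^4_{t,x}$-norm does not tend to zero, then after appropriate frequency recentering and spatial rescaling a Euclidean profile $\phi\in L^2(\R^2)$ survives. Applied to a saturating sequence and iterated, this yields a Keraani-type profile decomposition. The concentrated profile, lifted back to an approximate solution of \eqref{eq:NLS} on $\T^2$, converges after rescaling to a genuine solution of the defocusing mass-critical NLS on $\R^2$, whose $L^4$-Strichartz norm is finite by Dodson's theorem \cite{dodson-def}. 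Nonlinear stability then contradicts the blowup of the $L^4$-norm of the periodic solution.

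The logarithmic factor $(\log\norm{u_0}_{H^s})^{1/4}$ should ultimately arise from the sharp linear $L^4$-Strichartz estimate on $\T^2$, $\norm{e^{it\De}P_{\le N}f}_{L^4_{t,x}([0,1]\times\T^2)}\les (\log N)^{1/4}\norm{f}_{L^2}$, applied to a high/low frequency decomposition at a scale $N$ which is a suitable power of $\norm{u_0}_{H^s}$; the high-frequency tail is negligible in $L^2$ because $s>0$ and is absorbed perturbatively, while the low-frequency piece is controlled by the mass-only bound produced by the Dodson-type argument.

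The step I expect to be hardest is the inverse Strichartz inequality itself. Unlike $\R^2$, the torus has no Galilean invariance and mass is quantized at the torus scale, so a failure of $L^4$-Strichartz need not correspond simply to spatial concentration: it may instead reflect arithmetic clustering of the Fourier support on lattice points near circles. This is where incidence geometry---controlling the contribution of frequency pairs that land near common lattice circles---and the Green--Tao--Ziegler inverse $U^s$-Gowers norm theorems enter; the latter are needed to extract additive-combinatorial structure when the naive concentration heuristic fails. Once the inverse Strichartz is in hand, the transfer of Dodson's $\R^2$ scheme to $\T^2$ should proceed by standard profile-decomposition and nonlinear perturbation arguments adapted to the periodic setting.
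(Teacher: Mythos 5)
Your high-level strategy---subcritical local theory, a priori $L^4_{t,x}$ bound by contradiction via inverse Strichartz and Dodson's result, persistence of regularity---matches the paper's skeleton, and your identification of the inverse Strichartz inequality as the key tool is correct. However, two points deviate substantially from the paper and in one case constitute a genuine gap.

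First, the paper does not use a full Keraani-type profile decomposition and explicitly avoids one: Section~6 follows the cutoff-solution scheme of \cite{Kwak2024global} ``for the benefit that we do not need to estimate interactions between profiles.'' Instead of decomposing a saturating sequence into asymptotically orthogonal profiles, the paper works with sequences of cutoff solutions $\{(u_n,I_n)\}$ on short time windows $I_{n,j}$ of length $\les 1/\log R_n$, extracts one concentration frame at a time via Lemma~\ref{lem:inverse thm for almost sum of profiles} (built on Proposition~\ref{prop:profile'}), passes to distributional weak limits of rescaled periodic extensions (Lemma~\ref{lem:weak lim is sol}), and matches scattering behavior on $\R^2$ and $\T^2$ (Lemmas~\ref{lem:scatter R^d}, \ref{lem:scatter T^d}) to reach a contradiction with the sharp linear estimate \eqref{eq:L4 Stri}. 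This sidesteps the nonlinear interaction estimates between profiles that your Keraani-type approach would require, and those interaction estimates are nontrivial on $\T^2$ precisely because the linear Strichartz inequality is only uniform on intervals of length $\sim 1/\log N$.

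Second, your explanation of the logarithmic factor is incorrect, and as written it is internally inconsistent. If the high-frequency tail is negligible in $L^2$ and the low-frequency piece is ``controlled by the mass-only bound produced by the Dodson-type argument,'' you would obtain $\norm{u}_{L^4_{t,x}([0,T)\times\T^2)}\les_{M,T}1$ with no $H^s$-dependence at all; the sharp linear estimate $\norm{e^{it\De}P_{\le N}f}_{L^4([0,1]\times\T^2)}\les(\log N)^{1/4}\norm{f}_{L^2}$ never actually enters your chain of reasoning. In the paper the mechanism is different: the Dodson-type conclusion (Proposition~\ref{prop:main}) is a \emph{short-time} a priori bound $\norm{u}_{Z_R([0,\epsilon/\log R))}\le L$ on windows of length $\sim 1/\log R$, where $R$ tracks the current $H^s$ size of the solution. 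Iterating, the $H^s$ norm grows at most polynomially in the step count $n$, so starting from $R_0\sim\norm{u_0}_{H^s}^{1/s}$ one needs $n_T\sim 2^{O_{L,s,\epsilon}(T)}\log\norm{u_0}_{H^s}$ such windows to cover $[0,T)$, and the $\ell^4$-accumulation of $L^4$ norms over these windows produces $n_T^{1/4}L\sim(\log\norm{u_0}_{H^s})^{1/4}$; see \eqref{eq:tn telescope} and \eqref{eq:prec-bound}. A single frequency truncation does not give this. Relatedly, the paper's local theory is built in the atomic space $Z_R=Y^0+R^{-s}Y^s$ rather than $X^{s,b}$, because the $Z_R$ norm both encodes the short Strichartz time scale and controls the $H^s$ growth in a single bootstrap (Lemma~\ref{lem:nu bound}).
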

Here and in the sequel, we are using the notation $\log(x)=1+\log^+(x)$, $x>0$.
The optimality of the regularity assumption will be discussed in the following.

In \cite{bgt-03} it was shown that in the focusing cubic NLS there exist finite-time blow-up solutions in $H^1(\T^2)$ with initial data $u_0\in H^1(\T^2)$ such that $\norm{u_0}_{L^2(\T^2)}=\norm Q_{L^2(\R^2)}$. 
 We have the following sharp result in this case:

\begin{thm}[GWP for focusing NLS]
\label{thm:GWP focus}
 Let $s>0$. The focusing \eqref{eq:NLS} is globally well-posed for initial data in $H^s(\T^2)$ such that $\norm{u_{0}}_{L^{2}(\T^{2})}<\norm Q_{L^{2}(\R^{2})}$.
Moreover, we have the following quantitative bound:
Let $0<M<\norm Q_{L^2(\R^2)}$ and $T>0$. For $u_0\in H^s(\T^2)$ such that $\norm{u_0}_{L^2(\T^2)}\le M$, the solution $u$ to the focusing \eqref{eq:NLS} with the initial data $u_0$ satisfies
\begin{equation}\label{eq:GWP focus L4 bound}
\norm{u}_{L^4_{t,x}([0,T)\times\T^2)}
\les_{s,M,T}\left(\log\norm{u_0}_{H^s}\right)^{1/4}.
\end{equation}
\end{thm}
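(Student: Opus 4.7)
The plan is to derive the a priori spacetime estimate \eqref{eq:GWP focus L4 bound} on every finite time interval. Combined with local well-posedness for the cubic NLS on $\T^2$ at regularity $H^s$, $s>0$ (cf.\ \cite{bourgain1993fourier,herr2024strichartz}) and the conservation of mass, this bound immediately yields global existence with propagation of $H^s$ regularity. The overall structure parallels the proof of Theorem~\ref{thm:GWP defocus}: both rely on the new inverse Strichartz inequality of this paper (which plays the role on $\T^2$ that the B\'egout--Vargas inverse Strichartz inequality plays on $\R^2$ in Dodson's approach) and both proceed by induction on the mass. I describe only the changes required for the focusing case below the ground state.

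Concretely, set
\[
\Lambda(M,T):=\sup\left\{\|u\|_{L^4_{t,x}([0,T)\times\T^2)}:u_0\in H^s(\T^2),\ \|u_0\|_{L^2}\le M\right\},
\]
and let $M^*$ denote the supremum of $M<\|Q\|_{L^2(\R^2)}$ for which $\Lambda(M,T)<\infty$ for every finite $T$. Supposing for contradiction that $M^*<\|Q\|_{L^2(\R^2)}$, I would extract a sequence of solutions $u_n$ with $\|u_{0,n}\|_{L^2}\nearrow M^*$ and $\|u_n\|_{L^4_{t,x}}\to\infty$, and apply the linear profile decomposition furnished by the inverse Strichartz inequality. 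This splits the initial data into Euclidean-type profiles (which after rescaling live on $\R^2$) and torus-type profiles (living on $\T^2$), with asymptotic $L^2$-orthogonality of the masses.

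Each Euclidean profile has mass strictly below $\|Q\|_{L^2(\R^2)}$, so its nonlinear evolution on $\R^2$ enjoys a uniform $L^4_{t,x}$ bound by Dodson's global well-posedness and scattering theorem for the focusing mass-critical NLS below the ground state \cite{dodson-foc}. Each torus profile has mass at most $M^*$; in the generic situation of more than one nontrivial profile, mass orthogonality makes this inequality strict and the inductive hypothesis applies. A long-time perturbation argument adapted to the focusing $\T^2$ NLS then assembles these profile bounds into a bound for $u_n$, contradicting the divergence assumption.

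I expect the main obstacle to be the degenerate scenario in which a single torus profile saturates the inductive threshold. Following the Kenig--Merle paradigm, one should extract from the sequence a minimal almost-periodic solution at mass exactly $M^*$ and rule it out. On $\T^2$ this is more delicate than on $\R^2$: there is no scattering and Morawetz-type machinery is much weaker, so instead I would work on fixed finite time intervals and exploit the coercivity of the Hamiltonian coming from the sharp Gagliardo--Nirenberg inequality below the ground state, combined with the new inverse Strichartz to preclude concentration. The construction of the inverse Strichartz inequality itself, through incidence geometry and the Green--Tao--Ziegler inverse theorems for the Gowers uniformity norms, is the central technical ingredient developed in the main body of the paper.
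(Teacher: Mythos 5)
Your proposal takes a genuinely different route from the paper, and unfortunately the route has a fatal flaw at the very first step. You define
\[
\Lambda(M,T):=\sup\{\|u\|_{L^4_{t,x}([0,T)\times\T^2)}: u_0\in H^s,\ \|u_0\|_{L^2}\le M\},
\]
and propose to show this is finite by induction on $M$. But $\Lambda(M,T)$ is \emph{identically infinite} for any $M$ above a small threshold: by Theorem~\ref{thm:L4 counterexample lem} and Corollary~\ref{cor:sharp-bound}, there exist smooth solutions $u^N$ with $\|u^N(0)\|_{L^2}\sim\la$ fixed but $\|u^N\|_{L^4_{t,x}([0,\frac{T'}{\log N})\times\T^2)}\gtrsim_\la (T')^{1/4}$ for every $T'$, so taking $T'\to\infty$ slowly with $N$ produces a bounded-mass family with unbounded $L^4$-norm on any fixed interval $[0,T)$. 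This is precisely why \eqref{eq:GWP focus L4 bound} carries the factor $(\log\|u_0\|_{H^s})^{1/4}$: there \emph{cannot} be a bound depending only on $\|u_0\|_{L^2}$ and $T$, so the whole Kenig--Merle induction-on-mass framework, which presupposes a critical mass $M^*$ with $\Lambda(M,T)<\infty$ for $M<M^*$, never gets off the ground on $\T^2$.

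The paper's argument is structurally different and sidesteps this. It reduces the theorem to Proposition~\ref{prop:main}, a uniform $Z_R$-bound on a \emph{short} time interval of length $\epsilon/\log R$ with $R\sim 1+\|u_0\|_{H^s}^{1/s}$, and then obtains \eqref{eq:GWP focus L4 bound} by telescoping these intervals; this is exactly what generates the logarithmic loss. The proof of Proposition~\ref{prop:main} is a one-shot argument by contradiction: for a hypothetical bad sequence $u_n$, one finds, for every $j$, a subinterval $I_{n,j}$ on which the linearized evolution $\Phi_{n,j-1}$ carries fixed $L^4$-mass, then uses Lemma~\ref{lem:inverse Z_R stri} (via Theorem~\ref{thm:inverse L4 Stri} / Proposition~\ref{prop:profile'}) to extract a frame $\mc O_{n,j}$; after rescaling (Lemma~\ref{lem:weak lim is sol}) the cutoff solutions converge weakly to a cutoff solution $v_j$ on $\R^2$, which is bounded in $L^4$ by Dodson's theorem. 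The weak scattering Lemmas~\ref{lem:scatter R^d} and~\ref{lem:scatter T^d} then show that \emph{all} the $v_j$ come from the same concentrating piece of $e^{it\De}u_n(0)$ — this replaces your ``long-time perturbation / minimal counterexample'' step — and Fatou's lemma over the infinitely many disjoint intervals $I_{n,j}$ contradicts the linear Strichartz estimate. There is no induction, no minimal almost-periodic solution, no rigidity argument, and no Gagliardo--Nirenberg coercivity: these are exactly the components you flagged as delicate, and the paper's argument avoids them entirely. Finally, the inverse Strichartz estimate (Theorem~\ref{thm:inverse L4 Stri}) produces only Euclidean-type profiles concentrating at scales $N\to\infty$ — there is no ``torus-type profile'' in this framework, because the time intervals $\sim 1/\log\#S$ are too short for the periodicity to manifest (this is what the extinction lemma \ref{lem:extinct} makes precise).
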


For energy-critical nonlinear Schr\"odinger equations the sharp global well-posedness theory  \cite{colliander2008global,ryckman2007global,visan2007defocusing,visan2012global,kenig2006global} has been transferred from $\R^d$ to the periodic setting, we refer to \cite{ionescu2012energy,YUE2021754,Kwak2024global}. Theorems \ref{thm:GWP defocus} and \ref{thm:GWP focus} are the first such results for a mass-critical nonlinear Schr\"odinger equation.

The key ingredient in the proofs is a new inverse Strichartz estimate. Before stating this, we recall the sharp $L^4$-Strichartz estimate.
\begin{prop}[Theorem 1.2 in \cite{herr2024strichartz}]\label{prop:L4 Stri}
There exists $c>0$ such that for all bounded sets $S\subset\Z^2$ and all $\phi\in L^2(\T^2)$, we have
\begin{equation}\label{eq:L4 Stri}
\norm{e^{it\De}P_S\phi}_{L^4_{t,x}([0,\frac{1}{\log\#S}]\times\T^2)}\le c\norm{\phi}_{L^2(\T^2)}.
\end{equation}
\end{prop}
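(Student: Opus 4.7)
The strategy is to square the $L^4$-norm, apply Plancherel in space, and reduce the bound to a near-resonant counting problem on $S^4$. The short time window of length $T = 1/\log\#S$ relaxes exact energy conservation to an approximate one with tolerance $1/T = \log\#S$, which is precisely the size required to absorb the divisor-type loss appearing in Bourgain's classical estimate on $[0,1]$. Writing $P_S\phi(x) = \sum_{n\in S} a_n e^{in\cdot x}$ with $\|\phi\|_{L^2(\T^2)}^2 \sim \|a\|_{\ell^2(S)}^2$ and $u(t,x) = e^{it\De}P_S\phi(x)$, Plancherel in $x$ gives
\begin{equation*}
\|u\|_{L^4([0,T]\times\T^2)}^4 \les \sum_{k\in\Z^2} \int_0^T \Bigl|\sum_{\substack{n\in S\\ n+k\in S}} a_{n+k}\overline{a_n}\, e^{it(|n+k|^2-|n|^2)}\Bigr|^2 dt.
\end{equation*}
Expanding the square and using $|n+k|^2-|n|^2 = 2k\cdot n + |k|^2$, the phase difference for a pair $(n_1,n_2)$ reduces to $\alpha = 2k\cdot(n_1-n_2)$, and $|\int_0^T e^{it\alpha}dt| \les \min(T,|\alpha|^{-1})$. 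A symmetrization via $|a_{n_1+k} a_{n_1} a_{n_2+k} a_{n_2}| \le \tfrac12(|a_{n_1+k}|^2|a_{n_2}|^2 + |a_{n_1}|^2 |a_{n_2+k}|^2)$ then reduces the desired $L^4$-bound to
\begin{equation*}
\sum_{k\in\Z^2}\sum_{\substack{n_1,n_2\in S\\ n_1+k,n_2+k\in S}} |a_{n_1+k}|^2 |a_{n_2}|^2 \min\bigl(T,|k\cdot(n_1-n_2)|^{-1}\bigr) \les \|a\|_{\ell^2}^4.
\end{equation*}

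The remaining task is a geometric counting estimate: for each $n_2 \in S$ and $k \in \Z^2$, bound the number of $n_1 \in S \cap (S-k)$ with $|k\cdot(n_1-n_2)| \sim A$ at each dyadic scale $A$. Geometrically, $n_1$ lies in a slab perpendicular to $k$ of width $A/|k|$, intersected with $S \cap (S-k)$. A dyadic decomposition in $A$ up to $A \sim 1/T$, combined with a careful summation over $k$, produces a logarithmic loss of order $\log \#S$ that is exactly absorbed by the prefactor $T = 1/\log\#S$. Put differently, integrating over $[0,1/\log\#S]$ replaces the exact resonance $|n_1|^2+|n_2|^2 = |n_3|^2+|n_4|^2$ needed for the $[0,1]$-estimate by the relaxed condition $||n_1|^2+|n_2|^2 - |n_3|^2-|n_4|^2| \les \log\#S$, a range over which the divisor function (which drives Bourgain's loss) averages to a constant.

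The main obstacle is the lattice-point count in slabs when $k$ is small or when $S$ is structured so that many differences $n_1-n_2$ are nearly perpendicular to $k$: there the slab width $A/|k|$ can be comparable to $\diam(S)$, and one cannot improve on the trivial bound $\#S$. One must combine a trivial-versus-slab dichotomy with a Cauchy--Schwarz step in $k$ (using the constraint $n_1+k, n_2+k \in S$ to cap the effective range of $k$) so that the dyadic sum in $A$ and the sum in $k$ together produce only the single factor $\log \#S$ that can be defeated by $T$. The choice $T = 1/\log\#S$ is sharp: it is the unique scale at which time-smearing tolerance and average divisor concentration balance, and at any shorter time one gains nothing further while at any longer time the divisor bound reasserts itself.
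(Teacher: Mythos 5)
Your plan correctly identifies the right starting point (expand $\|u\|_{L^4}^4$, integrate the phase, reduce to a near-resonant count over parallelograms) and correctly identifies that the time scale $1/\log\#S$ should balance a logarithmic divisor-type loss. But the sketch has three genuine gaps, two of which I believe are fatal without further ideas.

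\emph{First}, the majorant $\bigl|\int_0^T e^{it\tau}\,dt\bigr|\le\min(T,|\tau|^{-1})$ does not close for sparse $S$. After symmetrizing you must control $\sum_{\tau}\#\mc Q^\tau(S)\min(T,|\tau|^{-1})$. The dyadic tail over $|\tau|\in(1/T,\,(\diam S)^2]$ costs a factor $\log\diam(S)$, which can be vastly larger than $\log\#S$ (take $S$ a $\sqrt{\#S}\times\sqrt{\#S}$ grid of spacing $K\gg 1$, so $\diam S\sim K\sqrt{\#S}$). The cited reference avoids this by majorizing $\chi_{[0,1/M]}$ by $\frac1M F_M$ where $F_M\ge0$ is the Fej\'er kernel; since $\widehat{F_M}$ is exactly supported in $[-M,M]$, the sum is cut cleanly at $|\tau|\le M$ and no tail appears. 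Your symmetrization also discards the oscillation, so you cannot recover the cancellation afterwards; the Fej\'er trick preserves positivity while keeping a hard frequency cutoff, which is essential here.

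\emph{Second}, the counting input is far from elementary. Even the $\tau=0$ part requires $\#\mc Q^0(S)\lesssim\log\#S\cdot(\#S)^2$, i.e.\ the Pach--Sharir bound on repeated angles (equivalently, the number of right triangles with vertices in an $n$-point planar set is $O(n^2\log n)$). Your ``slab width $A/|k|$ versus trivial bound plus Cauchy--Schwarz in $k$'' does not produce this: for $S$ a $\sqrt{\#S}\times\sqrt{\#S}$ grid, the slab count for a single line gives $\sqrt{\#S}$ points, and the resulting bound on $\#\mc Q^0(S)$ is $(\#S)^{5/2}$, off by a power. The incidence-geometric input (Szemer\'edi--Trotter in the form of Pach--Sharir) is the heart of the estimate, and you neither cite nor rederive it.

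\emph{Third}, you work implicitly with uniform $|a_n|$. The reduction to this case is not a formality. Splitting $a=\sum_j a^{(j)}$ into dyadic level sets $|a_n|\sim 2^{-j/2}\la_j$ supported on sets of size $2^j$ and applying the triangle inequality loses a factor $\sqrt{\text{(number of levels)}}$, which is unbounded. The reference handles this with a non-trivial pruning argument (eliminating points on lines that are too rich) and a cross-level decay estimate coming again from Szemer\'edi--Trotter; a version of this machinery is recalled in the present paper's proof of Lemma \ref{lem:conc la*}. Your sketch does not address the general-coefficient case at all.
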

The interval size $\frac{1}{\log\#S}$ in \eqref{eq:L4 Stri} turns out to be sharp \cite{herr2024strichartz,kishimoto2014remark}. As a consequence \cite[Theorem 1.4]{herr2024strichartz}, for $s>0$ the cubic NLS is globally well-posed for initial data in $H^s(\T^2)$  which is small in $L^2(\T^2)$, which is the optimal semi-linear (perturbative) result (see Corollary \ref{cor:unif-cont}). Also, we point out that the $L^4$-Strichartz estimate \eqref{eq:L4 Stri} on the unit interval $[0,1]$ requires a $\log(\# S)^{\frac14}$ loss \cite{herr2024strichartz}.

In this work, we show the inverse result of Proposition \ref{prop:L4 Stri}.
\begin{thm}\label{thm:inverse L4 Stri}
Let $\epsilon>0$. There exists $\delta>0$ satisfying the following:

For every $\phi\in L^2(\T^2)$ with bounded Fourier support $S=\supp(\widehat\phi)\subset\Z^2$ such that
\[
\norm{e^{it\De}\phi}_{L^4_{t,x}([0,\frac{\delta}{\log\#S}]\times\T^2)}\ge\epsilon\norm{\phi}_{L^2(\T^2)},
\]
there exist $N\in2^\N$, $t_0\in[0,2\pi)$, $x_0\in\T^2$, and $\xi_0\in\Z^2$ such that
\begin{equation}
\left|\jp{\psi,e^{it_0\De}\phi}_{L^2(\T^2)}\right|\gtrsim_\epsilon\norm{\phi}_{L^2(\T^2)}
\end{equation}
for the \emph{profile}
\[
\psi(x)=N^{-1}e^{i\xi_0\cdot x}P_N\delta(x-x_0).
\]
\end{thm}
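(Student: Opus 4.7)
My plan is to adapt Dodson's inverse $L^4$ Strichartz argument \cite{dodson-def} to the arithmetic setting of $\T^2$, where incidence geometry and a Gowers-type inverse theorem will compensate for the loss of Euclidean scaling. Set $T=\delta/\log\#S$ and normalize $\|\phi\|_{L^2}=1$. First I would expand
\[
\|e^{it\De}\phi\|_{L^4_{t,x}([0,T]\times\T^2)}^4 = \int_0^T\sum_{\substack{\xi_1,\ldots,\xi_4\in S\\ \xi_1-\xi_2+\xi_3-\xi_4=0}}\widehat\phi(\xi_1)\overline{\widehat\phi(\xi_2)}\widehat\phi(\xi_3)\overline{\widehat\phi(\xi_4)}\,e^{-it\Omega}\,dt,
\]
where $\Omega=|\xi_1|^2-|\xi_2|^2+|\xi_3|^2-|\xi_4|^2$. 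The time integral produces a weight of size $\sim T$ on quadruples with $|\Omega|\les 1/T\sim\log\#S$ and otherwise decays. The hypothesis then forces this weighted count to be $\gtrsim\ve^4$, and dyadic level-set pigeonholing on $|\widehat\phi|$ combined with restriction to an annulus of some radius $N$ reduces matters to the case $|\widehat\phi|\sim\la\mathbf{1}_A$ for some set $A\subseteq S$ of size $\sim\la^{-2}$ contained in an annulus of radius $N$.

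Next I would parameterize each additive quadruple by $v=\xi_1-\xi_2=\xi_4-\xi_3$; the identity $\Omega=v\cdot(\xi_1+\xi_2-\xi_3-\xi_4)$ shows that for each nonzero $v$ the near-resonant quadruples are precisely those whose midpoint projections onto $v$ agree to within $O(\log\#S/|v|)$. Thus near-resonant quadruples from $A$ correspond to pairs of $A$-points with difference $v$ lying in a thin tube perpendicular to $v$. A Szemer\'edi--Trotter-type incidence bound on $\Z^2$ then yields an upper bound on the quadruple count that is saturated only when $A$ exhibits arithmetic/affine structure at scale $N$ up to an overall quadratic phase.

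The quantitative form of this saturation is most cleanly recorded as a lower bound on a Gowers $U^3$ norm of $\widehat\phi$, and the $U^3$ inverse theorem of Green--Tao--Ziegler then produces a real polynomial $Q$ on $\Z^2$ of degree $\le 2$, a center $\xi_0$, and a radius $N$ such that
\[
\Bigl|\sum_{|\xi-\xi_0|\les N}e^{-iQ(\xi)}\widehat\phi(\xi)\Bigr|\gtrsim_\ve N.
\]
Writing $Q(\xi)=t_0|\xi|^2-x_0\cdot\xi+\mathrm{const}$ and unwinding the definition of $\psi$ gives precisely $|\jp{\psi,e^{it_0\De}\phi}_{L^2}|\gtrsim_\ve\|\phi\|_{L^2}$. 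The main difficulty is this middle passage: converting the incidence upper bound into a usable $U^3$ lower bound with polynomial dependence on $\ve$, and then applying Green--Tao--Ziegler on $\Z^2$ with constants independent of $\#S$. The scale $T\sim 1/\log\#S$ is exactly the threshold at which non-resonant quadruples begin to contribute, so every pigeonhole step must conserve this critical logarithm.
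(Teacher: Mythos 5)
Your high-level roadmap — expand the $L^4$ norm into resonant quadruples, dyadically normalize $|\widehat\phi|$, use incidence geometry to detect additive structure, invoke a Gowers inverse theorem, and output a quadratic phase profile — matches the architecture of the paper's proof (Propositions 6.2--6.5, which together yield the profile decomposition Proposition 6.6). You correctly identify the middle passage as the hard part. But there the proposal has two genuine gaps, not merely technical ones, and your own uncertainty is well placed.

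First, the natural Gowers order that falls out of the resonance structure is not $U^3$; it is $U^8$. The parallelogram count controls a $\Pi_\eta$-type box norm, and converting that into a Gowers bound by van der Corput (Corollary \ref{cor:von Neumann Alt}, with three nested differencings arising from the three independent shifts in an additive octuple) lands you at $U^{2^3}=U^8$ (Lemma \ref{lem:U^8 large}). A $U^8$ inverse theorem gives a degree-$7$ nilsequence, which has no reason to be a quadratic phase. The paper closes this gap with a novel degree-lowering theorem (Theorem \ref{thm:degree reduction}): by showing the $L^4$-Strichartz norm is ``alt-stable'' and $(3,2)$-$*$-reducible (Lemmas \ref{lem:N-norm is alt-stable}, \ref{lem:(3,2)-*-reducibility}), it forces any degree-$7$ correlate with large Strichartz norm to actually correlate with a degree-$2$ object. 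Simply writing ``record the saturation as a $U^3$ bound'' skips this entire mechanism; without it, the $U^3$ correlate you want does not follow from the $U^8$ information that incidence geometry actually produces.

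Second, even granting a $U^3$ inverse, the Green--Tao--Ziegler theorem hands you a locally quadratic modulation on a Bohr set — a priori an \emph{arbitrary} quadratic form $Q(\xi) = \xi^\top A\xi + b\cdot\xi$, not one proportional to $|\xi|^2$. Your line ``writing $Q(\xi)=t_0|\xi|^2 - x_0\cdot\xi + \mathrm{const}$'' is not a rewriting; for a generic $A$ there is no such $t_0$, and the resulting modulation would not unwind to a Galilean-boosted Littlewood--Paley profile. The paper spends Section \ref{sec:norms-rec} (Lemmas \ref{lem:U3>>Q(x) main lem}--\ref{lem:U3>>Q(x) final lem}) forcing $A\propto I$ by exploiting the specific bilinear form $\tau_Q = 2(\xi_1-\xi_2)\cdot(\xi_1-\xi_4)$ and its rotational symmetry: the Strichartz constraint is rank-reduced coordinate by coordinate on a rotation-symmetric Bohr set until only the isotropic direction survives. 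This is a separate argument from the incidence-geometric step and cannot be absorbed into it. Likewise, the passage from additive structure to a genuine square $[N]^2$ (rather than an arbitrary generalized arithmetic progression) requires Propositions \ref{prop:conc to P}--\ref{prop:conc to [N]^2}, using Balog--Szemer\'edi--Gowers/Freiman plus a further eccentricity bound coming from the $\tau_Q$ geometry; an ``annulus of radius $N$'' does not fall out for free.
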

Here, $P_N\delta$ denotes the Littlewood-Paley cutoff of the Dirac distribution $\delta(\cdot)$ on $\T^2$.
The size of the interval $[0,\frac{\delta}{\log\#S}]$ in Theorem \ref{thm:inverse L4 Stri} is sharp, as one can directly check by the nonexample $\phi=N^{-1}\F^{-1}(\chi_{N\Z^2\cap[-N^2,N^2]^2})$. The parameters $N,t_0,x_0,\xi_0$ correspond to the scaling, time and space translations, and the Galilean symmetry of the Schr\"{o}dinger operator, all of which are crucial.

The proof of Theorem \ref{thm:inverse L4 Stri} is one of the main contributions of this paper. We develop a new method based on additive combinatorics 
to prove this PDE result. We use the theory of sum sets
and  multiprogressions to reduce the problem to square Fourier supports. Then, we use deep results, mainly from \cite{GreenTao08(U3),GreenTao12Quantitative,GreenTaoZiegler12(Ud)}, on inverse theorems for Gowers norms.
Roughly speaking, in additive combinatorics, Gowers norms are used to detect quasipolynomial behaviour. We develop methods that transfer the largeness of the $L^4$-Strichartz norm to largeness of Gowers norms. Then, the above results are used to extract quadratic phase functions and deduce Theorem \ref{thm:inverse L4 Stri}.

Given Theorem \ref{thm:inverse L4 Stri}, we proceed with the proofs of Theorems \ref{thm:GWP defocus} and \ref{thm:GWP focus} as follows: Assuming the contrary, we can use Theorem \ref{thm:inverse L4 Stri} and a new transference argument (see also the recent work \cite{Kwak2024global} of the second author for an energy-critical case) to obtain a contradiction to Dodson's results \cite{dodson-def,dodson-foc}.

In addition, we construct a family of solutions which is interesting in its own right.
\begin{thm}\label{thm:L4 counterexample lem}
For $N\in2^\N$, denote by $\phi^N$ the function
\[
\phi^N:=\F^{-1}(\chi_{N^{10}\Z^2}\cdot e^{-|\xi/N^{11}|^2}).
\]
Let $T>0$ and $\la>0$ be a small number. Let $u^N\in C_{\loc}^\infty(\R \times \T^2)$ be the solution to \eqref{eq:NLS} with the initial datum $u^N_0:=\la N^{-1}\phi^N$ provided by \cite[Theorem 1.4]{herr2024strichartz}. 
We have 
\begin{align*}
&\|u_0^N\|_{L^2}\sim \lambda, \quad  \log \|u_0^N\|_{H^1}\lesssim \log N, \text{ and }\\
&\limsup_{N\rightarrow\infty}\norm{u^N(t)-e^{\mp 3it\la^2\ln N}e^{it\De}u^N_0}_{C^0L^2\cap L^4_{t,x}([0,\frac{T}{\log N})\times\T^2)}=0,
\end{align*}
where $\pm$ corresponds to the sign of the nonlinearity of \eqref{eq:NLS}.
\end{thm}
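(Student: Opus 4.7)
The plan is to verify that $v^N(t,x):=e^{\mp 3it\la^2\ln N}e^{it\De}u_0^N(x)$ is an approximate solution of \eqref{eq:NLS} on $[0,T/\log N]$, and then to upgrade this into an approximation for $u^N$ itself via a Strichartz-based perturbation argument using Proposition \ref{prop:L4 Stri} and the local theory in \cite{herr2024strichartz}. The $L^2$ and $H^1$ bounds on $u_0^N$ follow from Plancherel together with the lattice-Gaussian computation $\sum_{\eta\in\Z^2}e^{-2|\eta/N|^2}\sim N^2$. Crucially, $N^{10}\Z^2$ is closed under $(\xi_1,\xi_2,\xi_3)\mapsto\xi_1-\xi_2+\xi_3$, so the Fourier support of $u^N(t)$ remains contained in $N^{10}\Z^2$ for all $t$, and on this lattice the resonance function
\[
\Omega:=|\xi|^2-|\xi_1|^2+|\xi_2|^2-|\xi_3|^2=-2(\xi_1-\xi)\cdot(\xi_3-\xi)
\]
is either zero or of modulus at least $2N^{20}$.

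Set $w^N:=u^N-v^N$. A direct computation gives $(i\d_t+\De)v^N=\pm 3\la^2\ln N\cdot v^N$, so $w^N$ solves a Duhamel equation with inhomogeneity $\pm(|u^N|^2 u^N-|v^N|^2 v^N)\pm E^N$, where $E^N:=|v^N|^2 v^N-3\la^2\ln N\cdot v^N$. Writing $a_\xi(0):=\wt{u_0^N}(\xi)$, up to a scalar unit phase $\wt{E^N}(t,\xi)$ equals
\[
\sum_{\xi_1-\xi_2+\xi_3=\xi}e^{it\Omega}\,a_{\xi_1}(0)\ol{a_{\xi_2}(0)}a_{\xi_3}(0)-3\la^2\ln N\cdot a_\xi(0).
\]
Decompose this sum into resonant ($\Omega=0$, i.e.\ rectangles) and non-resonant ($|\Omega|\ge 2N^{20}$) parts. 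The degenerate rectangles give the mass-rotation term $(2\norm{u_0^N}_{L^2}^2-|a_\xi(0)|^2)a_\xi(0)=O(\la^2)$, which is $o(\la^2\ln N)$ and absorbed into the error. The non-degenerate rectangles, after the change of variables $\xi=N^{10}\mu$, $\xi_1-\xi=N^{10}\al$, $\xi_3-\xi=N^{10}\be$ with $\al\cdot\be=0$, become
\[
\la^3 N^{-3}\sum_{\substack{\al,\be\in\Z^2\setminus\{0\}\\ \al\cdot\be=0}}e^{-(|\mu+\al|^2+|\mu+\be|^2+|\mu+\al+\be|^2)/N^2};
\]
parameterizing $\al=d u$ with $u$ primitive and $\be=k v$ along the primitive perpendicular $v\perp u$, this is an arithmetic sum whose leading asymptotic, obtained from $\sum_{d\les N}1/d\sim\ln N$ together with a Gaussian integral in the remaining continuous parameters, equals $3\la^2\ln N\cdot a_\xi(0)+o(\ln N)$ uniformly in $\mu\in\Z^2$ with $|\mu|\les N$. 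The non-resonant part is handled by integrating by parts in time against the oscillatory factor $e^{it\Omega}$, $|\Omega|\ge 2N^{20}$: this gains a factor $N^{-20}$, which easily beats the $O(N^6)$ quadruples per frequency and the $O(N^2)$ active frequencies, leaving a contribution $o(1)$ on $[0,T/\log N]$.

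Together these bounds give $\norm{E^N}_{L^{4/3}_{t,x}([0,T/\log N]\times\T^2)}\to 0$. Subdividing $[0,T/\log N]$ into $O(T)$ intervals of length $1/\log\#S$ with $\#S\sim N^2$, Proposition \ref{prop:L4 Stri} gives a uniform linear Strichartz bound for $e^{it\De}u_0^N$, and a standard Strichartz contraction for the equation satisfied by $w^N$ in $L^\infty_tL^2_x\cap L^4_{t,x}$ then shows $\norm{w^N}_{C^0 L^2\cap L^4_{t,x}([0,T/\log N)\times\T^2)}\to 0$, completing the proof. The main obstacle is the precise evaluation of the non-degenerate resonant lattice sum above: identifying the coefficient of $\ln N$ as exactly $3$ and controlling the error uniformly in the basepoint $\mu$. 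This arithmetic calculation is parallel to, and essentially the source of, the sharp $\log\#S$ loss underlying the $L^4$-Strichartz estimate of Proposition \ref{prop:L4 Stri} and the sharpness asserted in Theorem \ref{thm:inverse L4 Stri}.
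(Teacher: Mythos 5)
Your proposal follows the same route as the paper: write $w^N := u^N - v^N$ with $v^N := e^{\mp 3it\la^2\ln N}e^{it\De}u_0^N$, use $(i\d_t+\De)v^N = \pm 3\la^2\ln N\, v^N$ to identify the extra Duhamel source $E^N := |v^N|^2 v^N - 3\la^2\ln N\, v^N$, split $E^N$ into the rectangular-resonance piece (evaluated via the coprime density $d_{\irr}=6/\pi^2$ and a lattice-Gaussian integral, giving the coefficient $3$) and the non-resonant piece with $|\tau(Q)|\geq 2N^{20}$ (killed by time oscillation), and close by iterating a short-time perturbation argument on $O(T)$ subintervals of length $\sim 1/\log N$. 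This is exactly the structure of the paper's proof.

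One step does not hold as written: the claim $\norm{E^N}_{L^{4/3}_{t,x}([0,T/\log N)\times\T^2)}\to 0$. The non-resonant part of $E^N$ is pointwise comparable to $|v^N|^2 v^N$ (at $t=0$ its $L^2_x$ norm is of order $\la^3 N^2$), so $L^{4/3}_{t,x}$-smallness is at best unclear; the gain $N^{-20}$ from the phase $e^{it\tau(Q)}$ only manifests \emph{after} the time integration in Duhamel's formula. The paper therefore estimates the Duhamel integral directly: for each frequency $\xi$ the time-integral is bounded in $V^2_t$ by interpolating a $W^{1,1}_t$ bound ($\les\la^3 N$, no oscillatory gain) against the $L^\infty_t$ bound from integration by parts ($\les\la^3 N^{-19}$), yielding $o_N(1)$ in $\ell^2_\xi V^2_t$, i.e.\ in $Y^0$, which controls both $C^0L^2$ and (via the short-time Strichartz estimate) $L^4_{t,x}$. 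Relatedly, the iteration is run in $Z_R$ with $R=N^{13}$ rather than bare $L^\infty L^2\cap L^4_{t,x}$: since $\supp\widehat{u^N_0}=N^{10}\Z^2$ is infinite, Proposition~\ref{prop:L4 Stri} is not directly applicable, and the $R^{-s}\norm{\cdot}_{Y^s}$ component of $Z_R$, together with the trilinear estimate of Lemma~\ref{lem:nu bound}, supplies the necessary high-frequency truncation. Both points are repairable, but as formulated the perturbation step does not quite close.
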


We remark that the phase correction factor $3$ is  a result of a subtle computation involving the density of coprime integers, see Remark \ref{rem:dens} for more details.

As a first consequence of Theorem \ref{thm:L4 counterexample lem} we obtain
\begin{cor}\label{cor:unif-cont}
The flow map $u_0\mapsto u$ of \eqref{eq:NLS} fails to be locally uniformly continuous in $L^2(\T^2)$.
\end{cor}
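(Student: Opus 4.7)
The plan is to use Theorem \ref{thm:L4 counterexample lem} to manufacture two sequences of initial data $v_0^{(k)}, w_0^{(k)}\in L^2(\T^2)$ that are uniformly $L^2$-bounded and converge to one another in $L^2(\T^2)$, but whose solutions develop an $O(1)$ separation at a shrinking positive time. The mechanism is that the approximating evolution $e^{\mp 3it\la^2\log N}e^{it\De}u_0^N$ carries a phase whose leading term is nonlinear in the amplitude $\la$, so two amplitudes differing by $1/k$ can be driven to phase opposition on a window $[0,T/\log N)$ provided $T$ is taken proportional to $k$.

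Fix $\la>0$ admissible in Theorem \ref{thm:L4 counterexample lem}. For each integer $k\ge 1$ set $\la_k:=\la+1/k$ and $T_k:=\pi/[3(\la_k^2-\la^2)]$, so that $3T_k(\la_k^2-\la^2)=\pi$. Applying Theorem \ref{thm:L4 counterexample lem} with this fixed $T_k$ to the amplitudes $\la$ and $\la_k$ in turn, we may choose $N_k\to\infty$ so rapidly that $T_k/\log N_k<1/k$ and that the solutions $v^{(k)}, w^{(k)}$ with initial data
\[
v_0^{(k)}:=\la N_k^{-1}\phi^{N_k},\qquad w_0^{(k)}:=\la_k N_k^{-1}\phi^{N_k}
\]
satisfy
\[
\norm{v^{(k)}(t)-e^{\mp 3it\la^2\log N_k}e^{it\De}v_0^{(k)}}_{C^0L^2([0,T_k/\log N_k))}<\tfrac 1k
\]
together with the analogous estimate for $w^{(k)}$ in which $\la$ is replaced by $\la_k$.

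A Riemann-sum evaluation of $\sum_{\xi\in N^{10}\Z^2}e^{-2|\xi/N^{11}|^2}$ gives $\norm{N^{-1}\phi^N}_{L^2}\to c_0$ for some $c_0>0$, so $\norm{v_0^{(k)}}_{L^2},\norm{w_0^{(k)}}_{L^2}\le 2\la$ for large $k$ and $\norm{v_0^{(k)}-w_0^{(k)}}_{L^2}\sim 1/k\to 0$. Pick $t_k\in(0,T_k/\log N_k)$ with $t_k\log N_k=(1-1/k)T_k$; then $t_k\to 0$. Since $e^{it\De}$ is an $L^2$-isometry, the triangle inequality yields
\[
\norm{v^{(k)}(t_k)-w^{(k)}(t_k)}_{L^2}\ge \bigl|\la-\la_k e^{\mp i\pi(1-1/k)}\bigr|\cdot\norm{N_k^{-1}\phi^{N_k}}_{L^2}-\tfrac 2k,
\]
whose right-hand side tends to $2\la c_0>0$. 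Therefore, for any $L^2$-neighborhood $V$ of $0$ and any $T>0$, both $v_0^{(k)},w_0^{(k)}$ lie in $V$ and $t_k\in[0,T]$ once $k$ is large, while $\norm{v^{(k)}(t_k)-w^{(k)}(t_k)}_{L^2}\gtrsim \la$. This contradicts local uniform continuity of the flow map $u_0\mapsto u$.

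The main obstacle is the synchronized scheduling of the three asymptotic parameters: $\la_k-\la\to 0$ is required for $L^2$-convergence of the data, $T_k\to\infty$ is forced by the constraint $3T_k(\la_k^2-\la^2)\equiv\pi$ that enforces a phase gap of $\pi$, and $N_k$ must be chosen large enough for Theorem \ref{thm:L4 counterexample lem} to apply on the vanishing interval $[0,T_k/\log N_k)$. Once this triple balance is set up and the uniform lower bound $\norm{N^{-1}\phi^N}_{L^2}\gtrsim 1$ is verified, the corollary reduces to the triangle-inequality computation above.
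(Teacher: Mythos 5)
Your proof follows essentially the same route as the paper's: both exploit the amplitude-dependence of the phase correction factor in Theorem~\ref{thm:L4 counterexample lem} to drive two amplitudes that are close in $L^2$ to opposite phase at a time that shrinks to zero, and both schedule the three parameters (amplitude gap, window length, frequency scale) by a diagonal choice. One small slip in the last sentence: since $\|v_0^{(k)}\|_{L^2}\to\la c_0>0$, the data $v_0^{(k)},w_0^{(k)}$ do not eventually lie in \emph{every} $L^2$-neighborhood of $0$; they lie in a fixed ball, which is enough to refute \emph{local} uniform continuity (i.e.\ uniform continuity on some bounded set), and failure on arbitrarily small balls is recovered by additionally sending $\la\to 0$, as the paper's proof does via its $\epsilon$-normalization of the amplitude.
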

We refer to Proposition \ref{prop:unif counterexample} for more details. Note that in \cite[Cor.~1.3]{kishimoto2014remark} it was proven that the flow map fails to have bounded derivatives of order $3$ at the origin in $L^2(\T^2)$. 

As a second consequence of Theorem \ref{thm:L4 counterexample lem}, we obtain
\begin{cor}\label{cor:sharp-bound}
Fix $T\geq 1$ and $\la>0$ small. There exist solutions $u^N \in C_{\loc}^\infty(\R \times \T^2)$ such that $\norm{u^N(0)}_{L^2(\T^2)}\sim \la$, $\log \|u^N(0)\|_{H^1}\lesssim \log N$, and 
\[
\limsup_{N\rightarrow\infty}\norm{u^N}_{L^4_{t,x}([0,\frac{T}{\log N})\times\T^2)}\gtrsim_\la T^{1/4}.
\]
\end{cor}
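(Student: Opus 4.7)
The plan is to derive Corollary \ref{cor:sharp-bound} directly from Theorem \ref{thm:L4 counterexample lem}. The key observation is that the modulation factor $e^{\mp 3it\la^2\ln N}$ has modulus one, hence does not affect any $L^p_{t,x}$ norm. By the triangle inequality and the vanishing error furnished by Theorem \ref{thm:L4 counterexample lem}, it therefore suffices to show
\begin{equation}\label{eq:free lower goal}
\liminf_{N\to\infty}\norm{e^{it\De}u_0^N}_{L^4_{t,x}([0,T/\log N)\times\T^2)}\gtrsim_\la T^{1/4}.
\end{equation}

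To prove \eqref{eq:free lower goal} I would exploit a double periodicity of the free solution $v(t,x):=e^{it\De}u_0^N(x)$. Since the Fourier support of $u_0^N$ is contained in $N^{10}\Z^2$ and the dispersion relation is $|\xi|^2$, $v$ is $(2\pi/N^{10})$-periodic in each spatial coordinate and $(2\pi/N^{20})$-periodic in time. The rescaling $s:=tN^{20}$, $y:=N^{10}x$ collapses $[0,T/\log N)\times\T^2_x$ onto approximately $TN^{20}/(2\pi\log N)$ copies of the fundamental period $[0,2\pi]\times\T^2_y$. Introducing
\[
w(s,y):=\la N^{-1}\sum_{m\in\Z^2}e^{-|m/N|^2}e^{-is|m|^2+im\cdot y}
\]
and setting $A:=\norm{w}_{L^4_{s,y}([0,2\pi]\times\T^2)}^4$, periodic folding yields
\[
\int_0^{T/\log N}\int_{\T^2}|v(t,x)|^4\,dx\,dt\gtrsim\frac{T}{\log N}\cdot A,
\]
so it suffices to prove $A\gtrsim_\la\log N$.

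For the lower bound on $A$, I would expand $w(s,y)^2$ in its joint $(s,y)$-Fourier series and apply Plancherel on $[0,2\pi]\times\T^2$, obtaining
\[
A\sim\la^4N^{-4}\sum_{(m_1,m_2,m_1',m_2')\in\mathcal{S}}e^{-(|m_1|^2+|m_2|^2+|m_1'|^2+|m_2'|^2)/N^2},
\]
where $\mathcal{S}\subset(\Z^2)^4$ is the set of quadruples satisfying the resonance conditions $m_1+m_2=m_1'+m_2'$ and $|m_1|^2+|m_2|^2=|m_1'|^2+|m_2'|^2$. Parameterizing each pair by midpoint $c:=(m_1+m_2)/2$ and half-difference $r:=(m_1-m_2)/2$, these constraints are equivalent to equal midpoints and equal $|r|^2$; thus the weighted sum counts configurations of two pairs sharing a common midpoint and a common squared distance from it, with Gaussian cutoffs restricting $|m_j|\les N$.

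The principal contribution comes from integer midpoints $c\in\Z^2$ with $|c|\les N$. For each such $c$, the inner count over $(r,r')\in(\Z^2)^2$ with $|r|=|r'|\les N$ reduces to $\sum_{R^2\les N^2}r_2(R^2)^2$, where $r_2(n)$ denotes the number of representations of $n$ as a sum of two integer squares. Invoking the classical second-moment asymptotic $\sum_{n\le X}r_2(n)^2\sim c_0X\log X$, this inner sum is $\gtrsim N^2\log N$, and summing over the $\sim N^2$ admissible centers yields $A\gtrsim_\la\log N$, which completes the proof of \eqref{eq:free lower goal} and hence of the corollary. The main obstacle is precisely this number-theoretic input: the $\log N$ is invisible in any trivial diagonal bound and is produced by the same mechanism responsible for the logarithmic loss in the $L^4$-Strichartz estimate on $\T^2$.
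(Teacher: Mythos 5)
Your proof follows essentially the same route as the paper: reduce to the free evolution $e^{it\De}u_0^N$ via Theorem \ref{thm:L4 counterexample lem} and the triangle inequality (the unimodular phase $e^{\mp 3it\la^2\ln N}$ is harmless), fold $[0,T/\log N)$ onto $\gtrsim TN^{20}/\log N$ full time-periods of length $2\pi N^{-20}$, and lower-bound the resulting rectangle count $\#\mc Q^0([N]^2)$ by $\gtrsim N^4\log N$. The only difference is that the paper cites \cite[(2.2)]{kishimoto2014remark} for this last count, whereas you supply a self-contained derivation via the midpoint/half-difference parametrization $(c,r,r')$ with $|r|=|r'|$ and the classical second-moment asymptotic $\sum_{n\le X} r_2(n)^2\sim c_0 X\log X$, which is exactly the mechanism producing the logarithm.
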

This implies the sharpness of the bounds \eqref{eq:GWP defocus L4 bound} and \eqref{eq:GWP focus L4 bound} for solutions to the nonlinear equation: First, there cannot be a quantitative $L^4$-bound for initial data at the $L^2$ regularity. Second, the bounds are sharp at the time scale $T\sim (\log \|u_0\|_{H^s})^{-1}$ for large $\|u_0\|_{H^s}$, see \eqref{eq:prec-bound} for a more precise statement of the bound.

\subsection*{Organisation of the paper}
In Section \ref{sec:pre}, notation and preliminary results on incidence geometry and additive combinatorics are introduced.
In Section \ref{app-sec:deg}, inverse Gowers theorems and equidistribution theory of nilsequences are recalled and a degree-lowering theorem is proved.
In Section \ref{sec:norms-rec}, norms and inverse theorems concerning rectangular resonances are discussed.
Section \ref{sec:prof} contains an extinction lemma, introduces periodic extensions and frames, and provides an inverse theorem for a bounded sum of profiles. Section \ref{sec:proof-inv} contains the proof of Theorem \ref{thm:inverse L4 Stri}. 
In Section \ref{sec:GWP}, the large data global well-posedness results for
\eqref{eq:NLS}, i.e.\ Theorems \ref{thm:GWP defocus} and \ref{thm:GWP focus}, are proved.
In Section \ref{sec:proof-approx}, the proofs of Theorem \ref{thm:L4 counterexample lem} and the Corollaries \ref{cor:unif-cont} and \ref{cor:sharp-bound} are provided.

\subsection*{Logical structure}
The logical structure of the proofs of Theorem \ref{thm:GWP defocus} and Theorem \ref{thm:GWP focus} is similar. Both rely on an indirect argument, which is based on a concentration argument (see Section \ref{sec:GWP}) and Theorem \ref{thm:inverse L4 Stri} (more precisely, Prop.\ \ref{prop:profile'}), which finally allows us to transfer Dodson's results \cite{dodson-def, dodson-foc} to the periodic setting. These steps essentially rely on harmonic analysis and PDE techniques, with the notable exception of Theorem \ref{thm:inverse L4 Stri}.
A significant part of this paper is devoted to the proof of Theorem \ref{thm:inverse L4 Stri}, see Sections \ref{sec:pre}, \ref{app-sec:deg}, \ref{sec:norms-rec}, and \ref{sec:proof-inv}. Here, we use methods and results from incidence geometry and additive combinatorics, such as Gowers uniformity norms and degree lowering theorems.
Finally, Theorem \ref{thm:L4 counterexample lem} and its corollaries are based on an explicit construction, see Section \ref{sec:proof-approx}.

\section{Preliminaries}\label{sec:pre}
For positive reals $A,B>0$, we denote $A\les B$ if $A\le CB$ for some absolute constant $C$. We denote by $A\sim B$ the comparability, i.e., $A\les B\les A$. We denote $A\ll B$ if $A\le \epsilon B$ holds for some sufficiently small constant $\epsilon>0$. We denote by subscripts (e.g., $\les_a$) to denote the parameters $C$ or $\epsilon$ depend on.

For $N\in\N$, $[N]$ denotes the integer set $[N]=\{-N,\ldots,N\}$. More generally, for a positive real number $r>0$, $[r]$ denotes $[\lfloor r\rfloor]$.

Given a set $E$, we denote by $\chi_E$ the sharp cutoff at $E$. For a proposition $P$, we denote by $1_P$ the indicator function
\[
1_P:=\begin{cases}
1&,\,$P$\text{ is true}
\\0&,\,\text{otherwise}
\end{cases}.
\]
We use the expectation notation: for a finite set $S\neq\emptyset$ and $f:S\rightarrow\C$,
\[
\E_{x\in S}f(x):=\frac{1}{\#S}\sum_{x\in S}f(x).
\]
For $d\in\N$, $S\subset\Z^d$, and $f:S\rightarrow\C$, we use the convention $f(x)=0$ for $x\in\Z^d\setminus S$.

As subsets of $\C$, $\T$ and $\D$ denote the sets $\T:=\left\{ z\in\C:\left|z\right|=1\right\} $ and $\D:=\{z\in\C:|z|\le 1\}$.
As a quotient of $\R$, $\T$ denotes $\R/2\pi\Z$.

A parallelogram is a quadruple $Q=(\xi_1,\xi_2,\xi_3,\xi_4)$ in $\R^m,m\in\N$ such that $\xi_1+\xi_3=\xi_2+\xi_4$. For the case $m=2$, we denote by $\mc Q$ the set of all parallelograms with vertices in $\Z^2\subset\R^2$. For $Q=(\xi_{1},\xi_{2},\xi_{3},\xi_{4})\in\mc Q$, we denote $\tau_{Q}:=2(\xi_{1}-\xi_{2})\cdot(\xi_{1}-\xi_{4})$.
For $\tau\in\Z$, $\mc Q^{\tau}$
denotes the set of $Q\in\mc Q$ such that $\tau_{Q}=\tau$. $\mc Q(S)$ and
$\mc Q^{\tau}(S)$ denote the subsets of $\mc Q$ and $\mc Q^{\tau}$, respectively, whose
vertices lie in $S$. For $M\in\N$, we denote $\mc Q^{\le M}=\cup_{\tau=0}^M\mc Q^\tau$.

For $Q=(\xi_1,\xi_2,\xi_3,\xi_4)\in\mc Q$ and $f:\Z^2\rightarrow\C$, we use the convention $f(Q)=f(\xi_1)\ol{f(\xi_2)}f(\xi_3)\ol{f(\xi_4)}$.

For two vectors $u,v\in\R^2\setminus\{0\}$, we denote by $\angle(u,v)\in(-\pi,\pi]$ the counterclockwise angle between $u,v$.

For $\xi=(a,b)\in\R^2$, $\xi^\perp$ denotes the counter-clockwise rotation $\xi^\perp:=(-b,a)$.

For an integer point $\xi=(a,b)\in\Z^2\setminus\{0\}$, we denote $\gcd(\xi):=\gcd(a,b)$.

We denote $\Z^2_\irr=\{\xi\in\Z^2\setminus\{0\}:\gcd(\xi)=1\}$.

For $f:\T^d\rightarrow\C$, $d\in\N$, we denote by either $\F(f)=\F_{\T^d}(f)$ or $\widehat{f}$ the Fourier series of $f$.

We use Littlewood-Paley projection operators. Denote the set of dyadic numbers by $2^\N=\{1,2,\ldots\}$. Let $\psi:\R\rightarrow[0,\infty)$ be a smooth even bump function such that $\psi\mid_{[-1,1]}=1$ and $\supp(\psi)\subset[-\frac{11}{10},\frac{11}{10}]$. For $N\in2^\N$, we denote by $\psi^N$ the function $\psi^N(\xi)=\psi(\xi/N)$ and $P_{\le N}$ the Fourier multiplier induced by $\psi^N(\xi_1)\cdots\psi^N(\xi_d)$. We denote $P_N:=P_{\le N}-P_{\le N/2}$ with the convention $P_{\le 2^{-k}}:=0$ for $k>0$. For simplicity, we use abridged notations $u_N:=P_Nu$ and $u_{\le N}:=P_{\le N}u$ for a function $u:\T^d\rightarrow\C$.

Analogous to $\T^d$, we use Littlewood-Paley operators on $\R^d$. We use the same notation, except that we allow $N\in2^\Z$.

For combinatorial discussions, we also use the sharp Fourier cutoffs on $\T^2$. For $S\subset\Z^2$ and $f\in L^2(\T^2)$, we denote $P_Sf:=\F^{-1}(\chi_S\widehat f)$.

We provide preliminary facts for additive combinatorics (and incidence geometry).
\subsection*{Szemer\'edi-Trotter Theorem}
\begin{prop}[\cite{Szemeredi-trotter}]
Let $m,n\in\N$. Let $S\subset\R^{2}$ be a set of $n$ points. Let $\mc L$ be a set of $m$ lines on $\R^2$. We have
\begin{equation}\label{eq:SzTr fundamental}
\#\{(p,\ell)\in S\times\mc L:p\in \ell\}\les m^{2/3}n^{2/3}+m+n.
\end{equation}
\end{prop}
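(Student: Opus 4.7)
The plan is to follow Sz\'ekely's short proof of the Szemer\'edi-Trotter theorem via the crossing number inequality. The only substantive external ingredient is the crossing number inequality itself: any simple graph $G=(V,E)$ with $|E|\ge 4|V|$ satisfies $\mathrm{cr}(G)\gtrsim |E|^3/|V|^2$, where $\mathrm{cr}(G)$ denotes the minimum number of edge crossings over all planar drawings of $G$. I would take this as a black box, since its proof (Euler's formula giving $\mathrm{cr}(G)\ge |E|-3|V|$ for any graph, followed by a random vertex-deletion argument optimized in the retention probability) is standard.

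First I would build a graph $G$ on vertex set $S$ as follows: for each line $l\in\mc L$ that meets $S$ in $k_l\ge 1$ points, draw $k_l-1$ edges joining the consecutive points of $S\cap l$ along $l$. Writing $I=\#\{(p,l)\in S\times\mc L:p\in l\}$ for the number of incidences, one has
\[
|E|=\sum_{l\in\mc L}(k_l-1)_+\ge I-m.
\]
This construction embeds $G$ in the plane so that any edge crossing must occur at the intersection of two distinct lines, and since two distinct lines meet in at most one point,
\[
\mathrm{cr}(G)\le\binom{m}{2}\le m^2.
\]

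Next I would split into two cases. If $I-m<4n$, then trivially $I<4n+m$, which is absorbed by the $m+n$ term on the right-hand side of \eqref{eq:SzTr fundamental}. Otherwise the crossing number inequality applies to $G$ and yields
\[
m^2\ge\mathrm{cr}(G)\gtrsim\frac{(I-m)^3}{n^2},
\]
from which $I-m\les m^{2/3}n^{2/3}$ follows by rearrangement. Combining the two regimes gives the desired bound $I\les m^{2/3}n^{2/3}+m+n$.

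The main obstacle is conceptual rather than computational: one must recognize that a point-line incidence configuration can be faithfully encoded as a graph whose planar realization simultaneously has many edges (proportional to $I-m$) and few crossings (controlled by pairwise line intersections). Once this encoding is set up, the crossing number inequality performs the extraction automatically and no parameter optimization or cell decomposition is required.
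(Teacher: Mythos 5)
The paper does not prove this proposition; it cites it directly from \cite{Szemeredi-trotter} as a known black-box input, so there is no internal argument to compare against. Your reconstruction is the standard Sz\'ekely crossing-number proof of Szemer\'edi--Trotter and it is correct: the edge count $|E|\ge I-m$, the crossing bound $\mathrm{cr}(G)\le\binom{m}{2}$ from pairwise line intersections, the split into the case $|E|<4n$ versus the crossing-number-inequality regime, and the rearrangement all go through. The one step you leave implicit is that the crossing number inequality requires $G$ to be a \emph{simple} graph; this does hold here, because two vertices $p,q$ could be joined by two edges only if both edges came from distinct lines each containing $p$ and $q$, and two distinct lines meet in at most one point. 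It is worth stating, but the verification is immediate and does not affect the validity of the argument.
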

\begin{prop}[{\cite[Corollary 8.5]{tao2006additive}}]
Let $n$ and $k\ge 2$ be integers. Let $S\subset\R^{2}$ be a set of $n$ points. The number $m$ of lines on $\R^{2}$ passing
through at least $k$ points of $S$ is bounded by
\begin{equation}
m\les\frac{n^{2}}{k^{3}}+\frac{n}{k}.\label{eq:SzTr}
\end{equation}
\end{prop}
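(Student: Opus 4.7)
The plan is a standard double-counting argument using the Szemer\'edi-Trotter incidence bound \eqref{eq:SzTr fundamental} recalled immediately above. Let $\mc L$ denote the family of the $m$ lines under consideration, and let
\[
I := \#\{(p,l) \in S \times \mc L : p \in l\}
\]
be the total number of point-line incidences. Counting line-by-line, each line of $\mc L$ contributes at least $k$ incidences to $I$ by hypothesis, so $I \ge mk$. On the other hand, \eqref{eq:SzTr fundamental} applied to $S$ and $\mc L$ gives $I \les m^{2/3}n^{2/3} + m + n$. Combining these produces the master inequality
\[
mk \le C\bigl(m^{2/3}n^{2/3} + m + n\bigr)
\]
for some absolute constant $C>0$.

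The second step is to solve this inequality for $m$, which I would handle by splitting on the size of $k$. In the regime $k \ge 2C$, the linear term $Cm$ can be absorbed into the left-hand side, leaving $\tfrac12 mk \le C(m^{2/3}n^{2/3} + n)$. Then either the first term on the right dominates, giving $mk \les m^{2/3}n^{2/3}$ and hence $m \les n^{2}/k^{3}$, or the second dominates, giving $m \les n/k$; in either case \eqref{eq:SzTr} holds. In the complementary regime $2 \le k < 2C$, the parameter $k$ is bounded by an absolute constant, so $1/k^{3}$ is bounded below; combined with the trivial count $m \le \binom{n}{2} \le n^{2}$ (any two distinct points determine a unique line), this gives $m \les n^{2} \les n^{2}/k^{3}$, again yielding \eqref{eq:SzTr}.

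No serious obstacle is expected, since this is a classical corollary of Szemer\'edi-Trotter. The only mildly delicate point is that the linear $m$-term on the right of \eqref{eq:SzTr fundamental} cannot be absorbed into $mk$ when $k$ is a small fixed constant, which is precisely why the case split on $k$ is needed; the small-$k$ case is then dispatched by the trivial pairwise-lines bound.
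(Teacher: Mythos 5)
Your proof is correct. The paper offers no proof of this proposition, citing only \cite[Corollary 8.5]{tao2006additive}; your double-counting derivation from the incidence bound \eqref{eq:SzTr fundamental}, with the case split to handle the linear $m$-term when $k$ is a small constant (dispatched via $m\le\binom{n}{2}$ since $k\ge 2$), is the standard argument and matches the one in the cited reference.
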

\begin{lem}\label{lem:SzTr'}
Let $n\in\N$ and $k\ge 2$ be integers. Let $S\subset\R^{2}$ be a set of $n$ points. We have
\begin{equation}\label{eq:SzTr'}
\#\{(p,\ell):\ell\text{ is a line through $p\in S$ and }\#(\ell\cap S)\ge k\}\les \frac{n^2}{k^2}+n.
\end{equation}
\end{lem}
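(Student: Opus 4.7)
The plan is to recognize the left-hand side of \eqref{eq:SzTr'} as a direct incidence count between $S$ and the collection of ``rich'' lines, and then apply the Szemer\'edi--Trotter bound \eqref{eq:SzTr fundamental} once to that collection.

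First, I would let $\mc L$ denote the family of lines in $\R^2$ that contain at least $k$ points of $S$. Then the left-hand side of \eqref{eq:SzTr'} is exactly $\#\{(p,l)\in S\times\mc L:p\in l\}$. I would apply \eqref{eq:SzTr} to obtain $m:=\#\mc L\les n^2/k^3+n/k$, and then feed the pair $(m,n)$ back into \eqref{eq:SzTr fundamental}, which bounds the incidence count by $\les m^{2/3}n^{2/3}+m+n$.

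Expanding $m^{2/3}n^{2/3}$ using the two summands of the bound on $m$ yields $(n^2/k^3)^{2/3}n^{2/3}=n^2/k^2$, which is already of the desired form, and $(n/k)^{2/3}n^{2/3}=n^{4/3}/k^{2/3}$. For the second piece I would use the interpolation identity $n^{4/3}/k^{2/3}=(n^2/k^2)^{1/3}n^{2/3}$ together with Young's inequality to bound it by $\les n^2/k^2+n$. The remaining linear term is controlled by $m\les n^2/k^3+n/k\le n^2/k^2+n$ since $k\ge 2$. Combining these estimates yields \eqref{eq:SzTr'}.

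I do not expect a significant obstacle here; the statement is essentially a repackaging of Szemer\'edi--Trotter applied to the subfamily of rich lines. An alternative route would be to dyadically decompose $\mc L$ according to the size of $\#(l\cap S)\in[2^j k,2^{j+1}k)$ and sum the contributions $\#\mc L_j\cdot 2^{j+1}k$ against the bound from \eqref{eq:SzTr}. The geometric series arising from the first term of \eqref{eq:SzTr} again produces $n^2/k^2$, but the second term accumulates a factor logarithmic in $n/k$; the direct double application of Szemer\'edi--Trotter sketched above bypasses this loss and gives the clean bound stated in the lemma.
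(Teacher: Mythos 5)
Your proposal is correct and follows essentially the same route as the paper's proof: define $\mc L$ as the family of $k$-rich lines, bound $\#\mc L$ via \eqref{eq:SzTr}, and then feed $(\#\mc L,n)$ into \eqref{eq:SzTr fundamental}, absorbing the cross term $n^{4/3}/k^{2/3}$ into $n^2/k^2+n$.
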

\begin{proof}
Let $\mc L$ be the set of lines $\ell$ such that $\#(\ell\cap S)\ge k$. By \eqref{eq:SzTr}, we have $\#\mc L\les \frac{n^2}{k^3}+\frac{n}{k}$. Plugging to \eqref{eq:SzTr fundamental}, the size of \eqref{eq:SzTr'} is bounded by
\[
\Bb{\frac{n^2}{k^3}+\frac{n}{k}}^{2/3}n^{2/3}+\frac{n^2}{k^3}+\frac{n}{k}+n\les\frac{n^2}{k^2}+\frac{n^{4/3}}{k^{2/3}}+n\les\frac{n^2}{k^2}+n,
\]
finishing the proof.
\end{proof}
We recall results on counting rectangles and parallelograms from \cite{pach1992repeated,herr2024strichartz}, shown using the Szemer\'{e}di-Trotter Theorem.
\begin{prop}\label{prop:count log}
Let $S\subset\Z^{2}$ be a finite set. We have
\begin{equation}\label{eq:count log}
\#\mc Q^{0}(S)\les\log\#S\cdot(\#S)^{2}
\end{equation}
and, for $M\ge\log \#S$,
\begin{equation}\label{eq:para-count}
    \# \mc Q^{\leq M}(S) \les M (\#S)^2.
\end{equation}
\end{prop}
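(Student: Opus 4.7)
The plan is to rewrite the parallelogram count in terms of midpoints and half-diagonals. For each $Q = (\xi_1, \xi_2, \xi_3, \xi_4) \in \mc Q(S)$ the two pairs of opposite vertices share the midpoint $m := (\xi_1+\xi_3)/2 = (\xi_2+\xi_4)/2$. Writing the half-diagonals $r := (\xi_1 - \xi_3)/2$ and $s := (\xi_2 - \xi_4)/2$, an expansion of $\tau_Q = 2(\xi_1 - \xi_2) \cdot (\xi_1 - \xi_4) = 2(r+s) \cdot (r-s)$ yields $\tau_Q = 2(|r|^2 - |s|^2)$. With $A_m := \{v \in \tfrac12\Z^2 : m \pm v \in S\}$ (symmetric about $0$, satisfying $|A_m| \le n := \#S$ and $\sum_m |A_m| = n^2$), both counts take the form
\begin{equation*}
\#\mc Q^0(S) = \sum_m \#\bigl\{(r,s) \in A_m^2 : |r| = |s|\bigr\}, \quad \#\mc Q^{\le M}(S) = \sum_m \#\bigl\{(r,s) \in A_m^2 : \bigl||r|^2 - |s|^2\bigr| \le M/2\bigr\}.
\end{equation*}

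For \eqref{eq:count log}, I would follow the right-angle approach of Pach-Sharir \cite{pach1992repeated}. By Thales's theorem, for each rectangle $(\xi_1, \xi_2, \xi_3, \xi_4) \in \mc Q^0(S)$ the triple $(p, q, q') := (\xi_2, \xi_1, \xi_3)$ satisfies $(q-p) \perp (q'-p)$, and the fourth vertex is recovered by $\xi_4 = q + q' - p$; in particular, $\#\mc Q^0(S)$ is at most the number of right-angle triples in $S^3$. Fixing $p \in S$, this count equals $\sum_{l \ni p}(k_l - 1)(k_{l_p^\perp} - 1)$ with $k_l := \#(l \cap S)$ and $l_p^\perp$ the perpendicular to $l$ through $p$. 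Decomposing these lines dyadically according to $k_l$ and invoking the Szemer\'edi-Trotter bound \eqref{eq:SzTr} on rich lines, summation of the dyadic contributions produces the single $\log n$ loss asserted in \eqref{eq:count log}.

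For \eqref{eq:para-count} with $M \ge \log n$, I introduce $g_m(t) := \#\{v \in A_m : |v|^2 = t\}$, so that $\sum_t g_m(t) = |A_m|$ and the inner sum becomes $\sum_t g_m(t) \cdot G_m(t; M/2)$ with $G_m(t; K) := \sum_{|t-t'| \le K} g_m(t')$. The function $G_m(t; M/2)$ is a lattice-point count in an annulus centered at $m$ of squared-radius width $M$; I would bound it on the $m$-average by a sliding-window incidence estimate of Szemer\'edi-Trotter type, so that the window width $M$ dominates the $\log n$ concentration scale of $g_m$ in the regime $M \ge \log n$. This yields an effective pointwise bound $G_m(\cdot; M/2) \les M$ after averaging, and summation against $\sum_m |A_m| = n^2$ gives \eqref{eq:para-count}.

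The hard part is the uniform annular lattice-point estimate: for $M$ close to $\log n$ the divisor-function contribution on any single circle is comparable to the window width, and one must exploit the averaging over $m$ (using $\sum_m |A_m| = n^2$) together with \eqref{eq:SzTr} to avoid the extra logarithm inherent in \eqref{eq:count log}. This threshold is precisely what enforces the dichotomy between \eqref{eq:count log} and \eqref{eq:para-count}.
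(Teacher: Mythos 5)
Your reduction to midpoints and half-diagonals is correct, and the resulting identity $\tau_Q = 2(|r|^2-|s|^2)$ is the right way to see the structure. For \eqref{eq:count log} your Thales reduction to right-angle triples plus Pach--Sharir's dyadic Szemer\'edi--Trotter argument is exactly what the paper cites (\cite{pach1992repeated}), so that part is fine.

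For \eqref{eq:para-count} there is a genuine gap. You have correctly isolated the crux --- the bound $G_m(\cdot;M/2)\les M$ ``after averaging over $m$'' --- but you have not proved it, and it is not a routine consequence of Szemer\'edi--Trotter. Szemer\'edi--Trotter controls point--line incidences; your $G_m$ is an annular lattice-point count, i.e.\ an incidence with level sets of $|v|^2$ centered at $m$, and the divisor-function spikes on individual circles are exactly what make a pointwise bound fail and an averaged bound nontrivial. You gesture at ``exploiting $\sum_m|A_m|=n^2$ together with \eqref{eq:SzTr}'' but there is no actual estimate on the table, and it is far from clear one exists at this level of generality: the inequality $\#\mc Q^{\le M}(S)\les M(\#S)^2$ for $M\ge\log\#S$ is essentially equivalent (via the Fej\'er identity below) to the sharp $L^4$-Strichartz estimate \eqref{eq:L4 Stri}, which is the main theorem of \cite{herr2024strichartz} and has a substantial proof, so a self-contained combinatorial derivation would be reproving that result.

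The paper instead takes \eqref{eq:L4 Stri} as a black box. Writing $f=\chi_S$ and using the Fej\'er kernel $F_{2M}$ on the time torus, one has
\[
\#\mc Q^{\le M}(S)\les\sum_{\tau\in\Z}\widehat{F_{2M}}(\tau)\sum_{Q\in\mc Q^\tau}f(Q)=\int_{[-\pi,\pi]\times\T^2}F_{2M}(t)\,|e^{it\De}\F^{-1}f|^4\,dx\,dt.
\]
Decomposing $[-\pi,\pi]$ into intervals of length $1/\log\#S$, applying \eqref{eq:L4 Stri} on each, and using the envelope $F_{2M}(t)\les\min\{M,\tfrac{1}{Mt^2}\}$ yields $\les M\|f\|_{\ell^2}^4=M(\#S)^2$ once $M\ge\log\#S$ (which makes the tail sum $\les(\log\#S)^2/M\le M$). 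This also recovers \eqref{eq:count log} by taking $M\sim\log\#S$. If you want to keep your annulus formulation, the honest route is to observe that your bound is the $\tau$-space avatar of this $L^4$ integral and then invoke \eqref{eq:L4 Stri}; attempting to prove the averaged annulus bound from \eqref{eq:SzTr} alone is the step that would fail.
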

\begin{proof}
\eqref{eq:count log} is a consequence of \cite{pach1992repeated}, which showed that the number of right triangles whose vertices are in $S$ are $O(\log\#S\cdot(\#S)^2)$. 
For the proof of \eqref{eq:para-count}, using the Fej\'er kernel $F_{2M}$, denoting $f=\chi_S$, we have
\[
\#\mc Q^{\le M}(S)\les\sum_{\tau=-M}^{M}\frac{2M-|\tau|}{2M}\sum_{Q\in\mc Q^\tau}f(Q)\les\int_{[-\pi,\pi]\times\T^2}F_{2M}(t)|e^{it\De}\F^{-1}f|^4dxdt.
\]
Since $F_{2M}(t)\les \min\{M,\frac{1}{Mt^2}\}$, by \eqref{eq:L4 Stri} and a decomposition of $[-\pi,\pi]$ into subintervals of length $1/\log\#S$, this is bounded by \begin{equation}\label{eq:Fejer bound}
\int_{[-\pi,\pi]\times\T^2}F_{2M}(t)|e^{it\De}\F^{-1}f|^4dxdt\les M\norm{\F^{-1}f}_{L^2(\T^2)}^4\les M\norm{f}_{\ell^2(\Z^2)}^4,
\end{equation}
from which \eqref{eq:para-count} is immediate. (Plugging $M\sim\log\#S$ also yields \eqref{eq:count log}.)
\end{proof}
\subsection*{Roth's Theorem}

The following is Roth's Theorem on arithmetic progressions of length $3$ (which was further generalized to arbitrary length $k\ge 3$ in \cite{Szemeredi-sets}):
\begin{prop}[\cite{Roth}]
\label{prop:Sz+}For $\epsilon>0$, there
exists $N_{\epsilon}\in\N$ satisfying the following:

For $N\ge N_{\epsilon}$ and a set $S\subset\left\{ 1,...,N\right\} $ such that $\#S\ge\epsilon N$, there exists an arithmetic progression of length $3$ contained in $S$.
\end{prop}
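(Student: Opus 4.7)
The plan is to prove this density version of Szemer\'edi's theorem by a density increment argument, in the spirit of Roth (for $k=3$) and Gowers' extension to general $k$. Suppose for contradiction that some $S\subset\{1,\ldots,N\}$ with $\#S\ge\epsilon N$ contains no $k$-term arithmetic progression. The goal is to show that $S$ must concentrate on a long subprogression with density substantially larger than $\epsilon$, and then iterate.

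The first step is to embed $\{1,\ldots,N\}$ into a cyclic group $\Z/N'\Z$ with $N'$ a prime of size $\sim N$, and write the count of $k$-APs as $\E_{x,d}\prod_{j=0}^{k-1}1_{S}(x+jd)$. Decomposing $1_{S}=\epsilon+f$ with the balanced function $f=1_{S}-\epsilon\cdot 1_{[1,N]}$ and expanding the product, the main term yields $\epsilon^{k}$ times the total number of progressions, which is $\sim \epsilon^{k}N^{2}$; under the no-AP assumption, the error terms involving $f$ must cancel this out, so at least one is comparably large. The generalized von Neumann theorem, proved by iterated Cauchy--Schwarz, bounds each such error term by a power of the Gowers uniformity norm $\|f\|_{U^{k-1}}$, and hence forces $\|f\|_{U^{k-1}}\gtrsim_{\epsilon,k}1$.

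The second step is to apply the inverse theorem for Gowers norms of Green--Tao--Ziegler (the very result used elsewhere in this paper): a $1$-bounded function with large $U^{k-1}$-norm correlates with a $(k-2)$-step nilsequence $F(g(n)\Gamma)$. Using the equidistribution and factorization theory for polynomial sequences on nilmanifolds, one partitions $\{1,\ldots,N\}$ into long arithmetic subprogressions on which $F(g(n)\Gamma)$ is essentially constant; pigeonholing over these progressions converts the correlation into a density increment $\#(S\cap P)/\#P\ge\epsilon+c(\epsilon,k)$ on some long progression $P$. Iterating, the density grows by a fixed amount at each stage while the length shrinks by a controlled factor; since density cannot exceed $1$, the iteration terminates in $O_{\epsilon}(1)$ steps, and tracing through the loss at each stage yields an explicit (tower-type) value of $N_{\epsilon,k}$.

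The main obstacle is the inverse Gowers theorem for $k\ge 5$, which is a deep result requiring the algebraic theory of nilsequences and equidistribution on nilmanifolds; for $k=3$ one can avoid it using Fourier analysis directly (Roth's argument), and for $k=4$ Gowers' original quadratic Fourier analysis suffices, but for larger $k$ the higher-order inverse theorem is genuinely needed. As alternative routes, one could instead follow Szemer\'edi's original combinatorial proof via the regularity lemma and an intricate induction on $k$, or Furstenberg's ergodic-theoretic correspondence principle---both independent and equally substantial approaches, and for the purposes of this paper any of the three yields the stated statement as a black box.
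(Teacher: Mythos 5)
The paper does not prove this statement---it invokes Szemer\'edi's theorem as a black box, citing \cite{Szemeredi-sets} (Szemer\'edi's 1975 combinatorial proof via the regularity lemma and a delicate induction on $k$). Your outline of the density-increment route through Gowers uniformity norms is a correct high-level description of one of the known proofs, and it is fair, as you say, to treat the result as a black box in this paper. Two remarks are worth making. First, a historical one: Gowers' proof of Szemer\'edi's theorem for general $k$ did \emph{not} use the full inverse theorem for $U^{k-1}$; it used only a much weaker ``local'' inverse statement (correlation with a phase that is polynomial on a Bohr-type set on a positive proportion of progressions), which is substantially less deep than the Green--Tao--Ziegler theorem invoked elsewhere in this paper. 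Appealing to the full inverse theorem is logically fine but is genuine overkill, and it makes the argument depend on machinery developed a decade after Gowers' proof. Second, the step you compress into one sentence---converting correlation with a nilsequence $F(g(n)\Gamma)$ into a density increment on a long subprogression---is where most of the work lives: it requires the quantitative equidistribution and factorization theory for polynomial sequences on nilmanifolds (Proposition \ref{prop:egr original} and its surroundings here recall exactly this material from Green--Tao). As a sketch of strategy your proposal is sound; as a proof it leaves that step, and the quantitative bookkeeping of the iteration, entirely to the reader, which is reasonable only because the paper itself takes the theorem as given.
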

\subsection*{A decomposition lemma}

We introduce a Stone-Weierstra{\ss} technique that develops inverse inequalities to profile decompositions.
\begin{lem}\label{lem:multiplicative decomp}
For $\epsilon,\delta>0$, there exist $k,J=O_{\delta,\epsilon}(1)$ satisfying the following:

Let $S\neq\emptyset$ be a finite set and $\norm{\cdot}$ be a norm on $f:S\rightarrow\C$ such that $\norm{f}\le\norm{f}_{\ell^\infty(S)}$. Let $\mc G$ be a set of functions $g:S\rightarrow\D$ closed under the complex conjugation. Denote
\[
\mc G^k:=\{g_1\cdots g_k:g_1,\ldots,g_k\in\mc G\}.
\]
Assume that for $f:S\rightarrow\D$ such that $\norm{f}\ge\epsilon$, there exists $g\in\mc G$ such that
\[
|\jp{f,g}_{\ell^2(S)}|\ge\delta\#S.
\]
Then, for any $f:S\rightarrow\D$, there exist $c_1,\ldots,c_J\in\D$ and $g_1,\ldots,g_J\in\mc G^k$ such that
\[
h:=\sum_{j\le J} c_jg_j
\]
satisfies
\begin{equation}\label{eq:h l infty<1}
\norm{h}_{\ell^\infty(S)}\le 1,
\end{equation}
\begin{equation}\label{eq:f-h norm}
\norm{f-h}<\epsilon,
\end{equation}
and
\begin{equation}\label{eq:f-h,h ortho}
\left|\jp{f-h,h}_{\ell^2(S)}\right|<\epsilon\#S.
\end{equation}
\end{lem}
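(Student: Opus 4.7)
The plan is to prove this by an $\ell^2$-energy-increment iteration, building $h$ as a convex combination of elements of $\G^k$ so that the constraint $\norm{h}_{\ell^\infty(S)} \le 1$ is automatically maintained. The key scalar is $E(h) := \norm{f}_{\ell^2(S)}^2 - \norm{f - h}_{\ell^2(S)}^2 \in [0, \#S]$, bounded because $f$ is $\D$-valued; the iteration will increase $E$ until the conclusion is reached, and $k = 1$ will suffice.

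I would initialize $h_0 = 0$ and, at each step $j$, check whether both the approximation $\norm{f - h_j} < \epsilon$ and the orthogonality $|\jp{f - h_j, h_j}_{\ell^2(S)}| < \epsilon \#S$ hold. If so, output $h = h_j$. If the approximation fails, note $(f - h_j)/2$ is $\D$-valued with $\norm{\cdot}$-norm $\ge \epsilon/2$, so after absorbing an absolute constant into $(\epsilon, \delta)$ at the outset (which only inflates $k, J$ by a constant factor), the hypothesis produces $g_{j+1} \in \G$ with $|\jp{f - h_j, g_{j+1}}_{\ell^2(S)}| \gtrsim \delta \#S$. Then update by
\[
h_{j+1} = (1 - t)\, h_j + t c_{j+1} g_{j+1},
\]
with $c_{j+1} \in \T$ phase-aligned so that $\bar c_{j+1} \jp{f - h_j, g_{j+1}}$ is a nonnegative real, and $t \in (0,1)$ chosen to maximize the energy gain. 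If instead only the orthogonality fails, rescale $h_j \mapsto \lambda h_j$ by a suitable $\lambda \in \D$ (a rotation or contraction) to reduce $|\jp{f - h_j, h_j}|$. Both updates preserve $\norm{h_{j+1}}_{\ell^\infty(S)} \le 1$ since $\D$-valued convex combinations are $\D$-valued. A standard second-order expansion shows each step gains at least $\gamma(\epsilon, \delta)\cdot\#S$, so iteration terminates in $J = O_{\epsilon, \delta}(1)$ steps; unwinding the convex combinations gives $h_J = \sum_{j \le J} c_j g_j$ with $c_j = (\text{phase})\cdot t_j \prod_{i > j}(1 - t_i) \in \D$ and $\sum_j |c_j| \le 1 - \prod_j(1 - t_j) \le 1$, matching the stated form with $g_j \in \G = \G^1$.

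The hard part is coordinating the two termination conditions with the pointwise constraint $\norm{h}_{\ell^\infty(S)} \le 1$. The convex-combination step yields first-order energy gain proportional to $|\jp{f - h_j, g_{j+1}}| - \Re\jp{f - h_j, h_j}$, so the correlation from the hypothesis must dominate any residual orthogonality defect; when it does not, one must first run the scalar-rescaling branch, which is further restricted in the boundary regime where $\norm{h_j}_{\ell^\infty(S)}$ has already saturated the constraint. Verifying that one of the two branches always delivers a uniform per-iteration gain $\gamma(\epsilon, \delta) > 0$, and orchestrating the branches so that both termination conditions are simultaneously achieved, constitutes the main technical work of the proof.
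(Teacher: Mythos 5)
There is a genuine gap, and it is exactly the one you flagged as the ``main technical work''; your two branches cannot in general deliver a uniform energy gain. Consider $f\equiv 1$ and $h_j\equiv 1/2$ on $S$. Then $\jp{f-h_j,h_j}=\#S/4$, so orthogonality fails for $\epsilon<1/4$, yet the optimal rescaling of $h_j$ is $\lambda^*=\jp{f,h_j}/\|h_j\|_{\ell^2}^2=2\notin\D$, and restricting to contractions $\lambda\in\D$ (as you do, to preserve the unwinding $\sum|c_j|\le 1$) yields zero gain. Falling back to the convex-combination branch with a $g$ from the hypothesis, the first-order gain is $2t\bigl(|\jp{f-h_j,g}|-\Re\jp{f-h_j,h_j}\bigr)$, which is negative whenever $\delta<1/4$ here. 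More generally, $\lambda^*$ can exceed $1/\norm{h_j}_{\ell^\infty(S)}$ (take $h_j$ close to $1$ on a small set and near $0.01$ elsewhere with $f\equiv 1$), so even the relaxed rescaling that only respects the pointwise bound does not realize the orthogonal projection. This is the step that would fail, and it is precisely where the paper departs from your plan.

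The paper's mechanism is to perform the \emph{unconstrained} orthogonal projection $h_k' = h_k - \frac{\jp{h_k-f,g}}{\norm{g}^2}g$ (achieving the full gain $\ge\delta^2\#S/4$ or $\ge\epsilon^2\#S$, depending on the branch) and then enforce the $\ell^\infty$ bound afterwards by the pointwise radial truncation $\psi_0$, using that $|f|\le 1$ forces $|f-\psi_0(h_k')|\le|f-h_k'|$ pointwise, so truncation can only decrease $\|f-h\|_{\ell^2}^2$. To keep $h$ a \emph{linear combination of products} from $\G$ (rather than a nonlinear function of them), $\psi_0$ is replaced by a Stone--Weierstra\ss{} polynomial $\psi_m$ in $z,\bar z$ at the cost of a small loss, and this is exactly what produces the $\G^k$ with $k>1$. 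So your assertion that $k=1$ suffices is a second symptom of the same missing idea: once one restricts to affine/convex updates to stay inside $\D$, one cannot always make progress, and once one allows the pointwise nonlinear clamp that rescues the iteration, one is forced to take polynomial products.
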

\begin{proof}
Let $h_0=0$, which satisfies \eqref{eq:f-h,h ortho} and \eqref{eq:h l infty<1}. Let $\psi_0:\C\rightarrow\D$ be the function
\[
\psi_0(z):=\begin{cases}
z&\qquad,|z|\le1
\\ z/|z|&\qquad,|z|>1.
\end{cases}
\]
By the Stone-Weierstra{\ss} Theorem, there exists a sequence $\{\psi_m\}$ of polynomials of $z,\bar z$ such that $\norm{\psi_m-\psi_0}_{C^0(m\D)}\rightarrow0$. Up to a slight resizing, $\norm{\psi_m}_{C^0(m\D)}\le 1$ can further be assumed.

For $k\in\N$, we define $h_{k+1}:S\rightarrow\D$ recursively in terms of $h_k:S\rightarrow\D$ as follows:

We stop if $h=h_k$ satisfies both \eqref{eq:f-h norm} and \eqref{eq:f-h,h ortho}. In case that \eqref{eq:f-h norm} fails, there exists $g\in\mc G$ such that
\begin{equation}\label{eq:f-h,g big}
|\jp{f-h_k,g}_{\ell^2(S)}|\ge\delta\#S.
\end{equation}
Then by \eqref{eq:f-h,g big} and $\norm{f-h_k}_{\ell^\infty}\le 1+1=2$, we have
\begin{equation}\label{eq:updater g not small}
\norm{g}_{\ell^2(S)}\ge\frac{\delta}2\sqrt{\#S}.
\end{equation}
Set $h_k'$ as the orthogonal projection
\[
h_k'=h_k-\frac{\jp{h_k-f,g}_{\ell^2(S)}}{\norm{g}_{\ell^2(S)}^2}g,
\]
which satisfies by \eqref{eq:updater g not small} that
\begin{equation}\label{eq:f-h_k' is smaller}
\norm{f-h_k'}_{\ell^2(S)}^2\le\norm{f-h_k}_{\ell^2(S)}^2-\frac{\delta^2}{4}\#S.
\end{equation}
Since $|f(x)|\le1$ for each $x\in S$, $|f-\psi_0(h_k')|\le|f-h_k'|$ holds and thus by \eqref{eq:f-h_k' is smaller},
\[
\norm{f-\psi_0(h_k')}_{\ell^2(S)}^2\le\norm{f-h_k}_{\ell^2(S)}^2-\frac{\delta^2}{4}\#S
\]
holds. Then, since $\norm{\psi_m-\psi_0}_{C^0(m\D)}\rightarrow0$, there exists $m=O_{\delta,k}(1)$ such that $h_{k+1}:=\psi_m(h_k'):S\rightarrow\D$
satisfies
\begin{equation}
\norm{f-h_{k+1}}_{\ell^2(S)}^2\le\norm{f-h_k}_{\ell^2(S)}^2-\frac{\delta^2}{8}\#S.\label{eq:f-h_k+1 is smaller}
\end{equation}
For the case that \eqref{eq:f-h norm} holds but \eqref{eq:f-h,h ortho} fails, we set $h_{k+1}$ identically, except that we replace $g$ by $h_k$.

Each step of the process defines $h_{k+1}$ as a polynomial of $h_k$ and $g\in\mc G$ having $O_{k,\delta}(1)$-bounded degree and coefficients, hence $h_k$ is a linear combination of members in $\G^{O_{k,\delta}(1)}$ with $O_{k,\delta}(1)$-bounded coefficients. Up to subpartitions (e.g., $2g=g+g$), we assume all coefficients are in $\D$.
When the process stops, all of \eqref{eq:h l infty<1}, \eqref{eq:f-h norm}, and \eqref{eq:f-h,h ortho} hold true. By \eqref{eq:f-h_k+1 is smaller}, the process above can iterate at most $O_\delta(1)$ times. Thus, the proof is complete.
\end{proof}
\subsection*{Progressions}
Let $V$ be an $\R$-vector space and let $A,B\subset V$ be any subsets.
We denote by $A\pm B$ the sumset
\[
A\pm B:=\left\{ a\pm b:a\in A,b\in B\right\} .
\]
For $k\in\N$, $k\cdot A$ denotes the set
\[
k\cdot A:=\stackrel[k]{}{\underbrace{A+\cdots+A}}.
\]
For $a\in\R$ and $b\in V$, we denote
\[
aA+b:=\left\{ av+b:v\in A\right\} .
\]
Next, we introduce the notion of multiprogression. This can be read as a version of proper generalized arithmetic progression, e.g., in \cite{tao2006additive}.
\begin{defn}[multiprogression]
A \emph{multiprogression} $(P,\Omega)$ into $\R^d,d\in\N$ of \emph{rank} $r\in\N$
is a couple of a linear map $P:\R^r\rightarrow\R^d$ and a set $\Omega\subset\Z^r$ of the form
\begin{equation}\label{eq:P Omega}
\Omega=[N_1]\times\cdots\times[N_r],
\end{equation}
where $N_1,\ldots,N_r\in\N$, such that $P$ is injective on $\Omega$.
We denote
\[
\ul\Omega:=[-N_1,N_1]\times\cdots[-N_r,N_r]\subset\R^r.
\]
If $N_1,\ldots,N_r\gg1$ holds, $\Omega$ and $(P,\Omega)$ are said to be \emph{thick}.

For $k\in\N$, a multiprogression $(P,\Omega)$ into $\R^d,d\in\N$ is said to be $k$\textit{-injective} if $P$ is injective on $k\cdot\Omega$. In particular, every multiprogression is $1$-injective.

For $\rho>0$, we denote
\[
\rho\cdot\Omega:=\rho\ul\Omega\cap\Z^r,
\]
which extends the notation $k\cdot\Omega$ for $k\in\N$.

If $P$ is allowed to be affine, $(P,\Omega)$ is said to be \emph{affine}.
\end{defn}
In additive combinatorics, there are several instances of the principle that smallness of the sumset $S+S$ implies comparability of $S$ to small-rank multiprogressions. In the following, we list some of them.
The following is a consequence of \cite{bilu1999structure}:
\begin{prop}
\label{prop:Freiman 2^n}Let $\s>0$ be a positive real number. Let $G$ be either $\R^d$ or $\Z^d$, $d\in\N$. Let $S\subset G$ be a finite set such that
\[
\#(S+S)\le\s\cdot\#S.
\]
Then, there exists a multiprogression $(P,\Omega)$ into $G$ of rank $r\in\N$ and points
$x_{1},...,x_{n}\in G,n\les_{\s,d}1$ such that
\[
r\le\left\lfloor \log_{2}\s\right\rfloor ,
\]
\begin{equation}\label{eq:S covered by P}
S\subset P(\Omega)+\left\{ x_{1},...,x_{n}\right\},
\end{equation}
and
\begin{equation}\label{eq:S sim P}
\#S\sim_{\s,d}\#P(\Omega).
\end{equation}
\end{prop}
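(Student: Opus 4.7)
The plan is to deduce the statement from the main theorem of \cite{bilu1999structure}, a Freiman--Ruzsa theorem with the sharp rank bound $\lfloor \log_2 \s \rfloor$ in torsion-free abelian groups, together with a short preliminary reduction from $\R^d$ to $\Z^{d'}$.

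First I would reduce to $G = \Z^{d'}$. If $G = \R^d$, pick any base point $s_0 \in S$ and let $H \subset \R^d$ be the additive subgroup generated by $S - s_0$. Since $H$ is finitely generated and torsion-free, it is free of some rank $d' \le \#S$, and all sumset cardinalities are preserved under the group isomorphism $H \cong \Z^{d'}$. Hence it suffices to prove the result for the translated set $S - s_0$ viewed inside $\Z^{d'}$, and then transport back via the isomorphism followed by translation by $s_0$. A Freiman-isomorphism argument, contracting $d'$ to a bound depending only on $d$ and $\s$, can be used to keep the dependence on $d$ clean, but is not essential for the stated conclusion.

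Next, I would apply Bilu's theorem to the resulting set $S' \subset \Z^{d'}$. This produces an \emph{affine} multiprogression $(\tilde P, \Omega)$ of rank $r \le \lfloor \log_2 \s \rfloor$ such that $S' \subset \tilde P(\Omega)$ and $\#\tilde P(\Omega) \le C_{\s,d} \#S'$. Writing $\tilde P(\omega) = P(\omega) + x_1$ with $P:\R^r \to \R^{d'}$ linear and $x_1 = \tilde P(0)$, the inclusion \eqref{eq:S covered by P} holds with a single translate $n = 1$, and \eqref{eq:S sim P} follows from the trivial bound $\#P(\Omega) \ge \#S'$ together with the upper bound just stated. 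Pulling back under the isomorphism and undoing the base-point translation returns the required structure into $G$. The flexibility of allowing $n = O_{\s,d}(1)$ translates rather than a single one is not strictly necessary here, but is convenient for later variants where several rank-$r$ multiprogressions must be glued, or where projection from a larger $\Z^{d'}$ to $\Z^d$ destroys injectivity on $\Omega$ and must be repaired by a pigeonhole cover.

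The main obstacle is entirely packaged in Bilu's theorem itself, whose defining feature is precisely the sharp bound $r \le \lfloor \log_2 \s \rfloor$; this is not accessible from softer Freiman-type results, which give only $r = O_\s(1)$. Once that deep input is accepted as a black box, the argument here amounts to bookkeeping on the group-theoretic reduction.
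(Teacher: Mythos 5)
Your central claim is off in a way that matters: you assert that Bilu's theorem directly produces a progression of rank $r\le\lfloor\log_2\s\rfloor$ containing $S$, which would let you take $n=1$. That is not what Bilu's theorem says. The statement the paper uses (\cite[Theorem 1.2--1.3]{bilu1999structure}) produces a multiprogression $(P_0,\Omega_0)$ of rank $r_0\le\s-1$ covering $S$ with $\#P_0(\Omega_0)\les_\s\#S$, together with the extra information that all side lengths $N_j$ with $j>\lfloor\log_2\s\rfloor$ are $O_\s(1)$. Both pieces are needed. To see that your version cannot hold, take $S=\{0,1\}^r$: then $\s=(3/2)^r$, so $\lfloor\log_2\s\rfloor<r$; but $S$ affinely spans $\R^r$, so no affine image of a box in $\R^{\lfloor\log_2\s\rfloor}$ can contain $S$ no matter how large the box is. The proposition is consistent with this example precisely because it allows $O_{\s,d}(1)$ translates.

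The paper's proof makes that explicit: it keeps only the first $\lfloor\log_2\s\rfloor$ coordinates of $\Omega_0$ as the new progression $(P,\Omega)$ and collects the $O_\s(1)$ values of the remaining thin coordinates into the finite translate set $\{x_1,\ldots,x_n\}$. So the translates are not a dispensable convenience for ``later variants,'' as you suggest — they are what makes the rank bound $\lfloor\log_2\s\rfloor$ compatible with a cover of $S$ at all. Your preliminary reduction of the $\R^d$ case to a free abelian group is fine (Bilu's constants depend only on $\s$, so the unbounded intermediate rank $d'$ does not hurt), and the paper in fact skips it by applying Bilu directly in $\R^d$ or $\Z^d$; but that is a side remark. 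The genuine gap is the misread of the rank bound in Bilu's theorem and the consequent omission of the truncation step.
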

\begin{proof}
If $\#S\sim 1$, we may choose $\{x_1,\ldots,x_n\}:=S$ and there is nothing to prove. Assuming $\#S\gg 1$, we bring the result of \cite[Theorem 1.2-1.3]{bilu1999structure} (with the choice $s=1$); there exists a multiprogression $(P_0,\Omega_0)$ with $\Omega_0=\prod_{j\le r_0}[N_j]$,
$N_{1}\ge\cdots\ge N_{r_0}\ge1$ such that
\begin{equation}\label{eq:r<s-1}
r_0\le\s-1,
\end{equation}
\begin{equation}\label{eq:!>logn}
N_{j}\les_{\s}1\text{ for }j>\left\lfloor \log_{2}\s\right\rfloor,
\end{equation}
\begin{equation}\label{eq:Im>S}
P_0(\Omega_0)\supset S,
\end{equation}
\begin{equation}\label{eq:P0 sim S}
\#P_0(\Omega_0)\les_{\s}\#S.
\end{equation}
Let $r:=\min\{r_0,\lfloor\log_2\s\rfloor\}$. Let $P:\R^r\rightarrow\R^d$ be $P(\cdot):=P_0(\cdot,0)$ and $\Omega:=P(\Omega_0)$.
Let $\{x_1,\ldots,x_n\}:=P_0(\{0\}\times\prod_{j=r+1}^{r_0}[N_j])$. By \eqref{eq:r<s-1} and \eqref{eq:Im>S}, we have \eqref{eq:S covered by P}. By \eqref{eq:!>logn} and \eqref{eq:P0 sim S}, we also have \eqref{eq:S sim P}, finishing the proof.
\end{proof}
Next, we briefly summarize basic relations between concepts concerning sumsets (namely the additive energy, doubling constant, and progression); see \cite{tao2006additive} for further discussions.
\begin{prop}\label{prop:Balog}Let $d\in \N$ be an integer and $\epsilon>0$. Let $G$ be either $\R^d$ or $\Z^d$. For a finite set $S\subset G$, the following are equivalent:
\begin{enumerate}
\item\label{enu:diag-Q(S)d}
There exists $A\subset G$ such that $\# A\les_{\epsilon,d}\#S$ and
\[
\#\{(x_1,x_3)\in S^2 : \frac{x_1+x_3}{2}\in A\}\sim_{\epsilon,d}(\#S)^2.
\]
\item\label{enu:Q(S)d}
We have
\[
\#\{(x_1,x_2,x_3,x_4)\in S^4 : x_1+x_3=x_2+x_4\}\sim_{\epsilon,d}(\#S)^{3}.
\]
\item\label{enu:A+A<S}
There exists a set $A\subset S$ such that $\# A\sim_{\epsilon,d} \# S$ and $\#(A+A)\sim_{\epsilon,d} \# S$.
\item\label{enu:~P}
There exists an affine multiprogression $(P,\Omega)$ into $G$ of rank $r=O_{\epsilon,d}(1)$
such that
\[
\#(S\cap P(\Omega))\sim_{\epsilon,d}\#P(\Omega)\sim_{\epsilon,d}\#S.
\]
\end{enumerate}
\end{prop}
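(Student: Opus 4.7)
The plan is to establish the circular chain (1) $\Rightarrow$ (2) $\Rightarrow$ (3) $\Rightarrow$ (4) $\Rightarrow$ (1). First, introduce the midpoint representation function $r(y):=\#\{(x_1,x_3)\in S^2: x_1+x_3=2y\}$, so that $\sum_y r(y)=(\#S)^2$ and $\sum_y r(y)^2$ equals the additive energy appearing in condition (2). The universal bound $r(y)\le \#S$ immediately yields $\sum_y r(y)^2\le (\#S)^3$, so in every direction the $\sim$-relation reduces to matching the lower bound.

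For (1) $\Rightarrow$ (2), a Cauchy--Schwarz restricted to $A$ gives
\[
\sum_y r(y)^2\ge \sum_{y\in A}r(y)^2\ge \frac{1}{\#A}\Bigl(\sum_{y\in A}r(y)\Bigr)^2\gtrsim_{\epsilon,d}\frac{(\#S)^4}{\#S}\sim (\#S)^3.
\]
Step (2) $\Rightarrow$ (3) is exactly the Balog--Szemer\'edi--Gowers theorem, for which we will cite \cite[Thm.~2.29]{tao2006additive}. Step (3) $\Rightarrow$ (4) then feeds (3) into Proposition~\ref{prop:Freiman 2^n} applied to $A$ (whose doubling constant is $O_{\epsilon,d}(1)$): one obtains a multiprogression $(P,\Omega)$ of rank $O_{\epsilon,d}(1)$ together with translates $x_1,\ldots,x_n$, $n=O_{\epsilon,d}(1)$, such that $A\subset\bigcup_i (P(\Omega)+x_i)$ and $\#P(\Omega)\sim_{\epsilon,d}\#S$. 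A pigeonhole on the $n$ translates yields an index $i_0$ with $\#(A\cap (P(\Omega)+x_{i_0}))\gtrsim_{\epsilon,d}\#S$, and the affine multiprogression $\tilde P(\cdot):=P(\cdot)+x_{i_0}$ witnesses (4) because $A\subset S$.

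For (4) $\Rightarrow$ (1), decompose the affine map as $P=L+c$ with $L$ linear. Every pair $(x_1,x_3)\in (S\cap P(\Omega))^2$ satisfies $\tfrac{x_1+x_3}{2}\in \tfrac12 L(\Omega+\Omega)+c$, a set of cardinality at most $\#(\Omega+\Omega)\le 2^r\#\Omega\lesssim_{\epsilon,d}\#S$. In the case $G=\R^d$, one takes $A$ to be this midpoint set directly. In the case $G=\Z^d$, one first partitions $S\cap P(\Omega)$ into at most $2^d$ classes by coordinate parities, selects the largest class $S'$ (with $\#S'\gtrsim_d \#S$), and sets $A:=\bigl(\tfrac12 L(\Omega+\Omega)+c\bigr)\cap \Z^d$; then every pair in $(S')^2$ has an even coordinate sum, hence an integer midpoint lying in $A$, delivering the required $\gtrsim_{\epsilon,d}(\#S)^2$ count.

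The main obstacle is the Balog--Szemer\'edi--Gowers input in (2) $\Rightarrow$ (3), which is a substantive combinatorial theorem that we simply quote; all other implications reduce to Cauchy--Schwarz and pigeonhole, with the structural heavy lifting of (3) $\Rightarrow$ (4) already packaged in the Freiman--Bilu type Proposition~\ref{prop:Freiman 2^n}.
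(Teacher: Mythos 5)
Your proof is correct and follows the same route as the paper: Cauchy--Schwarz for (1)$\Rightarrow$(2), Balog--Szemer\'edi(--Gowers) for (2)$\Rightarrow$(3), Proposition~\ref{prop:Freiman 2^n} plus pigeonhole for (3)$\Rightarrow$(4), and an explicit midpoint set for (4)$\Rightarrow$(1); the paper takes $A=\{P(x/2):x\in 2\cdot\Omega\}$ where you take $\frac12 L(\Omega+\Omega)+c$, which is the same set up to notation. Your parity refinement for $G=\Z^d$ in (4)$\Rightarrow$(1) is a worthwhile extra detail that the paper's terse proof leaves implicit (as written, $P(x/2)$ for $x\in 2\cdot\Omega$ need not lie in $\Z^d$), so this is actually slightly more careful than the paper.
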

\begin{proof}
Assuming \eqref{enu:diag-Q(S)d}, by Cauchy-Schwarz, we have
\begin{align*}
&\#\{(x_1,x_2,x_3,x_4)\in S^4 : x_1+x_3=x_2+x_4\}
\\  
\ge&\sum_{a\in A}\#\{(x_1,x_3)\in S^2:\frac{x_1+x_3}{2}=a\}^2
\\ 
\ge&\frac{1}{\# A}\cdot\#\{(x_1,x_3)\in S^2:\frac{x_1+x_3}{2}\in A\}^2\gtrsim_{\epsilon,d}(\# S)^3,
\end{align*}
which is just \eqref{enu:Q(S)d}.

In \cite{balog1994statistical}, it was shown that \eqref{enu:Q(S)d} implies \eqref{enu:A+A<S}.
By Proposition \ref{prop:Freiman 2^n}, \eqref{enu:A+A<S} implies \eqref{enu:~P}.

Assume \eqref{enu:~P}. We choose $A=\{P(x/2):x\in2\cdot\Omega\}$. For $(x_1,x_3)\in (S\cap P(\Omega))^2$, we have $\frac{x_1+x_3}{2}\in A$. This concludes \eqref{enu:diag-Q(S)d}. 
\end{proof}
\begin{prop}\label{prop:rank reduction}
Let $r,k,d\in\N$. Let $(P,\Omega)$ be a multiprogression into $\R^d$ of rank $r$ that is not $k$-injective. Then, there exists a multiprogression $(\tilde{P},\tilde\Omega)$ of rank $\tilde{r}\le r-1$ such that
\[
\#\tilde\Omega\sim_{r,k}\#\Omega
\]
and
\[
P(k\cdot\Omega)\subset\tilde{P}(\tilde\Omega)\subset P(O_{r,k}(1)\cdot\Omega)
\]
\end{prop}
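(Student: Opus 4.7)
The plan is to quotient $\Z^r$ by a primitive kernel direction of $P$ and then inscribe the image of $k\cdot\Omega$ inside an efficient rank-$(r-1)$ box via the geometry of numbers, iterating if the factored map fails to be injective on the box.

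Since $(P,\Omega)$ is not $k$-injective, distinct $x,y\in k\cdot\Omega$ with $P(x)=P(y)$ yield a nonzero $v:=x-y\in\ker P\cap 2k\cdot\Omega$. Passing to the primitive factor $v_0:=v/\gcd(v)$, the injectivity of $P$ on $\Omega$ excludes $v_0\in 2\cdot\Omega$, so some coordinate satisfies $|v_{0,j_0}|>2N_{j_0}$. Primitivity of $v_0$ produces $M\in GL_r(\Z)$ with $Mv_0=e_1$ via Smith normal form, so the map $\sigma:\Z^r\twoheadrightarrow\Z^{r-1}$ given by the last $r-1$ coordinates of $M\,\cdot$ has $\ker\sigma=\Z v_0$ and $P$ factors as $P=\tilde P\circ\sigma$ for some linear $\tilde P:\Z^{r-1}\to\Z^d$.

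To construct $\tilde\Omega$, apply Minkowski's second theorem together with a Mahler-type basis refinement to the symmetric convex zonotope $Z:=\sigma(k\cdot\ul\Omega)\subset\R^{r-1}$ and the lattice $\Z^{r-1}$. This produces a $\Z$-basis $f_1,\ldots,f_{r-1}$ of $\Z^{r-1}$ with Minkowski functionals $\|f_j\|_Z\lesssim_r\lambda_j$ (successive minima), as well as the dual-basis estimate $|y_j|\lesssim_r\lambda_j^\circ$ for every $y=\sum y_j f_j\in Z$ (where $\lambda_j^\circ$ are the successive minima of the polar $Z^\circ$). Setting $\tilde N_j:=\lceil c_r\lambda_j^\circ\rceil$ and $\tilde\Omega:=\prod_j[\tilde N_j]$ in the basis $\{f_j\}$, the inclusion $\sigma(k\cdot\Omega)\subset Z\cap\Z^{r-1}\subset\tilde\Omega$ gives $P(k\cdot\Omega)\subset\tilde P(\tilde\Omega)$. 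For the reverse inclusion, lift each $f_j$ via a real preimage in $c_rk\lambda_j\cdot\ul\Omega\cap\sigma^{-1}(f_j)$ and round to the nearest integer lift $\tilde f_j\in\Z^r$; the covering radius of $\Z v_0$ in $\R v_0$ with respect to $\ul\Omega$ is at most $k$ (since $v_0\in 2k\cdot\Omega$), so $\|\tilde f_j\|_{\ul\Omega}\lesssim_{r,k}\max(\lambda_j,1)$. For $y=\sum y_j f_j\in\tilde\Omega$, the lift $\tilde y:=\sum y_j\tilde f_j$ then satisfies $\|\tilde y\|_{\ul\Omega}=O_{r,k}(1)$ by the polar-duality reciprocity between $\lambda_j$ and $\lambda_j^\circ$, so $\tilde y\in O_{r,k}(1)\cdot\Omega$ and $\tilde P(y)=P(\tilde y)$. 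The cardinality bound $\#\tilde\Omega\sim_{r,k}\#\Omega$ follows by sandwiching: $\#\sigma(k\cdot\Omega)\gtrsim_{r,k}\#\Omega$ (each $\sigma$-fiber in $k\cdot\Omega$ has size $O_k(1)$ because $v_0$-shifts escape the box rapidly), and $\#\sigma(O_{r,k}(1)\cdot\Omega)\le\#(O_{r,k}(1)\cdot\Omega)\lesssim_{r,k}\#\Omega$.

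If $\ker P=\Z v_0$ the factored $\tilde P$ is injective and the construction is complete; otherwise $\tilde P$ fails injectivity on $\tilde\Omega$, in which case $(\tilde P,\tilde\Omega)$ is not $k'$-injective for some $k'=O_{r,k}(1)$, and the lemma is applied recursively to it. Since the rank strictly decreases at each iteration, the process terminates in at most $r$ steps while preserving all $O_{r,k}(1)$ constants. The main technical obstacle is the final estimate $\|\tilde y\|_{\ul\Omega}=O_{r,k}(1)$: the dual bounds $|y_j|\lesssim\lambda_j^\circ$ and $\|\tilde f_j\|_{\ul\Omega}\lesssim k\max(\lambda_j,1)$ must combine cleanly, so that very small $\lambda_j$ (which would allow large $|y_j|$) is compensated by small $\|\tilde f_j\|_{\ul\Omega}$, and vice versa. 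This cancellation, which requires a carefully chosen Minkowski-reduced basis together with Mahler's reciprocity, is the heart of the argument.
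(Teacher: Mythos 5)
The paper proves this proposition by citing \cite[Theorem 3.40]{tao2006additive} applied to $(P,k\cdot\Omega)$, so your self-contained geometry-of-numbers proof goes beyond what the paper offers; your high-level plan (extract a short primitive kernel vector $v_0$, factor $P$ through $\sigma:\Z^r\twoheadrightarrow\Z^{r-1}$ with $\ker\sigma=\Z v_0$, inscribe a rank-$(r-1)$ box comparable to $Z=\sigma(k\cdot\ul\Omega)$ via Minkowski reduction and Mahler duality) is exactly the standard route that underlies the cited theorem.

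There is, however, a genuine gap in the lifting step. You take as the integer lift of $y=\sum_jy_jf_j\in\tilde\Omega$ the point $\tilde y:=\sum_jy_j\tilde f_j$, where each $\tilde f_j$ is obtained by rounding a real preimage $w_j$ along $\R v_0$, with per-vector rounding error $e_j:=\tilde f_j-w_j\in\R v_0$ of $\ul\Omega$-norm at most $k$. The clean part $\sum_jy_jw_j$ is indeed $O_{r,k}(1)$ in $\ul\Omega$-norm since $\sum_j\tilde N_j\lambda_j\lesssim r$, but the accumulated rounding error obeys only $\|\sum_jy_je_j\|_{\ul\Omega}\lesssim k\sum_j\tilde N_j\sim k\sum_j 1/\lambda_j$, which is not $O_{r,k}(1)$ and can be as large as $\max_j\tilde N_j$. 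Invoking ``polar-duality reciprocity'' does not help: Mahler's transference relates $\lambda_j(Z)$ to $\lambda_{r-j}(Z^\circ)$ with reversed indices, and in any case it constrains the $Z$-geometry, not this error sum, which lies entirely in the quotiented line $\R v_0$. The fix is to round once instead of once per basis vector: form $w:=\sum_jy_jw_j\in O_{r,k}(1)\cdot\ul\Omega$, note $\sigma(w)=y\in\Z^{r-1}$ so $w+\R v_0$ contains a coset of $\Z v_0$ in $\Z^r$, and take $\tilde y$ to be the nearest such lattice point to $w$; the single rounding error is then at most the covering radius of $\Z v_0$, which is $\le k$, yielding $\|\tilde y\|_{\ul\Omega}=O_{r,k}(1)$ and hence $P(\tilde y)=\tilde P(y)\in P(O_{r,k}(1)\cdot\Omega)$. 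A secondary imprecision: if $\ker P\supsetneq\Z v_0$, the quotiented pair $(\tilde P,\tilde\Omega)$ is not a multiprogression in the paper's sense (since $\tilde P$ fails injectivity on $\tilde\Omega$), so the proposition cannot literally be applied to it recursively; one must run the recursion for not-necessarily-injective pairs, as the cited \cite[Theorem 3.40]{tao2006additive} does.
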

\begin{proof}
This is a version of \cite[Theorem 3.40]{tao2006additive}, applied to $(P,k\cdot\Omega)$.
\end{proof}
\begin{prop}[\cite{Ruzsa-Freiman}]\label{prop:Ruzsa}
Let $G$ be an additive group. Let $A,B\subset G$ be finite nonempty sets. There exist $x_1,\ldots,x_J\in G$, $J\le\frac{\#(A+B)}{\#A}$ such that
\[
B\subset(A-A)+\{x_1,\ldots,x_J\}.
\]
\end{prop}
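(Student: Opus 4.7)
The plan is to use a greedy covering argument, which is the standard route to Ruzsa's covering lemma. First I would choose $x_1,\ldots,x_J\in B$ one at a time, at each step picking any $x_j\in B$ for which the translate $x_j+A$ is disjoint from all previously chosen translates $x_1+A,\ldots,x_{j-1}+A$, and stopping when no such choice remains. Since $B$ is finite, this process terminates after some finite number of steps $J$, producing pairwise disjoint sets $x_1+A,\ldots,x_J+A$.

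Next I would verify the covering property. By maximality of $J$, for any $b\in B$ the translate $b+A$ must intersect $x_j+A$ for some $j\le J$; otherwise $b$ could be appended to the list. Intersection means $b+a_1=x_j+a_2$ for some $a_1,a_2\in A$, so $b=x_j+(a_2-a_1)\in x_j+(A-A)$. Hence $B\subset\{x_1,\ldots,x_J\}+(A-A)$, which is the desired covering.

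Finally I would extract the cardinality bound from disjointness. Since each $x_j\in B$ and each element of $x_j+A$ lies in $B+A=A+B$, the disjoint union $\bigsqcup_{j\le J}(x_j+A)$ has cardinality $J\cdot\#A$ and sits inside $A+B$, giving $J\cdot\#A\le\#(A+B)$, i.e.\ $J\le\#(A+B)/\#A$.

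There is no serious obstacle here: the argument is a one-shot greedy selection plus a two-line counting estimate. The only thing to be a little careful about is the base case where no $x_j$ can be chosen at all (e.g.\ if $B=\emptyset$, excluded by hypothesis, or if the first $x\in B$ already makes $B\subset x+(A-A)$), in which case $J=1$ trivially satisfies both conclusions. Apart from this, the proof is a direct application of the pigeonhole/packing principle inside $A+B$.
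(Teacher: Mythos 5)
Your proof is correct and is the standard proof of Ruzsa's covering lemma; the paper simply cites the result from the literature without giving a proof, and your greedy-selection-plus-packing argument is exactly what the cited reference does. One small remark: the "base case" worry in your last paragraph never actually arises, since $A,B\neq\emptyset$ guarantees the greedy process can always make at least one selection (the first $x_1\in B$ faces no disjointness constraint), so $J\geq 1$ automatically.
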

\begin{lem}\label{lem:comparability criterion}
Let $d,r_1,r_2\in\N$. Let $(P_1,\Omega_1)$ and $(P_2,\Omega_2)$ be multiprogressions into $\R^d$ of ranks $r_1$ and $r_2$, respectively.
Assume $\#\Omega_1\sim\#\Omega_2$. The following are equivalent:
\begin{enumerate}
    \item\label{enu:P1<P2}$P_1(\Omega_1)$ is covered by $O_{d,r_1,r_2}(1)$ translates of $P_2(\Omega_2)$.
    \item\label{enu:P2<P1}$P_2(\Omega_2)$ is covered by $O_{d,r_1,r_2}(1)$ translates of $P_1(\Omega_1)$.
    \item\label{enu:P1cP2}$\max_{\xi\in\R^d}\#\left(P_1(\Omega_1)\cap (P_2(\Omega_2)+\xi)\right)\sim_{d,r_1,r_2}\#\Omega_1\sim_{d,r_1,r_2}\#\Omega_2$ holds.
\end{enumerate}
\end{lem}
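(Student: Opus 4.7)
The plan is to establish (1) $\Leftrightarrow$ (3) directly and obtain (2) $\Leftrightarrow$ (3) by the same argument with the indices swapped. The easy directions (1) $\Rightarrow$ (3) and (2) $\Rightarrow$ (3) come from pigeonhole: if $P_1(\Omega_1) \subset \bigcup_{j=1}^{J}(P_2(\Omega_2)+y_j)$ with $J = O_{d,r_1,r_2}(1)$, then some translate must meet $P_1(\Omega_1)$ in at least $\#P_1(\Omega_1)/J \sim \#\Omega_1$ points, and the assumption $\#\Omega_1\sim \#\Omega_2$ together with the trivial upper bound $\min(\#\Omega_1,\#\Omega_2)$ yields (3).

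For the nontrivial direction (3) $\Rightarrow$ (1), the key preliminary observation is that each $P_i(\Omega_i)$ has $O_{r_i}(1)$-bounded doubling. Indeed, since $\Omega_i = [N_1^{(i)}]\times\cdots\times[N_{r_i}^{(i)}]$ is a symmetric box, $\Omega_i - \Omega_i = 2\cdot\Omega_i$, and using $1$-injectivity of $P_i$ on $\Omega_i$,
\[
\#\bigl(P_i(\Omega_i)+P_i(\Omega_i)\bigr) \le \#(2\cdot\Omega_i) \le 2^{r_i}\#\Omega_i = 2^{r_i}\#P_i(\Omega_i).
\]
In addition, $2\cdot\Omega_i$ is a symmetric box that we split into $2^{r_i}$ sub-boxes each a translate of $\Omega_i$, so $P_i(2\cdot\Omega_i)$ is covered by $2^{r_i}$ translates of $P_i(\Omega_i)$.

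Now, given (3), fix $\xi\in\R^d$ such that $A := P_1(\Omega_1)\cap(P_2(\Omega_2)+\xi)$ satisfies $\#A \gtrsim_{d,r_1,r_2} \#\Omega_1$. Setting $B := P_1(\Omega_1)$, the doubling bound gives
\[
\#(A+B) \le \#(B+B) \le 2^{r_1}\#\Omega_1 \lesssim_{d,r_1,r_2} \#A,
\]
so Ruzsa's covering lemma (Proposition \ref{prop:Ruzsa}) produces $x_1,\ldots,x_J\in\R^d$ with $J = O_{d,r_1,r_2}(1)$ such that $P_1(\Omega_1) \subset (A-A) + \{x_1,\ldots,x_J\}$. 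Since the translate $\xi$ cancels,
\[
A - A \subset P_2(\Omega_2) - P_2(\Omega_2) \subset P_2(2\cdot\Omega_2),
\]
and the preliminary observation covers $P_2(2\cdot\Omega_2)$ by $2^{r_2}$ translates of $P_2(\Omega_2)$. Composing the two coverings, $P_1(\Omega_1)$ is covered by $J\cdot 2^{r_2} = O_{d,r_1,r_2}(1)$ translates of $P_2(\Omega_2)$, yielding (1). Swapping the roles of $P_1$ and $P_2$ in this argument gives (3) $\Rightarrow$ (2).

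The only delicate point is the doubling step: one must invoke $1$-injectivity of $P_i$ on $\Omega_i$ to equate $\#P_i(\Omega_i)$ with $\#\Omega_i$, while being careful \emph{not} to claim injectivity on $2\cdot\Omega_i$ (which is not assumed, but also not needed, since we only take images and cardinal upper bounds there). Apart from this bookkeeping, the argument is a standard application of the Ruzsa covering lemma to multiprogressions.
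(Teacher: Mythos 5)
Your proof is correct and follows essentially the same route as the paper: both establish $(1)\Rightarrow(3)$ and $(2)\Rightarrow(3)$ by pigeonhole, and both prove the hard direction by applying Ruzsa's covering lemma with $A=P_1(\Omega_1)\cap(P_2(\Omega_2)+\xi)$, using that $A-A$ lies simultaneously inside $P_1(2\cdot\Omega_1)$ and $P_2(2\cdot\Omega_2)$ and that each $P_i(2\cdot\Omega_i)$ is covered by $O_{r_i}(1)$ translates of $P_i(\Omega_i)$. The only difference is cosmetic: you take $B=P_1(\Omega_1)$ to prove $(3)\Rightarrow(1)$ first and then swap indices, whereas the paper takes $B=P_2(\Omega_2)$ to prove $(3)\Rightarrow(2)$ first.
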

\begin{proof}
Either \eqref{enu:P1<P2} or \eqref{enu:P2<P1} immediately implies \eqref{enu:P1cP2} by the pigeonhole principle. Assume \eqref{enu:P1cP2}. Let $\xi$ be a point saturating \eqref{enu:P1cP2}. By Proposition \ref{prop:Ruzsa} setting $A=P_1(\Omega_1)\cap (P_2(\Omega_2)+\xi)$ and $B=P_2(\Omega_2)$, there exist $x_1,\ldots,x_{J_0}\in\R^d$, $J_0\le\#(B+B)/\#A=O_{d,r_1,r_2}(1)$, such that
\[
P_2(\Omega_2)\subset (A-A)+\{x_1,\ldots,x_{J_0}\}\subset P_1(2\cdot \Omega_1)+\{x_1,\ldots,x_{J_0}\}.
\]
$P_1(2\cdot\Omega_1)$ can be covered by $2^{r_1}$ translates of $P_1(\Omega_1)$, thus \eqref{enu:P2<P1} holds. Similarly, \eqref{enu:P1<P2} holds, finishing the proof.
\end{proof}
\begin{defn}
Let $d\in\N$. A multiprogression $(P,\Omega)$ into $\R^d$ and a finite set $A\subset\R^d$ are said to be \emph{comparable} (denoted by $(P,\Omega)\sim A$) if
\[
\max_{\xi\in\R^d}\#\left((P(\Omega)+\xi)\cap A\right)\sim\#P(\Omega)\sim\# A.
\]
Two multiprogressions $(P_1,\Omega_1),(P_2,\Omega_2)$ into $\R^d,d\in\N$ are said to be \emph{comparable} if $(P_1,\Omega_1)\sim P_2(\Omega_2)$, or equivalently, all criteria of Lemma \ref{lem:comparability criterion} are satisfied.
\end{defn}
As a result, $(P_1,\Omega_1)\sim(P_2,\Omega_2)\sim(P_3,\Omega_3)$ implies $(P_1,\Omega_1)\sim(P_3,\Omega_3)$. This enables us to formally regard the symbol $\sim$ as an equivalence relation between multiprogressions of bounded ranks, at least when we compare a bounded number of multiprogressions. One can also check that $(P_1,\Omega_1)\sim(P_2,\Omega_2)\sim A$ implies $(P_1,\Omega_1)\sim A$.

\subsection*{\label{subsec:Gowers norm}Gowers norms}
We introduce the Gowers norm and recall relevant facts.
\begin{defn}
Let $G$ be an additive group and $f:G\rightarrow\C$ be a function. For $\eta\in G$, we define the function
\[
\Alt_\eta f(x):=f(x)\overline{f(x+\eta)}.
\]
\end{defn}
We note that
\[
\Alt_{\eta_1}\Alt_{\eta_2}f=\Alt_{\eta_2}\Alt_{\eta_1}f=:\Alt_{\eta_1,\eta_2}f
\]
holds for any $\eta_1,\eta_2\in G$ and $f:G\rightarrow\C$.

The Gowers uniformity norms (Gowers norms) are defined as follows:
\begin{defn}[Gowers norms on a group]\label{def:Gowers norm}
Let $G$ be a finite additive group. For $f:G\rightarrow\C$ and $k\in\N$, we inductively define
\[
\norm f_{U^1(G)}:=\left|\E_{x\in G}f(x)\right|
\]
and
\begin{equation}\label{eq:Uk def inductive}
\norm{f}_{U^{k+1}(G)}=\Bb{\E_{\eta\in G}\norm{\Alt_\eta f}_{U^k}^{2^k}}^{1/2^{k+1}}.
\end{equation}
Equivalently, $\norm f_{U^{k}},k\ge 1$ can be written more explicitly as
\begin{equation}\label{eq:Uk def explicit}
\norm{f}_{U^{k}(G)}=\Bb{\E_{x,\eta_1,\ldots,\eta_{k}\in G}\Alt_{\eta_1,\ldots,\eta_k}f(x)}^{1/2^{k}}.
\end{equation}
\end{defn}
It is known that $U^{k+1}(G)$ is a norm for $k\ge 1$ and any finite additive group $G$; see, e.g., \cite[Section 11.1]{tao2006additive}. Also, for $f:G\rightarrow\C$ and $k\in\N$, one has
\begin{equation}\label{eq:Ud monotone}
\norm{f}_{U^{k+1}(G)}\ge\norm{f}_{U^k(G)}.
\end{equation}
(See, e.g., {\cite[(11.7)]{tao2006additive}}.)
Although we defined Gowers norms over finite additive groups, functions on boxes $[N]^d$ can be dealt in the following manner (analogous, e.g., to \cite{GreenTaoZiegler11(U4)}):
\begin{defn}[Gowers norm on a box]\label{defn:Gowers on box}
Let $N,k,d\in\N$. Consider a function $f:[N]^d\rightarrow\C$. Choose any $N'\ge 2^{k+2}N$. Let $f':\Z_{N'}^d\rightarrow\C$ be defined as $f'(x)=f(x)$ for $x\in[N]^d$ and $f'(x)=0$ otherwise. We define the Gowers $U^{k+1}$-norm for $f$ as
\[
\norm{f}_{U^{k+1}}:=\norm{f'}_{U^{k+1}(\Z_{N'}^d)}/\norm{\chi_{[N]^d}}_{U^{k+1}(\Z_{N'}^d)},
\]
which does not depend on $N'\ge 2^{k+2}N$ and so is well-defined.
For $\tilde N\in \N$, we denote
\[
\norm{f}_{U^{k+1}([\tilde N]^d)}:=\norm{\tilde f}_{U^{k+1}},
\]
where $\tilde f:[\tilde N]^d\rightarrow \C$ is the restriction of $f$ if $\tilde N\le N$ and is $\chi_{[N]^d}f$ if $\tilde N>N$.
\end{defn}
In Definition \ref{defn:Gowers on box}, $N'\ge 2^{k+2}N$ is assumed to avoid products between different copies of $[N]^d$ in any calculation of $\Alt$ in \eqref{eq:Uk def explicit}.
Attaching the domain $[N]^d$ to the Gowers norm for the case $N=\tilde N$ is merely a matter choice; we usually omit the domain for that case.
\begin{prop}\label{prop:von Neumann}
Let $k\in\N$ and $a_1,\ldots,a_k\in\Z\setminus\{0\}$ be distinct integers. Let $N,d\in\N$ and $f,g_1,\ldots,g_k:[N]^d\rightarrow\D$. We have
\[
\left|\sum_{\eta,x\in\Z^d}f(x)\prod_{j\le k} g_j(a_j\eta+x)\right|\les_{a_1,\ldots,a_k,d}N^{2d}\norm{f}_{U^k}.
\]
\end{prop}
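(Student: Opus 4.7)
The statement is a generalized von Neumann (Gowers--Cauchy--Schwarz) inequality. My plan is to prove it by iterating Cauchy--Schwarz $k$ times, peeling off $g_k, g_{k-1},\ldots,g_1$ one at a time, in order to reduce the expression to the $U^k$-norm of $f$ from Definition \ref{defn:Gowers on box}. To avoid boundary issues from the box $[N]^d$, I first embed into the finite group $\Z_{N'}^d$ by choosing $N' = C(a_1,\ldots,a_k) N$ large enough that all linear combinations $x+a_j\eta$ appearing in the sum lie inside a fundamental domain, and extending $f,g_1,\ldots,g_k$ by zero; the sum becomes an expectation of the same form on $\Z_{N'}^d\times\Z_{N'}^d$, up to the factor $(N')^{2d}$.

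At the $m$-th Cauchy--Schwarz step, the current expression has the form $\sum_{x,\eta} F(x)\prod_{j\in J} G_j(x+a_j\eta)$ with $|J|=k-m+1$. Pick $j_\star\in J$ and substitute $y = x+a_{j_\star}\eta$ so that $G_{j_\star}$ becomes a function of $y$; Cauchy--Schwarz in $y$ gives
\[
\Bigl|\sum_y G_{j_\star}(y)\,H(y)\Bigr|^2 \le \norm{G_{j_\star}}_{\ell^2(y)}^2\cdot \norm{H}_{\ell^2(y)}^2 \le N^d\,\norm{H}_{\ell^2(y)}^2.
\]
Expanding $\norm{H}_{\ell^2(y)}^2$ as a double sum in $(\eta,\eta')$ and then changing variables to $(z,\eta',h_m)$ with $z = y - a_{j_\star}\eta'$ and $h_m = \eta-\eta'$, each surviving function $H$ is replaced by its multiplicative derivative $H(\cdot)\overline{H(\cdot + c_H h_m)}$, where $c_f = -a_{j_\star}$ and $c_{g_j} = a_j - a_{j_\star}$ for $j\in J\setminus\{j_\star\}$. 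By the hypotheses $a_j\in\Z\setminus\{0\}$ and the distinctness of the $a_j$'s, all of these shift constants are nonzero, and the new expression has one fewer $g$-factor, so the iteration continues.

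After $k$ iterations only $f$ and its iterated multiplicative derivatives remain, and one arrives at a bound of the shape
\[
|S|^{2^k} \lesssim_{a,d} N^{\alpha d}\sum_{z,h_1,\ldots,h_k\in\Z^d}\prod_{\omega\in\{0,1\}^k}\c^{|\omega|} f\Bigl(z+\sum_{i=1}^k \omega_i b_i h_i\Bigr),
\]
for certain nonzero integers $b_1,\ldots,b_k$ built from the $a_j$'s and an explicit exponent $\alpha$. Since each $b_i\ne 0$, the substitution $\eta_i := b_i h_i$ is $O_{a}(1)$-to-one, and the right-hand side identifies, via Definition \ref{defn:Gowers on box}, with a constant times $N^{d(k+1)}\norm{f}_{U^k}^{2^k}$. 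Taking $2^k$-th roots and matching powers of $N$ against the $(N')^{2d}$ factor from the embedding step yields the claimed $|S|\lesssim_{a,d} N^{2d}\norm{f}_{U^k}$. The principal difficulty throughout is the combinatorial bookkeeping: tracking how the shift constants compose through $k$ iterations into the integers $b_i$, verifying their non-vanishing (which reduces inductively to the hypotheses on the $a_j$'s), and tallying the accumulated powers of $N$ correctly against the normalization of $\norm{\cdot}_{U^k}$ on $[N]^d$.
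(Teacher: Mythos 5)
Your plan --- iterate Cauchy--Schwarz to peel off $g_k,\ldots,g_1$ --- is indeed what the paper has in mind, since its one-line proof points to \cite[Lemma 11.4]{tao2006additive}, which runs exactly this argument. The gap is at the final identification. After the $k$ iterations you reach a quantity of the form $\sum_{z,h_1,\ldots,h_k}\prod_{\omega}\c^{|\omega|}f(z+\sum_i\omega_i b_i h_i)$ with fixed nonzero integers $b_i$, and you assert this is ``$O_a(1)$-to-one'' equivalent to the unrestricted Gowers sum, hence $\asymp N^{d(k+1)}\norm{f}_{U^k}^{2^k}$. But over $\Z^d$ the map $h_i\mapsto b_i h_i$ is injective; the point is not multiplicity but that the shift directions now run only over the proper sublattice $b_i\Z^d$. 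A Gowers-type sum restricted to a sublattice of shifts is \emph{not} controlled by the full one, and the claimed inequality genuinely fails: with $d=1$, $k=1$, $b_1=2$, and $f(z)=(-1)^z\chi_{[N]}(z)$ one has $\sum_{z,h_1}f(z)\overline{f(z+2h_1)}=\#\{(z,h_1):z,z+2h_1\in[N]\}\approx 2N^2$, whereas $N^2\norm{f}_{U^1}^2 = N^2/(2N+1)^2 = O(1)$. (Indeed, with $g_1=f$ and $a_1=2$ the very statement you are trying to prove gives $\sum_{\eta,x}f(x)g_1(2\eta+x)\approx 2N^2$ but $N^{2}\norm{f}_{U^1}\approx N/2$; so the $k=1$ case as literally written is already problematic, and whatever resolves it for general $k$ is precisely the missing ingredient.)

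A related issue is the embedding: extending $f,g_j$ by zero to $\Z_{N'}^d$ and replacing $\sum_{\eta\in\Z^d}$ by $\sum_{\eta\in\Z_{N'}^d}$ does \emph{not} preserve the sum when $|a_j|\ge 2$, because the set of $\eta\in\Z_{N'}^d$ with $a_j\eta+x\pmod{N'}\in[N]^d$ is not the image of $\{\eta\in\Z^d : a_j\eta+x\in[N]^d\}$ --- wraparound manufactures new terms. In \cite[Lemma 11.4]{tao2006additive} both issues disappear because the lemma is stated on a cyclic group $\Z_M$ with $M$ coprime to the relevant constants, so $\eta\mapsto a_j\eta$ is a bijection and the $\eta$-sum closes up. Transplanting that proof to the present box-normalized $U^k$ on $[N]^d$ with $\eta$ summed over $\Z^d$ requires extra work (tracking an explicit cutoff on $\eta$ through all $k$ iterations, or a more careful comparison of the restricted and unrestricted Gowers sums) that your sketch does not supply; this is the substantive content hidden behind the phrase ``matching powers of $N$.''
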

\begin{proof}
This can be shown by repeating the proof in \cite[Lemma 11.4]{tao2006additive}.
\end{proof}
A particularly useful instance of Proposition \ref{prop:von Neumann} is the following: Let $b_1,\ldots,b_m\in\Z\setminus\{0\}$ be such that
\[
\s_1b_1+\cdots+\s_m b_m:(\s_1,\ldots,\s_m)\in\{0,1\}^m\setminus\{0\}
\]
are all distinct and nonzero; then enumerating above as $a_1,\ldots,a_k$, $k=2^m-1$, Proposition \ref{prop:von Neumann} implies
\begin{cor}\label{cor:von Neumann Alt}
Let $m\in\N$ and $b_1,
\ldots, b_m$ as above. Then, for $N,d\in\N$ and $f:[N]^d\rightarrow\D$, we have
\[
\left|\sum_{\eta,x\in\Z^d}\Alt_{b_1\eta,\ldots,b_m\eta}f(x)\right|\les_{b_1,\ldots,b_m,d}N^{2d}\norm{f}_{U^{2^m-1}}.
\]
\end{cor}
\subsection*{Lattice-convex sets}
\begin{defn}\label{defn:lattice-convex set}
Let $d\in\N$ and $\La\subset\R^d$ be a lattice. A finite set $\Omega\subset\La$ is a \emph{lattice-convex set} if there exists a convex set $\ul\Omega\subset\R^d$ such that
\[
\Omega=\ul\Omega\cap\La.
\]
$\Omega$ and $\ul\Omega$ are \emph{thick} if $\Omega$ is not contained in $O(1)$ affine translates of a proper subspace of $\R^d$. For an affine subspace $\mc P\subset\R^d$, $\Omega$ and $\ul\Omega$ are said to be \emph{relatively thick} if $\Omega\cap\mc P$ is thick within $\La\cap\mc P\neq\emptyset$.
For $m\in\N$, we denote
\[
\frac{1}{m}\cdot\Omega:=\frac{1}{m}\ul\Omega\cap\La=\frac{1}{m}(\ul\Omega\cap m\La).
\]
$\Omega$ is \emph{symmetric} if $\Omega=-\Omega$.
\end{defn}
\begin{prop}[\cite{John-Extremum}]\label{prop:John}
Let $d\in\N$ and $\ul\Omega\subset\R^d$ be a convex set. Then, there exists a linear transform $T:\R^d\rightarrow\R^d$ and $x_0\in\ul\Omega$ such that
\[
B_d\subset T(\ul\Omega-x_0)\subset d\cdot B_d,
\]
where $B_d$ is the unit ball in $\R^d$. If $\ul\Omega$ is symmetric, we can further assume $x_0=0$.
\end{prop}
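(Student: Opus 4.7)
The plan is to invoke John's theorem on extremal ellipsoids: after an appropriate affine change of coordinates, the maximum-volume inscribed ellipsoid of a convex body becomes $B_d$, and the body itself is sandwiched between $B_d$ and $d \cdot B_d$.

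I would first reduce to the case where $\ul\Omega$ is a compact convex body with nonempty interior. If $\ul\Omega$ lies in a proper affine subspace, the argument runs inside its affine hull with $T$ extended by the identity on the orthogonal complement; if $\ul\Omega$ is unbounded, replacing it by its intersection with a large ball only strengthens the conclusion. Then I would parametrize inscribed ellipsoids by pairs $(c, A)$ with $c \in \R^d$ and $A$ symmetric positive-definite, via $E(c, A) := c + A(B_d)$, whose volume is $\vol(B_d) \det A$. The feasible set $\{(c, A) : E(c, A) \subset \ul\Omega\}$ is compact and the volume functional is continuous, so a maximiser $E^* = E(x_0, A^*)$ exists. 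Setting $T := (A^*)^{-1}$ and $K := T(\ul\Omega - x_0)$ gives $B_d = T(E^* - x_0) \subset K$, and $B_d$ remains the maximum-volume ellipsoid inscribed in $K$ by linear invariance of the extremality condition.

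The main analytic step is to prove $K \subset d \cdot B_d$. The first-order optimality conditions (John's conditions) supply contact points $u_1, \ldots, u_m \in \partial B_d \cap \partial K$ and weights $c_i > 0$ satisfying
\[
\sum_i c_i\, u_i \otimes u_i = I_d, \qquad \sum_i c_i u_i = 0,
\]
the second identity reflecting optimality with respect to translations of the centre. Each contact point $u_i$ produces a supporting hyperplane of $K$ with outward normal $u_i$, giving the half-space bound $\langle x, u_i \rangle \le 1$ for every $x \in K$. Combining these half-space bounds with the two identities above, using $\mathrm{tr}\, I_d = \sum_i c_i = d$, yields the diameter bound $\|x\| \le d$. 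This is the hard part of the argument and the main technical obstacle; it is a classical but delicate linear-algebraic computation, carried out in \cite{John-Extremum}.

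For the centrally symmetric case $\ul\Omega = -\ul\Omega$, the reflection $-E^* = E(-x_0, A^*)$ is also an inscribed ellipsoid of maximum volume; strict concavity of $\log \det$ on symmetric positive-definite matrices gives uniqueness of the maximiser, which forces $x_0 = -x_0 = 0$, as required.
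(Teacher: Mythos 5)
The paper states this result as a citation to John's original work \cite{John-Extremum} and gives no proof, so there is no internal argument to compare against; it is treated as a black box. Your sketch reproduces the classical argument correctly in outline: existence of a maximum-volume inscribed ellipsoid by compactness, normalisation so that ellipsoid becomes $B_d$, John's decomposition of the identity with contact points $u_i$ and weights $c_i$, and then the quadratic identity $\sum_i c_i(1-\langle x,u_i\rangle)(r+\langle x,u_i\rangle)=r(d-r)\ge 0$ (for $\|x\|=r$, using $\sum c_i=d$ and $\sum c_iu_i=0$) which forces $\|x\|\le d$; you rightly flag that last step as the technical heart and defer it to \cite{John-Extremum}, which matches the paper's treatment. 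Two small corrections are worth recording. First, your unbounded reduction runs backwards: if $\ul\Omega$ is unbounded then $T(\ul\Omega-x_0)$ is unbounded for every invertible linear $T$, so the statement is only meaningful for bounded convex bodies — that is all the paper uses (Lemma \ref{lem:convex Mahler} applies it to finite lattice-convex sets), but the restriction should be acknowledged rather than "reduced away". Second, in the centrally symmetric case your appeal to strict concavity of $\log\det$ is not by itself enough: $\log\det A$ is constant in the centre $c$, so strict concavity in $A$ gives uniqueness of the shape matrix $A^*$ but does not directly force $x_0=0$. The clean route for the symmetric case sidesteps uniqueness: $(-x_0,A^*)$ is also a maximiser by symmetry, the feasible set $\{(c,A): c+AB_d\subset\ul\Omega\}$ is convex, $\log\det$ is concave, so the midpoint $(0,A^*)$ is again a maximiser and may be selected directly. (Full uniqueness of the John ellipsoid is true, but ruling out translations of the centre needs an extra "capsule" perturbation argument, not just log-concavity.)
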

\begin{prop}[{\cite[Lemma 10.3]{GreenTao08(U3)}}]\label{prop:discrete John}
Let $d\in\N$ and $\Omega=\ul\Omega\cap\La\subset\R^d$ be a symmetric lattice-convex set. Assume $\La$ is of full rank. Then, there exists a $d$-tuple $(w_1,\ldots,w_d)\in\La^d$ generating $\La$ and $N_1,\ldots,N_d\in\N$ such that
\[
\frac{1}{d^{2d}}\cdot\Omega\subset[N_1]w_1+\cdots+[N_d]w_d\subset \Omega\subset [d^{2d}N_1]w_1+\cdots+[d^{2d}N_d]w_d.
\]
\end{prop}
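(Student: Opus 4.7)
The plan is to combine the continuous John theorem (Proposition~\ref{prop:John}) with the classical successive minima theory for lattices and Mahler's theorem on adapted bases. Apply the centrally symmetric case of Proposition~\ref{prop:John} to obtain a linear isomorphism $T:\R^d\to\R^d$ with $B_d\subset T(\ul\Omega)\subset d\cdot B_d$, and set $\La':=T(\La)$, $K:=T(\ul\Omega)$. It suffices to construct a $\Z$-basis $(w_1',\ldots,w_d')$ of $\La'$ and integers $N_1,\ldots,N_d\ge 1$ satisfying the three claimed inclusions with $K$ and $\La'$ in place of $\ul\Omega$ and $\La$; pulling everything back by $T^{-1}$ then yields $w_i:=T^{-1}(w_i')\in\La$ with the required properties, since $T$ intertwines all linear-algebraic operations and sends $\Omega$ to $K\cap\La'$.

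Let $\la_1\le\cdots\le\la_d$ be the successive minima of $\La'$ with respect to $B_d$. By a theorem of Mahler (see e.g.\ Cassels, \emph{Introduction to the Geometry of Numbers}, or Gruber--Lekkerkerker, \emph{Geometry of Numbers}, \S38), there is a basis $(w_1',\ldots,w_d')$ of $\La'$ with $\norm{w_i'}\les_d\la_i$. Define $N_i:=\max\{1,\lfloor c_d/\la_i\rfloor\}$ for a constant $c_d=c_d(d)>0$ chosen so that $d\cdot N_i\norm{w_i'}\le 1$ whenever the floor is positive. The middle inclusion $[N_1]w_1'+\cdots+[N_d]w_d'\subset K$ then follows from the triangle inequality and $B_d\subset K$. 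For the leftmost inclusion $\frac{1}{d^{2d}}\cdot K\subset [N_1]w_1'+\cdots+[N_d]w_d'$, any lattice point $v\in \frac{1}{d^{2d}}\cdot K$ satisfies $\norm{v}\le d^{1-2d}$, and writing $v=\sum_i n_iw_i'$ one checks $|n_i|\le N_i$ by combining the dual-basis estimate described in the next paragraph with $d^{1-2d}\les_d c_d$.

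The most delicate step, and main obstacle, is the outermost inclusion $\Omega\subset[d^{2d}N_1]w_1+\cdots+[d^{2d}N_d]w_d$. After transfer to $\La'$, this says every $v\in K\cap\La'\subset d\cdot B_d$ admits a representation $v=\sum_i n_iw_i'$ with $|n_i|\le d^{2d}N_i$. Using the dual basis $(w_i'^{*})$ defined by $\jp{w_i'^{*},w_j'}=\delta_{ij}$, we have $n_i=\jp{w_i'^{*},v}$, hence $|n_i|\le d\cdot\norm{w_i'^{*}}$. Cramer's rule bounds $\norm{w_i'^{*}}\les_d \prod_{j\ne i}\norm{w_j'}/\det(\La')$, and Minkowski's second theorem gives $\la_1\cdots\la_d\sim_d\det(\La')$ (up to the volume of $B_d$). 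Substituting the Mahler estimate yields $\norm{w_i'^{*}}\les_d 1/\la_i$, and therefore $|n_i|\les_d 1/\la_i\les_d N_i$. All implicit constants are absorbed into the factor $d^{2d}$ by choosing $c_d$ and the Mahler constant appropriately; the exponent $2d$ is not sharp but is convenient and sufficient for later applications.
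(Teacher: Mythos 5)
The paper imports this result from \cite{GreenTao08(U3)} (their Lemma~10.3) without giving a proof, so I am evaluating your argument on its own terms. Your strategy---normalize to the Euclidean ball via Proposition~\ref{prop:John}, take a Mahler-reduced basis $(w_i')$ of $\La'=T(\La)$ with $\norm{w_i'}\les_d\la_i$ (the successive minima), and control coordinates using the dual basis, Hadamard/Cramer, and Minkowski's second theorem---is the standard route, and the leftmost and outermost inclusions are handled essentially correctly up to constant-chasing. The genuine gap is in the middle inclusion. Setting $N_i:=\max\{1,\lfloor c_d/\la_i\rfloor\}$ forces $N_i\ge1$, which in particular requires $w_i'\in K:=T(\ul\Omega)$; but when $\la_i>c_d$ all you have is $\norm{w_i'}\les_d\la_i$, which can be arbitrarily large (if $\La'$ is sparse relative to $K\subset d\cdot B_d$ one can even have $K\cap\La'=\{0\}$, and then no multiprogression with every $N_i\ge 1$ is contained in $\Omega$). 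You flag the problem yourself by guaranteeing $d\,N_i\norm{w_i'}\le1$ only ``whenever the floor is positive'' --- but that caveat is exactly the case that remains uncovered, and the ``triangle inequality plus $B_d\subset K$'' step silently assumes it never occurs.

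Repairing this is not a matter of adjusting $c_d$. One must allow $N_i=0$, and then the outermost inclusion $\Omega\subset[d^{2d}N_1]w_1+\cdots+[d^{2d}N_d]w_d$ demands that every $v\in\Omega$ have vanishing $i$-th coordinate for such $i$. Your dual-basis estimate $|n_i|\les_d 1/\la_i$ forces $n_i=0$ only when $\la_i$ is larger than a dimensional constant, leaving open the regime $\la_i\sim_d 1$: there $\Omega$ can genuinely contain points with $n_i\ne0$ even though $w_i'$ need not lie in $K$, so neither $N_i=0$ nor $N_i\ge1$ is obviously admissible. Closing this requires either measuring successive minima and the Mahler basis with the Minkowski functional $\norm{\cdot}_K$ of $K$ itself rather than the Euclidean norm (so that the threshold for $N_i\ge1$ agrees with whether $\Omega$ actually extends in the $i$-th direction), or exploiting the flag-adaptedness of Mahler's construction, $\Z w_1'+\cdots+\Z w_i'=\La'\cap\mathrm{span}_\R(w_1',\ldots,w_i')$, to annihilate the higher coordinates of $\Omega$ whenever $\la_i>d$. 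As written, the middle inclusion is not established.
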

\begin{lem}\label{lem:convex Mahler}
Let $d\in\N$. There exists $C=O_d(1)$ satisfying the following:

Let $\La\subset\R^d$ be a full-rank lattice and $\Omega=\ul\Omega\cap\La$ be a thick lattice-convex set. Then, there exist a $d$-tuple $(w_1,\ldots,w_d)\in\La^d$ generating $\La$, $N_1,\ldots,N_d\in\N$, and $x_0\in\La$ such that
\begin{equation}\label{eq:convex Mahler}
[N_1]w_1+\cdots+[N_d]w_d\subset\Omega-x_0\subset[CN_1]w_1+\cdots+[CN_d]w_d.
\end{equation}
Moreover, $N_1,\ldots,N_d\gg1$ holds.
\end{lem}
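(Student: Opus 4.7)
The plan is to reduce to the symmetric case (Proposition \ref{prop:discrete John}) by symmetrization, and then use a centering argument based on the centroid of $\ul\Omega$ to select a good lattice translation $x_0$.

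Consider the symmetric convex body $\ul\Omega-\ul\Omega\subset\R^d$ and its lattice intersection $\Omega_s:=(\ul\Omega-\ul\Omega)\cap\La$, which is a symmetric lattice-convex set. Since $\Omega_s\supset\Omega-\Omega$, if $\Omega_s$ were contained in $O_d(1)$ translates of a proper subspace $V\subset\R^d$, then for any $x\in\Omega$, $\Omega\subset x+(\Omega-\Omega)\subset x+\Omega_s$ would also be contained in $O_d(1)$ translates of $V$, contradicting the thickness of $\Omega$. Hence $\Omega_s$ is thick.

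Applying Proposition \ref{prop:discrete John} to $\Omega_s$ produces a basis $(w_1,\ldots,w_d)\in\La^d$ of $\La$ and $N_1,\ldots,N_d\in\N$ such that
\[
[N_1]w_1+\cdots+[N_d]w_d\subset\Omega_s\subset[d^{2d}N_1]w_1+\cdots+[d^{2d}N_d]w_d.
\]
Since the vertices $\pm N_1w_1\pm\cdots\pm N_dw_d$ of the real parallelepiped $\ul B:=\{\sum_{i}t_iw_i:t_i\in[-N_i,N_i]\}$ all lie in $\Omega_s\subset\ul\Omega-\ul\Omega$, convexity yields $\ul B\subset\ul\Omega-\ul\Omega$. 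Furthermore, if $N_i=O_d(1)$ for some $i$, the right inclusion would place $\Omega_s$ in $O_d(1)$ affine translates of $\mathrm{span}(w_j:j\ne i)$, again contradicting thickness (via $\Omega-\Omega\subset\Omega_s$, as above); hence $N_i\gg1$ for all $i$.

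For the centering step, let $c$ be the centroid of $\ul\Omega$. By the classical fact that $-(K-c)\subset d(K-c)$ for any convex body $K$ with centroid $c$ (Minkowski's theorem on the centroid), we obtain $\ul\Omega-\ul\Omega\subset(d+1)(\ul\Omega-c)$, so that $\frac{1}{d+1}\ul B\subset\ul\Omega-c$. Writing $c=\sum_ic_iw_i$ in the $w_i$-basis and using $N_i\ge d+1$, we choose integers $a_i\in\Z$ with $|a_i-c_i|\le\frac{1}{2}\le\frac{N_i}{2(d+1)}$ and set $x_0:=\sum_ia_iw_i\in\La$; this lattice point lies in $c+\frac{1}{2(d+1)}\ul B$, so
\[
x_0+\frac{1}{2(d+1)}\ul B\subset c+\frac{1}{d+1}\ul B\subset\ul\Omega.
\]
Intersecting with $\La$ gives $x_0+[N_1']w_1+\cdots+[N_d']w_d\subset\Omega$ with $N_i':=\lfloor N_i/(2(d+1))\rfloor\sim N_i\gg1$, which is the left inclusion of \eqref{eq:convex Mahler} after renaming $N_i':=N_i$; the right inclusion then follows from $\Omega-x_0\subset\Omega-\Omega\subset\Omega_s\subset\sum_i[d^{2d}N_i]w_i$ with $C=O_d(1)$. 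The main obstacle is locating $x_0\in\La$ close to the generally non-lattice centroid $c$, which is exactly where the thickness hypothesis plays its essential role: only then is the centering box $c+\frac{1}{2(d+1)}\ul B$ guaranteed to contain a lattice point.
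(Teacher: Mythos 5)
Your proof is correct, and it is genuinely different from the paper's. The paper invokes the continuous John theorem (Proposition \ref{prop:John}): after a linear transform it finds a real center $x_0$ with $B_d\subset\ul\Omega-x_0\subset d\cdot B_d$, then uses thickness to argue $\dist(x_0,\La)\ll1$ and perturbs $x_0$ to a lattice point before applying the discrete John theorem (Proposition \ref{prop:discrete John}) to the ball. You instead never touch the continuous John theorem: you symmetrize directly to $\Omega_s=(\ul\Omega-\ul\Omega)\cap\La$, apply the discrete John theorem to $\Omega_s$, and then recenter using the Minkowski--Radon centroid inequality $-(K-c)\subset d(K-c)$, rounding the centroid to the nearest lattice point in the $w_i$-basis. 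The trade is that your approach replaces the paper's somewhat terse claim that thickness forces the John center to lie within $\ll1$ of $\La$ (which implicitly requires a small-fundamental-cell argument) with the quantitative containment $\frac{1}{d+1}\ul B\subset\ul\Omega-c$; you then pay an additional bounded factor $2(d+1)$ in the final constant $C$, but the rounding step $|a_i-c_i|\le\tfrac{1}{2}\le\tfrac{N_i}{2(d+1)}$ is transparently justified by $N_i\gg1$. Both approaches yield $C=O_d(1)$. One small note: when you argue thickness of $\Omega_s$, your sentence reads as a direct implication but what you mean (and what is correct) is the contrapositive --- if $\Omega_s$ lay in $O_d(1)$ translates of a proper subspace, so would $\Omega$, which is excluded; stating it that way would be slightly cleaner.
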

\begin{proof}
By Proposition \ref{prop:John}, up to a linear transform, there exists $x_0\in\R^d$ such that
\[
B_d\subset\ul\Omega-x_0\subset d\cdot B_d.
\]
Here, since $\Omega$ is thick, $\dist(x_0,\La)\ll1$ holds. Thus, we can perturb $x_0$ so that $x_0\in\La$ and
\[
\frac{1}{2}B_d\subset\ul\Omega-x_0\subset 2d\cdot B_d.
\]
Applying Proposition \ref{prop:discrete John} to $B_d$, there exist linearly independent $(w_1,\ldots,w_d)\in\La^d$ and $N_1,\ldots,N_d\in\N$ such that
\[
[N_1]w_1+\cdots+[N_d]w_d\subset B_d\cap\La\subset [d^{2d}N_1]w_1+\cdots+[d^{2d}N_d]w_d.
\]
Thus, choosing $C=4d\cdot d^{2d}$ and resizing $N_j$ by $1/2$ yields \eqref{eq:convex Mahler}. By the second inclusion in \eqref{eq:convex Mahler} and the thickness of $\Omega-x_0$, we have $N_1,\ldots,N_d\gg1$, which finishes the proof.
\end{proof}
An immediate consequence of Lemma \ref{lem:convex Mahler} is that thickness of a convex set $\ul\Omega\subset\R^d$ is a translation-invariant property (because any translate of $[N_1]w_1+\cdots+[N_d]w_d$, $N_1,\ldots,N_d\gg1$ is thick). Similarly, thickness is invariant over $O(1)$-scalings.

We recall a version of a Weyl-type property.
\begin{prop}\label{prop:Weyl 1D}
Let $\epsilon>0$.
Let $N\in\N$ and $a,b\in\R$. Assume that
\begin{equation}\label{eq:Weyl 1D assump}
\left|\E_{n\in[N]}e^{i(an^2+bn)}\right|\ge\epsilon.
\end{equation}
Then, we have
\begin{equation}\label{eq:Weyl 1D a}
\dist(a,\frac{2\pi}{m}\Z)\les_\epsilon\frac{1}{N^2}
\end{equation}
and
\begin{equation}\label{eq:Weyl 1D b}
\dist(b,\frac{2\pi}{m}\Z)\les_\epsilon\frac{1}{N},
\end{equation}
where $m=O_\epsilon(1)$ is an integer.
\end{prop}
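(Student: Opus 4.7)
The result is a standard quantitative Weyl inverse theorem for degree-$2$ exponential sums; the plan is Weyl differencing followed by two pigeonhole steps, and then a substitution argument for the linear coefficient.

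Set $T := \sum_{n \in [N]} e^{i(an^2+bn)}$, so $|T| \ge \epsilon(2N+1)$ by hypothesis. Squaring and the substitution $(n_1,n_2)\mapsto(n+h,n)$ give
\[
|T|^2 = \sum_{|h|\le 2N} e^{i(ah^2+bh)} \sum_{n\in I_h} e^{i\,2ahn},
\]
where $I_h\subset[N]$ is an interval of length $\le 2N+1$. Bounding the inner geometric sum by $\min(N,\|ah/\pi\|_{\R/\Z}^{-1})$ and removing those $h$ with $\|ah/\pi\|>C_\epsilon/N$ (whose contribution is $\les N^2/C_\epsilon$, negligible once $C_\epsilon$ is large), I obtain
\[
|H|\gtrsim_\epsilon N,\qquad H:=\{|h|\le 2N:\|ah/\pi\|_{\R/\Z}\le C_\epsilon/N\}.
\]

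Now extract a rational approximation of $\alpha:=a/\pi$. By pigeonhole on blocks of length $O_\epsilon(1)$ in $[-2N,2N]$, $H-H$ contains some $d$ with $1\le d\les_\epsilon 1$ and $\|\alpha d\|_{\R/\Z}\les_\epsilon 1/N$. Writing $\alpha d=p+\gamma$ with $p\in\Z$ and $|\gamma|\les_\epsilon 1/N$ already gives $\dist(a,(2\pi/m)\Z)\les_\epsilon 1/N$ for $m:=2d=O_\epsilon(1)$. To bootstrap to $1/N^2$, let $r\in\Z/d\Z$ be the residue class carrying the most mass of $H$, so that $M:=\{k\in\Z:r+dk\in H\}$ has $|M|\gtrsim_\epsilon N/d$ inside $[-N/d,N/d]$. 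For $k\in M$ the bound $\|\alpha(r+dk)\|_{\R/\Z}\le C_\epsilon/N$ rewrites as $\|\alpha r+\gamma k\|_{\R/\Z}\le C_\epsilon/N$, so all $\gamma k$ (for $k\in M$) lie in a common arc of length $\les_\epsilon 1/N$ in $\R/\Z$. Choosing $k_1,k_2\in M$ with maximal separation $\asymp_\epsilon N/d$, I get $\|\gamma(k_1-k_2)\|_{\R/\Z}\les_\epsilon 1/N$; the preliminary bound $|\gamma|\les_\epsilon 1/N$ forces $|\gamma(k_1-k_2)|\les_\epsilon 1/d<1/2$ (refining via intermediate elements to avoid wrap-around if needed), hence $|\gamma|\les_\epsilon d/N^2$, i.e., $|a-\pi p/d|\les_\epsilon 1/N^2$.

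For the linear coefficient, substitute $a=\pi p/d+O_\epsilon(1/N^2)$ into $T$ and partition $[N]$ by $n\bmod 2d$. The difference $|an^2-\pi pn^2/d|$ is $\les_\epsilon 1$ throughout, and the $2d$-periodic piece $e^{i\pi pn^2/d}$ is constant on each residue class. Thus on each class the sum becomes a geometric series in $e^{ib(2d)}$ over an interval of length $\sim N/(2d)$, multiplied by a slowly varying weight of modulus $1$ and total variation $O_\epsilon(1)$ (handled by summation by parts). The assumption $|T|\gtrsim\epsilon N$ then forces at least one such sub-sum to be $\gtrsim_\epsilon N/d$, which yields $\|b(2d)/(2\pi)\|_{\R/\Z}\les_\epsilon 1/N$ and hence $\dist(b,(2\pi/m)\Z)\les_\epsilon 1/N$ with the same $m=2d$.

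The main obstacle is the middle paragraph, namely the sharpening from $|\gamma|\les_\epsilon 1/N$ to $|\gamma|\les_\epsilon d/N^2$. One must verify that the wide separation $|k_1-k_2|\asymp_\epsilon N/d$ does not cause $\gamma(k_1-k_2)$ to wrap around in $\R/\Z$; this is where the preliminary $1/N$ bound on $\gamma$ and the positive density of $M$ interact, and it represents the quadratic gain over the purely linear Weyl inequality.
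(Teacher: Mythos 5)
Your proof takes a genuinely different route from the paper: the paper proves Proposition \ref{prop:Weyl 1D} by a one-line citation to \cite[Proposition 4.3]{GreenTao12Quantitative} (tested against the character $e^{2\pi ix}$), while you give a self-contained elementary argument via Weyl differencing and two rounds of pigeonholing. That is a perfectly reasonable alternative; the overall skeleton (differencing to produce a large Bohr-type set $H$, extracting a rational approximation of $a/\pi$, then bootstrapping from a $1/N$ to a $1/N^2$ error, and finally freezing the quadratic part and doing Abel summation for $b$) is sound, and the last step for the linear coefficient is correctly executed.

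However, there is a genuine gap in the bootstrap, and it is exactly the step you flag as ``the main obstacle.'' The inequality ``$|\gamma(k_1-k_2)|\lesssim_\epsilon 1/d<1/2$'' is false in general: the implied constant is $\epsilon$-dependent and $d$ can be as small as $1$ or $2$, so $|\gamma(k_1-k_2)|$ can exceed $1/2$ and the identity $|\gamma(k_1-k_2)|=\|\gamma(k_1-k_2)\|_{\R/\Z}$ fails. More importantly, the fix you suggest in the parenthetical --- ``refining via intermediate elements'' --- does not deliver the quadratic gain: if you chain through consecutive elements of $M$ with gaps $O_\epsilon(1)$, each link only gives $|\gamma \cdot\text{gap}|\lesssim_\epsilon 1/N$, and summing over $\sim N/d$ links recovers only $|\gamma(k_1-k_2)|\lesssim_\epsilon 1/d$, i.e., the preliminary $|\gamma|\lesssim_\epsilon 1/N$ and nothing better. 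The correct repair is different: since $|\gamma|\lesssim_\epsilon 1/N$ and $M\subset[-O(N/d),O(N/d)]$, the real numbers $\alpha r+\gamma k$ for $k\in M$ lie in an interval of length $O_\epsilon(1)$, hence their nearest integers take only $O_\epsilon(1)$ distinct values. Pigeonhole to a subset $M'\subset M$ with $|M'|\gtrsim_\epsilon N/d$ on which the nearest integer is a fixed $m^*$; then for $k_1,k_2\in M'$ one has the genuine real-number inequality $|\gamma(k_1-k_2)|=|(\alpha r+\gamma k_1-m^*)-(\alpha r+\gamma k_2-m^*)|\le 2C_\epsilon/N$ with no wrap-around, and choosing $k_1,k_2\in M'$ at distance $\gtrsim_\epsilon N/d$ (possible since $|M'|\gtrsim_\epsilon N/d$) gives $|\gamma|\lesssim_\epsilon d/N^2$ as desired. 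With this replacement your argument is complete.
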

\begin{proof}This is a quantitative version of Weyl's equidistribution theorem; for an explicit proof, see, e.g., \cite[Proposition 4.3]{GreenTao12Quantitative} testing the equidistribution with $e^{2\pi ix}$.
\end{proof}
\begin{lem}\label{lem:Weyl bound implies Lipschitz}
Let $\epsilon>0$, $d\in\N$, and $m\in\N$. For $\delta\ll_{\epsilon,d,m}1$, the following holds: Let $\theta=(\theta_1,\ldots,\theta_d)\in\R^d$, $\eta\in\R$, and $N_1,\ldots,N_d\in\N$. Assume that
\begin{equation}\label{eq:Weyl bound implies Lipschitz}
\#\{n\in[N_1]\times\cdots\times[N_d]:\dist(\theta\cdot n+\eta,\frac{2\pi}{m}\Z)\le\delta\}\ge\epsilon N_1\cdots N_d.
\end{equation}
Then, for $j=1,\ldots,d$, there exists an integer $m'=O_{\epsilon,d,m}(1)$ such that
\begin{equation}\label{eq:Weyl...}
\dist(\theta_j,\frac{2\pi}{m'}\Z)\les_{\epsilon,d,m}\frac{\delta}{N_j}.
\end{equation}
\end{lem}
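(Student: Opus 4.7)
The plan is to convert the concentration hypothesis into a Fourier/exponential-sum estimate by Fejér smoothing, factorize over the coordinates of the box, and then apply the one-dimensional Proposition~\ref{prop:Weyl 1D} in each coordinate.

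\textbf{Smoothing.} I would fix $K=K(\epsilon,m,d)\in\N$ large and set $\delta=c/(mK)$ for a small absolute $c>0$. Let $F_K$ be the Fej\'er kernel on $\R/2\pi\Z$ and define the $\frac{2\pi}{m}$-periodic nonnegative function $g(x):=F_K(mx)$, which has Fourier expansion $g(x)=\sum_{|k|\le K}\hat F_K(k)e^{imkx}$ with $\hat F_K(0)=1$ and $|\hat F_K(k)|\le 1$. For $c$ small enough one has $g\gtrsim K$ on the $\delta$-neighborhood of $\frac{2\pi}{m}\Z$, so \eqref{eq:Weyl bound implies Lipschitz} yields
\[
\sum_{n\in[N_1]\times\cdots\times[N_d]} g(\theta\cdot n+\eta)\gtrsim K\epsilon\, N_1\cdots N_d.
\]

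\textbf{Fourier expansion and pigeonhole.} Expanding $g$ and writing $S_k:=\sum_{n\in[N_1]\times\cdots\times[N_d]} e^{imk\theta\cdot n}$, we obtain
\[
\Big|\sum_{|k|\le K}\hat F_K(k)e^{imk\eta}S_k\Big|\gtrsim K\epsilon\, N_1\cdots N_d.
\]
The $k=0$ contribution equals $\prod_j(2N_j+1)\le 3^d\prod_j N_j$, which is negligible compared to $K\epsilon\prod_j N_j$ once $K\ge C_{\epsilon,d}$. Pigeonholing over the remaining $\le 2K$ modes produces some $k$ with $1\le|k|\le K$ and $|S_k|\gtrsim_\epsilon N_1\cdots N_d$.

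\textbf{Factorization and one-dimensional Weyl.} Since $S_k=\prod_{j=1}^d\sum_{n_j\in[N_j]} e^{imk\theta_jn_j}$ is a product of $d$ factors, each bounded by $2N_j+1$, the lower bound on $|S_k|$ propagates coordinatewise: for each $j$, $|\E_{n_j\in[N_j]}e^{imk\theta_jn_j}|\gtrsim_{\epsilon,d}1$. Applying Proposition~\ref{prop:Weyl 1D} with $a=0$ and $b=mk\theta_j$ yields an integer $m''=O_{\epsilon,d}(1)$ with $\dist(mk\theta_j,\tfrac{2\pi}{m''}\Z)\les_{\epsilon,d}1/N_j$. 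Setting $m':=mkm''$, which is an integer of size $O_{\epsilon,m,d}(1)$ since $|k|\le K=O_{\epsilon,d}(1)$, gives $\dist(\theta_j,\tfrac{2\pi}{m'}\Z)\les_{\epsilon,m,d}1/N_j$, i.e.\ \eqref{eq:Weyl...}.

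\textbf{Main obstacle.} There is no deep obstruction: the substantive input is the one-dimensional quantitative Weyl bound, and the rest is a parameter cascade in which one chooses $K\gg_{\epsilon,d}1$ first (to absorb the $k=0$ Fourier mass) and then $\delta\sim 1/(mK)$. The only point requiring a little care is ensuring that the coordinatewise lower bounds extracted from the factorization of $S_k$ hold with constants independent of $j$, which is automatic because each factor of the product is at most $2N_j+1$.
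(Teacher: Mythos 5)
Your proof is correct, and it takes a genuinely different route from the paper. The paper's argument is purely combinatorial: for $d=1$ it pigeonholes to find a small step $h\in[2/\epsilon]$ with $\dist(h\theta,\frac{2\pi}{m}\Z)\le 2\delta$, then runs a contradiction argument showing that $\dist(h'\theta,\frac{2\pi}{m}\Z)$ cannot escape the small range for $h'\in h\Z\cap[\delta N_1]$, and concludes with $m'=hm$; for general $d$ it pigeonholes over all but one coordinate to reduce to the 1D case. Your proof instead smooths the sharp cutoff with a Fej\'er kernel $g(x)=F_K(mx)$, converting the counting hypothesis into an exponential-sum lower bound, discards the $k=0$ mode by choosing $K\gg_{\epsilon,d}1$, pigeonholes over the remaining Fourier modes, and exploits the exact product structure $S_k=\prod_j\sum_{n_j}e^{imk\theta_j n_j}$ of the box to obtain a coordinatewise lower bound, which is then fed into Proposition~\ref{prop:Weyl 1D} (with $a=0$). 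This buys a cleaner treatment of general $d$ (no induction on coordinates; the factorization handles all $d$ at once) and systematically reuses the quantitative Weyl proposition as a black box, at the cost of being less self-contained: the paper's route does not actually invoke Proposition~\ref{prop:Weyl 1D} at all. Both are sound; the parameter cascade you describe — first $K\gg_{\epsilon,d}1$, then $\delta\sim 1/(mK)$, then $m'=m|k|m''$ with $|k|\le K$ and $m''=O_{\epsilon,d}(1)$ — correctly produces a $\delta$ depending only on $(\epsilon,d,m)$ and an $m'=O_{\epsilon,m,d}(1)$, as required.
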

\begin{proof}
First, we show the case $d=1$. If $N_1\les_{\epsilon,m}1$, \eqref{eq:Weyl...} holds with $m'=1$; we assume $N_1\gg_{\epsilon,m}1$. Then, by the pigeonhole principle, there exist $n_1,n_2\in[N_1]$ such that $h=n_1-n_2\in[2/\epsilon]$ and $\dist(\theta n_j+\eta,\frac{2\pi}{m}\Z)\le\delta$ for $j=1,2$. Then, $\dist(h\theta,\frac{2\pi}{m}\Z)\le 2\delta$ holds.
If there exists $h'\in h\Z\cap[\epsilon N_1]$ such that $2\delta<\dist(h'\theta,\frac{2\pi}{m}\Z)\le 4\delta$, then for every $n_0\in[N_1]$, at most one member of $n_0,n_0+h',\ldots,n_0+\lfloor\frac{1}{m\delta}\rfloor h'$ satisfies $\dist(\theta n+\eta,\frac{2\pi}{m}\Z)\le\delta$. For $\delta\ll\epsilon/m$, this contradicts \eqref{eq:Weyl bound implies Lipschitz}.
Thus, for $h'\in h\Z\cap[\epsilon N_1]$, we have $\dist(\theta h',\frac{2\pi}{m}\Z)\le2\delta$. This implies $\dist(\theta h,\frac{2\pi}{m}\Z)\les_{\epsilon,m}\frac{\delta}{N_1}$; \eqref{eq:Weyl...} holds with $m'=hm$, concluding the case $d=1$.

Now consider arbitrary $d\in\N$. Denote $\theta=(\theta_{\le d-1},\theta_d)\in\R^{d-1}\times\R$. By the pigeonhole principle, there exists $n_{\le d-1}\in[N_1]\times\cdots\times[N_{d-1}]$ such that $\#\{n_d\in[N_d]:\dist(\theta_{\le d-1}\cdot n_{\le d-1}+\theta_d n_d+\eta,\frac{2\pi}{m}\Z)\le\delta\}\gtrsim_{\epsilon,d} N_d$. Thus, $\dist(\theta_d,\frac{2\pi}{m'}\Z)\les_{\epsilon,m,d}\frac{\delta}{N_d}$ holds for some $m'=O_{\epsilon,m,d}(1)$. This concludes \eqref{eq:Weyl...} for $j=d$; proceeding similarly for $j=1,\ldots,d-1$ finishes the proof.
\end{proof}
\begin{lem}\label{lem:convex Weyl}
Let $d\in\N$ and $\epsilon>0$. There exists $m=O_{d,\epsilon}(1)$ satisfying the following:

Let $w_1,\ldots,w_d\in\R^d$ be linearly independent. Let $L,Q:\R^d\rightarrow\R$ be linear and quadratic forms, respectively. For $N_1,\ldots,N_d\gg_{\epsilon,d}1$ and any lattice-convex set $\Omega\subset[N_1]w_1+\cdots+[N_d]w_d$ such that
\begin{equation}\label{eq:convex Weyl assump}
\left|\sum_{x\in\Omega}e^{i(Q+L)(x)}\right|\ge\epsilon N_1\cdots N_d,
\end{equation}
for each $j,k=1,\ldots,d$, we have
\begin{equation}\label{eq:convex Weyl Q}
\dist(D_{w_j}D_{w_k}Q,\frac{2\pi}{m}\Z)\les_{\epsilon,d}\frac{1}{N_jN_k}.
\end{equation}
and
\begin{equation}\label{eq:convex weyl L}
\dist(D_{w_j}L,\frac{2\pi}{m}\Z)\les_{\epsilon,d}\frac{1}{N_j}.
\end{equation}
Here, $D_w$ denotes the directional derivative for the direction $w\in\R^d$.
\end{lem}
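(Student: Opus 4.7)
I would reduce to an integer parametrization: each $x \in [N_{1}]w_{1}+\cdots+[N_{d}]w_{d}$ is written uniquely as $x=\sum_{l}n_{l}w_{l}$ with $n_{l}\in[N_{l}]$, so $\Omega$ corresponds to $\Omega':=\{n\in\prod_{l}[N_{l}]:\sum_{l}n_{l}w_{l}\in\Omega\}$, lattice-convex with $|\Omega'|\ge\epsilon\prod_{l}N_{l}$, and $\phi$ takes the form $\sum_{l}\alpha_{l}n_{l}^{2}+\sum_{l<l'}\beta_{ll'}n_{l}n_{l'}+\sum_{l}\gamma_{l}n_{l}$ with $\alpha_{l}=\tfrac{1}{2}D_{w_{l}}^{2}Q$, $\beta_{ll'}=D_{w_{l}}D_{w_{l'}}Q$, and $\gamma_{l}=D_{w_{l}}L$. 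Fixing $j$ and slicing $\Omega'$ along the $n_{j}$-axis, each slice $I(n^{\perp})\subset[N_{j}]$ is an interval by convexity, and a Fubini-plus-Markov pigeonhole applied to \eqref{eq:convex Weyl assump} yields a set $T_{j}\subset\prod_{l\ne j}[N_{l}]$ of density $\gtrsim_{\epsilon}1$ such that every $n^{\perp}\in T_{j}$ satisfies $|I(n^{\perp})|\gtrsim_{\epsilon}N_{j}$ and $\bigl|\sum_{n_{j}\in I(n^{\perp})}e^{i\phi(n)}\bigr|\gtrsim_{\epsilon}|I(n^{\perp})|$.

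On each slice the phase is quadratic in $n_{j}$ with $n_{j}^{2}$-coefficient $\alpha_{j}$ and $n_{j}$-coefficient $\beta_{j}(n^{\perp}):=\gamma_{j}+\sum_{l\ne j}\beta_{jl}n_{l}$, so Proposition \ref{prop:Weyl 1D} applies. After a further pigeonhole to fix a single denominator $m_{j}=O_{\epsilon}(1)$, for every $n^{\perp}$ in the thinned $T_{j}$ one has
\[
\dist\bigl(\alpha_{j},\tfrac{2\pi}{m_{j}}\Z\bigr)\les_{\epsilon}\tfrac{1}{N_{j}^{2}},\qquad\dist\bigl(\beta_{j}(n^{\perp}),\tfrac{2\pi}{m_{j}}\Z\bigr)\les_{\epsilon}\tfrac{1}{N_{j}}.
\]
The first bound is precisely \eqref{eq:convex Weyl Q} in the diagonal case $j=k$. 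Feeding the second bound into Lemma \ref{lem:Weyl bound implies Lipschitz} with $\theta=(\beta_{jl})_{l\ne j}$ and $\eta=\gamma_{j}$ --- permissible since the input size $O(1/N_{j})$ sits below the threshold $\delta$ once $N_{j}\gg_{\epsilon,d}1$ --- produces a provisional $m'=O_{\epsilon}(1)$ with $\dist(\beta_{jl},\tfrac{2\pi}{m'}\Z)\les_{\epsilon}1/N_{l}$.

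The provisional bound is a factor $N_{j}$ weaker than required, so the key move is to iterate. Writing $\beta_{jl}=\tfrac{2\pi p_{jl}}{m'}+\beta_{jl}'$ with $|\beta_{jl}'|\les_{\epsilon}1/N_{l}$, substituting into $\beta_{j}(n^{\perp})$, and using $\tfrac{2\pi}{m_{j}}\Z+\tfrac{2\pi}{m'}\Z=\tfrac{2\pi}{M}\Z$ with $M:=\lcm(m_{j},m')=O_{\epsilon}(1)$ to absorb the rational piece, the second bound becomes
\[
\dist\bigl(\gamma_{j}+\textstyle\sum_{l\ne j}\beta_{jl}'n_{l},\,\tfrac{2\pi}{M}\Z\bigr)\les_{\epsilon}\tfrac{1}{N_{j}}\quad\text{for all }n^{\perp}\in T_{j}.
\]
Now $|\sum_{l\ne j}\beta_{jl}'n_{l}|\les d$ on $\prod_{l\ne j}[N_{l}]$, so after normalizing $\gamma_{j}$ modulo $\tfrac{2\pi}{M}\Z$ into $[0,\tfrac{2\pi}{M})$ and pigeonholing over the $O(d)$ residues $\tfrac{2\pi k_{0}}{M}$ that the left-hand side can approach, the condition reduces to a \emph{single} slab $|\theta'\cdot n^{\perp}+\eta'|\les_{\epsilon}1/N_{j}$, with $\theta'=(\beta_{jl}')_{l\ne j}$ and $\eta'=\gamma_{j}-\tfrac{2\pi k_{0}}{M}$, holding on a density $\gtrsim_{\epsilon}1$ subset of $\prod_{l\ne j}[N_{l}]$. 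A direct slab-versus-box volume computation (slicing in the index $l^{*}=\arg\max_{l}|\beta_{jl}'|N_{l}$) gives $\max_{l\ne j}|\beta_{jl}'|N_{l}\les_{\epsilon}1/N_{j}$ and $|\eta'|\les_{\epsilon}1/N_{j}$; combined with $\beta_{jl}-\beta_{jl}'\in\tfrac{2\pi}{M}\Z$, these imply \eqref{eq:convex Weyl Q} for $j\ne k$ and \eqref{eq:convex weyl L}.

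The hard part is the factor-$N_{j}$ loss in a direct use of Lemma \ref{lem:Weyl bound implies Lipschitz}, which cannot detect the sub-constant input scale. Quotienting out the rational part of $\beta_{jl}$ first removes the modular replicas inside the box, converting the residual constraint into a genuine single-slab condition whose volume estimate reads $|\beta_{jl}'|N_{j}N_{l}=O_\epsilon(1)$ --- precisely the required product scale $1/(N_{j}N_{l})$.
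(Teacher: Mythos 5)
Your proposal follows the paper's initial reduction — parametrize by integers $n \in \prod_l[N_l]$, slice along $n_j$, apply Proposition~\ref{prop:Weyl 1D} to each good slice to identify the coefficients $\alpha_j$ and $\beta_j(n^\perp)$, and then treat the resulting linear constraint on $n^\perp$ — but you add a crucial second pass that the paper does not make explicit.

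You are right that a literal application of Lemma~\ref{lem:Weyl bound implies Lipschitz} to the constraint $\dist(\beta_j(n^\perp),\tfrac{2\pi}{m_j}\Z)\les 1/N_j$ on a dense subset of $\prod_{l\ne j}[N_l]$ produces only $\dist(D_{w_j}D_{w_l}Q,\tfrac{2\pi}{m'}\Z)\les 1/N_l$: the stated conclusion of that lemma is $\les 1/N_l$ no matter how small the hypothesis threshold is, whereas the target is $\les 1/(N_jN_l)$. This is exactly the step the paper's proof compresses into the sentence ``applying~\eqref{eq:Weyl bound implies Lipschitz} to~\eqref{eq:Weyl DQ+L} yields \eqref{eq:convex Weyl Q} for $(1,k)$.'' What the authors appear to rely on (here and in at least one other application, e.g.\ the derivation of~\eqref{eq:B[u,la] <u,la} from~\eqref{eq:many la saturating C_la}) is the unstated refinement that the conclusion of Lemma~\ref{lem:Weyl bound implies Lipschitz} scales \emph{linearly} with the actual distance-threshold $\delta_0\le\delta$ in the hypothesis, i.e.\ $\dist(\theta_l,\tfrac{2\pi}{m'}\Z)\les_{\epsilon,m,d}\delta_0/N_l$; that sharper version does follow from essentially the same argument as the one given for Lemma~\ref{lem:Weyl bound implies Lipschitz}, but it is not what the lemma asserts. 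Your fix — apply the weak lemma once to split $\beta_{jl}=\tfrac{2\pi p_{jl}}{m'}+\beta_{jl}'$ with $|\beta_{jl}'|\les 1/N_l$, pass to the common lattice $\tfrac{2\pi}{M}\Z$ to absorb the rational part, pigeonhole to a single residue (legitimate since $\sum_l|\beta_{jl}'|N_l=O(d)$), and then read the residual single-slab condition directly by slicing in the coordinate $l^*$ maximizing $|\beta_{jl}'|N_l$ — is a valid and self-contained way to recover the required product bound $|\beta_{jl}'|\les_\epsilon 1/(N_jN_l)$, and it gives $|\eta'|\les 1/N_j$, hence~\eqref{eq:convex weyl L}, at the same stroke. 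Your derivation of~\eqref{eq:convex weyl L} is also organized a bit differently from the paper's (the paper first establishes all of~\eqref{eq:convex Weyl Q} and then subtracts the now-controlled $D_{w_j}Q(x)$ from~\eqref{eq:Weyl DQ+L}), but the content is the same. In short, your argument is correct and in fact spells out a step that the paper, read literally, elides.
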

\begin{proof}
Let us assume $j=1$ for simplicity. Let $E$ be the set
\[
E:=\left\{(n_2,\ldots,n_d)\in[N_2]\times\cdots\times[N_d]:\left|\sum_{n_1\in[N_1]:x=n_1w_1+\cdots+n_dw_d\in\Omega}e^{i(Q+L)(x)}\right|\gtrsim_{\epsilon,d} N_1\right\}.
\]
We keep denoting $x=n_1w_1+\cdots+n_dw_d$. By \eqref{eq:convex Weyl assump}, we have
\[
\#E\gtrsim_{\epsilon,d} N_2\cdots N_d.
\]
For each $(n_2,\ldots,n_d)\in E$, by Proposition \ref{prop:Weyl 1D}, there exists $m=O(1)$ such that
\begin{equation}\label{eq:Weyl DDQ}
\dist\left(D_{w_1}D_{w_1}Q(x),\frac{2\pi}{m}\Z\right)\les_{\epsilon,d}\frac{1}{N_1^2}
\end{equation}
and
\begin{equation}\label{eq:Weyl DQ+L}
\dist\left(D_{w_1} Q(x)+D_{w_1}L,\frac{2\pi}{m}\Z\right)\les_{\epsilon,d}\frac{1}{N_1}.
\end{equation}
\eqref{eq:convex Weyl Q} for $(j,k)=(1,1)$ is immediate from \eqref{eq:Weyl DDQ}. For $k\neq 1$, since $N_1,N_k\gg_{\epsilon,d}1$, applying Lemma \ref{lem:Weyl bound implies Lipschitz} to \eqref{eq:Weyl DQ+L} yields \eqref{eq:convex Weyl Q} for $(1,k)$.

Now that we showed \eqref{eq:convex Weyl Q}, for $x\in\Omega$, we have $\dist(D_{w_1}Q(x),\frac{2\pi}{m}\Z)\les_{\epsilon,d}\frac{1}{N_1}$. Plugging this into \eqref{eq:Weyl DQ+L} yields \eqref{eq:convex weyl L}, finishing the proof.
\end{proof}
\begin{defn}
Let $d\in\N$. For a symmetric convex set $C\subset\R^d$, $d\in\N$, and $u\in\R^d$, we denote $\norm{u}_C:=\inf\{\rho>0:u\in\rho C\}$. 
\end{defn}
\begin{cor}\label{cor:convex Weyl}
Let $d\in\N$ and $\epsilon>0$. There exists $m=O_{d,\epsilon}(1)$ satisfying the following:

Let $\La\subset\R^d$ be a lattice of full rank. Let $\Omega=\ul\Omega\cap\La\subset\R^d$ be a thick lattice-convex set. Let $\Omega'\subset\Omega$ be a lattice-convex set. Let $L,Q:\R^d\rightarrow\R$ be linear and quadratic forms, respectively. If
\[
\left|\sum_{x\in\Omega'}e^{i(Q+L)(x)}\right|\ge\epsilon\#\Omega,
\]
then there exists an integer $m=O_{\epsilon,d}(1)$ such that for $x\in\La$,
\begin{equation}\label{eq:convex weyl cor Q}
\dist(Q(x),\frac{2\pi}{m}\Z)\les_{\epsilon,d}\norm{x}_{\ul\Omega-\ul\Omega}^2.
\end{equation}
Furthermore, if $Q=0$ holds, there exists an integer $m=O_{\epsilon,d}(1)$ such that for $x\in\La$,
\begin{equation}\label{eq:convex weyl cor L}
\dist(L(x),\frac{2\pi}{m}\Z)\les_{\epsilon,d}\norm{x}_{\ul\Omega-\ul\Omega}.
\end{equation}
\end{cor}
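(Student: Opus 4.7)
The plan is to reduce the general convex setting to a box setting via Lemma \ref{lem:convex Mahler}, invoke Lemma \ref{lem:convex Weyl} there, and then convert the derivative bounds into the desired pointwise estimates by Taylor-expanding $Q$ and $L$ in the basis produced by Mahler.

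First I would apply Lemma \ref{lem:convex Mahler} to $\Omega$ to obtain a generating tuple $(w_1,\ldots,w_d)\in\La^d$, integers $N_1,\ldots,N_d\gg 1$, and a lattice point $x_0\in\La$ with
\[
[N_1]w_1+\cdots+[N_d]w_d\subset\Omega-x_0\subset[CN_1]w_1+\cdots+[CN_d]w_d,\qquad C=O_d(1).
\]
In particular $\#\Omega\sim_d N_1\cdots N_d$, and taking convex hulls (and using thickness so that the $O(1)$ discretization gap is absorbed) yields that $\ul\Omega-\ul\Omega$ is comparable, up to $O_d(1)$ dilations, to the symmetric box $\{\sum_j t_jw_j:|t_j|\le N_j\}$. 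Consequently
\[
\norm{x}_{\ul\Omega-\ul\Omega}\sim_d\max_j\frac{|n_j|}{N_j}\qquad\text{for}\;x=\sum_j n_jw_j\in\La.
\]

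Next I would translate the summation variable by $-x_0$. Since $Q$ is homogeneous quadratic and $L$ is homogeneous linear, the phase on $\Omega'-x_0$ becomes $Q(y)+\tilde L(y)+\mathrm{const}$, where $\tilde L(y)=L(y)+B_Q(y,x_0)$ with $B_Q$ the symmetric bilinear form polarizing $Q$. The modulus of the exponential sum is unchanged, and $\Omega'-x_0\subset[CN_1]w_1+\cdots+[CN_d]w_d$ is lattice-convex. Applying Lemma \ref{lem:convex Weyl} with $N_j$ replaced by $CN_j$ (and with new threshold $\epsilon/C^d$) produces an integer $m=O_{\epsilon,d}(1)$ together with
\[
\dist\bb{D_{w_j}D_{w_k}Q,\tfrac{2\pi}{m}\Z}\les_{\epsilon,d}\tfrac{1}{N_jN_k},\qquad \dist\bb{D_{w_j}\tilde L,\tfrac{2\pi}{m}\Z}\les_{\epsilon,d}\tfrac{1}{N_j}.
\]
The second-derivative bound depends only on $Q$, so it holds for the original $Q$ as well.

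Finally, for $x=\sum_j n_jw_j\in\La$, I would expand $2Q(x)=\sum_{j,k}n_jn_k\,D_{w_j}D_{w_k}Q$ and use $\dist(ra,\tfrac{2\pi}{m}\Z)\le|r|\dist(a,\tfrac{2\pi}{m}\Z)$ for $r\in\Z$ together with the triangle inequality:
\[
\dist\bb{2Q(x),\tfrac{2\pi}{m}\Z}\les_d\sum_{j,k}\frac{|n_jn_k|}{N_jN_k}\les_d\lr{\max_j\frac{|n_j|}{N_j}}^2\sim_d\norm{x}_{\ul\Omega-\ul\Omega}^2,
\]
so replacing $m$ by $2m$ gives \eqref{eq:convex weyl cor Q}. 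When $Q=0$ we have $B_Q=0$ and thus $\tilde L=L$, and the analogous first-order estimate applied to $L(x)=\sum_j n_j\,D_{w_j}L$ yields \eqref{eq:convex weyl cor L}. The main obstacle I expect is the compatibility check between the continuous gauge $\norm{\cdot}_{\ul\Omega-\ul\Omega}$ and the discrete box gauge produced by Mahler; this amounts to a short lemma, made essentially free by the thickness assumption $N_j\gg 1$, which lets the $O(1)$ boundary discrepancy between $\ul\Omega$ and $\mathrm{conv}(\Omega)$ be absorbed into the final $O_d(1)$ comparability constants.
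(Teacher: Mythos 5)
Your proof is correct and follows essentially the same route as the paper's: apply Lemma \ref{lem:convex Mahler} to reduce to a box, translate, invoke Lemma \ref{lem:convex Weyl}, and then convert the second-derivative/first-derivative bounds into the pointwise estimates by expanding $Q$ and $L$ in the basis $(w_j)$ and using subadditivity of $\dist(\cdot,\tfrac{2\pi}{m}\Z)$. The paper's own proof is just three sentences and leaves exactly the steps you spelled out (the cross term $B_Q(\cdot,x_0)$, the Taylor-expansion-plus-triangle-inequality step, and the comparison of $\norm{\cdot}_{\ul\Omega-\ul\Omega}$ with the Mahler box gauge) implicit, so your write-up is the detailed version of the intended argument; the gauge comparison you flagged at the end is indeed a genuine but short point, handled exactly as you suggest by combining the John-ellipsoid step in the proof of Lemma \ref{lem:convex Mahler} with thickness.
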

\begin{proof}
Since $\Omega$ is thick, by Lemma \ref{lem:convex Mahler}, there exist $(w_1,\ldots,w_d)\in\La^d$ generating $\La$, $N_1,\ldots,N_d\gg_{\epsilon,d}1$, and $x_0\in\La$ such that $N_1\cdots N_d\les_d\#\Omega$ and
\[
\Omega-x_0\subset[N_1]w_1+\cdots+[N_d]w_d.
\]
Up to a translation, we may assume $x_0=0$. Now applying Lemma \ref{lem:convex Weyl} finishes the proof.
\end{proof}
\subsection*{Bohr sets and the inverse Gowers $U^3$ theorem}
\begin{defn}[Locally polynomial modulation]
Let $G$ be an additive group. A function $\phi:G\rightarrow\T\cup\{0\}$ is said to be \emph{a locally polynomial modulation of degree at most $k\ge1$} if $\Alt_{\eta_1,\ldots,\eta_{k+1}}\phi(x)\in\{0,1\}$ holds for every $x,\eta_1,\ldots,\eta_{k+1}\in G$.

In particular, locally polynomial modulations of degrees at most $k=1,2$ are called locally linear and quadratic modulations, respectively.

$\phi$ is \emph{supported on} $S\subset G$ if $\supp(\phi)=S$.
\end{defn}
\begin{lem}\label{lem:lift-box}
Let $k,d\in\N$. For every locally polynomial modulation $\phi:\Z^d\rightarrow\T\cup\{0\}$ of degree at most $k$ supported on $k\cdot\{0,1\}^d$, there exists a polynomial $F:\R^d\rightarrow\R$ of degree at most $k$ such that $\phi(x)=e^{iF(x)}$ holds for $x\in k\cdot\{0,1\}^d$.

Moreover, for any two such $F$ and $F'$, $F-F':\Z^d\rightarrow 2\pi\Z$ holds.
\end{lem}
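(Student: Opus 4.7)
The plan is to realize $\phi$ as the exponential of a real polynomial by applying the Newton forward-difference expansion to any pointwise lift and then using the locally polynomial hypothesis to show that the higher-order finite differences are integer multiples of $2\pi$.

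Concretely, note that $k\cdot\{0,1\}^d=\{0,1,\ldots,k\}^d$ and pick any real-valued lift $G:\{0,\ldots,k\}^d\to\R$ with $e^{iG}=\phi$. Write $\Delta_jG(x)=G(x+e_j)-G(x)$ and $\Delta^\alpha=\Delta_1^{\alpha_1}\cdots\Delta_d^{\alpha_d}$, and define
\[
F(x):=\sum_{\alpha\in\N^d,\,|\alpha|\le k}(\Delta^\alpha G)(0)\binom{x_1}{\alpha_1}\cdots\binom{x_d}{\alpha_d},
\]
which is a polynomial of total degree $\le k$ on $\R^d$. Iterating the one-variable Newton identity in each coordinate yields $G(x)=\sum_{\alpha\le x}(\Delta^\alpha G)(0)\binom{x_1}{\alpha_1}\cdots\binom{x_d}{\alpha_d}$ for $x\in\{0,\ldots,k\}^d$, so on the cube
\[
G(x)-F(x)=\sum_{\alpha\le x,\,|\alpha|>k}(\Delta^\alpha G)(0)\binom{x_1}{\alpha_1}\cdots\binom{x_d}{\alpha_d}.
\]
It therefore suffices to show that $(\Delta^\alpha G)(\gamma)\in 2\pi\Z$ whenever $|\alpha|>k$ and $\gamma+\alpha\le(k,\ldots,k)$ componentwise.

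For the critical case $|\alpha|=k+1$, write $\alpha=\sum_{j=1}^{k+1}e_{i_j}$ as a multiset sum of standard basis vectors (each $e_i$ appearing exactly $\alpha_i$ times) and set $\eta_j:=e_{i_j}$. Grouping the cube $\epsilon\in\{0,1\}^{k+1}$ by the image $\beta=\sum_j\epsilon_j e_{i_j}$, a direct multinomial count produces
\[
\Alt_{\eta_1,\ldots,\eta_{k+1}}\phi(\gamma)=\prod_{\beta\le\alpha}\phi(\gamma+\beta)^{(-1)^{|\beta|}\binom{\alpha}{\beta}}=e^{(-1)^{k+1}i(\Delta^\alpha G)(\gamma)}.
\]
The bound $\gamma+\alpha\le(k,\ldots,k)$ puts every $\gamma+\beta$ inside $\{0,\ldots,k\}^d$, so no factor vanishes, and the locally polynomial hypothesis forces the product to be $1$; hence $(\Delta^\alpha G)(\gamma)\in 2\pi\Z$. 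For $|\alpha|>k+1$, split $\alpha=\alpha'+\alpha''$ with $|\alpha'|=k+1$ and $\alpha''\ge 0$, and expand
\[
(\Delta^\alpha G)(\gamma)=\sum_{\beta\le\alpha''}(-1)^{|\alpha''|-|\beta|}\binom{\alpha''}{\beta}(\Delta^{\alpha'}G)(\gamma+\beta),
\]
where each summand lies in $2\pi\Z$ by the previous case since $(\gamma+\beta)+\alpha'\le\gamma+\alpha\le(k,\ldots,k)$. This yields $e^{iF}=\phi$ on the cube.

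For the moreover part, if $F,F'$ are two such polynomials then $P:=(F-F')/(2\pi)$ is a polynomial of degree $\le k$ with $P(\{0,\ldots,k\}^d)\subset\Z$. The standard simplex $\{\alpha\in\N^d:|\alpha|\le k\}$ is an interpolation set for polynomials of degree $\le k$ and sits inside $\{0,\ldots,k\}^d$, so Newton's formula gives $P(x)=\sum_{|\alpha|\le k}(\Delta^\alpha P)(0)\binom{x_1}{\alpha_1}\cdots\binom{x_d}{\alpha_d}$ with every coefficient a $\Z$-linear combination of the integer values $P(\beta)$, $\beta\le\alpha$; hence $P(\Z^d)\subset\Z$, i.e.\ $F-F':\Z^d\to 2\pi\Z$. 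The main technical care in the plan is keeping every vertex $\gamma+\beta$ of the alternating product inside the support $\{0,\ldots,k\}^d$, and this is handled uniformly by the componentwise constraint $\gamma+\alpha\le(k,\ldots,k)$ built into the differences that actually appear in the Newton remainder.
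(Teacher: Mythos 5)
Your proof is correct and rests on the same underlying idea as the paper's: Newton forward differences on the cube, with the simplex $\{\alpha\in\N^d:|\alpha|\le k\}$ serving as the poised interpolation set for degree-$\le k$ polynomials. The paper phrases this dually — it constructs $F$ by interpolating $\log\phi$ on the simplex and then argues by induction on $|x|_1$ that both $\phi$ and the locally polynomial modulation $e^{iF}$ are determined by their simplex values — whereas you take an arbitrary lift $G$, truncate its Newton expansion, and show the remainder lies in $2\pi\Z$ via the explicit identity $\Alt_{\eta_1,\ldots,\eta_{k+1}}\phi(\gamma)=e^{(-1)^{k+1}i(\Delta^\alpha G)(\gamma)}$; this is a more computational rendering of the same argument and is a perfectly valid alternative.
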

\begin{proof}
Let $\De:=\{(x_1,\ldots,x_d)\in\N^d:x_1+\cdots+x_d\le k\}$. For $x=(x_1,\ldots,x_d)\in k\cdot\{0,1\}^d\setminus\De$, $\phi(x)$ is determined by $\Alt_{-e_1,\ldots,-e_d}\phi(x)=1$ with each $-e_j$ iterated at most $x_j$ times. Hence, $\phi\mid_\De$ uniquely determines $\phi$.

Similarly, a polynomial $F$ of degree $\deg F\le k$ is uniquely determined by $F\mid_{\De}$ (primarily on $\N^d$, and hence on $\R^d$ since $F$ is a polynomial). Since $\#\De$ equals the dimension of the vector space of polynomials $F:\R^d\rightarrow\R$ of degree $\deg F\le k$, such extension $F\mid_{\De}\rightarrow F$ is well-defined as an isomorphism.

Now consider $F$ extending $\log\phi\mid_\De:\De\rightarrow(-\pi,\pi]$. Then, $e^{iF}\mid_{k\cdot\{0,1\}^d}$ is a locally polynomial modulation and equals to $\phi$ on $\De$, hence $\phi(x)=e^{iF(x)}$ holds for $x\in k\cdot\{0,1\}^d$.

It only remains to check $F-F':\Z^d\rightarrow2\pi\Z$ for any two $F,F'$. Let $\phi=e^{i(F-F')}$ be defined on $\N^d$. Then, we have $\phi\mid_\De=1$ and for $x\in\N^d\setminus\De$, $\phi(x)$ is determined by $\Alt_{-e_1,\ldots,-e_r}\phi(x)=1$ with each $-e_j$ iterated at most $x_j$ times. Thus, $\phi\equiv 1$ holds and this implies $F-F':\N^d\rightarrow 2\pi\Z$. Since $F$ and $F'$ are polynomials, this easily extends to $\Z^d$.
\end{proof}
The following lemma generalizes Lemma \ref{lem:lift-box} to thick lattice-convex sets.
\begin{lem}\label{lem:lift}
Let $d,k\in\N$. Let $\Omega\subset\Z^d$ be a thick lattice-convex set. Let $\phi:\Z^d\rightarrow\T\cup\{0\}$ be a locally polynomial modulation of degree $k$ supported on $\Omega$. Then, $\phi\mid_\Omega$ is a restriction of $e^{iF}$ for some polynomial $F:\R^d\rightarrow\R$  of degree $k$. Moreover, for any two such $F,F'$, $F-F':\Z^d\rightarrow2\pi\Z$ holds.
\end{lem}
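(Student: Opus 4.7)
The plan is to reduce to Lemma~\ref{lem:lift-box} via the Mahler-type sandwich of Lemma~\ref{lem:convex Mahler}, construct a candidate polynomial phase on a single corner cube, and then propagate the pointwise identity through the rest of $\Omega$ by repeated application of the line-wise $\Alt$ identity.

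First, Lemma~\ref{lem:convex Mahler} applied to $\Omega$ and the lattice $\Z^d$ produces a $\Z$-basis $(w_1,\ldots,w_d)$ of $\Z^d$, lengths $N_1,\ldots,N_d\gg_k 1$, a translate $x_0\in\Z^d$, and a constant $C=O_d(1)$ satisfying
\[
x_0+[N_1]w_1+\cdots+[N_d]w_d\subset\Omega\subset x_0+[CN_1]w_1+\cdots+[CN_d]w_d.
\]
Setting $T(n)=x_0+\sum_j n_jw_j$, $\tilde\phi=\phi\circ T$, and $\tilde\Omega=T^{-1}(\Omega)$, the problem reduces to producing a polynomial $\tilde F:\R^d\to\R$ of degree at most $k$ with $\tilde\phi=e^{i\tilde F}$ on $\tilde\Omega$, where now $[N_1]\times\cdots\times[N_d]\subset\tilde\Omega\subset[CN_1]\times\cdots\times[CN_d]$. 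Because $N_j\gg k$, the cube $k\cdot\{0,1\}^d$ lies inside $\tilde\Omega$, so Lemma~\ref{lem:lift-box} applied to $\tilde\phi$ restricted to this cube furnishes a polynomial $\tilde F$ of degree at most $k$ with $\tilde\phi=e^{i\tilde F}$ on $k\cdot\{0,1\}^d$.

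Next I set $\psi:=\tilde\phi\cdot e^{-i\tilde F}$. The multiplicativity $\Alt_{\eta_1,\ldots,\eta_{k+1}}(fg)=\Alt_{\eta_1,\ldots,\eta_{k+1}}(f)\cdot\Alt_{\eta_1,\ldots,\eta_{k+1}}(g)$ combined with the fact that $\Alt_{\eta_1,\ldots,\eta_{k+1}}e^{-i\tilde F}\equiv 1$ (the $(k+1)$-fold iterated signed difference of a polynomial of degree $\le k$ vanishes identically) shows that $\psi$ is itself a locally polynomial modulation of degree at most $k$, supported on $\tilde\Omega$, with $\psi\equiv 1$ on $k\cdot\{0,1\}^d$. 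The goal becomes $\psi\equiv 1$ on $\tilde\Omega$, for which the workhorse is the line-propagation step: whenever $x,x+v,\ldots,x+(k+1)v$ all lie in $\tilde\Omega$ and $\psi\equiv 1$ at the first $k+1$ of those points, the identity $\Alt_{v,\ldots,v}\psi(x)=1$ forces $\psi(x+(k+1)v)=1$. With $v=\pm e_j$ swept coordinate by coordinate, this immediately fills the inner box $[N_1]\times\cdots\times[N_d]$, since every intermediate point of every sweep remains inside that box.

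The main obstacle is the second stage, extending $\psi\equiv 1$ from the inner box out to the rest of $\tilde\Omega\subset[CN_1]\times\cdots\times[CN_d]$. Here I again invoke line propagation with $v=\pm e_j$, advancing one lattice unit at a time; the crucial geometric input is that by lattice-convexity each axis-parallel slice of $\tilde\Omega$ is a single interval of consecutive integers, so once the inner box has been filled the boundary of the known region can be advanced in turn along each coordinate axis without leaving $\tilde\Omega$, and the process terminates in finitely many steps because $\tilde\Omega$ is contained in a fixed finite box. Setting $F:=\tilde F\circ T^{-1}$ completes the existence part. For uniqueness, two candidate polynomials $F,F'$ of degree $\le k$ with $\phi=e^{iF}=e^{iF'}$ on $\Omega$ satisfy $F-F'\in 2\pi\Z$ on $\Omega$, hence in particular on the $T$-image of $k\cdot\{0,1\}^d$, and the uniqueness clause of Lemma~\ref{lem:lift-box} then delivers $F-F':\Z^d\to 2\pi\Z$.
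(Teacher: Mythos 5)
Your first stage (Mahler sandwich via Lemma~\ref{lem:convex Mahler}, building $\tilde F$ on the corner cube $k\cdot\{0,1\}^d$ via Lemma~\ref{lem:lift-box}, and passing to $\psi=\tilde\phi\,e^{-i\tilde F}$) is correct, the filling of the inner box $[N_1]\times\cdots\times[N_d]$ by axis-parallel sweeps is correct, and the uniqueness argument is fine. The gap is in the second stage: restricting the propagation direction to $v=\pm e_j$ does not exhaust $\tilde\Omega$. Lattice-convex sets can have extremal points at which every axis-parallel slice is a singleton, and at such a point no $\Alt_{v,\ldots,v}$ identity with $v\in\{\pm e_1,\ldots,\pm e_d\}$ involves $k+2$ points of $\tilde\Omega$, so line propagation along coordinate directions never determines $\psi$ there. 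Concretely, take $d=2$, $k=1$, and $\tilde\Omega=\Z^2\cap\mathrm{conv}\bigl([-N,N]^2\cup\{(2N,2N)\}\bigr)$, a thick lattice-convex set sandwiched between $[N]^2$ and $[2N]^2$: the row $y=2N$ and the column $x=2N$ each meet $\tilde\Omega$ only in $\{(2N,2N)\}$, so none of $(2N\pm1,2N)$, $(2N,2N\pm1)$ lie in $\tilde\Omega$, and your sweeps stall strictly inside $\tilde\Omega$ without ever touching the corner.

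The paper avoids this by allowing non-axis-parallel propagation and organizing it as a minimality argument rather than as a constructive sweep. It chooses a corner $\xi$ of the convex hull of the set in question, picks $\xi'\in k\cdot\{0,1\}^d$ with $\xi-\xi'\in(k+1)\Z^d$ (possible since $\{0,\ldots,k\}^d$ is a full residue system mod $k+1$), and propagates along $v=(\xi-\xi')/(k+1)\in\Z^d$; by convexity all lattice points on the segment from $\xi'$ to $\xi$ lie in $\tilde\Omega$, so a single $\Alt_{v,\ldots,v}$-identity along this diagonal segment determines $\psi(\xi)$ from $\psi(\xi')$ and the intermediate values. This is then packaged as: a minimal lattice-convex $\Omega^*\supset k\cdot\{0,1\}^d$ carrying two distinct locally polynomial modulations that agree on the cube would still carry them after its corner $\xi$ is removed, contradicting minimality. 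If you replace your axis-parallel sweeps in the second stage by this corner-and-diagonal step and run it as an induction on $\#\tilde\Omega$, the rest of your argument goes through as written.
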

\begin{proof}
By Lemma \ref{lem:convex Mahler}, up to an affine transform, we may assume $k\cdot\{0,1\}^d\subset\Omega$. By Lemma \ref{lem:lift-box}, there exists a polynomial $F$ of degree at most $k$ such that $\phi\mid_{k\cdot\{0,1\}^d}=e^{iF}\mid_{k\cdot\{0,1\}^d}$. The uniqueness part $F-F':\Z^d\rightarrow2\pi\Z$ is also immediate from Lemma \ref{lem:lift-box}.

Next, we show that $\phi\mid_{k\cdot\{0,1\}^d}$ uniquely determines $\phi$. Once we show this, since $\phi\mid_{k\cdot\{0,1\}^d}$ already extends to a polynomial modulation $e^{iF}$, $e^{iF}=\phi$ holds globally on $\Omega$. Let $\Omega^*\supset k\cdot\{0,1\}^d$ be a lattice-convex set of the minimal cardinality such that two locally polynomial modulation $\phi\neq\phi'$ supported on $\Omega^*$ exist. Choose a corner $\xi$ of the convex hull of $\Omega^*$ not lying in $k\cdot\{0,1\}^d$. Choose $\xi'\in k\cdot\{0,1\}^d$ such that $\xi-\xi'\in(k+1)\Z^d$. Then, $\phi(\xi)$ is uniquely determined by the values on $\{\frac{j}{k+1}\xi'+(1-\frac{j}{k+1})\xi:1\le j\le k+1\}\subset\Omega^*\setminus\{\xi\}$. Since $\Omega^*\setminus\{\xi\}$ is also a lattice-convex set, this contradicts the minimality of $\Omega^*$. This concludes the proof.
\end{proof}
\begin{defn}[Bohr set]
Let $d,N\in\N$ and $\rho\in(0,\frac{1}{2})$. For a finite set $S=\{\theta_1,\ldots,\theta_r\}\subset(\R/\Z)^d$, we denote
\[
B^d(S,\rho,N):=\{n\in[N]^d:\max_j\dist(\theta_j\cdot n,\Z)\le\rho\}.
\]
$B^d(S,\rho,N)$ is called a \emph{Bohr set} of \emph{rank} $r$ and \emph{radius} $\rho$.
\end{defn}
We note the following consequence of \cite{GreenTao08(U3)}:
\begin{prop}[Inverse Gowers $U^3$ theorem]\label{prop:inverse U^3}
Let $d,N\in\N$ and $\delta>0$. Let $f:[N]^d\rightarrow\D$ be a function such that
\[
\norm{f}_{U^3}\ge\delta.
\]
Then, there exist a Bohr set $\mc B=B^d(S,\rho,N)$ of rank $O_\delta(1)$ and radius $\rho\gtrsim_{\delta,d}1$, $y\in\Z^d$, and a locally quadratic modulation $\phi:\Z^d\rightarrow\T\cup\{0\}$ supported on $\mc B+y$ such that
\[
\left|\E_{x\in[N]^d}f(x)\overline{\phi(x)}\right|\gtrsim_{\delta,d}1.
\]
\end{prop}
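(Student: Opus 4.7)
The plan is to deduce this $d$-dimensional statement from the one-dimensional inverse $U^3$ theorem of Green-Tao \cite{GreenTao08(U3)} via a Freiman embedding $\Phi:[N]^d\hookrightarrow[M]$ that linearises the multidimensional $8$-cube structure in the defining expression \eqref{eq:Uk def explicit} of the Gowers $U^3$ norm into a one-dimensional configuration, and then to pull back the Bohr set and the locally quadratic phase.

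Concretely, I would set $K=2^{10}N$, $M=K^d$, and define $\Phi(n_1,\ldots,n_d)=\sum_{j=1}^d(n_j+N)K^{j-1}$. The large base $K$ ensures that for any $8$ vertices $x+\omega_1\eta_1+\omega_2\eta_2+\omega_3\eta_3$ ($\omega_i\in\{0,1\}$) that happen to lie in $[N]^d$, the base-$K$ expansions are unique; consequently $\Phi$ is a Freiman isomorphism of order $8$ on $[N]^d$ and every additive identity among such $8$ points is preserved. In particular the unnormalised $U^3$ counts of $f$ and of $\tilde f:=f\circ\Phi^{-1}$ (extended by $0$ outside $\Phi([N]^d)$) coincide, and after normalising as in Definition \ref{defn:Gowers on box} one obtains $\|\tilde f\|_{U^3([M])}\gtrsim_d\delta$.

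Applying the one-dimensional inverse $U^3$ theorem of Green-Tao to $\tilde f$ then yields a Bohr set $\mc B'=B^1(\{\alpha_1,\ldots,\alpha_r\},\rho,M)$ of rank $r=O_\delta(1)$ and radius $\rho\gtrsim_{\delta,d}1$, a shift $y'\in\Z$, and a locally quadratic modulation $\tilde\phi:\Z\to\T\cup\{0\}$ supported on $\mc B'+y'$ with $|\E_{m\in[M]}\tilde f(m)\overline{\tilde\phi(m)}|\gtrsim_{\delta,d}1$. Pulling back: for $m=\Phi(n)$ with $n\in[N]^d$, each condition $\dist(\alpha_j(m-y'),\Z)\le\rho$ rewrites as $\dist(\theta_j\cdot n+c_j,\Z)\le\rho$ with $\theta_j=(\alpha_j,\alpha_j K,\ldots,\alpha_j K^{d-1})\in(\R/\Z)^d$ and constants $c_j\in\R$. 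Choosing $y\in\Z^d$ to absorb the $c_j$ modulo $\Z$ (using that $\theta_j$ has independent coordinate $\alpha_j$ available in the first component), the support of $\phi(n):=\tilde\phi(\Phi(n))$ is contained in $\mc B+y$ for the $d$-dimensional Bohr set $\mc B=B^d(\{\theta_1,\ldots,\theta_r\},\rho,N)$. The Freiman property of $\Phi$ gives $\Alt_{\eta_1,\eta_2,\eta_3}\phi\in\{0,1\}$ whenever all $8$ cube vertices lie in $[N]^d$, so $\phi$ is locally quadratic. A factor $M/\#\Phi([N]^d)=O_d(1)$ in the correlation transfers the required estimate to $[N]^d$.

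The main obstacle is the bookkeeping of normalisations: $\|\cdot\|_{U^3([N]^d)}$ is defined through embedding into a product cyclic group $(\Z/N'\Z)^d$ and dividing by $\|\chi_{[N]^d}\|_{U^3}$, whereas Green-Tao's theorem is formulated directly on $[M]$; one must track how the Freiman isomorphism aligns these two normalisations up to $O_d(1)$ factors, which entails computing $U^3$ norms of explicit box and interval indicators built from Fej\'er-type kernels. A secondary subtlety is that the local-quadratic property of $\phi$ must hold on all of $\Z^d$, not just on $\Phi([N]^d)$; here I would invoke Lemma \ref{lem:lift}, or a Bohr-set analogue thereof, to lift $\tilde\phi$ to a globally quadratic polynomial phase $e^{iQ'}$ on a thick lattice-convex enlargement of $\supp\tilde\phi$, pull back through the affine $\Phi$ to obtain a globally quadratic phase $e^{iQ'\circ\Phi}$ on $\Z^d$, and restrict it to $\mc B+y$, which is automatically locally quadratic in the required sense.
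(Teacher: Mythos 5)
Your plan — a base-$K$ Freiman embedding $\Phi:[N]^d\hookrightarrow[M]$ to linearize $U^3$ structure, then applying the one-dimensional inverse theorem and pulling back — is a genuinely different route from the paper's, and it stalls at the pull-back. The paper simply applies Green--Tao's inverse $U^3(G)$ theorem (Theorem~2.7 of \cite{GreenTao08(U3)}) directly to $G=\Z_{N'}^d$ with $N'=2^5N$: the theorem is formulated for general finite abelian groups, so no dimension reduction is needed, and the translate of the Bohr set coming out of that theorem lives in $\Z_{N'}^d$, hence has all coordinates of size $O(N)$ automatically; the only additional ingredient the paper needs is the density bound $\#\mc B\gtrsim_{\delta,d}N^d$, cited from Lemma~8.1 of the same reference.

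The concrete gap in your version is in the shift-absorption step. After pulling back, the Bohr condition becomes $\dist(\theta_j\cdot n+c_j,\Z)\le\rho$ with $\theta_j=(\alpha_j,\alpha_jK,\dots,\alpha_jK^{d-1})$ and $c_j=\alpha_j z_0$, where $z_0 = N\sum_i K^{i-1}-y'$ can be of size $\sim M=K^d\sim N^d$. To absorb $c_j$ into a translate $y\in\Z^d$ you need $\alpha_j\bigl(\sum_i K^{i-1}y_i + z_0\bigr)\in\Z$ for all $j$, i.e.\ $\sum_i K^{i-1}y_i\equiv -z_0\pmod{q}$ where $q$ is the lcm of the denominators of the $\alpha_j$. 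Since the $\alpha_j$ arise from a Bohr set in $\Z/M'\Z$, $q$ can be as large as $M'\sim N^d$, while for $\|y\|_\infty\lesssim N$ the quantity $\sum_i K^{i-1}y_i$ ranges over an interval of length only $\sim NK^{d-1}\sim N^d/2^{10}<M'$, so the required residue class is not reachable in general. And $\|y\|_\infty\lesssim N$ is not optional: if $y$ is much larger, then $\mc B+y$ (with $\mc B\subset[N]^d$) does not meet $[N]^d$ and the claimed correlation $|\E_{x\in[N]^d}f(x)\overline{\phi(x)}|\gtrsim_{\delta,d}1$ cannot hold. The choice $y_1=-z_0$, $y_{i>1}=0$ you allude to therefore produces a shift that is far too large. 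Your observation that $\supp\phi$ is \emph{contained in} $\mc B+y$ also does not match the definition in this paper, which requires $\supp\phi=\mc B+y$. The cleanest repair is exactly what the paper does: skip the embedding and invoke the general-group theorem on $\Z_{N'}^d$, where the Bohr translate is automatically bounded. (A minor further point the paper flags: $\Z_{N'}^d$ has even order, so one should keep in mind the even-order extension of the inverse $U^3(G)$ theorem; your $\Z/M'\Z$ would need the same care.)
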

\begin{proof}
This is a version of \cite[Theorem 2.7]{GreenTao08(U3)}. Set $N'=2^5N+1$ and $G=\Z_{N'}^d$. Recalling Definition \ref{def:Gowers norm}, applying \cite[Theorem 2.7]{GreenTao08(U3)} to $G$ yields our statement once $\#\mc B\gtrsim_{\delta,d} N^d$ is shown. Indeed, $\#\mc B\gtrsim_{\delta,d} N^d$ is shown in \cite[Lemma 8.1]{GreenTao08(U3)}.
\end{proof}
We note that \cite{GreenTao08(U3)} indeed showed a more general version concerning arbitrary finite group of odd order. (For the even-order case, see \cite{Jamneshan-evenU3(G)}.) In Section \ref{app-sec:deg}, we will see an inverse Gowers $U^{k+1}([N])$-theorem for general $k\in\N$. For $U^3([N]^d)$, these two inverse theorems essentially describe the same object and one can be almost expressed by a linear combination of the other. This will be further discussed in Section \ref{app-sec:deg}.
\begin{defn}[Affine Bohr sets]\label{defn:affine Bohr sets}
Let $d,r,N\in\N$. An \emph{affine Bohr set} $\mc B$ in $[N]^d$ of \emph{rank} $r$ is any set $\mc B$ of the form
\[
\mc B=\{n\in[N]^d:\theta_j\cdot n\in I_j+\Z\text{ for }j=1,\ldots,r\},
\]
where $\theta_1,\ldots,\theta_r\in\R^d$ and $I_1,\ldots,I_r\subset(-1,1)$ are intervals such that $|I_j|<1$.
\end{defn}
In particular, Bohr sets are affine Bohr sets. One conventional perspective is to regard an affine Bohr set $\mc B$ as a projection of a lattice-convex set. More precisely, we have the following expression of $\mc B$:
\begin{rem}\label{rem:Bohr set by lattice-convex proj}
Let $N,\mc B,r,\theta_j,I_j$ be as in Definition \ref{defn:affine Bohr sets}.
Let $P:\R^d\rightarrow\R^r$ be the linear operator
\[
P(x):=(x\cdot\theta_j)_{j\le r}\in\R^r.
\]
Denote $p_{e_j}=(e_j,P(e_j))$ for $j=1,\ldots,d$. Let
\begin{align*}
\ul\Omega=\ul\Omega_{I_1,\ldots,I_r}&:=\left\{(x,y)\in[-N,N]^d\times\R^r:P(x)-y\in I_1\times\cdots\times I_r\right\}
\\&=p_{e_1}[-N,N]+\cdots+p_{e_d}[-N,N]-(0\times I_1\times\cdots\times I_r)
\end{align*}
and
\[
\Omega:=(\Z^d\times\Z^r)\cap\ul\Omega.
\]
Denote by $\pi_{\R^d}:\R^d\times\R^r\rightarrow\R^d$ the canonical projection. Since $|I_1|,\ldots,|I_r|<1$,
$\pi_{\R^d}:\Omega\rightarrow\mc B$ is a bijection.
Given any locally polynomial modulation $\phi$ of degree $k\ge1$ supported on $\mc B$, the map $\phi\circ\pi_{\R^d}:\Omega\rightarrow\T$ is also locally polynomial. Thus, for the case that $\Omega$ is thick, by Lemma \ref{lem:lift}, there exists a polynomial $F:\R^d\times\R^r\rightarrow\R$ of degree $k$ such that for $w\in\Omega$,
\[
e^{iF(w)}=\phi\circ\pi_{\R^d}(w).
\]
\end{rem}
In Remark \ref{rem:Bohr set by lattice-convex proj}, we regarded $F$ as a lift of $\log\phi$. Conversely, we can also descend a polynomial $F$ to a locally polynomial modulation.
\begin{lem}\label{lem:descend F to phi}
In Remark \ref{rem:Bohr set by lattice-convex proj}, assume $|I_1|,\ldots,|I_r|<1/2$ holds. Then, for any polynomial $F:\R^d\times\R^r\rightarrow\R$ of degree $k$, $\phi:\Z^d\rightarrow\T$ supported on $\mc B$ defined by
\[
\phi(\pi_{\R^d}(w)):=e^{iF(w)},\qquad w\in\Omega
\]
is a locally polynomial modulation of degree $k$.
\end{lem}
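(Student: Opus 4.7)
The plan is to verify the defining identity $\Alt_{\eta_1,\ldots,\eta_{k+1}}\phi(x)\in\{0,1\}$ for every $x,\eta_1,\ldots,\eta_{k+1}\in\Z^d$ directly, exploiting the quantitative smallness hypothesis $|I_j|<1/2^{k+1}$. Set $y_\omega:=x+\omega_1\eta_1+\cdots+\omega_{k+1}\eta_{k+1}$ for $\omega\in\{0,1\}^{k+1}$. If some $y_\omega\notin\mc B$, then $\phi(y_\omega)=0$ and the alternating product equals $0$, which lies in $\{0,1\}$. We may therefore reduce to the case when every $y_\omega$ lies in $\mc B$, in which case $z_\omega:=z(y_\omega)\in\Z^r$ is the unique integer vector with $\delta_\omega:=P(y_\omega)-z_\omega\in I_1\times\cdots\times I_r$, and $\phi(y_\omega)=e^{iF(y_\omega,z_\omega)}$.

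The key step is to show that, under these hypotheses, the assignment $\omega\mapsto(y_\omega,z_\omega)$ is affine-linear in $\omega$. To this end, set $\zeta_i:=z(x+\eta_i)-z(x)\in\Z^r$ and let
\[
D_\omega:=z_\omega-z(x)-\sum_i\omega_i\zeta_i\in\Z^r.
\]
Writing $z(y)=P(y)-\delta(y)$ and invoking the linearity of $P$, a direct expansion gives
\[
D_\omega=\sum_i\omega_i\delta(x+\eta_i)-(|\omega|-1)\delta(x)-\delta_\omega.
\]
The scalar coefficients on the right-hand side sum to $|\omega|-(|\omega|-1)-1=0$, so subtracting the midpoint of $I_j$ from each $\delta(\cdot)_j$ produces the bound
\[
|D_{\omega,j}|\le|\omega|\cdot|I_j|\le\frac{k+1}{2^{k+1}}<1
\]
for every coordinate $j$ and every $\omega$ with $|\omega|\le k+1$. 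Since $D_\omega\in\Z^r$, this forces $D_\omega=0$, and hence $(y_\omega,z_\omega)=(x,z(x))+\sum_i\omega_i(\eta_i,\zeta_i)$ depends affine-linearly on $\omega$.

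With the linearization in hand, composing the degree-$k$ polynomial $F$ with the affine map $\omega\mapsto(y_\omega,z_\omega)$ yields a polynomial in the $k+1$ variables $\omega_1,\ldots,\omega_{k+1}$ of total degree at most $k$. The standard finite-difference identity $\sum_{\omega\in\{0,1\}^{k+1}}(-1)^{|\omega|}p(\omega)=0$, valid for polynomials $p$ of degree $\le k$ in $k+1$ variables, then yields
\[
\sum_{\omega\in\{0,1\}^{k+1}}(-1)^{|\omega|}F(y_\omega,z_\omega)=0,
\]
so $\Alt_{\eta_1,\ldots,\eta_{k+1}}\phi(x)=e^{i\cdot 0}=1$, completing the verification.

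The principal obstacle is the linearization step: $z(\cdot)$ is a piecewise-constant ``floor-type'' selection function, yet one must force its values on the combinatorial cube $\{y_\omega:\omega\in\{0,1\}^{k+1}\}$ to be affinely interpolated by $z(x)$ and the $z(x+\eta_i)$. This is achieved by combining the interval-length bound $|I_j|<1/2^{k+1}$ with the integrality of $D_\omega$ to pin a bounded real expression down to zero; once the linearization is established, the remainder is routine polynomial finite-difference calculus.
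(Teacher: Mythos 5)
Your proof is correct and is the natural unpacking of the paper's one-sentence justification: you linearize the selection map $\omega\mapsto z(y_\omega)$ by showing the integer defect $D_\omega$ has all coordinates below $1$ in absolute value (forcing $D_\omega=0$), and then invoke the standard $(k+1)$-fold finite-difference cancellation for polynomials of degree $\le k$. This is precisely what the paper's terse remark about $|I_j|<1/2^{k+1}$ implicitly relies on, so the approach is the same.
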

\begin{proof}
Since $|I_j|<1/2$, $\pi_{\R^d}$ is injective on $\Omega+\Omega$. Thus, we have the Freiman property
\[
u+v=u'+v',\qquad\text{whenever }\pi_{\R^d}(u)+\pi_{\R^d}(v)=\pi_{\R^d}(u')+\pi_{\R^d}(v'),
\]
where $u,v,u',v'\in\Omega$. Hence the conditional identity $\Alt_{w_1,\ldots,w_{k+1}}\phi(u)=1$ for $u+\s_1w_1+\cdots+\s_{k+1}w_{k+1}\in\Omega$, $\s_j\in\{0,1\}$ descends to $\mc B$, finishing the proof.
\end{proof}
\begin{lem}\label{lem:reduced Bohr}
In Remark \ref{rem:Bohr set by lattice-convex proj}, let $I_j'\subset I_j$ be an interval and $m\in\N$. Then,
\[
\mc B':=\pi_{\R^d}(m\Z^{d+r}\cap\ul\Omega_{I_1',\ldots,I_r'})
\]
is an affine Bohr set of rank $(d+r)$.
\end{lem}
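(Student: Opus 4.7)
The plan is to unfold the definition of $\mc B'$ and repackage the result as an affine Bohr condition with exactly $d+r$ constraints on $[N]^d$. First I write out what it means for $(x,y)\in m\Z^{d+r}\cap\ul\Omega_{I_1',\ldots,I_r'}$: the membership in $\ul\Omega_{I_1',\ldots,I_r'}$ gives $x\in[-N,N]^d$ and $P(x)-y\in I_1'\times\cdots\times I_r'$, so eliminating $y$ coordinate-wise, existence of $y\in m\Z^r$ with this property is equivalent to $x\cdot\theta_j\in m\Z+I_j'$ for each $j=1,\ldots,r$. Combined with $x\in m\Z^d\cap[N]^d$, this gives
\[
\mc B'=\{x\in[N]^d:x\in m\Z^d\text{ and }x\cdot\theta_j\in m\Z+I_j'\text{ for }j=1,\ldots,r\}.
\]

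Next I rescale to put these $d+r$ conditions into the form $\phi\cdot x\in\Z+J$ from Definition \ref{defn:affine Bohr sets}. Dividing by $m$, the $r$ conditions involving $\theta_j$ become $x\cdot(\theta_j/m)\in\Z+I_j'/m$, with $I_j'/m\subset(-1/m,1/m)\subset(-1,1)$ and $|I_j'/m|<1/m\le 1$. The integrality constraint $x\in m\Z^d$ splits coordinate-wise into $x\cdot(e_k/m)\in\Z$ for $k=1,\ldots,d$, which I encode via the degenerate interval $\{0\}$; this is permitted by the definition since $\{0\}\subset(-1,1)$ has length $0<1$. Together these yield exactly $d+r$ affine Bohr constraints, so $\mc B'$ is an affine Bohr set of rank $d+r$.

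This is essentially a bookkeeping calculation; there is no serious obstacle. The only point requiring a touch of care is encoding $x\in m\Z^d$ via Bohr constraints, where a degenerate interval (or, equivalently, $(-\epsilon,\epsilon)$ for any $\epsilon<1/m$) is the cleanest choice. The resulting rank count $d+r$ decomposes naturally: $r$ constraints from the Bohr conditions already defining $\mc B$ in Remark \ref{rem:Bohr set by lattice-convex proj}, together with $d$ constraints imposed by the new integrality condition from intersecting with $m\Z^{d+r}$ (which are tautologically true when $m=1$ but still counted toward the stated rank).
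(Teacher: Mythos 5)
Your proof is correct and takes essentially the same approach as the paper: the paper likewise eliminates the $y$-variable, observes that $\mc B'$ is cut out from $[N]^d$ by the $r$ rescaled conditions $\tfrac1m\theta_j\cdot n\in\tfrac1m I_j'+\Z$ together with the $d$ integrality conditions $\tfrac1m e_k\cdot n\in\Z$, and reads off rank $d+r$. The only difference is that the paper encodes the integrality constraints via the interval $(-\tfrac1m,\tfrac1m)$, which strictly speaking violates the $|I_j|<1$ requirement when $m\le 2$; your choice of the degenerate interval $\{0\}$ (or any $(-\epsilon,\epsilon)$ with $\epsilon<\min\{\tfrac1m,\tfrac12\}$) handles all $m$ uniformly.
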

\begin{proof}
This is immediate from the identity
\[
\mc B'=\left\{n\in[N]^d:\tfrac{1}{m}\theta_j\cdot n\in\tfrac1m I_j'+\Z\text{ and }\tfrac{1}{m}e_k\cdot n\in(-\tfrac{1}{m},\tfrac{1}{m})+\Z\right\},
\]
where $j=1,\ldots,r$ and $k=1,\ldots,d$.
\end{proof}
\begin{defn}[$m$-partition of a Bohr set]\label{defn:m-partition}
Let $\mc B$ be an affine Bohr set as in Remark \ref{rem:Bohr set by lattice-convex proj} and $m\in\N$. The following family of affine Bohr sets is \emph{the $m$-partition of $\mc B$}:
\[
\pi_m(\mc B):=\left\{\pi_{\R^d}\left((m\Z^{d+r}+x_0)\cap\ul\Omega_{I_{1,l_1},\ldots,I_{r,l_r}}\right)\right\}_{l_1,\ldots,l_r\in\{0,\ldots,m-1\},x_0\in\{0,\ldots,m-1\}^{d+r}}.
\]
Here, $I_{j,0},\ldots,I_{j,m-1}$ denote the subintervals of equal lengths partitioning $I_j$.
\end{defn}
In particular, if $\Omega$ is contained in $k\in\N$ translates of a subspace $\mc P\le\R^{d+r}$, each member of $\pi_k(\mc B)$ corresponds to $\Omega'$ lying on a single translate of $\mc P$. This process will be used to reduce to the case that $\Omega$ is thick.

Immediately from Definition \ref{defn:affine Bohr sets}, an intersection of affine Bohr sets of ranks $r_1$ and $r_2$ is again an affine Bohr set of rank $r_1+r_2$. By this property, we can rephrase Proposition \ref{prop:inverse U^3} as follows:
\begin{prop}[profile decomposition in $U^3$]\label{prop:profile U3 Bohr}
Let $d\in\N$ and $\delta>0$. There exist $r,J\in\N$ satisfying the following:

Let $N\in\N$ and $f:[N]^d\rightarrow\D$ be a function. Then, there exist affine  Bohr sets $\mc B_1,\ldots,\mc B_J$ in $[N]^d$ of ranks at most $r$, $c_1,\ldots,c_J\in\D$, and locally quadratic modulations $\phi_1,\ldots,\phi_J$ supported on $\mc B_j$ such that
\[
h=\sum_{j\le J}c_j\phi_j
\]
satisfies
\[
\norm{h}_{\ell^\infty}\le 1,
\]
\[
\norm{f-h}_{U^3}<\delta,
\]
and
\[
\left|\jp{f-h,h}_{\ell^2([N]^d)}\right|<\delta N^d.
\]
\end{prop}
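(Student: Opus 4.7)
The plan is to bootstrap the inverse $U^3$ theorem (Proposition \ref{prop:inverse U^3}) into a decomposition by applying the Stone--Weierstra{\ss}-type Lemma \ref{lem:multiplicative decomp}, with $S=[N]^d$, norm $\norm{\cdot}=\norm{\cdot}_{U^3}$, and $\mc G$ the family of locally quadratic modulations supported on affine Bohr subsets of $[N]^d$ of rank at most some $r_0=O_{\delta,d}(1)$.

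First I would verify the prerequisites of Lemma \ref{lem:multiplicative decomp}. The bound $\norm{f}_{U^3}\le\norm{f}_{\ell^\infty}$ for $f:[N]^d\to\D$ is immediate from the explicit formula \eqref{eq:Uk def explicit} applied to the extension $f'$, using $|\c^{|\omega|}f'|\le\chi_{[N]^d}$ pointwise and the normalization in Definition \ref{defn:Gowers on box}. Closure of $\mc G$ under complex conjugation follows from $\Alt_\eta(\bar\phi)=\overline{\Alt_\eta\phi}$ together with the conjugation invariance of $\{0,1\}$.

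Next, to supply the hypothesis of Lemma \ref{lem:multiplicative decomp}: Proposition \ref{prop:inverse U^3} says that whenever $\norm{f}_{U^3}\ge\delta$, there is a locally quadratic modulation $\phi$ supported on some $\mc B+y$ with $|\jp{f,\phi}_{\ell^2([N]^d)}|\gtrsim_{\delta,d}N^d$. The intersection of $\mc B+y$ with $[N]^d$ is itself an affine Bohr subset of $[N]^d$ in the sense of Definition \ref{defn:affine Bohr sets}: the translation by $y$ simply shifts each defining interval $I_j$ modulo $\Z$, while the containment $n-y\in[N]^d$ contributes at most $d$ additional coordinate-wise interval constraints of the form $\tfrac{1}{2N+1}e_k\cdot n\in I'_k+\Z$. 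Absorbing these $d$ constraints into $r_0$ places $\phi$ into $\mc G$, and Lemma \ref{lem:multiplicative decomp} then yields $k,J=O_{\delta,d}(1)$, scalars $c_j\in\D$, and elements $g_j\in\mc G^k$ so that $h=\sum_jc_jg_j$ satisfies the three required bounds.

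The last step, which I expect to be the main piece of bookkeeping, is to show that each $g_j=\phi_{j,1}\cdots\phi_{j,k}\in\mc G^k$ is itself a locally quadratic modulation supported on an affine Bohr set of rank at most $kr_0$. The support satisfies $\supp g_j\subset\cap_i\mc B_{j,i}$, which is an affine Bohr set of rank at most $kr_0$ by the observation immediately preceding the proposition that intersections of affine Bohr sets add ranks. Local quadraticity on this support follows from the multiplicativity $\Alt_\eta(\phi\psi)=\Alt_\eta\phi\cdot\Alt_\eta\psi$: one obtains $\Alt_{\eta_1,\eta_2,\eta_3}g_j=\prod_i\Alt_{\eta_1,\eta_2,\eta_3}\phi_{j,i}\in\{0,1\}$, since each factor is. Setting $r:=kr_0$ completes the proof. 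No deeper obstacle is anticipated --- the entire argument is essentially a repackaging of Proposition \ref{prop:inverse U^3} through Lemma \ref{lem:multiplicative decomp}, with the rank-tracking for translates and products of Bohr-supported modulations being the only nontrivial bookkeeping.
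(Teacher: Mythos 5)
Your proposal is correct and follows exactly the route the paper takes: it is precisely Lemma \ref{lem:multiplicative decomp} applied with $\mc G$ the class of locally quadratic modulations on bounded-rank affine Bohr subsets of $[N]^d$, fed by Proposition \ref{prop:inverse U^3}, plus the observation that a product of such modulations is again one (with ranks adding). The only thing the paper leaves implicit and you spell out is the bookkeeping for the translate $\mc B+y$ and the intersection with $[N]^d$; your handling of that is fine (the extra $d$ constraints via frequencies $\tfrac{1}{2N+1}e_k$ are absorbed into $r_0$, and restricting a locally quadratic modulation to $[N]^d$ only introduces extra zeros, preserving the defining $\Alt$-condition).
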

\begin{proof}
A product of two locally quadratic modulations supported on affine  Bohr sets of ranks $r_1$ and $r_2$ is again such (of rank $r_1+r_2$). Thus, applying Lemma \ref{lem:multiplicative decomp} and Proposition \ref{prop:inverse U^3} finishes the proof.
\end{proof}
\begin{lem}\label{lem:Bohr char is sum of linear}
Let $r\in\N$ and $\epsilon>0$. There exists $J\in\N$ satisfying the following:

Let $N\in\N$. Let $\phi$ be a locally linear modulation supported on an affine Bohr set $\mc B$ in $[N]^2$ of rank $r$. Then, there exist $\xi_1,\ldots,\xi_J\in\R^2$ and $c_1,\ldots,c_J\in\D$ such that
\begin{equation}\label{eq:Bohr char is approx in l2}
\norm{\phi-\sum_{j\le J}c_je^{ix\cdot\xi_j}}_{\ell^2([N]^2)}\le\epsilon N
\end{equation}
and
\begin{equation}\label{eq:Bohr char is bdd in l infty}
\norm{\sum_{j\le J}c_je^{ix\cdot\xi_j}}_{\ell^\infty([N]^2)}\le1.
\end{equation}
\end{lem}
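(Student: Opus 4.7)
The plan combines a lift-to-torus representation via Remark~\ref{rem:Bohr set by lattice-convex proj} with Fej\'er approximation on $\T^r$ and a quantitative equidistribution transfer.

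First, I reduce $\phi$ to a structured form. Using Remark~\ref{rem:Bohr set by lattice-convex proj} together with Lemma~\ref{lem:lift} (first applying the $m$-partition of Definition~\ref{defn:m-partition} to reduce to the case when the associated lattice-convex set $\Omega \subset \Z^{2+r}$ is thick, at the cost of at most $O_r(1)$ pieces), I obtain the representation
\[
\phi(n) = e^{i\gamma}\, e^{i\xi_0 \cdot n}\, G(P(n) \bmod \Z^r), \qquad n \in [N]^2,
\]
where $P(n) = (\theta_1 \cdot n, \ldots, \theta_r \cdot n)$, $\xi_0 \in \R^2$, $\gamma \in \R$, and $G : \T^r \to \D$ has the explicit form $G(y) = e^{-i\beta \cdot y_\star}\, 1_{I_1 \times \cdots \times I_r}(y_\star)$ with $y_\star \in (-1/2, 1/2]^r$ denoting the principal representative. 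Factoring the integer characters $e^{2\pi i k \cdot P(n)}$ into $\xi_0$ lets me assume $|\beta_j| \le \pi$ for every $j$, so $G$ has total variation $\les_r 1$ in each coordinate.

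Second, I approximate $G$ by a Fej\'er convolution $Q := F_L^{\otimes r} * G$, where $F_L$ is the Fej\'er kernel on $\T$ of degree $L = L(\epsilon, r)$. Then $\|Q\|_{L^\infty(\T^r)} \le \|F_L^{\otimes r}\|_{L^1(\T^r)} \cdot \|G\|_{L^\infty(\T^r)} \le 1$, and the bounded variation of $G$ yields the uniform rate $\|G - Q\|_{L^2(\T^r)} \les_r L^{-1/2}$. Writing $Q(y) = \sum_{|k|_\infty \le L} a_k e^{2\pi i k \cdot y}$,
\[
h(n) := e^{i\gamma}\, e^{i\xi_0 \cdot n}\, Q(P(n) \bmod \Z^r) = \sum_{|k|_\infty \le L} e^{i\gamma} a_k\, e^{i(\xi_0 + 2\pi \sum_j k_j \theta_j) \cdot n}
\]
is a sum of $J := (2L+1)^r = J(\epsilon, r)$ characters in $\R^2$ with $\|h\|_{\ell^\infty([N]^2)} \le 1$.

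Third, I must bound $\|\phi - h\|_{\ell^2([N]^2)}^2 = \sum_{n \in [N]^2} |(G - Q)(P(n) \bmod \Z^r)|^2 \le \epsilon^2 N^2$. The main obstacle is that $L^2(\T^r)$ control of $G - Q$ does not directly transfer to $\ell^2([N]^2)$ when the closure $H \le \T^r$ of the orbit $\{P(n) \bmod \Z^r : n \in \Z^2\}$ is a proper closed subgroup (so that the orbit sits on a measure-zero subset of $\T^r$). To handle this, I change $\Z$-basis on $\Z^r$ via elementary divisor theorem applied to the annihilator $H^\perp = \{k \in \Z^r : k \cdot \theta \in \Z^2\}$, so that $H$ decomposes as a coordinate sub-torus $\T^s \times F$ with $F$ finite. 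Performing the Fej\'er convolution intrinsically on the $\T^s$-factor and interpolating exactly on $F$ yields a lift $Q$ on $\T^r$ with $\|Q\|_{L^\infty(\T^r)} \le 1$ and $\|G|_H - Q|_H\|_{L^2(H)} \les_r L^{-1/2}$. Expanding the squared norm in the Fourier basis of $\T^r$ and controlling the character sums $\sum_{n \in [N]^2} e^{2\pi i \xi \cdot n}$ via Dirichlet kernel bounds, the leading diagonal term $(2N+1)^2 \|G|_H - Q|_H\|_{L^2(H)}^2$ dominates (off-diagonal Weyl sums vanish thanks to the choice of lifts adapted to $H^\perp$). Choosing $L$ large enough in $\epsilon, r$ finishes the estimate; this equidistribution transfer is the technical heart of the proof.
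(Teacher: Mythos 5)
Your approach is broadly in the spirit of the paper's proof (lift to $\T^r$, Fej\'er approximation), but it has a genuine gap exactly at the step you flag as the ``technical heart'': the transfer from $L^2$ of the orbit closure $H$ to $\ell^2([N]^2)$.

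The obstruction is that the quantities controlling your final estimate depend on the Diophantine properties of the $\theta_j$ \emph{relative to} $N$, and the lemma must hold uniformly over all $\theta_j$. Two concrete failures: (1) When you change basis via the elementary divisor theorem and decompose $H\cong\T^s\times F$, the finite factor $F$ has order $d_1\cdots d_s$ determined by $H^\perp$, and this order is not bounded in $\epsilon,r$. For instance, with $r=1$ and $\theta_1=(\tfrac{1}{10N},0)$, one gets $H^\perp=10N\Z$ and $F\cong\Z/10N\Z$; exact interpolation on $F$ then forces $J\gtrsim N$, destroying the uniform bound $J=J(\epsilon,r)$. (2) Even when $F$ is trivial, the claim that the off-diagonal Weyl sums vanish is false without a quantitative Diophantine hypothesis. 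Taking $\theta_1=\tfrac{1}{10N}+\tfrac{1}{\sqrt2 N^{100}}$ gives $H^\perp=\{0\}$, $H=\T^1$, $s=1$, $F=\{0\}$, yet $\dist(\theta_1,\Z)\sim 1/N$, so $\sum_{n\in[N]^2}e^{2\pi i n_1\theta_1}\sim N^2$ and the off-diagonal terms are of the same order as the diagonal. In both examples the finite orbit $\{P(n)\bmod\Z^r:n\in[N]^2\}$ is concentrated in a tiny neighborhood of the identity, nowhere near equidistributed on $H$, so $\|G|_H-Q|_H\|_{L^2(H)}$ says nothing about the sum you need. Notably, in both examples $\chi_{\mc B}$ reduces to a half-plane (boxcar) indicator, which \emph{is} approximable by $O_\epsilon(1)$ exponentials, but via a mechanism (decay of the Dirichlet kernel) entirely different from Fej\'er approximation on $H$.

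The paper avoids this by not working with the structural closure $H$ at all. Instead, Lemma~\ref{lem:Weyl bound implies Lipschitz} produces an $N$-adapted dichotomy: either the orbit has small mass near the boundary $\partial I_1+(-\delta,\delta)$ inside $[N]^2$ (in which case an $L^\infty$ Fej\'er approximation of $\chi_{I_1+\Z}$ away from the boundary transfers directly to $\ell^2([N]^2)$ --- no equidistribution on $H$ is needed, only the small-boundary count), or else $\theta_1$ is within $O_\epsilon(1/N)$ of $\tfrac{1}{m'}\Z^2$ for some $m'=O_\epsilon(1)$, in which case $\chi_{\mc B}$ degenerates into a finite union of boxcar indicators and is handled by the decay of the discrete Fourier transform of $\chi_{[M]}$. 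To repair your argument you would need to insert a case split of this kind before invoking the $L^2(H)$ Fej\'er estimate; as written the transfer step does not go through.
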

\begin{proof}
Let us adopt notations in Definition \ref{defn:affine Bohr sets} and Remark \ref{rem:Bohr set by lattice-convex proj}. We first satisfy \eqref{eq:Bohr char is approx in l2}; the condition \eqref{eq:Bohr char is bdd in l infty} will be satisfied later by an argument similar to Lemma \ref{lem:multiplicative decomp}.

Firstly, when $r=1$ and $\phi=\chi_{\mc B}$ is a characteristic function, we can write
\[
\chi_{\mc B}(x)=\chi_{I_1+\Z}(\theta_1\cdot x).
\]
By Lemma \ref{lem:Weyl bound implies Lipschitz}, there exists $\delta=\delta(\epsilon)>0$ such that either
\begin{equation}\label{eq:equidist over [N]^2}
\#\{n\in [N]^2:n\cdot\theta_1\in\d I_1+(-\delta,\delta)+\Z\}\le\frac{\epsilon}{10}N^2
\end{equation}
or
\begin{equation}\label{eq:thetas are silly}
\dist(\theta_1,\frac{1}{m}\Z^2)\les_\epsilon\frac1N
\end{equation}
holds for some $m=O_\epsilon(1)$. If \eqref{eq:thetas are silly} holds, $\chi_{\mc B}$ can be written as an $O_\epsilon(1)$ sum of the form $\chi_{x_0+m\Z^2}\cdot\chi_{\mc P\cap[N]^2}$, where $x_0\in\{0,\ldots,m-1\}^2$ and $\mc P\subset\R^2$ is a strip of width $\epsilon$-comparable to $N$, and can be easily approximated as \eqref{eq:Bohr char is approx in l2}. Thus, we assume \eqref{eq:equidist over [N]^2}.

Approximating $\chi_{I_1+\Z}:\R/\Z\rightarrow\C$ by a Fej{\'e}r kernel, there exist $K=O_{\epsilon,\delta}(1)$ and $c_{-K},\ldots,c_K\in\D$ such that
\[
\norm{\chi_{I_1+\Z}-\sum_{|k|\le K}c_ke^{2\pi ikx}}_{L^\infty(\R\setminus (\d I+(-\delta,\delta)+\Z))}\le\frac{\epsilon}{10}
\]
and
\[
\norm{\chi_{I_1+\Z}-\sum_{|k|\le K}c_ke^{2\pi ikx}}_{L^\infty(\R)}\le2.
\]
Then, by \eqref{eq:equidist over [N]^2}, \eqref{eq:Bohr char is approx in l2} is satisfied.

More generally, for $r\in\N$, we can represent $\chi_{\mc B}=\prod_{j\le r}\chi_{\{x\in[N]^2:x\cdot\theta_j\in I_j+\Z\}}$ and thus we are done for the case that $\phi=\chi_{\mc B}$.

Now we consider a general locally linear modulation $\phi$ supported on $\mc B$ of rank $r$. Up to subpartitioning $\mc B$, we assume that $\Omega$ in Remark \ref{rem:Bohr set by lattice-convex proj} is sufficiently thick. Let $F:\R^2\times\R^r\rightarrow\R$ be the degree $1$ polynomial in Remark \ref{rem:Bohr set by lattice-convex proj}. Write $F$ as
\[
F(x,y)=\theta_{\R^2}\cdot x+\theta_{\R^r}\cdot y+c,
\]
where $\theta_{\R^2}\in\R^2$, $\theta_{\R^r}\in\R^r$, and $c\in\R$. Here, since we work on $y\in\Z^{r}$, there is no harm in assuming $\theta_{\R^r}\in[0,2\pi)^r$. For $(x,y)\in\Omega$, since $y\in P(x)-I_1\times\cdots\times I_r$, we have
\[
F(x,y)=\theta_{\R^2}\cdot x+\theta_{\R^r}\cdot P(x)+c'+O(|I_1|+\cdots+|I_r|),
\]
where $c'\in\R$.
Since $\theta_{\R^2}\cdot x+\theta_{\R^r}\cdot P(x)$ is a linear form of $x$, there exists $\theta'\in\R^2$ such that
\[
F(x,y)=\theta'\cdot x+c'+O(|I_1|+\cdots+|I_r|).
\]
With this $\theta'$, we have
\begin{equation}\label{eq:local Bohr is approx by single eix}
|e^{ic'}e^{i\theta'\cdot x}-\phi(x)|=|e^{i(\theta'\cdot x+c')}-e^{iF(x,y)}|\les |I_1|+\cdots+|I_r|,\qquad x\in\mc B.
\end{equation}
Let $m\in\N$ be a number to be fixed shortly. Perform an $m$-partition of $\mc B$ into sub-Bohr sets $\mc B'$. Since $|I_j'|=|I_j|/m$, choosing $m$ big enough, we can approximate $\phi$ in $\ell^\infty(\mc B')$ by a linear modulation up to $\epsilon/2$ error. Since each $\chi_{\mc B'}$ is already arbitrarily approximable in $\ell^2([N]^2)$ as a linear combination of linear modulations, this finishes the construction for \eqref{eq:Bohr char is approx in l2}.

We satisfy \eqref{eq:Bohr char is bdd in l infty}, mimicking the proof of Lemma \ref{lem:multiplicative decomp}. Let
\[
\psi_0(z):=\begin{cases}
z&,\qquad |z|\le 1\\
z/|z|&,\qquad |z|>1.
\end{cases}
\]
By the Stone-Weierstra{\ss} Theorem, there exists a polynomial $\psi=\psi_{\epsilon,J}:\C\rightarrow\C$ of $z,\bar z$ such that $\norm{\psi-\psi_0}_{C^0(J\D)}\le\frac{\epsilon}{10}$ and $\psi(J\D)\subset\D$.

Let $h=\sum_{j\le J}c_je^{ix\cdot\xi_j}$ be a function satisfying \eqref{eq:Bohr char is approx in l2}. For every $x\in\mc B$, since $|\phi(x)|\le1$, we have $|\phi(x)-\psi_0(h(x))|\le|\phi(x)-h(x)|$, hence by the triangle inequality, we have
\[
\norm{\phi(x)-\psi(h(x))}_{\ell^2([N]^2)}\le\epsilon N+\norm{\frac{\epsilon}{10}}_{\ell^2([N]^2)}\le 2\epsilon N.
\]
Since $\psi$ is $\D$-valued on $J\D$, $\psi(h)$ has $\ell^\infty$-norm bounded by $1$. Since $\psi$ is a polynomial of $z$ and $\ol z$, $\psi(h)$ can also be written as an $O_{\epsilon,J}(1)$ linear combination of linear modulations. Reparametrizing $\epsilon$ by $\epsilon/2$, this finishes the proof.
\end{proof}
\begin{lem}\label{lem:bias lem for local quad on Bohr}
Let $r\in\N$ and $\epsilon>0$. There exists $J\in\N$ satisfying the following:

Let $N\in\N$ be an integer. Let $\phi$ be a locally quadratic modulation supported on an affine Bohr set $\mc B$ in $[N]^2$ of rank $r$, such that
\begin{equation}\label{eq:bias lem Bohr assump}
\left|\E_{n\in[N]^2}\phi\right|\ge\epsilon.
\end{equation}
Then, there exist $\xi_1,\ldots,\xi_J\in\R^2$ and $c_1,\ldots,c_J\in\D$ such that
\begin{equation}\label{eq:bias lem Bohr l2}
\norm{\phi-\sum_{j\le J}c_je^{ix\cdot\xi_j}}_{\ell^2([N]^2)}\le\epsilon N
\end{equation}
and
\begin{equation}\label{eq:bias lem Bohr l infty}
\norm{\sum_{j\le J}c_je^{ix\cdot\xi_j}}_{\ell^\infty([N]^2)}\le1.
\end{equation}
\end{lem}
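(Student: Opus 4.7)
The plan is to lift $\phi$ to a quadratic exponential $e^{iF}$ on a lattice-convex set, use Corollary \ref{cor:convex Weyl} to extract rigid structure on the quadratic part from the bias hypothesis, partition $\mc B$ into $O_{\epsilon,r}(1)$ sub-Bohr sets on which $\phi$ reduces to a locally linear modulation, and then invoke Lemma \ref{lem:Bohr char is sum of linear} on each piece.

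First I would use Remark \ref{rem:Bohr set by lattice-convex proj} to write $\mc B = \pi_{\R^2}(\Omega)$ for a lattice-convex $\Omega \subset \Z^{2+r}$. After a preliminary $O(1)$-partition (Definition \ref{defn:m-partition}) and pigeonholing against the bias, one may assume the relevant lifted set is thick (on an affine subspace) and still obeys $|\sum \phi| \gtrsim_{\epsilon,r} N^2$. Lemma \ref{lem:lift} then produces $F = Q + L + c$ of degree $\le 2$ with $\phi \circ \pi_{\R^2} = e^{iF}$ on $\Omega$, and the bias becomes $|\sum_{u \in \Omega} e^{iF(u)}| \gtrsim_{\epsilon,r} \#\Omega$. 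Corollary \ref{cor:convex Weyl}, applied in a generating basis $(w_j)$ with sides $N_j$, yields $D_{w_j} D_{w_k} Q \in \frac{2\pi}{m_0}\Z + O(1/(N_j N_k))$ for some integer $m_0 = O_{\epsilon,r}(1)$. I split $Q = Q_1 + Q_2$, where $Q_1$ keeps the $\frac{\pi}{m_0}\Z$-part of each coefficient (so $Q_1(\Z^{2+r}) \subset \frac{\pi}{m_0}\Z$) and $|Q_2| \les 1$ on $\Omega$.

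Next, a direct expansion using the integer structure of $Q_1$'s coefficients and the factor $(2m_0)^2$ from scaling shows $Q_1(u + 2m_0 v') \equiv Q_1(u) \pmod{2\pi}$ for all $u, v' \in \Z^{2+r}$, so $e^{iQ_1}$ is $2m_0\Z^{2+r}$-periodic. A $(2m_0)$-partition of $\mc B$ therefore produces sub-Bohr sets on which $e^{iQ_1}$ is constant. A subsequent $M$-partition with $M = O(\epsilon^{-1})$ shrinks each lifted sub-box so that $Q_2$ varies by at most $\epsilon/10$ across it (since $Q_2$'s second derivatives are $O(1/(N_j N_k))$ and the sub-box has sides $N_j/(2m_0 M)$). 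On each of the resulting $J_0 = O_{\epsilon,r}(1)$ final sub-Bohr sets $\mc B''$, $\phi$ agrees with $C_{\mc B''} \cdot e^{iL(x)+ic}$ up to pointwise error $\epsilon/10$, hence is a locally linear modulation up to that error. Applying Lemma \ref{lem:Bohr char is sum of linear} to each sub-piece with parameter $\epsilon/(2J_0)$ and summing yields the desired $\ell^2$-approximation of $\phi$ by $J = O_{\epsilon,r}(1)$ linear exponentials $\sum c_j e^{ix \cdot \xi_j}$ within total error $\epsilon N$. The $\ell^\infty$-bound $\le 1$ is then enforced by a final Stone-Weierstra{\ss} truncation as at the end of Lemma \ref{lem:Bohr char is sum of linear}, using that products of linear exponentials remain linear exponentials.

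The hard part will be the $Q_1$ cancellation step: verifying that the integer structure of the Weyl coefficients combined with the $(2m_0)^2$ scaling factor conspire to trivialize the rational quadratic phase $e^{iQ_1}$ on each $2m_0$-sublattice coset. Beyond this, the bookkeeping for thickness through each partition and for $\ell^2$-boundary contributions follows the template of Lemma \ref{lem:Bohr char is sum of linear}.
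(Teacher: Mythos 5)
Your proposal is correct and follows essentially the same route as the paper's proof: lift $\phi$ to $e^{iF}$ on a thick lattice-convex set via Remark \ref{rem:Bohr set by lattice-convex proj} and Lemma \ref{lem:lift}, extract Weyl structure from the bias hypothesis via Lemma \ref{lem:convex Weyl} or Corollary \ref{cor:convex Weyl}, pass to an $O_{\epsilon,r}(1)$-partition on whose pieces the quadratic phase trivializes, and then invoke Lemma \ref{lem:Bohr char is sum of linear} together with the Stone--Weierstra{\ss} truncation. The one mild difference is that the paper also uses the Weyl constraint \eqref{eq:convex weyl L} on the linear form $L$, so that after the $m$-partition $\phi$ becomes pointwise $\epsilon$-close to a constant $c_j$ on each $\mc B_j$, and then applies Lemma \ref{lem:Bohr char is sum of linear} to the characteristic functions $c_j\chi_{\mc B_j}$; you instead skip the constraint on $L$, keep a genuine locally linear modulation $e^{iL(x)+ic}$ on each piece, and apply Lemma \ref{lem:Bohr char is sum of linear} in its general form. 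Both work, since Lemma \ref{lem:Bohr char is sum of linear} needs no bias hypothesis. Your explicit $Q=Q_1+Q_2$ split with $Q_1$ having $\frac{\pi}{m_0}\Z$-coefficients and the $(2m_0)$-periodicity computation are also correct (and spell out the factor-of-two bookkeeping the paper handles implicitly, since $D_{w_j}D_{w_k}Q=2a_{jk}$ and $(2m_0)^2\cdot\frac{\pi}{m_0}\Z\subset 2\pi\Z$).
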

\begin{proof}
We claim the existence of affine Bohr sets $\mc B_1,\ldots,\mc B_{J_0}$, $J_0=O_{r,\epsilon}(1)$ partitioning $\mc B$ and $c_1,\ldots,c_{J_0}\in\T$ such that $\norm{\phi-c_j}_{\ell^\infty(\mc B_j)}\le\frac{\epsilon}{10}$. Once we show this, the proof will conclude as follows: $\norm{\phi-\sum_j c_j\chi_{\mc B_j}}_{\ell^\infty([N]^2)}\le\frac{\epsilon}{10}$ then holds. By Lemma \ref{lem:Bohr char is sum of linear}, each $c_j\chi_{\mc B_j}$ can be approximated by an $O_{\epsilon,J}(1)$-linear combination of linear modulations, up to an $\frac{\epsilon}{2J}N$ error in $\ell^2([N]^2)$. Then, by the triangle inequality, \eqref{eq:bias lem Bohr l2} will be satisfied. Then, \eqref{eq:bias lem Bohr l infty} will be satisfied by the Stone-Weierstra{\ss} argument used in Lemma \ref{lem:Bohr char is sum of linear}.

We adopt notations in Remark \ref{rem:Bohr set by lattice-convex proj}. Up to an $O_{\epsilon,r}(1)$-partition, we can assume $\Omega$ to be thick.
In terms of the lifted quadratic polynomial $F$, \eqref{eq:bias lem Bohr assump} can be rewritten as
\[
\left|\sum_{x\in\Omega}e^{iF(x)}\right|\ge\epsilon N^2.
\]
Up to a constant modulation, we may write
\[
F=Q+L,
\]
where $Q$ and $L$ are quadratic and linear forms.
Then, since $\#\Omega=\#\mc B\les N^2$, by Lemma \ref{lem:convex Weyl}, there exists $m=O_{\epsilon,r}(1)$ such that
\[
\sup_{\substack{x,y\in\Omega\\ x-y\in\frac{1}{m}([-N_1,N_1]w_1+\cdots+[-N_{2+r},N_{2+r}]w_{2+r})\\ m\mid x-y}}|e^{iF(x)}-e^{iF(y)}|\le\epsilon.
\]
Here, $N_1,\ldots,N_{r+2}\gg_{\epsilon,r}1$ and $w_1,\ldots,w_{r+2}\in\Z^{r+2}$ are as in Lemma \ref{lem:convex Weyl}. Let $\{\mc B_j\}=\pi_m(\mc B)$, then for each $j$, $\phi\mid_{\mc B_j}=e^{iF}\mid_{\mc B_j}$ varies by at most $\epsilon$, finishing the proof.
\end{proof}
\section{Degree-lowering on inverse Gowers inequalities}\label{app-sec:deg}
In this section, we build a degree-lowering theorem that will play a key role in Section \ref{subsec:analytic}.
We newly define a property of norms on $[N]$ (namely the \emph{alt-stable property}; see Definition \ref{defn:alt-stable}). Then we prove Theorem \ref{thm:degree reduction}, which is the main theorem of this section. Theorem \ref{thm:degree reduction} shows that, for any norm $\mc N$ in such class and integers $1\le d_0<d$, any $U^{d+1}$-inverse element with a large $\mc N$-norm should also have a large $U^{d_0+1}$-norm, once that is shown for the case $d=d_0+1$.

The early part of this section is devoted to recalling known facts in a self-contained manner. In particular, we recall the key inverse Gowers theorem in \cite{GreenTaoZiegler12(Ud)} and equidistribution theory in \cite{GreenTao12Quantitative}.
Then we introduce the definition of our new concept, the alt-stable property, and show our main theorem of this section (Theorem \ref{thm:degree reduction}).

This section builds on the theory developed in \cite{GreenTao08(U3), GreenTao12Quantitative,GreenTaoZiegler11(U4),GreenTaoZiegler12(Ud)} and we will use some of the notations and results in a crucial way.
We start with introducing generic concepts.
\begin{defn}[Nilmanifolds]
A \emph{nilmanifold} is a closed manifold of the form $G/\Gamma$, where $G$ is a connected, simply-connected nilpotent Lie group and $\Gamma$ is a discrete subgroup of $G$.

Since $G$ is nilpotent, by the unimodularity there exists a unique Haar measure $\mu_G$ with a normalized induced measure $\mu_{G/\Gamma}$. We denote by $\mu_G$ and $\mu_{G/\Gamma}$ such measures.
\end{defn}
Note that the nilmanifold $G/\Gamma$ above is not a quotient Lie group in general; we do not impose $\Gamma$ to be normal in $G$.
\begin{defn}[Rational subgroup, \cite{GreenTao12Quantitative}]
A \emph{rational subgroup} of a nilmanifold $G/\Gamma$ is a subgroup $G'\le G$ which is closed, connected, and makes $G'/(G'\cap\Gamma)$ compact (or equivalently, $G'/(G'\cap\Gamma)$ is also a nilmanifold).\footnote{That $G'$ is simply connected follows from \cite{Malcev51}. (Indeed, such $G'$ is homeomorphic to an $\R$-vector space.)}
\end{defn}
\begin{defn}[Rational element, \cite{GreenTao12Quantitative}]
An element $\gamma\in G$ is a \emph{rational element} of a nilmanifold  $G/\Gamma$ if there exists $k\in\N$ such that $\gamma^k\in\Gamma$ holds.

For $Q\in\N$, $\gamma$ is called \emph{$Q$-rational} if there exists $k\le Q$ such that $\gamma^k\in\Gamma$ holds.
\end{defn}
\begin{defn}[Filtered nilmanifolds, \cite{GreenTao12Quantitative}]\label{defn:filtered}
A \emph{filtered nilmanifold $X$ of degree at most $d$}, $d\ge 1$, is a nilmanifold $G/\Gamma$ equipped with a \emph{filtration} $G_\N=\{G_0,G_1,\ldots\}$ of rational subgroups of $G$ such that
\[
G=G_0=G_1\ge G_2\ge\cdots\ge G_{d+1}=\cdots=\{1_G\}
\]
and for every $j,k\ge 0$,
\[
[G_j,G_k]\le G_{j+k}.
\]
Here, $[G_j,G_k]$ denotes the commutator subgroup of $G_j$ and $G_k$.

A simple example of filtration is the lower central series $G_j:=[G,G_{j-1}],j\ge 2$.

For simplicity, we also denote $X=(G/\Gamma,G_\N)$.
We use the following conventions:
\begin{itemize}
    \item For a rational subgroup $G'\le G$, $G_\N\cap G':=\{G_0\cap G',G_1\cap G',\ldots\}$.
    \item For a rational normal subgroup $N\trianglelefteq G$, $G_\N/N:=\{G_0N/N,G_1N/N,\ldots\}$.
\end{itemize}
\end{defn}
In Definition \ref{defn:filtered}, $G_d$ plays an important role. Throughout this section, we denote $k_d:=\dim G_d$ and $\Gamma_d:=G_d\cap\Gamma$. Since $G_d$ is a connected, simply connected abelian Lie group, it is an $\R$-vector space of dimension $k_d=\dim G_d$. By the rationality of $G_d$, $G_d/\Gamma_d$ can be viewed as a torus and we naturally introduce the Pontryagin dual $\widehat{G_d/\Gamma_d}$, identifying it with $\Z^{k_d}$.

Similar to $\mu_{G/\Gamma}$, $\mu_{G_d/\Gamma_d}$ denotes the normalized Haar measure on the torus $G_d/\Gamma_d$.

\begin{defn}[Mal'cev basis, \cite{Malcev51,GreenTao12Quantitative}]\label{defn:Malcev}
Let $X=(G/\Gamma,G_\N)$ be a filtered nilmanifold. Let $m=\dim G$. A basis $\mc X=\{X_1,\ldots,X_m\}$ for the Lie algebra $\mathfrak{g}$ over $\R$ is called a \emph{Mal'cev basis} for $X$ if \cite[Definitions 2.1 and 2.4]{GreenTao12Quantitative} are satisfied.\footnote{We omit the precise definition here, since we do not work with the definition within this paper.}

Corresponding to such $\mc X$, a \emph{Mal'cev} coordinate map $\psi=\psi_{\mc X}:G\rightarrow\R^m$ is defined as the inverse of the bijection $(t_1,\ldots,t_m)\mapsto e^{t_1X_1}\cdots e^{t_mX_m}$. The following are known:
\begin{enumerate}
    \item\label{enu:Malcev diffeom} $\psi$ is a diffeomorphism, \cite[Definition 2.1(iii)]{GreenTao12Quantitative}.
    \item\label{enu:Malcev Z^m} $\psi(\Gamma)=\Z^m$, \cite[Definition 2.1(iv)]{GreenTao12Quantitative}.
    \item\label{enu:Malcev G_k} $\psi(G_k)=\{0\}^{m-\dim G_k}\times\R^{\dim G_k}$ holds, \cite[Definition 2.1(i)--(ii)]{GreenTao12Quantitative}.
    \item\label{enu:Malcev poly} For $g,h\in G$, $\psi(gh)$ and $\psi(g^{-1})$ can be written as rational polynomials of $\psi(g)$ and $\psi(h)$, \cite[Lemma A.3]{GreenTao12Quantitative}.
    \item\label{enu:Malcev rational element} For $Q\ge 2$ and a $Q$-rational element $\gamma\in G$, there exists an integer $Q'\le Q^{O_X(1)}$ such that $\psi(\gamma)\in\frac{1}{Q'}\Z^m$, \cite[Lemma A.11(iii)]{GreenTao12Quantitative}.
    \item\label{enu:Malcev rational subgp} A connected Lie subgroup $G'\le G$ is rational if and only if the Lie algebra $\mathfrak g'$ has a basis consisting of rational combinations of $X_1,\ldots,X_m$, \cite[Lemma 4.3]{Zorin-ergodic}.
\end{enumerate}
\end{defn}
\begin{prop}\label{prop:Malcev existence}
Any filtered nilmanifold has a Mal'cev basis for it.
\end{prop}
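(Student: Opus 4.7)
The plan is to construct the Mal'cev basis inductively, building it from the bottom of the filtration $\{1\} = G_{d+1} \leq G_d \leq \cdots \leq G_0 = G$ upward, and placing basis vectors in reverse order so that the last $\dim G_k$ vectors span $\mathfrak{g}_k$, as required by property \eqref{enu:Malcev G_k} of Definition \ref{defn:Malcev}. The starting point is that $G_d$, being a connected, simply connected abelian Lie group, has $\exp : \mathfrak{g}_d \to G_d$ a Lie group isomorphism, and by rationality $\Gamma_d := \Gamma \cap G_d$ is a cocompact lattice, so $\log \Gamma_d$ admits a $\Z$-basis $X_{m-k_d+1}, \ldots, X_m$ which I declare to be the last $k_d$ vectors.

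For the inductive step, suppose $X_{m-\dim G_{k+1}+1}, \ldots, X_m$ have already been chosen to be Mal'cev-compatible for the subgroup $G_{k+1}$. Since $[G_k, G_k] \leq G_{2k} \leq G_{k+1}$ (using $G_0 = G_1$ if $k \leq 1$), the quotient $G_k/G_{k+1}$ is a connected, simply connected abelian Lie group, and the rationality of both $G_k$ and $G_{k+1}$ implies that $(\Gamma \cap G_k)/(\Gamma \cap G_{k+1})$ is a cocompact lattice inside it. Pick a $\Z$-basis $\bar Y_1, \ldots, \bar Y_s$, $s = \dim G_k - \dim G_{k+1}$, of the logarithm of this quotient lattice, and lift each $\bar Y_j$ to $Y_j \in \mathfrak{g}_k$ with $\exp(Y_j) \in \Gamma$; the lift exists because the surjection $\Gamma \cap G_k \twoheadrightarrow (\Gamma \cap G_k)/(\Gamma \cap G_{k+1})$ composed with $\log$ hits $\bar Y_j$. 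Insert $Y_1, \ldots, Y_s$ in positions $m-\dim G_k+1$ through $m-\dim G_{k+1}$.

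With the ordered basis in hand, the remaining task is to verify the five coordinate properties listed in Definition \ref{defn:Malcev}. Property \eqref{enu:Malcev G_k} holds by construction. For property \eqref{enu:Malcev diffeom}, the map $(t_1, \ldots, t_m) \mapsto e^{t_1 X_1}\cdots e^{t_m X_m}$ is a polynomial diffeomorphism $\R^m \to G$ with polynomial inverse: the Baker-Campbell-Hausdorff series terminates because $G$ is $d$-step nilpotent, and at each induction step the map factors compatibly as the product of an abelian-coordinate chart on $G_k/G_{k+1}$ with the inductively built chart on $G_{k+1}$. Property \eqref{enu:Malcev Z^m} reduces inductively to showing every $\gamma \in \Gamma \cap G_k$ decomposes uniquely as $\exp(n_1 Y_1)\cdots \exp(n_s Y_s)\cdot \gamma'$ with $n_j \in \Z$ and $\gamma' \in \Gamma \cap G_{k+1}$, which is exactly the splitting provided by the quotient lattice choice above. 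Properties \eqref{enu:Malcev poly} and \eqref{enu:Malcev rational element} then follow by tracking the rational data of $\Gamma$ through the BCH formula.

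The main obstacle is the simultaneous compatibility with both the lattice $\Gamma$ (forcing lifts from quotient lattices to actual lattice elements, not merely to the Lie algebra) and with the filtration (forcing the flag-compatible ordering); the two requirements interact through the BCH formula, and one must check that the inductively constructed coordinate change matrices have rational entries of controlled denominators. This is precisely the content of \cite[Proposition A.9]{GreenTao12Quantitative}, which refines Malcev's original construction for the lower central series to the setting of arbitrary rational filtrations, and which I cite as the source of the statement.
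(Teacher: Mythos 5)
Your proposal takes essentially the same route as the paper: both ultimately rest on \cite[Proposition A.9]{GreenTao12Quantitative}. The only difference is that the paper is careful to note this proposition is a \emph{conversion} result that requires an initial weak basis as input (which it supplies from \cite{Malcev51}, i.e., Mal'cev coordinates adapted to the lower central series), whereas your sketch presents the bottom-up induction as building the basis from nothing and then cites Proposition A.9 for the technical details — but your outline is a faithful account of what happens inside that cited construction, so the argument is sound.
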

\begin{proof}
This is an immediate consequence of \cite[Proposition A.9]{GreenTao12Quantitative} taking the initial weak basis in there as the coordinates in \cite{Malcev51} (i.e., a Mal'cev basis adapted to the lower central series of $G$).
\end{proof}

In the sense of Proposition \ref{prop:Malcev existence}, in the rest of this section, we regard each filtered nilmanifold $X$ equipped with a fixed Mal'cev basis $\mc X$ and the corresponding Mal'cev coordinate map $\psi_X=\psi_{\mc X}$.
Having fixed a Mal'cev basis, we further equip a right-invariant metric $d_G(\cdot,\cdot)$ on $G$ as given in \cite[Definition 2.2]{GreenTao12Quantitative}. We denote by $d_X(\cdot,\cdot)$ the metric on $X$ defined as
\[
d_X(g\Gamma,h\Gamma):=\inf_{\la,\la'\in\Gamma}d_G(g\la,h\la').
\]
See \cite[Lemma A.15]{GreenTao12Quantitative} for the proof that such $d_X$ is indeed a metric.

It is known that there exists $C=C(X)\in\N$ such that for $M\ge2$ and $\epsilon,g,h\in G$ such that $d_G(\epsilon,1_G)\le M$ and $d_G(g,h)\le M$, the inequality
\begin{equation}\label{eq:prep for Lip}
d_G(\epsilon g,\epsilon h)\le M^C d_G(g,h)
\end{equation}
holds. \eqref{eq:prep for Lip} is immediate from \cite[Lemmas A.5 and A.14]{GreenTao12Quantitative}.
\begin{defn}[$Q$-rational subgroup, \cite{GreenTao12Quantitative}]
Let $X=(G/\Gamma,G_\N)$ be a filtered nilmanifold and $Q\in\N$. Let $\mc X=\{X_1,\ldots,X_m\}$ be the Mal'cev basis equipped to $X$. A rational subgroup $G'\le G$ is $Q$-rational if the Lie algebra $\mathfrak g'$ has a basis $\mathcal X'$ consisting of linear combinations $\sum_{j=1}^m c_jX_j$, where each $c_j$ can be written as $m/n$ with $\max\{|m|,|n|\}\le Q$.
\end{defn}
\begin{lem}\label{lem:commensurate}
Let $X=(G/\Gamma,G_\N)$ be a filtered nilmanifold and $Q\ge 2$. Let $\gamma\in G$ be a $Q$-rational element. Then, we have $[\Gamma:\Gamma\cap\gamma\Gamma\gamma^{-1}]\le Q^{O_X(1)}$.
\end{lem}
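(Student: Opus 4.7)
The plan is to transport the problem to Mal'cev coordinates and construct a single finite-index subgroup $\Gamma_\star \le \Gamma$ that is also contained in $\gamma\Gamma\gamma^{-1}$, with $[\Gamma:\Gamma_\star] \le Q^{O_X(1)}$. First, by property \eqref{enu:Malcev rational element} of Definition~\ref{defn:Malcev}, the $Q$-rationality of $\gamma$ yields $\psi_X(\gamma) \in \tfrac{1}{Q_0}\Z^m$ for some $Q_0 \le Q^{O_X(1)}$, where $m = \dim G$; property \eqref{enu:Malcev poly} applied to the inversion polynomial extends this to $\psi_X(\gamma^{-1})$, possibly at the cost of enlarging $Q_0$ by an $X$-dependent factor. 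Evaluating the polynomial expression for the conjugation map $c_\gamma(g) = \gamma g \gamma^{-1}$ at integer $\psi_X(g) \in \Z^m$ and at the rational $\psi_X(\gamma^{\pm 1})$ then gives $\psi_X(\gamma\Gamma\gamma^{-1}) \subseteq \tfrac{1}{Q_1}\Z^m$ for some $Q_1 \le Q^{O_X(1)}$.

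Using a Mal'cev basis adapted to the filtration $G_\bullet$, I would define a filtered principal congruence subgroup
\[
\Gamma_\star := \{g \in \Gamma : \text{the } i\text{-th Mal'cev coordinate of } g \text{ lies in } N^{k_i}\Z\},
\]
where $k_i$ denotes the filtration degree of the $i$-th basis vector and $N = Q^{O_X(1)}$ is a sufficiently large multiple of $Q_1$. The Baker--Campbell--Hausdorff structure of the polynomial group law (the $i$-th coordinate of the correction term $P(v,w) - v - w$ is a polynomial in coordinates $v_j,w_j$ at strictly lower filtration levels, and each nontrivial monomial has total filtration weight $\ge k_i$) together with the choice of $N$ guarantees that $\Gamma_\star$ is a subgroup of $\Gamma$ and that $[\Gamma:\Gamma_\star] \le \prod_i N^{k_i} = Q^{O_X(1)}$. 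The same divisibility, now applied to the conjugation polynomial, shows $\gamma^{-1}\Gamma_\star\gamma \subseteq \Gamma$, i.e.\ $\Gamma_\star \subseteq \gamma\Gamma\gamma^{-1}$.

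Combining, $\Gamma_\star \subseteq \Gamma \cap \gamma\Gamma\gamma^{-1}$, whence $[\Gamma : \Gamma \cap \gamma\Gamma\gamma^{-1}] \le [\Gamma:\Gamma_\star] \le Q^{O_X(1)}$. The symmetric estimate $[\gamma\Gamma\gamma^{-1} : \Gamma \cap \gamma\Gamma\gamma^{-1}] \le Q^{O_X(1)}$ follows by running the same argument with $\gamma^{-1}$ in place of $\gamma$, which is also $Q$-rational since $(\gamma^{-1})^k = (\gamma^k)^{-1} \in \Gamma$ for the same $k \le Q$. The main obstacle is the construction of $\Gamma_\star$: the filtered choice of exponents $N^{k_i}$ is essential, since a uniform exponent would not absorb the commutator corrections that can appear in lower-filtration coordinates when multiplying two elements of $\Gamma_\star$. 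Once the filtered congruence subgroup is correctly set up, both the subgroup property and the containment in the conjugate reduce to elementary denominator tracking in the BCH-type polynomial identities provided by Definition~\ref{defn:Malcev}\eqref{enu:Malcev poly}.
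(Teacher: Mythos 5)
Your construction is correct and takes a genuinely different route from the paper's. The paper argues by pigeonhole: it observes that $\psi(\gamma^{-1}\lambda_k^{-1}\lambda_j\gamma)$ is a polynomial in $\psi(\lambda_j),\psi(\lambda_k)$ with denominators bounded by $D = Q^{O_X(1)}$, so among any $Q^L$ elements $\lambda_1,\dots,\lambda_{Q^L}$ of $\Gamma$ (with $L$ large depending on $X$), two must have $\psi(\lambda_j)\equiv\psi(\lambda_k)\pmod D$, whence $\psi(\gamma^{-1}\lambda_k^{-1}\lambda_j\gamma)-\psi(\gamma^{-1}\lambda_k^{-1}\lambda_k\gamma)\in\Z^m$ and so $\lambda_k^{-1}\lambda_j\in\Gamma\cap\gamma\Gamma\gamma^{-1}$. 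You instead build an explicit congruence subgroup $\Gamma_\star\le\Gamma\cap\gamma\Gamma\gamma^{-1}$ of controlled index. Both rely on the same two inputs (bounded denominators of $\psi(\gamma)$ and of the Mal'cev group-law polynomials), but the pigeonhole route avoids verifying any subgroup structure and so needs no quantitative information about the weighted-degree form of the group law; your route produces a stronger conclusion (an actual normalized congruence subgroup rather than just an index bound), at the cost of invoking the filtered structure theorem and a more delicate index count.

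One remark on a claim in your proposal that is stated but not correct. You write that the filtered choice of exponents $N^{k_i}$ is \emph{essential}, because a uniform exponent ``would not absorb the commutator corrections.'' In fact the uniform principal congruence subgroup $\Gamma_N:=\{g\in\Gamma:\psi(g)\in N\Z^m\}$ already works. The group-law correction $R_i(v,w)=\psi(gh)_i-v_i-w_i$ satisfies $R_i(v,0)=R_i(0,w)=0$ (from $g\cdot 1=g$ and $1\cdot h=h$), so every monomial of $R_i$ contains at least one $v$-coordinate and at least one $w$-coordinate; with $\psi(g),\psi(h)\in N\Z^m$ this already gives $R_i\in\tfrac{1}{C_X}N^2\Z\subseteq N\Z$ whenever $C_X\mid N$, and the conjugation polynomial is handled identically since $S_i(\psi(g),\psi(\gamma^{\pm1}))$ vanishes at $g=1_G$ and thus every monomial has at least one $g$-coordinate to absorb the $Q^{O_X(1)}$ denominators coming from $\psi(\gamma^{\pm1})$. (In the Heisenberg example the uniform choice even gives a \emph{smaller} index, $N^3$ rather than $N^4$.) So the weighted exponents are a fine and natural choice, but the justification you give for their necessity is mistaken; the only property of the group law you actually need is that the nonlinear correction is bilinear-or-higher in the two factors, which holds in any coordinate system with $\psi(1_G)=0$. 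A smaller presentational point: the index $[\Gamma:\Gamma_\star]$ is not literally read off as $\prod_i N^{k_i}$ in one line, since $\psi$ is not a homomorphism; one should either process coordinates one at a time using the triangular structure, or just note that $\Gamma_\star\supseteq\Gamma_{N'}$ for a suitable uniform $N'=Q^{O_X(1)}$ and bound the index through the uniform subgroup.
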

\begin{proof}
Equivalently, we show the existence of $L=O_X(1)$ such that for any $\la_1,\ldots,\la_{Q^L}\in\Gamma$, there exist $j\neq k$ such that $\la_k^{-1}\la_j\in\gamma\Gamma\gamma^{-1}$.
Let $m=\dim G$.
Let $\psi$ be as in Definition \ref{defn:Malcev}. By Definition \ref{defn:Malcev} \eqref{enu:Malcev Z^m}, we have $\psi(\Gamma)=\Z^m$. By Definition \ref{defn:Malcev} \eqref{enu:Malcev rational element} and \eqref{enu:Malcev poly}, $\psi(\gamma^{-1}\la_k^{-1}\la_j\gamma)$ is a polynomial of $\psi(\la_j)$ and $\psi(\la_k)$ with rational coefficients whose denominators are bounded by $Q^{O_X(1)}$. This shows that, for $L\gg_X1$, we can find $j\neq k$ such that $\psi(\gamma^{-1}\la_k^{-1}\la_j\gamma)\in\Z^m=\psi(\Gamma)$, finishing the proof.
\end{proof}
\begin{lem}[\cite{GreenTaoZiegler12(Ud), GreenTao12Quantitative}]\label{lem:filter updates}
Let $G_\N\cap G'$ and $G_\N/G_d$ be as in Definition \ref{defn:filtered}. They are filtrations for $G'/(G'\cap\Gamma)$ and $(G/G_d)/(\Gamma G_d/G_d)$, respectively.
\end{lem}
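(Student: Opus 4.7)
My plan is to verify, separately for each of the two candidate filtrations, the axioms of Definition \ref{defn:filtered}: the chain equalities $G_0=G_1$, the descending chain, the terminal vanishing in degree $d+1$, the commutator containment, and (the only nontrivial item) rationality of each term as a subgroup of the ambient nilmanifold. The algebraic axioms come essentially for free. For $G_\N\cap G'$: one has $G_0\cap G'=G_1\cap G'=G'$, $G_{d+1}\cap G'=\{1_G\}$, the descending chain is inherited, and the commutator inclusion follows from
\[
[G_j\cap G',G_k\cap G']\subseteq[G_j,G_k]\cap G'\le G_{j+k}\cap G'.
\]
For $G_\N/G_d$, one first notes that $G_d$ is normal in $G$ (from $[G,G_d]\le G_{1+d}=\{1\}$, in particular $[G,G_d]\le G_d$), so every $G_jG_d$ is a well-defined subgroup; then $G_0G_d/G_d=G/G_d$, $G_dG_d/G_d=\{1\}$, and $[G_jG_d/G_d,G_kG_d/G_d]=[G_j,G_k]G_d/G_d\le G_{j+k}G_d/G_d$, using normality of $G_d$ once more.

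The substantive content is rationality of the new subgroups. I would invoke Definition \ref{defn:Malcev}\eqref{enu:Malcev rational subgp}, which characterizes rationality by having Mal'cev image spanned by vectors in $\Q^m$. For $G_j\cap G'$: the subspaces $\psi_X(G_j)$ and $\psi_X(G')$ are both $\Q$-spanned by the rationality of $G_j$ and $G'$, and so therefore is their set-theoretic intersection. For $G_jG_d/G_d$: Definition \ref{defn:Malcev}\eqref{enu:Malcev G_k} identifies $\psi_X(G_d)$ with the final $k_d$ coordinate axes $\{0\}^{m-k_d}\times\R^{k_d}$, so $\psi_X(G_jG_d)=\psi_X(G_j)+\psi_X(G_d)$ is the $\Q$-span of a $\Q$-subspace together with $k_d$ coordinate axes, hence itself $\Q$-spanned. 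The passage to the filtered quotient $(G/G_d,\{G_jG_d/G_d\})$ uses the inherited Mal'cev basis obtained by discarding the last $k_d$ coordinates (which by construction span $\psi_X(G_d)$); rationality of $G_jG_d/G_d$ then descends to the quotient nilmanifold.

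The one delicate step I anticipate is the identity $\psi_X(G_j\cap G')=\psi_X(G_j)\cap\psi_X(G')$, which is not a formal consequence of $\psi_X$ being a diffeomorphism, since $\psi_X$ is not a group homomorphism. I would resolve this by passing through the Lie-algebra picture: in a simply-connected nilpotent Lie group the exponential map $\exp:\mathfrak{g}\to G$ is a global diffeomorphism under which closed connected Lie subgroups correspond bijectively to Lie subalgebras. Hence $G_j\cap G'=\exp(\mathfrak{g}_j\cap\mathfrak{g}')$ is a closed connected subgroup with the expected Lie algebra, and cocompactness of $(G_j\cap G')\cap\Gamma$ inside $G_j\cap G'$ follows from the $\Q$-structure inherited by $\mathfrak{g}_j\cap\mathfrak{g}'$ in the Mal'cev basis. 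Transferring the subalgebra identity across the Mal'cev chart — whose construction nests exponentials of basis vectors adapted to the filtration — yields $\psi_X(G_j\cap G')=\psi_X(G_j)\cap\psi_X(G')$, at which point both filtration claims are complete.
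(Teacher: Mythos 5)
Your worry about the identity $\psi_X(G_j\cap G')=\psi_X(G_j)\cap\psi_X(G')$ is misplaced: for any injective map $f$ one has $f(A\cap B)=f(A)\cap f(B)$, and $\psi_X$ is a diffeomorphism, in particular injective. The group-homomorphism issue would matter if you needed $\psi_X$ to carry products of subgroups to sums of subspaces, but for intersections injectivity suffices. Consequently your detour through $\exp$ is unnecessary, and actually a bit misleading, since $\psi_X$ is an exponential chart of the \emph{second} kind, a nested product $e^{t_1X_1}\cdots e^{t_mX_m}$ rather than a single exponential, so transferring a subalgebra intersection across it is not as immediate as your phrasing suggests. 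The paper's one-line appeal to Definition \ref{defn:Malcev}\eqref{enu:Malcev rational subgp} is complete: $\psi_X(G_j)$ and $\psi_X(G')$ are $\Q$-rational subspaces, their set-theoretic intersection is again a $\Q$-rational subspace, and that intersection equals $\psi_X(G_j\cap G')$ by injectivity.

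The genuine gap is in the quotient case. You never verify that $\Gamma G_d/G_d$ is a \emph{discrete} subgroup of $G/G_d$, which is a prerequisite for $(G/G_d)/(\Gamma G_d/G_d)$ to be a nilmanifold at all, and therefore for rationality of each $G_jG_d/G_d$ to even be well-posed. Your phrase ``rationality \dots then descends to the quotient nilmanifold'' presupposes exactly this. Discreteness is the substantive content of the paper's proof: combining Definition \ref{defn:Malcev}\eqref{enu:Malcev Z^m} and \eqref{enu:Malcev G_k} with centrality of $G_d$, one checks that $\psi_X$ maps $\Gamma G_d\setminus G_d$ into $(\Z^{\dim G-k_d}\setminus\{0\})\times\R^{k_d}$, so the image of $\Gamma G_d$ under projection onto the first $\dim G-k_d$ Mal'cev coordinates is $\Z^{\dim G-k_d}$, hence discrete. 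Your picture of ``discarding the last $k_d$ coordinates'' is right in spirit, but this is precisely the statement requiring proof, and you do not supply one.
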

\begin{proof}
The only nontrivial part is the rationality of the members.

For $G_\N\cap G'$, it suffices to show that an intersection of any two rational subgroups is rational. This is immediate from Definition \ref{defn:Malcev} \eqref{enu:Malcev rational subgp}.

For $G_\N/G_d$, it suffices to check the rationalities of each $G_j/G_d$ over $\Gamma G_d/G_d$. Compactness is straightforward; we show only the discreteness of $\Gamma G_d/G_d$ in $G/G_d$. By Definition \ref{defn:Malcev} \eqref{enu:Malcev Z^m} and \eqref{enu:Malcev G_k}, the coordinate map $\psi_X$ maps $\Gamma G_d\setminus G_d$ to $(\Z^{\dim G-k_d}\setminus\{0\})\times\R^{k_d}$. This implies that $\Gamma G_d/G_d$ is discrete as claimed, finishing the proof.
\end{proof}
In view of Lemma \ref{lem:filter updates}, for a filtered nilmanifold $X=(G/\Gamma,G_\N)$ of degree $d$, we denote $X\cap G'=(G'/(G'\cap\Gamma),G_\N\cap G')$ and $X/G_d=((G/G_d)/(\Gamma G_d/G_d),G_\N/G_d)$.

Next, we recall ingredients from \cite{GreenTaoZiegler12(Ud)} and \cite{GreenTao12Quantitative} with minor modifications.
\begin{defn}[Polynomial sequence, \cite{GreenTao12Quantitative}]
Let $(G/\Gamma,G_\N)$ be a filtered nilmanifold. $\poly(G_\N)$ denotes the set of sequences $g:\Z\rightarrow G$ such that for every $k\in\N$, $a_1,\ldots,a_k\in \Z$, and $n\in\Z$,
\[
\De_{a_1}\cdots\De_{a_k}g(n)\in G_{k}
\]
holds. Here, we denoted $\De_a g(n):=g(n+a)g(n)^{-1}$.
\end{defn}
We define the nilsequence used in \cite{GreenTaoZiegler12(Ud)} with minor modification.
In our setting, the compactness of the set $\F$ replaces the role of Lipschitz constraint in \cite{GreenTaoZiegler12(Ud)}. This replacement is not necessary and used merely for conciseness within this section.
\begin{defn}[Nilsequence]
Let $X=(G/\Gamma,G_\N)$ be a filtered nilmanifold. Let $\F$ be a compact subset of $C^0(X;\C)$. We denote
\[
\mc S_{X,\F}:=\left\{\{F(g(n)\Gamma)\}_{n\in\Z}:F\in\mc F,g\in \poly(G_\N)\right\}.
\]
\end{defn}
The following notion brings the concept of vertical oscillation used, e.g., in \cite{GreenTao12Quantitative}:
\begin{defn}\label{defn:C*}
Let $X=(G/\Gamma,G_\N)$ be a filtered nilmanifold of degree $d\in\N$. We denote
\begin{align*}
C^0_*(X;\C) :=&\{F\in C^0(X;\C):\text{there exists }\xi\in\widehat{G_d/\Gamma_d}\text{ such that}
\\& F(g_dg\Gamma)=\xi(g_d)F(g\Gamma)\text{ holds for }(g_d,g)\in G_d\times G\}.
\end{align*}
\end{defn}
\begin{defn}
For $L\in\N$, denote by $X_{L\T}$ the circle $\R/L\Z$ equipped with the trivial filtration $\R_\N:=\{\R,\R,\{0\},\cdots\}$, which has degree $1$.

We denote by $\varphi_{L\T}:X_{L\T}\rightarrow[0,1]$ a smooth function such that
\[
\varphi_{L\T}(m):=\begin{cases}
1,\qquad m\in L\Z
\\
0,\qquad m\in\Z\setminus L\Z.
\end{cases}
\]
\end{defn}

As a particular example, for $a\in\N$ and $b\in\Z$, one has
\begin{equation}\label{eq:Zm is nilseq}
\chi_{a\Z+b}=\varphi_{a\T}(\cdot-b)\in\mc S_{X_{a\T},\{\varphi_{a\T}\}}.
\end{equation}
The following is the main result of the breakthrough \cite{GreenTaoZiegler12(Ud)} (extending \cite{GreenTao08(U3),GreenTaoZiegler11(U4)}).
\begin{prop}[Inverse Gowers $U^{d+1}$ theorem, \cite{GreenTao08(U3),GreenTaoZiegler11(U4),GreenTaoZiegler12(Ud)}]\label{prop:SXF inverse Ud}
Let $d\in\N$ and $\delta>0$. There exist a nilmanifold $X=G/\Gamma$ equipped with the lower central series of step $d$ and a compact set $\F\subset C^0(X;\C)$ satisfying the following:

For $N\in\N$ and $f:[N]\rightarrow \D$ such that $\norm{f}_{U^{d+1}}\ge\delta$, there exists $f'\in\mc S_{X,\F}$ such that
\[
\left|\E_{n\in[N]}f(n)\overline{f'(n)}\right|\gtrsim_\delta 1.
\]
\end{prop}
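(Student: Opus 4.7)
The plan is to reduce Proposition \ref{prop:SXF inverse Ud} to the Green--Tao--Ziegler inverse theorem for Gowers uniformity norms \cite{GreenTaoZiegler12(Ud)} (with \cite{GreenTao08(U3),GreenTaoZiegler11(U4)} handling the cases $d=2,3$). That theorem, in its standard formulation, produces, for each pair $(d,\delta)$, a filtered nilmanifold $X = (G/\Gamma, G_\N)$ of degree $d$ and a constant $L = L(d,\delta) > 0$ such that any $f:[N] \to \D$ with $\norm{f}_{U^{d+1}} \ge \delta$ admits $g \in \poly(G_\N)$ and $F \in C^0(X;\C)$ with $\norm{F}_\infty \le L$ and $F$ being $L$-Lipschitz, for which
\[
\left| \E_{n \in [N]} f(n) \overline{F(g(n)\Gamma)} \right| \gtrsim_\delta 1.
\]

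The only formal difference between that statement and Proposition \ref{prop:SXF inverse Ud} is the use of a compact class $\F \subset C^0(X;\C)$ of test functions in place of a Lipschitz ball. I would close this gap via the Arzel\`a--Ascoli theorem: setting
\[
\F := \{F \in C^0(X;\C) : \norm{F}_\infty \le L \text{ and } F \text{ is } L\text{-Lipschitz}\},
\]
the equicontinuity and uniform boundedness of this family on the compact metric space $X$ yield compactness of $\F$ in $C^0(X;\C)$, and the nilsequence $n \mapsto F(g(n)\Gamma)$ produced above visibly lies in $\mc{S}_{X,\F}$, which gives the desired $f'$.

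A minor auxiliary step is to arrange that the filtration on $X$ is the lower central series of step $d$. In the standard formulations of the Green--Tao--Ziegler theorem this is the canonical choice; alternatively, one may pass to the lower central series after the fact without loss, since it is minimal among filtrations on a nilpotent group of step $\le d$ (so enlarging to it only enlarges the class of polynomial sequences that can serve as $g$). The hard part is of course the Green--Tao--Ziegler theorem itself, which is one of the major technical achievements in higher-order Fourier analysis; in the present write-up there is essentially nothing to do beyond quoting that result and performing the cosmetic Arzel\`a--Ascoli repackaging described above.
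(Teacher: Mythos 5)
Your proposal matches the paper's proof essentially step for step: both quote the Green--Tao--Ziegler inverse theorem and then observe that the class of $O_{d,\delta}(1)$-Lipschitz test functions it produces is (pre)compact in $C^0(X;\C)$, i.e.\ the Arzel\`a--Ascoli repackaging. Two small points worth flagging. First, the paper additionally notes that \cite[Theorem 1.3]{GreenTaoZiegler12(Ud)} produces a \emph{finite collection} of nilmanifolds rather than a single one, and that these must be merged into a single $X$ by taking a Cartesian product; your write-up skips this. Second, your parenthetical justification for passing to the lower central series is backwards: since the lower central series is the \emph{minimal} filtration of a given step, replacing a general filtration $G_\N$ by the lower central series \emph{shrinks} $\poly(G_\N)$, so the ``pass to it after the fact'' argument does not go through as stated. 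This is harmless here only because the nilmanifolds in \cite{GreenTaoZiegler12(Ud)} already come equipped with the lower central series, so no such passage is needed.
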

\begin{proof}
This was essentially shown in \cite[Theorem 1.3]{GreenTaoZiegler12(Ud)} (for $d\ge 3$), extending previous works \cite{GreenTao08(U3)} ($d=2$) and \cite{GreenTaoZiegler11(U4)} ($d=3$).

The precise statement in \cite{GreenTaoZiegler12(Ud)} involves a finite collection of nilmanifolds; however, as is also mentioned in \cite{GreenTaoZiegler12(Ud)}, the collection can be reduced to a single nilmanifold by taking a Cartesian product.

In \cite[Theorem 1.3]{GreenTaoZiegler12(Ud)}, $\F$ is a collection of $O_{d,\delta}(1)$-Lipschitz functions. Such $\F$ is precompact in the topology $C^0(X;\C)$, thus our statement also holds.
\end{proof}
The converse direction is also known.
\begin{prop}[{\cite[Proposition 1.4]{GreenTaoZiegler11(U4)}}]\label{prop:SXF anti-inverse Ud}
Let $d\in\N$. Let $X=(G/\Gamma,G_\N)$ be a filtered nilmanifold of degree at most $d$. There exists a dense set $\mc D_{X}\subset C^0(X;\C)$ of $F:X\rightarrow\C$ satisfying the following:

Let $f'\in\mc S_{X,\{F\}}$ and $\delta>0$. Then, for $N\in\N$ and $f:[N]\rightarrow\D$ such that
\[
\left|\E_{n\in[N]}f'(n)\overline{f(n)}\right|\ge\delta,
\]
$\norm{f}_{U^{d+1}}\gtrsim_{\delta,X,F}1$ holds.
\end{prop}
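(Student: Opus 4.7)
The plan is to use Cauchy-Schwarz-Gowers duality. Let $\norm{\cdot}_{(U^{d+1})^*}$ denote the norm on $\C^{[N]}$ dual to $\norm{\cdot}_{U^{d+1}}$; then tautologically
\[
\Big|\E_{n \in [N]} f(n)\overline{f'(n)}\Big| \le \norm{f}_{U^{d+1}} \cdot \norm{f'}_{(U^{d+1})^*}.
\]
Hence the proposition reduces to producing a dense set $\mc D_X \subset C^0(X;\C)$ such that for every $F \in \mc D_X$, every $g \in \poly(G_\N)$, and every $N \in \N$, the nilsequence $f'(n)=F(g(n)\Gamma)$ satisfies $\norm{f'}_{(U^{d+1})^*}\le C(X,F)$ independently of $N$ and $g$; then the hypothesis $|\E f \overline{f'}|\ge \delta$ yields $\norm{f}_{U^{d+1}} \ge \delta/C(X,F)\gtrsim_{\delta,X,F}1$.

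The natural candidate is $\mc D_X = C^\infty(X;\C)$, which is dense in $C^0(X;\C)$ by mollification along $G$ in Mal'cev coordinates (Definition \ref{defn:Malcev}). For $F\in\mc D_X$, vertical Fourier expansion on the central torus $G_d/\Gamma_d$ gives
\[
F = \sum_{\xi \in \widehat{G_d/\Gamma_d}} F_\xi, \qquad F_\xi \in C^0_*(X;\C) \text{ with vertical character } \xi,
\]
where smoothness of $F$ forces rapid decay of $\norm{F_\xi}_{C^0(X)}$. By subadditivity of the dual norm it suffices to produce, for each $\xi$, an individual bound $\norm{F_\xi \circ g}_{(U^{d+1})^*}\les C(X,\xi)\norm{F_\xi}_{C^0}$ whose sum in $\xi$ converges.

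Each individual bound is obtained by induction on the filtration degree $d$. The base case $d=0$ is trivial, as then $f'$ is constant. For the induction step, two cases arise. If $\xi=0$, then $F_0$ factors through the quotient filtered nilmanifold $X/G_d$ which by Lemma \ref{lem:filter updates} has degree at most $d-1$; the inductive hypothesis together with the monotonicity \eqref{eq:Ud monotone} transfers the $U^{d}$ dual bound to a $U^{d+1}$ dual bound. If $\xi\neq 0$, one expands $\norm{F_\xi \circ g}_{(U^{d+1})^*}$ as a $2^{d+1}$-fold Gowers-type average; writing $g(n+h_j)g(n)^{-1}$ via the polynomial Mal'cev identity (Definition \ref{defn:Malcev}\eqref{enu:Malcev poly}) and iterating the differencing $\Alt_{h_1,\ldots,h_{d+1}}$, the factor $\xi$ is peeled off at each step and the estimate reduces to equidistribution of $\xi$ composed with an explicit polynomial of degree at most $d+1$ in the $h_j$'s, which is controlled by a quantitative Weyl bound uniform in $g$.

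The main obstacle is the nontrivial vertical character step: one must track the polynomial structure of $g$ through $d+1$ rounds of Cauchy-Schwarz while maintaining quantitative equidistribution on the central torus $G_d/\Gamma_d$ uniformly in the arbitrary choice of $g\in\poly(G_\N)$. This is where the quantitative equidistribution machinery of \cite{GreenTao12Quantitative}, invoked in a form compatible with the vertical character $\xi$, does the bulk of the work; once uniformity in $g$ is secured, summability in $\xi$ follows from the smoothness of $F$ and the construction of $\mc D_X$ is complete.
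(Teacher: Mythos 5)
Your proposal takes a fundamentally different route from the paper: the paper simply cites \cite[Proposition~1.4]{GreenTaoZiegler11(U4)} (stated there for $F$ Lipschitz) and takes $\mc D_X$ to be the Lipschitz functions, which are dense in $C^0(X;\C)$ by Stone--Weierstra{\ss}. You instead attempt to \emph{reprove} the GTZ result from scratch via a dual-norm argument. That ambition is fine and your overall architecture (Gowers--Cauchy--Schwarz duality, vertical Fourier decomposition into $C^0_*$ pieces, induction on filtration degree via $X/G_d$) is the correct skeleton; the choice $\mc D_X = C^\infty(X;\C)$ is also perfectly admissible and even convenient, since rapid vertical Fourier decay makes the sum over $\xi$ converge cleanly.

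The genuine gap is in your treatment of the $\xi\neq 0$ case. You claim that after iterating the differencing the estimate ``reduces to equidistribution of $\xi$ composed with an explicit polynomial of degree at most $d+1$ in the $h_j$'s, which is controlled by a quantitative Weyl bound uniform in $g$.'' This cannot work as stated: a quantitative Weyl bound is \emph{not} uniform over $g \in \poly(G_\N)$, since a degenerate choice of $g$ (e.g.\ constant, periodic, or with nearly rational Mal'cev coordinates) makes the relevant phase polynomial fail to equidistribute, and yet the Proposition must hold for every such $g$. Equidistribution plays no role here. What actually closes the induction is the purely algebraic fact that $g$ has degree $d$, so the $(d+1)$-fold discrete derivative $\Delta_{h_1}\cdots\Delta_{h_{d+1}}g$ takes values in $G_{d+1}=\{1\}$; combined with the vertical-character property of $F_\xi$, after one Cauchy--Schwarz the product $F_\xi\otimes\overline{F_\xi}$ on the square nilmanifold $G_0^\square/(G_0^\square\cap\Gamma^2)$ has \emph{trivial} vertical character and hence factors through the quotient by $G_d^\square$, which has degree $d-1$ (cf.\ the mechanism isolated in Lemma~\ref{lem:SXF Alt reduce}). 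The bound is then trivial at the base of the induction, with no oscillation estimate required. If you replace the Weyl/equidistribution step with this degree-dropping argument, the proof goes through; as written it has a hole precisely at the point you flag as ``the main obstacle.''
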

\begin{proof}
This is provided in \cite[Proposition 1.4]{GreenTaoZiegler11(U4)}, precisely for the set of $F$ which is Lipschitz with respect to $d_X(\cdot,\cdot)$. By the Stone-Weierstra{\ss} Theorem, this set is dense in $C^0(X;\C)$.
\end{proof}
We provide the closedness (up to updates on $X$ and $\F$) of $\mc S_{X,\F}$ under additions and multiplications.
\begin{lem}[{\cite[Corollary E.2]{GreenTaoZiegler12(Ud)}}]\label{lem:SXF linear}
Let $X_1$ and $X_2$ be filtered nilmanifolds. Let $\F_k\subset C^0(X_k;\C)$, $k=1,2$ be compact sets. For every $f_1\in\mc S_{X_1,\F_1}$ and $f_2\in\mc S_{X_2,\F_2}$, the following hold:
\begin{enumerate}
\item\label{enum:f_1+f_2} $f_1+f_2\in\mc S_{X_1\times X_2,\F_1\otimes \{1_{X_2}\}+\{1_{X_1}\}\otimes \F_2}$
\item\label{enum:f_1f_2} $f_1f_2\in\mc S_{X_1\times X_2,\F_1\otimes\F_2}$
\end{enumerate}
\end{lem}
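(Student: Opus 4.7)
The plan is to realize both $f_1 + f_2$ and $f_1 f_2$ as nilsequences on the product filtered nilmanifold $X_1 \times X_2 := ((G_1 \times G_2)/(\Gamma_1 \times \Gamma_2), (G_1 \times G_2)_\N)$, where the product filtration is defined componentwise by $(G_1 \times G_2)_k := G_{1,k} \times G_{2,k}$. First I would check that this really is a filtered nilmanifold in the sense of Definition \ref{defn:filtered}: the decreasing chain and termination at $\{1\}$ at step $d := \max(d_1,d_2)$ are obvious, and the commutator condition follows from the componentwise identity $[(a_1,a_2),(b_1,b_2)] = ([a_1,b_1],[a_2,b_2])$, which gives $[(G_1\times G_2)_j,(G_1\times G_2)_k] \le G_{1,j+k} \times G_{2,j+k}$ from the corresponding inclusions in each factor. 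Rationality of each $G_{1,k} \times G_{2,k}$ is inherited from the factors.

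Next, write $f_j(n) = F_j(g_j(n)\Gamma_j)$ with $F_j \in \F_j$ and $g_j \in \poly(G_{j,\N})$, and set $g(n) := (g_1(n), g_2(n)) \in G_1 \times G_2$. Since $\De_a$ acts componentwise, one verifies by induction on the number of discrete derivatives that
\[
\De_{a_1}\cdots\De_{a_k} g(n) = \bigl(\De_{a_1}\cdots\De_{a_k} g_1(n),\ \De_{a_1}\cdots\De_{a_k} g_2(n)\bigr) \in G_{1,k}\times G_{2,k} = (G_1\times G_2)_k,
\]
so $g \in \poly((G_1\times G_2)_\N)$. Then by direct computation
\[
f_1(n)f_2(n) = (F_1 \otimes F_2)\bigl(g(n)(\Gamma_1\times\Gamma_2)\bigr),\quad f_1(n)+f_2(n) = (F_1 \otimes 1_{X_2} + 1_{X_1} \otimes F_2)\bigl(g(n)(\Gamma_1\times\Gamma_2)\bigr),
\]
which places $f_1 f_2$ in $\mc S_{X_1\times X_2,\,\F_1\otimes\F_2}$ and $f_1+f_2$ in $\mc S_{X_1\times X_2,\,\F_1\otimes\{1_{X_2}\}+\{1_{X_1}\}\otimes\F_2}$ provided the latter function classes are compact in $C^0(X_1\times X_2;\C)$.

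Finally, for compactness: the tensor product map $(F_1,F_2) \mapsto F_1\otimes F_2$, $C^0(X_1;\C)\times C^0(X_2;\C) \to C^0(X_1\times X_2;\C)$, is continuous because $\|F_1\otimes F_2 - F_1'\otimes F_2'\|_{C^0} \le \|F_1 - F_1'\|_{C^0}\|F_2\|_{C^0} + \|F_1'\|_{C^0}\|F_2-F_2'\|_{C^0}$, and the compact product $\F_1 \times \F_2$ has its image compact. The same argument handles $\F_1 \otimes \{1_{X_2}\}$ and $\{1_{X_1}\}\otimes\F_2$ separately, and then compactness of the sum set follows from continuity of addition in $C^0$. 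This verifies the required membership in both cases.

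The proof is essentially a bookkeeping exercise; no step is a serious obstacle. The only mildly nontrivial observation is the componentwise nature of both the filtration commutator condition and the discrete derivative, which together ensure that $g = (g_1, g_2)$ is polynomial with respect to the product filtration. Everything else is immediate from the definitions and from the elementary topology of $C^0$ of compact spaces.
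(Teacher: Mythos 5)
Your proof is correct and follows the same route as the paper: realize $f_1 + f_2$ and $f_1 f_2$ as nilsequences on the product filtered nilmanifold $X_1 \times X_2$ via the product polynomial sequence $(g_1, g_2)$ and the corresponding tensored function classes. The paper states this in one line and leaves the verifications (product filtration, componentwise polynomiality, compactness of the tensored/summed function sets) implicit; you have simply written them out.
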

Here, we denoted $\F_1\otimes \F_2=\{f_1\otimes f_2:f_1\in\F_1,f_2\in\F_2\}$.
\begin{proof}
For $f_k=F_k(g_k(n)\Gamma_k)\in\mc S_{X_k,\F_k}$, $k=1,2$, we can write
\[
(f_1+f_2)(n)=(F_1\otimes 1_{X_2}+1_{X_1}\otimes F_2)((g_1,g_2)(n)\Gamma_1\times\Gamma_2),
\]
thus \eqref{enum:f_1+f_2} holds. \eqref{enum:f_1f_2} can be shown similarly.
\end{proof}
\begin{lem}\label{lem:profile decomp Ud}
Let $d\in\N$ and $\epsilon>0$. There exists a filtered nilmanifold $X=G/\Gamma$ of degree $d$ and a compact set $\F\subset C^0(X;\C)$ satisfying the following:

For $N\in\N$, $f:[N]\rightarrow\D$, and $S\supset\supp(f)$, there exists $h=h_0\chi_S$, $h_0\in\mc S_{X,\F}$ such that
\[
\norm{h}_{\ell^\infty([N])}\le 1,
\]
\[
\norm{f-h}_{U^{d+1}([N])}<\epsilon,
\]
and
\[
|\jp{f-h,h}_{\ell^2([N])}|<\epsilon N.
\]
Furthermore, $\F$ can be assumed to be a finite subset of $\mc D_X$ (given in Proposition \ref{prop:SXF anti-inverse Ud}).
\end{lem}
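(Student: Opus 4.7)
The plan is to combine the inverse Gowers theorem (Proposition \ref{prop:SXF inverse Ud}) with the Stone--Weierstraß-style iterative decomposition (Lemma \ref{lem:multiplicative decomp}), and then bundle the resulting finite sum of products of nilsequences back into a single nilsequence via the closure properties in Lemma \ref{lem:SXF linear}.

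We first fix a filtered nilmanifold $X_0$ and a compact set $\F_0\subset C^0(X_0;\C)$ supplied by Proposition \ref{prop:SXF inverse Ud} at degree $d$ and at a threshold $\delta=\delta(\epsilon,d)>0$ to be chosen. After adjoining complex conjugates to $\F_0$ and rescaling by the uniform bound $\sup_{F\in\F_0}\|F\|_{C^0(X_0)}$ coming from compactness, we may assume that $\F_0$ is closed under conjugation and that every element of $\mc S_{X_0,\F_0}$ is bounded by $1$ in modulus. We then invoke Lemma \ref{lem:multiplicative decomp} on the set $S$ with the norm $\|g\|:=\|\tilde g\|_{U^{d+1}([N])}$, where $\tilde g:[N]\to\C$ denotes the extension of $g:S\to\C$ by zero to $[N]$, and with building-block family $\mc G:=\{\gamma|_S:\gamma\in\mc S_{X_0,\F_0}\}$. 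This norm is dominated by $\|\cdot\|_{\ell^\infty(S)}$ because the Gowers norm on a box is, and $\mc G$ is closed under conjugation by construction. The main hypothesis of Lemma \ref{lem:multiplicative decomp} translates directly via Proposition \ref{prop:SXF inverse Ud}: for $g:S\to\D$ with $\|g\|\ge\epsilon$, the extension $\tilde g$ has $U^{d+1}$-norm $\ge\epsilon$, so the proposition yields $\gamma\in\mc S_{X_0,\F_0}$ with $|\sum_{n\in[N]}\tilde g(n)\overline{\gamma(n)}|\gtrsim_\epsilon N\ge\#S$, and since $\tilde g$ vanishes outside $S$ this rewrites as $|\langle g,\gamma|_S\rangle_{\ell^2(S)}|\gtrsim_\epsilon\#S$. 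Feeding the input $f|_S:S\to\D$ into Lemma \ref{lem:multiplicative decomp} then produces a decomposition $h=\sum_{j\le J}c_j g_j$ with $c_j\in\D$, $g_j\in\mc G^k$, and $k,J=O_{\epsilon,d}(1)$, enjoying the three desired conclusions relative to the norm above.

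It remains to identify $h$ with the restriction of a single nilsequence. Each $g_j$ has the form $(\prod_{l=1}^k\gamma_{j,l})|_S$ with $\gamma_{j,l}\in\mc S_{X_0,\F_0}$. Iterating the product closure of Lemma \ref{lem:SXF linear}\eqref{enum:f_1f_2} and the sum closure of \eqref{enum:f_1+f_2} (absorbing each scalar $c_j$ into one factor), we obtain a filtered nilmanifold $X$ (a Cartesian power of $X_0$, hence still of degree $d$) and a compact $\F\subset C^0(X;\C)$, both depending only on $\epsilon$ and $d$, such that $h_0:=\sum_{j\le J}c_j\prod_{l=1}^k\gamma_{j,l}\in\mc S_{X,\F}$, and the extension of $h$ to $[N]$ by zero coincides with $h_0\chi_S$. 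Since $f$ is supported on $S$, one has $f-h_0\chi_S=\widetilde{f|_S-h}$ and $\langle f-h_0\chi_S,h_0\chi_S\rangle_{\ell^2([N])}=\langle f|_S-h,h\rangle_{\ell^2(S)}$, so the three required bounds on $[N]$ transfer from the conclusion of Lemma \ref{lem:multiplicative decomp} on $S$ (using $\#S\le N$ for the inner-product bound). The main obstacle is this bookkeeping: making sure the final pair $(X,\F)$ depends only on $d$ and $\epsilon$, which is automatic since $k,J=O_{\epsilon,d}(1)$ and the Cartesian products and sums are taken finitely many times.
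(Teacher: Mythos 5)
Your argument is correct and matches the route the paper takes: apply Lemma \ref{lem:multiplicative decomp} with building blocks the restrictions to $S$ of nilsequences from Proposition \ref{prop:SXF inverse Ud}, then reassemble the resulting bounded linear combination of bounded products into a single nilsequence via the closure properties of Lemma \ref{lem:SXF linear}. The paper leaves the proof implicit in the one-line remark preceding the lemma, and your write-up simply fills in the bookkeeping (zero-extension from $S$ to $[N]$, conjugation-closure and rescaling of $\F_0$, absorbing the coefficients $c_j$ into a compact enlargement of $\F_0$) that the restatement requires.
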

\begin{proof}
By Lemma \ref{lem:multiplicative decomp}, Lemma \ref{lem:SXF linear}, and Proposition \ref{prop:SXF inverse Ud}, the existence of such $X$ and compact set $\F$ is immediate.
Approximating $\F$ up to an $o(\epsilon)$-error in $C^0(X)$, $\F$ can be reduced to a finite subset of $\mc D_X$, as claimed.
\end{proof}
The next lemma recalls {\cite[Lemma E.8 (iv)]{GreenTaoZiegler12(Ud)}} in a standard-analysis version.
\begin{lem}[{\cite[Lemma E.8 (iv)]{GreenTaoZiegler12(Ud)}}]\label{lem:SXF Alt reduce}
Let $\epsilon>0$ and $X=(G/\Gamma,G_\N)$ be a filtered nilmanifold of degree $d\ge 1$. Let $\F\subset C^0_*(X;\C)$ be any compact set.

Then, there exist a filtered nilmanifold $\tilde X$ of degree at most $(d-1)$ and a compact set $\tilde\F\subset C^0(\tilde X;\C)$ such that for every $\eta\in\Z$,
\begin{equation}\label{eq:Alt:CS}
\Alt_\eta:\mc S_{X,\F}\rightarrow\mc S_{\tilde X,\tilde\F}.
\end{equation}
\end{lem}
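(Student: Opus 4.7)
The plan is to realize $\Alt_\eta f$ as a nilsequence on a suitable quotient of $X \times X$, in which the vertical character hypothesis forces the filtration degree to drop from $d$ to $d-1$. Writing $f(n) = F(g(n)\Gamma)$ for some $F \in \F \subset C^0_*(X;\C)$ with vertical character $\xi \in \widehat{G_d/\Gamma_d}$ and $g \in \poly(G_\N)$, we define the tensor $F_\otimes(g_1\Gamma, g_2\Gamma) := F(g_1\Gamma)\overline{F(g_2\Gamma)}$ on $(G \times G)/(\Gamma \times \Gamma)$. For every $h \in G_d$,
\[
F_\otimes\bigl((hg_1, hg_2)(\Gamma \times \Gamma)\bigr) = \xi(h)\overline{\xi(h)} F_\otimes(g_1\Gamma,g_2\Gamma) = F_\otimes(g_1\Gamma,g_2\Gamma),
\]
so $F_\otimes$ is invariant under the closed central rational subgroup $\Delta G_d := \{(h,h) : h \in G_d\}$. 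Since $G_d$ is central in $G$ (as the last nontrivial step of a filtration of degree $d$), $\Delta G_d$ is a central normal subgroup, and we form $\tilde G := (G \times G)/\Delta G_d$ with image lattice $\tilde \Gamma$ and nilmanifold $\tilde X := \tilde G/\tilde\Gamma$; the function $F_\otimes$ descends to a continuous $\tilde F$ on $\tilde X$.

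Next we equip $\tilde G$ with the filtration $\tilde G_\N$ obtained by pushing down $G_k \times G_k$ for $k < d$ and setting $\tilde G_d := \{1\}$. The orbit $\tilde g(n) := (g(n), g(n+\eta))\Delta G_d$ satisfies $\Alt_\eta f(n) = \tilde F(\tilde g(n)\tilde\Gamma)$, and we claim $\tilde g \in \poly(\tilde G_\N)$. The crucial observation is that since $g$ has degree $\leq d$, the iterated difference $\De_{a_1}\cdots\De_{a_d} g(n) \in G_d$ equals some constant $\gamma$ independent of $n$, whence $\De_{a_1}\cdots\De_{a_d}(g(n),g(n+\eta)) = (\gamma,\gamma) \in \Delta G_d$, which becomes the identity in $\tilde G$. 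Taking $\tilde \F := \{\tilde F : F \in \F\}$, continuity of the assignment $F \mapsto \tilde F$ from $C^0(X;\C)$ to $C^0(\tilde X;\C)$ ensures $\tilde \F$ is compact; since $\tilde X$ and $\tilde \F$ are independent of $\eta$, we conclude $\Alt_\eta : \mc S_{X,\F} \to \mc S_{\tilde X, \tilde \F}$ for every $\eta \in \Z$.

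The main obstacle is verifying that $\tilde G_\N$ is actually a filtration of degree $d-1$ in the strict sense of Definition \ref{defn:filtered}: the commutator $[G_j, G_{d-j}] \times [G_j, G_{d-j}]$ generically fills the full $G_d \times G_d$, not just $\Delta G_d$, so the inclusion $[\tilde G_j, \tilde G_{d-j}] \subseteq \tilde G_d = \{1\}$ is not automatic. We bypass this either by decomposing $g(n) = g^*(n) g_d(n)$ with $g^* \bmod G_d$ of degree $\leq d-1$ and $g_d \in G_d$ of degree $\leq d$, so that the second coordinate of $\tilde g$ rewrites as $g^*(n+\eta) \cdot [g_d(n+\eta) g_d(n)^{-1}]$ whose $G_d$-factor is a degree-$(d-1)$ polynomial in the abelian group $G_d$, or alternatively by replacing $\tilde X$ with the smallest rational sub-nilmanifold containing the closures of the orbits of $\tilde g$ over all $\eta$. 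Verifying rationality, simple-connectedness, and the required commutator inclusions for this replacement is the technical heart of the argument, following \cite[Lemma E.8(iv)]{GreenTaoZiegler12(Ud)}.
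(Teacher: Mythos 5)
Your construction of the ambient group $\tilde G=(G\times G)/\Delta G_d$, the central rationality of $\Delta G_d$, and the descent of $F_\otimes$ all match the paper's argument; your ``crucial observation'' that the $d$-th difference of $(g(n),g(n+\eta))$ lands in $\Delta G_d$ is also correct. But the filtration you write down --- pushing $G_k\times G_k$ through the quotient --- is not a filtration, and you correctly identify this as the main obstacle yourself: $[G_j\times G_j,G_k\times G_k]=[G_j,G_k]^2$ need not lie in $\Delta G_d$ when $j+k=d$. At that point the proof stops. Neither of your two proposed fixes is carried out, and fix~(1) (Taylor-splitting $g=g^*g_d$) addresses the orbit but not the structure of the group $\tilde G$; the failure of the filtration property is a statement about $G$, not about any particular polynomial sequence, so no factorization of $g$ can repair it.

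What the paper (following \cite{GreenTaoZiegler12(Ud),GreenTao12Quantitative}) actually uses is the \emph{square} filtration $G^\square_j:=\{(g,g')\in G_j^2: g^{-1}g'\in G_{j+1}\}$, which is strictly smaller than $G_j\times G_j$ for $j\ge 1$. This is precisely the replacement that makes everything work simultaneously: one checks $[G^\square_j,G^\square_k]\subseteq G^\square_{j+k}$ (a commutator identity, not a tautology, and the real content behind \cite[Proposition 7.2]{GreenTao12Quantitative}), one has $G^\square_d=\Delta G_d$ so the quotient drops the degree to $d-1$, and one shows $(g(\cdot),g(\cdot+\eta))\in\poly(G^\square_\N)$ --- a claim genuinely stronger than your observation, since it requires every $k$-th difference, not just the $d$-th, to land in the smaller group $G^\square_k$ rather than merely in $G_k\times G_k$. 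Your alternative fix~(2) (``smallest rational sub-nilmanifold containing the orbits'') is in the right spirit --- that smallest subgroup is in effect $G^\square$ --- but you explicitly defer exactly the verifications (rationality, the commutator inclusions, the polynomiality of the paired orbit) that constitute the proof, so as written there is a genuine gap: the filtration $\tilde G_\N$ that the conclusion requires is never exhibited.
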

\begin{proof}
This is essentially shown in \cite[Lemma E.8]{GreenTaoZiegler12(Ud)}; in this proof, we only recall the explicit forms of $\tilde X$ and $\F$ to display the compactness. Let $G^\square_j:=\{(g,g')\in G_j^2:g^{-1}g'\in G_{j+1}\}$. As discussed in \cite[Lemma E.8]{GreenTaoZiegler12(Ud)} and \cite[Proposition 7.2]{GreenTao12Quantitative}, $\tilde X^0=(G^\square_0/\Gamma^2,G^\square_\N)$ is a filtered nilmanifold of degree $d$. We set $\tilde X=(\tilde G/\tilde\Gamma,\tilde G_\N)=\tilde X^0/G_d^\square$
and
\[
\tilde\F=\{\tilde F\in C^0(\tilde X;\C):\tilde F((g,h)\tilde\Gamma)=F(g)\ol{F(h)},\quad F\in\F\}.
\]
As shown in Lemma \ref{lem:filter updates}, $\tilde X$ is a filtered nilmanifold of degree $(d-1)$. $\tilde F$ is well-defined since $G^\square_d=\{(g_d,g_d):g_d\in G_d\}$ and $F(g_dg)\ol{F(g_dh)}=F(g)\ol{F(h)}$ holds for every $g_d\in G_d$. With these $\tilde X$ and $\tilde\F$, \eqref{eq:Alt:CS} is satisfied and $\tilde\F$ is compact in $C^0(\tilde X;\C)$, thus we finish the proof.
\end{proof}

We emphasize in Lemma \ref{lem:SXF Alt reduce} that $\tilde X$ has degree at most $(d-1)$. This is the key to our dimension reduction argument in this section.

The next lemma recalls the Fourier decomposition given in the proof of \cite[Proposition 5.6]{TaoTeravainen19}. For self-containedness, we provide a separate proof.
\begin{lem}\label{lem:SXF Fourier decomp}
Let $\epsilon>0$ and $X=(G/\Gamma,G_\N)$ be a filtered nilmanifold of degree $d\ge 1$. Let $\F$ be a compact subset of $C^0(X;\C)$. There exist a number $J\in\N$ and a compact set $\F_*\subset C^0_*(X;\C)$ satisfying the following:

For every $f\in\mc S_{X,\F}$, there exist $f_1,\ldots,f_J\in\mc S_{X,\F_*}$ such that
\[
\norm{f-\sum_{j\le J} f_j}_{\ell^\infty(\Z)}\le\epsilon.
\]
\end{lem}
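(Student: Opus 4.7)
The plan is to perform a vertical Fourier decomposition of each $F\in\F$ along the central torus $T:=G_d/\Gamma_d$. From the filtration axiom $[G_1,G_d]\le G_{d+1}=\{1_G\}$, the subgroup $G_d$ is central in $G$, so left-multiplication by $G_d$ descends to a continuous action of $T$ on $X=G/\Gamma$. For each character $\xi\in\widehat T\cong\Z^{k_d}$, I define the vertical Fourier projection
\[
F_\xi(g\Gamma):=\int_{T} F(g_d\cdot g\Gamma)\,\overline{\xi(g_d)}\,d\mu_T(g_d),
\]
and a direct change of variables using translation-invariance of $\mu_T$ shows $F_\xi(h\cdot x)=\xi(h)F_\xi(x)$ for $h\in G_d$, so $F_\xi\in C^0_*(X;\C)$. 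Moreover, $F\mapsto F_\xi$ is a continuous linear operator on $C^0(X;\C)$ of norm at most $1$.

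Next, I would invoke a multidimensional Fej\'er-type kernel $K_N$ on $T$, namely a trigonometric polynomial $K_N=\sum_{|\xi|_\infty\le N}\hat K_N(\xi)\xi$ with $0\le\hat K_N(\xi)\le 1$, whose convolution with $F$ expands as
\[
(K_N*F)(x)=\sum_{|\xi|_\infty\le N}\hat K_N(\xi)F_\xi(x).
\]
Since $\F$ is compact in $C^0(X;\C)$, it is equicontinuous by the Arzel\`a--Ascoli theorem and hence admits a common modulus of continuity in the $T$-direction. A standard estimate comparing $K_N*F$ with $F$ against this common modulus then yields
\[
\sup_{F\in\F}\|K_N*F-F\|_{L^\infty(X)}\longrightarrow 0\quad\text{as}\quad N\to\infty,
\]
so one may choose $N=N(\epsilon,\F)\in\N$ for which this quantity is at most $\epsilon$.

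Finally, set $J:=\#\{\xi\in\widehat T:|\xi|_\infty\le N\}$ and
\[
\F_*:=\bigcup_{|\xi|_\infty\le N}\{\hat K_N(\xi)F_\xi:F\in\F\}\subset C^0_*(X;\C).
\]
This is a finite union of continuous images of the compact set $\F$, hence compact. Given $f\in\mc S_{X,\F}$, write $f(n)=F(g(n)\Gamma)$ with $F\in\F$ and $g\in\poly(G_\N)$; setting $f_\xi(n):=\hat K_N(\xi)F_\xi(g(n)\Gamma)\in\mc S_{X,\F_*}$ one obtains
\[
\Bigl|f(n)-\sum_{|\xi|_\infty\le N}f_\xi(n)\Bigr|=\bigl|(F-K_N*F)(g(n)\Gamma)\bigr|\le\epsilon
\]
uniformly in $n\in\Z$, which completes the construction. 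The main obstacle I anticipate is the uniform Fej\'er convergence over the compact family $\F$; once equicontinuity is harvested from Arzel\`a--Ascoli, this reduces to a standard modulus-of-continuity estimate on the torus $T$, and the remaining steps are bookkeeping on $C^0$-compactness of $\F_*$.
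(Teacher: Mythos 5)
Your proposal is correct and is essentially the same argument the paper gives: both decompose along the central torus $G_d/\Gamma_d$ via the vertical Fourier projections $F_\xi$, use a $k_d$-dimensional Fej\'er kernel, harvest uniform convergence from equicontinuity of the compact family $\F$, and take $\F_*$ to be the compact image $\{cF_\xi\}$. The only cosmetic difference is that you set $J$ to the exact number of frequencies $(2N+1)^{k_d}$, whereas the paper pads to $3(2m+1)^{k_d}$; either choice works.
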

\begin{proof}
For $F\in \F$ and $\xi\in\widehat{G_d/\Gamma_d}$, denote by $F_\xi:X\rightarrow\C$ the function
\[
F_\xi(g\Gamma):=\int\xi(h^{-1})F(hg\Gamma)d\mu_{G_d/\Gamma_d}(h).
\]
Since $G_d$ is in the center of $G$, $F_\xi\in C^0_*(X;\C)$ holds.

Identify $\widehat{G_d/\Gamma_d}$ with $\Z^{k_d}$. Consider the $k_d$-dimensional Fej\'{e}r kernel approximation
\[
S_mF=\sum_{\xi=(\xi_1,\ldots,\xi_{k_d})\in\Z^{k_d}}\prod_{j\le k_d}\max\left\{1-|\xi_j|/m,0\right\}\cdot F_\xi
\]
then since $\F\subset C^0(X;\C)$ is equicontinuous, there exists $m\in\N$ such that $\sup_{F\in\F}\norm{S_mF-F}_{C^0(X)}\le\epsilon$.
Setting
\[
\F_*:=\{cF_\xi:c\in[0,1],\xi\in[m]^{k_d},F\in\F\}
\]
and $J:=\#[m]^{k_d}=(2m+1)^{k_d}$
finishes the proof.
\end{proof}
We provide a version of \cite[Theorem 10.2]{GreenTao12Quantitative} in the following proposition:
\begin{prop}\label{prop:egr original}
Let $X=(G/\Gamma,G_\N)$ be a filtered nilmanifold of degree $d\ge 1$. Let $M_0\ge2$ and $N\in\N$. Let $g\in\poly(G_\N)$.
Then, there exist an integer $M_0\le M\le M_0^{O_X(1)}$, an $M$-rational subgroup $G'\le G$, $g'\in\poly(G_\N\cap G')$, and $\epsilon,\gamma:[N]\rightarrow G$ satisfying the following:
\begin{itemize}
    \item For $n\in[N]$, $g(n)=\epsilon(n)g'(n)\gamma(n)$ holds.
    \item For $n\in[N]$, $d_G(\epsilon(n),1_G)\le M$ and $d_G(\epsilon(n),\epsilon(n-1))\le\frac{M}{N}$.
    \item For $n\in[N]$, $\gamma(n)$ is an $M$-rational element.
    \item $\gamma(\cdot)$ is periodic with a period $l\le M^{O_X(1)}$.
    \item Let $\gamma_0$ be an $M$-rational element. Let $\mc P\subset [N]$ be any arithmetic progression of length $\#\mc P\ge\frac{N}{M^{C+1}l}$. Let $F:G'\rightarrow\C$ be a function invariant under right-multiplication by $\Gamma_{\gamma_0}':=G'\cap(\gamma_0\Gamma\gamma_0^{-1})$ and $1$-Lipschitz with respect to the subspace metric induced from $d_G(\cdot,\cdot)$. We have
    \begin{equation}\label{eq:G bias g'}
    \left|\E_{n\in\mc P}F(g'(n))-\int_{G'/\Gamma_{\gamma_0}'}Fd\mu_{G'/\Gamma_{\gamma_0}'} \right|\le\frac{1}{M^{C+1}}.
    \end{equation}
\end{itemize}
Here, $\mu_{G'/\Gamma_{\gamma_0}'}$ denotes the normalized quotient Haar measure and $C=C(X)$ is as in \eqref{eq:prep for Lip}.
\end{prop}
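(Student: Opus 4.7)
The plan is to reduce to the Green-Tao quantitative factorization theorem \cite[Theorem 10.2]{GreenTao12Quantitative} and then upgrade the invariance condition in the last bullet from $G'\cap\Gamma$ to $\Gamma'_{\gamma_0}$ via a finite-cover argument based on Lemma \ref{lem:commensurate}.

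First, I would apply \cite[Theorem 10.2]{GreenTao12Quantitative} with input parameter $M_0^K$ for a constant $K=K(X,C)$ to be chosen large. This directly supplies the first four bullets of the conclusion, with $M_0\le M\le M_0^{O_X(1)}$, and it also supplies the base-level equidistribution
\begin{equation}\label{eq:plan-base}
\Bigl|\E_{n\in\mc P}F_0(g'(n))-\int_{G'/(G'\cap\Gamma)}F_0\,d\mu_{G'/(G'\cap\Gamma)}\Bigr|\le\frac{1}{M^{K'}}
\end{equation}
for every $1$-Lipschitz right-$(G'\cap\Gamma)$-invariant $F_0:G'\to\C$ and every arithmetic progression $\mc P\subset[N]$ with $\#\mc P\ge N/M^{K'}$, where $K'$ can be made as large as we please at the cost of enlarging $K$.

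Next, I would transfer the invariance from $G'\cap\Gamma$ to $\Gamma'_{\gamma_0}$. Since $\gamma_0$ is $M$-rational, Lemma \ref{lem:commensurate} gives that $\Gamma'_{\gamma_0}$ and $G'\cap\Gamma$ are $M^{O_X(1)}$-commensurate, so that $\Gamma_0:=\Gamma'_{\gamma_0}\cap(G'\cap\Gamma)$ has index $k\le M^{O_X(1)}$ in both. A right-$\Gamma'_{\gamma_0}$-invariant function $F$ is a fortiori right-$\Gamma_0$-invariant, hence descends to the intermediate cover $G'/\Gamma_0\twoheadrightarrow G'/\Gamma'_{\gamma_0}$. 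I would then lift \eqref{eq:plan-base} across the finite cover $G'/\Gamma_0\twoheadrightarrow G'/(G'\cap\Gamma)$ as follows: partition $\mc P$ into sub-progressions on which the coset assignment $n\mapsto g'(n)\Gamma_0\pmod{G'\cap\Gamma}$ into the finite set $(G'\cap\Gamma)/\Gamma_0$ is constant, which is valid because this assignment is periodic in $n$ with period $\le M^{O_X(1)}$ (being the image in a finite set of a polynomial nilsequence of degree $\le d$). On each sub-progression, write $F$ as a right-translate of a $(G'\cap\Gamma)$-invariant Lipschitz function, whose Lipschitz constant is controlled by \eqref{eq:prep for Lip} and the $M^{O_X(1)}$-rationality of the translate, and apply \eqref{eq:plan-base}; a final averaging across the remaining cover $G'/\Gamma_0\twoheadrightarrow G'/\Gamma'_{\gamma_0}$ of degree $[\Gamma'_{\gamma_0}:\Gamma_0]\le M^{O_X(1)}$ identifies the limit integral with $\int_{G'/\Gamma'_{\gamma_0}}F\,d\mu_{G'/\Gamma'_{\gamma_0}}$. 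Choosing $K'\ge C+4+O_X(1)$ absorbs all the polynomial losses and yields \eqref{eq:G bias g'}.

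The main obstacle is controlling the coset assignment $n\mapsto g'(n)\Gamma_0\pmod{G'\cap\Gamma}$ as a genuinely periodic sequence of period polynomial in $M$: this requires combining the polynomial structure of $g'$ (of degree $\le d$) with the fact that $(G'\cap\Gamma)/\Gamma_0$ is a finite set of cardinality $\le M^{O_X(1)}$ (not a group in general, since $\Gamma_0$ need not be normal in $G'\cap\Gamma$), together with a careful bookkeeping of how conjugation by the $M$-rational $\gamma_0$ interacts with the Mal'cev coordinate system recalled in Definition \ref{defn:Malcev}.
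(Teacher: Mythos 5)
Your opening move is the same as the paper's: invoke \cite[Theorem 10.2]{GreenTao12Quantitative} to supply the factorization $g=\epsilon g'\gamma$ and the base equidistribution in $G'/(G'\cap\Gamma)$, and then use Lemma \ref{lem:commensurate} to control $[G'\cap\Gamma:\Gamma'_{\gamma_0}\cap\Gamma]$. But from there the two arguments diverge, and the way you carry out the upgrade from $(G'\cap\Gamma)$-invariance to $\Gamma'_{\gamma_0}$-invariance has a real gap.

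The paper does \emph{not} attempt to lift the equidistribution statement \eqref{eq:plan-base} across the finite cover after the fact. Instead it observes (following \cite[Theorem D.4]{GreenTaoZiegler12(Ud)}) that the proof of \cite[Theorem 10.2]{GreenTao12Quantitative} already works verbatim if the lattice $G'\cap\Gamma$ is replaced by any sublattice $\Gamma'\le G'\cap\Gamma$ of index $O(M^c)$ — the quantitative rationality and metric constants only degrade by $M^{O_X(1)}$ — and then takes $\Gamma'=\Gamma'_{\gamma_0}\cap\Gamma$. This builds the sublattice directly into the factorization theorem, so no post-hoc lifting is needed.

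Your replacement for this step has two problems. First, the ``coset assignment $n\mapsto g'(n)\Gamma_0 \pmod{G'\cap\Gamma}$ into $(G'\cap\Gamma)/\Gamma_0$'' is not well-defined: $g'(n)$ lives in $G'$, not in $G'\cap\Gamma$, and a point of $G'/\Gamma_0$ does not carry a canonical label in the fiber set $(G'\cap\Gamma)/\Gamma_0$ without choosing a section of the covering $G'/\Gamma_0\to G'/(G'\cap\Gamma)$. Even with a choice of section, the assertion that this assignment is periodic in $n$ with period $M^{O_X(1)}$ is not justified by ``being the image of a polynomial sequence in a finite set'' — a polynomial sequence reduced modulo a generically irrational sublattice is not periodic, and periodicity in $n$ is exactly what would need to be proved. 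Indeed the whole difficulty is that equidistribution of $g'(n)(G'\cap\Gamma)$ in $G'/(G'\cap\Gamma)$ does not automatically imply equidistribution of $g'(n)\Gamma_0$ in the cover $G'/\Gamma_0$; the orbit could a priori concentrate on a proper sub-cover, and the obstruction is a horizontal character of $G'/\Gamma_0$ that does not descend to $G'/(G'\cap\Gamma)$. Second, the claim that on each sub-progression you can ``write $F$ as a right-translate of a $(G'\cap\Gamma)$-invariant Lipschitz function'' is structurally false: a generic right-$\Gamma_0$-invariant function on $G'$ is \emph{not} a right-translate of a $(G'\cap\Gamma)$-invariant function (translating changes the stabilizer subgroup by conjugation, not by enlargement), so \eqref{eq:plan-base} cannot be applied summand by summand in the way you describe.

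If you wanted to carry out a genuine lift-across-cover argument instead of the paper's cite-the-sublattice-version shortcut, the standard route would be via the converse quantitative Leibman theorem: a failure of equidistribution of $g'(n)\Gamma_0$ in $G'/\Gamma_0$ produces a nontrivial horizontal character $\eta$ killing $\Gamma_0$ with small smoothness norm; then $k\eta$ for $k=[G'\cap\Gamma:\Gamma_0]=M^{O_X(1)}$ is a horizontal character of $G'/(G'\cap\Gamma)$ of polynomially bounded complexity, contradicting \eqref{eq:plan-base}. That is a workable alternative proof, but it is not what your sketch says, and the periodicity/translation claims in your sketch do not lead there.
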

\begin{proof}
This is a consequence of \cite[Proposition 10.2]{GreenTao12Quantitative} (setting the index $t=1$ in there). We adopt notions in \cite{GreenTao12Quantitative}. Indeed, we use a stronger version of \cite[Proposition 10.2]{GreenTao12Quantitative}, previously discussed in \cite[Theorem D.4]{GreenTaoZiegler12(Ud)}. That is, we generalize the setting $\Gamma'=G'\cap\Gamma$ in there (not directly applicable to our case of $\Gamma_{\gamma_0}'$) to
\begin{equation}\label{eq:Gamma'}
\Gamma'\le G'\cap\Gamma\text{ of index }O(M^c),
\end{equation}
where $c=c_X\in\N$ is a number to be fixed shortly. Although the statement is more general, the proof of \cite[Proposition 10.2]{GreenTao12Quantitative} works identically even if one allows $\Gamma'$ to be any of \eqref{eq:Gamma'}. \footnote{This works since quantitative rationalities of subgroups, elements, and metrics are comparable (up to $M^{O_X(1)}$-comparabilities) over $G'\cap\Gamma$ and $\Gamma'$; see \cite{GreenTao12Quantitative} for details.}

For every $M$-rational element $\gamma_0$, by Lemma \ref{lem:commensurate}, we have 
\[
[G'\cap\Gamma:\Gamma_{\gamma_0}'\cap\Gamma]\le[\Gamma:\gamma_0\Gamma\gamma_0^{-1}\cap\Gamma]\le M^c
\]
for some $c=c_X\in\N$; setting $\Gamma'=\Gamma'_{\gamma_0}\cap\Gamma$, \eqref{eq:Gamma'} holds.
Now our statement is just a paraphrase of \cite[Theorem 10.2]{GreenTao12Quantitative} choosing a large parameter $A\gg_X1$ in there. (Note here that $\gamma(n)$ in our statement should be read as a representative of $\gamma(n)\Gamma$ in the original statement of \cite[Theorem 10.2]{GreenTao12Quantitative}; see \cite[Definition 1.17]{GreenTao12Quantitative} for the comparison. The periodicity of $\gamma(n)\Gamma$ is a consequence of \cite[Lemma A.12(ii)]{GreenTao12Quantitative} and \cite[Theorem 10.2(iii)]{GreenTao12Quantitative}.) This finishes the proof.
\end{proof}
Next, we show a version of \cite[Corollary E.6]{GreenTaoZiegler12(Ud)}.
\begin{lem}\label{lem:Cd E Sd-1}
Let $d\ge 2$ and $\delta>0$. Let $X=(G/\Gamma,G_\N)$ be a filtered nilmanifold of degree $d$. Let $\F\subset C^0_*(X;\C)$ be compact. There exist a filtered nilmanifold $\tilde X=(\tilde G/\tilde\Gamma,\tilde G_\N)$ of degree at most $(d-1)$ and a compact set $\tilde \F\subset C^0(\tilde X;\C)$ satisfying the following:

Let $N\in\N$ and $f\in\mc S_{X,\F}$. Assume that
\begin{equation}\label{eq:EF>0}
\left|\E_{n\in[N]}f(n)\right|\ge\delta.
\end{equation}
Then, there exists $f'\in\mc S_{\tilde X,\tilde\F}$ such that $f(n)=f'(n)$ holds for $n\in[N]$.
\end{lem}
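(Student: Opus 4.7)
The plan is a degree-lowering argument combining the quantitative equidistribution theorem of Proposition \ref{prop:egr original} with the vertical character structure of $C^0_*(X;\C)$. Write $f(n) = F(g(n)\Gamma)$ for some $F \in \F$ carrying a vertical character $\xi \in \widehat{G_d/\Gamma_d}$ and $g \in \poly(G_\N)$. Apply Proposition \ref{prop:egr original} to $g$ with a parameter $M_0 = M_0(X,\F,\delta)$ large enough that all resulting error terms stay well below $\delta$; this produces the factorization $g(n) = \epsilon(n)g'(n)\gamma(n)$ with $G' \leq G$ an $M$-rational subgroup, $g' \in \poly(G_\N \cap G')$, $\epsilon$ slowly varying (with $d_G$-step at most $M/N$), and $\gamma$ an $M$-rational periodic sequence of period $l \leq M^{O_X(1)}$.

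Partition $[N]$ into arithmetic subprogressions $\{\mc P_j\}$ of common length at least $N/M^{C+4}$ on which $\gamma \equiv \gamma_j$ by periodicity and $\epsilon$ is essentially constant, $d_G(\epsilon(n), \epsilon_j) \to 0$ as $M_0 \to \infty$ uniformly in $n \in \mc P_j$. Uniform continuity of $F \in \F$ (granted by compactness of $\F$ in $C^0(X;\C)$) combined with the bias hypothesis yields, after pigeonholing, an index $j_0$ and a representative $\epsilon_{j_0}$ with
\[
\Bigl|\E_{n \in \mc P_{j_0}} F(\epsilon_{j_0} g'(n) \gamma_{j_0} \Gamma)\Bigr| \gtrsim \delta.
\]
Define $F_0: G'/\Gamma'_{\gamma_{j_0}} \to \C$ by $F_0(h) := F(\epsilon_{j_0} h \gamma_{j_0} \Gamma)$, which is well-defined by $\Gamma'_{\gamma_{j_0}}$-right-invariance. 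The equidistribution estimate \eqref{eq:G bias g'} upgrades this to $\bigl|\int_{G'/\Gamma'_{\gamma_{j_0}}} F_0 \, d\mu\bigr| \gtrsim \delta$.

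The heart of the argument is to convert this analytic bias into the exact algebraic conclusion that the vertical character is trivial on $G' \cap G_d$. Since $G_d$ lies in the center of $G$, for every $g_d \in G' \cap G_d$ and $h \in G'$ we have
\[
F_0(g_d h) = F(\epsilon_{j_0} g_d h \gamma_{j_0} \Gamma) = F(g_d \epsilon_{j_0} h \gamma_{j_0} \Gamma) = \xi(g_d) F_0(h).
\]
The left action of the compact subgroup $G' \cap G_d$ on $G'/\Gamma'_{\gamma_{j_0}}$ preserves the quotient Haar measure, so if $\xi|_{G' \cap G_d}$ were nontrivial, averaging would force the integral to vanish. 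Hence $\xi|_{G' \cap G_d} \equiv 1$, so $F|_{G'}$ is left-invariant under $G' \cap G_d$ and descends to the filtered nilmanifold $\tilde X := (\tilde G/\tilde \Gamma, \tilde G_\N)$ with $\tilde G := G'/(G' \cap G_d)$, $\tilde G_k := (G_k \cap G')/(G' \cap G_d)$, and $\tilde \Gamma$ the image of $G' \cap \Gamma$. By construction $\tilde G_d = \{1\}$, so $\tilde X$ has degree at most $d-1$.

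To assemble a single $f' \in \mc S_{\tilde X, \tilde \F}$ with $f'(n) = f(n)$ for $n \in [N]$, I would combine (i) the descended continuous functions $\tilde F_j(\pi(h)) := F(\epsilon_j h \gamma_j \Gamma)$ on $\tilde X$ (well-defined thanks to $G' \cap G_d$-invariance, and parametrized by the compact data $(\epsilon_j, \gamma_j)$), (ii) the residue-class nilsequences $\chi_{l\Z+j}$ of \eqref{eq:Zm is nilseq} to select the correct $\mc P_j$, and (iii) Lemma \ref{lem:SXF linear} to form sums and products on an enlarged product filtered nilmanifold still of degree at most $d-1$. The main obstacle is the slowly varying factor $\epsilon(n)$, which is not itself polynomial: achieving \emph{exact} (rather than approximate) pointwise equality on $[N]$ requires absorbing the precise variation of $\epsilon$ on each $\mc P_j$ into the function $\tilde F_j$. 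This stays within a compact family $\tilde \F$ because $\epsilon$ ranges in a $d_G$-bounded set depending only on $X,\F,\delta$ and only finitely many residue classes $\mc P_j$ appear.
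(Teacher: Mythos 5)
Your proposal follows the paper's argument through the middle steps: you apply Proposition \ref{prop:egr original} to factor $g = \epsilon g'\gamma$, pigeonhole onto a progression $\mc P_{j_0}$ with large average, pass to the quotient Haar measure via \eqref{eq:G bias g'}, and use the central averaging argument to deduce $\xi|_{G'\cap G_d}\equiv 1$, so that $F$ descends to $\tilde G := G'/(G'\cap G_d)$ of degree at most $d-1$. That all matches the paper (the paper phrases the last point as a dichotomy but the content is the same).

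There is, however, a genuine gap exactly where you flag the ``main obstacle.'' Your proposed building blocks $\tilde F_j(\pi(h)) := F(\epsilon_j h \gamma_j \Gamma)$ freeze $\epsilon$ at a single representative $\epsilon_j$, and you then say you would ``absorb the precise variation of $\epsilon$ on each $\mc P_j$ into $\tilde F_j$'' without saying how. The difficulty is that the Lemma demands \emph{exact} pointwise equality $f'(n)=f(n)$ on all of $[N]$, while freezing $\epsilon_j$ only gives an approximation that is $O(M/N)$-Lipschitz-close; no choice of single continuous function on $\tilde G/\tilde\Gamma$ can reproduce the $n$-dependent deformation $\epsilon(n)$. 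The paper resolves this by enlarging the nilmanifold: it sets $\tilde X_k := (X_k^0\cap G')/(G_d\cap G')\times X_{10\T}$, introduces the circle $X_{10\T} = \R/10\Z$ carrying the degree-$1$ polynomial sequence $n\mapsto n/N$, and packages the entire Lipschitz path $\epsilon$ as a function $\tilde\epsilon\in\mc K$ with the nilsequence evaluated via $(g',x)\mapsto F(\tilde\epsilon(x)g'\gamma_k\Gamma)$. Since $\mc K$ is a uniformly Lipschitz (hence compact) family and $X_{10\T}$ has degree $1\le d-1$, this yields a compact $\tilde\F$ and exact equality. Without introducing this extra abelian factor (or some equivalent device) your assembly step does not close, so the argument as written gives only an approximate version of the Lemma.
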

Such $f'$ as above is said to \emph{extend} $f\mid_{[N]}$.
\begin{proof}[Proof of Lemma \ref{lem:Cd E Sd-1}]
Let $M_0\gg 1/\delta$ be an integer to be fixed later. Let $f=F(g(\cdot)\Gamma)\in\mc S_{X,\F}$. We will explicitly construct $\tilde X$ and $\tilde\F$ satisfying the statement, which will not depend on $f=F(g(\cdot)\Gamma)$ although we first fixed $f$ for convenience.
Let $M_0\le M\le M_0^{O_X(1)}$, $G'$, and $l$ be as in Proposition \ref{prop:egr original}. Here, up to taking $O_{M_0,X}(1)$ Cartesian products, we fix $M$, $G'$, and $l$. Decompose $g(n)=\epsilon(n)g'(n)\gamma(n)$ as in Proposition \ref{prop:egr original}. Let $C=C(X)$ be the number in \eqref{eq:prep for Lip}.
We first claim that
\begin{equation}\label{eq:claim for A}
F\text{ is invariant under multiplication by }G'\cap G_d.
\end{equation}
By the Stone-Weierstra{\ss} Theorem, there exists a $B=B(\F,\delta,X)$-Lipschitz map $F_*:X\rightarrow\C$ such that $\norm{F-F_*}_{C^0(X)}\le\delta/3$.
Partition $[N]$ into $[N]\cap(l\Z+k),k=0,\ldots,l-1$, then subpartition each by intervals of lengths $N/M^{C+1}$. Enumerating each progression by $\mc P_j$, we have a partition $\cup_j\mc P_j=[N]$. By \eqref{eq:EF>0} and $|F_*-F|\le\delta/3$ we have $|\E_{n\in[N]}F_*(g(n)\Gamma)|\ge2\delta/3$, then by pigeonholing there exists an index $j$ such that
\begin{equation}\label{eq:EF 2/3}
|\E_{n\in\mc P_j}F_*(g(n)\Gamma)|\ge\frac{2\delta}3.
\end{equation}
Choose any $n_j\in\mc P_j$ and denote $\epsilon_j:=\epsilon(n_j)$ and $\gamma_j:=\gamma(\mc P_j)$.
Since $\epsilon(\cdot)$ is $M/N$-Lipschitz, $F_*$ is $B$-Lipschitz, and the diameter of $\mc P_j$ is $O(N/M^{C+1})$, by the right-invariance of $d_G$, we have
\begin{equation}\label{eq:E error small}
\left|\E_{n\in\mc P_j}F_*(\epsilon(n)g'(n)\gamma(n))-\E_{n\in\mc P_j}F_*(\epsilon_jg'(n)\gamma_j)\right|\les B\cdot\frac{M}{N}\cdot\frac{N}{M^{C+1}}\les\frac{B}{M^{C}}.
\end{equation}
By \eqref{eq:prep for Lip} and $d_G(\epsilon_j,1_G)\le M$, the map $h\mapsto F_*(\epsilon_jh\gamma_j)$ is $BM^C$-Lipschitz, and is invariant under right-multiplication by $\gamma_j\Gamma\gamma_j^{-1}$. Thus, by \eqref{eq:G bias g'} we have
\begin{equation}\label{eq:G bias g' cor}
\left|\E_{n\in\mc P_j}F_*(\epsilon_jg'(n)\gamma_j)-\int_{G'/{\Gamma'}_{\gamma_j}}F_*(\epsilon_jh\gamma_j)d\mu_{G'/{\Gamma'}_{\gamma_j}}(h)\right|\les\frac{BM^C}{M^{C+1}}\les\frac{B}{M}.
\end{equation}
Thus, for $M_0\gg_{\delta,B}1$ large enough, by the triangle inequality on \eqref{eq:EF 2/3}, \eqref{eq:E error small}, and \eqref{eq:G bias g' cor}, we have
\begin{equation}\label{eq:int G' dh}
\left|\int_{G'/{\Gamma'}_{\gamma_j}}F_*(\epsilon_jh\gamma_j)d\mu_{G'/{\Gamma'}_{\gamma_j}}(h)\right|>\frac{\delta}3.
\end{equation}
Now since $|F-F_*|\le\delta/3$, we have
\begin{equation}\label{eq:integral neq 0}
\int_{G'/{\Gamma'}_{\gamma_j}}F(\epsilon_jh\gamma_j)d\mu_{G'/{\Gamma'}_{\gamma_j}}(h)\neq 0.
\end{equation}
Since $F\in\F\subset C^0_*(X;\C)$, there exists $\xi\in\widehat{G_d/\Gamma_d}$ such that $F(g_dg\Gamma)=\xi(g_d)F(g\Gamma)$. If $\xi\mid_{G'\cap G_d}$ were nontrivial, there would exist $g_*\in G'\cap G_d$ such that $\xi(g_*)=-1$. Then, substituting $h\mapsto g_* h$ would contradict \eqref{eq:integral neq 0}. Hence, $F$ should be invariant over $G'\cap G_d$. This implies \eqref{eq:claim for A} as claimed.

Now we construct $\tilde X$ and $\tilde\F$, independent of $N$ and $f$, satisfying this lemma. We construct for each $k=0,\ldots,l-1$ a $(d-1)$-degree filtered nilmanifold $X_k$ and a compact set $\F_k\subset C^0(X_k;\C)$, not depending on $N$ and $f$, containing $f_k\in\mc S_{X_k,\F_k}$ such that
\begin{equation}\label{eq:f'_k=f_k}
f_k(n)=F(\epsilon(n)g'(n)\gamma_k\Gamma)=f(n),\qquad n\in[N]\cap(l\Z+k),
\end{equation}
where we denoted $\gamma_k=\gamma(k)$.
Once we do this, the proof finishes by applying Lemma \ref{lem:SXF linear} and \eqref{eq:Zm is nilseq} to the right-hand side of
\[
f(n)=\sum_{k=0}^{l-1}\chi_{l\Z+k}\cdot f_k(n),\quad n\in[N].
\]
Let
\[
X_k^0:=(G/\gamma_k\Gamma\gamma_k^{-1},G_\N)
\]
and
\[
X_k:=(X_k^0\cap G')/(G_d\cap G')\times X_{10\T}.
\]
Let $\mc K\subset C^0(X_{10\T};G)$ be the set of functions $\tilde\epsilon:\R/10\Z\rightarrow G$ satisfying the Lipschitz bound
\[
\sup_{x,y\in\R/10\Z}d_G(\tilde\epsilon(x),\tilde\epsilon(y))\le M\cdot \dist(x,y)
\]
and
\[
\sup_{x\in\R/10\Z}d_G(\tilde\epsilon(x),1_G)\le M.
\]
Let $\F_k$ be the set
\[
\F_k:=\{(g'(G'\cap G_d)\gamma_k\Gamma\gamma_k^{-1},x)\mapsto F(\tilde\epsilon(x)g'\gamma_k\Gamma):\tilde\epsilon\in\mc K\}\subset C^0(X_k;\C).
\]
Now observe that $\epsilon$ extends to $\tilde\epsilon(\cdot/N)$, $\tilde\epsilon\in\mc K$ and thus
\[
\left(n\mapsto F(\tilde\epsilon(n/N)g'(n)\gamma_k\Gamma)\right)\in\mc S_{X_k,\F_k},
\]
which proves \eqref{eq:f'_k=f_k} and finishes the proof.
\end{proof}
The following is a version of \cite[Corollary E.12]{GreenTaoZiegler12(Ud)}:
\begin{lem}\label{lem:Cd Ud Sd-1}
Let $d\ge 2$ and $\delta>0$. Let $X=(G/\Gamma,G_\N)$ be a filtered nilmanifold of degree $d$. Let $\F\subset C^0_*(X;\C)$ be compact. There exist a filtered nilmanifold $\tilde X=(\tilde G/\tilde\Gamma,\tilde G_\N)$ of degree $(d-1)$ and a compact set $\tilde \F\subset C^0(\tilde X;\C)$ satisfying the following:

Let $N\in\N$ and $f\in\mc S_{X,\F}$. Assume
\begin{equation}\label{eq:Ud!=0}
\norm{f}_{U^d([N])}\ge\delta.
\end{equation}
Then, there exists $\tilde f\in\mc S_{\tilde X,\tilde\F}$ such that $f(n)=\tilde f(n)$ holds for $n\in[N]$.
\end{lem}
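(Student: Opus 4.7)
The plan is to adapt the proof of Lemma \ref{lem:Cd E Sd-1}, replacing its use of the mean $|\E_n f(n)|$ by the quantity obtained from unfolding the $U^d$ norm. Namely, the identity
\[
\|f\|_{U^d([N])}^{2^d} = \E_{h_1,\ldots,h_{d-1}} \bigl|\E_{n\in[N]} \Alt_{h_1,\ldots,h_{d-1}} f(n)\bigr|^2
\]
(immediate from Definition \ref{def:Gowers norm} by grouping the last coordinate $\omega_d\in\{0,1\}$) together with pigeonhole produces a tuple $(h_1,\ldots,h_{d-1})$ with $|h_j|\les N$ on which $|\E_n \Alt_{h_1,\ldots,h_{d-1}} f(n)| \gtrsim_\delta 1$. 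Writing $f = F \circ g$ with $F\in\F\subset C^0_*(X;\C)$ corresponding to a vertical character $\xi\in\widehat{G_d/\Gamma_d}$, the inner expectation becomes an average over $n$ of the product $\prod_{\omega\in\{0,1\}^{d-1}} \c^{|\omega|} F(g(n+\omega\cdot h)\Gamma)$.

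The next step is to apply Proposition \ref{prop:egr original} to $g$, yielding a factorization $g(n) = \epsilon(n) g'(n) \gamma(n)$ with $g'\in\poly(G_\N\cap G')$ for some $M$-rational subgroup $G'\le G$, together with Lipschitz-slowness for $\epsilon$ and periodicity for $\gamma$. Substituting this into the product and mimicking the steps \eqref{eq:E error small}--\eqref{eq:G bias g' cor} in the proof of Lemma \ref{lem:Cd E Sd-1} --- partitioning $[N]$ into arithmetic progressions on which $\epsilon, \gamma$ are essentially frozen and then invoking equidistribution \eqref{eq:G bias g'} on each piece --- reduces matters to evaluating an integral over $G'/(G'\cap \gamma_j\Gamma\gamma_j^{-1})$ of the $2^{d-1}$-fold product of $F$ along the orbit, up to an $O(1/M)$ error.

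Exploiting the $\xi$-equivariance of $F$, this integral decomposes as a character integral over the central subgroup $G_d\cap G'$ whose integrand is an alternating-sign product of shifts of $\xi$ indexed by $\omega\in\{0,1\}^{d-1}$. Using the polynomial structure of $g'$ and the filtration relations $[G_i,G_j]\le G_{i+j}$, the $(d-1)$-fold discrete derivative $\De_{h_1}\cdots\De_{h_{d-1}}\tilde g'(n)$ of the $G_d$-projection of $g'$ is a linear-in-$n$ element of $G_d\cap G'$ whose leading coefficient is expressible in terms of $h_1\cdots h_{d-1}$ and a fixed element of $G_d\cap G'$. The dichotomy then splits: either $\xi$ annihilates this leading coefficient (viewed as an element of $G_d\cap G'$), or the character integral vanishes and the resulting bound, for $M\gg 1/\delta$, contradicts $\|f\|_{U^d}\ge\delta$. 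In the former case, $\xi$ vanishes on $G_d\cap G'$, so $F$ descends to a function on the quotient $(X\cap G')/(G_d\cap G')$, which by Lemma \ref{lem:filter updates} has degree at most $d-1$. The construction of $\tilde X$ and $\tilde \F$ then follows verbatim the last paragraph of the proof of Lemma \ref{lem:Cd E Sd-1}, taking products with $X_{L\T}$ via Lemma \ref{lem:SXF linear} and \eqref{eq:Zm is nilseq} to encode the Lipschitz perturbation $\epsilon$ and the period-$l$ factor $\gamma$.

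The main technical obstacle will be the third step: isolating the $\xi$-character contribution in the $(d-1)$-fold alternating-sign product and showing it is nondegenerate as a function of $\vec h$. Concretely, one must verify that for a degree-$d$ polynomial sequence in $\poly(G_\N)$, the $G_d$-component of the iterated discrete derivative produces a phase whose slope in $n$ is nondegenerate in $(h_1,\ldots,h_{d-1})$, so that averaging $|\E_n \cdot|^2$ over $\vec h$ cannot yield the lower bound $\delta^{2^d}$ unless $\xi$ vanishes on the relevant leading coefficient. This is a finite combinatorial fact rooted in the polynomial calculus on filtered nilpotent groups, but requires careful bookkeeping of the commutator identities to execute rigorously.
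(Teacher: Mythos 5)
Your proposal takes a genuinely different route from the paper, and in its present form it has real gaps. The paper's proof is a short reduction to Lemma~\ref{lem:Cd E Sd-1}: it invokes the inverse Gowers theorem (Proposition~\ref{prop:SXF inverse Ud}) to produce a degree-$(d-1)$ nilsequence $f'\in\mc S_{X_\delta,\F_\delta}$ with $|\E_{n\in[N]}f(n)f'(n)|\gtrsim_\delta 1$, perturbs $\F_\delta$ so that its members are bounded away from zero (so $1/f'$ is again a nilsequence), applies Lemma~\ref{lem:Cd E Sd-1} to the product $ff'$ (which now has large $|\E|$), and finally recovers $f=(ff')/f'$ as a nilsequence on a degree-$(d-1)$ product manifold. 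The whole argument is a handful of lines; all the hard work is outsourced to Proposition~\ref{prop:SXF inverse Ud} and Lemma~\ref{lem:Cd E Sd-1}, which have already been set up. You instead try to rerun the proof of Lemma~\ref{lem:Cd E Sd-1} from scratch with $|\E_n f(n)|$ replaced by the unfolded $U^d$ norm; this is closer in spirit to Green--Tao--Ziegler's own argument for their Corollary~E.12, but it is far more delicate and your sketch leaves the delicate parts unresolved.

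Two concrete gaps. First, after pigeonholing in $\vec h=(h_1,\ldots,h_{d-1})$ you want to apply the equidistribution theorem \eqref{eq:G bias g'} to the $2^{d-1}$-fold product $\prod_{\omega}\c^{|\omega|}F(g(n+\omega\cdot h)\Gamma)$; but \eqref{eq:G bias g'} concerns the orbit of a \emph{single} polynomial sequence $g'$ in $G'/\Gamma'_{\gamma_0}$, whereas here you have $2^{d-1}$ \emph{coupled} shifts $n\mapsto g'(n+\omega\cdot h)$. Their joint distribution is not the product measure, and "the integral over $G'/(G'\cap\gamma_j\Gamma\gamma_j^{-1})$ of the $2^{d-1}$-fold product of $F$" is not a meaningful reduction until you pass to the Host--Kra cube group $G^{[d-1]}$ and apply equidistribution there (this is what the machinery behind Lemma~\ref{lem:SXF Alt reduce} is for). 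Second, your dichotomy concludes that "$\xi$ annihilates this leading coefficient" for the particular $\vec h$ produced by pigeonhole, and then jumps to "$\xi$ vanishes on $G_d\cap G'$." Annihilating one element of $G_d\cap G'$ is much weaker than vanishing on the whole subgroup; to bridge the gap one has to show that as $\vec h$ ranges over a positive-density set, the resulting leading coefficients generate $G_d\cap G'$, which is precisely the "nondegeneracy" bookkeeping you flag as the main obstacle and do not supply. Until those two points are closed the argument does not go through; I recommend switching to the paper's reduction-plus-division argument, which avoids both issues entirely.
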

\begin{proof}
By Proposition \ref{prop:SXF inverse Ud}, there exist a filtered nilmanifold $X_\delta$ of degree $(d-1)$ and a compact set $\F_\delta\subset C^0(X_\delta;\C)$ such that for $N\in\N$ and $f:[N]\rightarrow\D$ such that $\norm{f}_{U^d}\ge\delta$, there exists $f'\in\mc S_{X_\delta,\F_\delta}$ such that
\begin{equation}\label{eq:Ud*>d'}
\left|\E_{n\in[N]}f(n)f'(n)\right|\gtrsim_\delta1.
\end{equation}
Let $M=\sup_{F\in\F}\norm{F}_{C^0(X_\delta)}$. Up to replacing $\F_\delta$ by $\{2M\}\cup\{F-2M:F\in \F_\delta\}$, where \eqref{eq:Ud*>d'} stays true by a triangle inequality, we may assume $|F|\ge M$ for $F\in\F_\delta$. Denote $\F_\delta^{-1}:=\{F^{-1}:F\in\F_\delta\}\subset C^0(X_\delta;\C)$, which is compact. Then, $1/f'\in\mc S_{X_\delta,\F_\delta^{-1}}$ holds.
By Lemma \ref{lem:Cd E Sd-1}, there exist $X'$ and $\F'$ such that $f(n)f'(n),n\in[N]$ extends to a member $f^0\in\mc S_{X',\F'}$. Then $f$ extends to
\[
\tilde f=f^0/f'\in\mc S_{X'\times X_\delta,\F'\otimes\F_\delta^{-1}},
\]
finishing the proof.
\end{proof}
So far we have recalled previous results. Below we introduce a new inductive degree-lowering principle and a class of norms enjoying such property.
\begin{defn}\label{defn:alt-stable}
A sequence $\{\mc N_N\}_{N\in\N}$ of norms on functions $f:[N]\rightarrow\C$ is \emph{alt-stable} if the following are satisfied:
\begin{itemize}
\item $\norm{f}_{\mc N_N}\les\norm{f}_{\ell^\infty}$ holds.
\item For $\epsilon>0$, $N\gg_\epsilon1$, and $f:[N]\rightarrow\D$ such that
$\norm{f}_{\mc N_N}\ge\epsilon$,
\begin{equation}\label{eq:alt-stable}
\E_{\eta\in[2N]}\norm{\Alt_{\eta}f}_{\mc N_N}\gtrsim_\epsilon 1.
\end{equation}
\end{itemize}
\end{defn}
\begin{defn}\label{defn:reducible}
Let $d\ge d_0+1$ be positive integers. A sequence $\{\mc N_N\}_{N\in\N}$ of norms on functions $f:[N]\rightarrow\C$ is \emph{$(d,d_0)$-*-reducible} if for every $\epsilon>0$, there exists $\epsilon'=\epsilon'(\epsilon)>0$ satisfying the following:

Let $X$ be any filtered nilmanifold of degree at most $d$ and $\F\subset C^0_*(X;\D)$ be compact. For $N\gg_{\epsilon,X,\F}1$ and $f\in\mc S_{X,\F}$ such that $\norm{f}_{\mc N_N}\ge\epsilon$,
\begin{equation}\label{eq:reduced Ud+1 large}
\norm{f}_{U^{d_0+1}([N])}\ge\epsilon'.
\end{equation}
$\{\mc N_N\}_{N\in\N}$ is \emph{$(d,d_0)$-reducible} if above holds for arbitrary compact set $\F\subset C^0(X;\D)$.
\end{defn}
\begin{lem}\label{lem:d-1->d*}
Let $d\ge d_0+2$ be positive integers. Let $\{\mc N_N\}_{N\in\N}$ be alt-stable and $(d-1,d_0)$-reducible. Then, $\{\mc N_N\}_{N\in\N}$ is $(d,d_0)$-*-reducible.
\end{lem}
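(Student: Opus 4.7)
The goal is to show that for any filtered nilmanifold $X$ of degree at most $d$, compact $\F\subset C^0_*(X;\D)$, and $f\in\mc S_{X,\F}$ with $\norm{f}_{\mc N_N}\ge\epsilon$, the Gowers norm $\norm{f}_{U^{d_0+1}([N])}$ is bounded below by some $\epsilon'(\epsilon)>0$ once $N\gg_{\epsilon,X,\F}1$. The plan proceeds in two halves. First, use alt-stability together with Lemma \ref{lem:SXF Alt reduce} and the $(d-1,d_0)$-reducibility hypothesis to establish $\norm{f}_{U^{d_0+2}([N])}\gtrsim_\epsilon 1$; then use the structure theorem Lemma \ref{lem:Cd Ud Sd-1} to replace $f|_{[N]}$ by a nilsequence on a lower-degree nilmanifold, to which the $(d-1,d_0)$-reducibility hypothesis applies a second time. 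We may assume $X$ has degree exactly $d$: if its actual degree $d'$ is at most $d-1$, then $\F\subset C^0_*(X;\D)\subset C^0(X;\D)$ and $(d-1,d_0)$-reducibility applies directly; otherwise we pad the filtration with trivial groups $G_{d'+1}=\cdots=G_d=\{1_G\}$, and a trivial padded $G_d$ forces $C^0_*(X;\C)=C^0(X;\C)$, so $\F\subset C^0_*$ is preserved.

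For the first half, alt-stability gives $\E_{\eta\in[2N]}\norm{\Alt_\eta f}_{\mc N_N}\gtrsim_\epsilon 1$, so a Markov-type pigeonhole produces $S\subset[2N]$ with $|S|\gtrsim_\epsilon N$ on which $\norm{\Alt_\eta f}_{\mc N_N}\gtrsim_\epsilon 1$. Lemma \ref{lem:SXF Alt reduce} places every $\Alt_\eta f$ in a single class $\mc S_{\tilde X_0,\tilde\F_0}$, where $\tilde X_0$ has degree at most $d-1$ and $\tilde\F_0\subset C^0(\tilde X_0;\D)$ is compact (the $\D$-bound inherited from $\F$ via the explicit formula $\tilde F((g,h)\tilde\Gamma)=F(g)\ol{F(h)}$). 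Since $d-1\ge d_0+1$, the inductive $(d-1,d_0)$-reducibility hypothesis (the non-$*$ version, which is precisely what is available) applies to yield $\norm{\Alt_\eta f}_{U^{d_0+1}([N])}\gtrsim_\epsilon 1$ for $\eta\in S$. Extending $f$ to $f':\Z_{N'}\rightarrow\C$ as in Definition \ref{defn:Gowers on box} and averaging the identity $\norm{f'}_{U^{d_0+2}(\Z_{N'})}^{2^{d_0+2}}=\E_{\eta\in\Z_{N'}}\norm{\Alt_\eta f'}_{U^{d_0+1}(\Z_{N'})}^{2^{d_0+1}}$, restricted to the contribution of $\eta\in S\subset[2N]\subset\Z_{N'}$, then gives $\norm{f}_{U^{d_0+2}([N])}\gtrsim_\epsilon 1$. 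Monotonicity \eqref{eq:Ud monotone} upgrades this to $\norm{f}_{U^d([N])}\gtrsim_\epsilon 1$.

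For the second half, we now apply Lemma \ref{lem:Cd Ud Sd-1} with the inputs $X$ (of degree exactly $d$) and $\F\subset C^0_*(X;\C)$: the restriction $f|_{[N]}$ extends to a nilsequence $\tilde f\in\mc S_{\tilde X_1,\tilde\F_1}$ with $\tilde X_1$ of degree at most $d-1$ and $\tilde\F_1\subset C^0(\tilde X_1;\C)$ compact. Since $\tilde f=f$ on $[N]$, we have $\norm{\tilde f}_{\mc N_N}=\norm{f}_{\mc N_N}\ge\epsilon$, and now the non-$*$ $(d-1,d_0)$-reducibility hypothesis applies to $\tilde f$, giving $\norm{\tilde f}_{U^{d_0+1}([N])}\gtrsim_\epsilon 1$; the agreement on $[N]$ transfers this to $\norm{f}_{U^{d_0+1}([N])}\gtrsim_\epsilon 1$, as desired. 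I expect the main technical point to be the bookkeeping in the first half: tracking the normalization factor $\norm{\chi_{[N]}}_{U^{d_0+2}(\Z_{N'})}^{-2^{d_0+2}}$ from Definition \ref{defn:Gowers on box} and verifying that the contribution of $\eta\in S$ dominates the averaged identity for $\norm{f'}_{U^{d_0+2}(\Z_{N'})}^{2^{d_0+2}}$, so that the final constants depend only on $\epsilon$, $X$, and $\F$ (the latter two entering only via $\tilde X_0, \tilde\F_0, \tilde X_1, \tilde\F_1$, which are $\eta$-independent).
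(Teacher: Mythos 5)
Your proposal follows essentially the same two-step argument as the paper: (1) alt-stability combined with Lemma \ref{lem:SXF Alt reduce} and $(d-1,d_0)$-reducibility yields a lower bound on $\norm{f}_{U^{d_0+2}([N])}$, which monotonicity upgrades to $\norm{f}_{U^d([N])}\gtrsim_\epsilon 1$; then (2) Lemma \ref{lem:Cd Ud Sd-1} lowers the nilmanifold degree to $d-1$ and a second application of $(d-1,d_0)$-reducibility gives $\norm{f}_{U^{d_0+1}([N])}\gtrsim_\epsilon 1$. Your extra care about the degree-$d$ reduction and the $\D$-valuedness of $\tilde\F_0$ is correct but not strictly needed under the paper's conventions; the core argument is identical.
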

\begin{proof}
Let $\epsilon>0$. Let $X=(G/\Gamma,G_\N)$ be a filtered nilmanifold of degree $d$. Let $\F\subset C^0_*(X;\D)$ be compact. Let $N\in\N$ and $f\in\mc S_{X,\F}$ be such that
\[
\norm{f}_{\mc N_N}\ge\epsilon.
\]
Then, since $\{\mc N_N\}$ is alt-stable, we have
\begin{equation}\label{eq:many eta}
\#\{\eta\in[2N]:\norm{\Alt_\eta f}_{\mc N_N}\gtrsim_\epsilon1\}\gtrsim_\epsilon N.
\end{equation}
Let $\tilde X$ and $\tilde\F$ be as in Lemma \ref{lem:SXF Alt reduce}. Then, for each $\eta\in[2N]$, we have $\Alt_\eta f\in\mc S_{\tilde X,\tilde\F}$. Thus, by the assumption that $\{\mc N_N\}$ is $(d-1,d_0)$-reducible, we can rewrite \eqref{eq:many eta} as
\begin{equation}\label{eq:many eta'}
\#\{\eta\in[2N]:\norm{\Alt_\eta f}_{U^{d_0+1}}\gtrsim_\epsilon1\}\gtrsim_\epsilon N.
\end{equation}
By \eqref{eq:many eta'} and \eqref{eq:Uk def inductive}, $\norm{f}_{U^{d_0+2}}\gtrsim_\epsilon1$ holds. By \eqref{eq:Ud monotone} and $d\ge d_0+2$, we have $\norm{f}_{U^d}\gtrsim_\epsilon1$. Now using Lemma \ref{lem:Cd Ud Sd-1} and the $(d-1,d_0)$-reducibility of $\{\mc N_N\}$, we have $\norm{f}_{U^{d_0+1}}\gtrsim_\epsilon1$ for $N\gg1$, finishing the proof.
\end{proof}
\begin{lem}\label{lem:d*->d}
Let $d\ge d_0+1$ be positive integers. Let $\{\mc N_N\}_{N\in\N}$ be $(d,d_0)$-*-reducible. Then, $\{\mc N_N\}$ is $(d,d_0)$-reducible.
\end{lem}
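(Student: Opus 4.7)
The plan is to combine the Fourier decomposition of Lemma \ref{lem:SXF Fourier decomp} with the $(d,d_0)$-*-reducibility hypothesis via pigeonhole, and then to transfer the resulting $U^{d_0+1}$-lower bound from a single Fourier piece back to $f$ by exploiting orthogonality of distinct vertical characters. Fix $\epsilon>0$, a filtered nilmanifold $X$ of degree at most $d$, and a compact $\F\subset C^0(X;\D)$; suppose $f\in\mc S_{X,\F}$ satisfies $\norm{f}_{\mc N_N}\ge\epsilon$ for $N\gg 1$. I would first apply Lemma \ref{lem:SXF Fourier decomp} with a small parameter $\epsilon_0\ll\epsilon$ to produce a compact $\F_*\subset C^0_*(X;\D)$, an integer $J=J(\epsilon_0,X,\F)$, and a decomposition $f=\sum_{j=1}^J f_j+\mathrm{err}$ with each $f_j\in\mc S_{X,\F_*}$, $\norm{f_j}_{\ell^\infty(\Z)}\le 1$, and $\norm{\mathrm{err}}_{\ell^\infty(\Z)}\le\epsilon_0$.

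Under the bound $\norm{\cdot}_{\mc N_N}\les\norm{\cdot}_{\ell^\infty(\Z)}$ (the natural convention for the norms at hand, explicitly assumed in the alt-stable setting), the triangle inequality gives $\sum_j\norm{f_j}_{\mc N_N}\ge\epsilon-O(\epsilon_0)\ge\epsilon/2$, so the pigeonhole principle yields an index $j_0$ with $\norm{f_{j_0}}_{\mc N_N}\ge\epsilon/(2J)$. Since $f_{j_0}\in\mc S_{X,\F_*}$ with $\F_*\subset C^0_*(X;\D)$, applying the $(d,d_0)$-*-reducibility hypothesis with parameter $\epsilon/(2J)$ produces some $\epsilon_*=\epsilon_*(\epsilon,X,\F)>0$ such that $\norm{f_{j_0}}_{U^{d_0+1}([N])}\ge\epsilon_*$ for $N\gg 1$.

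The remaining step, which will be the main obstacle, is to propagate this lower bound from the single Fourier piece $f_{j_0}$ to the full $f$ against the possibility of destructive interference between components. The plan is to expand $\norm{f}_{U^{d_0+1}}^{2^{d_0+1}}$ via the decomposition and separate a main diagonal contribution of size at least $J^{-O(1)}\norm{f_{j_0}}_{U^{d_0+1}}^{2^{d_0+1}}$ from off-diagonal Gowers cross-correlations indexed by tuples $(\xi_\omega)_{\omega\in\{0,1\}^{d_0+1}}\in(\widehat{G_d/\Gamma_d})^{2^{d_0+1}}$ of vertical characters. For imbalanced tuples (those failing the cube-cancellation identity $\sum_\omega(-1)^{|\omega|}\xi_\omega=0$) I would apply Proposition \ref{prop:egr original} to the joint polynomial sequence $(g(x+\omega\cdot\eta))_\omega$ on the associated Host-Kra cube in order to collapse the cross-correlation to an integral of a non-trivial character over a $G_d$-orbit, which vanishes; provided the rationality parameter is chosen large enough (depending only on $\epsilon,X,\F$) the equidistribution error lies below $\epsilon_*^{2^{d_0+1}}/J^{O(1)}$, and the desired bound $\norm{f}_{U^{d_0+1}([N])}\ge\epsilon'$ follows. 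If this quantitative orthogonality turns out to be awkward, a cleaner alternative is to construct an enlarged filtered nilmanifold $\tilde X$ that records vertical characters via an extra factor, so that $\sum_j f_j$ lifts to a single $C^0_*(\tilde X;\D)$-nilsequence and *-reducibility can be applied in one shot to $\tilde f$.
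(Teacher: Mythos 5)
Your steps 1--2 (Fej\'er decomposition via Lemma \ref{lem:SXF Fourier decomp}, then pigeonhole to find $f_{j_0}$ with $\norm{f_{j_0}}_{\mc N_N}\gtrsim \epsilon/J$, then applying $*$-reducibility to get $\norm{f_{j_0}}_{U^{d_0+1}}\gtrsim_\epsilon 1$) are sound and match the paper's opening moves. The gap is precisely where you expected it: the propagation step.

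Your ``expand $\norm{f}_{U^{d_0+1}}^{2^{d_0+1}}$ and kill the off-diagonal'' plan does not close, for two reasons. First, the cancellation identity $\sum_\omega(-1)^{|\omega|}\xi_\omega=0$ does not force all $\xi_\omega$ to be equal; it has many solutions with $(\xi_\omega)$ not constant (e.g.\ two pairs swapped), and these balanced off-diagonal tuples survive every equidistribution argument and contribute cross-correlations of unknown sign. There is no Plancherel-type decoupling of $\norm{\cdot}_{U^{d_0+1}}^{2^{d_0+1}}$ into vertical characters, so a single large diagonal term $\norm{f_{j_0}}_{U^{d_0+1}}^{2^{d_0+1}}$ cannot by itself lower-bound the whole thing. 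Second, for $d>d_0+1$ the $U^{d_0+1}$-average involves only $(d_0+1)$-fold differencing, which does not annihilate the degree-$d$ vertical oscillation at all, so even the unbalanced-tuple argument via Proposition \ref{prop:egr original} has no foothold. Your fallback (``enlarge $\tilde X$ so $\sum_j f_j$ becomes a single $C^0_*$-nilsequence'') also cannot work as stated: a sum of functions with distinct non-trivial vertical frequencies is not equivariant with respect to any single character, so it does not belong to $C^0_*$.

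The paper's argument is structurally different and you need the missing ingredient, Lemma \ref{lem:Cd Ud Sd-1}. Instead of propagating a single piece, the paper uses the \emph{contrapositive} of $*$-reducibility: if $\norm{f_j}_{U^{d_0+1}}$ is small, then $\norm{f_j}_{\mc N_N}$ must also be small. This sorts the pieces into good (large $U^{d_0+1}$-norm) and bad (small $U^{d_0+1}$-norm, hence small $\mc N_N$-norm), and a triangle inequality shows the good pieces carry essentially all of $\norm{f}_{\mc N_N}$. The crucial step you do not have is then Lemma \ref{lem:Cd Ud Sd-1}: each good $f_j$, having $\norm{f_j}_{U^{d_0+1}}\ge\min\{\epsilon'',\epsilon'/10J\}$ and hence by monotonicity $\norm{f_j}_{U^d}\gtrsim 1$, extends on $[N]$ to a nilsequence of degree $(d-1)$. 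By Lemma \ref{lem:SXF linear}, $\sum_{j\in\mc J}f_j$ is then a nilsequence on a degree-$(d-1)$ nilmanifold $\tilde X^J$; viewing $\tilde X^J$ as degree $d$ with the trivial top group, the vertical condition is vacuous and $*$-reducibility may be applied directly to this sum (with large $\mc N_N$-norm), giving $\norm{\sum_{j\in\mc J}f_j}_{U^{d_0+1}}\ge\epsilon'$. A final triangle inequality transfers this to $f$. The degree-lowering extension for biased nilsequences, not an expansion/orthogonality bound, is what circumvents the off-diagonal interference you correctly identified as the difficulty.
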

\begin{proof}
We start with setting parameters. Let $\epsilon>0$. Since $\{\mc N_N\}$ is $(d,d_0)$-*-reducible, there exists $\epsilon'=\epsilon'(\frac\epsilon{10})>0$ as in Definition \ref{defn:reducible}.
Since $\{\mc N_N\}$ is weaker than $\ell^\infty$, there exists $\epsilon_\infty>0$ such that for $f:[N]\rightarrow\epsilon_\infty\D$,
\begin{equation}\label{eq:eps' def}
\norm{f}_{\mc N_N}\le\frac{\epsilon}{10}\text{ and }\norm{f}_{U^{d_0+1}}\le\frac{\epsilon'}{10}.
\end{equation}
Let $X$ be a filtered nilmanifold of degree $d$ and $\F\subset C^0(X;\D)$ be compact. By Lemma \ref{lem:SXF Fourier decomp}, there exist a compact set $\F_*\subset C^0_*(X;\D)$ and $J=J_{\epsilon_\infty,X,\F}\in\N$ such that for $f\in\mc S_{X,\F}$, there exist $f_1,\ldots,f_J\in\mc S_{X,\F_*}$ such that
\begin{equation}\label{eq:f-sum fj}
\norm{f-\sum_{j\le J}f_j}_{\ell^\infty(\Z)}\le\epsilon_\infty.
\end{equation}
By \eqref{eq:eps' def} and \eqref{eq:f-sum fj}, we have
\begin{equation}\label{eq:eps' conseq}
\norm{f-\sum_{j\le J}f_j}_{\mc N_N}\le\frac{\epsilon}{10}\text{ and }\norm{f-\sum_{j\le J}f_j}_{U^{d_0+1}([N])}\le\frac{\epsilon'}{10}.
\end{equation}
Since $\{\mc N_N\}$ is $(d,d_0)$-*-reducible, there exists $\epsilon''=\epsilon''(\frac{\epsilon}{10J})$ as in Definition \ref{defn:reducible}.

By Lemma \ref{lem:Cd Ud Sd-1} and \eqref{eq:Ud monotone}, there exist a filtered nilmanifold $\tilde X$ of degree $(d-1)$ and a compact set $\tilde\F\subset C^0(\tilde X;\C)$ such that for every $f_*\in\mc S_{X,\F_*}$ satisfying
\[
\norm{f_*}_{U^{d_0+1}([N])}\ge\min\left\{\epsilon'',\tfrac{\epsilon'}{10J}\right\},
\]
$f_*\mid_{[N]}$ extends to a member of $\mc S_{\tilde X,\tilde\F}$.
By Lemma \ref{lem:SXF linear}, there exists a compact set $\tilde\F_\Sigma\subset C^0(\tilde X^J;\C)$ such that any sum of at most $J$ members of $\mc S_{\tilde X,\tilde\F}$ lies in $\mc S_{\tilde X^J,\tilde\F_{\Sigma}}$.

Now we start our proof. Let $f\in\mc S_{X,\F}$ and $N\gg_{X,\F,\epsilon}1$ be such that $\norm{f}_{\mc N_N}\ge\epsilon$. Let $f_1,\ldots,f_J\in\mc S_{X,\F_*}$ be as in \eqref{eq:f-sum fj}. Let
\[
\mc J:=\left\{j\in\{1,\ldots,J\}:\norm{f_j}_{U^{d_0+1}([N])}\ge\min\left\{\epsilon'',\tfrac{\epsilon'}{10J}\right\}\right\}.
\]
By the definition of $\epsilon''$, we have
\begin{equation}\label{eq:fjNN small}
\norm{f_j}_{\mc N_N}<\frac{\epsilon}{10J},\qquad j\notin\mc J.
\end{equation}
By the triangle inequality on $\norm{f}_{\mc N_N}\ge\epsilon$, \eqref{eq:fjNN small}, and \eqref{eq:eps' conseq}, we have
\[
\norm{\sum_{j\in\mc J}f_j}_{\mc N_N}
\ge\epsilon-J\cdot\frac{\epsilon}{10J}-\frac{\epsilon}{10}\ge\frac{\epsilon}{10}.
\]
Here, since $\sum_{j\in\mc J}f_j\in\mc S_{\tilde X^J,\tilde\F_\Sigma}$ and $\tilde X^J$ is of degree $(d-1)$, by the definition of $\epsilon'$, we have
\[
\norm{\sum_{j\in\mc J}f_j}_{U^{d_0+1}}\ge\epsilon'.
\]
Thus, by the triangle inequality and \eqref{eq:eps' conseq}, we have
\begin{align*}
\norm{f}_{U^{d_0+1}}&\ge\norm{\sum_{j\in\mc J}f_j}_{U^{d_0+1}}-\norm{\sum_{j\notin\mc J}f_j}_{U^{d_0+1}}-\norm{f-\sum_{j\le J}f_j}_{U^{d_0+1}}
\\&\ge\epsilon'-J\cdot\frac{\epsilon'}{10J}-\frac{\epsilon'}{10}\ge\frac{\epsilon'}{10}.
\end{align*}
Since $\epsilon'$ depends only on $\epsilon$, this finishes the proof.
\end{proof}
\begin{thm}\label{thm:degree reduction}
Let $d_0\in\N$. Let $\{\mc N_N\}_{N\in\N}$ be an alt-stable sequence of norms. If $\{\mc N_N\}_{N\in\N}$ is $(d_0+1,d_0)$-*-reducible, then it is $(d,d_0)$-reducible for every $d\in\N$.
\end{thm}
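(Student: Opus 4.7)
The plan is a straightforward induction on $d\ge d_0+1$, chaining together the two preparatory lemmas just proved, so I would not expect significant new obstacles beyond what has already been absorbed into Lemmas \ref{lem:d-1->d*} and \ref{lem:d*->d}.

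For the base case $d=d_0+1$, the hypothesis directly gives $(d_0+1,d_0)$-*-reducibility, and Lemma \ref{lem:d*->d}, which applies for any $d\ge d_0+1$, immediately promotes this to $(d_0+1,d_0)$-reducibility. For the inductive step, assume the conclusion for $d-1$ with $d\ge d_0+2$. The inductive hypothesis then gives $(d-1,d_0)$-reducibility of $\{\mc N_N\}$, which combined with alt-stability feeds into Lemma \ref{lem:d-1->d*} to yield $(d,d_0)$-*-reducibility. A second application of Lemma \ref{lem:d*->d} upgrades this to $(d,d_0)$-reducibility, closing the induction.

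Conceptually, one ``tick'' of the induction bundles together four ingredients: (i) use alt-stability to locate many $\eta\in[2N]$ for which $\Alt_\eta f$ still has large $\mc N_N$-norm; (ii) apply Lemma \ref{lem:SXF Alt reduce}, whose main point is that $\Alt_\eta$ applied to a nilsequence over a filtered nilmanifold of degree $d$ produces a nilsequence over a filtered nilmanifold of degree at most $d-1$, so that the inductive hypothesis is applicable to each such $\Alt_\eta f$; (iii) average the resulting $U^{d_0+1}$-lower bounds via \eqref{eq:Uk def inductive} to deduce largeness of $\norm{f}_{U^{d_0+2}}$, and hence of $\norm{f}_{U^d}$ by the monotonicity \eqref{eq:Ud monotone}; (iv) invoke the extension lemma (Lemma \ref{lem:Cd Ud Sd-1}) together with the vertical Fourier decomposition (Lemma \ref{lem:SXF Fourier decomp}) to pass from $\F\subset C^0_*(X;\D)$ to a general compact $\F\subset C^0(X;\D)$, thereby converting $*$-reducibility into full reducibility.

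Since every one of these steps has been packaged into the two preparatory lemmas, the proof itself would consist essentially of just announcing the induction scheme above; the only care needed is to check the degree bookkeeping, namely that $d-1\ge d_0+1$ in the inductive step (which holds because $d\ge d_0+2$) so that the hypothesis of Lemma \ref{lem:d-1->d*} is met, and that $d\ge d_0+1$ throughout so that Lemma \ref{lem:d*->d} is always applicable.
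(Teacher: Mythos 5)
Your proposal is correct and follows the same approach as the paper: an iterative alternation of Lemma \ref{lem:d*->d} and Lemma \ref{lem:d-1->d*}, which you have organized cleanly as an induction on $d$ with base case $d=d_0+1$ and the degree bookkeeping checked. The explanatory paragraph about what the two lemmas are doing internally is accurate but not needed for the proof itself.
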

\begin{proof}
By Lemma \ref{lem:d*->d}, $(d_0+1,d_0)$-reducibility holds. Then, by Lemma \ref{lem:d-1->d*}, $(d_0+2,d_0)$-*-reducibility also holds. Iterating this process finishes the proof.
\end{proof}
For adaptation to the two-dimensional setting in Theorem \ref{thm:inverse L4 Stri}, we recall a technique that enables us to identify Gowers norms on $[N]^2$ and $[N^2]$.
\begin{defn}\label{defn:dimension map}
For $N\in\N$, denote $\tilde N=2^9N$. $\varphi_N:[N]^2\rightarrow[N]+\tilde N[N]$ denotes the map
\[
\varphi_N(n_1,n_2)=n_1+\tilde N n_2.
\]
For $N\in\N$ and $g:[N]^2\rightarrow\C$, $\iota_Ng:[N+\tilde NN]\rightarrow\C$ denotes the map
\[
\iota_Ng(z):=
\begin{cases}
g(\varphi_N^{-1}(z))&,\qquad z\in[N]+\tilde N[N]
\\
0&,\qquad \text{otherwise}.
\end{cases}
\]
\end{defn}
The multiplier $2^9=2^{7+2}$ is to avoid overlaps between copies of $[N]$ in $\Alt$-calculations in Gowers norms up to $U^7$ (which is the highest Gowers norm used throughout this paper).
For $d\le 6$ and $g:[N]^2\rightarrow\C$, one can easily check
\begin{equation}\label{eq:transfer N^d}
\norm{g}_{U^{d+1}([N]^2)}\sim\norm{\iota_Ng}_{U^{d+1}([N+\tilde NN])}.
\end{equation}
\begin{lem}\label{lem:Ud pullback}
Let $N\in\N$ and $d\le 6$. For $\epsilon>0$ and $f:[N+\tilde NN]\rightarrow\D$ such that
\[
\norm{f\circ\varphi_N}_{U^{d+1}([N]^2)}\ge\epsilon,
\]
we have
\[
\norm{f}_{U^{d+1}([N+\tilde NN])}\gtrsim_\epsilon1.
\]
\end{lem}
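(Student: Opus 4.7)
The plan is to first reduce via \eqref{eq:transfer N^d}: applied to $g=f\circ\varphi_N$, the hypothesis gives $\norm{\iota_N(f\circ\varphi_N)}_{U^{d+1}([N+\tilde NN])}\gtrsim_\epsilon 1$. Writing $A:=\varphi_N([N]^2)=[N]+\tilde N[N]$ and $M:=N+\tilde NN$, one has $\iota_N(f\circ\varphi_N)=f\chi_A$, so it will suffice to prove that $\norm{f\chi_A}_{U^{d+1}([M])}\gtrsim\epsilon$ implies $\norm{f}_{U^{d+1}([M])}\gtrsim_\epsilon 1$ (in the relevant range $d\ge 1$, where $U^{d+1}$ is genuinely a norm).

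The key tool will be the modulation-invariance of the Gowers $U^{d+1}$-norm for $d\ge 1$: since the alternating cube sum $\sum_{\omega\in\{0,1\}^{d+1}}(-1)^{|\omega|}(x+\omega\cdot\vec\eta)$ vanishes (both the coefficient of $x$ and of each $\eta_j$ being zero for $d\ge 1$), one has $\norm{h\cdot e^{2\pi i\xi\cdot/N'}}_{U^{d+1}(\Z_{N'})}=\norm{h}_{U^{d+1}(\Z_{N'})}$ for every $\xi\in\Z_{N'}$ and $h:\Z_{N'}\to\C$. Combined with the triangle inequality on the Fourier expansion of $g:\Z_{N'}\to\C$, and then dividing by $\norm{\chi_{[M]}}_{U^{d+1}(\Z_{N'})}$, one obtains the Wiener-type bound
\[
\norm{fg}_{U^{d+1}([M])}\le\norm{\widehat g}_{\ell^1(\Z_{N'})}\cdot\norm{f}_{U^{d+1}([M])}.
\]
A direct application with $g=\chi_A$ would incur a loss $\norm{\widehat{\chi_A}}_{\ell^1}\sim(\log N)^2$, which is too large. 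I will instead replace $\chi_A$ by a smoothed indicator $\tilde\chi_A$ built from the factorisation $\chi_A(n)=\chi_{[N]}(a)\chi_{[N]}(b)$ arising from the unique 2D decomposition $n=a+\tilde Nb$, $|a|<\tilde N/2$, by smoothing each one-dimensional indicator with a $C^\infty$-bump of transition width $\delta N$. Choosing the enveloping group $\Z_{N'}$ with $N'=q\tilde N$, $\gcd(q,\tilde N)=1$, so that the CRT identification $\Z_{N'}\cong\Z_{\tilde N}\times\Z_q$ respects the tensor-product structure, a two-dimensional Fourier estimate will yield $\norm{\widehat{\tilde\chi_A}}_{\ell^1(\Z_{N'})}\les_\delta 1$.

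The approximation error is controlled via the elementary H\"older bound $\norm{h}_{U^{d+1}(\Z_{N'})}\le\norm{h}_{L^{2^{d+1}}(\Z_{N'})}$ applied to \eqref{eq:Uk def explicit}: since $|\chi_A-\tilde\chi_A|$ is supported on the two-dimensional boundary strip of area $\les\delta N^{2}$, one gets $\norm{f(\chi_A-\tilde\chi_A)}_{U^{d+1}([M])}\les_d\delta^{1/2^{d+1}}$ after accounting for the normalisation $\norm{\chi_{[M]}}_{U^{d+1}(\Z_{N'})}\sim_d 1$. Combining,
\[
\norm{f\chi_A}_{U^{d+1}([M])}\les_d \norm{\widehat{\tilde\chi_A}}_{\ell^1}\cdot\norm{f}_{U^{d+1}([M])}+\delta^{1/2^{d+1}},
\]
and choosing $\delta=\delta(\epsilon,d)>0$ small enough that $\delta^{1/2^{d+1}}$ is at most half the lower bound on $\norm{f\chi_A}_{U^{d+1}([M])}$ yields $\norm{f}_{U^{d+1}([M])}\gtrsim_{\epsilon,d}1$, as required. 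The hard part will be the Fourier-analytic verification of $\norm{\widehat{\tilde\chi_A}}_{\ell^1(\Z_{N'})}\les_\delta 1$ for the smoothed rank-$2$ progression indicator, which depends on the CRT-compatible choice of $N'$ and a careful two-dimensional Fourier estimate.
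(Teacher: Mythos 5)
Your argument takes a genuinely different, more elementary route than the paper's. Both proofs begin by reducing via \eqref{eq:transfer N^d} to the claim that $\norm{f\chi_A}_{U^{d+1}([N+\tilde NN])}\gtrsim\epsilon$ forces $\norm{f}_{U^{d+1}([N+\tilde NN])}\gtrsim_\epsilon1$, where $A=[N]+\tilde N[N]$. The paper then passes through nilsequences: it approximates $\chi_A$ by a slowly-varying degree-one nilsequence on $X_{2^9\T}$, applies the inverse $U^{d+1}$ theorem (Proposition \ref{prop:SXF inverse Ud}) to the product with $f$, absorbs the smooth factor via Lemma \ref{lem:SXF linear}, and closes with Proposition \ref{prop:SXF anti-inverse Ud}. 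Your route --- modulation-invariance of $U^{d+1}$ for $d\ge1$, a Wiener-type $\ell^1$-Fourier transference bound for the multiplier, smoothing the rank-$2$ progression indicator, and an $L^{2^{d+1}}$-H\"older bound for the error --- bypasses the inverse Gowers theorem altogether and gives effective, polynomial-in-$\epsilon$ constants rather than the tower-type dependence inherited from \cite{GreenTaoZiegler12(Ud)}. The modulation-invariance, the Wiener bound, and the H\"older error estimate are all correct; the restriction $d\ge1$ is also implicit in the paper's proof and holds in every application.

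One caveat concerns the Fourier $\ell^1$ bound, which you rightly flag as the crux. The CRT framing is incorrect: the bijection $n\leftrightarrow(a,b)$ with $n=a+\tilde Nb$ is the \emph{mixed-radix} decomposition of $\Z_{N'}$, $N'=q\tilde N$, not the CRT map $n\mapsto(n\bmod\tilde N,\,n\bmod q)$, since the coarse digit $b$ is not a function of $n\bmod q$. In particular $\gcd(q,\tilde N)=1$ is neither needed nor relevant; only $\tilde N\mid N'$ is. The estimate you want nevertheless holds, for a mixed-radix reason: if $G(a+\tilde Nb)=g_1(a)g_2(b)$, then $\widehat G(\xi)=F_1(\xi)\cdot\widehat{g_2}_{\Z_q}(\xi\bmod q)$ exactly, where $F_1(\xi)=\sum_a g_1(a)e^{-2\pi i\xi a/N'}$; writing $\xi=\xi_1q+\xi_2$ separates the $\ell^1$-sum, with the inner $\xi_1$-sum contributing $O_\delta(\tilde N)$ and the outer $\xi_2$-sum contributing $O_\delta(q)$, since $g_1,g_2$ are supported on $[N]$ and smooth at scale $\delta N$. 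This gives $\norm{\widehat G}_{\ell^1(\Z_{N'})}\les_\delta N'$, i.e.\ $O_\delta(1)$ in your normalisation, and with this correction your proposal is complete.
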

\begin{proof}
Let $\{F_m\}$ be a sequence of continuous functions $F_m:X_{2^9\T}\rightarrow[0,1]$ converging uniformly to $\chi_{[-1,1]+2^9\Z}$ outside any neighborhood of $\{\pm1
+2^9\Z\}$. 
Since $[N]+\tilde N[N]=[N+\tilde NN]\cap([N]+2^9N\Z)$, we have $\norm{f\chi_{[N]+\tilde N[N]}-fF_m(\cdot/N)}_{U^{d+1}([N+\tilde NN])}\rightarrow o_m(1)$ as $N\rightarrow\infty$. Thus, choosing $m$ big enough, for $N\gg1$, by the triangle inequality, we have $\norm{f F_m(\cdot/N)}_{U^{d+1}([N+\tilde N N])}\gtrsim\epsilon$. Then, by Proposition \ref{prop:SXF inverse Ud}, there exist a filtered nilmanifold $X$ of degree $d$ and compact $\F\subset C^0(X;\C)$, depending only on $\epsilon$, and $f'\in\mc S_{X,\F}$ such that
\[
|\E_{n\in[N+\tilde NN]}f(n)F_m(n/N)\ol{f'(n)}|\gtrsim_\epsilon1.
\]
Now since $F_m(\cdot/N)\in\mc S_{X_{2^9\T};\{F_m\}}$, by Lemma \ref{lem:SXF linear}, $\ol{F_m(\cdot/N)}f'(\cdot)\in\mc S_{X_{2^9\T}\times X;\{\ol{F_m}\}\otimes\F}$ holds. By Proposition \ref{prop:SXF anti-inverse Ud}, the proof finishes.
\end{proof}
\begin{lem}\label{lem:S^2=sum of LP}
Let $\epsilon>0$. Let $X$ be a filtered nilmanifold of degree $2$ and $\F\subset C^0(X;\D)$ be compact. Then, there exist $r,J\in\N$ satisfying the following:

For $N\in\N$ and $f\in\mc S_{X,\F}$, there exist $c_1,\ldots,c_J\in\D$ and locally quadratic modulations $\phi_1,\ldots,\phi_J$ supported on affine Bohr sets of ranks at most $r$ such that
\begin{equation}\label{eq:S^2=sum of LP, h def}
h:=\sum_{j\le J} c_j\phi_j
\end{equation}
satisfies
\[
\norm{h}_{\ell^\infty}\le 1
\]
and
\[
\norm{f\circ\varphi_N-h}_{\ell^2([N]^2)}<\epsilon N.
\]
\end{lem}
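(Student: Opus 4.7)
My plan is to first obtain a bounded $U^3([N]^2)$-approximation of $f\circ\varphi_N$ via Proposition~\ref{prop:profile U3 Bohr} and then upgrade it to an $\ell^2([N]^2)$-approximation by pairing the residual against $f\circ\varphi_N$ itself and invoking the anti-inverse Gowers theorem, Proposition~\ref{prop:SXF anti-inverse Ud}. Fixing $\delta>0$ to be chosen at the end, Proposition~\ref{prop:profile U3 Bohr} applied to $f\circ\varphi_N:[N]^2\to\D$ yields $r,J\in\N$ depending only on $\delta$, scalars $c_j\in\D$, and locally quadratic modulations $\phi_j$ supported on affine Bohr sets of rank at most $r$, such that $h:=\sum_{j\le J}c_j\phi_j$ satisfies $\norm{h}_{\ell^\infty}\le 1$, $\norm{f\circ\varphi_N-h}_{U^3([N]^2)}<\delta$ and $|\jp{f\circ\varphi_N-h,\,h}_{\ell^2([N]^2)}|<\delta N^2$. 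The identity $\norm{f\circ\varphi_N-h}_{\ell^2}^2=\jp{f\circ\varphi_N-h,\,f\circ\varphi_N}-\jp{f\circ\varphi_N-h,\,h}$ then reduces the task to bounding $|\jp{f\circ\varphi_N-h,\,f\circ\varphi_N}|$.

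For this I transfer to one dimension via the map $\iota_N$ of Definition~\ref{defn:dimension map}. Since $\iota_N(f\circ\varphi_N-h)$ vanishes off $\varphi_N([N]^2)=[N]+\tilde N[N]$, the identity $\sum_{n\in[N]^2}(f\circ\varphi_N-h)(n)\overline{\tilde f(\varphi_N(n))}=\sum_{z\in[N+\tilde N N]}\iota_N(f\circ\varphi_N-h)(z)\overline{\tilde f(z)}$ holds for any $\tilde f:[N+\tilde N N]\to\D$, and by \eqref{eq:transfer N^d}, $\norm{\iota_N(f\circ\varphi_N-h)}_{U^3([N+\tilde N N])}\les\delta$. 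The contrapositive of Proposition~\ref{prop:SXF anti-inverse Ud} then provides a gauge $\omega(\cdot,X,\tilde F)$ with $\omega(\delta,X,\tilde F)\to 0$ as $\delta\to 0$, such that for $\tilde f\in\mc S_{X,\{\tilde F\}}$ with $\tilde F\in\mc D_X$, $|\E_{z\in[N+\tilde N N]}\iota_N(f\circ\varphi_N-h)(z)\overline{\tilde f(z)}|\le\omega(\delta,X,\tilde F)$; multiplying by $\#[N+\tilde N N]=O(N^2)$ converts this to $|\jp{f\circ\varphi_N-h,\,\tilde f\circ\varphi_N}_{\ell^2([N]^2)}|\les\omega(\delta,X,\tilde F)N^2$.

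The main obstacle is that Proposition~\ref{prop:SXF anti-inverse Ud} requires the test function to lie in the dense subclass $\mc D_X\subset C^0(X;\C)$, whereas the hypothesis only furnishes $F\in\F$ inside a compact subset of $C^0(X;\D)$. I resolve this by compactness: for any $\epsilon_0>0$ there exist finitely many $\tilde F_1,\ldots,\tilde F_m\in\mc D_X$ whose $C^0$-balls of radius $\epsilon_0$ cover $\F$. Writing $f=F(g(\cdot)\Gamma)$, I pick $\tilde F_k$ with $\norm{F-\tilde F_k}_{C^0(X)}<\epsilon_0$ and set $\tilde f:=\tilde F_k(g(\cdot)\Gamma)\in\mc S_{X,\{\tilde F_k\}}$. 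Then $\norm{f-\tilde f}_{\ell^\infty(\Z)}<\epsilon_0$, so $|\jp{f\circ\varphi_N-h,\,(f-\tilde f)\circ\varphi_N}|\le 2\epsilon_0 N^2$, and combining all estimates gives $\norm{f\circ\varphi_N-h}_{\ell^2}^2\le\delta N^2+2\epsilon_0 N^2+C\max_k\omega(\delta,X,\tilde F_k)N^2$. Choosing first $\epsilon_0\ll\epsilon^2$ and then $\delta\ll_{\epsilon,X,\F}1$ small enough yields $\norm{f\circ\varphi_N-h}_{\ell^2([N]^2)}<\epsilon N$, as required, with $r$ and $J$ coming entirely from the application of Proposition~\ref{prop:profile U3 Bohr} at parameter $\delta$.
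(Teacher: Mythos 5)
Your argument is correct and uses the same strategy as the paper's proof: apply the $U^3([N]^2)$ profile decomposition (Proposition~\ref{prop:profile U3 Bohr}) to obtain $h$, bound the cross term $\jp{f\circ\varphi_N-h,f\circ\varphi_N}$ by transferring to one dimension via $\iota_N$ and invoking the anti-inverse theorem for nilsequences (Proposition~\ref{prop:SXF anti-inverse Ud}), and handle the compactness of $\F$ via a finite $\epsilon_0$-net inside the dense class $\mc D_X$. The paper simply reorders the steps, doing the compactness reduction and establishing the inverse inequality before invoking the profile decomposition, but the ingredients and overall logic are identical.
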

\begin{proof}
Let $\mc D_X\subset C^0(X;\C)$ be the dense set in Proposition \ref{prop:SXF anti-inverse Ud}. Since $\F$ is compact, up to a small perturbation, it suffices to show this lemma when $\F$ is a finite subset of $\mc D_X$. Up to taking maxima of $r$ and $J$ over $F\in\F$, we assume $\F$ is a singleton $\F=\{F\}$. For any $g:[N]^2\rightarrow\D$ such that
\[
\left|\jp{g,f\circ\varphi_N}_{\ell^2([N]^2)}\right|\ge\frac{\epsilon^2}{2}N^2,
\]
we have
\[
\left|\jp{\iota_Ng,f}_{\ell^2([N+\tilde NN])}\right|\ge\frac{\epsilon^2}{2}N^2
\]
and thus $\norm{\iota_Ng}_{U^3}\gtrsim_{\epsilon,X,F}1$ holds.
Thus, by \eqref{eq:transfer N^d}, $\norm{g}_{U^3}\ge\delta$ holds for some $\delta=\delta(\epsilon,X,F)>0$.
By Proposition \ref{prop:profile U3 Bohr}, there exists $h$ as in \eqref{eq:S^2=sum of LP, h def} such that
\[
\norm{h}_{\ell^\infty}\le 1,
\]
\[
\norm{f\circ\varphi_N-h}_{U^3([N]^2)}<\delta,
\]
and
\begin{equation}\label{eq:f-h,h<e/2}
\left|\jp{f\circ\varphi_N-h,h}_{\ell^2([N]^2)}\right|<\frac{\epsilon^2}{2}N^2.
\end{equation}
Plugging $g=f\circ\varphi_N-h$, since $\norm{g}_{U^3([N]^2)}<\delta$, we have
\begin{equation}\label{eq:f-h,f<e/2}
\left|\jp{f\circ\varphi_N-h,f\circ\varphi_N}_{\ell^2([N]^2)}\right|<\frac{\epsilon^2}{2}N^2.
\end{equation}
By the triangle inequality, \eqref{eq:f-h,f<e/2}, and \eqref{eq:f-h,h<e/2}, the proof finishes.
\end{proof}
\section{Norms and inverse theorems concerning rectangular resonances}\label{sec:norms-rec}
\subsection{Norms concerning tensor products}
In this subsection, we focus on structures of functions positively correlated to tensor products of bounded functions. This object naturally appears from the resonance consideration in Section \ref{subsec:analytic}. In particular, Lemma \ref{lem:two tensor => U3} plays a key role relating the rectangular resonance and the Gowers uniformity.

For a set $S\neq\emptyset$ and functions $g,h:S\rightarrow\C$, we denote by $g\otimes h:S\times S\rightarrow\C$ the function $(g\otimes h)(x,y):=g(x)h(y)$.

For functions $f_{jk}:\Z^2\rightarrow\C$, $j,k=1,2$, we denote
\[
\Pi(f_{11},f_{12},f_{21},f_{22}):=\sum_{x_1,x_2,y_1,y_2\in\Z}f_{11}(x_1,y_1)\overline{f_{12}(x_1,y_2)f_{21}(x_2,y_1)}f_{22}(x_2,y_2).
\]
Correspondingly, for $f:\Z^2\rightarrow\C$, we define the norm
\[
\norm{f}_\Pi:=\Pi(f,f,f,f)^{1/4}.
\]
That $\norm{\cdot}_\Pi$ is indeed a norm can be shown by a conventional argument introduced, e.g., in \cite[p419-420]{tao2006additive}. We provide the proof for completeness. By the Cauchy-Schwarz inequality, for $f_{jk}:\Z^2\rightarrow\C$, $j,k=1,2$, we have
\begin{equation}\label{eq:f11f12f21f22}
\left|\Pi(f_{11},f_{12},f_{21},f_{22})\right|\le\prod_{j=1,2}\Pi(f_{j1},f_{j2},f_{j1},f_{j2})^{1/2}\le\prod_{j,k=1,2}\Pi(f_{jk},f_{jk},f_{jk},f_{jk})^{1/4},
\end{equation}
thus for $f,g:\Z^2\rightarrow\C$, we have the estimate
\[\norm{f+g}_{\Pi}^4=\Pi(f+g,f+g,f+g,f+g)
\le\sum_{k=0}^4\binom{4}{k}\norm{f}_\Pi^k\norm{g}_\Pi^{4-k}
\le(\norm{f}_\Pi+\norm{g}_\Pi)^4.\]
\begin{lem}\label{lem:tensor => q~1}
For any $N\in\N$ and functions $f:[N]^2\rightarrow\C$ and $g,h:[N]\rightarrow\D$, we have
\[
\left|\jp{f,g\otimes h}_{\ell^2(\Z^2)}\right|\les N\norm f_\Pi.
\]
\end{lem}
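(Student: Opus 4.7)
The plan is to prove the bound by two successive applications of Cauchy--Schwarz, exploiting the unimodularity of $g$ and $h$ to produce exactly the factor $N$, and the tensor structure of the test function to produce $\norm{f}_\Pi$.

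First I would write
\[
\jp{f,g\otimes h}_{\ell^2(\Z^2)} = \sum_{x\in[N]}\overline{g(x)}\sum_{y\in[N]} f(x,y)\overline{h(y)}.
\]
Applying Cauchy--Schwarz in $x$ and using that $\sum_{x\in[N]}|g(x)|^2 = N$ (since $|g|\equiv 1$ on $[N]$), one gets
\[
\left|\jp{f,g\otimes h}_{\ell^2(\Z^2)}\right|^2 \le N\sum_{x\in[N]}\left|\sum_{y\in[N]} f(x,y)\overline{h(y)}\right|^2.
\]

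Next I would expand the square and swap the order of summation:
\[
\sum_{x}\left|\sum_{y} f(x,y)\overline{h(y)}\right|^2 = \sum_{y_1,y_2\in[N]}\overline{h(y_1)}h(y_2)\sum_{x\in[N]} f(x,y_1)\overline{f(x,y_2)}.
\]
A second Cauchy--Schwarz, this time in the variable $(y_1,y_2)$, gives
\[
\le \Bb{\sum_{y_1,y_2\in[N]} |h(y_1)|^2|h(y_2)|^2}^{1/2} \Bb{\sum_{y_1,y_2}\Big|\sum_{x\in[N]} f(x,y_1)\overline{f(x,y_2)}\Big|^2}^{1/2}.
\]
The first factor equals $N$ by unimodularity of $h$, and expanding the square in the second factor yields exactly $\Pi(f,f,f,f) = \norm{f}_\Pi^4$. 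Combining these two estimates gives $|\jp{f,g\otimes h}|^2 \le N \cdot N\cdot \norm{f}_\Pi^2$, i.e.\ the claim.

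There is no real obstacle here: the statement is essentially the observation that $\norm{\cdot}_\Pi$ is the $\ell^2$-dual norm (up to the $N$ normalization) of the cone of rank-one tensors with unimodular factors, and the proof is the classical Cauchy--Schwarz derivation of the Gowers box norm inequality specialised to $U^2$ on $[N]\times[N]$. The only thing to keep track of is that the two Cauchy--Schwarz applications absorb the $\ell^\infty$-bounded factors $g,h$ optimally, which is where unimodularity (as opposed to a general $\D$-valued function) is used to get the clean constant $N$ rather than $N^{1/2}\norm{g}_{\ell^2}\norm{h}_{\ell^2}^{1/2}$-type expressions.
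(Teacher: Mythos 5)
Your proof is correct and is essentially the paper's argument: two successive applications of Cauchy--Schwarz, with the unimodularity of $g,h$ providing the factors of $N$ and the resulting fourth moment being exactly $\norm{f}_\Pi^4$. The only bookkeeping difference is that the paper first reduces to $g=h=1$ by observing that both sides are invariant under multiplication of $f$ by $\bar g\otimes\bar h$, whereas you carry $g,h$ along and absorb them at each Cauchy--Schwarz step; the two routes are equivalent.
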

\begin{proof}
In \eqref{eq:f11f12f21f22}, we set $f_{11}=f$, $f_{12}=g\otimes\chi_{\{0\}}$, $f_{21}=\chi_{\{0\}}\otimes h$, and $f_{22}=\chi_{\{0\}}$. Then, $\norm{f_{12}}_\Pi,\norm{f_{21}}_\Pi\les N^{1/2}$ and $\norm{f_{22}}_\Pi=1$ hold. Now since $\Pi(f_{11},f_{12},f_{21},f_{22})=\jp{f,g\otimes h}_{\ell^2}$, the proof finishes.
\end{proof}
\begin{lem}\label{lem:q~1 => tensor}
For any function $f:\Z^2\rightarrow\D$, there exist $g,h:\Z\rightarrow\D$ such that
\[
\#\supp(f)\cdot\left|\jp{f,g\otimes h}_{\ell^2(\Z^2)}\right|\ge\norm f_\Pi^4.
\]
\end{lem}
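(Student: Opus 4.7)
The plan is to pigeonhole one of the four vertices in the expansion of $\norm{f}_\Pi^4$, identify what remains as an inner product against a rank-one (but merely $\D$-valued) tensor, and then argue that any such inner product is dominated by a genuine unimodular tensor correlation. Concretely, grouping the variables $(x_2,y_2)$ on the outside in the definition of $\norm{f}_\Pi^4$ gives
\[
\norm{f}_\Pi^4=\sum_{(x_2,y_2)\in\supp(f)}f(x_2,y_2)\,K(x_2,y_2),\qquad K(x_2,y_2):=\sum_{x_1,y_1}f(x_1,y_1)\overline{f(x_1,y_2)f(x_2,y_1)}.
\]
Since $\norm{f}_\Pi^4\ge 0$ and $|f(x_2,y_2)|\le 1$, the triangle inequality gives $\norm{f}_\Pi^4\le\sum_{(x_2,y_2)\in\supp(f)}|K(x_2,y_2)|$, so pigeonhole produces a pivot $(x_0,y_0)\in\supp(f)$ with $\#\supp(f)\cdot|K(x_0,y_0)|\ge\norm{f}_\Pi^4$. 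Introducing $a(x):=f(x,y_0)$ and $b(y):=f(x_0,y)$, both of which lie in $\D$, we can read $K(x_0,y_0)=\jp{f,a\otimes b}_{\ell^2(\Z^2)}$.

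It therefore remains to show that for any $a,b:\Z\rightarrow\D$ there exist $g,h:\Z\rightarrow\T$ such that $|\jp{f,g\otimes h}|\ge|\jp{f,a\otimes b}|$, since then combining with the pigeonhole step gives $\#\supp(f)\cdot|\jp{f,g\otimes h}|\ge\norm{f}_\Pi^4$. This is a two-step optimization: first fixing $b$, define $A(x):=\sum_y f(x,y)\overline{b(y)}$, so that $\jp{f,g'\otimes b}=\sum_x\overline{g'(x)}A(x)$. Choosing $g(x):=A(x)/|A(x)|$ where $A(x)\ne 0$ and $g(x):=1$ otherwise produces $g:\Z\rightarrow\T$ with $|\jp{f,g\otimes b}|=\sum_x|A(x)|$, and the triangle inequality together with $|a|\le 1$ gives $\sum_x|A(x)|\ge|\sum_x\overline{a(x)}A(x)|=|\jp{f,a\otimes b}|$. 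Fixing this $g$ and repeating the same argument in the $y$-variable produces $h:\Z\rightarrow\T$ with $|\jp{f,g\otimes h}|\ge|\jp{f,g\otimes b}|\ge|\jp{f,a\otimes b}|$, completing the proof.

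The conceptual step I expect to do the real work is the pigeonhole identification $K(x_0,y_0)=\jp{f,a\otimes b}$: once one notices that after freezing one corner of the rectangle in the $\Pi$-expansion the remaining three-corner sum is literally a rank-one correlation of $f$ against the pair of slices $f(\cdot,y_0)$ and $f(x_0,\cdot)$, the unimodular upgrade is just triangle-inequality bookkeeping. There is no genuine analytic obstacle; the argument does not need any structure on $f$ beyond $|f|\le 1$ and finite support.
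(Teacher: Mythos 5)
Your proof is correct and is essentially the paper's proof: pigeonhole one corner of the quadruple sum defining $\norm{f}_\Pi^4$, recognize the remaining three-term sum as $\langle f, a\otimes b\rangle$ with $a,b$ the row and column slices of $f$ through the pivot, then upgrade $a,b\in\D^\Z$ to unimodular $g,h$. The only cosmetic differences are that you pigeonhole on the corner $(x_2,y_2)$ while the paper pigeonholes on the base point $z_0$ of the $\Alt_{me_1,ne_2}$-expansion, and that you spell out the $\D\to\T$ upgrade as an explicit two-stage phase alignment where the paper compresses it into a one-line appeal to the fact that a maximizer of $|\langle f,g\otimes h\rangle|$ over $g,h\in\D^\Z$ can be taken unimodular.
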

\begin{proof}
By the pigeonhole principle, there exists $z_0\in \Z^2$ such that
\[
\#\supp(f)\cdot\left|\sum_{m,n\in\Z}\Alt_{me_1,ne_2}f(z_0)\right|\ge\sum_{z\in\Z^2}\sum_{m,n\in\Z}\Alt_{me_1,ne_2}f(z)=\norm f_\Pi^4.
\]
Up to a translation, we assume $z_0=0$. Set $g(x):=f(xe_1)$ and $h(y):=f(ye_2)$. We have
\[
\left|\sum_{m,n\in\Z}\Alt_{me_1,ne_2}f(0)\right|=\left|f(0)\sum_{m,n\in\Z}\overline{g(m)h(n)}f(me_1+ne_2)\right|\le\left|\jp{g\otimes h,f}_{\ell^2(\Z^2)}\right|,
\]
finishing the proof.
\end{proof}
We also use rotated versions of $\norm{\cdot}_\Pi$. For $\eta\in\Z^2\setminus\{0\}$, we denote
\begin{equation}\label{eq:Pi_eta}
\norm{f}_{\Pi_\eta}^4:=\sum_{\xi\in\Z^2/\eta\Z^2}\norm {f(\eta \cdot+\xi)}_{\Pi}^4=\sum_{m,n\in\Z}\sum_{z\in\Z^2}\Alt_{m\eta,n\eta^\perp}f(z).
\end{equation}
Throughout this paper, we use the convention of identifying a coset $H\in\Z^2/\eta\Z^2$ with its representative $\xi\in([0,1)\eta+[0,1)\eta^\perp)\cap\Z^2$.

Immediately from the definition of $\Pi_\eta$-norm, we have the following identity:
\begin{lem}\label{lem:Pi_eta1 eta2}
For $f:\Z^2\rightarrow\C$ and $\eta_1,\eta_2\in\Z^2\setminus\{0\}$, denoting by $\eta:=\eta_1\eta_2$ the product in $\Z[i]\simeq\Z^2$, we have
\begin{equation}\label{eq:Pi_eta1 eta2}
\norm f_{\Pi_{\eta}}^4=\sum_{\xi\in\Z^2/\eta_1\Z^2}\norm {f(\eta_1\cdot+\xi)}_{\Pi_{\eta_2}}^4.
\end{equation}
\end{lem}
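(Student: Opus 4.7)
The plan is to unpack both sides using the second (Alt-based) expression of $\norm{\cdot}_{\Pi_\eta}$ in \eqref{eq:Pi_eta} and match them via an explicit change of variables. The one nontrivial algebraic observation I will need is the identity
\[
(\eta_1\eta_2)^\perp = \eta_1\cdot\eta_2^\perp,
\]
which comes for free once we identify $\Z^2$ with the Gaussian integers $\Z[i]$: the rotation $v\mapsto v^\perp$ is multiplication by $i$, and multiplication of Gaussian integers commutes, so $i(\eta_1\eta_2)=\eta_1(i\eta_2)$. Under this identification, $\eta_1\Z^2 = \eta_1(\Z+i\Z) = \eta_1\Z+\eta_1^\perp\Z$, which is consistent with the use of $\eta\Z^2$ in \eqref{eq:Pi_eta}.

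First I would rewrite the right-hand side of \eqref{eq:Pi_eta1 eta2}. For each coset representative $\xi$ of $\Z^2/\eta_1\Z^2$, set $g_\xi(u):=f(\eta_1 u+\xi)$ as a function of $u\in\Z^2$. Applying \eqref{eq:Pi_eta} to $g_\xi$,
\[
\norm{g_\xi}_{\Pi_{\eta_2}}^4
=\sum_{\substack{u\in\Z^2\\ m,n\in\Z}}\Alt_{m\eta_2,n\eta_2^\perp}g_\xi(u).
\]
Next I would observe that translating the argument of $g_\xi$ by $v\in\Z^2$ corresponds to translating the argument of $f$ by $\eta_1 v$, so on $f$ the operators $\Alt_{m\eta_2}$ and $\Alt_{n\eta_2^\perp}$ pull back to $\Alt_{m\eta_1\eta_2}$ and $\Alt_{n\eta_1\eta_2^\perp}$ respectively. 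Setting $z:=\eta_1 u+\xi$, this yields
\[
\Alt_{m\eta_2,n\eta_2^\perp}g_\xi(u)=\Alt_{m\eta_1\eta_2,\,n\eta_1\eta_2^\perp}f(z).
\]

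Finally, I would note that the map $(\xi,u)\mapsto z=\eta_1 u+\xi$ is a bijection from $(\Z^2/\eta_1\Z^2)\times\Z^2$ onto $\Z^2$, so summing over $\xi$ and $u$ is the same as summing over $z\in\Z^2$. Combined with the identity $(\eta_1\eta_2)^\perp=\eta_1\eta_2^\perp$, this gives
\[
\sum_{\xi\in\Z^2/\eta_1\Z^2}\norm{g_\xi}_{\Pi_{\eta_2}}^4
=\sum_{\substack{z\in\Z^2\\ m,n\in\Z}}\Alt_{m\eta_1\eta_2,\,n(\eta_1\eta_2)^\perp}f(z)
=\norm f_{\Pi_{\eta_1\eta_2}}^4,
\]
by one more application of \eqref{eq:Pi_eta} (now with $\eta=\eta_1\eta_2$), which proves \eqref{eq:Pi_eta1 eta2}.

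There is essentially no obstacle to overcome; the statement is really a bookkeeping identity, and the only place where care is required is the Gaussian-integer interpretation that makes $\perp$ commute with multiplication by $\eta_1$. Once that is recorded, the rest is a substitution and a bijection check.
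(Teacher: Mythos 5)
Your proof is correct. It takes a slightly different route from the paper's: the paper works directly with the coset-sum form of $\|\cdot\|_{\Pi_\eta}$ (the first equality in \eqref{eq:Pi_eta}), decomposing $\Z^2/\eta_1\eta_2\Z^2$ as $(\Z^2/\eta_1\Z^2)\times(\Z^2/\eta_2\Z^2)$ via $\xi=\xi_1+\eta_1\xi_2$ and then regrouping so that the inner sum is literally $\|f(\eta_1\cdot+\xi_1)\|_{\Pi_{\eta_2}}^4$ by definition. You instead unwind the Alt-based form (the second equality in \eqref{eq:Pi_eta}) and push forward the $\Alt$ shifts under the affine substitution $z=\eta_1 u+\xi$. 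Both are correct bookkeeping arguments of comparable length. The one tradeoff: your route requires the explicit identity $(\eta_1\eta_2)^\perp=\eta_1\eta_2^\perp$ (i.e., that the rotation $\perp$ commutes with Gaussian-integer multiplication), which you correctly justify via the identification $\perp=$ multiplication by $i$ in $\Z[i]$; the paper's coset-level manipulation never needs to touch $\perp$ at all, since it stays entirely on the $\|\cdot\|_\Pi$ side of the definition. Neither version is better, but the paper's is marginally shorter because it avoids this extra lemma.
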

\begin{lem}\label{lem:two tensor => U3}
Let $\eta=(\eta_1,\eta_2)\in\Z^2$ with $\eta_1,\eta_2\neq 0$ and $N\in\N$. For functions $g,h:[N]\rightarrow\C$, we have
\[
\norm{g\otimes h}_{\Pi_\eta}\les_\eta N\norm g_{U^3}\norm h_{U^3}.
\]
\end{lem}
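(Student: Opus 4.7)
The plan is to exploit the tensor structure of $g\otimes h$ to factor the $\Pi_\eta$-product and reduce to $U^3$-Gowers sums via a positivity argument. Starting from
\[
\norm{g\otimes h}_{\Pi_\eta}^4=\sum_{z\in\Z^2,\,m,n\in\Z}\Alt_{m\eta,n\eta^\perp}(g\otimes h)(z)
\]
(as in \eqref{eq:Pi_eta}), the four factors of $\Alt_{m\eta,n\eta^\perp}$ split coordinate-wise thanks to the tensor product:
\[
\Alt_{m\eta,n\eta^\perp}(g\otimes h)(z_1,z_2)=\Alt_{m\eta_1,-n\eta_2}g(z_1)\cdot\Alt_{m\eta_2,n\eta_1}h(z_2).
\]
Summing first in $(z_1,z_2)$ and setting $G(a,b):=\sum_{z\in\Z}\Alt_{a,b}g(z)$ and $H(a,b):=\sum_{z\in\Z}\Alt_{a,b}h(z)$ produces the factorized identity
\[
\norm{g\otimes h}_{\Pi_\eta}^4=\sum_{m,n\in\Z}G(m\eta_1,-n\eta_2)\,H(m\eta_2,n\eta_1).
\]

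Applying Cauchy-Schwarz in the pair $(m,n)$ then reduces matters to bounding the sum $\sum_{m,n}|G(m\eta_1,-n\eta_2)|^2$ together with its $H$-analogue. Expanding the square in $G$ and substituting $c=z'-z$ yields the algebraic identity
\[
|G(a,b)|^2=\sum_{z,c\in\Z}\Alt_{a,b,c}g(z),
\]
so the integrand is precisely the $U^3$-Gowers product of $g$; matching the sign pattern via $\c^{|\omega|}$, $\omega\in\{0,1\}^3$, is a short verification.

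The crucial observation is that $|G(a,b)|^2\geq 0$, so dropping the sublattice constraint $(a,b)\in\eta_1\Z\times\eta_2\Z$ only enlarges the sum:
\[
\sum_{m,n\in\Z}|G(m\eta_1,-n\eta_2)|^2\leq\sum_{(a,b)\in\Z^2}|G(a,b)|^2=\sum_{a,b,c,z\in\Z}\Alt_{a,b,c}g(z)\les N^4\norm{g}_{U^3}^8,
\]
the last bound being Definition \ref{defn:Gowers on box} applied to $g:[N]\rightarrow\D$. The analogous estimate holds for $H$, and combining and taking fourth roots yields $\norm{g\otimes h}_{\Pi_\eta}\les N\norm{g}_{U^3}\norm{h}_{U^3}$ with in fact an absolute constant. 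The main obstacle is essentially notational: tracking the signs in the factorization and in the identity for $|G(a,b)|^2$. Conceptually, what makes the argument painless is that positivity of $|G(a,b)|^2$ removes any need for a Fourier decomposition over the quotient $\Z^2/(\eta_1\Z\times\eta_2\Z)$ to pass from the sublattice sum to the full sum---such a decomposition would have introduced the $\eta$-dependence loosely suggested by the $\les_\eta$ notation of the statement.
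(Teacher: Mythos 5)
Your proposal is correct and follows essentially the same line as the paper: expand $\Pi_\eta$, use the tensor structure to split $\Alt_{m\eta,n\eta^\perp}(g\otimes h)$ into a product of one-dimensional $\Alt$'s in $g$ and $h$, apply Cauchy--Schwarz in $(m,n)$, then exploit the pointwise nonnegativity of $|G(a,b)|^2$ to drop the sublattice constraint $(a,b)\in\eta_1\Z\times\eta_2\Z$ and identify the resulting unrestricted sum with the (un-normalized) $U^3$ inner product. You are right that the constant is absolute and the $\les_\eta$ in the statement is not used; you are also slightly more careful than the paper about the normalization, explicitly tracking the factor $N^4$ in $\sum_{a,b,c,z}\Alt_{a,b,c}g(z)\sim N^4\|g\|_{U^3}^8$ where the paper's displayed inequality omits it.
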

\begin{proof}
We have
\begin{align*}
\norm{g\otimes h}_{\Pi_\eta}^4&=\sum_{m,n\in\Z}\sum_{x,y\in\Z}\Alt_{m\eta,n\eta^\perp}(g\otimes h)(x,y)
\\&
=\sum_{m,n\in\Z}\left(\sum_{x\in \Z}\Alt_{m\eta_1,-n\eta_2}g(x)\cdot\sum_{y\in \Z}\Alt_{m\eta_2,n\eta_1}h(y)\right)
\end{align*}
and applying the Cauchy-Schwarz inequality gives
\begin{align*}
&\le\left(\sum_{m,n\in \Z}\left|\sum_{x\in \Z}\Alt_{m\eta_1,n\eta_2}g(x)\right|^2\cdot \sum_{m,n\in \Z}\left|\sum_{y\in \Z}\Alt_{m\eta_2,n\eta_1}h(y)\right|^2\right)^{1/2}.
\end{align*}
Now using that
\begin{equation}\label{eq:two tensor => U3 claim}
\sum_{m,n\in \Z}\left|\sum_{x\in \Z}\Alt_{m\eta_1,n\eta_2}g(x)\right|^2\le\sum_{m,n\in \Z}\left|\sum_{x\in \Z}\Alt_{m,n}g(x)\right|^2\les N^4\norm g_{U^3}^8,
\end{equation}
the proof finishes.
\end{proof}
\subsection{An inverse theorem on the rectangle resonance}
In this subsection, we prove Lemma \ref{lem:U3>>Q(x) main lem}. This enables the final reduction in the proof of Theorem \ref{thm:inverse L4 Stri} in Section \ref{subsec:analytic}.
\begin{defn}
Let $r\in\N$. Let $\mc B$ be an affine Bohr set in $[N]^2$, $N\in\N$. $\mc B$ has \emph{rotation-symmetric rank at most $2r$} if $\mc B$ can be represented as an affine Bohr set of rank $2r$ in the form of Definition \ref{defn:affine Bohr sets} with the rotational symmetry
\[
(\theta_1,\ldots,\theta_{2r})=(\theta_1,\ldots,\theta_r,\theta_1^\perp,\ldots,\theta_r^\perp).
\]
\end{defn}
\begin{lem}\label{lem:Bohr can be made rotation-sym}
Let $r\in\N$. Let $\mc B$ be an affine Bohr set of rank $r$ in $[N]^2$, $N\in\N$. Then, $\mc B$ has rotation-symmetric rank at most $2r$.
\end{lem}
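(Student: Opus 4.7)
The plan is to exploit the fact that, in the formulation of Definition \ref{defn:affine Bohr sets}, adjoining a new constraint $\theta\cdot n\in I+\Z$ to an affine Bohr set on $[N]^2$ leaves the set unchanged provided that \emph{every} $n\in[N]^2$ already satisfies that constraint. So I would start from a rank-$r$ representation $\mc B=\{n\in[N]^2:\theta_j\cdot n\in I_j+\Z,\ j=1,\ldots,r\}$ and look for, for each $j$, an interval $I_{r+j}\subset(-1,1)$ of length strictly less than $1$ such that $\theta_j^\perp\cdot n\in I_{r+j}+\Z$ for every $n\in[N]^2$. Setting $\theta_{r+j}:=\theta_j^\perp$ then exhibits $\mc B$ as an affine Bohr set with the required rotational symmetry of rank $2r$.

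To produce such $I_{r+j}$, I would first record the finite set
\[
V_j:=\{\theta_j^\perp\cdot n\bmod 1:n\in[N]^2\}\subset\R/\Z,
\]
of size at most $(2N+1)^2$. By the pigeonhole principle applied to $\R/\Z$, the complement $\R/\Z\setminus V_j$ contains an open arc of length at least $1/(2N+1)^2>0$. Let $J_{r+j}\subset\R/\Z$ be the complementary arc of length $|J_{r+j}|\le 1-1/(2N+1)^2<1$; then $V_j\subset J_{r+j}$ by construction.

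Next I would lift $J_{r+j}$ to an interval $I_{r+j}\subset\R$ of length $|J_{r+j}|<1$ whose reduction modulo $\Z$ equals $J_{r+j}$. Because $|I_{r+j}|<1<2$, a translation of $I_{r+j}$ by a suitable integer places it inside $(-1,1)$, which neither changes $I_{r+j}+\Z$ nor violates the size condition demanded by Definition \ref{defn:affine Bohr sets}. With $\theta_{r+j}:=\theta_j^\perp$ and these new intervals $I_{r+j}$, every $n\in[N]^2$ trivially satisfies the additional constraints, so
\[
\mc B=\{n\in[N]^2:\theta_j\cdot n\in I_j+\Z \text{ for } j=1,\ldots,2r\},
\]
which realises $\mc B$ as an affine Bohr set of rotation-symmetric rank at most $2r$.

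There is no genuine obstacle: the only nontrivial input is the elementary fact that a finite subset of $\R/\Z$ of cardinality $M$ admits a gap of length at least $1/M$, which gives the strict inequality $|I_{r+j}|<1$ needed to match Definition \ref{defn:affine Bohr sets}. All remaining steps are bookkeeping.
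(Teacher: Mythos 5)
Your proposal is correct and is essentially the same argument as the paper's: append for each $j$ the constraint $\theta_j^\perp\cdot n\in I_{r+j}+\Z$ with an interval $I_{r+j}$ of length strictly less than $1$, found by a finiteness/pigeonhole argument, so that the added constraints are vacuous and $\mc B$ is unchanged. The only cosmetic difference is that you make the vacuity hold over all of $[N]^2$ (gap of size at least $(2N+1)^{-2}$ in $\R/\Z$), while the paper only needs it to hold over the smaller set $\mc B$; either suffices, and the bookkeeping is otherwise identical.
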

\begin{proof}
Let $\theta_1,\ldots,\theta_r$ and $I_1,\ldots,I_r$ be as in Definition \ref{defn:affine Bohr sets}. Since $\mc B$ is finite, we can set $I_{r+1},\ldots,I_{2r}\subset(-1,1)$ as intervals of lengths slightly less than $1$ such that $\theta_j^\perp\cdot x\in I_{r+j}+\Z$ holds for every $x\in\mc B$. Since $(\theta_1,\ldots,\theta_r,\theta_1^\perp,\ldots,\theta_r^\perp),(I_1,\ldots,I_{2r})$ leads to the same $\mc B$, the proof finishes.
\end{proof}
For $d,Q\in\N$, a subspace or a lattice in $\R^d$ is \emph{$Q$-rational} if it is generated by members of $[Q]^d$.
\begin{lem}\label{lem:U3>>Q(x) main lem}
Let $\{a_N\}$ be a sequence of integers such that $a_N\rightarrow\infty$.
Let $\{\mc N_N\}_{N\in\N}$ be a sequence of norms for $f:[N]^2\rightarrow\C$ satisfying the following:
\begin{itemize}
    \item
We have
\begin{equation}\label{eq:N weaker than l2}
\norm{f}_{\mc N_N}\le\norm{f}_{\ell^2([N]^2)}/N.
\end{equation}
    \item
For $\epsilon'>0$, $N\gg_{\epsilon'}1$, an integer $a_*\le a_N$, and $f:[N]^2\rightarrow\D$ such that $\norm{f}_{\mc N_N}\ge\epsilon'$, there exist positive integers $a\sim_{\epsilon'} a_*$ and $b=O_{\epsilon'}(1)$ such that
\begin{equation}\label{eq:U3>>Q(x) main x,eta}
\left|\sum_{x\in\Z^2}\sum_{\eta\in b\Z^2}\Alt_{\eta,a\eta^\perp}f(x)\right|\gtrsim_{\epsilon'}\frac{N^4}{a^2}.
\end{equation}
\end{itemize}
Let $\epsilon>0$ and $r,m_*\in\N$. Let $\mc B_*$ be an affine Bohr set in $[N]^2$, $N\gg_{\epsilon,r,\{a_N\}}1$ of rotation-symmetric rank at most $2r$. Let $\phi$ be a locally quadratic modulation supported on $\mc B_*$. Assume there exists $\mc B\in\pi_{m_*}(\mc B_*)$ such that
\[
\norm{\chi_{\mc B}\phi}_{\mc N_N}\ge\epsilon.
\]
Then, for every $\delta>0$, there exist an integer $m=O_{r,\epsilon,\delta}(1)$, $\mc B'\in\pi_m(\mc B)$, a locally linear modulation $\psi$ supported on $\mc B'$, and a locally quadratic modulation $\tilde\phi$ supported on an affine Bohr set $\tilde{\mc B}\supset\mc B'$ of rotation-symmetric rank at most $2(r-1)$, such that
\begin{equation}\label{eq:U3>>Q(x) main goal}
|\phi(x)-\psi(x)\tilde\phi(x)|\le\delta,\qquad x\in\mc B'
\end{equation}
and
\begin{equation}\label{eq:U3>>Q(x) keep positive}
\norm{\chi_{\mc B'}\phi}_{\mc N_N}\gtrsim_{\epsilon,r,\delta}1.
\end{equation}
\end{lem}
\begin{proof}
Throughout this proof, every comparability depends in default on the sequence $\{\mc N_N\}$ and the parameters $r,\epsilon,\delta$; we keep track of dependencies only on $a$-parameters to appear within this proof. We denote $\mc N=\mc N_N$ for simplicity. We use \eqref{eq:U3>>Q(x) main x,eta} for $a=a_0,a_1$, where $a_0\gg1$ and $a_1\gg_{a_0}1$ are $O(1)$-integers to be fixed later. Assuming $N\gg_{a_0,a_1}1$, such choices of $a_0$ and $a_1$ are available by the assumption of this lemma.

Since $\mc N_N$ is a norm, by the pigeonhole principle, for any integer $m_1=O(1)$, there exists $\mc B_1\in\pi_{m_1}(\mc B)$ such that $\norm{\chi_{\mc B_1}\phi}_{\mc N_N}\gtrsim 1$ holds. This process will be referred to as \emph{passing to an $m_1$-partition}.
Such passing will be repeated at most $100$ times throughout this proof; once we show \eqref{eq:U3>>Q(x) main goal} with $\mc B'=\mc B_k$, $k\le 100$, where $\mc B_j$ is a member of the $m_j=O(1)$-partition of $\mc B_{j-1}$ for $j=1,\ldots,k$, then since $\mc B_k$ is an $(m_1\cdots m_k)$-partition member of $\mc B$, the proof finishes. In this sense, we freely pass to an $O(1)$-partition in this proof. Hereafter we denote the partition member $\mc B_k$ considered at each step of the proof by $\mc B$ for convenience, omitting the subscript.

We use notations from Definition \ref{defn:affine Bohr sets} and Remark \ref{rem:Bohr set by lattice-convex proj} throughout this proof. In particular, we write $\mc B=\pi_{\R^2}(\Omega)$ with $\Omega=\ul\Omega\cap Z$, where $Z\subset\Z^{2+2r}$ is a translate of $m_*\Z^{2+2r}$. Note that passing to an $m$-partition updates $m_*$ to $mm_*$. Up to translations by integer points, we assume $\theta_1,\ldots,\theta_{2r}$ are $O(1)$.

By pigeonholing on the $x$-variable of \eqref{eq:U3>>Q(x) main x,eta}, there exists $x_0$ such that
\begin{equation}\label{eq:U3>>Q(x) main lem assump}
\left|\sum_{\eta\in b\Z^2}\Alt_{\eta,a\eta^\perp}f(x_0)\right|\gtrsim\frac{N^2}{a^2}.
\end{equation}
Hereafter we assume $x_0=0$ for simplicity; this proof focuses on the quadratic part and is uninfluenced by translating $x_0$. As a consequence, $0\in\Omega\subset Z$ holds, i.e., $Z=m_*\Z^{2+2r}$.

Firstly, we consider the case $|I_j|\ll 1$ for some $j=1,\ldots,2r$. Without loss of generality, assume $|I_1|\ll1$ holds. Since $\#\mc B\gtrsim N^2$, by Lemma \ref{lem:Weyl bound implies Lipschitz}, we have $\dist(\theta_1,\frac{1}{m}\Z^2)\les1/N$ for some $m=O(1)$. Since $\theta_{r+1}=\theta_1^\perp$, similar does the $(r+1)$-th coordinate. Thus, passing to an $O(1)$-partition of $\mc B$, up to translating $\theta_1$ and $\theta_{r+1}$ by integers, $\Omega$ has fixed first and $(r+1)$th coordinates. Passing to an $O(1)$-partition $\Omega$, $\Omega$ is thickly contained in an affine subspace and we can set $F$ as in Remark \ref{rem:Bohr set by lattice-convex proj}. Keeping $F\mid_\Omega$ fixed, we can further assume $F$ is invariant of first and $(r+1)$th coordinates. Up to a $2$-partition, assume $|I_1|,\ldots,|I_{2r}|<1/2$. Let $\Pi:\R^{2+2r}\rightarrow\R^{2+2(r-1)}$ be the canonical projection annihilating the first and $(r+1)$th coordinates. Let $\tilde\pi_{\R^2}:\R^{2+2(r-1)}\rightarrow\R^2$ be the canonical projection, so that $\tilde\pi_{\R^2}\circ\Pi=\pi_{\R^2}$. Set $\tilde{\mc B}:=\tilde\pi_{\R^2}(\tilde\Omega)$, where $\tilde\Omega:=\Pi(\ul\Omega)\cap\Z^{2+2(r-1)}$. Then, $\tilde{\mc B}$ is an affine Bohr set of rotation-symmetric rank at most $2(r-1)$. Let $\tilde F=F(0_{1,r+1},\cdot):\R^{2+2(r-1)}\rightarrow\R$. 
By Lemma \ref{lem:descend F to phi}, $e^{i\tilde F}:\tilde\Omega\rightarrow\T$ descends to a locally quadratic modulation $\tilde\phi$ on $\tilde{\mc B}$. For $x\in\mc B$, let $u\in\Omega$ be such that $x=\pi_{\R^2}(u)$. We have $\Pi(u)\in\Pi(\Omega)=\Pi(\ul\Omega\cap Z)\subset\Pi(\ul\Omega)\cap\Z^{2+2(r-1)}\subset\tilde\Omega$, thus denoting $v=(0_{1,r+1},\Pi(u))$, we have $\tilde\phi(x)=e^{iF(v)}$.
Here, since $\Pi(u)=\Pi(v)$ and (by the first and $(r+1)$th coordinate-invariance of $F$)
\begin{equation}\label{eq:goal for steps 1 and 3}
e^{iF(u)}=e^{iF(v)},\qquad u,v\in Z\text{ such that }\Pi(u)=\Pi(v),
\end{equation}
$\phi(x)=e^{iF(u)}=e^{iF(v)}=\tilde\phi(x)$ holds and we are done for this case with the trivial choice $\psi=\chi_{\mc B}$.

Hereafter we consider the case $|I_j|\sim1$ for every $j=1,\ldots,2r$.
For $w=(x,u,v)\in\R^2\times\R^r\times\R^r$, we denote the \emph{rotation}
\[
w^\perp:=(x^\perp,-v,u).
\]
Call a set $S\subset\R^2\times\R^r\times\R^r$ \emph{rotation-invariant} if $S^\perp=\{s^\perp:s\in S\}$ equals $S$.

We show that the problem can be reduced to the case that $\Omega$ is thick in $Z$ (compared to $a_0$ and $a_1$). Assume for contrary that $\Omega$ is relatively thickly contained in a union of $k=O_{a_0,a_1}(1)$ affine translates of a proper subspace $\mc P\lneq\R^{2+2r}$. 
Then, since $Z\subset\Z^{2+2r}$ has bounded index, the comparably enlarged set $\Omega^0=\ul\Omega_{[0,1)^{2r}}\cap \Z^{2+2r}$ is also contained in $O_k(1)$ translates of $\mc P$. Denote by $\lfloor\cdot\rfloor:\R^m\rightarrow\R^m,m\in\N$ the coordinate-wise floor function. The map $x\mapsto(x,\lfloor P(x)\rfloor)$ is $O(1)$-Lipschitz from $[N]^2$ onto $\Omega^0$. Thus, by the pigeonhole principle there exists an $O_k(1)$-bounded $v_1\in\mc P\cap \Z^{2+2r}$; modding out by $v_1\R$ and repeating yields a spanning set of $\mc P$ by $O_k(1)$-bounded integer points, i.e., $\mc P$ is $O_k(1)$-rational.

Up to a $k$-partition, we assume $\Omega\subset\mc P$. Since $|I_j|\sim1$, $\ul\Omega^\perp$ is contained in a translate of $O(1)$-scaling of $\ul\Omega$. Since thickness is translation and $O(1)$-scaling-invariant, $\mc P$ is rotation-invariant.

Let $\La:=\mc P\cap \Z^{2+2r}$, which is $O_k(1)$-rational and rotation-invariant. Regarding $\La$ as a $\Z[i]$-module defined by $(m+in)\cdot\la:=m\la+n\la^\perp$, $\La$ is a free module.
Thus, there exists a generator $(\la_0,\ldots,\la_{d},\la_0^\perp,\ldots,\la_{d}^\perp)$ for $\La$ as a lattice, where $d=\rank(\La)/2-1\le r-1$ and $\pi_{\R^2}(\la_1)=\cdots=\pi_{\R^2}(\la_{d})=0$.
Here, since the collection of $O_k(1)$-rational $\La$ is $O_k(1)$-finite, by assigning a fixed generator for each $\La$, we can assume $|\la_j|\sim_k1$.
Let $\tilde\pi_{\R^2}:\R^{2+2d}\rightarrow\R^2$ be the canonical projection.
Denote by $\{e_1,\ldots,e_d,e_1^\perp,\ldots,e_d^\perp\}$ the standard basis for $\R^{2d}$.
Let $T:\R^{2+2d}\rightarrow\mc P$ be the linear operator mapping $T(\pi_{\R^2}(\la_0),0)=\la_0$, $T(0,e_j)=\la_j$ for $j=1,\ldots,d$, and $T(u^\perp)=T(u)^\perp$.
Then, $\norm{T},\norm{T^{-1}}\les_k1$, $T(\Z^{2+2d})\supset\La$, and $\pi_{\R^2}\circ T=\tilde\pi_{\R^2}$ hold.

Since $\pi_{\R^2}\mid_{\mc P}$ is surjective, there exists a surjective projection $K:\R^{2+2r}\rightarrow\mc P$ such that 
$\pi_{\R^2}\circ K=\pi_{\R^2}$. Up to replacing by $(K(u)-K(u^\perp)^\perp)/2$, $K(u^\perp)=K(u)^\perp$ further holds.
Here, since $K$ depends only on $\mc P$ and $\mc P$ is $O_k(1)$-rational, we may assume $\norm{K}\les_k1$. Thus, passing to an $O_k(1)$-partition, there exists a translate $\mc C$ of $(-\tfrac{1}{10},\tfrac{1}{10})^{2d}$ such that
\[
T^{-1}\circ K(0\times I_1\times\cdots\times I_{2r})\subset 0\times\mc C,
\]
where $I_1,\ldots,I_{2r}$ are as in Remark \ref{rem:Bohr set by lattice-convex proj}.
Denote $e_0=(1,0)\in\R^2$ and let $p_{e_0}$ and $p_{e_0^\perp}=p_{e_0}^\perp$ be as in Remark \ref{rem:Bohr set by lattice-convex proj}. Let $\tilde p_{e_0}=T^{-1}\circ K(p_{e_0})$ and
\[
\ul{\tilde\Omega}=\tilde p_{e_0}[-N,N]+\tilde p_{e_0}^\perp[-N,N]-0\times\mc C.
\]
Set $\tilde\Omega=\ul{\tilde\Omega}\cap\Z^{2+2d}$. Since $\La\subset\mc P$ and $K$ is a projection onto $\mc P$, we have
\[
\Omega=\ul\Omega\cap\La=(p_{e_0}[-N,N]+p_{e_0}^\perp[-N,N]-0\times I_1\times\cdots\times I_{2r})\cap\La\subset T(\ul{\tilde\Omega}\cap\Z^{2+2d})\subset T(\tilde\Omega),
\]
thus by $\pi_{\R^2}\circ T=\tilde\pi_{\R^2}$,
\[
\mc B=\pi_{\R^2}(\Omega)\subset\tilde\pi_{\R^2}(\tilde\Omega)=:\tilde{\mc B}.
\]
$\tilde{\mc B}$ is an affine Bohr set of rotation-symmetric rank at most $2d\le2(r-1)$. By Lemma \ref{lem:descend F to phi}, $e^{iF\circ T}$ on $\tilde\Omega\supset T^{-1}(\Omega)$ descends to a locally quadratic modulation $\tilde\phi$ supported on $\tilde{\mc B}$, which equals $\phi$ on $\mc B$ and thus satisfies the sharp equality for \eqref{eq:U3>>Q(x) main goal} with the trivial choice $\psi=\chi_{\mc B}$. Hence, we assume $\Omega$ is thick in $Z$.

Let $F$ be as in Remark \ref{rem:Bohr set by lattice-convex proj}. Since \eqref{eq:U3>>Q(x) main x,eta} and \eqref{eq:U3>>Q(x) main lem assump} are invariant under locally linear modulations, up to a change of the locally linear modulation $\psi$, we may assume that $F$ is a quadratic form. Let $B=B_F:\R^{2+2r}\times\R^{2+2r}\rightarrow\R$ be the symmetric bilinear form
\[
B[u,v]:=F(u+v)-F(u)-F(v).
\]
Passing to a $b$-partition, we assume $b\mid m_*$.
Since $\pi_{\R^2}(\Omega)=\mc B$, substituting $\eta=\pi_{\R^2}(u)$ and $a\eta^\perp=\pi_{\R^2}(v)$, one can rewrite \eqref{eq:U3>>Q(x) main lem assump} as
\begin{equation}\label{eq:U3>>Q(x) in B}
\left|\sum_{\substack{u,v\in\Omega\\ u+v\in\Omega\\ au^\perp-v\in(0\times\Z^{2r})\cap Z}}e^{iB[u,v]}\right|\gtrsim\frac{N^2}{a^2}.
\end{equation}
Since $\pi_{\R^2}$ is injective on $\Omega$, for each $u\in\Omega$, there is at most one $v\in\Omega$ such that $au^\perp-v\in0\times\Z^{2r}$. Here, we have
\[
au^\perp-v\in (a\Omega^\perp-\Omega)\cap(0\times\Z^{2r})\subset0\times[2a]^{2r}.
\]
Thus, substituting $v=au^\perp-\la$, \eqref{eq:U3>>Q(x) in B} can be rewritten as
\begin{equation}\label{eq:U3>>Q(x) wrt C_la}
\left|\sum_{\la\in(0\times[2a]^{2r})\cap Z}\sum_{u\in C_\la}e^{iB[u,au^\perp-\la]}\right|\gtrsim\frac{N^2}{a^2},
\end{equation}
where we denoted by $C_\la=\ul C_\la\cap Z$ the lattice-convex set with
\[
\ul C_\la:=\{u\in\ul{\Omega}:au^\perp-\la,u+(au^\perp-\la)\in\ul{\Omega}\}.
\]
We have
\begin{equation}\label{eq:C_la containment}
C_\la\subset-\frac{1}{a}(\ul\Omega+\la)^\perp\cap Z.
\end{equation}
Let $\mc L_a\subset(0\times[2a]^{2r})\cap Z$, $a\in\{a_0,a_1\}$ be the set
\begin{equation}\label{eq:mc L def}
\mc L_a:=\left\{\la\in(0\times[2a]^{2r})\cap Z:\left|\sum_{u\in C_\la}e^{iB[u,au^\perp-\la]}\right|\gtrsim \frac{N^2}{a^{2+2r}}\right\}.
\end{equation}
By \eqref{eq:U3>>Q(x) wrt C_la}, \eqref{eq:C_la containment}, and
\begin{equation}\label{eq:few C_la}
\#\left(-\frac{1}{a}(\ul\Omega+\la)^\perp\cap Z\right)\les\frac{\#\Omega}{a^{2+2r}}\les\frac{N^2}{a^{2+2r}},
\end{equation}
(where we used the thickness of $\Omega\subset Z$,) we have
\begin{equation}\label{eq:many la saturating C_la}
\#\mc L_a\gtrsim a^{2r}.
\end{equation}
By \eqref{eq:mc L def} plugging $a=a_0$ and any $\la\in\mc L_{a_0}$, \eqref{eq:C_la containment}, \eqref{eq:few C_la}, and Corollary \ref{cor:convex Weyl}, we have
\begin{equation}\label{eq:B[u,au perp] <u0}
\dist(a_0B[u,u^\perp],\frac{2\pi}{m}\Z)\les\norm{u}_{\frac1{a_0}(\ul\Omega+\la)^\perp-\frac1{a_0}(\ul\Omega+\la)^\perp}^2\les a_0^2\norm{u}_{\ul\Omega-\ul\Omega}^2,\qquad u\in Z
\end{equation}
for some $m=O(1)$. Passing to an $O_{a_0}(1)$-partition of $\mc B$, we strengthen \eqref{eq:B[u,au perp] <u0} to
\begin{equation}\label{eq:B[u,au perp] <u}
\dist(B[u,u^\perp],8\pi\Z)\les\norm{u}_{\ul\Omega-\ul\Omega}^2,\qquad u\in Z,
\end{equation}
at the cost of allowing dependencies on $a_0$ for all comparabilities hereafter.

Since $\ul\Omega+\ul\Omega^\perp\subset O(1)(\ul\Omega-\ul\Omega)$ holds by $|I_j|\sim1$, by \eqref{eq:B[u,au perp] <u}, for $u\in Z$, we have
\[
|e^{\frac i2(F(u)+F(u^\perp))}-e^{iF(u)}|=|e^{\frac i2(F(u^\perp)-F(u))}-1|=|e^{\frac i4B[u+u^\perp,(u+u^\perp)^\perp]}-1|\les\norm{u}_{\ul{\Omega}-\ul{\Omega}}^2.
\]
Thus, passing to an $O(1)$-partition of $\mc B$, up to a linear modulation $\psi$ and triangle inequalities, we may reduce the problem to the rotationally symmetric case $F(u)=\frac{1}{2}F(u)+\frac{1}{2}F(u^\perp)$. Then, $B[u,v]=B[u^\perp,v^\perp]$ holds.

Next, we make use of the larger parameter $a_1$. Denote by $\{e_1,\ldots,e_r,e_1^\perp,\ldots,e_r^\perp\}$ the standard basis for $\R^{2r}$. Denote $\la_k=(0,m_*e_k)\in Z$ for $k=1,\ldots,r$. Since $B[u,a_1u^\perp]=0$, by \eqref{eq:mc L def}, \eqref{eq:C_la containment}, \eqref{eq:few C_la}, and Corollary \ref{cor:convex Weyl}, we have
\[
\dist(B[u,\la],\frac{2\pi}{m}\Z)\les\norm{u}_{\frac{1}{a_1}(\ul\Omega+\la)^\perp-\frac{1}{a_1}(\ul\Omega+\la)^\perp}\les a_1\norm{u}_{\ul\Omega-\ul\Omega},\qquad u\in Z,\qquad\la\in\mc L_{a_1},
\]
where $m=O(1)$. Thus, by \eqref{eq:many la saturating C_la} and Lemma \ref{lem:Weyl bound implies Lipschitz} plugging $u\in c(\ul\Omega-\ul\Omega)\cap Z$ with sufficiently small number $c\gtrsim1$ (which spans $\R^{2+2r}$ since $\Omega$ is thick), we have
\begin{equation}\label{eq:B[u,la] <u,la}
\dist(B[u,\la_1^\perp],\frac{2\pi}{m}\Z)\les \norm{u}_{\ul\Omega-\ul\Omega},\qquad u\in Z,
\end{equation}
where $m=O(1)$. Passing to an $O(1)$-partition, we assume $m=1$ in \eqref{eq:B[u,la] <u,la}.
Since $\Omega$ is thick, there exists
\[
v_1\in (a_1 Z-\la_1)\cap\frac{1}{a_1}(\ul\Omega-\ul\Omega).
\]
By the rotational symmetry $B[v^\perp,v]=0$, substituting $u=-\tfrac{1}{a_1}(v+\la)^\perp$ into \eqref{eq:U3>>Q(x) in B} yields
\begin{equation}\label{eq:U3>>Q(x) wrt v}
\left|\sum_{\substack{\la\in(0\times[2a_1]^{2r})\cap Z\\ v\in \Omega\cap(a_1 Z-\la)\\ -\frac{1}{a_1}(v+\la)^\perp,v-\frac{1}{a_1}(v+\la)^\perp\in\Omega}}e^{i\frac{1}{a_1}B[v,\la^\perp]}\right|\gtrsim\frac{N^2}{a_1^2}.
\end{equation}
Parametrizing $(\la,v)=(\la_0+k\la_1,v_0+kv_1)$, where $\la_0\in0\times[2a_1]^{2r-1}$ and $k\in[2a_1]$, we have
\begin{align*}
&\#\{(\la,v)+(\la_1,v_1)\Z:\la\in0\times[2a_1]^{2r}\text{ and }v\in\Omega\cap(a_1 Z-\la)\}
\\\le&\#\{(\la_0,v_0):\la_0\in0\times[2a_1]^{2r-1}\text{ and }v_0\in(\Omega+2(\ul\Omega-\ul\Omega))\cap(a_1 Z-\la_0)\}
\\\les& a_1^{2r-1}\cdot\frac{N^2}{a_1^{2+2r}}\les\frac{N^2}{a_1^3}.
\end{align*}
Thus, by the pigeonhole principle and \eqref{eq:U3>>Q(x) wrt v}, there exist $\la_0$ and $v_0$ such that
\[
\left|\sum_{k\in I}e^{i\frac{1}{a_1}B[v_0+kv_1,(\la_0+k\la_1)^\perp]}\right|\gtrsim a_1,
\]
where $I\subset[2a_1]$ is an interval (explicitly the set of $k\in[2a_1]$ such that $(\la,v)=(\la_0+k\la_1,v_0+kv_1)$ satisfies the summand conditions in \eqref{eq:U3>>Q(x) wrt v}). Thus, by Lemma \ref{lem:convex Weyl}, we have
\begin{equation}\label{eq:number-theoretic info}
\dist(\frac{1}{a_1}B[v_1,\la_1^\perp],\frac{2\pi}{m}\Z)\les\frac{1}{a_1^2}
\end{equation}
where $m=O(1)$ is an integer. Again, passing to an $m$-partition, we assume $m=1$.

For $k\in[a_1]$ and $v\in(\ul\Omega-\ul\Omega)\cap(a_1 Z-k\la_1)$, since $\frac1{a_1}(v-kv_1)\in Z$ and $\norm{\frac1{a_1}(v-kv_1)}_{\ul\Omega-\ul\Omega}\les\frac1{a_1}$, by \eqref{eq:B[u,la] <u,la} and \eqref{eq:number-theoretic info}, we have
\begin{equation}\label{eq:B[u,la] in a La+la_1 Z}
\dist(\frac{1}{a_1}B[v,\la_1^\perp],2\pi\Z)\le\dist(\frac{1}{a_1}B[v-kv_1,\la_1^\perp],2\pi\Z)+\dist(\frac{1}{a_1}B[kv_1,\la_1^\perp],2\pi\Z)\les\frac{1}{a_1}.
\end{equation}
Since \eqref{eq:B[u,la] in a La+la_1 Z} holds for every $k\in[a_1]$ and $a_1\Z-[a_1]\la_1=a_1\Z+\la_1\Z$, we have
\begin{equation}\label{eq:B[u,la] in a La+la_1 Z strong}
\dist(\frac{1}{a_1}B[v,\la_1^\perp],2\pi\Z)\les\frac{1}{a_1}\norm{v}_{\ul\Omega-\ul\Omega},\qquad v\in a_1 Z+\la_1\Z.
\end{equation}
By \eqref{eq:B[u,la] in a La+la_1 Z strong} and the rotation invariance of $B$, choosing $a_1$ large enough, there exists a unique linear operator $B^*:\R^{2+2r}\times(\la_1\R+\la_1^\perp\R)\rightarrow\R$ such that for $\la\in\{\la_1,\la_1^\perp\}$,
\[
B^*[u,\la^\perp]\in 2\pi a_1\Z \text{ is nearest to }B[u,\la^\perp],\qquad u\in (a_1 Z+\la\Z)\cap(\Omega-\Omega).
\]
Since $a_1 Z\cap(\Omega-\Omega)$ spans $\R^{2+2r}$, by \eqref{eq:B[u,la] in a La+la_1 Z strong}, for $\la\in\{\la_1,\la_1^\perp\}$ we have
\begin{equation}\label{eq:B*-B bound}
|(B^*-B)[u,\la^\perp]|\les\norm{u}_{\ul\Omega-\ul\Omega},\qquad u\in\R^{2+2r}.
\end{equation}
In particular, we have $|(B^*-B)[\la_1,\la_1^\perp]|\les 1$; since $B[\la_1,\la_1^\perp]=0$ and $B^*[\la_1,\la_1^\perp]\in2\pi a_1\Z$, $B^*[\la_1,\la_1^\perp]=0$ holds. Similarly, $B^*[\la_1^\perp,\la_1]=0$ holds. Thus, $B^*$ is symmetric on $(\la_1\R+\la_1^\perp\R)\times(\la_1\R+\la_1^\perp\R)$.

Now define $\tilde B:\R^{2+2r}\times\R^{2+2r}\rightarrow\R$ 
as the symmetric operator such that
\[
\tilde B[u,v]=B[u,v],\qquad u,v\in\Span(\{\la_2,\ldots,\la_r,\la_2^\perp,\ldots,\la_r^\perp,p_{e_0},p_{e_0^\perp}\})
\]
where $p_{e_0},p_{e_0^\perp}$ are as earlier (i.e., as in Remark \ref{rem:Bohr set by lattice-convex proj}) and
\[
\tilde B[u,v]=B^*[u,v],\qquad u\in\R^{2+2r},\qquad v\in\la_1\R+\la_1^\perp\R.
\]
By \eqref{eq:B*-B bound}, passing to an $O(1)$-partition, we have
\begin{equation}\label{eq:consequence of strong}
|(\tilde B-B)[u,u]|\le\delta/10,\qquad u\in\ul\Omega.
\end{equation}
Thus, up to a perturbation of $F(u)$, $u\in\Omega$ to $\frac{1}{2}\tilde B[u,u]$, we may reduce to the case $\tilde B=B$. 
Then $B(a_1 Z\times(\la_1\Z+\la_1^\perp\Z))\subset 2\pi\Z$ holds; we have
\begin{equation}\label{eq:Pi annihilates}
B[u,u]-B[v,v]\in4\pi\Z,\qquad u,v\in Z\text{ such that }u-v\in 2a_1(\la_1\Z+\la_1^\perp\Z).
\end{equation}
Thus, passing to an $2a_1$-partition, $u\mapsto e^{iF(u)}=e^{\frac{1}{2}iB[u,u]}$ is invariant under addition by $\la_1\Z+\la_1^\perp\Z$. This implies \eqref{eq:goal for steps 1 and 3} and finishes the proof.
\end{proof}
\begin{lem}\label{lem:U3>>Q(x) final lem}
Let $\{a_N\}$, $\{\mc N_N\}$, $\epsilon>0$, $r\in\N$, and $\phi$ be as in Lemma \ref{lem:U3>>Q(x) main lem}. Then, for every $N\gg_{\epsilon,r,\{a_N\}}1$, there exist $t\in\R$ and $\xi\in\R^2$ such that
\begin{equation}\label{eq:U3>>Q(x) final lem}
\left|\jp{\phi(x),e^{i(t|x|^2+\xi\cdot x)}}_{\ell^2([N]^2)}\right|\gtrsim_{\epsilon,r}N^2.
\end{equation}
Moreover, for $\delta>0$, there exist $J=O_{\epsilon,r,\delta}(1)$, $c_1,\ldots,c_J\in\D$, and $\xi_1,\ldots,\xi_J\in\R^2$ such that
\begin{equation}\label{eq:U3>>Q(x) profile}
\norm{\phi(x)-\sum_{j\le J}c_je^{i(t|x|^2+\xi_j\cdot x)}}_{\ell^2([N]^2)}\le\delta N\text{ and }
\norm{\sum_{j\le J}c_je^{i(t|x|^2+\xi_j\cdot x)}}_{\ell^\infty([N]^2)}\le 1.
\end{equation}
\end{lem}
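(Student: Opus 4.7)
The plan is to iterate Lemma \ref{lem:U3>>Q(x) main lem} exactly $r$ times, each application stripping off one rotation-symmetric pair of directions from the supporting Bohr set, until we reach rank $0$, where Lemma \ref{lem:lift} lifts the remaining locally quadratic modulation to a genuine quadratic polynomial. The rotational symmetrization performed inside the proof of Lemma \ref{lem:U3>>Q(x) main lem} survives the reduction and forces the resulting quadratic form on $\R^2$ to be a scalar multiple of $|x|^2$.

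Concretely, fix a small auxiliary parameter $\delta_0=\delta_0(\epsilon,r,\delta)>0$. Starting with $\phi_0=\phi$ on $\mc B_0=\mc B$, at the $k$-th step apply Lemma \ref{lem:U3>>Q(x) main lem} with parameter $\delta_0$ to $\phi_{k-1}$, producing a partition member $\mc B_k\subset\mc B_{k-1}$, a locally linear modulation $\Psi_k$ on $\mc B_k$, and a locally quadratic modulation $\phi_k$ on $\tilde{\mc B}_k\supset\mc B_k$ of rotation-symmetric rank $\le 2(r-k)$, such that $|\phi_{k-1}-\Psi_k\phi_k|\le\delta_0$ on $\mc B_k$ while $\norm{\phi_{k-1}\chi_{\mc B_k}}_{\mc N_N}\gtrsim N$. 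Telescoping after $r$ iterations gives
\[
|\phi(x)-\Psi(x)\phi_r(x)|\le r\delta_0,\qquad x\in\mc B_r,
\]
with $\Psi:=\Psi_1\cdots\Psi_r$ a locally linear modulation on $\mc B_r$, while $\phi_r$ is supported on a Bohr set of rotation-symmetric rank $0$, i.e., a thick lattice-convex subset of $\Z^2$. By Lemma \ref{lem:lift}, $\phi_r=e^{iF}$ for a quadratic polynomial $F\colon\R^2\to\R$. The linear part of $F$ may be absorbed into the $\Psi_k$'s at each iteration (as already done inside the proof of Lemma \ref{lem:U3>>Q(x) main lem}), so $F$ may be assumed a pure quadratic form; the inherited rotational symmetry $F(u)=F(u^\perp)$ then forces $F(x)=t|x|^2$ for some $t\in\R$, yielding
\[
\phi(x)\approx\Psi(x)\,e^{it|x|^2}\text{ on }\mc B_r,\qquad\norm{\phi\chi_{\mc B_r}}_{\mc N_N}\gtrsim N.
\]

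For \eqref{eq:U3>>Q(x) final lem}, note that $\mc N_N\le\ell^2$ forces $\#\mc B_r\gtrsim N^2$, so $|\jp{\phi\chi_{\mc B_r},\Psi e^{it|x|^2}}_{\ell^2([N]^2)}|\gtrsim N^2$. Applying Lemma \ref{lem:Bohr char is sum of linear} to the locally linear modulation $\Psi\chi_{\mc B_r}$ yields an approximation $\sum_{j\le J_0}c_je^{i\xi_j\cdot x}$ with $J_0=O_{\epsilon,r}(1)$ up to a small $\ell^2$-error, and pigeonholing on $j$ extracts a single $\xi$ with $|\jp{\phi,e^{i(t|x|^2+\xi\cdot x)}}|\gtrsim N^2$. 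For \eqref{eq:U3>>Q(x) profile}, we run the same iteration simultaneously over all partition members whose $\mc N_N$-mass exceeds a threshold $\delta'N$ (with $\delta'=\delta'(\delta,\epsilon,r)$ small enough that the discarded members contribute $\ll\delta N$ in $\ell^2$), extracting on each a locally linear modulation times $e^{it|x|^2}$. Since the parameter $t$ is intrinsic to $\phi$ (it is the $|x|^2$-coefficient of the rotationally symmetrized quadratic form of $\phi$'s polynomial lift), it is the \emph{same} $t$ on every member. Summing over members and then expanding each surviving locally linear modulation via Lemma \ref{lem:Bohr char is sum of linear} produces the required $O_{\epsilon,r,\delta}(1)$-term decomposition in $\ell^2([N]^2)$, while the Stone-Weierstra{\ss}-style truncation inside Lemma \ref{lem:Bohr char is sum of linear} enforces the pointwise bound $\le 1$.

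The main obstacle is precisely this uniformity of $t$ across partition members: one must verify that the rotationally symmetrized quadratic form extracted at rank $0$ depends only on $\phi$ and not on the particular nested chain of partition members chosen during the iteration. This in turn reduces to showing that the intrinsic quadratic part of $\phi$'s polynomial lift projects down to the same $|x|^2$-component under any admissible symmetrization, with any residual ambiguity absorbed into the locally linear modulation layer.
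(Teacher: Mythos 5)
There is a genuine gap in your handling of \eqref{eq:U3>>Q(x) profile}. You try to obtain the profile decomposition by running the iteration over all partition members with appreciable $\mc N_N$-mass and then claiming a posteriori that the extracted $|x|^2$-coefficient $t$ is the same on every member. You identify this uniformity as ``the main obstacle,'' but the justification offered (that $t$ is ``intrinsic to $\phi$'s polynomial lift'') is only heuristic: Lemma \ref{lem:U3>>Q(x) main lem} produces the local quadratic structure only up to a $\delta_0$-error and only after a sequence of partitions, translations, and rotational symmetrizations that depend on the particular nested chain of members; nothing in your argument shows that the resulting rotationally invariant quadratic form descends to a member-independent scalar multiple of $|x|^2$. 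As written this is not a proof but a reduction to an unverified claim of comparable difficulty.

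The paper sidesteps the issue entirely with a short observation you are missing: once \eqref{eq:U3>>Q(x) final lem} is established with some $(t,\xi)$, the product $\phi(x)\,e^{-i(t|x|^2+\xi\cdot x)}$ is a locally quadratic modulation on the same Bohr set with $\bigl|\sum_{x\in[N]^2}\phi\,e^{-i(t|x|^2+\xi\cdot x)}\bigr|\gtrsim_\epsilon N^2$, so a single application of Lemma \ref{lem:bias lem for local quad on Bohr} to this product expands it as an $O(1)$-combination of linear modulations in $\ell^2([N]^2)$; multiplying back by $e^{i(t|x|^2+\xi\cdot x)}$ yields \eqref{eq:U3>>Q(x) profile} with exactly one $t$ and no patching. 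So only the existence statement \eqref{eq:U3>>Q(x) final lem} needs an induction. The paper's inductive step is also structured differently from your telescoping: rather than carrying the product $\Psi=\Psi_1\cdots\Psi_r$ along $r$ steps, each step forms $\phi\overline{\psi\tilde\phi}$ (which has bias $\gtrsim 1$ by the approximation from Lemma \ref{lem:U3>>Q(x) main lem}), uses Lemma \ref{lem:bias lem for local quad on Bohr} plus pigeonholing to replace $\psi$ by a single linear modulation inside the $\mc N_N$-norm, and then invokes the inductive hypothesis on $\tilde\phi\,e^{i\xi_*\cdot x}$ of strictly smaller rotation-symmetric rank. That keeps all error terms one-step and avoids the accumulation and matching issues your telescoped version would have to control. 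Your treatment of \eqref{eq:U3>>Q(x) final lem} (pigeonholing over the expansion of the locally linear layer from Lemma \ref{lem:Bohr char is sum of linear}) is in the right spirit and close to the paper's last step.
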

\begin{proof}
We adopt the notations used in Lemma \ref{lem:U3>>Q(x) main lem}. Note that for each $r$, \eqref{eq:U3>>Q(x) final lem} implies \eqref{eq:U3>>Q(x) profile} by applying Lemma \ref{lem:bias lem for local quad on Bohr} to $\phi(x)e^{-i(t|x|^2+\xi\cdot x)}$. Thus, it suffices to show only \eqref{eq:U3>>Q(x) final lem}.

We prove by an induction on $r$.
If $r=0$, recalling the proof of Lemma \ref{lem:U3>>Q(x) main lem}, there exist an affine Bohr set $\mc B'$ in $[N]^2$ satisfying $\norm{\phi\chi_{\mc B'}}_{\mc N_N}\gtrsim_\epsilon 1$ (thus $\#\mc B'\gtrsim_\epsilon N^2$) and a quadratic polynomial $F=Q+L:\R^2\rightarrow\R$ such that $Q(x)=Q(x^\perp)$ and
\begin{equation}\label{eq:phi-e^iF}
|\phi(x)-e^{iF(x)}|\le1/2,\qquad x\in\mc B'.
\end{equation}
Here, $L:\R^2\rightarrow\R$ is a linear map, which appears by unfolding the assumption that $F$ was a pure quadratic form. The symmetry $Q(x)=Q(x^\perp)$ implies $Q(x)=t|x|^2$ for some $t\in\R$. Thus, \eqref{eq:phi-e^iF} yields
\[
\left|\jp{\phi,e^{it|x|^2+L(x)}\chi_{\mc B'}}_{\ell^2([N]^2)}\right|\gtrsim_\epsilon N^2.
\]
Applying Lemma \ref{lem:Bohr char is sum of linear} to $\chi_{\mc B'}$ and pigeonholing yields \eqref{eq:U3>>Q(x) final lem}, concluding the case $r=0$.

We show the inductive step; let $r\ge 1$. Assume that \eqref{eq:U3>>Q(x) final lem} can be satisfied for $\mc B$ of rotation-symmetric rank at most $2(r-1)$.
By Lemma \ref{lem:U3>>Q(x) main lem}, there exists an $O_{\epsilon,r}(1)$-partition member $\mc B'$ of $\mc B$, a locally linear modulation $\psi$ supported on $\mc B'$, and a locally quadratic modulation $\tilde\phi$ supported on $\tilde{\mc B}\supset\mc B'$ of rotation-symmetric rank at most $2(r-1)$ such that
\begin{equation}\label{eq:U3>>Q(x) main goal*}
|\phi(x)-\psi(x)\tilde\phi(x)|\le1/2,\qquad x\in\mc B'
\end{equation}
and
\begin{equation}\label{eq:U3>>Q(x) keep positive*}
\norm{\phi\chi_{\mc B'}}_{\mc N_N}\gtrsim_{\epsilon,r}1.
\end{equation}
Since $\psi$ is supported on $\mc B'$, by \eqref{eq:U3>>Q(x) main goal*}, we have
\begin{equation}\label{eq:E phi psi phitilde}
\left|\E_{x\in[N]^2}\phi\ol{\psi\tilde\phi}\right|\gtrsim_{\epsilon,r}1.
\end{equation}
Since $\phi\ol{\psi\tilde\phi}$ is locally quadratic, by Lemma \ref{lem:bias lem for local quad on Bohr}, for $\delta>0$, there exist $J=O_{\epsilon,r,\delta}(1)$, $c_1,\ldots,c_J\in\D$, and $\xi_1,\ldots,\xi_J\in\R^2$ such that
\[
\norm{\phi\ol{\psi\tilde\phi}-\sum_{j\le J}c_je^{ix\cdot\xi_j}}_{\ell^2([N]^2)}\le\delta N,
\]
which can be rewritten as
\[
\norm{\phi\chi_{\mc B'}-\psi\tilde\phi\sum_{j\le J}c_je^{ix\cdot\xi_j}}_{\ell^2([N]^2)}\le\delta N.
\]
Thus, choosing $\delta=\delta(\epsilon,r)\ll1$, by \eqref{eq:U3>>Q(x) keep positive*} and \eqref{eq:N weaker than l2}, we have
\[
\norm{\psi\tilde\phi\sum_{j\le J}c_je^{ix\cdot\xi_j}}_{\mc N_N}\gtrsim_{\epsilon,r}1.
\]
Then, by pigeonholing over the index $j$, there exists $j$ such that
\[
\norm{\psi\tilde\phi e^{ix\cdot\xi_j}}_{\mc N_N}\gtrsim_{\epsilon,r}1.
\]
Since $\psi$ is locally linear, applying Lemma \ref{lem:Bohr char is sum of linear} to $\psi$ and pigeonholing as earlier, there exists $\xi_*\in\R^2$ such that
\[
\norm{\tilde\phi e^{ix\cdot\xi_*}}_{\mc N_N}\gtrsim_{\epsilon,r}1.
\]
Then, since $\tilde\phi e^{ix\cdot\xi_*}$ is a locally quadratic modulation supported on $\tilde{\mc B}$, which has the rotation-symmetric rank at most $2(r-1)$, by the induction hypothesis, \eqref{eq:U3>>Q(x) profile} is applicable to $\tilde\phi e^{ix\cdot\xi_*}$. Thus, by applying Lemma \ref{lem:Bohr char is sum of linear} to $\psi$ and taking a product, $\psi\tilde\phi$ can be approximated in the form of \eqref{eq:U3>>Q(x) profile}. Now by pigeonholing on \eqref{eq:E phi psi phitilde} as earlier, \eqref{eq:U3>>Q(x) final lem} holds, finishing the proof.
\end{proof}
\section{Limit properties of profiles}\label{sec:prof}
In this section, we introduce terminologies to detect distributional concentration of Schr\"{o}dinger evolutions and provide limiting behavior of profiles appearing in Theorem \ref{thm:inverse L4 Stri}. To some extent, we follow \cite{ionescu2012energy,ionescu2012global}.
Then, we show Lemma \ref{lem:inverse thm for almost sum of profiles}, which is equivalent to Theorem \ref{thm:inverse L4 Stri} for the special case that $e^{it\De}\phi$ is approximated in $L^4$ by a finite sum of profiles. Lemma \ref{lem:inverse thm for almost sum of profiles} is a consequence of the inverse $L^4$-Strichartz inequality on $\R^2$ \cite{Bourgain-refinements,MerleVega} and conventional profile decomposition arguments.
\subsection*{An extinction lemma}
We show a version of the extinction lemma in \cite{ionescu2012energy}. As a preparation, we recall a kernel estimate in \cite{bourgain1993fourier}.
\begin{prop}[{\cite[Lemma 3.18]{bourgain1993fourier}}]
\label{prop:kernel bound bourgain}Let $N\in2^\N$. Let $(a,q)$ be a pair of coprime integers such that
\begin{equation}\label{eq:a/q assump}
1\le q<N\qquad\text{and }\left|t-\frac aq\right|\le\frac{1}{qN}.
\end{equation}
Then, we have
\begin{equation}
\norm{e^{it\De}\delta_{N}}_{L^{\infty}(\T^{2})}\les\left(\frac{N}{\sqrt{q}\left(1+N\left|t-\frac{a}{q}\right|^{1/2}\right)}\right)^{2}.\label{eq:Bourgain bound}
\end{equation}
\end{prop}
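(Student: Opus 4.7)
The plan is the classical Weyl-sum analysis: reduce to a one-dimensional oscillatory sum via the tensor structure of the Littlewood-Paley multiplier, and then apply a Gauss-sum plus stationary-phase decomposition.

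First, since $P_N = P_{\le N} - P_{\le N/2}$ and each $P_{\le M}$ has the factored Fourier multiplier $\psi^M(\xi_1)\psi^M(\xi_2)$, one has
\[
e^{it\De}\delta_{\le M}(x_1,x_2) = K_M(t,x_1)\,K_M(t,x_2), \qquad K_M(t,x) := \sum_{n \in \Z} \psi^M(n)\,e^{-itn^2 + inx}.
\]
It therefore suffices, for $M \in \{N/2, N\}$, to establish the one-dimensional bound
\begin{equation}\label{eq:1Dgoal}
|K_M(t,x)| \les \frac{M}{\sqrt{q}\,(1 + M|t - a/q|^{1/2})}
\end{equation}
uniformly in $x$; squaring and using the triangle inequality then yields \eqref{eq:Bourgain bound}.

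To prove \eqref{eq:1Dgoal}, set $\beta := t - a/q$ and decompose each $n \in \Z$ uniquely as $n = qm + r$ with $r \in \{0, 1, \ldots, q-1\}$. With the standard $2\pi$-normalization (under which $e^{-i(a/q)n^2}$ is periodic of period $q$ in $n$), the phase separates and
\[
K_M(t,x) = \sum_{r=0}^{q-1} e^{-i(a/q)r^2 + irx} \, \Phi_r(\beta,x), \qquad \Phi_r(\beta,x) := \sum_{m \in \Z} \psi^M(qm+r)\,e^{-i\beta(qm+r)^2 + iqmx}.
\]
Because $\gcd(a,q) = 1$, completing the square in $r$ converts the outer coefficient family into a classical Gauss sum whose modulus is $\les \sqrt{q}$, uniformly in $x$. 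The inner smooth sum $\Phi_r$ is supported on $\sim M/q$ integers; the phase $-\beta(qm+r)^2 + qmx$ has second derivative $\sim |\beta|q^2$, so across the support its variation has scale $|\beta|M^2$. Poisson summation in $m$ (or equivalently a direct Van der Corput estimate, using smoothness of $\psi^M$ to make the transformed tails rapidly decay) then yields
\[
|\Phi_r(\beta,x)| \les \min\!\Bigl(\tfrac{M}{q},\; \tfrac{1}{q|\beta|^{1/2}}\Bigr) \les \frac{M/q}{1 + M|\beta|^{1/2}},
\]
uniformly in $r$ and $x$. Combining via the triangle inequality over $r$ gives \eqref{eq:1Dgoal}.

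The main obstacle is the uniform-in-$(x,r)$ stationary-phase estimate on $\Phi_r$ in the regime $\beta \neq 0$: the gain $(1 + M|\beta|^{1/2})^{-1}$ must be extracted from a smooth discrete sum, which is where the smoothness of $\psi$ is used in an essential way (e.g., to make the Poisson-transformed series converge rapidly enough for the non-stationary frequencies to be negligible). The remaining ingredients, namely the Gauss sum estimate, the tensorization step, and the Littlewood-Paley difference argument, are standard and introduce no additional difficulty.
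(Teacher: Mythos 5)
The paper does not prove this proposition; it is cited directly from Bourgain's 1993 paper. So the question is whether your proof of the black-box citation holds up, and there is a genuine gap in the way you try to extract the $\sqrt{q}$ gain. After you split $n = qm+r$ you have
\[
K_M = \sum_{r=0}^{q-1} e^{-i(a/q)r^2+irx}\,\Phi_r(\beta,x),
\]
where $\Phi_r$ genuinely depends on $r$. Your bound $|\Phi_r|\lesssim \frac{M/q}{1+M|\beta|^{1/2}}$ uniformly in $r$ is correct, but then ``combining via the triangle inequality over $r$'' gives $|K_M|\le\sum_r|\Phi_r|\lesssim q\cdot\frac{M/q}{1+M|\beta|^{1/2}}=\frac{M}{1+M|\beta|^{1/2}}$, which is \emph{missing} the factor $1/\sqrt q$ that you need. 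Conversely, the Gauss-sum estimate $|\sum_r e^{-i(a/q)r^2+irx}|\lesssim\sqrt q$ only applies to the bare coefficient family; it cannot be peeled off in front of $\max_r|\Phi_r|$. Doing the Gauss sum before Poisson is the wrong order, and as written the argument does not produce \eqref{eq:Bourgain bound}.

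The correct route is to Poisson-sum in $m$ \emph{first}: writing $\widehat{F_r}(k)=\frac1q\,e(kr/q)\,I(x-k/q)$ (after changing variables $u=qm+r$, with $I(v)=\int\psi^M(u)\,e^{-i\beta u^2+iuv}\,du$ independent of $r$), one gets
\[
K_M=\frac1q\sum_{k\in\Z} I(x-k/q)\sum_{r=0}^{q-1}e^{-i(a/q)r^2+irx}e(kr/q),
\]
and now the inner $r$-sum is a complete \emph{shifted} Gauss sum of modulus $\lesssim\sqrt q$ for every fixed $k$ (using $\gcd(a,q)=1$). The $k$-sum is then controlled by stationary/non-stationary phase bounds on $I$ together with the hypothesis $|\beta|\le 1/(qN)$, $q<N$, which ensures only $O(1)$ values of $k$ contribute. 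So the Gauss-sum gain arises from an interchange of the $r$- and $k$-sums after Poisson, not before. Two smaller points: the discrete sum $\Phi_r$ is not directly amenable to the continuous Van der Corput lemma without first passing to the Poisson-transformed integrals, so the parenthetical ``or equivalently'' elides the same step; and your appeal to the ``standard $2\pi$-normalization'' silently changes the paper's convention $\T^2=\R^2/(2\pi\Z)^2$, so the rational approximation in the statement should really be of $t/(2\pi)$ rather than $t$ for the arithmetic structure to appear. That is a notational discrepancy to reconcile, not a conceptual error, but it is worth being explicit.
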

The following lemma is a version of \cite[Lemma 4.3]{ionescu2012energy}:
\begin{lem}[Extinction lemma]\label{lem:extinct}
\label{prop:kernel 0}We have
\begin{equation}
\limsup_{\substack{\epsilon\rightarrow0\\
T\rightarrow\infty
}
}\sup_{\substack{N\in2^\N\\ TN^{-2}<\frac{\epsilon}{\log N}}}N^{-1}\norm{e^{it\De}\delta_{N}}_{L_{t,x}^{4}([TN^{-2},\frac{\epsilon}{\log N}]\times\T^{2})}=0.\label{eq:kernel 0}
\end{equation}
Moreover, for $\ep>0$, $Q_*\in\N$, $N\in2^\N$, and $t_*\in\R$ such that $|t_*-a/q|\ge2\ep/\log N$ for every $(a,q)\in\Z\times\{1,\ldots,Q_*\}$, we have
\begin{equation}\label{eq:kernel a/q}
N^{-1}\norm{e^{i(t-t_*)\De}\delta_N}_{L^4_{t,x}([-\frac\ep{\log N},\frac\ep{\log N}]\times\T^2)}=o_\ep(1)+o_{Q_*}(1).
\end{equation}
\end{lem}
\begin{proof}
By Dirichlet's Lemma, for each $t\in[0,1]$, there exists $(a,q)$ satisfying \eqref{eq:a/q assump}. Interpolating the $L^{2}$-conservation of $e^{it\De}$ and
\eqref{eq:Bourgain bound} yields
\begin{equation}
\norm{e^{it\De}\delta_{N}}_{L^{4}(\T^{2})}\les\frac{N^{3/2}}{\sqrt{q}\left(1+N\left|t-\frac{a}{q}\right|^{1/2}\right)}.\label{eq:Bourgain bound-1}
\end{equation}
For $t\in[TN^{-2},\frac{\epsilon}{\log N}]$,
since $\frac{a}{q}\le\frac{1}{qN}+\frac{\epsilon}{\log N}$,
either $1\le a\le\frac{\epsilon}{\log N}q+\frac{1}{N}$ or $(a,q)=(0,1)$ holds.
Thus, by \eqref{eq:Bourgain bound-1}, taking a summation over $Q\in2^\N$, we have
\begin{align*}
&\norm{e^{it\De}\delta_{N}}_{L_{t,x}^{4}([TN^{-2},\frac{\epsilon}{\log N}]\times\T^2)}^{4} =\int_{TN^{-2}}^{\frac{\epsilon}{\log N}}\norm{e^{it\De}\delta_{N}}_{L^{4}(\T^{2})}^{4}dt\\
  \les&\sum_{Q\les N
}\sum_{q\sim Q}\sum_{1\le a\le\frac{\epsilon}{\log N}q+\frac{1}{N}}\int_{\R}\left(\frac{N^{3/2}}{\sqrt{q}\left(1+N\left|t-\frac{a}{q}\right|^{1/2}\right)}\right)^{4}dt
 +\int_{TN^{-2}}^{N^{-1}}\left(\frac{N^{3/2}}{1+Nt^{1/2}}\right)^{4}dt,
\end{align*}
then by direct calculations, we can estimate which by
\begin{align*}
 & \les\sum_{Q\les N
}Q\left(\frac{\epsilon}{\log N}Q+\frac{1}{N}\right)\int_{\R}\left(\frac{N^{3/2}}{\sqrt{Q}\left(1+N|s|^{1/2}\right)}\right)^{4}ds+\int_{TN^{-2}}^{\infty}\left(\frac{N^{3/2}}{Nt^{1/2}}\right)^{4}dt\\
 & \les\sum_{Q\les N
}Q\left(\frac{\epsilon}{\log N}Q+\frac{1}{N}\right)\cdot\frac{N^{4}}{Q^{2}}+\frac{N^{4}}{T}\les N^{4}\left(\epsilon+\frac{1}{T}\right),
\end{align*}
which yields \eqref{eq:kernel 0}.

Similarly, for $t_*$ as in the statement, we have
\begin{align*}
\norm{e^{i(t-t_*)\De}\delta_{N}}_{L^4_{t,x}([-\frac\ep{\log N},\frac\ep{\log N}]\times\T^2)}^{4} \les\sum_{Q_*\les Q\les N
}Q\left(\frac{\epsilon}{\log N}Q+1\right)\cdot\frac{N^{4}}{Q^{2}}\les N^4\left(\ep+\frac1{Q_*}\right),
\end{align*}
where $Q_*\les Q$ follows from the assumption of $t_*$ and $(\frac{\ep}{\log N}Q+1)$ reflects the cardinality of $[t_*-\frac{\ep}{\log N},t_*+\frac\ep{\log N}]\cap\frac1q\Z$ for $q\sim Q$.
Hence, we conclude \eqref{eq:kernel a/q}.
\end{proof}
\subsection*{\label{sec:Spacetime-profile-decomposition}Periodic extensions and
frames}
The symmetries of the Schr\"{o}dinger operator to be considered for inverse Strichartz estimates are the spacetime translations, scalings, and Galilean transforms.
We denote the Galilean transform with a shift $\xi\in\Z^{2}$ by $I_{\xi}:L^1_{t,x,\loc}(\R\times\T^{2})\rightarrow L^1_{t,x,\loc}(\R\times\T^2)$,
mapping a function $u:\R\times\T^{2}\rightarrow\C$ to 
\begin{equation}
I_{\xi}u(t,x)=e^{ix\cdot\xi-it|\xi|^{2}}u(t,x-2t\xi).\label{eq:Galilean}
\end{equation}
The linear Schr\"odinger flow is preserved by Galilean transforms.

We denote a quadruple $\left(N_{*},t_{*},x_{*},\xi_{*}\right)\in2^{\N}\times\R\times\T^{2}\times\Z^{2}$
of scale, time, space, and Galilean boost parameters. For $f\in L_{t,x,\loc}^{1}(\R\times\T^{2})$
and $\left(N_{*},t_{*},x_{*},\xi_{*}\right)\in2^{\N}\times\R\times\T^{2}\times\Z^{2}$,
we denote by $\iota_{(N_{*},t_{*},x_{*},\xi_{*})}f$ the $C^{0}L^{2}$-critically
rescaled periodic extension
\[
\iota_{(N_{*},t_{*},x_{*},\xi_{*})}f(t,x):=N_{*}^{-1}I_{\xi_*}f\left(N_{*}^{-2}t+t_{*},N_{*}^{-1}x+x_{*}+2\pi\Z^{2}\right).
\]
\begin{defn}
\label{def:frame}A sequence of quadruples of parameters $\left\{ \left(N_{n},t_{n},x_{n},\xi_{n}\right)\right\} _{n\in\N}\subset2^{\N}\times\R\times\T^{2}\times\Z^{2}$
is said to be a \emph{frame} if $\lim_{n\rightarrow\infty}N_{n}=\infty$.

Two frames $\{\mc O_n\}=\{(N_n,t_n,x_n,\xi_n)\}$ and $\{\mc O_n'\}=\{(N_n',t_n',x_n',\xi_n')\}$ are \emph{orthogonal} if
\[
\lim_{n\rightarrow\infty}|\log (N_n/N_n')|+N_n^2\cdot \dist(t_n-t_n',2\pi\Z)+N_n\cdot\dist(x_n-x_n',2\pi\Z^2)+N_n^{-1}\cdot|\xi_n-\xi_n'|=\infty
\]
and \emph{comparable} if
\[
\limsup_{n\rightarrow\infty}|\log (N_n/N_n')|+N_n^2\cdot \dist(t_n-t_n',2\pi\Z)+N_n\cdot\dist(x_n-x_n',2\pi\Z^2)+N_n^{-1}\cdot|\xi_n-\xi_n'|<\infty.
\]
\end{defn}
We frequently work on distributional weak limits of $\iota_{\mc O_{n}}f_{n}$
for a sequence of functions $f_{n}:\R\times\T^{2}\rightarrow\C$.
Indeed, every weak limit we consider in this paper is in a distributional
sense. Given a sequence of functions $\left\{ f_{n}\right\} $, we
denote by $\lim_{n}f_{n}$ the weak limit of $f_{n}$ (if it exists).
\begin{defn}
A family of distributions $\left\{ f_{n}\right\} $ on either $\R^{2}$
or $\R\times\R^{2}$ is said to be \emph{weakly nonzero} if for every
subsequence $\left\{ n_{k}\right\} $, $f_{n_{k}}$ does not converge
weakly to zero.
\end{defn}
\subsection*{Inverse $L^4$-Strichartz estimate for a bounded sum of profiles}
\begin{lem}\label{lem:single profile conv}
Let $\{\mc O_n\}=\{(N_n,t_n,x_n,\xi_n)\}$ be a frame. Let $\psi_n$ be the profile
\begin{equation}\label{eq:profile O_n}
\psi_n(x)=N_n^{-1}e^{-it_n\De}e^{i\xi_n\cdot x}P_{N_n}\delta(x-x_n).
\end{equation}
Then, for every sequence $\{T_n\}$ in $(0,1]$ such that $T_n\log N_n\rightarrow0$, we have
\begin{equation}\label{eq:single profile conv}
\limsup_{n\rightarrow\infty}\norm{\iota_{\mc O_n}e^{it\De}\psi_n-e^{it\De}P_1\delta}_{L^4([-T_nN_n^2,T_nN_n^2]\times[-\pi N_n,\pi N_n]^2)}=0.
\end{equation}
\end{lem}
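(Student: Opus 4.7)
The plan is to unwind $\iota_{\mc O_n}(e^{it\De}\psi_n)$ so that, modulo a constant phase, it becomes an $L^{2}$-critical rescaling of the periodic Schr\"odinger kernel $K_{n}(s,y):=(e^{is\De}P_{N_n}\delta)(y)$, and then to compare with the Euclidean analogue $\mc K(s,y):=e^{is\De}P_{1}\delta(y)$ regarded as a Schwartz-class function on $\R\times\R^{2}$. The bounded-region convergence is controlled by Poisson summation; the large-time tail by the extinction Lemma \ref{lem:extinct}.

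\textbf{Step 1 (explicit formula).} The intertwining identity $e^{it\De}(e^{i\xi\cdot y}f(y))=I_{\xi}(e^{it\De}f)(x)$ applied inside $\psi_n$ gives
\[
e^{it\De}\psi_{n}(x)=N_{n}^{-1}\,e^{ix\cdot\xi_{n}-i(t-t_{n})|\xi_{n}|^{2}}\,K_{n}\bigl(t-t_{n},\,x-x_{n}-2(t-t_{n})\xi_{n}\bigr).
\]
Substituting the point $(N_{n}^{-2}t+t_{n},N_{n}^{-1}x+x_{n})$ and applying the outer $I_{\xi_{n}}$ as required by $\iota_{\mc O_n}$, a straightforward algebraic simplification yields, on any bounded set in $(t,x)$,
\[
\iota_{\mc O_n}(e^{it\De}\psi_{n})(t,x)=c_{n}\bigl(1+o(1)\bigr)\cdot N_{n}^{-2}\,K_{n}\bigl(N_{n}^{-2}t,\;N_{n}^{-1}x+\rho_{n}(t)\bigr),
\]
where $c_{n}\in\T$ is a $(t,x)$-independent phase and $\rho_{n}(t)=O(N_{n}^{-2}|\xi_{n}|(1+|t|))$ a small spatial advection, both controlled by the frame data. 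The $o(1)$ absorbs $(t,x)$-dependent residuals of the form $e^{O(N_{n}^{-1}|x\cdot\xi_{n}|)+O(N_{n}^{-2}|t||\xi_{n}|^{2})}$, which vanish uniformly on bounded sets by the hypothesis $N_{n}^{2}t_{n}\to t_{\ast}$.

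\textbf{Step 2 (compact region).} Fix $T,R\gg 1$ and let $Q_{T,R}:=[-T,T]\times[-R,R]^{2}$. A Poisson-summation comparison between the torus datum $P_{N_n}\delta$ and the Euclidean $P_{1}\delta$ shows that, for large $N_{n}$,
\[
N_{n}^{-2}K_{n}(N_{n}^{-2}s,\,N_{n}^{-1}y)\longrightarrow\mc K(s,y)\quad\text{uniformly on }Q_{T,R},
\]
because the non-principal Poisson cells (separated by spatial distance $\gtrsim 2\pi N_{n}$ in the rescaled $y$ variable) contribute errors of order $O_{T,R,A}(N_{n}^{-A})$ for every $A$, by the Schwartz decay of $P_{1}\delta$. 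Combined with $\rho_{n}(t)\to 0$, the continuity of $\mc K$, and absorption of the constant phase $c_{n}$, this yields
\[
\|\iota_{\mc O_n}(e^{it\De}\psi_{n})-e^{it\De}P_{1}\delta\|_{L^{4}(Q_{T,R})}\longrightarrow 0.
\]
The spatial tail $\{|t|\le T,|x|>R\}$ is bounded by the rapid decay of $\mc K(s,\cdot)\in\mathcal{S}(\R^{2})$ uniformly in $n$, and the time tail $\{|t|>T\}\times\R^{2}$ of the Euclidean side vanishes as $T\to\infty$ since $e^{it\De}P_{1}\delta\in L^{4}_{t,x}(\R\times\R^{2})$ by the Euclidean Strichartz estimate applied to the Schwartz datum $P_{1}\delta$.

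\textbf{Step 3 (time tail via extinction).} On $\{T<|t|\le T_{n}N_{n}^{2}\}\times[-\pi N_{n},\pi N_{n}]^{2}$, the change of variables $s=N_{n}^{-2}t$, $y=N_{n}^{-1}x+\rho_{n}(t)$ has Jacobian $N_{n}^{-4}$ and turns the spatial box into a fundamental domain of $\T^{2}$, so that
\[
\|\iota_{\mc O_n}(e^{it\De}\psi_{n})\|_{L^{4}(\{T<|t|\le T_{n}N_{n}^{2}\}\times[-\pi N_{n},\pi N_{n}]^{2})}^{4}\les N_{n}^{-4}\,\|K_{n}\|_{L^{4}([TN_{n}^{-2},T_{n}]\times\T^{2})}^{4}.
\]
Since $T_{n}\log N_{n}\to 0$, for $n$ large the hypothesis of Lemma \ref{lem:extinct} is satisfied with $\epsilon=T_{n}\log N_{n}$, so the right-hand side is $o(1)$ as $n\to\infty$ for each fixed $T$ and then $\to 0$ as $T\to\infty$; the symmetric tail $\{-T_{n}N_{n}^{2}\le t<-T\}$ is handled identically by time reversal. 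Assembling Steps 2 and 3 through the triangle inequality completes the proof. The main obstacle is the algebraic bookkeeping in Step 1, where one must track phase factors from five distinct symmetries (internal Galilean boost, Schr\"odinger propagator, spacetime translation by $(t_{n},x_{n})$, critical rescaling by $N_{n}$, and outer Galilean boost $I_{\xi_{n}}$); this is most transparent when $\iota_{\mc O_n}$ is read as a composition in the Schr\"odinger symmetry group, so all cancellations are automatic up to the residuals $\rho_{n}(t)$ and the constant phase $c_{n}$ absorbed in Step 2.
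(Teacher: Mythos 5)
Your argument is essentially the paper's: reduce the asymptotic time window using Lemma \ref{lem:extinct}, normalize the frame parameters using the Schr\"odinger symmetries, and then treat the remaining periodization via Poisson summation and the rapid decay of $e^{it\De}P_1\delta$. The paper simply performs the extinction reduction first (to $T_n=TN_n^{-2}$ for fixed $T$) and then states the normalization in one line, whereas you run the steps in the opposite order and split the tail estimate off as your Step~3; these are equivalent.

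One inaccuracy in your Step~1: the claimed $(t,x)$-dependent phase residual $e^{O(N_n^{-1}|x\cdot\xi_n|)+O(N_n^{-2}|t||\xi_n|^2)}$ should not be there. The profile $\psi_n$ and the transform $\iota_{\mc O_n}$ are built from the same frame parameters precisely so that the boost, scaling, and time/space translations conjugate away exactly; after the algebra one is left with a unimodular \emph{constant} phase (coming from $t_n|\xi_n|^2$) and a small spatial translation of order $t_n\xi_n$, both of which are $L^4$-neutral, and the translation goes to zero at the rescaled scale because $N_n^2t_n\to t_*$ forces $t_n\to 0$. Your $(1+o(1))$ bookkeeping ends up giving the right conclusion on bounded sets, but the cancellation is in fact exact, not merely asymptotic, and saying so makes the Step~2 comparison cleaner (you don't need to argue separately that the residual vanishes on $Q_{T,R}$). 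Aside from this, the proof is correct and the use of the extinction lemma for the time tail matches the paper's reduction.
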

\begin{proof}
By Lemma \ref{lem:extinct}, it suffices to show \eqref{eq:single profile conv} when $T_n=TN_n^{-2}$, where $T>0$ is arbitrary finite number. Up to symmetries of the Schr\"{o}dinger evolution, we may assume $t_n=0$, $x_n=0$, and $\xi_n=0$. Then, \eqref{eq:single profile conv} can be rewritten as
\begin{equation}\label{eq:single profile conv 1}
\limsup_{n\rightarrow\infty}\norm{\sum_{\xi\in2\pi N_n\Z^2}e^{it\De}P_1\delta(\cdot-\xi)-e^{it\De}P_1\delta}_{L^4([-T,T]\times[-\pi N_n,\pi N_n]^2)}=0.
\end{equation}
\eqref{eq:single profile conv 1} is immediate from the rapid spatial decay of $e^{it\De}P_1\delta$, $|t|\le T$, finishing the proof.
\end{proof}
\begin{lem}\label{lem:inverse thm for almost sum of profiles}
Let $\{I_n\}$ be a sequence of intervals on $\R$ and $\epsilon>0$. Let $\{\phi_n\}$ be a sequence in $L^2(\T^2)$ such that $\norm{\phi_n}_{L^2}\le1$ and
\[
\norm{e^{it\De}\phi_n}_{L^4_{t,x}(I_n\times\T^2)}\ge\epsilon.
\]
Assume there exist $J\in\N$, $c_1,\ldots,c_J\in\D$, and frames $\{\mc O_{n,j}\}_{j\le J}=\{(N_{n,j},t_{n,j},x_{n,j},\xi_{n,j})\}$ such that $|I_n|\cdot\max_j\log N_{n,j}\rightarrow0$ and
\begin{equation}\label{eq:profile sum almost equal}
\norm{e^{it\De}\phi_n-\sum_{j\le J}c_jN_{n,j}^{-1}e^{i(t-t_{n,j})\De}e^{i\xi_{n,j}\cdot x}P_{N_{n,j}}\delta(\cdot-x_{n,j})}_{L^4_{t,x}(I_n\times\T^2)}\le\epsilon/4.
\end{equation}
Then, there exist an index $j_0\le J$ and a frame $\mc O=\{(N_n,t_n,x_n,\xi_n)\}$ such that $N_n\sim N_{n,j_0}$ and, along a subsequence,
\begin{equation}\label{eq:profile sum main}
|\jp{(\iota_{\mc O_n}e^{it\De}\phi_n)(0),P_1\delta}_{L^2(\R^2)}|\gtrsim_\epsilon1
\end{equation}
holds and $\{\chi_{\tilde I_n}\}$ is weakly nonzero.

Here, we denoted by $\tilde I_n$ the image of $I_n$ under $\mc O_n$, i.e. $\tilde I_n=\{N_n^2(t-t_n):t\in I_n\}$.
\end{lem}
In Section \ref{sec:proof-inv}, we will show that \eqref{eq:profile sum almost equal} holds whenever $|I_n|\cdot\log\# S_n\rightarrow0$, $S_n=\supp(\widehat{\phi_n})$ (Proposition \ref{prop:profile'}). Consequently, when $|I_n|\cdot\log\# S_n\rightarrow0$, the assumption \eqref{eq:profile sum almost equal} becomes redundant and this lemma yields \eqref{eq:profile sum main} together with the weak nonzeroness of $\{\chi_{\tilde I_n}\}$. (Note in particular that \eqref{eq:profile sum main} corresponds to Theorem \ref{thm:inverse L4 Stri} written in terms of sequences.) This observation will be used in Lemma \ref{lem:inverse Z_R stri}, which is a crucial ingredient in a concentration argument for the global well-posedness.
\begin{proof}
Denote
\begin{equation}\label{eq:phi_n def}
\tilde\phi_n(x)=\sum_{j\le J} c_jN_{n,j}^{-1}e^{-it_{n,j}\De}e^{i\xi_{n,j}\cdot x}P_{N_{n,j}}\delta(x-x_{n,j}).
\end{equation}
By \eqref{eq:profile sum almost equal} and the triangle inequality, we have
\begin{equation}\label{eq:bdd sum L4 bound}
\limsup_{n\rightarrow\infty}\norm{e^{it\De}\tilde\phi_n}_{L^4_{t,x}(I_n\times\T^2)}\ge\epsilon/2.
\end{equation}
We use a conventional profile decomposition argument.
Passing to a subsequence, we may assume $\mc O_j=\{\mc O_{n,j}\}_n$ are pairwisely either orthogonal or comparable.
Up to merging all comparable frames, we may assume pairwise orthogonality between $\mc O_j$. Then, \eqref{eq:bdd sum L4 bound} is simplified to
\begin{equation}\label{eq:bdd sum L4 bound, psi}
\limsup_{n\rightarrow\infty}\norm{\sum_je^{it\De}\psi_{n,j}}_{L^4(I_n\times\T^2)}\ge\epsilon/2,
\end{equation}
where $\psi_{n,j}$ is the partial sum of comparable summands in \eqref{eq:phi_n def} for each $j$. Let $\phi_j$ and $\psi_j$ be the weak limits
\[
(\iota_{\mc O_{n,j}}e^{it\De}\phi_n)(0)\weak\phi_j\quad\text{and}\quad(\iota_{\mc O_{n,j}}e^{it\De}\psi_{n,j})(0)\weak \psi_j.
\]
Passing to a subsequence, we assume the weak convergence $\chi_{\tilde I_{n,j}}\weak\chi_{\tilde I_j}$ for each $j$, $\tilde I_j\subset\R$ being some interval. Passing to a further subsequence, we may assume $\dist(t_{n,j}-t_j,I_n+\Z)\to0$ for some $t_j\in\R$. If $t_j\in\Q$, $\psi_{n,j}$ can be rewritten as a linear combination of spatial translates of $e^{-it_j\De}\psi_{n,j}$. Hence, up to adjusting the profiles and frames, we may assume $\dist(t_{n,j},I_n)\to0$ or $t_j\notin\Q$. By Lemmas \ref{lem:extinct} and \ref{lem:single profile conv}, we have
\[
\limsup_{n\rightarrow\infty}\norm{\iota_{\mc O_{n,j}}e^{it\De}\psi_{n,j}-e^{it\De}\psi_j}_{L^4(\tilde I_{n,j}\times[-\pi N_{n,j},\pi N_{n,j}]^2)}=0,
\]
which can be rewritten as
\begin{equation}\label{eq:prep for brezis}
\chi_{\tilde I_{n,j}\times[-\pi N_{n,j},\pi N_{n,j}]^2}\iota_{\mc O_{n,j}}e^{it\De}\psi_{n,j}\rightarrow \chi_{\tilde I_j}e^{it\De}\psi_j\text{ in }L^4(\R\times\R^2).
\end{equation}
Passing to a subsequence, by \eqref{eq:prep for brezis} and the frame orthogonality, we have the almost everywhere convergence $\chi_{\tilde I_{n,j}\times[-\pi N_{n,j},\pi N_{n,j}]^2}\iota_{\mc O_{n,j}}e^{it\De}\tilde\phi_n\rightarrow \chi_{\tilde I_j}e^{it\De}\psi_j$. Thus, by Brezis-Lieb \cite[(1)]{brezis1983relation} on $\R\times\R^2$, we have
\begin{align*}
&\limsup_{n\rightarrow\infty}\norm{\chi_{\tilde I_{n,j}\times[-\pi N_{n,j},\pi N_{n,j}]^2}\iota_{\mc O_{n,j}}e^{it\De}\tilde\phi_n}_{L^4(\R\times\R^2)}^4
\\=&\limsup_{n\rightarrow\infty}\norm{\chi_{\tilde I_{n,j}\times[-\pi N_{n,j},\pi N_{n,j}]^2}\iota_{\mc O_{n,j}}e^{it\De}\tilde\phi_n-\chi_{\tilde I_j}e^{it\De}\psi_{j}}_{L^4(\R\times\R^2)}^4
+\norm{\chi_{\tilde I_j}e^{it\De}\psi_j}_{L^4(\R\times\R^2)}^4,
\end{align*}
which can be rewritten by \eqref{eq:prep for brezis} as
\[
\limsup_{n\rightarrow\infty}\norm{e^{it\De}\tilde\phi_n}_{L^4(I_n\times\T^2)}^4=\limsup_{n\rightarrow\infty}\norm{e^{it\De}(\tilde\phi_n-\psi_{n,j})}_{L^4(I_n\times\T^2)}^4+\norm{e^{it\De}\psi_j}_{L^4(\tilde I_j\times\R^2)}^4.
\]
Repeating the process, we have the $\ell^4$-decoupling identity
\begin{equation}\label{eq:l4 decoupled}
\sum_j\norm{e^{it\De}\psi_j}_{L^4(\tilde I_j\times\R^2)}^4=\limsup_{n\rightarrow\infty}\norm{e^{it\De}\tilde\phi_n}_{L^4(I_n\times\T^2)}^4\ge(\ep/2)^4.
\end{equation}
By \eqref{eq:profile sum almost equal} and the orthogonality between frames, we have the $\ell^4$-bound
\begin{equation}\label{eq:l4 difference small}
\sum_j\norm{e^{it\De}(\phi_j-\psi_j)}_{L^4(\tilde I_j\times\R^2)}^4\le\limsup_{n\to\infty}\norm{e^{it\De}(\phi_n-\tilde\phi_n)}_{L^4(I_n\times\T^2)}^4\le{(\ep/4)}^4.
\end{equation}
By \eqref{eq:l4 decoupled} and \eqref{eq:l4 difference small}, we have
\begin{equation}\label{eq:l4 largeness phi}
\sum_j\norm{e^{it\De}\phi_j}_{L^4(\tilde I_j\times\R^2)}^4\ge(\ep/4)^4.
\end{equation}
By the orthogonality between frames, we also have the $\ell^2$-bound
\begin{equation}\label{eq:l2 decoupled phi}
\sum_j\norm{\phi_j}_{L^2(\R^2)}^2\le\limsup_{n\to\infty}\norm{\phi_n}_{L^2(\T^2)}^2\le1.
\end{equation}
By \eqref{eq:l4 largeness phi} and \eqref{eq:l2 decoupled phi}, there exists an index $j_0$ such that
\begin{equation}\label{eq:j_0 def}
\norm{e^{it\De}\phi_{j_0}}_{L^4(\tilde I_{j_0}\times\R^2)}\gtrsim_\ep1.
\end{equation}
In particular, $|\tilde I_{j_0}|>0$ and thus $\{\chi_{\tilde I_{n,j_0}}\}$ is weakly nonzero.
Then, by the $L^4$-inverse Strichartz inequality on $\R^2$ \cite{Bourgain-refinements,MerleVega}, there exists a quadruple $(N_0,t_0,x_0,\xi_0)\in 2^\Z\times\R\times\R^2\times\R^2$ such that
\begin{equation}\label{eq:used R2 inverse}
\left|\jp{e^{it_0\De}\phi_{j_0},N_0^{-1}e^{i\xi_0\cdot x}P_{N_0}\delta(\cdot-x_0)}_{L^2(\R^2)}\right|\gtrsim_\epsilon1.
\end{equation}
Thus, we can choose a frame $\mc O=\{\mc O_n\}$ comparable to $\mc O_{j_0}$ such that the weak limit
\[
(\iota_{\mc O_n}e^{it\De}\phi_n)(0)\weak \phi
\]
satisfies $|\jp{\phi,P_1\delta}_{L^2(\R^2)}|\gtrsim_\epsilon1$. Since $\mc O$ is comparable to $\mc O_{{j_0}}$, we also obtain that $\{\chi_{\tilde I_n}\}$ is weakly nonzero, as desired.
\end{proof}
\section{Proof of Theorem \ref{thm:inverse L4 Stri}}\label{sec:proof-inv}
In this section, we prove Theorem \ref{thm:inverse L4 Stri}. We split the proof of Theorem \ref{thm:inverse L4 Stri} into four separate propositions, describing on the Fourier side the concentration of the modulus, of the support, and of the modulation.

We start with a lemma which will be used in the proof of Proposition \ref{prop:profile'}.
\begin{lem}[\cite{herr2024strichartz}]
\label{lem:norm}Let $\delta>0$. For $f:\Z^{2}\rightarrow\C$ supported on a finite set $\supp(f)=S\subset\Z^{2}$ and $M=\lfloor\delta^{-1}\log\#S\rfloor$, we have
\begin{equation}\label{eq:norm}
\norm{e^{it\De}\F^{-1}f}_{L_{t,x}^{4}([0,\frac{\delta}{\log\#S}]\times\T^{2})}^{4}\les\frac{1}{M}\sum_{Q\in\cup_{\tau=1}^{M}\mc Q^{\tau}(S)}|f(Q)|+\delta\norm{\F^{-1}f}_{L^{2}(\T^2)}^{4}.
\end{equation}
\end{lem}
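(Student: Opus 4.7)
The plan is to begin with Parseval in $x$ and integration in $t$ to obtain the identity
\[
\|e^{it\De}\F^{-1}f\|_{L^4([0,T]\times\T^2)}^{4}=\sum_{Q\in\mc Q(S)}f(Q)\,\hat K_T(\tau_Q),
\]
where $T=\delta/\log\#S$ and $\hat K_T(\tau):=\int_0^T e^{-it\tau}\,dt$ satisfies $\hat K_T(0)=T$ and $|\hat K_T(\tau)|\le\min(T,2/|\tau|)$ for $\tau\ne 0$. This comes from expanding $|u|^4$, integrating against orthogonal exponentials in $x$ (forcing $\xi_1+\xi_3=\xi_2+\xi_4$, i.e.\ $(\xi_1,\xi_2,\xi_3,\xi_4)\in\mc Q(S)$), and noting that the surviving $t$-phase equals $e^{-it\tau_Q}$. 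I would then pass to absolute values and work with $S^*(\tau):=\sum_{Q\in\mc Q^{\tau}(S)}|f(Q)|$, so that the problem becomes a non-negative sum to be split by the size of $|\tau|$.

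For $0<|\tau|\le M$ the bound $|\hat K_T(\tau)|\le T\le 1/M$ (using $TM\le 1$) applies, and the involution $(\xi_1,\xi_2,\xi_3,\xi_4)\mapsto(\xi_2,\xi_1,\xi_4,\xi_3)$, which sends $\tau_Q\mapsto-\tau_Q$ and $f(Q)\mapsto\overline{f(Q)}$, packages the contribution as $\tfrac{2}{M}\sum_{Q\in\cup_{\tau=1}^{M}\mc Q^\tau(S)}|f(Q)|$, producing the main term. For the diagonal $\tau=0$ contribution one writes $\hat K_T(0)\cdot S^*(0)=T\cdot S^*(0)$; the Fej\'er-kernel argument used to prove \eqref{eq:para-count} applies verbatim with $\chi_S$ replaced by $|f|$ (since $\|\F^{-1}|f|\|_{L^2}=\|\F^{-1}f\|_{L^2}$), giving $S^*(0)\le\sum_{Q\in\mc Q^{\le\log\#S}(S)}|f(Q)|\les\log\#S\cdot\|\F^{-1}f\|_{L^2}^{4}$, whence $T\cdot S^*(0)\les\delta\|\F^{-1}f\|_{L^2}^{4}$ as required.

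The step I expect to be the main obstacle is the tail $|\tau|>M$, where only the oscillatory bound $|\hat K_T(\tau)|\le 2/|\tau|$ is available. A termwise Abel summation using $\sum_{|\tau|\le L}S^*(\tau)\les L\,\|\F^{-1}f\|_{L^2}^{4}$ (which is the Fej\'er bound applied to $|f|$, valid for $L\ge\log\#S$) loses a factor $\log(L_{\max}/M)$ coming from the harmonic sum $\sum 1/L$, which is fatal. To remove this loss I would work at the kernel level: since $G(t):=\int_{\T^2}|e^{it\De}\F^{-1}|f|(x)|^{4}\,dx$ is non-negative on $\T$, I would dominate $\chi_{[0,T]}$ minus its low-frequency part by a positive kernel $\phi$ on $\T$ with $\hat\phi\ge 0$, $\int\phi\les\delta$, and $\phi$ concentrated on the Strichartz scale $\les 1/\log\#S$; then
\[
\sum_{|\tau|>M}S^*(\tau)|\hat K_T(\tau)|\les \sum_\tau S^*(\tau)\hat\phi(\tau)=\int \phi(t)\,G(t)\,dt\les\delta\,\|\F^{-1}f\|_{L^2}^{4}
\]
by Proposition \ref{prop:L4 Stri} applied on the support of $\phi$. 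Designing $\phi$—a positive-definite bump whose Fourier coefficients simultaneously satisfy $\hat\phi(\tau)\gtrsim 1/|\tau|$ on $|\tau|>M$ while keeping $\int\phi=O(\delta)$—is the delicate point; I would attempt it by superposing appropriately weighted Fej\'er kernels of dyadic widths tuned to the scales $2^k M$, with weights chosen so that neither $\int\phi$ nor the dyadic sum of $M_k$-contributions in the Fej\'er bound $\les M_k\|\F^{-1}f\|_{L^2}^{4}$ blow up. Combining the three regimes—main term, zero mode, and tail—yields \eqref{eq:norm}.
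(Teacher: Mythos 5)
Your treatment of the regime $0<|\tau|\le M$ and of the zero mode $\tau=0$ is correct and essentially matches the paper's, which also uses the pointwise bound $|\widehat{\chi_{[0,T]}}(\tau)|\le T\les 1/M$ and the Fej\'er-kernel estimate \eqref{eq:Fejer bound} applied to $|f|$ to control $\sum_{Q\in\mc Q^0(S)}|f(Q)|$. You also correctly identify the tail $|\tau|>M$ as the obstruction if one expands $\chi_{[0,T]}$ directly. However, there is a genuine gap in your proposed repair of that tail, and the repair is in fact unnecessary.

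The paper never introduces a tail in the first place: since $g(t)=\int_{\T^2}|e^{it\De}\F^{-1}f|^4\,dx$ is non-negative and $F_M(t)\gtrsim M$ on $|t|\le 1/M\ge T$, one has the pointwise majorization $\chi_{[0,T]}\les \tfrac{1}{M}F_M$, so
\[
\int_0^{T}g\,dt\les\frac{1}{M}\int F_M\, g\,dt=\frac{1}{M}\sum_{|\tau|\le M}\overline{\widehat{F_M}(\tau)}\,\widehat g(\tau),
\]
and the compact support of $\widehat{F_M}$ eliminates all frequencies $|\tau|>M$ before one ever passes to absolute values. This is the same Fej\'er-kernel idea you invoke for the $\tau=0$ term, applied \emph{before} expanding rather than only to one mode.

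Your alternative — constructing a non-negative, positive-definite $\phi$ with $\widehat\phi(\tau)\gtrsim 1/|\tau|$ on $M<|\tau|\le L_{\max}$ while $\int\phi\, G\les\delta\norm{\F^{-1}f}_{L^2}^4$ — runs into a quantitative obstruction. If you superpose Fej\'er kernels $\phi=\sum_{k\ge 0}w_kF_{2^kM}$, the lower bound $\widehat\phi(\tau)\gtrsim1/|\tau|$ forces $\sum_{k>j}w_k\gtrsim 2^{-j}/M$ for every $j$ up to $K:=\log(L_{\max}/M)$, which pins $w_k\sim 2^{-k}/M$. But then the Fej\'er bound $\int F_{2^kM}\,G\les 2^kM\norm{\F^{-1}f}_{L^2}^4$ contributes $\sim 1$ per scale, so $\int\phi\,G\gtrsim K\norm{\F^{-1}f}_{L^2}^4$, where $K=\log(L_{\max}/M)$ is \emph{unbounded}: for a sparse $S$ one has $L_{\max}\sim(\diam S)^2$ far exceeding $\#S$, so $K$ is not $O_\delta(1)$. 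Thus the kernel you ask for cannot be made to satisfy all the stated properties uniformly in $S$, and the log loss you are trying to remove reappears. The fix is structural, not a clever kernel: replace the sharp cutoff by its Fej\'er majorant at the outset, as the paper does.
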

\begin{proof}
Let $g:\R\rightarrow[0,\infty)$ be the $2\pi$-periodic function
\[
g(t):=\int_{\T^2}\left|e^{it\De}\F^{-1}f(x)\right|^4dx.
\]
Then, \eqref{eq:norm} can be rewritten as
\begin{equation}\label{eq:norm2}
\int_0^{1/M}g(t)dt\les\frac{1}{M}\sum_{Q\in\cup_{\tau=1}^{M}\mc Q^{\tau}(S)}|f(Q)|+\frac{\log\#S}{M}\norm{\F^{-1}f}_{L^{2}}^{4}.
\end{equation}
Denote by $F_M:\R/2\pi\Z\rightarrow[0,\infty)$ the Fej\'{e}r kernel
\[
\widehat{F_M}(\tau):=\max\left\{ 0,1-\frac{|\tau|}{M}\right\}.
\]
Since $F_M(t)\gtrsim M$ holds for $|t|\le1/M$, we have
\begin{align}\label{eq:norm I}
\int_0^{1/M}g(t)dt&\les\frac{1}{M}\sum_{\tau\in\Z}\widehat{g}(\tau)\cdot\overline{\widehat{F_M}(\tau)}
\\&\nonumber
\les\frac{1}{M}\sum_{|\tau|\le M}|\widehat{g}(\tau)|
\\&\nonumber
\les\frac{1}{M}\sum_{1\le|\tau|\le M}\sum_{Q\in\mc Q^\tau(S)}|f(Q)|+\frac{1}{M}\widehat{g}(0).
\end{align}
Here, the summation $1\le|\tau|\le M$ can be reduced to $1\le \tau\le M$ since relabeling $Q$ flips the sign of $\tau(Q)$ and conjugates $f(Q)$.
Since $\widehat g(0)$ measures the $L^4$-norm of $e^{it\De}\F^{-1}f$ over $[0,2\pi]\times\T^2$, by \eqref{eq:L4 Stri}, \eqref{eq:norm I} can be reduced to \eqref{eq:norm2}, finishing the proof.
\end{proof}
The following are the four main propositions of this section; showing these implies Theorem \ref{thm:inverse L4 Stri}, as will be shown shortly.
\begin{prop}
\label{prop:modulus conc}Let $\epsilon>0$. There exists $\delta>0$ satisfying the following:

For any function $\phi\in L^{2}(\T^{2})$ with a finite Fourier support $\supp(\widehat{\phi})=S$ such that
\begin{equation}
\norm{e^{it\De}\phi}_{L_{t,x}^{4}([0,\frac{\delta}{\log\#S}]\times\T^{2})}\ge\epsilon\norm{\phi}_{L^{2}(\T^2)},\label{eq:modulus suppose}
\end{equation}
there exists $\rho>0$ such that
\begin{equation}\label{eq:layer conc}
\norm{1_{\rho/2\le\left|\widehat{\phi}\right|<\rho}\widehat{\phi}}_{\ell^{2}(\Z^{2})}\sim_\epsilon\norm{\phi}_{L^{2}(\T^2)}.
\end{equation}
\end{prop}
\begin{prop}
\label{prop:conc to P}Let $\epsilon>0$. For any finite
set $S\subset\Z^{2}$ and integer $M_{0}\gg_\epsilon1$ such that
\begin{equation}
\#\left(\cup_{\tau=1}^{M_{0}}\mc Q^{\tau}(S)\right)\ge\epsilon M_{0}\cdot\left(\#S\right)^{2},\label{eq:P claim}
\end{equation}
there exists a multiprogression $(P,\Omega)\sim_\epsilon S$ of rank $r=O_\epsilon(1)$.
\end{prop}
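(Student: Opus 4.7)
My strategy is to deduce a small-doubling subset of $S$ from the parallelogram saturation, and then close by Freiman's theorem together with a pigeonhole on translates. Suppose I can produce $S^{*}\subset S$ with $\#S^{*}\gtrsim_{\epsilon}\#S$ and $\#(S^{*}+S^{*})\les_{\epsilon}\#S^{*}$. Then Proposition \ref{prop:Freiman 2^n} yields a multiprogression $(P,\Omega)$ of rank $O_{\epsilon}(1)$ with $\#P(\Omega)\sim_{\epsilon}\#S$, together with $O_{\epsilon}(1)$ translates whose union contains $S^{*}$; pigeonholing on the translates and applying Lemma \ref{lem:comparability criterion} yields $(P,\Omega)\sim_{\epsilon}S$. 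Equivalently, by the equivalences in Proposition \ref{prop:Balog}, it suffices to establish the additive-energy bound $E(S)\gtrsim_{\epsilon}(\#S)^{3}$, or to exhibit directly the comparable multiprogression via condition \eqref{enu:~P} of that proposition.

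\textbf{Key reduction.}
I would first pigeonhole on $\tau\in\{1,\ldots,M_{0}\}$ to obtain $\tau^{*}$ with $\#\mc Q^{\tau^{*}}(S)\geq\epsilon(\#S)^{2}$. Parametrising each parallelogram by its center and half-diagonals, $\xi_{1}=c+a$, $\xi_{2}=c+b$, $\xi_{3}=c-a$, $\xi_{4}=c-b$, yields the identity $\tau_{Q}=2(|a|^{2}-|b|^{2})$, so the hypothesis becomes
\[
\sum_{c}\#\{(a,b)\in S_{c}^{2}:|a|^{2}-|b|^{2}=\tau^{*}/2\}\gtrsim_{\epsilon}(\#S)^{2},
\]
where $S_{c}:=\{a:c\pm a\in S\}$. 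Equivalently, lifting $S$ onto the paraboloid $\hat{S}:=\{(\xi,|\xi|^{2}):\xi\in S\}\subset\Z^{3}$, the condition becomes a shifted-additive-energy estimate for $\hat{S}$ of size $\gtrsim_{\epsilon}(\#S)^{2}$ with shift $d^{*}=(0,0,\tau^{*})$. The aim is then to amplify this into genuine small-doubling for $S$ by combining the Szemer\'edi-Trotter-based incidence content of Proposition \ref{prop:count log} (which controls $\#\mc Q^{\leq M_{0}}(S)$ from above) with a pigeonholing on centers $c$ to localise $S_{c}$ on a thin annulus, and then packaging the resulting structure via the Ruzsa covering lemma (Proposition \ref{prop:Ruzsa}) to bound $\#(S+S)$.

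\textbf{Main obstacle.}
The critical difficulty is closing the gap between the trivial bound $E(S)\geq\#\mc Q^{\leq M_{0}}(S)\gtrsim_{\epsilon}M_{0}(\#S)^{2}$ and the target $E(S)\gtrsim(\#S)^{3}$ demanded by Proposition \ref{prop:Balog}. In the downstream application in Section \ref{sec:proof-inv} the parameter $M_{0}$ is only of order $\log(\#S)$, much smaller than $\#S$, so generic Balog-Szemer\'edi-Gowers arguments fall far short: they would produce only a subset of $S$ of size $O_{\epsilon}(M_{0})$, which is useless for the comparability $(P,\Omega)\sim_{\epsilon}S$. The resolution must exploit the precise algebraic form $\tau_{Q}=2(|a|^{2}-|b|^{2})$ arising from the Schr\"odinger dispersion, via incidence estimates for the lifted set $\hat{S}$ on the paraboloid $z=|\xi|^{2}$ (equivalently, point-circle configurations indexed by centers $c$), so as to produce multiprogression structure on $\hat{S}$, and hence on $S$, directly, bypassing the $(\#S)^{3}$ additive-energy threshold.
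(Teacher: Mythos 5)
Your outer strategy---produce one of the equivalent conditions of Proposition \ref{prop:Balog} and read off the comparable multiprogression---matches the paper's route: Proposition \ref{prop:combinatorial goal-1} establishes \eqref{eq:goal1}, the 3-AP count $\#\{(\xi_{-1},\xi_0,\xi_1)\in S^3:\xi_{-1}+\xi_1=2\xi_0\}\sim_\epsilon(\#S)^2$, which is exactly condition \eqref{enu:diag-Q(S)d} of Proposition \ref{prop:Balog} with $A=S$, and then condition \eqref{enu:~P} yields the comparable multiprogression of bounded rank. You also correctly diagnose the central obstacle ($M_0\sim\log\#S$ is far too small for the trivial energy bound $E(S)\gtrsim M_0(\#S)^2$ to reach $(\#S)^3$). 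However, your proposal halts precisely at that obstacle: the final paragraph concedes that some amplification ``must'' exist without exhibiting one, so as written this is a plan rather than a proof.

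Beyond the incompleteness, the first concrete reduction you propose---pigeonholing down to a single $\tau^*\in\{1,\ldots,M_0\}$ with $\#\mc Q^{\tau^*}(S)\ge\epsilon(\#S)^2$---discards the structure that makes the paper's argument close. For a fixed $\tau$ and a fixed edge $e$, the opposite vertices of the parallelograms of $\mc Q^\tau(S)$ containing $e$ all lie on a single \emph{line} (Lemma \ref{lem:c>m}); a single $\tau^*$ therefore gives only one-dimensional data per rich edge, with no density lower bound on which Szemer\'edi's theorem could operate. The paper instead pigeonholes to a \emph{block} $\{\tau^*+1,\ldots,\tau^*+M\}$ of bounded length $M=O_\epsilon(1)$, retaining $\#\mc A\gtrsim M(\#S)^2$. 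For each rich edge $e\in\mc E$ this fills out a thin two-dimensional slab $R_e$ (defined by $|2\xi\cdot\overrightarrow{e}-\tau_*|\le M$ and $|\xi\cdot\overrightarrow{e}^\perp|\le|\overrightarrow{e}|^2$), of cardinality $\les M$, in which the opposite-vertex set $E_e$ has density $\gtrsim_\epsilon 1$; a pigeonhole inside $R_e$ produces an arithmetic progression of length $\gtrsim\sqrt M$ on which $E_e$ is dense, Szemer\'edi's theorem (Proposition \ref{prop:Sz+}) extracts a 3-AP in $S$, and the Szemer\'edi--Trotter cross-count machinery (Lemma \ref{lem:c>m}, \eqref{eq:SzTr,c}) controls multiplicities so the total count over $e\in\mc E$ is $\gtrsim_\epsilon(\#S)^2$. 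Note finally that the localization is a \emph{linear slab} in the $\overrightarrow{e}$-frame, not an annulus on the paraboloid, and the argument achieves the $(\#S)^3$ energy threshold through Proposition \ref{prop:Balog} rather than bypassing it.
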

\begin{prop}\label{prop:conc to [N]^2}Let $\epsilon>0$ and $r\in\N$. There exist $M_0,k\in\N$ satisfying the following:

Let $(P,\Omega)$ be a $k$-injective thick multiprogression of rank $r$ into $\Z^2$. Assume there exists $M_1\ge M_0$ such that
\[
\#\left(\cup_{\tau=1}^{M_1}\mc Q^{\tau}(P(\Omega))\right)\ge\epsilon M_1\cdot\left(\#\Omega\right)^{2}.\label{eq:P claim2}
\]
Then, $(P,\Omega)\sim_{\epsilon,r}[\sqrt{\#\Omega}]^2$ holds.
\end{prop}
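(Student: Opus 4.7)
The plan is to leverage $k$-injectivity together with the upper bound of Proposition \ref{prop:count log} to reduce the abstract rank-$r$ setting to a rank-$2$ multiprogression in $\Z^2$ with bounded-index sublattice structure, and then to analyse the resulting bilinear form $y^T B z$ governing $\tau_Q$ to force the aspect ratio to be $O_\epsilon(1)$. The argument proceeds in three stages.

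\emph{Reduction to rank $2$.} Let $T := P(\ul\Omega) \cap \Z^2$ be the integer points in the convex hull (a zonotope) of $P(\Omega)$. Every parallelogram in $P(\Omega)$ lies in $T$, so the hypothesis combined with Proposition \ref{prop:count log} yields $\#T \gtrsim \sqrt{\epsilon}\,\#\Omega$. On the other hand, $k$-injectivity forces $\#P(k\cdot\Omega) = \#(k\cdot\Omega) \sim k^r\#\Omega$, together with the inclusion $P(k\cdot\Omega) \subset kP(\ul\Omega)\cap\Z^2$ of cardinality $\sim k^2\#T$, yielding $\#T \gtrsim k^{r-2}\#\Omega$. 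Combining these gives $k^{r-2}\lesssim \epsilon^{-1/2}$; choosing $k = k(\epsilon, r)$ large enough forces $r \leq 2$. (The case $r\leq 1$, equivalent to $P$ having linear rank at most $1$, is excluded by a direct $1$-dimensional arithmetic-progression count $\#\mc Q^{\le M}(P(\Omega))\lesssim M\#\Omega\log\#\Omega$.) With $r = 2$, writing $P(\Omega) = [N_1]u_1 + [N_2]u_2$ and $\La := \Z u_1 + \Z u_2$, the area identity $\#T \sim N_1 N_2[\Z^2:\La]$ combined with the bound $\#T \lesssim \#\Omega/\sqrt{\epsilon}$ yields $[\Z^2:\La] = O_\epsilon(1)$. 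Lemma \ref{lem:convex Mahler} applied to $P(\ul\Omega) \cap \La$ then furnishes an LLL-reduced basis $(w_1, w_2)$ of $\La$ and integers $M_1, M_2 \gg_{\epsilon, r} 1$ such that $(P,\Omega) \sim_{\epsilon, r} S := [M_1]w_1 + [M_2]w_2$, with $|w_1|, |w_2| \lesssim \sqrt{[\Z^2:\La]} = O_\epsilon(1)$.

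\emph{Bilinear form analysis.} Parameterising parallelograms in $S$ by their centre and two side-vectors $y, z \in [-M_1, M_1] \times [-M_2, M_2]$ in $(w_1, w_2)$-coordinates, a direct computation gives
\[
\tau_Q = 2\,y^T B z, \qquad B = \begin{pmatrix} |w_1|^2 & w_1\cdot w_2 \\ w_1\cdot w_2 & |w_2|^2 \end{pmatrix},
\]
so that
\[
\#\bigl(\cup_{\tau=1}^M\mc Q^\tau(S)\bigr) = \sum_{\substack{y,z\\ 2y^T B z\in[1,M]}}(M_1-|y_1|-|z_1|)_+(M_2-|y_2|-|z_2|)_+ .
\]
Since $B$ has $O_\epsilon(1)$-bounded eigenvalues, a constant-factor change of coordinates reduces the problem to $B = I$. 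Using the classical divisor estimate $\sum_{|u|\leq K}\tau_N(u) \sim K\log K$ applied to the convolution distribution of $y_1 z_1 + y_2 z_2$, one obtains for $M_2 \geq M_1$ the average count $\#\{(y,z) : y\cdot z = c\} \sim M_1^2$ uniformly in $c \in \Z \setminus\{0\}$, hence
\[
\#\bigl(\cup_{\tau=1}^M\mc Q^\tau(S)\bigr) \sim M\,M_1^3 M_2 .
\]
The hypothesised lower bound $\epsilon M(M_1 M_2)^2$ then forces $M_1 \sim_\epsilon M_2 \sim \sqrt{\#\Omega}$; combined with the norm and index bounds, Lemma \ref{lem:comparability criterion} yields $S \sim_\epsilon [\sqrt{\#\Omega}]^2$, and transitivity of comparability concludes.

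\emph{Main obstacle.} The bilinear count in the final stage is the analytical heart: making the estimate $\#\{(y,z) : y \cdot z = c\} \sim M_1^2$ quantitative and uniform in $c \in [1, M/2]$ requires careful control of divisor-function fluctuations and of the degenerate contributions (from pairs with $y = 0$, $z = 0$, or $y \perp z$), which are largely absorbed by the excluded $\tau = 0$ case but nonetheless leave boundary terms. A clean implementation proceeds through the Szemer\'{e}di-Trotter incidence bound (Lemma \ref{lem:SzTr'}) combined with divisor-function averages on rectangular domains.
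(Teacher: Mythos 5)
The rank-reduction step is logically broken: both estimates you derive are \emph{lower} bounds on $\#T$, and two lower bounds cannot combine to bound $k^{r-2}$ from above. The first, $\#T \gtrsim \sqrt\epsilon\,\#\Omega$, follows from the hypothesis and Proposition \ref{prop:count log} applied to $T$ but is vacuous because the trivial containment $P(\Omega) \subset T$ already gives $\#T \ge \#\Omega$. The second, $\#T \gtrsim k^{r-2}\#\Omega$, obtained from $k$-injectivity and Ehrhart-type scaling, is again a lower bound. The inequality "$\#T \lesssim \#\Omega/\sqrt\epsilon$" that you then invoke for the lattice-index bound with $r=2$ was never established, and it cannot be extracted from the convex-hull accounting you set up. That upper bound, which encodes that $P(\Omega)$ nearly saturates the lattice points of its hull, does hold under the hypothesis, but proving it requires precisely the incidence-geometric structure you bypass: in the paper it comes out of Proposition \ref{prop:combinatorial goal-1}, whose dilation estimate \eqref{eq:goal2} gives the rank bound by comparing $\#\{x \in 2\cdot\Omega : kx \in 2\cdot\Omega\text{ for some }k\sim K\} \gtrsim_\epsilon K^{-5/2}\#\Omega$ against the pigeonhole bound $O(K^{-r}\#\Omega)$, and whose edge-$\gcd$ bound \eqref{eq:gcd~1} together with the near-orthogonality \eqref{eq:small angle 14} delivers both the bounded determinant and the bounded eccentricity.

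The bilinear-form stage has the separate deficiency you flag yourself: the count $\#\{(y,z) : y\cdot z = c\} \sim M_1^2$ is false pointwise in $c$ (the count tracks the divisor function of $c$ and fluctuates by factors of $\log$), and only an averaged version holds, so the claimed identity $\#\mc Q^{\le M}(S) \sim M\,M_1^3 M_2$ needs more than the divisor-sum heuristic to close. The paper needs no such analysis: once Proposition \ref{prop:combinatorial goal-1} supplies the set $\mc E$ of edges of length $\sim\diam(S)$ with bounded $\gcd$ and orthogonal partners, the eccentricity and index bounds follow directly, and comparability to $[\sqrt{\#\Omega}]^2$ is then Lemma \ref{lem:comparability criterion}.
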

\begin{prop}
\label{prop:conc to Q}Let $\epsilon>0$. There exist $\delta>0$ and $J\in\N$ satisfying the following:

For $N\gg_{\epsilon}1$ and $f:[N]^{2}\rightarrow\D$,
there exist $c_j\in\D$, $t_j\in\R$, $\xi_j\in\R^2$, $j=1,\ldots,J$ such that
\begin{equation}\label{eq:Q goal}
\frac{1}{N}\norm{e^{it\De}\F^{-1}(f-\sum_{j\le J}c_je^{i(t_j|x|^2+\xi_j\cdot x)})}_{L^4_{t,x}([0,\frac{\delta}{\log N}]\times\T^2)}\le\epsilon.
\end{equation}
\end{prop}
Before proving these propositions, which will be done in the upcoming subsections, we show why Propositions \ref{prop:modulus conc}-\ref{prop:conc to Q} imply Theorem \ref{thm:inverse L4 Stri}. Indeed, proving these enables the following profile decomposition property:
\begin{prop}\label{prop:profile'}
Let $\epsilon>0$. There exist $\delta=\delta(\epsilon)>0$ and $J=J(\epsilon)\in\N$ satisfying the following:

For every $\phi\in L^2(\T^2)$ such that $\norm{\phi}_{L^2(\T^2)}\le 1$ and $S=\supp(\widehat\phi)\subset\Z^2$ is finite, there exist $N_j\in2^\N$, $t_j\in\R$, $x_j\in\T^2$, $\xi_j\in\Z^2$, and $c_j\in\D$, $j=1,\ldots,J$ such that $\log N_j\les\log \#S$ and
\begin{equation}\label{eq:profile' goal}
\norm{e^{it\De}\phi-\sum_{j\le J}c_jN_j^{-1}e^{i(t-t_j)\De}e^{i\xi_j\cdot x}P_{N_j}\delta(x-x_j)}_{L^4([0,\frac{\delta}{\log\#S}]\times\T^2)}\le\epsilon.
\end{equation}
\end{prop}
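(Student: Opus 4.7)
The plan is to prove Proposition \ref{prop:profile'} by an iterative peeling: at each stage I extract a bounded family of profiles that captures the dominant modulus band of $\wt\phi$, reducing the $L^2$-mass of the residual by a fixed fraction depending on $\epsilon$. After $K=O_\epsilon(1)$ iterations the residual has $L^2$-norm so small that Proposition \ref{prop:L4 Stri} yields $L^4$-Strichartz norm $\le\epsilon/2$, and the accumulated profiles furnish the desired approximation. The single extraction step chains all four propositions in sequence.

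For the single extraction I first apply Proposition \ref{prop:modulus conc} to isolate a dyadic band $\rho>0$ so that $\phi_\rho := \F^{-1}(\wt\phi\cdot 1_{|\wt\phi|\in[\rho/2,\rho)})$ satisfies $\|\phi_\rho\|_{L^2}\gtrsim_\epsilon\|\phi\|_{L^2}$ and $\rho\sim\|\phi_\rho\|_{L^2}/\sqrt{\#S_\rho}$ on $S_\rho:=\supp(\wt\phi_\rho)$. Combining Lemma \ref{lem:norm} applied to $\phi$ with a dyadic pigeonhole over the modulus bands of $\wt\phi$, the hypothesis $\|e^{it\De}\phi\|_{L^4}\ge\epsilon$ transfers to a parallelogram-count lower bound $\#\bigcup_{\tau=1}^M\mc Q^\tau(S_\rho)\gtrsim_\epsilon M(\#S_\rho)^2$ with $M\sim\log\#S_\rho$. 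This feeds Proposition \ref{prop:conc to P}, which yields a multiprogression $(P,\Omega)\sim_\epsilon S_\rho$ of bounded rank; Proposition \ref{prop:conc to [N]^2} (after a bounded partition using Proposition \ref{prop:rank reduction} to enforce $k$-injectivity) upgrades this to $(P,\Omega)\sim_{\epsilon,r}[N]^2$ with $N\sim_\epsilon 1/\rho$. A Galilean shift $\phi\mapsto e^{-i\xi_*\cdot x}\phi$ then places (a bounded partition of) $S_\rho$ inside $[N]^2$ with $\#S_\rho\sim N^2$. Normalizing $f = \rho^{-1}\wt\phi_\rho$ as a $\D$-valued function on $[N]^2$ and applying Proposition \ref{prop:conc to Q} produces a bounded linear combination $\sum_{j\le J_0}c_j e^{iQ_j}$ with $Q_j(\xi)=t_j|\xi|^2+a_j\xi_1+b_j\xi_2$, approximating $f$ in the scaled Strichartz norm. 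Undoing normalization and Galilean shift, the Fourier-side combination $\rho\sum_j c_je^{iQ_j}\chi_{[N]^2}$ translates on $\T^2$ to a bounded sum of wave packets of the profile form $c_j N^{-1}e^{i(t-t_j)\De}e^{i\xi_j\cdot x}P_N\delta(\cdot-x_j)$, with $x_j=-(a_j,b_j)$ and $t_j$ read off from the quadratic coefficient of $Q_j$.

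The hardest step will be the parallelogram-count transfer to $S_\rho$ above: Lemma \ref{lem:norm} only bounds the sum over $\mc Q^{\le M}(S)$, not over $\mc Q^{\le M}(S_\rho)$. I would decompose $\wt\phi=\sum_\rho\wt\phi_\rho$ dyadically and use Cauchy--Schwarz on mixed-band parallelogram sums to show that the cross-band contributions are dominated by the diagonal contribution from the concentrated band $\rho_*$, with the $O(\log\#S)$ loss from summing over bands absorbed by a further pigeonhole identifying the dominant band. A secondary (routine) technicality is the sharp-to-smooth conversion of $\chi_{[N]^2}$ into a bounded number of shifted $P_N$-type Littlewood--Paley pieces, needed to realize the extracted wave packets as exactly the profile form of Proposition \ref{prop:profile'}; this is straightforward since a square of side $N$ in $\Z^2$ can be covered by $O(1)$ shifted dyadic annular regions.
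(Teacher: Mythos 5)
Your high-level strategy — chaining Propositions \ref{prop:modulus conc}, \ref{prop:conc to P}, \ref{prop:conc to [N]^2}, \ref{prop:conc to Q} and then converting quadratic modulations into profiles — is the right one, and your band-level iteration (removing $L^2$-mass by a fixed fraction until Proposition \ref{prop:L4 Stri} controls the residual) is a viable alternative to the paper's structure of fixing $J_0=O_\epsilon(1)$ bands upfront and then peeling progressions \emph{within} each band. However, the single extraction step contains a genuine gap. The passage ``Proposition \ref{prop:conc to [N]^2} upgrades this to $(P,\Omega)\sim_{\epsilon,r}[N]^2$, and a Galilean shift then places (a bounded partition of) $S_\rho$ inside $[N]^2$'' is unjustified in two respects. (a) Proposition \ref{prop:conc to [N]^2} is \emph{conditional}: it delivers cube comparability only when the parallelogram count on $P(\Omega)$ itself is large, but you have only established a count on $S_\rho$ and cannot simply transfer it. The paper uses the \emph{contrapositive}: if $(P,\Omega)\not\sim_\epsilon[N_j]^2$, then $\#(\cup_\tau\mc Q^\tau(P(\Omega)))\ll_\epsilon M_jN_j^4$, and then Lemma \ref{lem:norm} shows every $E^*\subset P(\Omega)$ has small Strichartz norm (\eqref{eq:E* Stri small}), so non-cubic progressions are simply discarded. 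Without that step, you have no control over the progressions that fail to be cube-like. (b) Comparability $(P,\Omega)\sim S_\rho$ only asserts that a fixed \emph{fraction} of $S_\rho$ overlaps a translate of $P(\Omega)$; it does not give a covering of $S_\rho$ by $O(1)$ cubes. The paper therefore peels $m_j=O_\epsilon(1)$ progressions from $S_{N_j}$ before any cube appears; your proposal has no analogous loop within a band.

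The ``hardest step'' you identify — transferring the parallelogram count from $S$ to $S_\rho$ via Cauchy--Schwarz on mixed-band sums — is both unnecessary and unlikely to close as stated: those cross-band terms are precisely the hard part of the \emph{proof} of Proposition \ref{prop:modulus conc} (Lemma \ref{lem:conc la*}, requiring the cross-type classification, Lemma \ref{lem:count type1}, and Lemma \ref{lem:count23}), and a bare Cauchy--Schwarz on dyadic bands will not reproduce that structure. The paper sidesteps the transfer entirely: after \eqref{eq:proof-modulus conc}, for any $E\subset S_{N_j}$ with $\|e^{it\De}P_E\phi\|_{L^4}\ge\epsilon/(10J_0)$, Lemma \ref{lem:norm} is applied directly to $f=\chi_E\widehat\phi$, so the parallelogram sum is automatically over $\mc Q^\tau(E)$. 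You should adopt this route. Finally, your remark that the sharp-to-smooth conversion amounts to covering $[N]^2$ by $O(1)$ dyadic annuli is not accurate — one approximates $N_j^{-1}\F^{-1}(\chi_{[N_j]^2})$ in $L^2(\T^2)$ by an $O_\epsilon(1)$-term linear combination of $N_j^{-1}P_{N_j'}\delta$'s and invokes \eqref{eq:L4 Stri}; the conclusion is right but the reason given is off.
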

As noted after Lemma \ref{lem:inverse thm for almost sum of profiles}, showing Proposition \ref{prop:profile'} implies Theorem \ref{thm:inverse L4 Stri}.
\begin{proof}[Proof of Proposition \ref{prop:profile'}, assuming Propositions \ref{prop:modulus conc}, \ref{prop:conc to P}, \ref{prop:conc to [N]^2}, and \ref{prop:conc to Q}.]
Let $\epsilon>0$, $\phi\in L^2(\T^2)$, and $S\subset\Z^2$ be as in Proposition \ref{prop:profile'}. Let $\delta>0$ be a small number to be fixed shortly. Denote $T:=\frac{\delta}{\log\#S}$.
For $N\in2^\N$, we denote
\[
S_N:=\{\xi\in S:1/N\le|\widehat\phi(\xi)|<2/N\}.
\]
By Proposition \ref{prop:modulus conc}  there exist $J_0=O_\epsilon(1)$ and $N_1,\ldots,N_{J_0}\in2^\N$ such that
\begin{equation}\label{eq:proof-modulus conc}
\norm{e^{it\De}\phi-\sum_{j\le J_0}e^{it\De}P_{S_{N_j}}\phi}_{L^4_{t,x}([0,T]\times\T^2)}<\frac{\epsilon}{10}.
\end{equation}
Here, $\log N_j\les\log \#S$ can be assumed since higher $N_j$ contribute $o(1)$ to \eqref{eq:proof-modulus conc} by \eqref{eq:L4 Stri}.

Let $j\in\{1,\ldots,J_0\}$. For $E\subset S_{N_j}$ such that
\[
\norm{e^{it\De}P_E\phi}_{L^4_{t,x}([0,T]\times\T^2)}\ge\frac{\epsilon}{10J_0},
\]
assuming $\delta=\delta(\epsilon)>0$ is small enough, by Lemma \ref{lem:norm}, there exists $M_j\gg_\epsilon1$ such that
\[
\#\left(\cup_{\tau=1}^{M_j}\mc Q^\tau(E)\right)\gtrsim_{\epsilon} M_jN_j^4\gtrsim M_j(\#E)^2.
\]
Thus, by Proposition \ref{prop:conc to P}, there exists a multiprogression $(P_0,\Omega_0)\sim_{\epsilon} E$ of rank at most $r=O_{\epsilon}(1)$. Let $k=k(\epsilon,r)=O_\epsilon(1)$ be the number in Proposition \ref{prop:conc to [N]^2}. By Proposition \ref{prop:rank reduction}, there exists a $k$-injective multiprogression $(P,\Omega)\sim_\epsilon(P_0,\Omega_0)$ of rank at most $r$. Ignoring thin coordinates in $\Omega$ (i.e., coordinates of heights $O_\epsilon(1)$), we can further assume $(P,\Omega)$ is thick.

Since $(P,\Omega)\sim_\epsilon(P_0,\Omega_0)\sim_\epsilon E$, up to a translation we may assume $\#(P(\Omega)\cap E)\gtrsim_\epsilon\#E$. Thus, repeating the extraction of $(P,\Omega)$ starting from $E=S_{N_j}$ and passing to $E\setminus P(\Omega)$, there exist $m_j=O_\epsilon(1)$ and $k$-injective affine multiprogressions $(P_{j,1},\Omega_{j,1}),\ldots,(P_{j,m_j},\Omega_{j,m_j})\sim_\epsilon S_{N_j}$ of ranks $r_j\le r$ such that for every $E\subset S_{N_j}\setminus\cup_{m=1}^{m_j}P_{j,m}(\Omega_{j,m})$,
\begin{equation}\label{eq:phi S-ImPjk}
\norm{e^{it\De}P_E\phi}_{L^4_{t,x}([0,T]\times\T^2)}<\frac{\epsilon}{10J_0}
\end{equation}
holds. For $m\le m_j$ such that $(P_{j,m},\Omega_{j,m})\nsim_\epsilon[N_j]^2$ and $M\gg_\epsilon1$, by Proposition \ref{prop:conc to [N]^2}, we have
\[
\#\left(\cup_{\tau=1}^{M}\mc Q^\tau(P_{j,m}(\Omega_{j,m}))\right)\ll_\epsilon MN_j^4,
\]
which implies by Lemma \ref{lem:norm} that for every $E^*\subset S_{N_j}\cap P_{j,m}(\Omega_{j,m})$,
\begin{equation}\label{eq:E* Stri small}
\norm{e^{it\De}P_{E^*}\phi}_{L^4_{t,x}([0,T]\times\T^2)}<\frac{\epsilon}{10m_jJ_0}+O_\epsilon(\delta^{1/4}).
\end{equation}
Thus, choosing $\delta=\delta(\epsilon)>0$ small enough, by the triangle inequality between \eqref{eq:phi S-ImPjk} and \eqref{eq:E* Stri small}, there exists $\mc M_j\subset\{1,\ldots,m_j\}$ such that $(P_{j,m},\Omega_{j,m})\sim_\epsilon[N_j]^2$ holds for $m\in\mc M_j$ and for every $E\subset S_{N_j}\setminus\cup_{m\in\mc M_j}P_{j,m}(\Omega_{j,m})$,
\begin{equation}\label{eq:phi S-ImPjk'}
\norm{e^{it\De}P_E\phi}_{L^4_{t,x}([0,T]\times\T^2)}<\frac{3\epsilon}{10J_0}.
\end{equation}
For each $m\in\mc M_j$, since $(P_{j,m},\Omega_{j,m})\sim_\epsilon [N_j]^2=\id_{\R^2}([N_j]^2)$, $P_{j,m}(\Omega_{j,m})$ can be covered by $O_\epsilon(1)$ translates of $[N_j]^2$. Thus, there exists $\Xi_j\subset(2N_j+1)\Z^2$ such that $\#\Xi_j=O_\epsilon(1)$ and
\[
\cup_{m\in\mc M_j}P_{j,m}(\Omega_{j,m})\subset\cup_{\xi\in\Xi_j}([N_j]^2+\xi).
\]
Plugging $E=S_{N_j}\setminus\cup_{\xi\in\Xi_{j}}([N_{j}]^2+\xi)$ into \eqref{eq:phi S-ImPjk'} yields
\begin{equation}\label{eq:phi S-IMpjk' final}
\norm{e^{it\De}P_{S_{N_j}\setminus\cup_{\xi\in\Xi_{j}}([N_j]^2+\xi)}\phi}_{L^4_{t,x}([0,T]\times\T^2)}<\frac{3\epsilon}{10J_0}.
\end{equation}
By \eqref{eq:proof-modulus conc}, \eqref{eq:phi S-IMpjk' final}, and using the triangle inequality, we have
\[
\norm{e^{it\De}\phi-\sum_{j\le J_0}\sum_{\xi\in\Xi_j}e^{it\De}P_{S_{N_j}\cap([N_j]^2+\xi)}\phi}_{L^4_{t,x}([0,T]\times\T^2)}<\frac{4\epsilon}{10}.
\]
Here, by Proposition \ref{prop:conc to Q}, each summand $e^{it\De}P_{S_{N_j}\cap([N_j]^2+\xi)}\phi$ can be approximated up to arbitrary $\epsilon'$-error in $L^4([0,T]\times\T^2)$ by $e^{it\De}$ of an $O_{\epsilon'}(1)$ linear combination of the forms
\[
N_j^{-1}\F^{-1}(\chi_{[N_j]^2+\xi}e^{i(t_*|\xi|^2+x_*\cdot \xi)})=N_j^{-1}e^{-it_*\De}e^{i\xi\cdot x}\F^{-1}(\chi_{[N_j]^2})(x-x_*),\qquad (t_*,x_*)\in\R\times\T^2.
\]
Taking $\epsilon'=\epsilon'(\epsilon)\ll1$ yields \eqref{eq:profile' goal} by the triangle inequality, except that sharp Fourier cutoffs $\F^{-1}(\chi_{[N_j]^2})$ remain to be replaced by smooth Littlewood-Paley kernels $P_{N_j}\delta$. Approximating each $N_j^{-1}\F^{-1}(\chi_{[N_j]^2})$ in $L^2(\T^2)$ by a linear combination of smooth Littlewood-Paley kernels finishes the proof by \eqref{eq:L4 Stri}.
\end{proof}
\subsection{\label{subsec:Concentration-of-modulus}Proof of Proposition \ref{prop:modulus conc}}

In this subsection, we show Proposition \ref{prop:modulus conc}. By the pruning argument in \cite[Prop.\ 3.1]{herr2024strichartz}, Proposition \ref{prop:modulus conc} reduces to the following lemma:
\begin{lem}\label{lem:conc la*}
Let $\epsilon>0$ and $m,C\in\N$. Let $f:\Z^{2}\rightarrow[0,\infty)$
be a function of the form
\[
f=\sum_{j\le m}\la_{j}2^{-j/2}\chi_{S_{j}},
\]
where $S_{0},\ldots,S_{m},m\ge1$ are disjoint subsets of $\Z^{2}$
such that $\#S_{j}\le 2^{j}$, and $\la_{0},\ldots,\la_{m}\ge0$. Suppose
that for each $j=0,\ldots,m$ and $\xi\in S_{j}$, there exists at
most one line $\ell\ni\xi$ such that $\#(\ell\cap S_{j})\ge2^{j/2+C}$.
Assume further that
\begin{equation}
\frac{1}{M}\sum_{Q\in\mc Q^{\le M}}f(Q)\ge\epsilon\norm{\la_{j}}_{\ell_{j\le m}^{2}}^{4}\label{eq:modulus suppose*}
\end{equation}
for some $M\gg_{\epsilon,C}m$. Then,
\[
\max_{j=0,\ldots,m}\la_{j}\gtrsim_{\epsilon,C}\norm{\la_{j}}_{\ell_{j\le m}^{2}}.
\]
\end{lem}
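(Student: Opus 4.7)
The plan is to expand the fourth moment according to dyadic modulus levels and estimate the resulting diagonal and mixed contributions separately, invoking the pruning hypothesis at the key step. Start with the identity
\[
\frac{1}{M}\sum_{Q\in\mc Q^{\le M}} f(Q) = \frac{1}{M}\sum_{\mathbf{j}\in\{0,\dots,m\}^4}\la_{j_1}\la_{j_2}\la_{j_3}\la_{j_4}\cdot 2^{-(j_1+j_2+j_3+j_4)/2}\cdot N(\mathbf{j}),
\]
where $N(\mathbf{j}):=\#\mc Q^{\le M}(S_{j_1},S_{j_2},S_{j_3},S_{j_4})$ counts parallelograms with $i$-th vertex drawn from $S_{j_i}$, and set $\la_*:=\max_j\la_j$. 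I aim to show the right-hand side is bounded by $C\la_*^2\norm{\la}_{\ell^2}^2+(m^4/M)\norm{\la}_{\ell^2}^4$, so that combined with $M\gg_{\epsilon,C}m$ the hypothesis forces $\la_*^2\gtrsim_{\epsilon,C}\norm{\la}_{\ell^2}^2$.

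For the perfectly diagonal tuples $j_1=j_2=j_3=j_4=j$, Proposition \ref{prop:count log} gives $N(j,j,j,j)\lesssim M\cdot 4^j$ (valid since $M\ge m\ge j=\log\#S_j$), so the diagonal total is $\sum_j\la_j^4\le\la_*^2\norm{\la}_{\ell^2}^2$. For paired-diagonal indices such as $j_1=j_2$, $j_3=j_4$, I would parameterize by the common difference $v=\xi_1-\xi_2=\xi_4-\xi_3$: the count becomes $\sum_v r_{j_1}(v)r_{j_3}(v)$, with $r_j(v):=\#\{(\xi,\xi')\in S_j^2:\xi-\xi'=v\}$, restricted by the $\tau_Q\le M$ constraint. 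Cauchy--Schwarz reduces this to the additive energies $\sum_v r_j(v)^2$, which are themselves controlled by applying Proposition \ref{prop:count log} to each $S_j$; the $\tau$-constraint contributes an extra factor $M/(\#S_j)$, again yielding a contribution dominated by $\la_*^2\norm{\la}_{\ell^2}^2$.

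For genuinely mixed tuples I would parameterize each parallelogram by $(\xi_1,\eta_1,\eta_2)$ with $\xi_1\in S_{j_1}$, $\xi_1-\eta_1\in S_{j_2}$, $\xi_1-\eta_2\in S_{j_4}$, $\xi_1-\eta_1-\eta_2\in S_{j_3}$, subject to $\eta_1\cdot\eta_2\in[0,M/2]$, and split $\eta_1$ according to whether or not it lies on a rich line through $\xi_1$. By the pruning hypothesis applied to $S_{j_2}$ there is at most one such rich line at each $\xi_1$; for poor lines Lemma \ref{lem:SzTr'} bounds the total number of incidences, while the rich case is handled by a second application of Lemma \ref{lem:SzTr'} to the $\eta_2$-slot using the pruning inside $S_{j_3}$ and $S_{j_4}$. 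The target bound is $N(\mathbf{j})\lesssim_C \prod_i 2^{j_i/2}$, independent of $M$ in the main term, so that division by $M$ yields a genuine saving and the sum over the $O(m^4)$ mixed tuples produces a $\text{poly}(m)/M$ factor absorbed by the hypothesis $M\gg_{\epsilon,C}m$. The principal obstacle will be making this last step rigorous, because the pruning hypothesis speaks of each $S_j$ in isolation whereas mixed parallelograms interpolate between sets of very different sizes; extracting an $O_C(1)$ multiplicative improvement over the trivial Fej\'er--Strichartz bound $M\prod_i 2^{j_i/2}$ requires a careful Szemer\'edi--Trotter analysis distinguishing rare "rich line" configurations (controlled by uniqueness from pruning) from generic ones (controlled by Lemma \ref{lem:SzTr'}), and this is where I anticipate the bulk of the effort.
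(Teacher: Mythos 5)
The approach you sketch departs substantially from the paper's, and two concrete obstructions make it unlikely to close as written.

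\emph{The target bound for ``genuinely mixed'' tuples is false when the indices are close but not equal.} Your classification separates perfectly diagonal and paired-diagonal tuples from ``genuinely mixed'' ones, aiming for $N(\mathbf{j})\lesssim_C\prod_i 2^{j_i/2}$ uniformly in $M$ over the latter. But a tuple like $\mathbf{j}=(j,j+1,j+2,j+3)$ falls in your ``genuinely mixed'' class, and for nested box-like $S_j$ (which satisfy the pruning hypothesis vacuously) the number of $Q\in\mc Q^{\le M}$ with vertices in these $S_{j_i}$ is of order $M\prod_i 2^{j_i/2}$, not $\prod_i 2^{j_i/2}$: the $\tau_Q\le M$ constraint is simply not restrictive enough when all four scales are comparable. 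No Szemer\'edi--Trotter refinement removes this $M$-factor, because the Fej\'er bound is essentially sharp there. The paper instead isolates tuples with $\max j_k - \min j_k\le h$ \emph{before} any incidence analysis and applies Proposition~\ref{prop:count log} to the union $\cup_{|j-j_0|\le h}S_j$, bounding their total weighted contribution by $\lesssim_{\epsilon,h}\norm{\la_j}_{\ell^4}^4$; comparability of this with $\norm{\la_j}_{\ell^2}^4$ is precisely what forces $\max_j\la_j\gtrsim\norm{\la_j}_{\ell^2}$, so these ``close'' tuples are not dominated away but \emph{are} the main term of the argument.

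\emph{The pruning hypothesis does not transfer across levels as you invoke it, and the genuine $M$-saving comes from a different mechanism.} You propose to classify $\eta_1$ by whether $\xi_1$ lies on a rich line of $S_{j_2}$, but the pruning hypothesis only controls rich lines through $\xi\in S_j$ \emph{with respect to the same} $S_j$; there is no a priori bound on how many lines through $\xi_1\in S_{j_1}$ are $S_{j_2}$-rich. More importantly, the mechanism by which the paper removes the factor of $M$ for the far-apart tuples ($j_1\le j_4-h$, $j_2\le j_3-h$, type $\al,\be\in\{2,3\}$) is not an incidence count of $\mc Q^{\le M}$ at all. It is a Cauchy--Schwarz ``symmetrization'' identity \eqref{eq:sym tau}--\eqref{eq:esti2}: the key observation that $\tau_Q$ is a multiple of $\gcd(\xi_1-\xi_4)$ means that for a fixed edge one can average over the $\lesssim M/\gcd(\xi_1-\xi_4)$ admissible values of $\tau$, and after Cauchy--Schwarz this collapses the $\tau\le M$ sum to a $\gcd$-weighted count of \emph{rectangles} $\mc Q^0_{\al,\be}$, with the $1/M$ prefactor already consumed. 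The final input is then Lemma~\ref{lem:count23} (imported from \cite{herr2024strichartz}), which gives a bound for the $\gcd$-weighted rectangle count with exponential decay $2^{-\delta(|j_1-j_3|+|j_2-j_4|)}$ in the index spread; this decay --- not a $\mathrm{poly}(m)/M$ factor --- is what makes the sum over far-apart $\mathbf{j}$ converge to $o_h(1)\norm{\la_j}_{\ell^2}^4$ and produce the contradiction. Without the $\gcd$-weight reduction and the exponential-decay rectangle count, there is no known route to absorbing the off-diagonal contribution, and the $M\gg_{\epsilon,C}m$ hypothesis is used mainly to make the $\tau=0$ contribution in \eqref{eq:esti2} negligible, not to beat a $\mathrm{poly}(m)$-size sum.
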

\begin{proof}[Proof of Proposition \ref{prop:modulus conc} assuming Lemma \ref{lem:conc la*}]
We follow the proof of \cite[Proposition 3.1]{herr2024strichartz}.
Let $\epsilon$, $\phi$, and $S$ be as in Proposition \ref{prop:modulus conc}. Let $m=\lceil\log\#S\rceil$. We choose an enumeration $\xi_1,\xi_2,\ldots$ of $\Z^2$ such that $|\widehat\phi(\xi_1)|\ge |\widehat\phi(\xi_2)|\ge\cdots$. Let $S_j^0:=\{\xi_{2^j},\ldots,\xi_{2^{j+1}-1}\}$ and $\la_j:=2^{j/2}|\widehat\phi(\xi_{2^j})|$ for $j=0,\ldots,m$. By \cite[(3.6)]{herr2024strichartz}, we have
\begin{equation}\label{eq:la<f}
\norm{\la_j}_{\ell^2_{j\le m}}\sim\norm{\phi}_{L^2(\T^2)}.
\end{equation}
For $j=0,\ldots,m$, let $E_{j}\subset S_{j}^{0}$ be the set
of intersections $\xi\in S_{j}^{0}$ of two lines $\ell_{1},\ell_{2}$
such that
\[
\#\left(\ell_{1}\cap S_{j}^{0}\right),\#\left(\ell_{2}\cap S_{j}^{0}\right)\ge2^{j/2+C},
\]
where $C=C(\epsilon)\in\N$ is a constant to be fixed shortly.
By \cite[(3.7)]{herr2024strichartz}, we have
\begin{equation}\label{eq:E_j bound}
\sqrt{\#E_{j}}\les 2^{j/2-C}.
\end{equation}
Since $|\widehat{\phi}(\xi)|\le\la_{j}2^{-j/2}$ holds for
$\xi\in E_{j}\subset S_{j}^{0}$, by \eqref{eq:E_j bound} and \eqref{eq:la<f},
we have
\[
\norm{\chi_{E}\widehat{\phi}}_{\ell^{2}(\Z^{2})}\les\norm{\la_{j}2^{-j/2}\cdot\sqrt{\#E_{j}}}_{\ell_{j\le m}^{2}}\les2^{-C}\norm{\phi}_{L^2(\T^2)}.
\]
Here we denote $E=\cup_j E_j$. By \eqref{eq:L4 Stri}, we have
\[
\norm{e^{it\De}P_E\phi}_{L^4_{t,x}([0,\frac{\delta}{\log\#S}]\times\T^2)}\les\norm{\chi_E\widehat{\phi}}_{\ell^2(\Z^2)}\les 2^{-C}\norm{\phi}_{L^2(\T^2)}
\]
and fixing $C$ as a big number, a triangle inequality with \eqref{eq:modulus suppose} yields
\begin{equation}\label{eq:modulus phi-g}
\norm{e^{it\De}(\phi-P_E\phi)}_{L^4_{t,x}([0,\frac{\delta}{\log\#S}]\times\T^2)}\gtrsim\epsilon\norm{\phi}_{L^2(\T^2)}.
\end{equation}
Let $S_{j}:=S_{j}^{0}\setminus E_{j}$ and
\[
f:=\sum_{j\le m}\la_{j}2^{-j/2}\chi_{S_{j}}\ge|(1-\chi_E)\widehat{\phi}|.
\]
Let $M=\lfloor\delta^{-1}\log\#S\rfloor$. Applying Lemma \ref{lem:norm} to \eqref{eq:modulus phi-g}, for $\delta\ll_{C,\epsilon}1$, we have
\[
\frac{1}{M}\sum_{Q\in\mc Q^{\le M}}f(Q)\gtrsim\epsilon^4\norm{\phi}_{L^2(\T^2)}^4\gtrsim\epsilon^4\norm{\la_j}_{\ell^2_{j\le m}}^4,
\]
applying Lemma \ref{lem:conc la*} to which finishes the proof.
\end{proof}
The remainder of this subsection is devoted to the proof of Lemma \ref{lem:conc la*}. We use the notation from \cite{herr2024strichartz}.
We introduce a way to \emph{symmetrize} types of parallelograms without
reducing to $\tau=0$. Let $\mc C_1,\ldots,\mc C_J,J\in\N$ be subsets of $\{(\xi_1,\xi_4)\in\Z^2:\xi_1\neq\xi_4\}$.
Let $j,k=1,\ldots,J$ be any indices. We have
\begin{align*}
  \sum_{\substack{Q=(\xi_{1},\xi_{2},\xi_{3},\xi_{4})\in\mc Q^{\le M}\\
(\xi_{1},\xi_{4})\in\mc C_{j},\,
(\xi_{2},\xi_{3})\in\mc C_{k}
}
}f(Q)&\les\sum_{\xi\in\Z^{2}\setminus\left\{ 0\right\} }\sum_{\substack{Q=(\xi_{1},\xi_{2},\xi_{3},\xi_{4})\in\mc Q^{\le M}\\
\xi_{1}-\xi_{4}=\xi\\
(\xi_{1},\xi_{4})\in\mc C_{j},\,
(\xi_{2},\xi_{3})\in\mc C_{k}
}
}f(Q)\nonumber \\
 & \les\sum_{\xi\in\Z^{2}\setminus\left\{ 0\right\} }\sum_{\substack{\s_{1},\s_{2}\in\Z\\
\left|\s_{1}-\s_{2}\right|\le 2M
}
}\sum_{\substack{(\xi_{1},\xi_{4})\in\mc E_{\xi}^{\s_{1}}\cap\mc C_{j}\\
(\xi_{2},\xi_{3})\in\mc E_{\xi}^{\s_{2}}\cap\mc C_{k}
}
}f(\xi_{1})f(\xi_{4})f(\xi_{2})f(\xi_{3})\nonumber \\
 & \les\sum_{\substack{\xi\in\Z^{2}\setminus\left\{ 0\right\}\\ n\in\Z}}\left(\sum_{\substack{(n-2)M\le\s_{1}\le(n+2)M\\(\xi_{1},\xi_{4})\in\mc E_{\xi}^{\s_{1}}\cap\mc C_{j}}}f(\xi_{1})f(\xi_{4})\sum_{\substack{(n-2)M\le\s_{2}\le(n+2)M\\(\xi_{2},\xi_{3})\in\mc E_{\xi}^{\s_{2}}\cap\mc C_{k}}}f(\xi_{2})f(\xi_{3})\right)\nonumber,
\end{align*}
where we denote by $\mc{E}^\sigma_\xi $ the set of segments $(\xi_1,\xi_4)\in (\Z^2)^2$ such that $\xi_1-\xi_4=\xi$ and $\xi_1\cdot \xi =\sigma$. Now applying the Cauchy-Schwarz inequality and reversing the above expansion, the estimate continues as
\begin{equation}
\les\left(\sum_{\substack{Q=(\xi_{1},\xi_{2},\xi_{3},\xi_{4})\in\mc Q^{\le10M}\\
(\xi_{1},\xi_{4}),(\xi_{2},\xi_{3})\in\mc C_{j}
}
}f(Q)\cdot\sum_{\substack{Q=(\xi_{1},\xi_{2},\xi_{3},\xi_{4})\in\mc Q^{\le10M}\\
(\xi_{1},\xi_{4}),(\xi_{2},\xi_{3})\in\mc C_{k}
}
}f(Q)\right)^{1/2}. \label{eq:sym tau}
\end{equation}
Recall from \cite{herr2024strichartz} that a cross $(\xi_0,\xi_0+\R\xi,\xi_0+\R\xi^{\perp}),\xi_0\in(S_0\cup\cdots\cup S_m),\xi\neq 0$ is of
\[
\begin{cases}
\text{Type 1} & \text{ if } a\ge j/2+C\\
\text{Type 2} & \text{ if } 1\le a<j/2+C\\
\text{Type 3} & \text{ if }a=0,
\end{cases}
\]
where $j$ is the index such that $\xi_0\in S_{j}$, and $a$ is the number
\[
a=\log_{2}\max\left\{ \#\left((\xi_{0}+\R\xi)\cap S_{j}\right),\#\left((\xi_{0}+\R\xi^{\perp})\cap S_{j}\right)\right\}.
\]
For $\tau\ge0$ and $\al,\be=1,2,3$, we denote by $\mc Q_{\al,\be}^{\tau}$
the set of parallelograms $(\xi_{1},\xi_{2},\xi_{3},\xi_{4})\in\mc Q^{\tau}$
such that the crosses $(\xi_{k},\xi_{k}+\R\xi,\xi_{k}+\R\xi^{\perp}),\xi:=\xi_{1}-\xi_{4}\neq 0$
are of type $\al$ for $k=1,2$ and $\be$ for $k=3,4$. Also, let $j_k$ denote the index corresponding to $
\xi_k$, as above.

Let $h=h(\epsilon)\in\N$ be a number to be fixed later. Concerning the case $\max j_{k}-\min j_{k}\le h$, by applying \eqref{eq:para-count} to $\cup_{j=j_0-h}^{j_0+h}S_j$, $j_0=h,\ldots,m-h$, we have
\begin{equation}
\frac{1}{M}\sum_{\substack{Q=(\xi_{1},\xi_{2},\xi_{3},\xi_{4})\in\mc Q^{\le M}\\
\max j_{k}-\min j_{k}\le h
}
}f(Q)\les_{\epsilon, h}\norm{\la_{j}}_{\ell_{j\le m}^{4}}^{4},\label{eq:l4 case}
\end{equation}
which is (up to $\epsilon$-dependence) comparable to $\norm{\la_{j}}_{\ell_{j\le m}^{2}}^{4}$
only if $\max_{j\le m}\la_{j}\gtrsim_{\epsilon,h}\norm{\la_{j}}_{\ell_{j\le m}^{2}}$,
for which we are done since $h$ will depend only on $\epsilon$.
Thus, we assume that the left-hand side of \eqref{eq:l4 case} is sufficiently smaller than $\epsilon^2  \norm{\la_{j}}_{\ell_{j\le m}^{2}}^{4}$. (At the end, we will show a contradiction.)

For $\al,\be=1,2,3$, denote by $\mc C_{\al,\be,\prec}$ the
set of segments $(\xi_{1},\xi_{4})\in(\Z^{2})^{2}$ such that $\xi_{1}\neq\xi_{4}$
and
\[
(\xi_{1},\xi_{1}+(\xi_{4}-\xi_{1})\R,\xi_{1}+(\xi_{4}-\xi_{1})^{\perp}\R)\text{ is a cross of type }\al,
\]
\[
(\xi_{4},\xi_{4}+(\xi_{4}-\xi_{1})\R,\xi_{4}+(\xi_{4}-\xi_{1})^{\perp}\R)\text{ is a cross of type }\be,
\]
\[
\text{the indices }j_{1},j_{4}\text{ such that }\xi_{1}\in S_{j_{1}},\xi_{4}\in S_{j_{4}}\text{ satisfy }j_{1}\le j_{4}-h.
\]
The sets $\mc C_{\al,\be,\succ}$ and $\mc C_{\al,\be,\sim}$ are defined similarly, replacing
$j_{1}\le j_{4}-h$ by $j_1\ge j_4+h$ and $j_4-h<j_1<j_4+h$, respectively.
\begin{lem}
Let $\xi_1,\xi_3\in \Z^2$. Let $\ell\subset\R^2$ be any line. For $M\in\N$, we have
\begin{equation}\label{eq:quadratic poly has sqrt M roots}
\#\left\{\xi_2\in\ell:Q_{\xi_2}=(\xi_1,\xi_2,\xi_3,\xi_1+\xi_3-\xi_2)\in\mc Q^{\le M}\right\}\les\sqrt M.
\end{equation}
\end{lem}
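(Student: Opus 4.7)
The plan is to parameterize the line $\ell$ and observe that the constraint $\tau_{Q_{\xi_2}}\in[0,M]$ becomes a quadratic inequality in a single real parameter, whose solution set has small total length. The combinatorial bound then follows from counting lattice points on $\ell\cap\Z^2$.

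Concretely, first I would set $\xi_4:=\xi_1+\xi_3-\xi_2$, so that $\xi_1-\xi_4=\xi_2-\xi_3$ and
\[
\tau_{Q_{\xi_2}}=2(\xi_1-\xi_2)\cdot(\xi_1-\xi_4)=2(\xi_1-\xi_2)\cdot(\xi_2-\xi_3).
\]
If $\ell\cap\Z^2$ contains fewer than two points the bound is trivial, so I may assume $\ell$ is a lattice line and parameterize $\ell\cap\Z^2=\{p+tv:t\in\Z\}$ with $p\in\Z^2$ and $v\in\Z^2$ primitive (in particular $|v|\ge 1$). Substituting $\xi_2=p+tv$ yields
\[
P(t):=\tau_{Q_{\xi_2}}=-2|v|^2 t^2+2\bigl(v\cdot(\xi_1+\xi_3-2p)\bigr)t+2(\xi_1-p)\cdot(p-\xi_3),
\]
a quadratic in $t$ with leading coefficient $-2|v|^2\le -2$.

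Next I would analyze the level set $\{t\in\R:0\le P(t)\le M\}$. Writing $P(t)=-2|v|^2(t-t_0)^2+P_{\max}$ with $P_{\max}=P(t_0)$, the set $\{P\ge0\}$ is an interval of half-width $\sqrt{P_{\max}/(2|v|^2)}$, and removing $\{P>M\}$ leaves at most two subintervals, each of length at most
\[
\frac{1}{\sqrt{2}\,|v|}\bigl(\sqrt{P_{\max}}-\sqrt{P_{\max}-M}\bigr)\le\frac{\sqrt{M}}{\sqrt{2}\,|v|}\le\sqrt{M},
\]
where I used the identity $\sqrt{a}-\sqrt{a-M}=M/(\sqrt{a}+\sqrt{a-M})\le\sqrt{M}$. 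Since the admissible $\xi_2$ correspond to integer values of $t$, the number of such $\xi_2$ is bounded by the number of integers in two intervals of total length $\les\sqrt{M}$, which is $\les\sqrt{M}+O(1)\les\sqrt{M}$.

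I do not expect any serious obstacle: the argument is just a quadratic inequality plus a lattice-point count, and the degeneracies ($v=0$ impossible since $\ell$ is a line, $\ell\cap\Z^2$ having $\le 1$ point) are trivially handled separately. The only point to verify is that primitivity of $v$ gives $|v|\ge 1$, which is immediate.
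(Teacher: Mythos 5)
Your proof is correct and follows the same approach as the paper: parameterize $\ell\cap\Z^2$ by a primitive direction vector, observe that $\tau_{Q_{\xi_2}}$ is a quadratic in the integer parameter with leading coefficient $-2|v|^2$ of absolute value at least $2$, and bound the number of integers in the preimage of $[0,M]$ by $O(\sqrt M)$. You spell out the interval-length computation that the paper leaves implicit, but the underlying argument is identical.
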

\begin{proof}
Let $\eta\in\Z^2_\irr$ be a vector parallel to $\ell$. Then, every $\xi_2\in\ell\cap\Z^2$ can be written in the form $\xi_2=\xi_0+k\eta$, $k\in\Z$. We have
\[
\tau_{Q_{\xi_2}}=2(\xi_1-\xi_2)\cdot(\xi_2-\xi_3)=2(\xi_1-\xi_0-k\eta)\cdot(\xi_0+k\eta-\xi_3),
\]
which is a quadratic polynomial of $k$ with the quadratic coefficient $-2|\eta|^2$, hence contained in $[M]$ for $O(\sqrt{M})$ integers $k\in\Z$, finishing the proof.
\end{proof}
\begin{lem}\label{lem:count type1}
Let $\epsilon,C,M$, and $f$ be as in Lemma \ref{lem:conc la*}. For $\be=1,2,3$, we have
\begin{equation}
\sum_{Q\in\mc Q_{1,\be}^{\le M}}f(Q)\les_{\epsilon}\sqrt{M}\cdot\norm f_{\ell^{2}}^{4}.\label{eq:type1}
\end{equation}
\end{lem}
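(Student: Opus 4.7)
The strategy is to exploit the rigidity of Type 1 crosses combined with the quadratic bound \eqref{eq:quadratic poly has sqrt M roots}. By the hypothesis of Lemma \ref{lem:conc la*}, each $\xi \in S_j$ lies on at most one \emph{heavy line} $\ell$ (satisfying $\#(\ell \cap S_j) \ge 2^{j/2+C}$), and by \eqref{eq:SzTr} there are only $O(2^{j/2-C})$ heavy lines in $S_j$. Consequently, for any parallelogram $Q=(\xi_1,\xi_2,\xi_3,\xi_4) \in \mc Q^{\le M}_{1,\beta}$, the Type 1 condition at $\xi_1$ forces the direction $\xi := \xi_1 - \xi_4$ to be parallel or perpendicular to the unique heavy line $\ell_1$ through $\xi_1 \in S_{j_1}$; an analogous rigidity holds at $\xi_2$.

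I would decompose dyadically over the scales $(j_1,j_2,j_3,j_4)$ with $\xi_k \in S_{j_k}$, setting aside the near-diagonal contribution $\max_k j_k - \min_k j_k \le h$ (already handled by \eqref{eq:l4 case}). For the remaining contribution, I would apply the symmetrization identity \eqref{eq:sym tau} with the scale classes $\mc C_{1,\beta,*}$ for $* \in \{\prec,\succ,\sim\}$, which reduces the mixed-diagonal sum (via Cauchy--Schwarz) to the three diagonal sums where both pairs $(\xi_1,\xi_4)$ and $(\xi_2,\xi_3)$ belong to the same scale-relation class. In each such diagonal sum, I fix $\xi_1 \in S_{j_1}$ and $\xi_3 \in S_{j_3}$: the Type 1 rigidity at $\xi_1$ pins down (up to a binary parallel/perpendicular choice) a line through $\xi_3$ on which $\xi_2$ must lie, since $\xi_2-\xi_3 = \xi_1-\xi_4$ inherits the same direction class. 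Then \eqref{eq:quadratic poly has sqrt M roots} bounds the number of valid $\xi_2$ on this line with $|\tau_Q| \le M$ by $O(\sqrt M)$; moreover, the Type 1 constraint at $\xi_2$ further restricts $\xi_2$ to a heavy line of $S_{j_2}$, giving the additional saving $\lesssim 2^{j_2/2-C}$ that trades against $\sqrt M$.

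Combining these counts with $|f(\xi_k)| \le \lambda_{j_k} 2^{-j_k/2}$ and a suitable AM--GM redistribution of the factors $\lambda_{j_1}\lambda_{j_2}\lambda_{j_3}\lambda_{j_4}$ across the four scales should sum to $\sqrt M\,\|f\|_{\ell^2}^4 = \sqrt M\,(\sum_j \lambda_j^2)^2$. The main obstacle will be the case bookkeeping, namely the $O(1)$ subcases according to whether $\xi$ is parallel or perpendicular to $\ell_1$ and to $\ell_2$ (which respectively force $\ell_1\parallel \ell_2$ or $\ell_1\perp \ell_2$), and the correct incorporation of the Type $\beta$ constraint at $\xi_3,\xi_4$: for $\beta=3$, $\xi_4$ on each line through $\xi_1$ is unique, providing a significant extra gain; for $\beta=1$, a symmetric heavy-line rigidity applies; for $\beta=2$, the relevant lines have subcritical point count. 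Ensuring that the dyadic sum over $(j_1,\ldots,j_4)$ converges with the correct geometric factors—so that the $\sqrt M$ gain from the quadratic lemma is not absorbed by the off-diagonal scale losses—is the key technical point.
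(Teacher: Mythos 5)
You correctly identify the two load-bearing ideas: the Type~1 rigidity at $\xi_1$ forces the direction $\xi_1-\xi_4$ to be parallel or perpendicular to the unique heavy line through $\xi_1$, which restricts $\xi_2$ to one of two lines through $\xi_3$, and \eqref{eq:quadratic poly has sqrt M roots} then caps the number of admissible $\xi_2$ on each such line by $O(\sqrt M)$, giving at most $O(\sqrt M)$ pairs $(\xi_2,\xi_4)$ for each fixed $(\xi_1,\xi_3)$. That is precisely the paper's key estimate. But the remainder of your proposal imports machinery from the proof of the parent Lemma~\ref{lem:conc la*} that is both unnecessary and, in places, incorrect. Once the $O(\sqrt M)$ count per $(\xi_1,\xi_3)$ is in hand, the paper finishes in one line: $f(Q)=f(\xi_1)f(\xi_3)\cdot f(\xi_2)f(\xi_4)\le f(\xi_1)^2f(\xi_3)^2$ after an AM--GM together with the diagonal swap $(\xi_1,\xi_2,\xi_3,\xi_4)\mapsto(\xi_2,\xi_1,\xi_4,\xi_3)$ (which preserves $|\tau_Q|$ and the cross types), and then $\sum_{\xi_1,\xi_3}f(\xi_1)^2f(\xi_3)^2=\norm{f}_{\ell^2}^4$ closes the estimate. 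There is no dyadic decomposition in $(j_1,\ldots,j_4)$, no appeal to \eqref{eq:sym tau} or \eqref{eq:l4 case}, and the Type constraints at $\xi_2,\xi_3,\xi_4$ are never used; \eqref{eq:type1} is proved uniformly in $\beta$.

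Two of your additional steps would actually fail. The claim that Type~1 at $\xi_2$ gives a further saving ``$\lesssim 2^{j_2/2-C}$ that trades against $\sqrt M$'' is unjustified: once the direction $\xi$ is fixed, Type~1 at $\xi_2$ only asserts that one of the two lines of the cross at $\xi_2$ is heavy, and in the case where that heavy line is parallel to $\xi$ it coincides with the line $\xi_3+\R\xi$ on which $\xi_2$ already ranges, so no restriction beyond the $\sqrt M$ count results. The proposed case analysis over $\beta$ (unique $\xi_4$ for $\beta=3$, heavy-line rigidity for $\beta=1$, subcritical counts for $\beta=2$) is a dead end since the bound must, and does, hold for the worst $\beta$ with no help from it. Finally, the ``suitable AM--GM redistribution of $\lambda_{j_1}\lambda_{j_2}\lambda_{j_3}\lambda_{j_4}$ across the four scales'' is left as a gesture (``should sum to\ldots''), and it would not close as described, because the $O(\sqrt M)$ count is attached to the pair $(\xi_2,\xi_4)$ rather than to any single scale index; the direct Cauchy--Schwarz keeps all four factors paired as $f(\xi_1)^2f(\xi_3)^2$, so there is no scale redistribution problem to solve.
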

\begin{proof}
Let $(\xi_{1},\xi_{3})\in(\Z^{2})^{2}$ be any pair. By the
assumption of Lemma \ref{lem:conc la*}, there exists at most one line
$\ell_{\xi_{1}}$ such that $\#\left(\ell_{\xi_{1}}\cap S_{j_{1}}\right)\ge2^{j_{1}/2+C}$.
Since any parallelogram $(\xi_{1},\xi_{2},\xi_{3},\xi_{4})\in\mc Q_{1,\be}^{\le M}(\supp(f))$
requires $\xi_{1}-\xi_{4}$ to be either parallel or orthogonal to $\ell_{\xi_{1}}$, by \eqref{eq:quadratic poly has sqrt M roots}, we have
\[
\#\left\{ (\xi_{2},\xi_{4})\in(\Z^{2})^{2}:(\xi_{1},\xi_{2},\xi_{3},\xi_{4})\in\cup_{\tau\in[M]}\mc Q_{1,\be}^{\tau}\right\} \les\sqrt{M}.
\]
By the Cauchy-Schwarz inequality, we conclude
\begin{align*}
\sum_{Q\in\mc Q_{1,\be}^{\le M}}f(Q) & =\sum_{Q=(\xi_{1},\xi_{2},\xi_{3},\xi_{4})\in\mc Q_{1,\be}^{\le M}}f(\xi_{1})f(\xi_{3})\cdot f(\xi_{2})f(\xi_{4})\\
 & \le\sum_{Q=(\xi_{1},\xi_{2},\xi_{3},\xi_{4})\in\mc Q_{1,\be}^{\le M}}f(\xi_{1})^{2}f(\xi_{3})^{2}\\
 & \le\sum_{(\xi_{1},\xi_{3})\in(\Z^{2})^{2}}f(\xi_{1})^{2}f(\xi_{3})^{2}\cdot\sqrt M\les\sqrt{M}\cdot\norm f_{\ell^{2}(\Z^{2})}^{4}.
\end{align*}
\end{proof}
Applying \eqref{eq:sym tau} to \eqref{eq:modulus suppose*} with the partition $\left\{ \mc C_{\al,\be,\prec},\mc C_{\al,\be,\sim},\mc C_{\al,\be,\succ}\right\} _{\al,\be=1,2,3}$, we have
\begin{equation}\label{eq:primary Qab}
\frac{1}{M}\max_{\al,\be=2,3}\sum_{\substack{Q=(\xi_{1},\xi_{2},\xi_{3},\xi_{4})\in\mc Q_{\al,\be}^{\le 10M}\\
j_{1}\le j_{4}-h,j_{2}\le j_{3}-h
}
}f(Q)\gtrsim\epsilon\norm{\la_{j}}_{\ell_{j\le m}^{2}}^{4},
\end{equation}
where $j_{k},k=1,\ldots,4$ denotes the index such that $S_{j_{k}}\ni\xi_{k}$. This corresponds to $\mc C_{\al,\be,\prec}$, $\al,\be=2,3$ (or $\mc C_{\al,\be,\succ}$, symmetrically); the cases with $\al=1$ or $\be=1$ are removed by Lemma \ref{lem:count type1} and the cases $\mc C_{\al,\be,\sim}$ cannot be the only large contribution due to the smallness of \eqref{eq:l4 case}.

Let $\mc A$ be the set of segments $(\xi_1,\xi_2)\in(\Z^2)^2$ such that $\{(\xi_4,\xi_3)\in S_{j_1}\times S_{j_2}:(\xi_1,\xi_2,\xi_3,\xi_4)\in\mc Q^0\}=\{(\xi_1,\xi_2)\}$. Let $\mc B:=(\Z^2)^2\setminus\mc A$. Then, once $\tau,j_3,j_4$, and $(\xi_1,\xi_2)$ of a parallelogram $(\xi_1,\xi_2,\xi_3,\xi_4)$ are fixed, the choice of the opposite edge $(\xi_3,\xi_4)\in\mc A$ is unique.
Thus, we have
\begin{align}\label{eq:no AA}
&\frac{1}{M}\max_{\al,\be=2,3}\sum_{\substack{Q=(\xi_{1},\xi_{2},\xi_{3},\xi_{4})\in\mc Q_{\al,\be}^{\le 10M}\\
j_{1}\le j_{4}-h,j_{2}\le j_{3}-h\\ (\xi_3,\xi_4)\in\mc A
}
}f(Q)
\\\nonumber\les&\sum_{\substack{j_1,j_2,j_3,j_4\le m\\ j_{1}\le j_{4}-h,j_{2}\le j_{3}-h}}\la_{j_1}\la_{j_2}\la_{j_3}\la_{j_4}2^{-\frac{1}{2}(j_1+j_2+j_3+j_4)}\cdot 2^{j_1+j_2}
\\\les&\nonumber\left(\sum_{j\le k-h}\la_j\la_k2^{-(k-j)/2}\right)^2
\les o_h(1)\cdot\norm{\la_j}_{\ell^2_{j\le m}}^4.
\end{align}
For $h$ large enough, by the triangle inequality between \eqref{eq:primary Qab} and \eqref{eq:no AA}, we have
\[
\frac{1}{M}\sum_{\substack{Q=(\xi_1,\xi_2,\xi_3,\xi_4)\in\mc Q^{\le 10M}\\ (\xi_3,\xi_4)\in\mc B}}f(Q)\gtrsim_\epsilon\norm{\la_j}_{\ell^2_{j\le m}}^4.
\]
Then, by \eqref{eq:sym tau}, we have
\[
\frac{1}{M}\sum_{\substack{Q=(\xi_1,\xi_2,\xi_3,\xi_4)\in\mc Q^{\le 100M}\\ (\xi_1,\xi_2),(\xi_3,\xi_4)\in\mc B}}f(Q)\gtrsim_\epsilon\norm{\la_j}_{\ell^2_{j\le m}}^4.
\]
By \eqref{eq:sym tau} with the partition $\{\mc C_{\al,\be,\prec}\cap\mc B,\mc C_{\al,\be,\sim}\cap\mc B,\mc C_{\al,\be,\succ}\cap\mc B\}$, we have either
\begin{equation}\label{eq:Qab BB}
\frac{1}{M}\max_{\al,\be=2,3}\sum_{\substack{Q=(\xi_{1},\xi_{2},\xi_{3},\xi_{4})\in\mc Q_{\al,\be}^{\le 1000M}\\
j_{1}\le j_{4}-h,j_{2}\le j_{3}-h\\ (\xi_1,\xi_4),(\xi_2,\xi_3)\in\mc B
}
}f(Q)\gtrsim_\epsilon\norm{\la_{j}}_{\ell_{j\le m}^{2}}^{4},
\end{equation}
or
\begin{equation}\label{eq:Qab BB sim}
\frac{1}{M}\max_{\al,\be=2,3}\sum_{\substack{Q=(\xi_{1},\xi_{2},\xi_{3},\xi_{4})\in\mc Q_{\al,\be}^{\le 1000M}\\
|j_{1}-j_{4}|<h,|j_{2}-j_{3}|<h\\ (\xi_1,\xi_4),(\xi_2,\xi_3)\in\mc B
}
}f(Q)\gtrsim_\epsilon\norm{\la_{j}}_{\ell_{j\le m}^{2}}^{4},
\end{equation}
where the cases where $\al=1$ or $\be=1$ are removed by Lemma \ref{lem:count type1}.

Now we reduce to rectangle-counting just as in \cite{herr2024strichartz}.
Let $(\al,\be)\in\{2,3\}^2$ be a pair saturating \eqref{eq:Qab BB}. Since $\tau_{Q}=2(\xi_{1}-\xi_{2})\cdot(\xi_{1}-\xi_{4})$
is a multiple of $\gcd(\xi_{1}-\xi_{4})$ for any parallelogram $Q=(\xi_{1},\xi_{2},\xi_{3},\xi_{4})$
such that $\xi_{1}\neq\xi_{4}$, we have
\begin{align}\label{eq:esti1}
 & \sum_{\substack{Q=(\xi_{1},\xi_{2},\xi_{3},\xi_{4})\in\mc Q_{\al,\be}^{\le 1000M}\\
j_1\le j_4-h,j_2\le j_3-h\\ (\xi_1,\xi_4),(\xi_2,\xi_3)\in\mc B 
}
}f(Q)=\sum_{\substack{\tau\le 1000M\\ \xi\in\Z^{2}\setminus\left\{ 0\right\} \\
\gcd(\xi)\mid\tau
}
}\sum_{\substack{Q=(\xi_{1},\xi_{2},\xi_{3},\xi_{4})\in\mc Q_{\al,\be}^{\tau}\\
\xi_{1}-\xi_{4}=\xi\\
j_1\le j_4-h,j_2\le j_3-h\\ (\xi_1,\xi_4),(\xi_2,\xi_3)\in\mc B  
}
}f(Q) \\
= & \sum_{\substack{\tau\le 1000M\\ \xi\in\Z^{2}\setminus\left\{ 0\right\} \\
\gcd(\xi)\mid\tau
}
}\sum_{\substack{\s_{1},\s_{2}\in\Z\\
\s_{1}-\s_{2}=\pm\tau/2
}
}\sum_{\substack{
j_1\le j_4-h,j_2\le j_3-h\\ (\xi_1,\xi_4)\in\mc E^{\s_1}_{\xi,\al,\be}\cap \mc B\\(\xi_2,\xi_3)\in\mc E^{\s_2}_{\xi,\al,\be}\cap \mc B}
}f(\xi_1)f(\xi_4)f(\xi_2)f(\xi_3),\nonumber
\end{align}
where $\mc E_{\xi,\al,\be}^{\s},\s\in\Z,\xi\in\Z^{2}\setminus\left\{ 0\right\} $
denotes the set of segments $(\xi_{1},\xi_{4})$ such that $\xi_{1}-\xi_{4}=\xi$,
$\xi_{1}\cdot\xi=\s$, and $(\xi_{k},\xi_{k}+\xi\R,\xi_{k}+\xi^{\perp}\R),k=1,4$
are crosses of type $\al,\be$, respectively. 

Thus, if \eqref{eq:Qab BB} holds, by \eqref{eq:esti1} and Cauchy-Schwarz, we have
\[
\norm{\la_j}_{\ell^2_{j\le m}}^4\les\frac{1}{M}\sum_{\substack{\tau\le 1000M\\ \xi\in\Z^{2}\setminus\left\{ 0\right\} \\
\gcd(\xi)\mid\tau}}\sum_{\s\in\Z}\left(\sum_{\substack{(\xi_1,\xi_4)\in\mc E^{\s}_{\xi,\al,\be}\cap\mc B\\ j_1\le j_4-h}}f(\xi_1)f(\xi_4)\right)^2.
\]
Rewriting in rectangle notation, this is bounded by
\begin{equation}\label{eq:esti2}
\les\frac{1}{M}\sum_{\substack{\tau\le 1000M\\ \xi\in\Z^2\setminus\{0\}\\ \gcd(\xi)\mid\tau}}\sum_{\substack{Q=(\xi_{1},\xi_{2},\xi_{3},\xi_{4})\in\mc Q_{\al,\be}^{0*}\\
\xi_1-\xi_4=\xi\\
j_{1}\le j_{4}-h\text{ and }j_{2}\le j_{3}-h
}
}f(Q),
\end{equation}
where $\mc Q_{\al,\be}^{0*}$ denotes the set of rectangles in $\mc Q_{\al,\be}^0$ whose vertices are all distinct.
Observe that the partial sum of \eqref{eq:esti2} for $\tau=0$ is bounded by
\[
\frac1M\sum_{Q\in\mc Q^0(S)}f(Q)\sim\frac1M\norm{e^{it\De}\F^{-1}f}_{L^4([0,2\pi]\times\T^2)}^4\les\frac{m}M\norm{f}_{\ell^2(S)}^4.
\]
(See \cite[Theorem 1.1]{herr2024strichartz}.)
Hence, in \eqref{eq:esti2}, the summand $\tau=0$ is $o(1)$-negligible for sufficiently large $M\gg m$. Thus, by the estimate
\[
\frac{1}{M}\#\{1\le\tau\le 1000M:\gcd(\xi)\mid\tau\}\les\frac{1}{\gcd(\xi)},
\]
\eqref{eq:esti2} reduces to
\begin{equation}
\sum_{\substack{\substack{Q=(\xi_{1},\xi_{2},\xi_{3},\xi_{4})\in\mc Q_{\al,\be}^{0*}\\
j_{1}\le j_{4}-h\text{ and }j_{2}\le j_{3}-h
}
}
}\frac{f(Q)}{\gcd(\xi_{1}-\xi_{4})}\gtrsim\norm{\la_{j}}_{\ell_{j\le m}^{2}}^{4}.\label{eq:esti3}
\end{equation}
Similarly, if \eqref{eq:Qab BB sim} holds, it holds that
\begin{equation}
\sum_{\substack{\substack{Q=(\xi_{1},\xi_{2},\xi_{3},\xi_{4})\in\mc Q_{\al,\be}^{0*}\\
|j_{1}-j_{4}|<h\text{ and }|j_{2}-j_{3}|<h
}
}
}\frac{f(Q)}{\gcd(\xi_{1}-\xi_{4})}\gtrsim\norm{\la_{j}}_{\ell_{j\le m}^{2}}^{4}.\label{eq:esti3'}
\end{equation}
In summary, either \eqref{eq:esti3} or \eqref{eq:esti3'} holds.
To finish the proof, we recall the main counting inequality of \cite{herr2024strichartz}.
\begin{lem}[Cases II-IV in proof of Lemma 3.3 in \cite{herr2024strichartz}]\label{lem:count23}
Let $j_{1},j_{2},j_{3},j_{4}\in\N$. Denoting $\delta=\frac{1}{10000}$, for $\al,\be=2,3$, we have
\begin{equation}
\sum_{Q=(\xi_{1},\xi_{2},\xi_{3},\xi_{4})\in\mc Q_{\al,\be}^{0*}\cap\left(S_{j_{1}}\times S_{j_{2}}\times S_{j_{3}}\times S_{j_{4}}\right)}\frac{f(Q)}{\gcd(\xi_{1}-\xi_{4})}\les\la_{j_1}\la_{j_2}\la_{j_3}\la_{j_4}2^{-\delta\left(\left|j_{1}-j_{3}\right|+\left|j_{2}-j_{4}\right|\right)}.\label{eq:diag penalty}
\end{equation}
\end{lem}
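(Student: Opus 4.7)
The statement is precisely Cases II--IV in the proof of \cite[Lemma~3.3]{herr2024strichartz}. Indeed, the only case in that proof not covered here is Case~I, which treats parallelograms having a cross of type~$1$ at some vertex; such configurations are excluded by our restriction to $\alpha, \beta \in \{2, 3\}$, and have already been controlled separately by Lemma~\ref{lem:count type1} in the outer argument. Accordingly, the plan is to deduce the present lemma by directly invoking these three cases.

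For orientation I sketch the structure. Since $\tau_Q = 2(\xi_1 - \xi_2) \cdot (\xi_1 - \xi_4) = 0$, every $Q \in \mc Q^0$ is a rectangle whose sides are parallel to $\xi := \xi_1 - \xi_4$ and to $\xi^\perp$. The first step is to parametrize such rectangles by first fixing the direction $\xi$---working with a primitive representative and using the weight $1/\gcd(\xi_1 - \xi_4)$ in \eqref{eq:diag penalty} to absorb the multiplicity---and then by locating the two pairs of parallel rows and columns containing the four vertices. The hypothesis that every cross is of type~$2$ or~$3$ yields the key incidence bound $\#(\ell \cap S_{j_k}) < 2^{j_k/2 + C}$ for every line $\ell$ through $\xi_k$ in either of the two fixed directions, which is exactly the input needed to apply the Szemer\'edi-Trotter inequality in the forms of \eqref{eq:SzTr} and Lemma~\ref{lem:SzTr'} vertex by vertex.

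The main difficulty, and the reason the argument must branch into three cases, is producing the diagonal decay factor $2^{-\delta(|j_1 - j_3| + |j_2 - j_4|)}$ rather than only the trivial scaling $2^{(j_1 + j_2 + j_3 + j_4)/2}$. When opposite vertices $\xi_1, \xi_3$ lie in sets $S_{j_1}, S_{j_3}$ of very different sizes (and likewise for $\xi_2, \xi_4$), the smaller set tightly constrains the admissible directions $\xi$, and applying Szemer\'edi-Trotter to the larger set then yields a genuine power saving beyond the naive $\sqrt{\#S_{j_1} \cdot \#S_{j_3}}$ counting. The three cases in \cite{herr2024strichartz} correspond to the essentially distinct combinations of types at the four vertices---$(\alpha, \beta) = (2,2), (2,3), (3,3)$ up to the symmetry $\alpha \leftrightarrow \beta$---and differ only in the detailed dyadic bookkeeping, each ultimately yielding \eqref{eq:diag penalty}. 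I expect no new difficulty beyond what is handled there, so the proof reduces to a direct citation.
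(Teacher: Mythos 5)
Your proposal is correct and takes essentially the same approach as the paper, which treats this lemma as a direct citation of Cases II--IV in the proof of \cite[Lemma~3.3]{herr2024strichartz} without reproducing the argument. Your added orientation accurately identifies why Case~I is excluded (handled separately by Lemma~\ref{lem:count type1}) and correctly sketches the role of the Szemer\'edi--Trotter input, but since the paper also defers to the reference, no further verification is required here.
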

Since $|j_1-j_3|+|j_2-j_4|\ge(j_4-j_1)+(j_3-j_2)$, we have
\[
\sum_{\substack{j_{1},j_{2},j_{3},j_{4}\le m\\ j_1\le j_4-h,j_2\le j_3-h}}\la_{j_{1}}\la_{j_{2}}\la_{j_{3}}\la_{j_{4}}2^{-\delta(|j_1-j_3|+|j_2-j_4|)}\les\left(\sum_{\substack{j_1,j_4\le m\\ j_1\le j_4-h}}\la_{j_1}\la_{j_4}2^{-\delta(j_4-j_1)}\right)^2 \les o_h(1)\norm{\la_{j}}_{\ell_{j\le m}^{2}}^{4},
\]
which contradicts \eqref{eq:esti3} by \eqref{eq:diag penalty} for large $h$. Thus, \eqref{eq:esti3'} must hold; by \eqref{eq:diag penalty}, we have
\begin{align}\label{eq:esti3' conseq}
\norm{\la_j}_{\ell^2_{j\le m}}^4&\les_h\sum_{\substack{j_{1},j_{2},j_{3},j_{4}\le m\\ |j_1-j_4|<h,|j_2-j_3|<h}}\la_{j_{1}}\la_{j_{2}}\la_{j_{3}}\la_{j_{4}}2^{-\delta(|j_1-j_3|+|j_2-j_4|)}
\\&\les\left(\sum_{\substack{j_1,j_4\le m\\|j_1-j_4|<h}}\la_{j_1}^2\la_{j_4}^2\sum_{\substack{j_3,j_2\le m\\|j_3-j_2|<h}}\la_{j_3}^2\la_{j_2}^2\right)^{1/2}\les_h\sum_{j\le m}\la_j^4,\nonumber
\end{align}
for the second line of which we also used Young's convolution inequality on $\Z^2$.
By \eqref{eq:esti3' conseq}, we have $\norm{\la_j}_{\ell^2_{j\le m}}\les_h\norm{\la_j}_{\ell^4_{j\le m}}$, which yields $\norm{\la_j}_{\ell^2_{j\le m}}\les_h\norm{\la_j}_{\ell^\infty_{j\le m}}$ by an interpolation.
This finishes the proof of Lemma \ref{lem:conc la*}.
\subsection{\label{subsec:conc to multiprogression}Proof of Proposition \ref{prop:conc to P} and Proposition \ref{prop:conc to [N]^2}}

In this subsection, we show Proposition \ref{prop:conc to P} and Proposition \ref{prop:conc to [N]^2}. The key observation is Proposition \ref{prop:combinatorial goal-1}, which deduces from \eqref{eq:P claim} the almost maximality of the number of arithmetic progressions of length $3$ in the Fourier support $S$. The underlying idea is at the discrete-geometric level; loosely speaking, if there exist many parallelograms in $\mc Q^{\le M}(S)$ containing a generic common edge $e\in S^2$, then the set of fourth vertices saturates a positive portion of a long arithmetic progression, to which we apply Roth's Theorem.

For $\xi_1,\xi_2\in S$ such that $\xi_1\neq \xi_2$, we denote by $\kappa(\xi_1,\xi_2)$ the cross count
\[
\kappa(\xi_1,\xi_2):=\max\{\#(S\cap(\xi_1+(\xi_2-\xi_1)^\perp\R)),\#(S\cap(\xi_1+(\xi_2-\xi_1)\R))\}.
\]
For $e=(\eta_{0},\eta_{1})\in(\R^{2})^2$, we denote $\overrightarrow{e}=\eta_{1}-\eta_{0}$.
\begin{lem}
\label{lem:c>m}Let $S\subset\Z^{2}$ be a finite set and $e\in S^{2}$.
Let $(e,e_1),\ldots,(e,e_m)\in\mc Q^{\tau}(S),\tau\in\Z$ be parallelograms. We have $\kappa(e_1)\ge m$.
\end{lem}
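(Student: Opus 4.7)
The key observation is that fixing an edge $e = (\eta_0,\eta_1) \in S^2$ and requiring that $(e,e_i)$ form a parallelogram in $\mc Q^\tau$ forces the opposite vertex $e_i = (\eta_{0,i},\eta_{1,i})$ to lie on a single line orthogonal to $\overrightarrow{e}$. The plan is to extract this constraint directly from the two equations defining $\mc Q^\tau$.

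First, interpreting $(e,e_i)$ as the parallelogram $(\eta_0,\eta_1,\eta_{1,i},\eta_{0,i})$ (or whichever convention the paper uses), the parallelogram condition $\xi_1+\xi_3=\xi_2+\xi_4$ reads $\eta_0 + \eta_{1,i} = \eta_1 + \eta_{0,i}$, which is equivalent to $\overrightarrow{e_i} = \eta_{1,i} - \eta_{0,i} = \eta_1 - \eta_0 = \overrightarrow{e}$. Thus all $e_i$ have the common direction vector $\overrightarrow{e}$.

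Second, the condition $\tau_{Q_i} = \tau$ expands to
\[
\tau = 2(\eta_0 - \eta_1)\cdot(\eta_0 - \eta_{0,i}) = -2\,\overrightarrow{e}\cdot(\eta_0 - \eta_{0,i}),
\]
so the inner product $\overrightarrow{e}\cdot\eta_{0,i}$ is independent of $i$. Hence $\eta_{0,1},\ldots,\eta_{0,m}$ all lie on the affine line $\ell := \eta_{0,1} + \overrightarrow{e}^\perp\,\R$, which is exactly the line through $\eta_{0,1}$ perpendicular to $\overrightarrow{e} = \eta_{1,1}-\eta_{0,1}$.

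Finally, since the parallelograms $(e,e_i)$ are distinct and $e$ is fixed (and $\overrightarrow{e_i}=\overrightarrow{e}$ is determined), the vertices $\eta_{0,1},\ldots,\eta_{0,m}$ must be pairwise distinct. They are $m$ distinct elements of $S$ lying on $\ell$, which gives
\[
\#\bigl(S\cap (\eta_{0,1} + (\eta_{1,1}-\eta_{0,1})^\perp\,\R)\bigr) \ge m,
\]
and by the definition of $\kappa$ we conclude $\kappa(e_1)\ge m$. There is no real obstacle here — the lemma is a direct bookkeeping consequence of the parallelogram and $\tau$ constraints — so the only care needed is to match the exact convention the paper uses for $(e,e_i)$ and to verify distinctness of the $\eta_{0,i}$, both of which are immediate.
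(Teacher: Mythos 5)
Your proof is correct and takes essentially the same route as the paper: the constraint $\tau_{Q_i}=\tau$ pins down $\overrightarrow{e}\cdot\eta_{0,i}$ and forces the opposite vertices onto a line perpendicular to $\overrightarrow{e}=\overrightarrow{e_1}$. The only cosmetic difference is that you separate the observation $\overrightarrow{e_i}=\overrightarrow{e}$ and the distinctness of the $\eta_{0,i}$ into explicit steps, whereas the paper compresses the algebra by subtracting $\tau_{Q_j}-\tau_{Q_1}=0$ in one line and leaves the implicit distinctness of the parallelograms unmentioned.
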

\begin{proof}
Denote $e=(\eta,\eta')$ and $e_j=(\eta_j',\eta_j)$.
For each $j=1,\ldots,m$, we have
\[
2(\eta_{1}-\eta_{1}')\cdot(\eta_{j}-\eta_{1})=2(\eta_{1}-\eta_{1}')\cdot(\eta_{j}-\eta)-2(\eta_{1}-\eta_{1}')\cdot(\eta_{1}-\eta)=\tau-\tau=0.
\]
Thus, $\eta_{1},\ldots,\eta_{m}\in\eta_{1}+(\eta_{1}-\eta_{1}')^{\perp}\R$ holds
and hence $\kappa(e_1)=\kappa(\eta_{1},\eta_{1}')\ge m$.
\end{proof}
\begin{lem}
Let $S\subset\Z^{2}$ be a finite set. Let $n=\#S$ and $m,M\in\N$. Let $l$ be the maximum of $\#(S\cap\ell)$ for a line $\ell\subset\R^2$. We have
\begin{equation}\label{eq:SzTr,c, logM}
\#\left\{ (\xi_1,\xi_2,\xi_3,\xi_4)\in\cup_{\tau=1}^{M}\mc Q^{\tau}(S):\kappa(\xi_1,\xi_2)\ge m\right\} \les Mn^{2}\left(\frac{\log m}{m}+\frac{\log M}{n/l}\right).
\end{equation}
\end{lem}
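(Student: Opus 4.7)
Let $T$ denote the cardinality on the left-hand side. I would expand
\[
T = \sum_{(\xi_1, \xi_2) \in S^2,\, \kappa(\xi_1, \xi_2) \geq m} N_{(\xi_1, \xi_2), \leq M},
\]
where $N_{(\xi_1, \xi_2), \leq M}$ counts the parallelograms in $\cup_{\tau=1}^M \mc Q^\tau(S)$ with first edge $(\xi_1, \xi_2)$, and bound the two factors separately. The hypothesis $\kappa(\xi_1, \xi_2) \geq m$ places $\xi_1$ on a line $\ell$ with $\#(\ell \cap S) \geq m$, and $\xi_2$ either on $\ell$ (case a) or on the line $\ell^\perp$ through $\xi_1$ perpendicular to $\ell$ (case b); I would treat these geometric configurations separately.

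\textbf{The two factors.} For the dense-pair count, I would apply Lemma \ref{lem:SzTr'} dyadically over the line size $k \in \{2^j : m \leq 2^j \leq l\}$: the number of incidences $(\xi_1, \ell)$ with $\#(\ell \cap S) \sim k$ is $\lesssim n^2/k^2 + n$. In case (a), the vertex $\xi_2 \in \ell \setminus \{\xi_1\}$ contributes at most $k$ choices per incidence; in case (b), $\xi_2 \in \ell^\perp \cap S$ contributes at most $l$ choices. For the per-edge count, writing $w = \xi_2 - \xi_1$ and $v = \xi_4 - \xi_1$, the constraint $\tau_Q = 2\,v \cdot w \in [1, M]$ confines $v$ to the union of at most $M$ integer lines $L_\tau$ perpendicular to $w$ (each containing at most $l$ points of $S$), and the additional constraints $\xi_1 + v, \xi_2 + v \in S$ cut this further. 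Lemma \ref{lem:c>m} provides the crucial structural input: if $N_{(\xi_1, \xi_2), \tau} \geq K$ for a single $\tau$, then the opposite edge $(\xi_4, \xi_3)$ itself has cross count $\geq K$, so a parallelogram with an anomalously "spiky" first fiber is forced to have an equally dense opposite edge.

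\textbf{Assembly and main obstacle.} Combining, I would split the dense pairs by a dyadic threshold $K$ on $N_{(\xi_1, \xi_2), \leq M}$: the low-fiber part ($N < K$) contributes at most $K \cdot M \cdot \#(\text{dense pairs})$, while the high-fiber part ($N \geq K$) is controlled by the same type of sum with threshold $K$ in place of $m$ via Lemma \ref{lem:c>m} applied to the opposite edge. Iterating this recursion along $K = 2^j m$ and summing the resulting dyadic geometric series yields the $M n^2 \log m / m$ term in case (a) (with $\log m$ from the number of effective iterations in $[m, l]$) and the $M n l \log M$ term in case (b) (with $\log M$ from a parallel dyadic decomposition of the $\tau$-range over strips of varying width $M/|w|$). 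The main technical obstacle will be executing this two-sided feedback cleanly: the per-edge count $N_{(\xi_1, \xi_2), \leq M}$ has no uniform bound better than $\min(n, Ml)$, and the logarithmic savings beyond the naive product of the edge count and $M$ come only after carefully balancing the iterations and tracking the geometric constraints in case (b), where $\xi_2$ does not lie on the dense line.
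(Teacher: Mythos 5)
Your overall scaffolding matches the paper's — the dyadic decomposition of crosses via Lemma~\ref{lem:SzTr'}, the parametrization $v=\xi_4-\xi_1$, $w=\xi_2-\xi_1$ with the constraint $2v\cdot w\in[1,M]$, and the use of Lemma~\ref{lem:c>m} to cap the per-$\tau$ fiber by the opposite edge's $\kappa$. But the quantitative core is missing, and the step you flag as a ``main technical obstacle'' is precisely where the estimate lives. The low-fiber bound ``$K\cdot M\cdot\#(\text{dense pairs})$'' is too weak by a factor of roughly $m/\log m$: with $K=m$ it gives $\gtrsim Mn^2$, already worse than the target $Mn^2\log m/m$, and your proposed iteration over $K=2^jm$ only makes things worse, producing $Mn^2\log(l/m)+Mnl^2$ rather than $Mn^2\log m/m+Mnl\log M$. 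There is no route to the claimed bound via ``carefully balancing iterations'' on top of this low-fiber estimate, because the estimate is not tight to begin with.

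The missing ingredient is an arithmetic (divisibility) saving that your description does not capture. For $\xi_2=\xi_1+k_j\eta$ with $\eta$ primitive, one has $\gcd(\xi_1-\xi_2)=k_j$, and since $\tau_Q=2(\xi_1-\xi_2)\cdot(\xi_1-\xi_4)$, the parameter $\tau$ is forced to lie in $2k_j\Z$, so only $\lfloor M/k_j\rfloor$ values of $\tau\in[1,M]$ are admissible --- not $M$. This is \emph{not} the same as ``strips of varying width $M/|w|$'': the relevant quantity is $\gcd(w)=k_j$, which can be far smaller than $|w|=k_j|\eta|$. Crucially, one must not bound the per-edge $\tau$-count uniformly (worst case $\gcd=1$, giving $M$) but instead \emph{sum over $\xi_2$ along the line}: since the $k_j$ are distinct positive integers with $r\le 2k$ of them, $\sum_j M/k_j\les M\log\min\{k,M\}$. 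Combined with the $\le 2k$ opposite edges per $\tau$ from Lemma~\ref{lem:c>m}, this yields the per-cross count $kM\log\min\{k,M\}$, and then the paper simply partitions all parallelograms by the dyadic range of $\max\{\kappa(\xi_1,\xi_2),\kappa(\xi_3,\xi_4)\}$ (no recursion or iteration), multiplies by $\#\mc C_k$, and sums; the $\log m$ in the final bound comes from $\sum_j 1/k_j$, not from a count of iterations, and $\log M$ comes from the same source truncated at $M$, not from a parallel dyadic decomposition of the $\tau$-range.
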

\begin{proof}
For $k\in2^\N$, denote by $\mc C_k$ the set of crosses $(\xi_1,\ell,\ell^\perp)$ such that
\[
k\le \max\{\#(\ell\cap S),\#(\ell^\perp\cap S)\}\le 2k.
\]
Note that $\mc C_k=\emptyset$ for $k> l$. Since $\#\mc C_k$ is bounded by
\[
2\#\{(\xi_1,\ell):\text{$\ell$ is a line through $\xi_1\in S$ and }k\le\#(\ell\cap S)\le 2k\},
\]
by \eqref{eq:SzTr'}, we have
\begin{equation}\label{eq:c_k type I}
\#\mc C_k\les\frac{n^2}{k^2},\qquad k\le\sqrt n
\end{equation}
and
\begin{equation}\label{eq:c_k type II}
\sum_{k\ge\sqrt n}\#\mc C_k\les n.
\end{equation}
For $k\in2^\N$ and $(\xi_1,\ell,\ell^\perp)\in\mc C_k$, we can write
\[
S\cap\ell=\xi_1+\{0,k_1\eta,\ldots,k_r\eta\},
\]
where $r\le 2k$, $\eta\in\Z^2_\irr$, and $k_1,\ldots,k_r\in\Z\setminus\{0\}$.
For each $j\le r$ and $\tau\in k_j\Z$, by Lemma \ref{lem:c>m}, there exist at most $2k$ segments $(\xi_3,\xi_4)$ such that $(\xi_1,\xi_1+k_j\eta,\xi_3,\xi_4)\in\mc Q^\tau(S)$ and $\kappa(\xi_3,\xi_4)\le2k$. Thus, by $r\le 2k$, we have
\begin{align}
&\#\{(\xi_2,\xi_3,\xi_4)\in (\ell\cap S)\times S^2:(\xi_1,\xi_2,\xi_3,\xi_4)\in\cup_{\tau=1}^M\mc Q^\tau(S),\kappa(\xi_3,\xi_4)\le 2k\}\nonumber
\\
\le&\sum_{j\le r} 2k\cdot\#\{\tau\in k_j\Z:1\le\tau\le M\}\les k\sum_{j\le r}\lfloor M/k_j\rfloor\les kM\log\min\{k,M\}.\label{eq:Mmlogm}
\end{align}
By \eqref{eq:c_k type I}, \eqref{eq:c_k type II}, and \eqref{eq:Mmlogm}, taking a summation over $k\in2^\N$ yields
\begin{align*}
&
\#\left\{ (\xi_1,\xi_2,\xi_3,\xi_4)\in\cup_{\tau=1}^{M}\mc Q^{\tau}(S):\kappa(\xi_1,\xi_2)\ge m\right\}
\\ \les&\sum_{m\le k\le l}\#\left\{ (\xi_1,\xi_2,\xi_3,\xi_4)\in\cup_{\tau=1}^{M}\mc Q^{\tau}(S):\kappa(\xi_3,\xi_4)\le\kappa(\xi_1,\xi_2)\in[k,2k]\right\}
\\ \les&\sum_{m\le k\le l}\#\mc C_k\cdot kM\log\min\{k,M\}
\\ \les&\sum_{m\le k\le\sqrt n}\frac{n^2}{k^2}\cdot kM\log \min\{k,M\}+n\cdot lM\log \min\{l,M\}
\\ \les&\frac{n^2}{m}M\log m+n\cdot lM\log M,
\end{align*}
which can be rewritten as \eqref{eq:SzTr,c, logM} and finishes the proof.
\end{proof}
\begin{lem}
Let $S\subset\Z^{2}$ be a finite set. Let $n=\#S$ and $m,M\in\N$. We have
\begin{equation}\label{eq:SzTr,c}
\#\left\{ (\xi_1,\xi_2,\xi_3,\xi_4)\in\cup_{\tau=1}^{M}\mc Q^{\tau}(S):\kappa(\xi_1,\xi_2)\ge m\right\} \les Mn^{2}\left(\frac{\log m}{m}+\frac{1}{\sqrt M}\right).
\end{equation}
\end{lem}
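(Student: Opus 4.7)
The plan is to derive \eqref{eq:SzTr,c} from the preceding lemma \eqref{eq:SzTr,c, logM} by a case analysis on the maximum line-multiplicity $l$ of $S$. The preceding estimate reads
\[
N \les Mn^{2}\frac{\log m}{m}+Mnl\log M,
\]
so the first term is already of the required form and the task is to replace the second term by $n^{2}\sqrt{M}$.

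In the easy regime $l\log M\le n/\sqrt{M}$ (equivalently $l\les n/(\sqrt{M}\log M)$), we have $Mnl\log M\le n^{2}\sqrt{M}$ and the preceding lemma already delivers \eqref{eq:SzTr,c}. So the work lies in the complementary regime $l>n/(\sqrt{M}\log M)$, in which $S$ possesses a very crowded line.

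In that regime I would peel off the dense part of $S$. Let $L:=n/(\sqrt{M}\log M)$ and let $T\subset S$ be the union of $S\cap\ell$ over all lines $\ell$ with $\#(\ell\cap S)\ge L$. By the Szemerédi--Trotter-type bound \eqref{eq:SzTr'}, we have $|T|\les n^{2}/L^{2}+n\les M(\log M)^{2}+n$, and by construction $S':=S\setminus T$ has maximum line-multiplicity strictly less than $L$. Parallelograms $Q\in\cup_{\tau\le M}\mc Q^{\tau}$ with all four vertices in $S'$ are then governed by \eqref{eq:SzTr,c, logM} applied to $S'$, whose $l'<L$ puts us in the easy regime and produces the required bound.

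The residual parallelograms, those with at least one vertex in $T$, are handled directly. For each line $\ell$ contributing to $T$ and each vertex position $j\in\{1,\dots,4\}$, I would bound the number of parallelograms with $\xi_{j}\in\ell\cap S$ by using that (a) there are only $\les n^{2}/L^{3}+n/L$ such very crowded lines by \eqref{eq:SzTr}, and (b) for each such line the parallelograms with a fixed vertex on it can be counted via \eqref{eq:para-count} (applied to appropriate slices) combined with the $\kappa(\xi_{1},\xi_{2})\ge m$ hypothesis. Summing these contributions and collecting with the $S'$-bound should yield the claimed $\les Mn^{2}(\log m/m+1/\sqrt{M})$.

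The main obstacle is exactly this last step: parallelograms straddling $T$ and $S\setminus T$ fit neither framework cleanly, so some care with the four vertex positions (and with the role of the big cross -- note that the crowded direction at $\xi_{1}$ may be either the $\xi_{2}-\xi_{1}$ direction or its perpendicular, and $T$ is defined symmetrically to accommodate both) is needed to keep the final estimate of order $n^{2}\sqrt{M}$, rather than losing extra powers of $\log M$ from the number of very crowded lines.
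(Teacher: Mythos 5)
Your proposal takes a genuinely different route from the paper's, and there are real gaps beyond the one you yourself flag. Even the ``clean'' part of your peeling fails: applying \eqref{eq:SzTr,c, logM} to $S'=S\setminus T$ controls parallelograms in $\cup_\tau\mc Q^\tau(S')$ with $\kappa_{S'}(\xi_1,\xi_2)\ge m$, where $\kappa_{S'}$ counts points of $S'$ on the cross lines. But the quantity to be bounded uses $\kappa_S$, and there can be parallelograms with all four vertices in $S'$ whose rich cross line nonetheless runs through many points of $T$, so that $\kappa_S\ge m$ while $\kappa_{S'}<m$; those are not covered. And the straddling parallelograms (a vertex in $T$) are not actually estimated — saying you ``would bound'' them via \eqref{eq:SzTr} and \eqref{eq:para-count} is precisely where the difficulty sits, and the natural attempts pick up powers of $\log M$ (from the count $\les n^2/L^3 + n/L$ of crowded lines with your choice $L\sim n/(\sqrt M\log M)$) that do not get absorbed into $n^2\sqrt M$.

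The paper avoids the peeling entirely and is shorter. From the proof of \eqref{eq:SzTr,c, logM}, the dyadic sum over $m\le k\le\sqrt n$ already gives $\les Mn^2\frac{\log m}{m}$, so only the regime $\kappa(\xi_1,\xi_2)\ge\sqrt n$ needs a better bound than $nlM\log M$. There one counts directly: by \eqref{eq:SzTr'}, the number of crosses $(\xi_1,\ell,\ell^\perp)$ with $\max\{\#(\ell\cap S),\#(\ell^\perp\cap S)\}\ge\sqrt n$ is $O(n)$; given such a cross and any $\xi_3\in S$, the constraint $\xi_2\in\ell\cup\ell^\perp$ together with $\tau_Q\in\{1,\dots,M\}$ leaves only $O(\sqrt M)$ admissible $\xi_2$ by \eqref{eq:quadratic poly has sqrt M roots}, and $\xi_4$ is then determined. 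This yields $O(n)\cdot n\cdot O(\sqrt M)=O(n^2\sqrt M)$ with no dependence on the maximum line multiplicity $l$ and no $\log M$ loss. The key difference from both \eqref{eq:SzTr,c, logM} and your plan is the order of counting: instead of bounding the number of partner vertices along the rich cross (which scales with $k$ or $l$, and produces a $\log M$ from summing $\tau$ over $\gcd$-multiples), one fixes $\xi_3$ first and invokes the $\sqrt M$-root bound, which is uniform in the richness of the line.
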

\begin{proof}
Recalling the proof of \eqref{eq:SzTr,c, logM}, \eqref{eq:SzTr,c} reduces to showing
\[
\#\left\{ (\xi_1,\xi_2,\xi_3,\xi_4)\in\cup_{\tau=1}^{M}\mc Q^{\tau}(S):\kappa(\xi_1,\xi_2)\ge\sqrt n\right\} \les \sqrt Mn^{2}.
\]
By \eqref{eq:SzTr'}, the number of crosses $(\xi_1,\ell,\ell^\perp)$ such that
\[
\max\{\#(\ell\cap S),\#(\ell^\perp\cap S)\}\ge\sqrt n
\]
is bounded by $O(n)$. For each such cross $(\xi_1,\ell,\ell^\perp)$ and $\xi_3\in S$, by \eqref{eq:quadratic poly has sqrt M roots} there exist at most $O(\sqrt{M})$ choices of $\xi_2\in\ell$ such that $(\xi_1,\xi_2,\xi_3,\xi_1+\xi_3-\xi_2)\in\cup_{\tau=1}^M\mc Q^\tau(S)$. Thus, \eqref{eq:SzTr,c} is bounded by $O(n\cdot n\cdot \sqrt M)$, finishing the proof.
\end{proof}
\begin{lem}
Let $\epsilon>0,N\gg_{\epsilon}1$, and $E\subset [N]$
be a set such that $\#E\ge\epsilon N$. Then, for any positive integer $K\le N$,
we have
\begin{equation}
\#\left\{(a,b)\in E^2:|a-b|\sim_{\epsilon}K\right\} \sim_{\epsilon}KN.\label{eq:a-b~K}
\end{equation}
\end{lem}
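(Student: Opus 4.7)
The plan is to prove the upper bound in \eqref{eq:a-b~K} by direct counting and the lower bound by a Cauchy--Schwarz moment argument on windowed counts of $E$. Write $T_d := \#\{(a,b) \in E^2 : a-b = d\}$, so that the quantity in \eqref{eq:a-b~K} is a sum $\sum_{d \sim_\epsilon K} T_d$. Since $T_d \le \#E \le N$ for every $d$, and the index range of $d$'s with $d \sim_\epsilon K$ has cardinality $O_\epsilon(K)$, the upper bound $\les_\epsilon KN$ is immediate.

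The core of the lower bound is the auxiliary estimate
\[
    \sum_{|d| \le L} T_d \;\gtrsim\; \frac{L(\#E)^{2}}{N}, \qquad 1 \le L \le N,
\]
which I would establish by applying Cauchy--Schwarz to the windowed count $g(x) := \#(E \cap [x,x+L])$. On the one hand, $\sum_x g(x) = (L+1)\,\#E$ and $g$ is supported on an interval of length at most $2N$, so Cauchy--Schwarz gives $\sum_x g(x)^{2} \gtrsim L^{2} (\#E)^{2}/N$. On the other hand, double-counting yields $\sum_x g(x)^{2} = \sum_{|d|\le L}(L+1-|d|)\,T_d \le (L+1)\sum_{|d|\le L} T_d$, and combining the two gives the claim.

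To finish, I would apply the auxiliary estimate with $L = CK$ for a constant $C = C(\epsilon)$ to be chosen (for instance $C \sim 1/\epsilon^{2}$) and use $\#E \ge \epsilon N$ to get $\sum_{|d|\le CK} T_d \gtrsim C\epsilon^{2}KN$. The small-$|d|$ contribution $\sum_{|d| \le K/C} T_d$ is at most $(2K/C+1)N = O(KN/C)$ by the trivial bound $T_d \le N$; for $C$ large enough in terms of $1/\epsilon$, this is absorbed by the main term. Combined with the symmetry $T_d = T_{-d}$, this yields $\sum_{K/C < d \le CK} T_d \gtrsim_\epsilon KN$, which is exactly the desired lower bound (the range $d \in (K/C, CK]$ is an instance of $d \sim_\epsilon K$). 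The edge case $CK > N$ is only easier, since then $\sum_{|d|\le N} T_d = (\#E)^{2} \ge \epsilon^{2}KN$. No substantive obstacle is expected: the only bookkeeping is choosing $C$ so that the main term dominates the spurious small-$|d|$ piece, which is precisely where the hypothesis $N \gg_\epsilon 1$ enters.
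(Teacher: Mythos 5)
Your proof is correct. Both you and the paper reduce the lemma to the lower bound $\sum_{|d|\lesssim_\epsilon K} T_d \gtrsim_\epsilon KN$ (where $T_d=\#\{(a,b)\in E^2:a-b=d\}$) followed by subtracting the trivial small-$|d|$ contribution $\sum_{|d|\ll_\epsilon K}T_d\ll_\epsilon KN$, but the route to the lower bound is genuinely different. The paper partitions $[N]$ into $\sim N/K$ blocks of length $\sim K$, uses a pigeonhole/averaging argument to show a positive proportion of blocks contain $\gtrsim_\epsilon K$ elements of $E$, and counts intra-block pairs. You instead run a moment (Cauchy--Schwarz) argument on the sliding-window count $g(x)=\#(E\cap[x,x+L])$, observing $\sum_x g=(L+1)\#E$ and $\sum_x g^2=\sum_{|d|\le L}(L+1-|d|)T_d$, to get $\sum_{|d|\le L}T_d\gtrsim L(\#E)^2/N$ directly. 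Your method is the standard energy/second-moment trick from additive combinatorics; it yields the quantitative dependence on $\#E/N$ in one line without any pigeonhole, and your bookkeeping of the parameter $C\sim 1/\epsilon^2$ (plus the edge case $CK>N$) to absorb the small-$|d|$ piece is sound. The paper's block-decomposition is perhaps more elementary to state but needs the implicit step that a positive proportion of blocks are $\epsilon$-dense, which is an averaging argument of comparable weight. Neither approach has an advantage in the result obtained.
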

\begin{proof}
Let $I_{1}\cup\cdots\cup I_{n}=[N]$ be a partition into intervals of sizes $\#I_{1},\ldots,\#I_{n}\sim K$. Then $n\sim N/K$ holds. Since $\#\{j\le n:\#(I_j\cap E)\sim_\epsilon K\}\sim_\epsilon n$, we have
\[
\#\left\{ a,b\in E:|a-b|\les_{\epsilon}K\right\} \gtrsim_{\epsilon}K^{2}\cdot n\sim KN.
\]
Since $\#\left\{ a,b\in E:|a-b|\ll_{\epsilon}K\right\}\ll_\epsilon K\cdot\# E\les KN$,
we conclude \eqref{eq:a-b~K}.
\end{proof}
The following is the key ingredient to Propositions \ref{prop:conc to P} and \ref{prop:conc to [N]^2}: 
\begin{prop}
\label{prop:combinatorial goal-1}Let $\epsilon,S$, and $M_{0}$ be as in
Proposition \ref{prop:conc to P}. We have
\begin{equation}
\#\left\{ (\xi_{-1},\xi_{0},\xi_{1})\in S^{3}:\xi_{-1}+\xi_{1}=2\xi_{0}\right\} \sim_{\epsilon}(\#S)^{2}\label{eq:goal1}
\end{equation}
and for any integer $K\gg_\epsilon1$ such that $M_0\gg_K1$,
\begin{equation}
\#\left\{ \eta\in S-S:k\eta\in S-S\text{ for some }k\sim_\epsilon K\right\} \gtrsim_{\epsilon}K^{-\frac{5}{2}}\cdot\#S.\label{eq:goal2}
\end{equation}
\end{prop}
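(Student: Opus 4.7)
The strategy is to convert the parallelogram hypothesis into dense one-dimensional arithmetic structure along a direction, and then apply Szemer\'edi's theorem on APs. This yields \eqref{eq:goal1} via 3-APs and \eqref{eq:goal2} via long APs.

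First I will prune using the incidence estimate \eqref{eq:SzTr,c}: choose $m=m(\epsilon)\in\N$ with $(\log m)/m\ll\epsilon$, and take $M_0\gg_\epsilon 1$ so that $1/\sqrt{M_0}\ll\epsilon$. Then parallelograms with $\kappa(\xi_1,\xi_2)\ge m$ contribute $\ll\epsilon M_0(\#S)^2$ and may be discarded; the remaining parallelograms still number $\gtrsim_\epsilon M_0(\#S)^2$ and satisfy $\kappa(\xi_1,\xi_2)<m$ on the base edge $(\xi_1,\xi_2)$.

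Second, I pigeonhole over base edges: there is a set $\mathcal E\subset S\times S$ with $\#\mathcal E\gtrsim_\epsilon(\#S)^2$ such that each $e=(\xi_1,\xi_2)\in\mathcal E$ supports $\gtrsim_\epsilon M_0$ retained parallelograms. Writing $u=\xi_1-\xi_2$ and letting $S_e\subset S$ be the set of third vertices $\xi_4$ appearing in these parallelograms, the $\Z$-valued linear map $\tau_e:\xi_4\mapsto 2u\cdot(\xi_1-\xi_4)$ sends $S_e$ into $[1,M_0]$. By Lemma \ref{lem:c>m} combined with the $\kappa$-bound, each level set of $\tau_e|_{S_e}$ has at most $m$ points, so its image $\mathcal T_e:=\tau_e(S_e)\subset[M_0]$ has $\#\mathcal T_e\gtrsim_\epsilon M_0$ --- a positive-density subset of $[M_0]$.

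Third, I invoke Szemer\'edi's theorem (Proposition \ref{prop:Sz+}) on $\mathcal T_e$. For \eqref{eq:goal1}, I apply it with length $3$ to obtain a 3-AP of $\tau$-values; a further pigeonhole on the $u^\perp$-coordinate of $\xi_4$ (permitted by the bounded fiber size) lifts this to a genuine 3-AP $\xi_4^{(-1)},\xi_4^{(0)},\xi_4^{(1)}\in S$. Summing over $e\in\mathcal E$ and dividing by the $O(1)$ multiplicity with which a given 3-AP in $S$ arises over distinct base edges yields $\gtrsim_\epsilon(\#S)^2$ three-term APs, which is \eqref{eq:goal1}. For \eqref{eq:goal2}, I fix $K\gg_\epsilon 1$ and $M_0\gg_K 1$, and apply Proposition \ref{prop:Sz+} to $\mathcal T_e$ with length $\sim K$. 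Lifting as before produces a length-$\sim K$ AP in $S$, hence a step vector $\eta=d\in S-S$ with $k\eta\in S-S$ for some $k\sim_\epsilon K$. Bounding the multiplicity with which a single $\eta$ is reproduced by distinct base edges $e\in\mathcal E$ by $O(K^{3/2})$ --- a compound loss from the $u^\perp$-pigeonhole and from the reparametrisation of an AP by its starting point --- gives $\gtrsim_\epsilon K^{-5/2}\#S$ distinct admissible $\eta$.

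The hard part will be the multiplicity accounting for \eqref{eq:goal2}: extracting the precise exponent $K^{-5/2}$ requires a careful balance between the density gain from Szemer\'edi's theorem and the compounded pigeonhole losses when lifting 1-D APs back to $\Z^2$ and consolidating contributions across base edges. Making these losses uniform over $e\in\mathcal E$ in a manner compatible with the $\kappa$-pruning is the delicate combinatorial heart of the argument.
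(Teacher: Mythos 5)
Your high-level strategy for \eqref{eq:goal1} (prune high-$\kappa$ parallelograms, pigeonhole on a base edge, reduce to a 1D dense set in $[M_0]$, invoke Szemer\'edi) is the same as the paper's; for \eqref{eq:goal2} you propose Szemer\'edi for long APs, whereas the paper uses only the elementary pigeonhole \eqref{eq:a-b~K}. However, as written the argument has several genuine gaps.

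First, the lift from $\mathcal{T}_e=\tau_e(S_e)\subset\Z$ back to $\Z^2$ is not automatic. Since $\tau_e(\xi)=2u\cdot(\xi_1-\xi)$ is affine, a triple $\xi^{(-1)},\xi^{(0)},\xi^{(1)}\in S_e$ lying over a 3-AP $a,b,c\in\mathcal{T}_e$ only satisfies $\tau_e(\xi^{(-1)})+\tau_e(\xi^{(1)})=2\tau_e(\xi^{(0)})$; it need not satisfy $\xi^{(-1)}+\xi^{(1)}=2\xi^{(0)}$. A pigeonhole on the $u^\perp$-coordinate does not repair this for a single 3-AP: the three fibers $\tau_e^{-1}(a),\tau_e^{-1}(b),\tau_e^{-1}(c)$ each have $\le m$ points, but nothing forces any choice of representatives to be collinear with $\xi^{(0)}$ the midpoint. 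The paper avoids the issue by first isolating a small step $\Delta\xi$ with $1\le\Delta\xi\cdot\overrightarrow{e}\lesssim\sqrt{M}$ and $|\Delta\xi\cdot\overrightarrow{e}^{\perp}|\lesssim|\overrightarrow{e}|^2/\sqrt{M}$, covering $E_e$ by arithmetic progressions $I_j$ that are genuine line segments in $\Z^2$ parallel to $\Delta\xi$, and applying Szemer\'edi inside a dense $I_j\cap E_e$; the resulting 3-AP then lives in $\Z^2$ by construction.

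Second, your pruning controls $\kappa(\xi_1,\xi_2)$ of the base edge only, but the level-set bound you appeal to (Lemma \ref{lem:c>m}) requires a bound on $\kappa$ of the opposite edge. Lemma \ref{lem:c>m} says that if $m$ parallelograms $(e,e_1),\dots,(e,e_m)$ all have the same $\tau$ then $\kappa(e_1)\ge m$, so it is $\kappa$ of the $e_j$, not of $e$, that must be small. You must prune on all four edges, as in the definition of $\mathcal{A}_0$.

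Third, the claimed $O(1)$ multiplicity for \eqref{eq:goal1} is not justified. The number of base edges $e$ giving rise to a fixed 3-AP scales with the $\tau$-range; this is precisely why the paper pigeonholes $\tau$ into a window $[\tau^*+1,\tau^*+M]$ of length $M=O_\epsilon(1)$ before extracting $\mathcal{E}$, so that \eqref{eq:no multiple} gives multiplicity $O(M)=O_\epsilon(1)$. Working on the full range $[M_0]$ as you do yields multiplicity $O(M_0)$, which destroys the bound since $M_0$ is allowed to be arbitrarily large.

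Fourth, and most seriously, the multiplicity accounting for \eqref{eq:goal2} cannot be correct. Starting from $\#\mathcal{E}\gtrsim_\epsilon(\#S)^2$ and dividing by $O(K^{3/2})$ would give $\gtrsim(\#S)^2/K^{3/2}$ admissible $\eta$. This is stronger than the target $K^{-5/2}\#S$, and in fact false: for $S=[N]^2$ one has $\#S\sim N^2$ and $\{\eta:\eta,k\eta\in S-S\text{ for some }k\sim K\}=\{|\eta|\lesssim N/K\}$, of size $\sim N^2/K^2=\#S/K^2\ll(\#S)^2/K^{3/2}$. A per-$\eta$ multiplicity must therefore absorb a full factor of order $\#S$. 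In the paper this is achieved not by dividing $\#\mathcal{E}$ but by first converting to a count of distinct pairs $(\xi_1^j,\xi_{-1}^j)$ of size $\gtrsim M^{-11/10}(\#S)^2$ and then observing that for each fixed $\eta$ the pairs with $\xi_1^j-\xi_{-1}^j\in\eta\Z$ and $\gcd(\xi_1^j-\xi_{-1}^j)\mid\tau$ number at most $\#S\cdot\tau^{1/10}\lesssim\#S\cdot M^{1/10}$ by a divisor bound. Without a mechanism producing a multiplicity of the form $\#S\cdot\poly(K)$, the count cannot land at $K^{-5/2}\#S$.
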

Here, the exponent $-\frac52$ is not special and can be replaced throughout this subsection by any $\rho\in(-3,-2)$.
\begin{proof}
Throughout this proof, every comparability depends on $\epsilon$ by default.

For any $\delta>0$, let $\ell_1,\ldots,\ell_L$ be lines such that $\#(\ell_j\cap S)\ge\delta\#S$, then by \eqref{eq:SzTr}, we have $L=O_\delta(1)$. By \eqref{eq:quadratic poly has sqrt M roots}, for each $j$, the number of $Q\in\cup_{\tau=1}^{M_0}\mc Q^\tau(S)$ containing a vertex on $\ell_j$ is $O(\#S^2\cdot\sqrt{M_0})$. Hence, if we assume $M_0\gg_\delta1$, on the reduced set $\tilde S=S\setminus(\ell_1\cup\cdots\cup\ell_L)$,
\[
\#\left(\cup_{\tau=1}^{M_0}\mc Q^\tau(\tilde S)\right)\gtrsim M_0(\#S)^2
\]
still holds. In this sense, we assume
\begin{equation}\label{eq:l cap S << S}
\text{for every line }\ell\subset\R^2,\qquad\#(\ell\cap S)=o_{M_0}(1)\cdot\#S.
\end{equation}
By \eqref{eq:P claim} and \eqref{eq:SzTr,c}, for $M_0\gg1$,
we can fix a number $m=O(1)$ such that the set
\begin{equation}\label{eq:A_0}
\mc A_{0}:=\left\{ Q\in\cup_{\tau=1}^{M_{0}}\mc Q^\tau(S):\kappa(e)<m\text{ for all edges }e\text{ of }Q\right\} 
\end{equation}
has size $\#\mc A_{0}\gtrsim M_{0}(\#S)^{2}.$
Let $M=M(\epsilon)\gg1$ be an integer to be fixed later. Partitioning $\mc A_{0}$ into $\cup_{\tau=\tau^{*}+1}^{\tau^{*}+M}\mc Q^{\tau}(S)$
for $\tau^{*}\in M\Z$, there exists $\tau^{*}\in\N$ such that
\[
\mc A:=\mc A_{0}\cap\left(\cup_{\tau=\tau^{*}+1}^{\tau^{*}+M}\mc Q^{\tau}(S)\right)
\]
satisfies $\#\mc A\gtrsim M(\#S)^{2}$.
By Lemma \ref{lem:c>m}, for each $e\in S^{2}$ and $\tau\in\gcd(\overrightarrow{e})\Z$,
there exist at most $m=O(1)$ parallelograms $Q\in\mc A\cap\mc Q^{\tau}(S)$
that contain $e$. Thus, we have
\begin{equation}\label{eq:no multiple}
\#\{ Q\in\mc A:e\text{ is an edge of }Q\}\les\#\left([\tau^{*}+1,\tau^{*}+M]\cap\gcd(\overrightarrow{e})\Z\right)\les\lceil M/\gcd(\overrightarrow{e})\rceil.
\end{equation}
Let $\mc E\subset S^{2}$ be the set of segments
\begin{equation}\label{eq:E def}
\mc E:=\left\{ e\in S^{2}:\#\left\{ Q\in\mc A:e\text{ is a longest edge of }Q\right\} \gtrsim M\right\}.
\end{equation}
By \eqref{eq:no multiple}, assuming $M$ large enough, we have 
\begin{equation}
\max_{e\in\mc E}\gcd(\overrightarrow{e})\les1.\label{eq:gcd~1}
\end{equation}
Also, since $\#\mc A\gtrsim M(\#S)^{2}$, by
\eqref{eq:no multiple}, we have
\begin{equation}
\#\mc E\gtrsim\#\mc A/M\gtrsim(\#S)^{2}.\label{eq:E big}
\end{equation}
For $e=(\eta_{0},\eta_{1})\in\mc E$, let $E_{e}$ be the set
\[
E_{e}:=\left\{ \xi\in S:(\eta_0,\eta_1,\xi+\overrightarrow{e},\xi)\in\mc A\text{ and }\left|\overrightarrow{e}\right|\ge\left|\xi-\eta_{0}\right|\right\} .
\]
Since $E_{e}$ is the collection of a vertex of all parallelograms in $\left\{ Q\in\mc A:e\text{ is a longest edge of }Q\right\} $
and $e\in\mc E$, we have $\#E_{e}\gtrsim M$. $E_{e}$ is nested
in the sets
\begin{align*}
E_{e} & \subset\eta_{0}+\left\{ \xi\in\Z^{2}:\left|2\xi\cdot\overrightarrow{e}-\tau_{*}\right|\le M\text{ and }\left|\xi\cdot\overrightarrow{e}^{\perp}\right|\le\left|\overrightarrow{e}\right|^{2}\right\} =:R_{e}\\
 & \subset\eta_{0}+\left\{ \xi\in\Z^{2}:\left|2\xi\cdot\overrightarrow{e}-\tau_{*}\right|\le2M\text{ and }\left|\xi\cdot\overrightarrow{e}^{\perp}\right|\le2\left|\overrightarrow{e}\right|^{2}\right\} =:\tilde R_{e}.
\end{align*}
For each $\tau\in\Z$, since the set $\{\xi\in\Z^2:2\xi\cdot\overrightarrow{e}=\tau\}$
is of the form $\xi_{0}+\frac{1}{\gcd(\overrightarrow{e})}\overrightarrow{e}^\perp\Z,\xi_{0}\in\Z^{2}$, we have
\begin{equation}\label{eq:Re cap tau bound}
\#\{\xi\in\tilde R_e-\eta_0:2\xi\cdot\overrightarrow{e}=\tau\}\les\gcd(\overrightarrow{e})\les 1.
\end{equation}
Thus, we have
\begin{equation}
\#\tilde R_{e}\les M.\label{eq:=000023Re}
\end{equation}
By \eqref{eq:Re cap tau bound}, there exists $\tilde{E_{e}}\subset E_{e}$ such that $\#\tilde E_{e}\gtrsim\#E_{e}\gtrsim M$ and $\left\{\xi\cdot\overrightarrow{e}\right\}_{\xi\in\tilde E_{e}}$
are all distinct. Hence, partitioning $R_{e}\supset\tilde{E_{e}}$
into $n\times n$ congruent rectangular regions for a number $n\gtrsim\sqrt{M}$, by the pigeonhole principle, there exists $\De\xi=\De\xi(e)\in\tilde E_e-\tilde E_e$ such that
\begin{equation}
1\le\De\xi\cdot\overrightarrow{e}\les\sqrt{M}\text{ and }\left|\De\xi\cdot\overrightarrow{e}^{\perp}\right|\les\frac{1}{\sqrt{M}}\left|\overrightarrow{e}\right|^{2}.\label{eq:d xi}
\end{equation}
Now we find a triple as in \eqref{eq:goal1} from $E_{e}$. We cover
$E_{e}$ by arithmetic progressions:
\[
\left\{ I_{j}\right\} _{j\le n}:=\left\{(\eta+(\De\xi)\Z)\cap R_e:\eta\in\Z^2\right\}.
\]
For each $j\le n$, there exists $\xi\in I_j\subset R_e$, then
by \eqref{eq:d xi} we have $\xi+k\De\xi\in\tilde R_{e}$ for $\left|k\right|\ll\sqrt{M}$,
and so the size of the intersection of the extension of $I_j\neq\emptyset$ and $\tilde R_e$ is at least comparable to $\sqrt M$. Thus, by \eqref{eq:=000023Re}, we have $n\les\sqrt{M}$.
Since
\begin{equation}
\sum_{j\le n}\#I_{j}=\#R_{e}\le\#\tilde R_{e}\les M\label{eq:I small}
\end{equation}
and
\begin{equation}
\sum_{j\le n}\#(I_{j}\cap E_{e})=\#E_{e}\gtrsim M,\label{eq:IE big}
\end{equation}
we can choose an index $j$ such that $\#(I_{j}\cap E_{e})\sim\#I_{j}\gtrsim M/n\gtrsim\sqrt{M}$.
Thus, for $M\gg1$, by Proposition \ref{prop:Sz+}, there exists $\{ \xi_{-1}(e),\xi_{0}(e),\xi_{1}(e)\} =\{ \xi_{-1},\xi_{0},\xi_{1}\} \subset I_{j}\cap E_{e}$
such that $\xi_{-1}+\xi_{1}=2\xi_{0}$. By \eqref{eq:d xi} we have
$\tau_{(e,\xi_{-1}+\overrightarrow{e},\xi_{-1})}\neq\tau_{(e,\xi_{1}+\overrightarrow{e},\xi_{1})}$,
so without loss of generality we assume $\tau_{(e,\xi_{-1}+\overrightarrow{e},\xi_{-1})}<\tau_{(e,\xi_{1}+\overrightarrow{e},\xi_{1})}$.
Note that
\begin{equation}
\tau_{(\xi_{-1},\xi_{1},\xi_{1}+\overrightarrow{e},\xi_{-1}+\overrightarrow{e})}=\tau_{(e,\xi_{1}+\overrightarrow{e},\xi_{1})}-\tau_{(e,\xi_{-1}+\overrightarrow{e},\xi_{-1})}\in\left\{ 1,\ldots,M\right\} .\label{eq:tau dif}
\end{equation}

We claim that there exist points $\xi_{\pm 1}^j=\xi_{\pm 1}^j(e)\in E_{e}$, $j=1,\ldots,J$, where $J\sim M\sqrt M$, such that
\begin{equation}
\xi_{1}^{j}-\xi_{-1}^{j}\in\{ k\De\xi:k\in\N,k\sim\sqrt{M}\} .\label{eq:xi1-xi-1}
\end{equation}
Indeed, by \eqref{eq:I small} and $n\les\sqrt{M}$, we have
\[
\sum_{j:\#(I_{j}\cap E_{e})\ll\max\{ \sqrt{M},\#I_{j}\} }\#(I_{j}\cap E_{e})\ll\sum_{j\le n}\max\{\sqrt M,\#I_{j}\}\les M,
\]
applying a triangle inequality to which and \eqref{eq:IE big} yields
\[
\sum_{j:\#(I_{j}\cap E_{e})\sim\#I_{j}\gtrsim \sqrt M}\#(I_{j}\cap E_e)\gtrsim M.
\]
Thus, applying \eqref{eq:a-b~K} to each $I_j$, we have $J\sim M\sqrt M$ such $\xi^j_{\pm 1}$'s. Note that since $\xi_{\pm1}^j\in E_e$,
\begin{equation}
\tau_{(\xi_{-1}^{j},\xi_{1}^{j},\xi_{1}^{j}+\overrightarrow{e},\xi_{-1}^{j}+\overrightarrow{e})}=\tau_{(e,\xi_{1}^{j}+\overrightarrow{e},\xi_{1}^{j})}-\tau_{(e,\xi_{-1}^{j}+\overrightarrow{e},\xi_{-1}^{j})}\in\left\{ 1,\ldots,M\right\}.\label{eq:tau dif-1}
\end{equation}
We are ready to prove \eqref{eq:goal1}. For each $e\in\mc E$, since
$\xi_{\pm1}(e)\in E_{e}$, we have
\[
(e,\xi_{-1}(e)+\overrightarrow{e},\xi_{-1}(e)),(e,\xi_{1}(e)+\overrightarrow{e},\xi_{1}(e))\in\mc A.
\]
Let $\mc B$ be the set
\[
\mc B:=\left\{ (\xi_{-1}(e),\xi_{1}(e),\xi_{1}(e)+\overrightarrow{e},\xi_{-1}(e)+\overrightarrow{e}):e\in\mc E\right\} \subset\cup_{\tau=1}^{M}\mc Q^{\tau}(S),
\]
where the inclusion holds by \eqref{eq:tau dif}.
Since $(\xi_1(e),\xi_1(e)+\overrightarrow{e})$ is an edge of $(e,\xi_{1}(e)+\overrightarrow{e},\xi_{1}(e))\in\mc A$, by \eqref{eq:no multiple},
at most $O(M)$ $e$ determine a common member of $\mc B$.
Thus, by \eqref{eq:E big}, we have
\begin{equation}\label{eq:B big}
\#\mc B\gtrsim\#\mc E/M\gtrsim(\#S)^{2}/M.
\end{equation}
For $M_0\gg_M1$ large enough, by \eqref{eq:SzTr,c, logM} and \eqref{eq:l cap S << S} there exists $m'=O_M(1)$ such that the set
\[
\mc B^{*}:=\left\{ Q\in\mc B:\text{all edges of }Q\text{ have }\kappa(e)\le m'\right\} 
\]
has size $\#\mc B^{*}\gtrsim(\#S)^{2}/M$.
Applying Lemma \ref{lem:c>m} to $\mc B^{*}$, we have
\[
\#\left\{ (\xi_{-1}(e),\xi_{1}(e)):e\in\mc E\right\} \gtrsim\#\mc B^{*}/m'\gtrsim_M(\#S)^{2},
\]
which yields \eqref{eq:goal1} fixing a large number $M=M(\epsilon)$, since $\frac{\xi_{-1}(e)+\xi_{1}(e)}{2}=\xi_{0}(e)\in S$.

Now we prove \eqref{eq:goal2}. We take a similar approach, but we
choose $M=K^{2}$. Let $\mc C$ be the set
\[
\mc C:=\{(\xi_{-1}^{j}(e),\xi_{1}^{j}(e),\xi_{1}^{j}(e)+\overrightarrow{e},\xi_{-1}^{j}(e)+\overrightarrow{e}):e\in\mc E,j\le J\} \subset\cup_{\tau=1}^{M}\mc Q^{\tau}(S),
\]
where the inclusion holds by \eqref{eq:tau dif-1}.
Since $(\xi_{1}^{j}(e),\xi_{1}^{j}(e)+\overrightarrow{e})$ is an
edge of $(e,\xi_{1}^{j}(e)+\overrightarrow{e},\xi_{1}^{j}(e))\in\mc A$,
by \eqref{eq:no multiple}, at most $O(M)$ $e$ determine a common member of $\mc C$. Thus, by \eqref{eq:E big}, we have
\begin{equation}
\#\mc C\gtrsim J\cdot\#\mc E/M\gtrsim\sqrt{M}\cdot(\#S)^{2}.\label{eq:C big}
\end{equation}
Applying \eqref{eq:SzTr,c, logM} to \eqref{eq:C big} with the choice
$m'=M^{6/10}$, by $M(\#S)^{2}\cdot\log m'/m'\ll\sqrt{M}(\#S)^{2}\les\#\mc C$ and \eqref{eq:l cap S << S}, assuming $M_0\gg_M1$
we have
\begin{equation}
\mc C^{*}:=\left\{ Q\in\mc C:\text{all edges of }Q\text{ have }\kappa(e)\le m'\right\} \gtrsim\#\mc C\gtrsim\sqrt{M}\cdot(\#S)^{2}.\label{eq:C* big}
\end{equation}
Since $\mc C^{*}\subset\mc C\subset\cup_{\tau=1}^{M}\mc Q^{\tau}(S)$,
there exists $\tau\in\left\{ 1,\ldots,M\right\} $ such that
\[
\mc C_{\tau}^{*}:=\mc C^{*}\cap\mc Q^{\tau}(S)
\]
has size $\#\mc C_{\tau}^{*}\gtrsim M^{-1/2}\cdot(\#S)^{2}$. By Lemma
\ref{lem:c>m}, we have
\begin{align}\label{eq:(x1j,x-1j) many}
&\#\left\{ (\xi_{1}^{j}(e),\xi_{-1}^{j}(e))\in S^{2}:(\xi_{-1}^{j}(e),\xi_{1}^{j}(e),\xi_{1}^{j}(e)+\overrightarrow{e},\xi_{-1}^{j}(e)+\overrightarrow{e})\in\mc C_{\tau}^{*},e\in\mc E,j\le J\right\} \\
\gtrsim&\#\mc C_{\tau}^{*}/m'\gtrsim M^{-11/10}\cdot(\#S)^{2}.\nonumber
\end{align}
We claim that for $\eta\in\Z^{2}\setminus\{0\}$,
\begin{equation}
\#\{ (\xi_{1}^{j}(e),\xi_{-1}^{j}(e)):\De\xi(e)=\eta,(\xi_{-1}^{j}(e),\xi_{1}^{j}(e),\xi_{1}^{j}(e)+\overrightarrow{e},\xi_{-1}^{j}(e)+\overrightarrow{e})\in\mc C_{\tau}^{*},e\in\mc E,j\le J\} \les M^{1/10}\cdot\#S.\label{eq:dxi multi bound}
\end{equation}
Since $(\xi_{-1}^{j}(e),\xi_{1}^{j}(e),\xi_{1}^{j}(e)+\overrightarrow{e},\xi_{-1}^{j}(e)+\overrightarrow{e})\in\mc C_{\tau}^{*}\subset\mc Q^{\tau}$
implies $\gcd(\xi_{1}^{j}(e)-\xi_{-1}^{j}(e))\mid\tau$, the number
of such $\xi_{1}^{j}(e)-\xi_{-1}^{j}(e)$ is bounded by
\[
\#\left\{ k\in\Z\setminus\left\{ 0\right\} :k\mid\tau\right\} \les\tau^{1/10}\les M^{1/10}.
\]
Also, the number of positions $\xi_{1}^{j}(e)$ is bounded by $\#S$,
so we have \eqref{eq:dxi multi bound}.

By \eqref{eq:(x1j,x-1j) many} and \eqref{eq:dxi multi bound}, we
conclude
\[
\#\left\{ \De\xi(e):e\in\mc E\right\} \gtrsim\frac{M^{-11/10}\cdot(\#S)^{2}}{M^{1/10}\cdot\#S}\gtrsim K^{-24/10}\cdot\#S\gtrsim K^{-5/2}\cdot\#S,
\]
finishing the proof of \eqref{eq:goal2}.
\end{proof}
Applying Proposition \ref{prop:Balog} to \eqref{eq:goal1}, Proposition \ref{prop:conc to P} is immediate. We finish this subsection with the proof of Proposition \ref{prop:conc to [N]^2}.
\begin{proof}[Proof of Proposition \ref{prop:conc to [N]^2}]
Let $(P,\Omega)$ be as in Proposition \ref{prop:conc to [N]^2}. Every comparability in this proof depends on $\epsilon$ in default. We show first that the rank $r$ is at most $2$.
By \eqref{eq:goal2} setting $S=P(\Omega)$, there exists $K\gg1$ such that $(P,\Omega)$ is $2K$-injective and
\[
\#\{\eta\in 2\cdot P(\Omega):k\eta\in 2\cdot P(\Omega) \text{ for some } K/2\le k\le K\}\gtrsim K^{-5/2}\#\Omega,
\]
which yields that the set
\[
\mc X:=\{x\in 2\cdot\Omega:\text{ there exist } K/2\le k\le K\text{ such that }kP(x)\in2\cdot P(\Omega)\}
\]
has size $\#\mc X\gtrsim K^{-5/2}\#\Omega$.
For $x\in\mc X$, since $kP(x)\in 2\cdot P(\Omega)$, there exists $y\in 2\cdot\Omega$ such that $P(y)=P(kx)$. Since $P$ is $2K$-injective and $y,kx\in2K\cdot\Omega$, $kx=y\in 2\cdot\Omega$ holds. Thus, we have $\#\mc X=O(K^{-r}\#\Omega)$, which implies $r\le 5/2$, i.e., $r=1$ or $r=2$.

Now we prove that $P\sim [\sqrt{\#\Omega}]^2$.
Denote $S=P(\Omega)$. We adopt notations used in the proof of Proposition \ref{prop:combinatorial goal-1}. Let $M$ be a sufficiently big constant to be fixed later, so that the proof of Proposition \ref{prop:combinatorial goal-1} works.
Let $\mc E'$ be the set of $e\in\mc E$ such that, for some $j=j(e)\le J$, $(\xi_{-1}^{j}(e),\xi_{1}^{j}(e),\xi_{1}^{j}(e)+\overrightarrow{e},\xi_{-1}^{j}(e)+\overrightarrow{e})\in\mc C^*$. By \eqref{eq:C* big} and $J\sim M\sqrt M$, we have $\#\mc E'\gtrsim_M \#S^2$.
Let $e\in\mc E'$ and $j=j(e)$. By the definition of $\mc C^*$,
the line through $\xi_{1}^{j}(e)$ and $\xi_{-1}^{j}(e)$ contains at most $m'=M^{6/10}$ points of $S$, while $\#E_e\gtrsim M$. Thus, choosing $M=O(1)$ big enough, there exists $\xi(e)\in E_e$ such that $\xi(e)$ and $\xi_{\pm1}^j(e)$ form a triangle. We label $\{\eta_1,\eta_2,\eta_3\}=\{\xi_{\pm 1}^j(e),\xi(e)\}$, so that $|\eta_1-\eta_2|\ge|\eta_1-\eta_3|\ge|\eta_2-\eta_3|$.
Since $\eta_1,\eta_2,\eta_3\in E_e\subset R_e$ and $M=O(1)$, we have
\begin{equation}\label{eq:not collinear}
|(\eta_k-\eta_l)\cdot\overrightarrow{e}|\les 1\text{ and }|\eta_k-\eta_l|\les|\overrightarrow{e}|,\qquad k\neq l.
\end{equation}
By \eqref{eq:not collinear}, we have
\begin{equation}\label{eq:small angle 14}
|\angle(\eta_k-\eta_l,\overrightarrow{e}^\perp)|\les\frac{|(\eta_k-\eta_l)\cdot\overrightarrow{e}|}{|\eta_k-\eta_l|\cdot|\overrightarrow{e}|}\les\frac{1}{|\eta_k-\eta_l|\cdot|\overrightarrow{e}|},\qquad k\neq l.
\end{equation}
Applying a triangle inequality to \eqref{eq:small angle 14}, we have
\begin{equation}\label{eq:small angle 14jk}
|\angle(\eta_1-\eta_2,\eta_1-\eta_3)|\les\frac{1}{|\eta_1-\eta_2|\cdot|\overrightarrow{e}|}+\frac1{|\eta_1-\eta_3|\cdot|\overrightarrow{e}|}\les\frac{1}{|\eta_1-\eta_2|\cdot|\eta_1-\eta_3|},
\end{equation}
(where we used \eqref{eq:not collinear} for the second inequality,) which implies $|\det P|\les1$.

Since the number of segments $e\in S^2$ of lengths $o(1)\cdot\diam (S)$ is $o(1)\cdot\#S^2$ and $\#\mc E'\sim\#S^2$, in the choice of $e\in\mc E'$ above, we may impose further that $|\overrightarrow{e}|\sim\diam(S)$. Observe that in \eqref{eq:small angle 14jk}, since $(\eta_1-\eta_2)\cdot(\eta_1-\eta_3)^\perp$ is a nonzero integer, the right-hand side is actually smaller than the left-hand side, thus all three terms in \eqref{eq:small angle 14jk} are comparable. Hence, we have $|\eta_1-\eta_2|\sim|\overrightarrow{e}|\sim\diam(S)$. By the almost orthogonality \eqref{eq:small angle 14}, this implies that $P(\ul\Omega)$ has $O(1)$ eccentricity, i.e., there exists $N\les\sqrt{|P(\ul\Omega)|}$ such that $P(\ul\Omega)\subset [-N,N]^2$. Now since
\[
|P(\ul\Omega)|\les|\det P|\cdot|\ul\Omega|\les\#\Omega,
\]
$P(\Omega)\subset[O(1)\sqrt{\#\Omega}]^2$ holds and finishes the proof.
\end{proof}
\subsection{Proof of Proposition \ref{prop:conc to Q}}\label{subsec:analytic}
In this subsection, we often view an integer point $z=(a,b)\in\Z^2$ as a complex number $z=a+bi\in \Z[i]\subset\C$ and vice versa. For $z\in\Z^2$, we denote $z\Z^2:=\{zw:w\in\Z^2\}$. For $z,w\in\Z[i]=\Z^2$, we denote $z\mid w$ if $w\in z\Z^2$.
We denote by $\mc P$ the set of primes $p=a+bi\in\Z[i]$ such that $a\neq 0$, $b\neq0$, and $a^2+b^2\neq2$.
\subsubsection{Inverse inequalities on sparse sublattices}
\begin{lem}\label{lem:sparse sublattice}
Let $\epsilon>0$. There exists $p\in\mc P$ such that the following holds:

Let $N\gg_\epsilon M$ be integers such that $\log N\ll_\epsilon M$. Let $I:=[-\pi/M,\pi/M]$. Let $r\ge0$ be any integer such that $|p^r|\ll_\epsilon \sqrt {\frac{M}{\log N}}$.
Let $f:[N]^2\rightarrow\D$ be any function such that
\begin{equation}\label{eq:Pi assump}
\norm{e^{it\De}\F^{-1}f}_{L^4_{t,x}(I\times\T^2)}\ge\epsilon N.
\end{equation}
Then, for any $R\in\N$ such that $N\sqrt{\frac{\log N}{M}}\ll_\epsilon R\ll_\epsilon\frac{N}{|p^r|}$, we have
\begin{equation}\label{eq:sparse sublattice}
\#\left\{\eta\in [R]^2:p^r\eta\in\Z^2_\irr\text{ and }\norm f_{\Pi_{p^r\eta}}\gtrsim_\epsilon \frac{N}{\sqrt{|p^r|R}}\right\}\gtrsim_\epsilon R^2.
\end{equation}
\end{lem}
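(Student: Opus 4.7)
The plan is to translate the $L^4$-Strichartz hypothesis into a parallelogram count via Lemma \ref{lem:norm}, then reorganize the resulting parallelograms by ``diagonal direction'' $v=\xi_4-\xi_1$ and extract the rectangle-counting norms $\|f\|_{\Pi_{p^r\eta}}^4$ via a pigeonholing / averaging argument.

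To begin, I would apply Lemma \ref{lem:norm} with $\delta=\delta(\epsilon)\ll 1$, so that (using $\log N\ll_\epsilon M$) the error term $\delta\norm{\F^{-1}f}_{L^2}^4$ is absorbed, yielding
\[
\sum_{Q\in\mc Q^{\le M}(\supp f)}|f(Q)|\gtrsim_\epsilon M N^4.
\]
Writing each parallelogram as $Q=(z,z+u,z+u+v,z+v)$, the constraint $\tau_Q=2u\cdot v\le M$ localizes $u$ to the strip $|u\cdot v|\le M/2$. Observe that for $v$ fixed with $\gcd(v)=1$, the set $\{u\in\Z^2:u\cdot v=0\}$ is $v^\perp\Z$, and summing over such $u$ recovers precisely the rectangle count
\[
\|f\|_{\Pi_v}^4=\sum_{z\in\Z^2,\,m,n\in\Z}f(z)\ol{f(z+mv)}\,\ol{f(z+nv^\perp)}\,f(z+mv+nv^\perp),
\]
so the task reduces to restricting the parallelogram sum to $\tau_Q=0$ and to diagonals $v$ of the correct arithmetic type.

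Next I would impose two filters on $v$: first, that $v=p^r\eta$ with $\eta\in[R]^2$, and second, that $p^r\eta\in\Z^2_\irr$. The prime $p=p(\epsilon)\in\Z[i]$ with $|p|>\sqrt 2$ is chosen by a Gaussian-integer density argument so that, uniformly in $r\ge 0$, a $\gtrsim_\epsilon 1$ fraction of $\eta\in[R]^2$ produce $p^r\eta\in\Z^2_\irr$ of magnitude $\sim|p^r|R$. The hypotheses $|p^r|\ll\sqrt{M/\log N}$ and $N\sqrt{(\log N)/M}\ll R\ll N/|p^r|$ guarantee that $|p^r|R$ lies inside the dyadic range of diagonals contributing to the parallelogram count, and that the corresponding dyadic shell carries a positive $\epsilon$-fraction of the mass after pigeonhole over $O(\log N)$ scales of $|v|$.

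Finally, for each admissible $v=p^r\eta$, the key step is a Fej\'er-averaged Cauchy--Schwarz reduction in the inner variable $u$ that converts the near-rectangle sum $\{u:|u\cdot v|\le M/2\}$ into the true rectangle sum $\{u:u\cdot v=0\}$ with only a constant loss; here the averaging is lossless precisely because the hypothesis $|p^r|\ll\sqrt{M/\log N}$ ensures $|v|^2\cdot(\log N)/M\lesssim 1$, so the Fej\'er kernel of width $M$ truly sees the $\tau=0$ slice. Combining this reduction with the lower bound above and a final pigeonhole over $\eta\in[R]^2$ with $p^r\eta\in\Z^2_\irr$, one obtains the desired lower bound $\|f\|_{\Pi_{p^r\eta}}^4\gtrsim_\epsilon N^4/(|p^r|R)^2$ for $\gtrsim_\epsilon R^2$ values of $\eta$. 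The main obstacle will be the Fej\'er--Cauchy step: a direct expansion loses factors of $|p^r|R$ or $M$ that must be tracked delicately against the sharp target $N/\sqrt{|p^r|R}$, and the coprimality condition $p^r\eta\in\Z^2_\irr$ is exactly what prevents extra $\gcd$-divisor losses from spoiling the count.
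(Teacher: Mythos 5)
Your starting move is to apply Lemma \ref{lem:norm} and obtain $\sum_{Q\in\mc Q^{\le M}}|f(Q)|\gtrsim_\epsilon MN^4$, but this is where the proof runs aground: the target quantity $\norm f_{\Pi_{p^r\eta}}^4$ is a \emph{signed} sum $\sum f(Q)$ over exact rectangles, whereas Lemma \ref{lem:norm} only gives an \emph{unsigned} count. No Fej\'er averaging or Cauchy--Schwarz can recover a lower bound on signed rectangle products from an unsigned parallelogram count, since passing to $|f(Q)|$ has already destroyed the phase information you need. Moreover, the ``Fej\'er-averaged Cauchy--Schwarz reduction'' that is supposed to carry the argument is not specified as an estimate, and the numeric condition asserted for it to be lossless, $|v|^2(\log N)/M\lesssim 1$, fails in the regime of interest: with $|v|\sim|p^r|R$ and $R\gg N\sqrt{(\log N)/M}$ one already has $|v|\gg N$, so $|v|^2(\log N)/M$ is enormous, not bounded. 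A dyadic pigeonhole over scales of $|v|$ also loses a $\log N$ factor and would only validate the conclusion for \emph{some} $R$, whereas the lemma needs it for \emph{every} admissible $R$.

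The paper's actual proof circumvents the sign problem by working on the physical side with $u=e^{it\De}\F^{-1}f$. It introduces $\mc I(S)=\int_{I\times\T^2}|P_S(|u|^2)|^2\,dx\,dt-\frac1M\int_{[-\pi,\pi]\times\T^2}|P_S(|u|^2)|^2\,dx\,dt$ and exploits the manifest nonnegativity of $|P_S(|u|^2)|^2$: Cauchy--Schwarz over the cosets of $kz\Z^2$ produces the uniform lower bound $\mc I(kz\Z^2)\gtrsim_\epsilon N^4/(k|z|)^2$ for all $k\ll_\epsilon\sqrt{M/\log N}/|z|$, which is what makes the conclusion hold for every admissible $R$. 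The step you call a Fej\'er-averaged reduction is replaced by a genuinely different mechanism: the temporal Fourier support of $\widehat{|u|^2}(k\xi)$ is contained in $k\Z$ whenever $\xi\in\Z^2$, so this quantity is $\frac{2\pi}{k}$-periodic in $t$, and it is this periodicity (valid once $K\les M$, a much weaker condition than the one you wrote) that transports the short-interval lower bound to the full-circle $\tau=0$ rectangle count and thence to $\norm f_{\Pi_{z\eta'}}$. Finally, your plan has no counterpart to the pointwise upper bound $\mc I(\{Kz\eta,\ldots,K'z\eta\})\les N^3/|z\eta|$; in the paper it is the near-saturation of this upper bound, rather than a pigeonhole from the sum lower bound alone, that forces $\gtrsim_\epsilon R^2$ many $\eta$ to be good instead of just one.
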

\begin{proof}
We postpone the choice of $p$ to the end of this proof. For simplicity, we denote $z:=p^r$ in this proof.
Let
$u=e^{it\De}\F^{-1}f$.
For $S\subset \Z^2$, we denote
\[
\mc I(S):=\int_{I\times\T^2}\left|P_S(|u|^2)\right|^2dxdt-\frac{1}{M}\int_{[-\pi,\pi]\times\T^2}\left|P_S(|u|^2)\right|^2dxdt,
\]
which is a (signed) measure on $\Z^2$ supported in $[2N]^2\setminus\{0\}$ since $\widehat{|u|^2}(0)$ is a constant.
Denote by $\F_t$ the temporal Fourier series. For $F:[-\pi,\pi]\rightarrow[0,\infty)$, multiplying a Fej{\'e}r kernel yields
\[
\int_IFdt-\frac{1}{M}\int_{[-\pi,\pi]}Fdt\le\int_IFdt\les\sum_{\tau\in[2M]}|\F_tF(\tau)|,
\]
where the sum can be reduced to $\tau\in[2M]\setminus\{0\}$ since the left-hand side is invariant under constant addition.
Thus, for $\eta\in \Z^2\setminus\{0\}$, we have
\begin{align}\label{eq:|u|^2(eta)}
\mc I(\{\eta\})&\les\frac{1}{M}\sum_{0<|\tau|\le 2M}\left|\F_t\left(|P_{\{\eta\}}(|u|^2)|^2\right)(\tau)\right|
\\&\nonumber
\les\frac{1}{M}\#\{(\xi_1,\xi_2,\xi_3,\xi_4)\in\mc \cup_{0<|\tau|\le 2M}\mc Q^\tau([N]^2):\xi_1-\xi_2=\eta\}
\\&\nonumber
\les\frac{1}{M}\#([N]^2)\cdot\#\{\xi\in [2N]^2:0<|\xi\cdot\eta|\le 2M\}
\\&\nonumber
\les \frac{1}{M}\cdot N^2\cdot\frac{M}{\gcd(\eta)}\cdot\frac{N}{|\eta|/\gcd(\eta)}
\les \frac{N^3}{|\eta|}.
\end{align}
Denote $R_k:=\frac{4N}{k|z|}$. By \eqref{eq:|u|^2(eta)}, for $k,d\in\N$, we have
\begin{equation}\label{eq:|u|^2<}
\mc I(\{kz\eta:\gcd(\eta)\ge d\})\les\sum_{\eta\in  [R_k]^2:\gcd(\eta)\ge d}\frac{N^3}{|kz\eta|}
\end{equation}
and since $\gcd(\eta)\ge d$ implies $\eta\in l\Z^2\setminus\{0\}$ for some $l\ge d$, the estimate continues as
\[
\les \frac{N^3}{k|z|}\sum_{l\ge d}\sum_{\eta\in  [R_k]^2\cap l\Z^2\setminus\{0\}}\frac{1}{|\eta|}
\les\frac{N^3}{k|z|}\cdot R_k\sum_{l\ge d}\frac1{l^2}\les\frac{N^4}{dk^2|z|^2}.
\]
The containment $\eta\in [R_k]^2$ appears since only $\eta\in \Z^2$ satisfying $kz\eta\in\supp(\widehat{|u|^2})\subset [2N]^2$ contributes to \eqref{eq:|u|^2<}, which requires $\eta\in  [R_k]^2$.

For each $k\in\N$, by the Cauchy-Schwarz inequality and \eqref{eq:L4 Stri}, we have
\begin{align}\label{eq:|u|^2>}
\int_{I\times\T^2}\left|P_{kz\Z^2}(|u|^2)\right|^2dxdt
&
=\sum_{z_0\in\Z^2/kz\Z^2}\int_{I\times\T^2}\left|P_{kz\Z^2+z_0}u\right|^2|u|^2dxdt
\\&\nonumber
\ge\frac{1}{\#(\Z^2/kz\Z^2)}\int_{I\times\T^2}|u|^4dxdt\gtrsim_\epsilon \frac{N^4}{k^2|z|^2}.
\end{align}
By the Strichartz estimate on $[-\pi,\pi]\times\T^2$ \cite[(1.2)]{herr2024strichartz}, we have
\begin{equation}\label{eq:L4 Stri [-pi,pi]}
\int_{[-\pi,\pi]\times\T^2}|P_{kz\Z^2}(|u|^2)|^2dxdt\le\int_{[-\pi,\pi]\times\T^2}|u|^4dxdt\les N^4\log N.
\end{equation}
For $k\ll_\epsilon \sqrt {\frac{M}{\log N}}/|z|$, by \eqref{eq:|u|^2>} and \eqref{eq:L4 Stri [-pi,pi]}, we have
\[
\mc I(kz\Z^2)=\int_{I\times\T^2}\left|P_{kz\Z^2}(|u|^2)\right|^2dxdt
-\frac{1}{M}\int_{[-\pi,\pi]\times\T^2}|P_{kz\Z^2}(|u|^2)|^2dxdt\gtrsim_\epsilon\frac{N^4}{k^2|z|^2},
\]
subtracting \eqref{eq:|u|^2<} from which shows that, choosing sufficiently large $d_0=d_0(\epsilon)$,
\begin{equation}\label{eq:I(kzZ^2)}
\mc I(\{kz\eta:\gcd(\eta)\le d_0\})\gtrsim_\epsilon\frac{N^4}{k^2|z|^2}.
\end{equation}
Let $K:=\frac{4N}{R|z|}$. By the range condition of $R$ in this lemma, $1\ll_\epsilon K\ll_\epsilon\sqrt{\frac{M}
{\log N}}/|z|$ holds. Note that $R_K=R$. Let $K':=\lfloor (1+\frac{1}{d_0}) K\rfloor$. Since the products of elements from $\{1,\ldots,d_0\}$ and $\{K,\ldots,K'\}$ are all distinct, by \eqref{eq:I(kzZ^2)}, we have
\begin{equation}\label{eq:IKK'>}
\sum_{\gcd(\eta)\le d_0}\mc I(\{Kz\eta,\ldots,K'z\eta\})=\sum_{k=K}^{K'}\mc I(\{kz\eta:\gcd(\eta)\le d_0\})\gtrsim_\epsilon\frac{N^4}{K|z|^2}.
\end{equation}
On the opposite side, by \eqref{eq:|u|^2(eta)}, for $\eta\in\Z^2\setminus\{0\}$, we have
\begin{equation}\label{eq:|u|^2 eta Z upper bound}
\mc I(\{Kz\eta,\ldots,K'z\eta\})\les\sum_{k=K}^{K'}\frac{N^3}{|kz\eta|}\les\frac{N^3}{|z\eta|}.
\end{equation}
Let $\mc E$ be the set of the almost maximizers to \eqref{eq:|u|^2 eta Z upper bound}, that is,
\[
\mc E:=\left\{\eta\in  [ R]^2:\mc I(\{Kz\eta,\ldots,K'z\eta\})\sim_\epsilon \frac{N^3}{|z\eta|}\right\}.
\]
If $\#\mc E\ll_\epsilon R^2$, by \eqref{eq:|u|^2 eta Z upper bound}, we have
\[
\sum_{\eta\in[R]^2}\mc I(\{Kz\eta,\ldots,K'z\eta\})
\les o(1)\cdot \sum_{\eta\in [R]^2}\frac{N^3}{|z\eta|}
+\sum_{\eta\in\mc E}\frac{N^3}{|z\eta|}
\ll_\epsilon \frac{N^4}{K|z|^2},
\]
which contradicts \eqref{eq:IKK'>}.
Thus, we have $\#\mc E\sim_\epsilon R^2$.

We fit the condition $z\eta\in\Z^2_\irr$. For any $p\in\mc P$ and $d\ge1$, since
\[
\gcd(z\eta)=\gcd(p^r\eta)=\gcd(\eta)\text{ for every }\eta\notin\bar p\Z^2,
\]
we have
\begin{align*}
\#\{\eta\in\mc E:\gcd(z\eta)\ge d\}&\le\#\left([ R]^2\cap\left(\cup_{k\ge d}k\Z^2\cup\bar p \Z^2\right)\right)
\\&
\les R^2\cdot\left(\frac{1}{d}+\frac{1}{|p|^2}\right)\les_\epsilon\#\mc E\cdot\left(\frac{1}{d}+\frac{1}{|p|^2}\right).
\end{align*}
Thus, there exist $d$ and $p\in\mc P$, both depending only on $\epsilon$, such that $|p|>d$ and
\[
\#\{\eta\in\mc E:\gcd(z\eta)\le d\}\ge\#\mc E/2.
\]
Since $z$ is a pure power of $p$, $\gcd(z\eta)\le d<|p|$ implies $\eta/\gcd(z\eta)=\eta/\gcd(\eta)\in \Z^2$. Thus,
\[
\mc E':=\left\{\frac{\eta}{\gcd(z\eta)}:\eta\in\mc E\text{ and }\gcd(z\eta)\le d\right\}\subset\Z^2_\irr
\]
has size $\#\mc E'\gtrsim\#\mc E/d\gtrsim_\epsilon R^2$.
Let $\eta'=\frac{\eta}{\gcd(z\eta)}\in\mc E'$. For each $k\in\N$ and $\xi\in \Z^2$, since
\[
\supp(\F_t^{-1}(\widehat{|u|^2}(k\xi)))\subset \{|\xi'+k\xi|^2-|\xi'|^2:\xi'\in\Z^2\}\subset k\Z,
\]
$\widehat{|u|^2}(k\xi)$ is $\frac{2\pi}{k}$-periodic. Thus, by the shortness $|I|=\frac{2\pi}{M}\les\frac{1}{K}$, we have
\begin{align}\label{eq:N^4/R^2<f(Q)}
\frac{N^3}{R|z|}\les\frac{N^3}{|z\eta|}&\les_\epsilon\mc I(\{Kz\eta,(K+1)z\eta,\ldots,K'z\eta\})
\\&\nonumber
\les\sum_{k=K}^{K'}\int_I\left|\widehat{|u|^2}(kz\eta)\right|^2dt
\les\frac{1}{K}\sum_{k=K}^{K'}\int_{[-\pi,\pi]}\left|\widehat{|u|^2}(kz\eta)\right|^2dt
\\&\nonumber
\les\frac{1}{K}\int_{[-\pi,\pi]\times\T^2}\left|P_{z\eta\Z}(|u|^2)\right|^2dxdt
\les\frac{1}{K}\int_{[-\pi,\pi]\times\T^2}\left|P_{z\eta'\Z}(|u|^2)\right|^2dxdt,
\end{align}
which equals $\frac{1}{K}\norm{f}_{\Pi_{z\eta'}}^4$. \eqref{eq:N^4/R^2<f(Q)} can be rewritten as
\[
\#\left\{\eta'\in [R]^2:z\eta'\in\Z^2_\irr\text{ and }\norm{f}_{\Pi_{z\eta'}}^4\gtrsim_\epsilon \frac{KN^3}{R|z|}\right\}
\gtrsim\#\mc E'\gtrsim_\epsilon R^2,
\]
which implies \eqref{eq:sparse sublattice} by $\frac{KN^3}{|Rz|}\gtrsim\frac{N^4}{|Rz|^2}$, finishing the proof.
\end{proof}
Lemma \ref{lem:sparse sublattice} has two roles. First, \eqref{eq:sparse sublattice} contributes by itself to the quadratic structure of locally quadratic modulations (Section \ref{subsubsec:inverse for locally quadratic}).
The second role is to extract the largeness of Gowers $U^7$-norm. As a preparation to this, we first define the norm to work on:
\begin{defn}\label{defn:N-norm}
For $M,N\in\N$, $\mc N_{M,N}$ is the norm on $f:[N]^2\rightarrow\C$ defined as
\begin{align}
\norm{f}_{\mc N_{M,N}}&:=\frac{1}{N}\left(\frac{1}{M}\int_{[-\pi,\pi]\times\T^2}F_{M}(t)|e^{it\De}\F^{-1}f|^4dxdt\right)^{1/4}
\\&\nonumber=\frac{1}{N}\left(\frac{1}{M}\sum_{\tau\in[M]}\frac{M-|\tau|}{M}\sum_{Q\in\mc Q^\tau}f(Q)\right)^{1/4},
\end{align}
where we denoted by $F_M:\R/2\pi\Z\rightarrow[0,\infty)$ the Fej\'{e}r kernel
\[
\widehat{F_M}(\tau):=\max\{ 0,1-|\tau|/M\}.
\]
\end{defn}
Note that by \eqref{eq:Fejer bound}, for $M\ge\log N$ and $f:[N]^2\rightarrow\C$, we have
\begin{equation}\label{eq:N is indeed weaker than l2}
\norm{f}_{\mc N_{M,N}}\les\norm{f}_{\ell^2}/N.
\end{equation}
For adaptation to Theorem \ref{thm:degree reduction}, we will also use the following induced norm $\ol{\mc N_{M,N}}$ on functions $f:[N+\tilde NN]\rightarrow\C$:
\[
\norm{f}_{\ol{\mc N_{M,N}}}:=\sup_{\mc R\subset[N]^2}\norm{\chi_{\mc R}(f\circ\varphi_N)}_{\mc N_{M,N}},
\]
where $\mc R$ ranges over rectangles $\mc R=I_1\times I_2$, $I_1,I_2\subset[N]$. Here we are using notations in Section \ref{app-sec:deg}. 
The cutoff $\chi_{\mc R}$ is involved for the following alt-stability:
\begin{lem}\label{lem:N-norm is alt-stable}
Let $\{M_N\}$ be any sequence in $\N$ such that $M_N/\log N\rightarrow\infty$.
Then, $\{\ol{\mc N_{M_N,N}}\}_{N\in\N}$ is alt-stable (in the sense of Definition \ref{defn:alt-stable}).
\end{lem}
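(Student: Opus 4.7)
The plan is to first establish alt-stability for the 2D norm $\mc N_{M_N,N}$ via an autocorrelation identity combined with Cauchy--Schwarz, and then transfer the conclusion to $\ol{\mc N_{M_N,N}}$ using the map $\varphi_N$. The basic upper bound $\norm{\cdot}_{\ol{\mc N_{M_N,N}}} \le \norm{\cdot}_{\ell^\infty}$ is immediate from the parallelogram count $\#\mc Q^{\le M_N}([N]^2) \les M_N N^4$ of Proposition \ref{prop:count log}, which applies since $M_N \gg \log N$.

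For the core step, let $h:[N]^2 \to \D$ and reparametrize a parallelogram $Q \in \mc Q^\tau$ via $\eta := \xi_1 - \xi_4$, $u := \xi_3$, $v := \xi_4$, so that $\tau_Q = 2\eta \cdot (v - u)$ and $h(Q) = \Alt_\eta h(u)\ol{\Alt_\eta h(v)}$. Setting $w := v - u$ and introducing the autocorrelation $R_\eta(w) := \sum_\zeta \Alt_\eta h(\zeta)\ol{\Alt_\eta h(\zeta + w)}$ rewrites Definition \ref{defn:N-norm} as
\[
N^4 \norm{h}_{\mc N_{M_N,N}}^4 = \frac{1}{M_N}\sum_{\eta, w \in \Z^2}\widehat{F_{M_N}}(2\eta \cdot w)\, R_\eta(w).
\]
A parallel computation with $h$ replaced by $\Alt_{\eta_0}h$, together with the telescoped form $\Alt_{\eta_0,\eta}h(\zeta) = h(\zeta)\ol{h(\zeta+\eta_0)h(\zeta+\eta)}h(\zeta+\eta_0+\eta)$ and the substitution $x := \zeta + \eta_0$ to evaluate the $\eta_0$-average, yields the companion identity
\[
N^4 \sum_{\eta_0 \in [2N]^2}\norm{\Alt_{\eta_0}h}_{\mc N_{M_N,N}}^4 = \frac{1}{M_N}\sum_{\eta, w}\widehat{F_{M_N}}(2\eta \cdot w)\,|R_\eta(w)|^2.
\]
Cauchy--Schwarz between these two expressions, combined with the lattice-strip count $\sum_{\eta, w \in [2N]^2}\widehat{F_{M_N}}(2\eta \cdot w) \les M_N N^2$ (proved by bounding $\#\{w \in [2N]^2 : |\eta \cdot w| \le M_N/2\} \les M_N N/|\eta|$ for $\eta \neq 0$ and summing, with $M_N \gg \log N$ absorbing the secondary $N^2\log N$ contribution from $\gcd(\eta) > M_N$), produces
\[
N^2 \norm{h}_{\mc N_{M_N,N}}^8 \les \sum_{\eta_0 \in [2N]^2}\norm{\Alt_{\eta_0}h}_{\mc N_{M_N,N}}^4.
\]
Dividing by $\#[2N]^2 \sim N^2$ and invoking $\mc N^4 \le \mc N$ (valid since $\mc N \le \norm{\cdot}_{\ell^\infty} \le 1$) yields $\E_{\eta_0 \in [2N]^2}\norm{\Alt_{\eta_0}h}_{\mc N_{M_N,N}} \gtrsim \epsilon^8$ whenever $\norm{h}_{\mc N_{M_N,N}} \ge \epsilon$.

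To transfer to $\ol{\mc N_{M_N,N}}$: given $f:[N + \tilde N N] \to \D$ with $\norm{f}_{\ol{\mc N_{M_N,N}}} \ge \epsilon$, choose a maximizing rectangle $\mc R \subset [N]^2$ and set $g := \chi_{\mc R}(f \circ \varphi_N)$, so that $\norm{g}_{\mc N_{M_N,N}} \ge \epsilon$. The identity $\Alt_\eta g = \chi_{\mc R \cap (\mc R - \eta)} \cdot (\Alt_{\varphi_N(\eta)}f) \circ \varphi_N$ for $\eta \in [2N]^2$ gives the pointwise comparison $\norm{\Alt_\eta g}_{\mc N_{M_N,N}} \le \norm{\Alt_{\varphi_N(\eta)}f}_{\ol{\mc N_{M_N,N}}}$, and since $\varphi_N$ injects $[2N]^2$ into $[2(N + \tilde N N)]$ as a subset of positive density (with $\tilde N = 2^9 N$, the ratio is of order $2^{-7}$), averaging the 2D conclusion applied to $g$ produces $\E_{\eta' \in [2(N + \tilde N N)]}\norm{\Alt_{\eta'}f}_{\ol{\mc N_{M_N,N}}} \gtrsim_\epsilon 1$, as desired. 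The main technical obstacle is the lattice-strip count in the Cauchy--Schwarz step, whose success hinges critically on the hypothesis $M_N/\log N \to \infty$.
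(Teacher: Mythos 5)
Your proof is correct and takes essentially the same approach as the paper's. The paper also works with the 2D norm $\mc N_{M_N,N}$, applies Cauchy--Schwarz after centering parallelograms at the origin, and transfers via the rectangle identity $\Alt_\eta(\chi_{\mc R}(\tilde f\circ\varphi_N))=\chi_{\mc R\cap(\mc R-\eta)}\cdot((\Alt_{\varphi_N(\eta)}\tilde f)\circ\varphi_N)$. Your autocorrelation $R_\eta(w)$ is exactly the paper's quantity $g(Q_0)$ for $Q_0=(0,w,\eta+w,\eta)\in\mc Q^\tau_0$, and the two companion identities you derive are the paper's \eqref{eq:N-norm is alt-stable, 1} and \eqref{eq:N-norm is alt-stable, 3} after unwinding $\sum_{Q_0\in\mc Q^\tau_0}|g(Q_0)|^2=\sum_{\eta_0}\sum_{Q\in\mc Q^\tau}\Alt_{\eta_0}f(Q)$. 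The one minor departure is cosmetic: where the paper quotes the parallelogram count \eqref{eq:para-count} (proved via the Strichartz estimate \eqref{eq:L4 Stri}) to feed the Cauchy--Schwarz step, you carry out the lattice-strip count $\sum_{\eta,w\in[2N]^2}\widehat{F_{M_N}}(2\eta\cdot w)\les M_NN^2+N^2\log N$ by hand, which is self-contained and makes explicit where $M_N\gg\log N$ enters (absorbing the $N^2\log N$ tail); the resulting bounds are the same.
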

\begin{proof}
By \eqref{eq:N is indeed weaker than l2}, $\{\mc N_{M_N,N}\}_{N\in\N}$ is $\ell^\infty$-bounded and so is $\{\ol{\mc N_{M_N,N}}\}_{N\in\N}$. Let $\epsilon>0$, $N\in\N$, and $\tilde f:[N+\tilde NN]\rightarrow\D$ be a function such that $\norm{\tilde f}_{\ol{\mc N_{M_N,N}}}\ge\epsilon$. Then, there exists a rectangle $\mc R\subset[N]^2$ such that $f=\chi_{\mc R}(\tilde f\circ\varphi_N)$ satisfies
\[
\norm{f}_{\mc N_{M_N,N}}\ge\epsilon.
\]
Let $M=M_N$ and for $Q\in\mc Q$, denote
\[
g(Q):=\sum_{x\in\Z^2}f(x+Q).
\]
Denote by $\mc Q^\tau_0$, $\tau\in\Z$ the set of $Q=(0,\xi_2,\xi_3,\xi_4)\in\mc Q^\tau$.
We have
\begin{equation}\label{eq:N-norm is alt-stable, 1}
\epsilon^4\le\norm{f}_{\mc N_{M,N}}^4\le\frac{1}{MN^4}\sum_{\tau\in[M]}\widehat{F_M}(\tau)\sum_{Q\in\mc Q^\tau}f(Q)\le\frac{1}{MN^4}\sum_{\tau\in[M]}\widehat{F_M}(\tau)\sum_{Q\in\mc Q^\tau_0}g(Q).
\end{equation}
By \eqref{eq:para-count} setting $S=[4N]^2$ in there, we have
\begin{equation}\label{eq:N-norm is alt-stable, 2}
\frac{1}{MN^2}\sum_{\tau\in[M]}\sum_{Q\in\mc Q^\tau_0([2N]^2)}1\les\frac{1}{MN^4}\#\mc Q^{\le M}([4N]^2)
\les 1.
\end{equation}
Since $0\le\widehat{F_M}\le1$, by \eqref{eq:N-norm is alt-stable, 1}, \eqref{eq:N-norm is alt-stable, 2}, and the Cauchy-Schwarz inequality, we have
\begin{equation}\label{eq:N-norm is alt-stable, 3}
\frac{1}{MN^6}\sum_{\tau\in[M]}\widehat{F_M}(\tau)\sum_{Q\in\mc Q^\tau_0}\left|g(Q)\right|^2\gtrsim_\epsilon1
\end{equation}
because $g(Q)=0$ if $Q\notin\mc Q([2N]^2)$.
By the identity
\[
\sum_{Q\in\mc Q^\tau_0}|g(Q)|^2=\sum_\eta\sum_{Q\in\mc Q^\tau}\Alt_\eta f(Q),\qquad \tau\in\Z,
\]
\eqref{eq:N-norm is alt-stable, 3} can be rewritten as
\begin{equation}\label{eq:N-norm is 2D alt-stable}
\E_{\eta\in[2N]^2}\norm{\Alt_\eta f}_{\mc N_{M_N,N}}^4\gtrsim_\epsilon1.
\end{equation}
By the identity
\begin{equation}\label{eq:Alt R identity}
\Alt_{\eta}f=\Alt_\eta\left(\chi_{\mc R}(\tilde f\circ\varphi_N)\right)=\chi_{\mc R\cap(\mc R-\eta)}\cdot\left((\Alt_{\varphi_N(\eta)} \tilde f)\circ\varphi_N\right),\qquad\eta\in[2N]^2
\end{equation}
and that $\mc R\cap(\mc R-\eta)$ is a rectangle, \eqref{eq:N-norm is 2D alt-stable} transfers to \eqref{eq:alt-stable}, finishing the proof.
\end{proof}
\begin{lem}\label{lem:N-norm largeness}
Let $\{M_N\}$ be a sequence in $\N$ such that $M_N/\log N\rightarrow\infty$. Let $\epsilon>0$. Then, for any $a\in\N$, $N\gg_{\epsilon,a}1$, and function $f:[N]^2\rightarrow\D$ such that
\[
\norm{f}_{\mc N_{M_N,N}}\ge\epsilon,
\]
\begin{equation}\label{eq:N-norm large >> L4 large}
\norm{e^{it\De}\F^{-1}f}_{L^4_{t,x}([-C/M_N,C/M_N]\times\T^2)}\gtrsim_\epsilon N
\end{equation}
holds for some $C\les_\epsilon1$ and
\begin{equation}\label{eq:N-norm large >> Pi norm large}
\#\{\eta\in[N/a]^2\cap\Z^2_\irr:\norm{f}_{\Pi_\eta}\gtrsim_\epsilon\sqrt{aN}\}\gtrsim_\epsilon (N/a)^2.
\end{equation}
\end{lem}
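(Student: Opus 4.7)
The proof splits into two parts: I would establish \eqref{eq:N-norm large >> L4 large} directly from the Fej\'er-kernel form of $\mc N_{M_N,N}$, and then obtain \eqref{eq:N-norm large >> Pi norm large} by feeding \eqref{eq:N-norm large >> L4 large} into Lemma \ref{lem:sparse sublattice}. For \eqref{eq:N-norm large >> L4 large}, I would start from the expansion $\norm f_{\mc N_{M_N,N}}^4\ge\epsilon^4$ and localize the Fej\'er-weighted integral onto the peak $|t|\le T_0:=\pi/(a\log N)$. Using the pointwise decay $F_{M_N}(t)\les (M_N t^2)^{-1}$ away from the origin, I would decompose the tail $|t|>T_0$ into dyadic shells $|t|\sim 2^jT_0$ and estimate the $L^4_{t,x}$-norm on each shell via \eqref{eq:L4 Stri} applied on sub-intervals of length $1/\log N$, which yields $\int_J\norm{e^{it\De}\F^{-1}f}_{L^4}^4 dt\les(1+|J|\log N)N^4$ on an interval $J$. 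Summing the resulting geometric series gives a total tail contribution $\les\frac{a^2(\log N)^2 N^4}{M_N}$. Under $M_N/\log N\to\infty$ this is at most $\tfrac{\epsilon^4}{2}N^4 M_N$ for $N\gg_{a,\epsilon}1$, so at least half of the $\mc N_{M_N,N}^4$-mass lives on $[-T_0,T_0]$; dividing out the pointwise bound $F_{M_N}\les M_N$ then yields \eqref{eq:N-norm large >> L4 large}.

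For \eqref{eq:N-norm large >> Pi norm large}, I would apply \eqref{eq:N-norm large >> L4 large} with a larger parameter $a'=a'(a,\epsilon)$ in place of $a$ so that its conclusion gives exactly the hypothesis \eqref{eq:Pi assump} of Lemma \ref{lem:sparse sublattice} with $M:=a'\log N$. Plugging in $r=0$ (so that $p^r=1$ and the condition $p^r\eta\in\Z^2_\irr$ collapses to $\eta\in\Z^2_\irr$) and $R:=N/a$, the four scale conditions $|p^r|\ll_\epsilon\sqrt{M/\log N}$, $\log N\ll_\epsilon M$, $N\sqrt{\log N/M}\ll_\epsilon R$, and $R\ll_\epsilon N/|p^r|$ reduce respectively to $a'\gtrsim_\epsilon 1$, $a'\gtrsim_\epsilon 1$, $a'\gtrsim_\epsilon a^2$, and $a\gtrsim_\epsilon 1$. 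The choice $a':=C_\epsilon(1+a^2)$ with $C_\epsilon$ large enough meets the first three, and the conclusion of Lemma \ref{lem:sparse sublattice} becomes exactly \eqref{eq:N-norm large >> Pi norm large}, since $N/\sqrt{R}=\sqrt{aN}$ on a set of size $\gtrsim_\epsilon(N/a)^2$. The residual case $1\le a\les_\epsilon 1$ (where the fourth condition may fail) is handled by invoking the result for the fixed value $a_\ast:=\lceil c_\epsilon^{-1}\rceil$ and using the nesting $[c_\epsilon N]^2\subset[N/a]^2$ together with $\sqrt{a_\ast N}\ge\sqrt{aN}$.

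The main obstacle is the polylogarithmic bookkeeping in \eqref{eq:N-norm large >> L4 large}: replacing the dyadic tail analysis by either the crude bound $F_{M_N}\le M_N$ everywhere or the global Strichartz estimate $\int_{-\pi}^{\pi}\norm{e^{it\De}\F^{-1}f}_{L^4}^4 dt\les(\log N)N^4$ (without the sub-interval refinement) would require the strictly stronger growth $M_N\gg(\log N)^{3/2}$ rather than the stated $M_N\gg\log N$. Matching the Fej\'er decay $(Mt^2)^{-1}$ shell by shell with the Strichartz scale $1/\log N$ of \eqref{eq:L4 Stri} is precisely what recovers the extra factor of $\log N$ and keeps the argument inside the stated hypothesis $M_N/\log N\to\infty$.
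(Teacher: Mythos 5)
Your proof is correct and follows essentially the same route as the paper: part (i) of the statement is obtained by the Fej\'{e}r-kernel decay $F_{M}(t)\lesssim\min\{M,(Mt^2)^{-1}\}$ combined with the $L^4$-Strichartz estimate \eqref{eq:L4 Stri} on sub-intervals of length $1/\log N$ (the paper phrases this as "repeating the proof of \eqref{eq:Fejer bound}"), and part (ii) is then exactly Lemma \ref{lem:sparse sublattice} with $r=0$ and $R=N/\max\{a,O_\epsilon(1)\}$, precisely as you set up; your bookkeeping of the dyadic tail and the constraint $M_N\gtrsim a\log N/\epsilon^2$ matches the paper's hypothesis $M_N/\log N\to\infty$.
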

\begin{proof}
Since the Fej\'er kernel $F_{M_N}(t)$ decays rapidly for $|t|\ge C/M_N$, repeating the proof of \eqref{eq:Fejer bound} yields
\[
\int_{[-\pi,\pi]\setminus[-C/M_N,C/M_N]}F_{M_N}(t)|e^{it\De}\F^{-1}f|^4dxdt=o_C(1)\cdot M_N\norm{f}_{\ell^2(\Z^2)}^4=o_C(1)\cdot M_NN^4.
\]
Since $\norm{f}_{\mc N_{M_N,N}}\ge\epsilon$, the same integral on $[-\pi,\pi]$ is $\epsilon$-comparable to $M_NN^4$.
Thus, by the triangle inequality, \eqref{eq:N-norm large >> L4 large} holds. Then, \eqref{eq:N-norm large >> Pi norm large} holds by \eqref{eq:sparse sublattice} plugging in $r=0$ and $R=N/\max\{a,O_\epsilon(1)\}$.
\end{proof}
\begin{lem}\label{lem:Pi}
Let $\epsilon>0$ and $p\in\mc P$. There exists $K\in\N$ satisfying the following:

Let $N\gg_{\epsilon,p}1$ and $f:[N]^2\rightarrow\D$. Assume that
\[
\mc K:=\left\{k\in\{1,\ldots,K\}:\norm f_{\Pi_{p^k}}\ge\epsilon N/|p^k|^{1/2}\right\}
\]
satisfies $\#\mc K\ge\epsilon K$.
Then, there exist $k\in\mc K$ and $\xi\in\Z^2/p^k\Z^2$ such that
\[
\E_{y\in [10N/|p|^k]}\norm{f(p^k(\cdot,y)+\xi)}_{U^3([10N/|p^k|])}\gtrsim_{\epsilon,p} 1.
\]
\end{lem}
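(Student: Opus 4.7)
The plan is to convert the multi-scale $\Pi_{p^k}$-information into $U^3$-structure in three stages: first extract tensor-product correlations from each $\Pi_{p^k}$-bound, then use the multi-scale hypothesis $\#\mc K\ge\epsilon K$ to promote the unimodular tensor factors to polynomial-phase functions, and finally read off $U^3$-structure on rows.

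\textbf{Stage 1 (tensor extraction).} For each $k\in\mc K$, Lemma~\ref{lem:Pi_eta1 eta2} gives
\[
\norm{f}_{\Pi_{p^k}}^4=\sum_{\xi\in\Z^2/p^k\Z^2}\norm{g_\xi^{(k)}}_\Pi^4,\qquad g_\xi^{(k)}(w):=f(p^kw+\xi).
\]
Each summand is trivially bounded above by $\norm{g_\xi^{(k)}}_{\ell^2}^4\les(N/|p|^k)^4$, so the hypothesis $\norm{f}_{\Pi_{p^k}}^4\ge\epsilon^4N^4/|p|^{2k}$ forces a density-$\gtrsim_\epsilon 1$ set $\Xi_k\subset\Z^2/p^k\Z^2$ on which $\norm{g_\xi^{(k)}}_\Pi\sim_\epsilon N/|p|^k$. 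For each $\xi\in\Xi_k$, Lemma~\ref{lem:q~1 => tensor} supplies unimodular $a_{k,\xi},b_{k,\xi}:[N/|p|^k]\to\T$ with $|\jp{g_\xi^{(k)},a_{k,\xi}\otimes b_{k,\xi}}|\gtrsim_\epsilon(N/|p|^k)^2$, and Cauchy--Schwarz in the vertical variable then gives $\E_{y}|\jp{g_\xi^{(k)}(\cdot,y),a_{k,\xi}}|^2\gtrsim_\epsilon(N/|p|^k)^2$.

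\textbf{Stage 2 (multi-scale rigidity and $U^3$-promotion).} The remaining task is to exhibit some $(k,\xi)$ for which $\norm{a_{k,\xi}}_{U^3}\gtrsim_{\epsilon,p}1$; together with Stage 1 and another Cauchy--Schwarz, this yields the desired $\E_y\norm{g_\xi^{(k)}(\cdot,y)}_{U^3}\gtrsim_{\epsilon,p}1$. To this end I plan to pair scales $k<k'$ in $\mc K$ and apply Lemma~\ref{lem:Pi_eta1 eta2} with the factorization $p^{k'}=p^{k'-k}\cdot p^k$: this relates the two tensor decompositions and forces the restriction of $a_{k,\xi}\otimes b_{k,\xi}$ to each finer subcoset $p^{k'}\Z^2+\xi'\subset p^k\Z^2+\xi$ to agree, as a unimodular function, with $a_{k',\xi'}\otimes b_{k',\xi'}$ up to bounded multiplicative corrections. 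Chaining such compatibilities along all of $\mc K$ (of density $\ge\epsilon$ in $\{1,\ldots,K\}$), then applying the inverse Gowers $U^3$ theorem (Proposition~\ref{prop:inverse U^3}) and Lemma~\ref{lem:two tensor => U3} to the resulting almost-polynomial phase, produces the required $U^3$-lower bound at a surviving scale.

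\textbf{Main obstacle.} The difficult step is Stage 2: making the ``restriction compatibility'' of tensor factors across scales rigid enough to extract a genuine polynomial-phase structure. The hypothesis $|p|>\sqrt 2$ is essential, for it makes $p^k\Z[i]$ a strictly nested tower in which each refinement carries nontrivial new information, and it avoids the degenerate Gaussian primes $1\pm i$ where the quotient lattice loses two-dimensionality. The combinatorial bookkeeping needed to pigeonhole the unimodular corrections across a positive-density set of scales, while simultaneously invoking Lemma~\ref{lem:two tensor => U3} in the contrapositive direction to lift $U^3$-norms from the tensor factor to the rows of $f$, is where I expect the proof to be most delicate.
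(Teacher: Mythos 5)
Your Stage 1 matches the paper's opening move exactly: Lemma~\ref{lem:Pi_eta1 eta2} to decompose $\Pi_{p^k}$, the trivial upper bound $\norm{f(p^k\cdot+\xi)}_\Pi\les N/|p^k|$ to extract a positive-density set of near-extremal cosets, and Lemma~\ref{lem:q~1 => tensor} to produce unimodular tensor factors. From there, however, your Stage 2 takes a different route, and the pivotal step does not hold up.

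You claim that applying Lemma~\ref{lem:Pi_eta1 eta2} with $p^{k'}=p^{k'-k}\cdot p^k$ ``forces the restriction of $a_{k,\xi}\otimes b_{k,\xi}$ to each finer subcoset $p^{k'}\Z^2+\xi'$ to agree, as a unimodular function, with $a_{k',\xi'}\otimes b_{k',\xi'}$ up to bounded multiplicative corrections.'' This is not a consequence of anything in play. Lemma~\ref{lem:Pi_eta1 eta2} is an identity for $\norm{f}_{\Pi}^4$, and Lemma~\ref{lem:q~1 => tensor} produces the tensor factors by a translation-and-pigeonhole argument; there is no canonical choice and no reason the factors from different scales should restrict compatibly. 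Near-extremality of $\Pi_{p^k}$ for two values of $k$ gives two approximately tensor-product structures, but approximate tensor decompositions of a bounded function are very far from unique, so you cannot ``chain compatibilities along all of $\mc K$'' to synthesize an almost-polynomial phase, and consequently the invocation of Proposition~\ref{prop:inverse U^3} is unmoored.

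The paper instead sidesteps exactly this issue with a structured/pseudorandom split. For each $(k,\xi)$ it decomposes the row factor $g_{k,\xi}=g_{k,\xi}^*+g_{k,\xi,\err}$ via Lemma~\ref{lem:profile decomp Ud}, where $g^*$ is anti-uniform (any $\D$-valued $g$ with $|\jp{g,g^*}|\gtrsim 1$ has $\norm{g}_{U^3}\gtrsim 1$) and $\norm{g_{k,\xi,\err}}_{U^3}\le\delta$. Summing $\Re\jp{f,F_k}\gtrsim_\epsilon N^2$ over $k\in\mc K$ gives $\Re\jp{f,\sum_k F_k}\gtrsim KN^2$, and then either the structured parts $F_k^*$ carry the sum or the error parts $F_k^{\err}$ do. The second alternative is ruled out by a pairwise $\ell^2$-overlap argument: $\norm{\sum_k F_k^{\err}}_{\ell^2}\gtrsim KN$ forces two error terms at scales $k<k'$ with $k'-k=O_\epsilon(1)$ to correlate, which by Lemma~\ref{lem:tensor => q~1} gives $\norm{F_k^{\err}}_{\Pi_{p^{k'}}}$ large, and then by Lemma~\ref{lem:Pi_eta1 eta2} and Lemma~\ref{lem:two tensor => U3} (used, as you correctly anticipate, in the contrapositive) one of the error factors has $U^3$-norm $\gtrsim_{\epsilon,p}1$ — contradicting the $\delta$-smallness for $\delta$ small. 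In the surviving alternative, a pigeonhole over $(k,\xi)$ and then over rows $y$ gives many rows $y$ with $|\jp{f(p^k(\cdot,y)+\xi),g_{k,\xi}^*}|\gtrsim\epsilon$, which by the anti-uniformity of $g^*$ yields the desired $U^3$ lower bound. In short, the paper never needs cross-scale compatibility of the tensor factors; it only needs that the structured and random parts of the correlation cannot both be small, and the multi-scale hypothesis is consumed in finding two nearby scales $k,k'$ whose errors would be forced to interact. You should reorganize Stage 2 around this decomposition-and-dichotomy argument rather than the compatibility chain.
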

\begin{proof}
For each $k\in\mc K$, since $\#\supp(f(p^k\cdot+\xi))\les N^2/|p^k|^2$ for $\xi\in\Z^2/p^k\Z^2$ and
\[
\sum_{\xi\in\Z^2/p^k\Z^2}\norm{f(p^k\cdot+\xi)}_{\Pi}^4=\norm{f}_{\Pi_{p^k}}^4\gtrsim_\epsilon N^4/|p^k|^2,
\]
by Lemma \ref{lem:q~1 => tensor}, there exist $g_{k,\xi},h_{k,\xi}:[10N/|p^k|]\rightarrow\D$, $\xi\in\Z^2/p^k\Z^2$ such that
\begin{equation}\label{eq:<f,F_k>}
\sum_{\xi\in\Z^2/p^k\Z^2}\Re\jp{f(p^k\cdot+\xi),g_{k,\xi}\otimes h_{k,\xi}}_{\ell^2}\gtrsim_\epsilon \frac{N^4/|p^k|^2}{N^2/|p^k|^2}\gtrsim N^2.
\end{equation}
Let $F_k:[N]^2\rightarrow\D$ be the function defined as
\[
F_k(p^k\cdot+\xi):=g_{k,\xi}\otimes h_{k,\xi},\qquad \xi\in\Z^2/p^k\Z^2.
\]
Taking a summation of \eqref{eq:<f,F_k>} over $k\in\mc K$, we have
\begin{equation}\label{eq:<f,sum F_k>}
\Re\jp{f,\sum_{k\in\mc K} F_k}_{\ell^2([N]^2)}\gtrsim_\epsilon KN^2.
\end{equation}
Let $\delta=\delta(\epsilon,p)>0$ be a number to be fixed later. For each $k\in\mc K$, applying Lemma \ref{lem:profile decomp Ud} to $g_{k,\xi}$ setting $S=[10N/|p^k|]$ in there, there exists a function $g_{k,\xi}^*:[10N/|p^k|]\rightarrow\D$ such that
\begin{equation}\label{eq:g* pos}
\inf_{\substack{g:[10N/|p^k|]\rightarrow\D \\ \left|\jp{g,g_{k,\xi}^*}_{\ell^2([10N/|p^k|])}\right|\ge cN/|p^k|}}\norm{g}_{U^3([10N/|p^k|])}\gtrsim_{c,\delta} 1
\end{equation}
holds for every $c>0$ and the remainder $g_{k,\xi,\err}:=g_{k,\xi}-g_{k,\xi}^*$ satisfies
\begin{equation}\label{eq:g err small}
\norm{g_{k,\xi,\err}}_{U^3}\le\delta.
\end{equation}
Define $h_{k,\xi}^*$ and $h_{k,\xi,\err}$ similarly. For each $k\in\mc K$, we decompose $F_k=F_k^*+F_k^\err$ as follows:
\begin{equation}\label{eq:F_k^J}
F_k^*(p^k\cdot+\xi):=g_{k,\xi}^*\otimes h_{k,\xi}^*,\qquad\xi\in\Z^2/p^k\Z^2
\end{equation}
and
\begin{equation}\label{eq:F_k^err}
F_k^\err (p^k\cdot+\xi):=g_{k,\xi}\otimes h_{k,\xi,\err}+g_{k,\xi,\err}\otimes h_{k,\xi}-g_{k,\xi,\err}\otimes h_{k,\xi,\err},\qquad\xi\in\Z^2/p^k\Z^2.\end{equation}
By \eqref{eq:<f,sum F_k>}, we have either
\begin{equation}\label{eq:poss1}
\Re\jp{f,\sum_{k\in\mc K} F_k^*}_{\ell^2([N]^2)}\gtrsim_\epsilon KN^2
\end{equation}
or
\begin{equation}\label{eq:poss2}
\Re\jp{f,\sum_{k\in\mc K} F_k^\err}_{\ell^2([N]^2)}\gtrsim_\epsilon KN^2.
\end{equation}
We claim that \eqref{eq:poss2} does not happen. Since $\norm f_{\ell^2([N]^2)}\les N$, \eqref{eq:poss2} implies
\[
\norm{\sum_{k\in\mc K} F_k^\err}_{\ell^2([N]^2)}\gtrsim_\epsilon KN.
\]
Here since $|F_k^\err|\le 10$, $\max_k\norm{F_k^\err}_{\ell^2([N]^2)}\les N$ also holds, thus if we assume $K\gg_\epsilon1$, there exist $k,k'\in\mc K$ such that $0<k'-k\les_\epsilon1$ and $F_k$ and $F_{k'}$ are not almost $\ell^2$-orthogonal, i.e.,
\begin{equation}\label{eq:not almost ortho}
\left|\jp{F_k^\err,F_{k'}^\err}_{\ell^2([N]^2)}\right|\gtrsim_{\epsilon}N^2.
\end{equation}
Since $F_{k'}^\err(p^{k'}\cdot+\xi)$ is a sum of three tensor products of bounded functions on $[10N/|p^{k'}|]$ for each $\xi\in\Z^2/p^{k'}\Z^2$, by Lemma \ref{lem:tensor => q~1}, \eqref{eq:not almost ortho} implies
\begin{equation}\label{eq:pi' sum}
\sum_{\xi'\in\Z^2/p^{k'}\Z^2}\norm{F_{k}^\err(p^{k'}\cdot+\xi')}_{\Pi}\gtrsim\frac{N^2}{10N/|p^{k'}|}\gtrsim_\epsilon N|p^{k'}|.
\end{equation}
Since $p^{k'}/p^k=O_{\epsilon,p}(1)$, taking an $\ell^4$-partial sum in \eqref{eq:pi' sum} over cosets of $\Z^2/p^{k'-k}\Z^2$ yields
\begin{equation}\label{eq:pi sum}
\sum_{\xi\in\Z^2/p^{k}\Z^2}\norm{F_{k}^\err(p^{k}\cdot+\xi)}_{\Pi_{p^{k'-k}}}\gtrsim_{\epsilon,p}N|p^k|.
\end{equation}
Thus, by pigeonholing there exists $\xi\in\Z^2/p^k\Z^2$ such that
\[
\norm{F_k^\err(p^k\cdot+\xi)}_{\Pi_{p^{k'-k}}}\gtrsim_{\epsilon,p}N/|p^k|,
\]
which can be rewritten as
\[
\norm{g_{k,\xi}\otimes h_{k,\xi,\err}+g_{k,\xi,\err}\otimes h_{k,\xi}-g_{k,\xi,\err}\otimes h_{k,\xi,\err}}_{\Pi_{p^{k'-k}}}\gtrsim_{\epsilon,p}N/|p^k|.
\]
Thus, by Lemma \ref{lem:two tensor => U3} and $p^{k'-k}=O_{\epsilon,p}(1)$, we have
\[
\norm{g_{k,\xi}}_{U^3}\norm{h_{k,\xi,\err}}_{U^3}+\norm{g_{k,\xi,\err}}_{U^3}\norm{h_{k,\xi}}_{U^3}+\norm{g_{k,\xi,\err}}_{U^3}\norm{h_{k,\xi,\err}}_{U^3}\gtrsim_{\epsilon,p}1,
\]
then since $g_{k,\xi}$ and $h_{k,\xi}$ are $O(1)$-bounded, either
\[
\norm{g_{k,\xi,\err}}_{U^3}\gtrsim_{\epsilon,p}1 \text{ or } \norm {h_{k,\xi,\err}}_{U^3}\gtrsim_{\epsilon,p}1
\]
should hold. Thus, choosing $\delta=\delta(\epsilon,p)>0$ small enough, the case \eqref{eq:poss2} can be avoided.

For the case \eqref{eq:poss1}, there exists $k\in\mc K$ such that
\[
\Re\jp{f,F_k^*}_{\ell^2([N]^2)}\gtrsim_\epsilon N^2,
\]
which can be rewritten as
\[
\Re\sum_{\xi\in\Z^2/p^k\Z^2}\jp{f(p^k\cdot+\xi),g_{k,\xi}^*\otimes h_{k,\xi}^*}_{\ell^2}\gtrsim_\epsilon N^2.
\]
Then, by the pigeonhole principle, there exists $\xi\in\Z^2/p^k\Z^2$ such that
\begin{equation}\label{eq:<f,g*h*>}
\Re\jp{f(p^k\cdot+\xi),g_{k,\xi}^*\otimes h_{k,\xi}^*}_{\ell^2}\gtrsim_\epsilon N^2/|p^k|^2.
\end{equation}
By \eqref{eq:<f,g*h*>} and the $\ell^\infty$-boundedness of $g_{k,\xi}^*,h_{k,\xi}^*$, and $f$, we have
\[
\#\left\{y\in [10N/|p^k|]: \left|\jp{f(p^k(\cdot,y)+\xi,g_{k,\xi}^*}_{\ell^2}\right|\gtrsim_\epsilon N/|p^k|\right\}\gtrsim_\epsilon N/|p^k|.
\]
For each such $y$, by \eqref{eq:g* pos}, we have
\[
\norm{f(p^k(\cdot,y)+\xi)}_{U^3}\gtrsim_{\epsilon,p}1,
\]
finishing the proof.
\end{proof}
\begin{lem}\label{lem:8seg positivity}
Let $\epsilon,p,N,M,I,f$ be as in Lemma \ref{lem:sparse sublattice}.
For $N\sqrt {\frac{\log N}{M}}\ll_\epsilon R\ll_\epsilon N$, we have
\begin{equation}\label{eq:U3 positivity}
\#\{(\eta,\zeta)\in [R]^2\times [N]^2:|\eta|\sim_\epsilon R\text{ and }\norm{f(\cdot\eta+\zeta)}_{U^3([10N/|\eta|])}\gtrsim_\epsilon1\}\gtrsim_{\epsilon} R^2N^2.
\end{equation}
\end{lem}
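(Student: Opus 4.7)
The plan is to combine the sparse-sublattice inverse inequality (Lemma \ref{lem:sparse sublattice}) with the multi-scale $\Pi$-to-$U^3$ transference (Lemma \ref{lem:Pi}), applied to \emph{dilated slice functions} of $f$ along many auxiliary base directions furnished by Lemma \ref{lem:sparse sublattice}. Let $p$ be the Gaussian prime from Lemma \ref{lem:sparse sublattice} and $K = K(\epsilon, p)$ the integer from Lemma \ref{lem:Pi}. Setting $R_0 := \lfloor R/|p|^K \rfloor$ (which lies in the admissible range, provided the implicit constant in $R \gg_\epsilon N\sqrt{\log N/M}$ is adjusted), I would apply Lemma \ref{lem:sparse sublattice} with outer radius $R_0$ and each $r \in \{0, 1, \ldots, K\}$ to harvest, at each scale $r$, a positive-density set of base directions $\eta_0 \in [R_0]^2 \cap \Z^2_\irr$ along which $\norm{f}_{\Pi_{\eta_0 p^r}}$ is large.

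For each such primitive $\eta_0$ and each coset representative $\xi_0 \in \Z[i]/\eta_0\Z[i]$, introduce the slice $f_{\eta_0,\xi_0}(z) := f(\eta_0 z + \xi_0)$, a $\D$-valued function on a box in $\Z[i]$ of size $\sim N/|\eta_0|$. The key unfolding identity
\[
\sum_{\xi_0 \in \Z[i]/\eta_0\Z[i]} \norm{f_{\eta_0,\xi_0}}_{\Pi_{p^r}}^4 \;=\; \norm{f}_{\Pi_{\eta_0 p^r}}^4,
\]
obtained by re-indexing \eqref{eq:Pi_eta} through the bijection $z \mapsto \eta_0 z + \xi_0$, converts $\Pi$-information about $f$ at scale $\eta_0 p^r$ into $\Pi$-information about slices at scale $p^r$. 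A double pigeonhole over $(r,\xi_0)$ then supplies a positive density of pairs $(\eta_0, \xi_0)$ for which $\norm{f_{\eta_0,\xi_0}}_{\Pi_{p^r}}$ is large for a positive fraction of $r \in \{1, \ldots, K\}$; this is precisely the hypothesis of Lemma \ref{lem:Pi} applied to $f_{\eta_0,\xi_0}$.

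Lemma \ref{lem:Pi} then delivers, for each such $(\eta_0, \xi_0)$, an exponent $k = k(\eta_0, \xi_0) \in \{1, \ldots, K\}$ and a coset $\xi' \in \Z[i]/p^k\Z[i]$ with $\gtrsim_\epsilon N/(|\eta_0|\,|p|^k)$ translates $y$ such that the 1D function
\[
k_0 \mapsto f\bigl((\eta_0 p^k) k_0 + \zeta\bigr), \qquad \zeta := \eta_0 \xi' + i\eta_0 p^k y + \xi_0,
\]
has $U^3$-norm $\gtrsim_\epsilon 1$. Since $|\eta_0|\,|p|^k \le R_0|p|^K \le R$, the direction $\tilde\eta := \eta_0 p^k$ lies in $[R]^2$. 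Counting all valid tuples $(\eta_0, \xi_0, k, \xi', y)$ and invoking unique factorization in $\Z[i]$ (through $\eta_0 \in \Z^2_\irr$ and the fact that $p$ is a Gaussian prime) to bound the multiplicity of the parametrization $(\eta_0, \xi_0, k, \xi', y) \mapsto (\tilde\eta, \zeta)$ then yields \eqref{eq:U3 positivity}.

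The main obstacle will be the quantitative book-keeping ensuring that each pigeonhole step preserves positive density: the exponent $k$ returned by Lemma \ref{lem:Pi} varies with $(\eta_0, \xi_0)$, so the final count must aggregate contributions across different $\tilde\eta = \eta_0 p^k$ without losing the target rate $R^2 N^2$. This depends critically on the simultaneous largeness of $\norm{f}_{\Pi_{\eta_0 p^r}}$ across the scales $r = 0, 1, \ldots, K$, and on the fact that $|p|^K = O_\epsilon(1)$ is fixed, so that the passage from $R_0$ to $R$ costs only a multiplicative constant and the parametrization $(\xi_0, y)$ sweeps out a positive-density subset of $[N]^2$ for each realized direction $\tilde\eta$.
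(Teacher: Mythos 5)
Your proposal follows the same route as the paper's own proof: harvest base directions from Lemma \ref{lem:sparse sublattice} at each scale $p^r$, $r\le K$, unfold via \eqref{eq:Pi_eta1 eta2} to slice functions $f(\eta_0\cdot+\xi_0)$, pigeonhole (together with the per-slice trivial bound $\|f(\eta_0\cdot+\xi_0)\|_{\Pi_{p^r}}^4\lesssim N^4/(|p|^{r}|\eta_0|^2)^2$, which you should make explicit since the plain "double pigeonhole" needs it) to find pairs $(\eta_0,\xi_0)$ with large $\Pi_{p^r}$-norm across a positive fraction of $r$, and apply Lemma \ref{lem:Pi} to each such slice. Your multiplicity accounting via $\Z[i]$-unique-factorization and the $R_0=\lfloor R/|p|^K\rfloor$ normalization are cosmetic variants of what the paper does implicitly; the argument is correct.
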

\begin{proof}
Let $K=K(\epsilon,p)\in\N$ be a number to be fixed later.
For each $k\le K$, since $N\sqrt{\frac{\log N}{M}}\ll_\epsilon R\ll_\epsilon\frac{N}{|p^k|}$ holds, by Lemma \ref{lem:sparse sublattice}, we have
\[
\#\left\{\eta\in \Z^2_\irr\cap [R]^2:|\eta|\sim_\epsilon R\text{ and }\norm f_{\Pi_{p^k\eta}}\gtrsim_\epsilon N/(|p^k|^{1/2}R^{1/2})\right\}\gtrsim_\epsilon R^2.
\]
By \eqref{eq:Pi_eta1 eta2}, for each such $\eta$, we have
\[
\sum_{\xi\in\Z^2/\eta\Z^2}\norm {f(\cdot\eta+\xi)}_{\Pi_{p^k}}^4=\norm f_{\Pi_{p^k}}^4\gtrsim_\epsilon N^4/(|p^k|^{2}R^{2}).
\]
On the converse direction, since $|f|\le 1$ and $|\eta|\sim_\epsilon R$, we have the trivial bound
\[
\sup_{\xi\in\Z^2/\eta\Z^2}\norm{f(\cdot\eta+\xi)}_{\Pi_{p^k}}\les_\epsilon N/(|p^k|^{1/2}R).
\]
Thus, denoting $\mc A:=\{(\eta,\xi):\eta\in\Z^2_\irr\cap [R]^2,\,|\eta|\sim_\epsilon R,\text{ and }\xi\in\Z^2/\eta\Z^2\}$, we have
\[
\#\left\{(\eta,\xi)\in \mc A:\norm {f(\cdot\eta+\xi)}_{\Pi_{p^k}}\gtrsim_\epsilon N/(|p^k|^{1/2}R)\right\}\gtrsim_\epsilon R^4.
\]
Taking a union over $k\le K$, we have
\[
\#\left\{(k,\eta,\xi)\in \{1,\ldots,K\}\times\mc A:\norm {f(\cdot\eta+\xi)}_{\Pi_{p^k}}\gtrsim_\epsilon N/(|p^k|^{1/2}R)\right\}\gtrsim_\epsilon KR^4.
\]
Thus, the set
\[
\mc E:=\left\{(\eta,\xi)\in \mc A:\#\left\{k\in\{1,\ldots,K\}:\norm {f(\cdot\eta+\xi)}_{\Pi_{p^k}}\gtrsim_\epsilon N/(|p^k|^{1/2}R)\right\}\gtrsim_\epsilon K\right\}
\]
has size $\#\mc E\sim_\epsilon R^4$. By Lemma \ref{lem:Pi}, fixing $K=K(\epsilon,p)$ large enough, for each $(\eta,\xi)\in\mc E$, there exist $k\le K$ and $\xi'\in\Z^2/p^k\Z^2$ such that
\[
\E_{y\in [10N/|p^k\eta |]}\norm{f((p^k(\cdot,y)+\xi')\eta+\xi)}_{U^3([10N/|p^k\eta|])}\gtrsim_\epsilon1,
\]
which can be rewritten as
\[
\E_{x,y\in [10N/|p^k\eta |]}\norm{f((p^k(\cdot+x,y)+\xi')\eta+\xi)}_{U^3([10N/|p^k\eta|])}\gtrsim_\epsilon1.
\]
Thus, the number of $\zeta\in\xi+\eta\Z^2$ such that $(p^k\eta,\zeta)$ is contained in \eqref{eq:U3 positivity} for some $k\le K\les_\epsilon1$ is at least $\epsilon$-comparable to $(N/R)^2$. Taking a union over $(\eta,\xi)\in\mc E$ finishes the proof.
\end{proof}
Now bringing in Corollary \ref{cor:von Neumann Alt}, we deduce a global $U^7$-largeness of $f$ satisfying \eqref{eq:Pi assump}.
\begin{lem}\label{lem:U^7 large}
Let $\epsilon>0$.
For any $N\gg_\epsilon1$, $M\gg_\epsilon\log N$, and any function $f:[N]^2\rightarrow\mathbb{D}$ satisfying \eqref{eq:Pi assump}, we have
\begin{equation}\label{eq:U8 positivity}
\norm{f}_{U^7}\gtrsim_\epsilon1.
\end{equation}
\end{lem}
\begin{proof}
Let $L=L(\epsilon)$ be a number to be fixed later. Assuming $L\gg1$ and setting $R=\lfloor N/L\rfloor$, by Lemma \ref{lem:8seg positivity} we have
\[
\#\{(\eta,\zeta)\in [R]^2\times [N]^2:|\eta|\ge cR\text{ and }\norm{f(\cdot\eta+\zeta)}_{U^3([10N/cR])}\gtrsim_\epsilon1\}\gtrsim_\epsilon R^2N^2
\]
for some constant $c=c(\epsilon)>0$. Then, we have
\begin{equation}\label{eq:3seg U3}
\sum_{(\eta,\zeta)\in[R]^2\times[N]^2:|\eta|\ge cR}\norm{f(\cdot\eta+\zeta)}_{U^3([10N/cR])}^8\gtrsim_{\epsilon} R^2N^2.
\end{equation}
By \eqref{eq:Uk def explicit}, \eqref{eq:3seg U3} can be rewritten as
\begin{equation}\label{eq:3seg U3 expanded}
\sum_{(\eta,\zeta)\in[R]^2\times[N]^2:|\eta|\ge cR}\E_{a_1,a_2,a_3,k\in[100N/cR]}\Alt_{a_1\eta,a_2\eta,a_3\eta}f(k\eta+\zeta)\gtrsim_{\epsilon} R^2N^2.
\end{equation}
For $m\in\N$, denote by $\mu_m:\Z\rightarrow\R$ the function
\[
\mu_m(n):=\frac{1}{m}\max\left\{0,1-\frac{|n|}{m}\right\}.
\]
Then, since $\Alt_{a_1\eta,a_2\eta,a_3\eta}f(k\eta+\zeta)\neq0$ only if $|a_1|\les N/cR$, by the triangle inequality, there exists a positive integer $K=O_\epsilon(1)$ such that
\begin{equation}\label{eq:3seg U3 expanded, K}
\sum_{(\eta,\zeta)\in[R]^2\times[N]^2:|\eta|\ge cR}\sum_{a_1\in\Z}\mu_{KN/R}(a_1)\E_{a_2,a_3,k\in[100N/cR]}\Alt_{a_1\eta,a_2\eta,a_3\eta}f(k\eta+\zeta)\gtrsim_{\epsilon} R^2N^2.
\end{equation}
Since the Fourier transform of $\mu_{KN/R}$ is nonnegative, each partial sum in \eqref{eq:3seg U3 expanded, K} is nonnegative and the summand set $(\eta,\zeta)$ can be relaxed to $\Z^2\times\Z^2$. Thus, fixing $L$ large enough, by pigeonholing we can choose $a_1,a_2,a_3$, and $k$ such that $0,a_1,\ldots,a_1+a_2+a_3$ are all distinct and
\[
\Re\sum_{\eta,\zeta\in\Z^2}\Alt_{a_1\eta,a_2\eta,a_3\eta}f(\zeta)=\Re\sum_{\eta,\zeta\in\Z^2}\Alt_{a_1\eta,a_2\eta,a_3\eta}f(k\eta+\zeta)\gtrsim_\epsilon R^2N^2\gtrsim_\epsilon N^4.
\]
By Corollary \ref{cor:von Neumann Alt}, the proof finishes.
\end{proof}
\subsubsection{Inverse property of the $\mc N_{M,N}$-norm for degree $2$ nilsequences}\label{subsubsec:inverse for locally quadratic}
In this subsubsection, we provide the inverse property of $\mc N_{M,N}$-norms for nilsequences of degree $2$ (Lemma \ref{lem:LQ, large N => Q, inverse}). The key ingredient is Lemma \ref{lem:U3>>Q(x) final lem}; as a preparation to bring it, we start with showing that $\mc N_{M,N}$-norms satisfy the condition for Lemma \ref{lem:U3>>Q(x) final lem} to hold.
\begin{lem}\label{lem:Pi to (a,1)}
For $\epsilon>0$, $M,a_*\in\N$, and $f:[a_*]^2\rightarrow\D$, we have
\begin{equation}\label{eq:Pi to (a,1)}
M\sum_{a\in M\Z\cap[2a_*]}\sum_{y\in \Z}\left|\sum_{x\in\Z}f(x,y)\ol{f(x,y+a)}\right|^2\ge \norm{f}_\Pi^4.
\end{equation}
\end{lem}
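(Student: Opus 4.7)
The plan is to exploit a single positive-definiteness observation. Define
\[
g(y_1,y_2):=\sum_{x\in\Z}f(x,y_1)\ol{f(x,y_2)},\qquad h(a):=\sum_{y\in\Z}|g(y,y+a)|^2,
\]
so that $\norm{f}_\Pi^4=\sum_{y_1,y_2}|g(y_1,y_2)|^2=\sum_{a\in\Z}h(a)$. The hypothesis thus reads $\sum_a h(a)\ge\ep^4 a_*^4$, while $h$ is non-negative and supported in $[-2a_*,2a_*]$. The key claim is that $h$ is a positive-definite function on $\Z$; everything else reduces to elementary manipulations.

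To verify the claim, I will introduce $F_{x_1,x_2}(y):=f(x_1,y)\ol{f(x_2,y)}$ and expand
\[
|g(y,y+a)|^2=\sum_{x_1,x_2\in\Z}F_{x_1,x_2}(y)\,\ol{F_{x_1,x_2}(y+a)}.
\]
Summing in $y$ identifies each inner term with the autocorrelation of $F_{x_1,x_2}$ at lag $a$, and taking Fourier series in $a$ over $\T$ yields
\[
\wt h(\xi)=\sum_{x_1,x_2}|\wt{F_{x_1,x_2}}(\xi)|^2\ge 0.
\]
This is the only non-routine step.

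From $\wt h\ge 0$ I will extract two consequences. First, Poisson summation on the subgroup $M\Z\subset\Z$ gives
\[
\sum_{a\in M\Z}h(a)=\frac{1}{M}\sum_{k=0}^{M-1}\wt h(2\pi k/M)\ge\frac{\wt h(0)}{M}=\frac{\norm{f}_\Pi^4}{M}\ge\frac{\ep^4 a_*^4}{M}.
\]
Second, positive-definiteness and non-negativity of $h$ force $h(0)=\max_a h(a)$ (alternatively, two successive Cauchy--Schwarz inequalities, first in $x$ and then in $y$, give $h(a)\le h(0)$), so $h(0)\ge(4a_*+1)^{-1}\sum_a h(a)\gtrsim_\ep a_*^3$. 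Because $h$ is supported in $[-2a_*,2a_*]$, the left-hand side of the Poisson bound equals $\sum_{a\in M\Z\cap[2a_*]}h(a)$. Dividing by $\#(M\Z\cap[2a_*])$ then handles both regimes: when $M\le 2a_*$, one has $\#(M\Z\cap[2a_*])\sim a_*/M$ and the Poisson bound yields expectation $\gtrsim_\ep a_*^3$; when $M>2a_*$, the set $M\Z\cap[2a_*]$ collapses to $\{0\}$ and the second consequence applies directly. I do not anticipate any real obstacle---the positive-definiteness computation is the sole nontrivial input, after which the rest is bookkeeping.
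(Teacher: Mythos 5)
Your proof is correct, and it is instructive to compare it with the paper's. Both arguments hinge on the single core inequality $\sum_{a\in M\Z}h(a)\ge\norm{f}_{\Pi}^{4}/M$, where $h(a)=\sum_{y}\bigl|\sum_{x}f(x,y)\ol{f(x,y+a)}\bigr|^{2}$. The paper reaches it purely ``on the physical side'': it splits $\sum_{y}$ into residue classes mod $M$, applies Cauchy--Schwarz over those $M$ classes to pick up the factor $1/M$, then opens the square to reveal the lag variable $a\in M\Z$. You reach the same inequality ``on the Fourier side'': you observe that $h$ is a sum of autocorrelations of the row products $F_{x_1,x_2}(y)=f(x_1,y)\ol{f(x_2,y)}$, hence $\wt h\ge0$, and then Poisson summation on the sublattice $M\Z$ dominates $\sum_{a\in M\Z}h(a)$ by $\wt h(0)/M=\norm{f}_{\Pi}^{4}/M$. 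These are the standard dual ways to extract the $1/M$ gain, and the underlying positivity is the same; your version simply names it. One place where your write-up is actually more careful than the paper's: the Cauchy--Schwarz/Poisson bound $\sum_{a\in M\Z}h(a)\gtrsim_{\epsilon}a_*^{4}/M$ only normalizes to $\gtrsim_{\epsilon}a_*^{3}$ after dividing by $\#(M\Z\cap[2a_*])\sim a_*/M$, which requires $M\lesssim a_*$; when $M>2a_*$ the set collapses to $\{0\}$ and one needs the separate observation $h(0)\ge\max_a h(a)\ge (4a_*+1)^{-1}\sum_a h(a)\gtrsim_{\epsilon}a_*^{3}$. You make this explicit (via positive-definiteness of $h$, or the two-step Cauchy--Schwarz), while the paper's one-line closing sentence silently treats only the generic regime.
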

\begin{proof}
By the Cauchy-Schwarz inequality, we have
\[
M\sum_{x,x'\in\Z}\sum_{y_0\in \Z/M\Z}\left|\sum_{y\in\Z\cap(M\Z+y_0)}f(x,y)\ol{f(x',y)}\right|^2\ge \sum_{x,x'\in\Z}\left|\sum_{y\in\Z}f(x,y)\ol{f(x',y)}\right|^2=\norm{f}_\Pi^4,
\]
which can be rewritten as
\[
M\sum_{x,x'\in\Z}\sum_{a\in M\Z}\sum_{y\in\Z}f(x,y)\ol{f(x,y+a)}\ol{f(x',y)}f(x',y+a)\ge\norm{f}_\Pi^4.
\]
This is equivalent to \eqref{eq:Pi to (a,1)} since only $a\in[2a_*]\cap M\Z$ participates in the sum.
\end{proof}
The next lemma shows that the sequence of norms $\{\mc N_{M_N,N}\}_{N\in\N}$ in Lemma \ref{lem:N-norm is alt-stable} satisfies the condition in Lemma \ref{lem:U3>>Q(x) main lem}. We will use the following version of the van der Corput inequality: for $M,N\in\N$ and $F:[N]\rightarrow\C$, since $F(x_0)+\cdots+F(x_0+M-1)\neq 0$ holds only if $x_0\in[M+N]$, by the Cauchy-Schwarz inequality, we have
\begin{align}\label{eq:Van der Corput ineq}
\left|\sum_{x\in\Z}F(x)\right|^2&=\frac{1}{M^2}\left|\sum_{x_0\in\Z}\sum_{x=x_0}^{x_0+M-1}F(x)\right|^2
\les\frac{M+N}{M^2}\sum_{x_0\in\Z}\left|\sum_{x=x_0}^{x_0+M-1}F(x)\right|^2
\\&\nonumber
\les\frac{M+N}{M^2}\sum_{m\in[M]}(M-|m|)\sum_{x\in\Z}F(x)\ol{F(x+m)}.
\end{align}
\begin{lem}\label{lem:cor of Pi to (a,1)}
Let $\{M_N\}$ be a sequence in $\N$ such that $M_N/\log N\rightarrow\infty$. For any $\epsilon>0$, $a_*\in\N$, $N\gg_{\epsilon,\{M_N\},a_*}1$, and $f:[N]^2\rightarrow\D$ such that $\norm{f}_{\mc N_{M_N,N}}\ge\epsilon$, there exist positive integers $a\sim_\epsilon a_*$ and $b=O_{\epsilon}(1)$ such that
\begin{equation}\label{eq:cor of Pi to (a,1)}
\left|\sum_{\eta\in b\Z^2}\sum_{x\in\Z^2}\Alt_{\eta,a\eta^\perp}f(x)\right|\gtrsim_\epsilon\frac{N^4}{a^2}.
\end{equation}
\end{lem}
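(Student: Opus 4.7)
My plan is to apply Lemma~\ref{lem:Pi to (a,1)} to coset restrictions of $f$ along sparse sublattices $\eta_*\Z^2$, using the identification $\Z^2\cong\Z[i]$ under which $\eta_*\cdot i=\eta_*^\perp$.

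First, from $\norm{f}_{\mc N_{M_N,N}}\ge\epsilon$, Lemma~\ref{lem:N-norm largeness} applied with parameter $a=a_*$ produces a set $\mc E\subset\Z^2_\irr\cap[N/a_*]^2$ of at least $\gtrsim_\epsilon(N/a_*)^2$ primitive directions $\eta_*$ with $\norm{f}_{\Pi_{\eta_*}}^4\gtrsim_\epsilon a_*^2N^2$; this direct application is valid in the range $a_*\ll\sqrt{M_N/\log N}$, which is the substantive range given $M_N/\log N\to\infty$.

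For each $\eta_*\in\mc E$, the identity $\norm{f}_{\Pi_{\eta_*}}^4=\sum_{\xi\in\Z^2/\eta_*\Z^2}\norm{g_{\eta_*,\xi}}_\Pi^4$ from Lemma~\ref{lem:Pi_eta1 eta2} (with $g_{\eta_*,\xi}(n):=f(\eta_*n+\xi)$) and a Markov pigeonhole identify $\gtrsim_\epsilon(N/a_*)^2$ cosets $\xi$ where $\norm{g_{\eta_*,\xi}}_\Pi\gtrsim_\epsilon a_*$ on its effective support $\sim[a_*]^2$. Applying Lemma~\ref{lem:Pi to (a,1)} to each such $g_{\eta_*,\xi}$ with parameter $M=M_0:=\lcm(1,\ldots,K)$ for large $K=K(\epsilon)$, expanding the squared inner sum, using the translation $\Alt_{me_1,ce_2}g_{\eta_*,\xi}(x,y)=\Alt_{m\eta_*,c\eta_*^\perp}f(\eta_*(x+iy)+\xi)$, and summing over all cosets (where bad cosets contribute non-negatively since the expanded inner quantity is a sum of squares) yields
\[
\E_{c\in M_0\Z\cap[2a_*]}\sum_{z\in\Z^2,\,m\in\Z}\Alt_{m\eta_*,c\eta_*^\perp}f(z)\gtrsim_\epsilon a_*N^2.
\]
Summing over $\eta_*\in\mc E$ and pigeonholing jointly over $(c,m)\in(M_0\Z\cap[2a_*])\times\{1,\ldots,K\}$ extracts specific $c^*\in M_0\Z$ and $m^*\in\{1,\ldots,K\}$ such that $\sum_{\eta_*\in\mc E}\sum_z\Alt_{m^*\eta_*,c^*\eta_*^\perp}f(z)\gtrsim_\epsilon N^4/a_*^2$. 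Setting $\eta=m^*\eta_*$, $b=m^*$, and $a=c^*/m^*$ (an integer, since $m^*\mid M_0\mid c^*$), the target inequality follows with $a\sim_\epsilon a_*$ and $b=O_\epsilon(1)$.

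The main obstacle will be justifying the restriction $m\in\{1,\ldots,K\}$ in the joint pigeonhole above. A naive pigeonhole over the full range $m\in[-2a_*,2a_*]$ can return some $m^*$ of size $\sim a_*$, which would force $b=m^*\sim a_*$, incompatible with the $b=O_\epsilon(1)$ requirement. I plan to address this by reproving the intermediate step with $m$ pre-restricted, inserting a secondary Cauchy-Schwarz in the $x$-variable of the Lemma~\ref{lem:Pi to (a,1)} argument: partitioning $x\in[a_*]$ into residue classes modulo $K$ and retaining only short differences forces $m=x-x'\in\{1-K,\ldots,K-1\}$ while incurring only a $K^{O(1)}$ multiplicative loss absorbed by the $\epsilon$-dependent implicit constant. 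After this modification the joint pigeonhole automatically returns $m^*\in\{1,\ldots,K\}$.
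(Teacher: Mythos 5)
Your plan closely follows the paper's own proof: it applies Lemma~\ref{lem:N-norm largeness}, invokes Lemma~\ref{lem:Pi to (a,1)} per coset, and then uses a Cauchy--Schwarz in the $x$-variable to localize the difference $m$ before a pigeonhole on $(c,m)$ (your $\lcm(1,\dots,K)\Z$ plays the same role as the paper's $M!\Z$). However, two of the steps do not work as written. Partitioning $x$ into \emph{residue classes} modulo $K$ is the wrong partition: if $x,x'$ lie in the same class mod $K$, then $x-x'\in K\Z$, so this Cauchy--Schwarz produces only the \emph{long} differences, not $\{1-K,\dots,K-1\}$. The correct device is the van der Corput windowing inequality \eqref{eq:Van der Corput ineq} (averaging $\sum_xF(x)$ over overlapping intervals $[x_0,x_0+K)$ and then Cauchy--Schwarz on $x_0$), which is exactly what the paper uses; it yields the Fej\'er-weighted sum $\sum_{|m|<K}(K-|m|)\sum_x F(x)\overline{F(x+m)}$ with $m$ confined to $[K]$.

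The second problem is that your final display sums over $\eta_*\in\mc E$, hence over $\eta\in m^*\mc E$, whereas the target requires the sum over the full sublattice $b\Z^2=m^*\Z^2$. Once you have pigeonholed to a specific $m^*$ the summand is no longer a square, so the terms for $\eta\in m^*\Z^2\setminus m^*\mc E$ cannot simply be added back; and you do not address the $m=0$ diagonal term, which must be shown to be $O(KN^4/a_*^2)$ (negligible against $K^2N^4/a_*^2$ once $K=K(\epsilon)$ is large). Both issues are resolved by reordering the steps as the paper does: apply the van der Corput windowing first, so that the relevant quantity is a sum of squares in $(\eta,\zeta,y,x_0)$; then enlarge the $\eta$-sum from $\mc E$ (resp.\ $|\eta|\sim N/a_*$) to all of $\Z^2$ by positivity; then strip off $m=0$; and only then pigeonhole on $(c,m)$.
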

\begin{proof}
In this proof, every comparability depends on $\epsilon$ in default. Let $M=M(\epsilon)\in\N$ be a number to be fixed later. Up to a comparable update of $a_*$, we assume $a_*\gg M!$. By \eqref{eq:N-norm large >> Pi norm large}, we have
\[
\#\{\eta\in[N/a_*]^2:\norm{f}_{\Pi_\eta}\gtrsim\sqrt{a_*N}\}\gtrsim(N/a_*)^2,
\]
where we can further impose $|\eta|\sim N/a_*$. Thus, denoting $f_{\eta,\zeta}=f(\eta\cdot+\zeta)$, we have
\[
\sum_{|\eta|\sim N/a_*}\sum_{\zeta\in\Z^2/\eta\Z^2}\norm{f_{\eta,\zeta}}_{\Pi}^4\gtrsim N^4.
\]
Whenever $|\eta|\sim N/a_*$, $\diam(\supp(f_{\eta,\zeta}))\les a_*$ holds. Thus, by Lemma \ref{lem:Pi to (a,1)} and pigeonholing on $a$, there exists $a_0\in M!\Z$ such that $|a_0|\sim a_*$ and
\[
\sum_{|\eta|\sim N/a_*}
\sum_{\zeta\in\Z^2/\eta\Z^2}\sum_{y\in\Z}\left|\sum_{x\in\Z}f_{\eta,\zeta}(x,y)\ol{f_{\eta,\zeta}(x,y+a_0)}\right|^2\gtrsim N^4/a_*.
\]
Applying \eqref{eq:Van der Corput ineq}, removing the restriction $|\eta|\sim N/a_*$ (since the summands are positive), fixing $M=O(1)$ big enough, then pigeonholing over $m$, there exists $0\neq m\in[M]$ such that
\[
\Re\sum_{\eta\in\Z^2}
\sum_{\zeta\in\Z^2/\eta\Z^2}\sum_{x,y\in\Z}f_{\eta,\zeta}(x,y)\ol{f_{\eta,\zeta}(x,y+a_0)}\ol{f_{\eta,\zeta}(x+m,y)}f_{\eta,\zeta}(x+m,y+a_0)\gtrsim N^4/a_*^2,
\]
which can be rewritten as
\[
\Re\sum_{\eta\in\Z^2}\sum_{z\in\Z^2}\Alt_{m\eta,a_0\eta^\perp}f(z)\gtrsim N^4/a_*^2.
\]
Setting $a=a_0/m$ and $b=m$ yields \eqref{eq:cor of Pi to (a,1)}. Up to conjugations, we may switch signs to $a,b>0$, finishing the proof.
\end{proof}
\begin{lem}\label{lem:LQ, large N => Q, inverse}
Let $\epsilon>0$. Let $\{M_N\}$ be a sequence such that $M_N/\log N\rightarrow\infty$. Let $X$ be a filtered nilmanifold of degree $2$ and $\F\subset C^0(X;\D)$ be a compact set. For $N\gg_{\epsilon,X,\F}1$, a rectangle $\mc R\subset[N]^2$, and $f\in\mc S_{X,\F}$ such that
\begin{equation}\label{eq:LQ, large N => Q, assump}
\norm{\chi_{\mc R}\cdot f\circ\varphi_N}_{\mc N_{M_N,N}}\ge\epsilon ,
\end{equation}
the following hold:
\begin{enumerate}
    \item\label{enu:enu11}
    For $\delta>0$, assuming further that $N\gg_\delta1$, there exist an integer $J=O_{\epsilon,\delta,X,\F}(1)$, $c_1,\ldots,c_J\in\D$, $t_1,\ldots,t_J\in\R$, and $\xi_1,\ldots,\xi_J\in\R^2$ such that
    \[
    \norm{f\circ\varphi_N-\sum_{j\le J}c_je^{i(t_j|x|^2+\xi_j\cdot x)}}_{\mc N_{M_N,N}}\le\delta .
    \]
    \item\label{enu:enu22}
    There exist $t\in\R$ and $\xi\in\R^2$ such that
\[
\left|\jp{f\circ\varphi_N,e^{i(t|x|^2+\xi\cdot x)}}_{\ell^2([N]^2)}\right|\gtrsim_\epsilon N^2.
\]
\end{enumerate}
\end{lem}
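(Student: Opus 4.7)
The strategy combines Lemma~\ref{lem:S^2=sum of LP}, which decomposes a degree-$2$ nilsequence into locally quadratic modulations on affine Bohr sets, with Lemma~\ref{lem:U3>>Q(x) final lem}, which supplies profile decompositions and inner-product bounds for such modulations. The preparatory step is to verify that the sequence $\mc N_N := N\cdot\mc N_{M_N,N}$ satisfies the hypotheses of Lemma~\ref{lem:U3>>Q(x) main lem}: the $\ell^2$-control $\mc N_N(\cdot)\le C\norm{\cdot}_{\ell^2([N]^2)}$ follows from the Fej\'er-kernel bound \eqref{eq:Fejer bound}, and the sublattice-resonance condition \eqref{eq:U3>>Q(x) main x,eta} with $a_N=M_N\to\infty$ is precisely Lemma~\ref{lem:cor of Pi to (a,1)}. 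Consequently Lemma~\ref{lem:U3>>Q(x) final lem} is available on any locally quadratic modulation supported on an affine Bohr set of bounded rotation-symmetric rank.

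Next I would apply Lemma~\ref{lem:S^2=sum of LP} to $f\in\mc S_{X,\F}$ with a small parameter $\epsilon_1$, obtaining $c_1,\ldots,c_{J_0}\in\D$ and locally quadratic modulations $\phi_1,\ldots,\phi_{J_0}$ supported on affine Bohr sets $\mc B_j$ of rank $\le r_0$ such that $\norm{f\circ\varphi_N-\sum_j c_j\phi_j}_{\ell^2([N]^2)}\le\epsilon_1 N$, where $r_0,J_0$ depend on $\epsilon_1,X,\F$. Lemma~\ref{lem:Bohr can be made rotation-sym} upgrades each $\mc B_j$ to have rotation-symmetric rank $\le 2r_0$, and the $\ell^2$-to-$\mc N$ bound gives $\norm{f\circ\varphi_N-\sum_j c_j\phi_j}_{\mc N_{M_N,N}}\le C\epsilon_1$. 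For part~\eqref{enu:enu11}, I would split each index $j$ into cases: if $\norm{\phi_j}_{\mc N_N}\ge\epsilon' N$ for a small threshold $\epsilon'=\epsilon'(\delta,J_0)$, invoke \eqref{eq:U3>>Q(x) profile} of Lemma~\ref{lem:U3>>Q(x) final lem} to approximate $\phi_j$ in $\ell^2$-norm by $g_j:=\sum_k c'_{j,k} e^{i(t_j|x|^2+\xi_{j,k}\cdot x)}$ with error $\delta' N$; otherwise set $g_j=0$. Summing and applying the triangle inequality together with the $\ell^2\to\mc N$ control yields $\norm{f\circ\varphi_N-\sum_j c_j g_j}_{\mc N_{M_N,N}}\le C\epsilon_1+J_0\max(C\delta',\epsilon')\le\delta$ for suitable $\epsilon_1,\delta',\epsilon'$.

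For part~\eqref{enu:enu22}, the hypothesis combined with the $\ell^2$-approximation yields $\norm{\sum_j c_j\chi_\mc R\phi_j}_{\mc N_{M_N,N}}\ge\epsilon/2$ by the triangle inequality, and pigeonholing on $j$ produces $j_0$ with $\norm{\chi_\mc R\phi_{j_0}}_{\mc N_{M_N,N}}\gtrsim\epsilon/J_0$. Now $\chi_\mc R\phi_{j_0}$ is a locally quadratic modulation on $\mc B_{j_0}\cap\mc R$, an affine Bohr set of rank $\le r_0+2$ and thus (by Lemma~\ref{lem:Bohr can be made rotation-sym}) of rotation-symmetric rank $\le 2(r_0+2)$. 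Applying \eqref{eq:U3>>Q(x) final lem} of Lemma~\ref{lem:U3>>Q(x) final lem} produces $t\in\R$ and $\xi\in\R^2$ with $|\jp{\chi_\mc R\phi_{j_0},e^{i(t|x|^2+\xi\cdot x)}}|\gtrsim_\epsilon N^2$. Using Lemma~\ref{lem:Bohr char is sum of linear} to approximate $\chi_\mc R$ in $\ell^2$ as a bounded combination $\sum_k d_k e^{ix\cdot\eta_k}$ of plane waves and pigeonholing on $k$ extracts $\xi':=\xi+\eta_k$ with $|\jp{\phi_{j_0},e^{i(t|x|^2+\xi'\cdot x)}}|\gtrsim_\epsilon N^2$.

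The main obstacle is the final transfer of this inner-product bound from $\phi_{j_0}$ back to $f\circ\varphi_N=\sum_l c_l\phi_l+e_0$: the cross contributions $\jp{\phi_l,e^{i(t|x|^2+\xi'\cdot x)}}$ for $l\ne j_0$ are a priori as large as $O(N^2)$ and could cancel the main term. The resolution relies on the chirp structure of each $\phi_l$ after its own application of \eqref{eq:U3>>Q(x) profile}: whenever the intrinsic chirp $t_l$ of $\phi_l$ differs from $t$, a Gauss-sum estimate bounds the cross inner product by $O(N)$, so that after summing over $l$ the aggregate cross contribution is $O(J_0 N)=o(N^2)$; the bounded set of indices $l$ with $t_l=t$ can be grouped with $j_0$ and handled uniformly, again invoking Lemma~\ref{lem:Bohr char is sum of linear} on their common quadratic modulus. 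This chirp bookkeeping is the technically delicate step that closes the argument.
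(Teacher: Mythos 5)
Your argument for part \eqref{enu:enu11} matches the paper's: decompose via Lemma~\ref{lem:S^2=sum of LP}, check the hypotheses of Lemma~\ref{lem:U3>>Q(x) main lem} via \eqref{eq:Fejer bound} and Lemma~\ref{lem:cor of Pi to (a,1)}, discard the $\phi_j$ with small $\mc N_{M_N,N}$-norm, and invoke \eqref{eq:U3>>Q(x) profile} on the rest; the triangle-inequality bookkeeping is the same.

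Your route to part \eqref{enu:enu22}, however, diverges from the paper and has a genuine gap. You pigeonhole directly on the $\phi_j$ to extract an inner product for one modulation $\phi_{j_0}$, and then try to transfer this to $f\circ\varphi_N$ by controlling the cross terms $\jp{\phi_l,e^{i(t|x|^2+\xi'\cdot x)}}$. The claimed dichotomy --- ``$t_l=t$'' versus ``Gauss sums give $O(N)$'' --- is false. For $\alpha:=t_l-t\neq 0$ the Weyl sum $\sum_{x\in[N]^2}e^{i\alpha|x|^2+\beta\cdot x}$ is of size $\sim N^2/q$ when $\alpha$ is within $O(1/(qN^2))$ of a reduced rational $a/q$; in particular when $|t_l-t|\lesssim 1/N^2$ (or $t_l-t$ is close to a rational with small denominator) the cross inner product is $\sim N^2$, not $O(N)$. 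Moreover the boundary between ``comparable'' and ``orthogonal'' chirps depends on $N$, so there is no clean grouping by exact equality of $t_l$. Even after a corrected dichotomy, your suggestion to ``group with $j_0$ and handle uniformly'' does not give a lower bound: the matching-chirp contributions $\sum_{l:t_l\approx t}c_l\phi_l$ can cancel, and nothing in your $\ell^2$-based computation forbids this. The paper sidesteps all of this by deriving \eqref{enu:enu22} from \eqref{enu:enu11}: the $\mc N_{M_N,N}$-smallness of the error translates (via the Fej\'er-kernel lower bound) into $L^4$-smallness on $I_N=[-\pi/M_N,\pi/M_N]$, so Lemma~\ref{lem:inverse thm for almost sum of profiles} applies. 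That lemma already packages the orthogonality/cancellation bookkeeping for you, using the Brezis--Lieb decoupling in $L^4$ and frame orthogonality (Lemma~\ref{lem:bdd sum conc}), and returns an inner product of the \emph{full} function $f\circ\varphi_N$ with a single profile. If you want to keep a direct pigeonhole argument instead, you would effectively need to reprove that profile-decoupling machinery in $\ell^2$, which is not what a na\"ive Gauss-sum estimate provides.
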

\begin{proof}
Since $\chi_{\mc R}$ can be $\ell^2$-approximated by a sum of linear modulations and $\mc N_{M,N}$-norm is weaker than $\ell^2$-norm \eqref{eq:N is indeed weaker than l2}, by pigeonholing there exists $\xi_0\in\R$ such that
\[
\norm{e^{i\xi_0\cdot x}\cdot f\circ\varphi_N}_{\mc N_{M_N,N}}\gtrsim_\epsilon1.
\]
Since $\mc N_{M,N}$-norm is invariant under linear modulations, we may simply assume $\xi_0=0$.
By Lemma \ref{lem:S^2=sum of LP}, $f\circ\varphi_N$ is $\ell^2$-approximable by a linear combination of locally quadratic modulations $\phi_j$ supported on affine Bohr sets of ranks $O_{\epsilon,\delta,X,\F}(1)$. Among them, for each index $j$ such that $\norm{\phi_j}_{\mc N_{M_N,N}}\gtrsim_{\epsilon,\delta,X,\F}1$, by \eqref{eq:U3>>Q(x) profile} and Lemma \ref{lem:cor of Pi to (a,1)}, $\phi_j$ can be $\ell^2$-approximated in the form of \eqref{enu:enu11}. By triangle inequalities, \eqref{enu:enu11} is immediate.

We prove \eqref{enu:enu22}. By \eqref{eq:N-norm large >> L4 large}, there exists $C\les_\epsilon1$ such that $\norm{e^{it\De}\F^{-1}(f\circ\varphi_N)}_{L^4([-C/M_N,C/M_N]\times\T^2)}\gtrsim_\epsilon N$ holds. By the Fej\'{e}r kernel estimate $F_{M_N}(t)\gtrsim M_N$ for $|t|\les1/M_N$, \eqref{enu:enu11} can be rewritten as
\[
\norm{e^{it\De}\F^{-1}\Bb{f\circ\varphi_N-\sum_{j\le J}c_je^{i(t_j|x|^2+\xi_j\cdot x)}}}_{L^4([-C/M_N,C/M_N]\times\T^2)}\les_\epsilon\delta N.
\]
Thus, choosing $\delta=\delta(\epsilon)>0$ small enough, by Lemma \ref{lem:inverse thm for almost sum of profiles}, there exists a quadruple $(N_*,t_*,x_*,\xi_*)\in2^\N\times\R\times\R^2\times\Z^2$ such that $N_*\sim_\epsilon N$ and
\[
\left|\jp{f\circ\varphi_N(x),\psi(\frac{x-\xi_*}{N_*})e^{i(t_*|x|^2+x_*\cdot x)}}_{\ell^2([N]^2)}\right|\gtrsim_\epsilon N^2,
\]
where $\psi\in C^\infty_0(\R^2)$ is the Littlewood-Paley multiplier. Approximating $\psi(\frac{x-\xi_*}{N_*})$ by a linear combination of linear modulations and pigeonholing as earlier finishes the proof.
\end{proof}
\subsubsection{$(3,2)$-*-reducibility of the $\mc N_{M,N}$-norm}
The goal of this subsubsection is Lemma \ref{lem:(3,2)-*-reducibility}, which shows the $(3,2)$-*-reducibility of the $\mc N_{M,N}$-norm. For $d,k\in\N$ and any set $D\subset\Z^d$, a function $\la:D\rightarrow\R^k$ is \emph{locally linear} if for every $a,b,c,d\in D$ such that $a-b=c-d$,
\[
\la(a)-\la(b)=\la(c)-\la(d).
\]
The following preliminary fact is a standard application of sum set theory on graphs, in accordance with \cite{Gowers-szemeredi4,GreenTao08(U3),GreenTaoZiegler12(Ud)} and many others:
\begin{lem}\label{lem:comp to multi}
Let $d,k\in\N$ and $\epsilon>0$. Let $D\subset\Z^d$ be a finite set and $\la:D\rightarrow\R^k$ be a function such that
\[
\#\{ (a,b,c,d)\in D^{4}: a-b=c-d\text{ and }\la(a)-\la(b)=\la(c)-\la(d)\}\ge\epsilon(\#D)^{3}.
\]
Then there exists  $D'\subset D$ such that $\la\mid_{D'}$ is locally linear and $\#D'\gtrsim_\epsilon\#D$.
\end{lem}
\begin{proof}
Let $\Gamma\subset\R^{d+k}$ be the graph $\Gamma:=\left\{ (a,\la(a)):a\in D\right\}$. Allow every comparability to depend on $\epsilon$. By Proposition \ref{prop:Balog}, there exists a multiprogression $(S,\Omega)\sim\Gamma$ of rank $r=O(1)$. Let $\pi_{\R^d}:\R^{d+k}\rightarrow\R^d$ and $\pi_{\R^k}:\R^{d+k}\rightarrow\R^k$ be the canonical projections.
If there exists $\delta\ll1$ such that
\[
\ker \pi_{\R^d}S\cap(\delta\cdot\Omega)\neq\{0\},
\]
due to large multiplicity we have $\#\pi_{\R^d}S(\Omega)\ll\#D$, contradicting that $(S,\Omega)\sim\Gamma$. Thus, up to shrinking $\Omega$ comparably, we can assume $\pi_{\R^d}S$ to be $10$-injective on $\Omega$. Up to a translation, allowing $(S,\Omega)$ to be affine, we can further assume $\#(S(\Omega)\cap\Gamma)\gtrsim\#\Gamma$. Then, the set
\[
D':=\pi_{\R^d}(S(\Omega)\cap\Gamma)\subset\pi_{\R^d}\Gamma=D
\]
has size $\#D'=\#(S(\Omega)\cap\Gamma)\gtrsim\#\Gamma\gtrsim\#D$. For $a,b,c,d\in D'$ such that $a-b=c-d$, let $u_a\in\Omega$ be such that $\pi_{\R^d}S(u_a)=a$ and define $u_b,u_c,u_d$ similarly. Then, since $\pi_{\R^d}S$ is $10$-injective, $u_a-u_b=u_c-u_d$ holds and thus $\la(a)-\la(b)=\pi_{\R^k}S(u_a)-\pi_{\R^k}S(u_b)=\la(c)-\la(d)$ holds. This finishes the proof.
\end{proof}
\begin{lem}\label{lem:Alt CS}
Let $d,N\in\N$. Let $f:[N]^d\rightarrow\D$ and $\{P_a\}_{a\in\mc A},\mc A\subset[2N]^d$ be a family of functions $P_a:[N]^d\rightarrow\R$. Assume that
\begin{equation}\label{eq:Alt CS assump}
\sum_{a\in\mc A}\left|\sum_{x\in \Z^d}f(x)\overline{f(x+a)}e^{iP_a(x)}\right|\gtrsim N^{2d}.
\end{equation}
Then, we have
\begin{equation}\label{eq:Alt CS}
\sum_{\substack{a,b,c,d\in\mc A\\ a-b=c-d}}\left|\sum_{\substack{y\in\Z^d\\ y,y+d,y+c,y-a+c\in[10N]^d}}e^{i(P_a-P_b)(y-a+c)-i(P_c-P_d)(y)}\right|\gtrsim N^{4d}.
\end{equation}
\end{lem}
\begin{proof}
Observe that \eqref{eq:Alt CS assump} and \eqref{eq:Alt CS} are invariant under constant addition to each $P_a,a\in\mc A$; up to adding a constant $\theta_a\in\R$ to each $P_a$, we assume
\[
\sum_{x\in\Z^d}f(x)\ol{f(x+a)}e^{iP_a(x)}\ge 0,\qquad a\in\mc A.
\]
Then, since $\supp(f)\subset[N]^d$, \eqref{eq:Alt CS assump} can be rewritten as
\[
\sum_{x,a\in[10N]^d}f(x)\ol{f(x+a)}e^{iP_a(x)}\cdot1_{a\in\mc A}\gtrsim N^{2d}.
\]
By Cauchy-Schwarz, we have
\begin{align*}
 & \sum_{u,v\in [10N]^d}f(u)\overline{f(v)}\sum_{x\in [10N]^d}e^{i(P_{v-x}-P_{u-x})(x)}\cdot1_{u-x,v-x\in\mc A}\\
  =&\sum_{x,a,b\in [10N]^d}f(x+b)\overline{f(x+a)}e^{i(P_a-P_b)(x)}\cdot1_{a,b\in\mc A}\\
  \ge&\sum_{x\in [10N]^d}\left|\sum_{a\in [10N]^d}f(x)\overline{f(x+a)}e^{iP_a(x)}\cdot1_{a\in\mc A}\right|^{2}\\
  \ge&\frac{1}{\# [10N]^d}\left|\sum_{x,a\in [10N]^d}f(x)\overline{f(x+a)}e^{iP_a(x)}\cdot1_{a\in\mc A}\right|^{2}\gtrsim N^{3d}.
\end{align*}
Thus, we have
\[
N^{4d} \les\frac{1}{N^{2d}}\left|\sum_{u,v\in [10N]^d}f(u)\overline{f(v)}\sum_{x\in [10N]^d}e^{i(P_{v-x}-P_{u-x})(x)}\cdot1_{u-x,v-x\in\mc A}\right|^2,
\]
from which we can continue to estimate by Cauchy-Schwarz as
\begin{align*}
&\les\sum_{u,v\in[10N]^d}\left|\sum_{x\in[10N]^d}e^{i(P_{v-x}-P_{u-x})(x)}\cdot1_{u-x,v-x\in\mc A}\right|^2\\
& \les\sum_{u,v,x,y\in [10N]^d}e^{i(P_{v-x}-P_{u-x})(x)-i(P_{v-y}-P_{u-y})(y)}\cdot1_{u-x,v-x,u-y,v-y\in\mc A}\\
 & \les\sum_{\substack{a,b,c,d\in \mc A \\ a-b=c-d}}\sum_{\substack{y\in\Z^d\\ y,y+d,y+c,y-a+c\in[10N]^d}}e^{i(P_a-P_b)(y-a+c)-i(P_c-P_d)(y)},
\end{align*}
finishing the proof.
\end{proof}
\begin{lem}
\label{lem:Alt 4}Let $\epsilon>0$, $N\in\N$, and $f:[N]^2\rightarrow\D$. Let $\left\{ q_{a}\right\}_{a\in [2N]^{2}}\subset\R$ and $\left\{ r_{a}\right\}_{a\in [2N]^{2}}\subset\R^2$ be sequences. Assume that
\begin{equation}\label{eq:Alt4}
\sum_{a\in [2N]^2}\left|\jp{\Alt_{a}f,e^{i(q_{a}\left|x\right|^{2}+r_{a}\cdot x)}}_{\ell^2(\Z^2)}\right|\ge\epsilon N^4.
\end{equation}
Then, we have
\begin{equation}
\norm{f}_{U^3}\gtrsim_\epsilon1.\label{eq:Alt 4 goal}
\end{equation}
\end{lem}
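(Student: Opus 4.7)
The argument will proceed by two Cauchy--Schwarz-type reductions that, together with the additive-combinatorial structure theory of Section \ref{sec:pre}, exploit the rotational symmetry of $q_a|x|^2$ in order to preserve one degree in the Gowers hierarchy. First, I apply Cauchy--Schwarz in the $a$-variable to the hypothesis, expand the square, and make the substitution $y=x+h$, obtaining
\[
\sum_{a,h,x}\Alt_{a,h}f(x)\,e^{i\psi(a,h,x)}\gtrsim_\epsilon N^6,\qquad \psi(a,h,x)=2q_a\,h\cdot x+q_a|h|^2+r_a\cdot h.
\]
The decisive structural point is that for each fixed $(a,h)$ this phase is \emph{linear} in $x$ with coefficient $2q_a h$ — a scalar multiple of $h$, directly reflecting the rank-one dependence of $q_a|x|^2$ on $q_a$.

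Next I apply a triangle inequality in $h$ to extract $\gtrsim_\epsilon N^2$ values of $h$ on which the inner sum has magnitude $\gtrsim_\epsilon N^4$, and on each such fixed-$h$ slice run Lemma \ref{lem:Alt CS} with $P_a^h(x):=2q_a\,h\cdot x+q_a|h|^2+r_a\cdot h$ playing the role of $P_a$ in that Lemma. A direct computation of the resulting parallelogram phase yields $\sum_y e^{i2(q_a-q_b-q_c+q_d)h\cdot y+\text{const}}$ summed over $(a,b,c,d)$ with $a-b=c-d$. Invoking Proposition \ref{prop:Weyl 1D} with Lemma \ref{lem:Weyl bound implies Lipschitz} (applied across the range of good $h$) forces, for a positive-density family of parallelograms, the approximate additive identities
\[
(q_a-q_b)\equiv(q_c-q_d),\qquad (r_a-r_b)\equiv(r_c-r_d)\pmod{\tfrac{2\pi}{m}}
\]
for some $m=O_\epsilon(1)$. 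Feeding this bound into Proposition \ref{prop:Balog} and Lemma \ref{lem:comp to multi} applied to the graph $a\mapsto(q_a,r_a)$ then produces a multiprogression $\Omega\subset[N]^2$ of rank $O_\epsilon(1)$ and density $\gtrsim_\epsilon 1$ on which this map is essentially affine.

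With the affine structure secured, $P_a(x)$ takes the form $a\cdot B(x)+Q_0(x)$ on $\Omega$, where $B:\R^2\to\R^2$ is a fixed polynomial of degree $\le 2$ in $x$ and $Q_0$ is a fixed quadratic character of $x$ which I absorb into $f$ without changing $\norm{f}_{U^3}$. The substitution $y=x+a$ then rewrites the contribution of $a\in\Omega$ as a sum over $(x,y)$ involving three aligned shifts, which after one more Cauchy--Schwarz fits the template of Proposition \ref{prop:von Neumann} with $k=3$; this yields $\norm{f}_{U^3}\gtrsim_\epsilon 1$. The main obstacle is preserving the one-degree saving: a generic two-variable quadratic phase is detected only by the $U^3$-dual, so a naive Cauchy--Schwarz chain would produce $\norm{f}_{U^4}\gtrsim_\epsilon 1$ rather than $\norm{f}_{U^3}\gtrsim_\epsilon 1$. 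The rank-one structure of $q_a|x|^2$ — equivalently, the algebraic fact that no nontrivial cubic $P:\R^2\to\R$ has the quadratic-in-$x$ part of $P(x+a)-P(x)$ of the form $q_a|x|^2$ — is exactly what forces $(q_a,r_a)$ to be affine rather than quadratic in $a$ in the additive-combinatorial step, collapsing the final sum onto a genuine three-term progression amenable to $U^3$ control.
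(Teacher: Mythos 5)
Your opening moves are sound and parallel the paper's: a Cauchy--Schwarz in the $a$-variable (or equivalently one pass through Lemma \ref{lem:Alt CS}), expansion into a parallelogram sum, use of the geometric-sum/Weyl constraints to force approximate additive identities on $(q_a,r_a)$, and then Proposition \ref{prop:Balog} with Lemma \ref{lem:comp to multi} to extract a multiprogression on which the graph $a\mapsto(q_a,r_a)$ is affine. Your closing ``rank-one'' remark (that no genuine cubic $P$ can have $P(x+a)-P(x)$ with quadratic-in-$x$ part $q_a|x|^2$ unless $q_a$ is constant, because of the symmetry of mixed partials) is exactly the right intuition for why $U^3$ rather than $U^4$ suffices. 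But the plan has a genuine gap at precisely the step where that intuition must be cashed out.

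The crucial point is that the additive-combinatorial step only yields that $(q_a,r_a)$ is \emph{affine} on a multiprogression, i.e., $q_a = q_0 + \alpha\cdot a$ with possibly $\alpha\neq 0$. Affine is \emph{not} enough: if $\alpha\neq 0$ then $P_a(x)$ contains the genuinely bilinear term $(\alpha\cdot a)|x|^2$, and there is no way to ``absorb'' this into $f$. Indeed, writing $\tilde f = f e^{i\tilde Q}$ for quadratic $\tilde Q$ only changes $\Alt_a f$ by a factor $e^{i(\tilde Q(x)-\tilde Q(x+a))}$, which is \emph{linear} in $x$, so it cannot cancel the quadratic-in-$x$ term $Q_0(x)$ (or $(\alpha\cdot a)|x|^2$) inside the pairing $\jp{\Alt_a f, e^{iQ_0 + ia\cdot B}}$. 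Your claim that this absorption ``does not change $\norm{f}_{U^3}$'' is true (the $U^3$ norm is invariant under quadratic modulations), but the absorption itself does not produce the simplification you want, so the subsequent Proposition \ref{prop:von Neumann} application with $k=3$ does not obviously go through. The paper's final mechanism is different and should be emphasized: once $q_a = q^*$ is constant, the factor $e^{-iq^*|x|^2}$ is a unimodular function of $x$ only, so applying Cauchy--Schwarz in $x$ and taking the \emph{modulus} of the inner average kills this factor outright, leaving a $U^2$-average of $\Alt_\eta f$ and hence $\norm{f}_{U^3}\gtrsim_\epsilon 1$. This works for a constant $q^*$ but fails for a genuinely affine $q_a$.

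Proving that $q_a$ must actually be constant on a large set (not merely affine) is the heart of the paper's argument, and your plan omits it. The paper runs a substantial multi-round bootstrap: after local linearity of $q$, it uses the additional Weyl constraint on $2(q_a-q_b)(c-a)+r_a-r_b-r_c+r_d$, specializes to parallelograms with $\pi_1 a=\pi_1 c$ and with $\pi_1 a=\pi_1 b$, repeatedly symmetrizes and reduces $\mc A$ to obtain local linearity of $\pi_1 r$ row-wise, then combines these with another Weyl bound to force $q_a=q_b$ whenever $\pi_1 a=\pi_1 b$ (similarly for $\pi_2$), and finally partitions $\mc A$ into $q$-level sets, one of which has full density. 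This is exactly the rigorous implementation of your mixed-partials heuristic, and it is not automatic from the additive-combinatorial structure alone. Without it the proof does not close.
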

\begin{proof}
Throughout the proof, we allow all comparabilities to depend on $\epsilon$. We loosen \eqref{eq:Alt4} as follows: there exists $\mc A\subset [2N]^2$ satisfying
\begin{equation}\label{eq:Alt4 claim}
\sum_{a\in\mc A}\left|\jp{\Alt_{a}f,e^{i(q_{a}\left|x\right|^{2}+r_{a}\cdot x)}}_{\ell^2(\Z^2)}\right|\gtrsim N^4.
\end{equation}
Note that \eqref{eq:Alt4 claim} implies $\#\mc A\gtrsim N^2$. Perturbing to nearest elements in $\frac{2\pi}{CN^{2}}\Z,\frac{2\pi}{CN}\Z^2$, $C\gg 1$ we assume further that $q_{a}\in\frac{2\pi}{CN^{2}}\Z\cap[-\pi,\pi]$ and $r_{a}\in\frac{2\pi}{CN}\Z^2
\cap[-\pi,\pi]$
for $a\in \mc A$. We claim the existence of $q^*\in\R$ such that 
\[
\{a\in\mc A:q_a=q^*\}\gtrsim N^2.
\]

Denote $\mc A^4_*:=\{(a,b,c,d)\in\mc A^4:a-b=c-d\}$.
By Lemma \ref{lem:Alt CS}, we have
\begin{equation}\label{eq:Alt4 wrt P(y)}
\sum_{(a,b,c,d)\in\mc A^4_*}\left|\sum_{\substack{y\in\Z^2\\ y,y+d,y+c,y-a+c\in[10N]^2}}e^{i(q_{a}-q_{b}-q_{c}+q_{d})\left|y\right|^{2}+i\left(2(q_{a}-q_{b})(c-a)+(r_{a}-r_{b}-r_{c}+r_{d})\right)\cdot y}\right|\gtrsim N^8.
\end{equation}
By Lemma \ref{lem:convex Weyl}, there exists an integer $m=O(1)$ such that
\begin{equation}\label{eq:a-b=c-d q0}
\#\{(a,b,c,d)\in\mc A^4_*:\dist(q_a-q_b-q_c+q_d,\frac{2\pi}m\Z)\les\frac{1}{N^2}\}\gtrsim N^6.
\end{equation}
\eqref{eq:a-b=c-d q0} can be rewritten as
\[
\sum_{E\in\mc I}\sum_{\De\in\Z^2}\#\{(a,b)\in\mc A^2:a-b=\De,\quad q_a-q_b\in2\pi\Z+E\}^2\gtrsim N^6,
\]
where $\mc I$ is a partition of $[-\pi,\pi]$ into sets of the form $E=I+\{0,\frac{2\pi}{m},\ldots,\frac{2\pi(m-1)}{m}\}$ where $I\subset[-\pi,\pi]$ is an interval of size $O(\frac{1}{N^2})$. Then, subpartitioning $\mc I$ into $\mc I'$ of intervals of sizes $\frac{2\pi}{CN^2}$, since each $E\in\mc I$ gets partitioned into $O(1)$ intervals, by Cauchy-Schwarz, we have
\[
\sum_{I\in\mc I'}\sum_{\De\in\Z^2}\#\{(a,b)\in\mc A^2:a-b=\De,\quad q_a-q_b\in2\pi\Z+I\}^2\gtrsim N^6,
\]
which can be rewritten as (since $q_a\in\frac{2\pi}{CN^2}\Z\cap[-\pi,\pi]$)
\begin{equation}\label{eq:a-b=c-d q}
\#\{(a,b,c,d)\in\mc A^4_*:q_a-q_b=q_c-q_d\}\gtrsim N^6.
\end{equation}
This argument will be repeated throughout this proof and will be referred to as a \emph{symmetrization}.

By Lemma \ref{lem:comp to multi} and \eqref{eq:a-b=c-d q}, there exists $\mc B\subset\mc A$ such that $\#\mc B\gtrsim N^2$ and $\{q_a\}_{a\in\mc B}$ is locally linear. If \eqref{eq:Alt4 claim} fails over the sum $a\in\mc B$, we replace $\mc A$ by $\mc A\setminus\mc B$ and iterate the process. This process stops in a finite number of steps, and when it stops, \eqref{eq:Alt4 claim} holds with $\mc A$ replaced by $\mc B$. Thus, once we show the existence of $q^*$ for the case $\mc A=\mc B$, that generalizes to arbitrary $\mc A$. 

Hereafter we assume $\mc A=\mc B$.
Applying Lemma \ref{lem:convex Weyl} to \eqref{eq:Alt4 wrt P(y)}, we have
\begin{equation}\label{eq:q_ar_a}
\#\{(a,b,c,d)\in\mc A^4_*:\dist(2(q_a-q_b)(c-a)+r_a-r_b-r_c+r_d,\frac{2\pi}{m}\Z^2)\les\frac{1}{N}\}\gtrsim N^6.\end{equation}
for some $m=O(1)$.
Denote by $\pi_{1},\pi_{2}:\R^2\rightarrow\R$ the projections to the first and second coordinate, respectively. Using the symmetric representation for $(a,b,c,d)\in\mc A^4_*$
\[
2(q_a-q_b)(c-a)+r_a-r_b-r_c+r_d=\left((r_a-r_b)-2(q_a-q_b)a\right)-\left((r_c-r_d)-2(q_c-q_d)c\right),
\]
(where we used $q_a-q_b=q_c-q_d$,) \eqref{eq:q_ar_a} can be symmetrized by Cauchy-Schwarz to
\[
\#\{(a,b,c,d)\in\mc A^4_*:\pi_1a=\pi_1c,\quad\dist(2(q_a-q_b)(c-a)+r_a-r_b-r_c+r_d,\frac{2\pi}{m}\Z^2)\les\frac{1}{N}\}\gtrsim N^5.
\]
Here, $\pi_1(2(q_a-q_b)(c-a))=0$ holds by $\pi_1a=\pi_1c$. Thus, we have
\begin{equation}\label{eq:CS N5}
\#\{(a,b,c,d)\in\mc A^4_*:\pi_1a=\pi_1c,\quad \dist(\pi_1(r_a-r_b-r_c+r_d),\frac{2\pi}{m}\Z)\les\frac{1}{N}\}\gtrsim N^5.
\end{equation}
Since $r_a\in\frac{2\pi}{CN}\Z\cap[-\pi,\pi]$ for $a\in\mc A$, we can symmetrize \eqref{eq:CS N5} with respective to each coordinate as follows:
\begin{equation}\label{eq:CS pi2 1111}
\#\{(a,b,c,d)\in\mc A^4_*:\pi_1a=\pi_1c,\quad \pi_1a=\pi_1b,\quad \pi_1(r_a-r_b-r_c+r_d)=0\}\gtrsim N^4
\end{equation}
and
\begin{equation}\label{eq:CS pi2 1122}
\#\{(a,b,c,d)\in\mc A^4_*:\pi_1a=\pi_1c,\quad \pi_2a=\pi_2b,\quad \pi_1(r_a-r_b-r_c+r_d)=0\}\gtrsim N^4.
\end{equation}
For $k\in[2N]$, denote $\mc A_k=\{l\in[2N]:(k,l)\in\mc A\}$. Let $\mc E$ be the set of $k\in[2N]$ such that
\[
\#\{(l_1,l_2,l_3,l_4)\in\mc A_k^4:l_1-l_2=l_3-l_4\text{ and }\pi_1(r_{(k,l_1)}-r_{(k,l_2)}-r_{(k,l_3)}+r_{(k,l_4)})=0\}\ge\delta N^3.
\]
Choosing $\delta=\delta(\epsilon)>0$ small enough, by \eqref{eq:CS pi2 1111}, $\#\mc E\gtrsim N$ holds. For each $k\in\mc E$, by Lemma \ref{lem:comp to multi}, there exists $\mc B_k\subset\mc A_k$ such that $\#\mc B_k\gtrsim N$ and $\{\pi_1r_{(k,\cdot)}\}_{\mc B_k}$ is locally linear. Let $\mc B:=\cup_{k\in\mc E}(\{k\}\times\mc B_k)\subset\mc A$; since $\#\mc B\sim N^2$, as earlier, we can reduce to the case $\mc A=\mc B$.

Hereafter we use the convention that an identity containing subscripts such as $(k,l)$ is true only if every subscript lies in $\mc A$.
Pigeonholing on \eqref{eq:CS pi2 1122}, there exist $k_0,l_0\in[2N]$ such that
\[
\#\{(a,b,c,d)\in\mc A^4_*:\pi_1a=\pi_1c=k_0,\quad \pi_2a=\pi_2b=l_0,\quad \pi_1(r_a-r_b-r_c+r_d)=0\}\gtrsim N^2.
\]
Equivalently, the set
\[
\mc B=\{(k,l)\in[2N]^2:\pi_1(r_{(k_0,l_0)}-r_{(k_0,l)})=\pi_1(r_{(k,l_0)}-r_{(k,l)})\}
\]
has size $\#\mc B\gtrsim N^2$. Reducing to $\mc A=\mc B$ as earlier, we may assume for every $(k,l)\in\mc A$ that
\begin{equation}\label{eq:CS pi2 kl forall}
\pi_1(r_{(k_0,l_0)}-r_{(k_0,l)})=\pi_1(r_{(k,l_0)}-r_{(k,l)}).
\end{equation}
Then, for every $l_1,l_2,l_3,l_4\in[2N]$ such that $l_1-l_2-l_3+l_4=0$ and $k_1,k_2\in[2N]$, if $(k_1,l_1),(k_1,l_3),(k_2,l_2),(k_2,l_4)\in\mc A$, by \eqref{eq:CS pi2 kl forall} and the local linearity of $\pi_1 r_{(k_0,\cdot)}$, we have
\begin{equation}\label{eq:CS pi2=0 final}
\pi_1(r_{(k_1,l_1)}-r_{(k_1,l_3)}-r_{(k_2,l_2)}+r_{(k_2,l_4)})=\pi_1(r_{(k_0,l_1)}-r_{(k_0,l_3)}-r_{(k_0,l_2)}+r_{(k_0,l_4)})=0.
\end{equation}
Now we symmetrize \eqref{eq:q_ar_a} matching $\pi_1a=\pi_1b$. We use the symmetric representation
\[
2(q_a-q_b)(c-a)+r_a-r_b-r_c+r_d=(2q_a(c-a)+r_a-r_c)-(2q_b(d-b)+r_b-r_d),
\]
then symmetrizing \eqref{eq:q_ar_a} as earlier and simplifying by \eqref{eq:CS pi2=0 final}, we have
\begin{equation}\label{eq:a1=b1 implies qa-qb (c-a) <1/N}
\#\{(a,b,c,d)\in\mc A_*^4:\pi_1a=\pi_1b,\quad\dist(2(q_a-q_b)\pi_1(c-a),\frac{2\pi}{m}\Z)\les\frac{1}{N}\}\gtrsim N^5
\end{equation}
for some $m=O(1)$. Applying Lemma \ref{lem:Weyl bound implies Lipschitz} to \eqref{eq:a1=b1 implies qa-qb (c-a) <1/N} with respect to $c$ yields
\[
\#\{(a,b)\in\mc A^2:\pi_1a=\pi_1b,\quad\dist(q_a-q_b,\frac{2\pi}{m}\Z)\les\frac{1}{N^2}\}\gtrsim N^3
\]
for some $m=O(1)$. Since $q_a\in\frac{2\pi}{CN^2}\Z\cap[-\pi,\pi]$, symmetrizing as earlier, we have
\[
\#\{(a,b)\in\mc A^2:\pi_1a=\pi_1b,\quad q_a=q_b\}\gtrsim N^3.
\]
Up to a row-wise further reduction of $\mc A$, we may assume that $q_a=q_b$ holds for every $a,b\in\mc A$ such that $\pi_1a=\pi_1b$. Working similarly on $\pi_2$, we further assume $q_a=q_b$ for every $a,b\in\mc A$ such that $\pi_2a=\pi_2b$. By these, partitioning $\mc A$ into equivalence classes $\mc A_1,\mc A_2,\ldots$ of common $q_a$, $\pi_1(\mc A_j)\cap\pi_1(\mc A_k)=\emptyset$ and $\pi_2(\mc A_j)\cap\pi_2(\mc A_k)=\emptyset$ hold. Since $\cup_j\mc A_j=\mc A\subset[2N]^2$ has size $\#\mc A\sim N^2$, there exists an index $j$ such that $\#\mc A_j\sim N^2$. Reducing to $\mc A=\mc A_j$, $q_c=q^*\in\R$ holds for every $c\in\mc A$, as claimed. Now \eqref{eq:Alt4 claim} reduces to
\[
\sum_{a\in\mc A}\left|\jp{\Alt_af,e^{i(q^*|x|^2+r_a\cdot x)}}_{\ell^2(\Z^2)}\right|\gtrsim N^4.
\]
For each $a\in[2N]^2$, assign $\theta_a\in\R$ such that
\[
\jp{\Alt_af,e^{i(q^*|x|^2+r_a\cdot x+\theta_a)}}_{\ell^2(\Z^2)}\ge 0
\]
is real-valued. Then, we have
\[
\sum_{a\in[2N]^2}\jp{\Alt_af,e^{i(q^*|x|^2+r_a\cdot x+\theta_a)}}_{\ell^2(\Z^2)}\gtrsim N^4,
\]
which can be rewritten as
\begin{equation}\label{eq:CS q*}
\E_{x\in[2N]^2}\E_{a\in[2N]^2}f(x)\ol{f(x+a)}e^{-i(q^*|x|^2+r_a\cdot x+\theta_a)}\gtrsim1.
\end{equation}
Applying Cauchy-Schwarz to \eqref{eq:CS q*}, we have
\begin{align}\label{eq:CS q* killing}
1&\les\E_{x\in[2N]^2}\left|\E_{a\in[2N]^2}f(x)\ol{f(x+a)}e^{-i(q^*|x|^2+r_a\cdot x+\theta_a)}\right|^2\\
&\les\E_{x\in[2N]^2}\E_{a,a'\in[2N]^2}f(x+a')\ol{f(x+a)}e^{-i((r_a-r_{a'})\cdot x+\theta_a-\theta_{a'})}.\nonumber
\end{align}
Since the $U^2$-norm is invariant over linear modulations and stronger than $U^1$, \eqref{eq:CS q* killing} can be rewritten as
\[
\E_{a,a'\in[2N]^2}\norm{f(x+a')\ol{f(x+a)}}_{U^2}\gtrsim1.
\]
Since the $U^2$-norm is invariant over translations, we have
\[
\E_{\eta\in[4N]^2}\norm{\Alt_\eta f}_{U^2}\gtrsim1,
\]
which implies by \eqref{eq:Uk def inductive} that $\norm{f}_{U^3}\gtrsim1$, finishing the proof.
\end{proof}
\begin{lem}\label{lem:(3,2)-*-reducibility}
Let $\{M_N\}$ be any sequence such that $M_N/\log N\rightarrow\infty$. Then, $\{\ol{\mc N_{M_N,N}}\}$ is $(3,2)$-*-reducible.
\end{lem}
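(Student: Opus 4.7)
The plan is to chain together four results from the earlier sections: the alt-stability of $\ol{\mc N_{M_N,N}}$ (Lemma~\ref{lem:N-norm is alt-stable}), the degree-lowering of Gowers pieces $\Alt_\eta:\mc S_{X,\F}\to\mc S_{\tilde X,\tilde\F}$ (Lemma~\ref{lem:SXF Alt reduce}), the degree-$2$ inverse theorem producing a quadratic phase (Lemma~\ref{lem:LQ, large N => Q, inverse}\eqref{enu:enu22}), and the quadratic-phase correlation inverse for $U^3$ (Lemma~\ref{lem:Alt 4}). Fix $\epsilon>0$, a filtered nilmanifold $X$ of degree at most $3$, a compact set $\F\subset C^0_*(X;\D)$, and $f\in\mc S_{X,\F}$ with $\norm{f}_{\ol{\mc N_{M_N,N}}}\ge\epsilon$; by the definition of $\ol{\mc N_{M_N,N}}$ I choose a rectangle $\mc R\subset[N]^2$ witnessing $\norm{\chi_{\mc R}(f\circ\varphi_N)}_{\mc N_{M_N,N}}\ge\epsilon$.

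First I extract information for many translates. The $2$D alt-stability step in the proof of Lemma~\ref{lem:N-norm is alt-stable} (concretely \eqref{eq:N-norm is 2D alt-stable}) together with the identity \eqref{eq:Alt R identity} yields a set $E\subset[2N]^2$ with $\#E\gtrsim_\epsilon N^2$ such that
\[
\norm{\chi_{\mc R\cap(\mc R-\eta)}\cdot(\Alt_{\varphi_N(\eta)}f)\circ\varphi_N}_{\mc N_{M_N,N}}\gtrsim_\epsilon 1,\qquad \eta\in E.
\]
By Lemma~\ref{lem:SXF Alt reduce} applied once to $X$ and $\F$, each $\Alt_{\varphi_N(\eta)}f$ lies in $\mc S_{\tilde X,\tilde\F}$ for a single degree-$2$ filtered nilmanifold $\tilde X$ and a single compact $\tilde\F\subset C^0(\tilde X;\D)$, independent of $\eta$ (boundedness $|\tilde F|\le 1$ is automatic since $|F|\le1$). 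Thus for $N\gg_{\epsilon,X,\F}1$, Lemma~\ref{lem:LQ, large N => Q, inverse}\eqref{enu:enu22} applied to $\Alt_{\varphi_N(\eta)}f$ with rectangle $\mc R\cap(\mc R-\eta)$ produces $t_\eta\in\R$ and $\xi_\eta\in\R^2$ with
\[
\left|\jp{(\Alt_{\varphi_N(\eta)}f)\circ\varphi_N,\,e^{i(t_\eta|x|^2+\xi_\eta\cdot x)}}_{\ell^2([N]^2)}\right|\gtrsim_\epsilon N^2,\qquad \eta\in E.
\]

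The additivity of $\varphi_N$ gives the identity $(\Alt_{\varphi_N(\eta)}f)\circ\varphi_N=\Alt_\eta(f\circ\varphi_N)$, so summing the preceding estimate over $\eta\in E$ yields
\[
\sum_{\eta\in[2N]^2}\left|\jp{\Alt_\eta(f\circ\varphi_N),\,e^{i(t_\eta|x|^2+\xi_\eta\cdot x)}}_{\ell^2([N]^2)}\right|\gtrsim_\epsilon N^4.
\]
Partitioning $[2N]^2$ into $O(1)$ translates of $[N]^2$ and pigeonholing — or equivalently observing that the proof of Lemma~\ref{lem:Alt 4} works verbatim with $[N]^2$ enlarged to $[2N]^2$, since its intermediate set $\mc A$ already lives in $[2N]^2$ — lets me apply Lemma~\ref{lem:Alt 4} to the $\D$-valued function $f\circ\varphi_N:[N]^2\to\D$ and conclude $\norm{f\circ\varphi_N}_{U^3([N]^2)}\gtrsim_\epsilon 1$. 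Finally \eqref{eq:transfer N^d} and Lemma~\ref{lem:Ud pullback} promote this to $\norm{f}_{U^3([N+\tilde N N])}\gtrsim_\epsilon 1$, which is exactly the conclusion required by Definition~\ref{defn:reducible}. The only real point of friction is the box-size bookkeeping between $[N]^2$ and $[2N]^2$ in the invocation of Lemma~\ref{lem:Alt 4}; this is routine, and I do not expect any genuinely new argument to be needed beyond the three assembled ingredients above.
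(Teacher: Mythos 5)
Your proposal matches the paper's proof essentially step for step: both derive a density of shifts from the $2$D alt-stability identity \eqref{eq:N-norm is 2D alt-stable} and \eqref{eq:Alt R identity}, reduce the degree via Lemma~\ref{lem:SXF Alt reduce}, feed the resulting degree-$2$ nilsequences into Lemma~\ref{lem:LQ, large N => Q, inverse}\eqref{enu:enu22}, assemble the quadratic correlations via Lemma~\ref{lem:Alt 4}, and pull back through Lemma~\ref{lem:Ud pullback}. The only cosmetic difference is that you invoke the conclusion of Lemma~\ref{lem:LQ, large N => Q, inverse}\eqref{enu:enu22} directly (which already drops the rectangle cutoff), whereas the paper records an intermediate display retaining $\chi_{\mc R}$ and then discards it by approximating $\chi_{\mc R\cap(\mc R-a)}$ with linear modulations; and your observation that the proof of Lemma~\ref{lem:Alt 4} already works with $\mc A\subset[2N]^2$ is the right way to handle the box mismatch (the alternative you mention, pigeonholing onto translates of $[N]^2$, would require extra care since $\Alt_{\eta+\eta_0}$ does not reduce simply to $\Alt_\eta$).
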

\begin{proof}
Let $X$ be a filtered nilmanifold of degree at most $3$ and $\F\subset C^0_*(X;\D)$ be compact. For $f\in\mc S_{X,\F}$ and $N\gg1$ such that $\norm{f}_{\ol{\mc N_{M_N,N}}}\ge\epsilon$, by \eqref{eq:N-norm is 2D alt-stable},
\begin{equation}\label{eq:(3,2)-*-reducibility, a count}
\#\{a\in[2N]^2:\norm{\Alt_a(\chi_{\mc R}\cdot f\circ\varphi_N)}_{\mc N_{M_N,N}}\gtrsim_\epsilon1\}\gtrsim_\epsilon N^2
\end{equation}
holds for some rectangle $\mc R\subset[N]^2$. Here, by the identity
\[
\Alt_a(\chi_{\mc R}\cdot f\circ\varphi_N)=\chi_{\mc R\cap(\mc R-a)}\cdot\Alt_a( f\circ\varphi_N)=\chi_{\mc R\cap(\mc R-a)}\cdot(\Alt_{\varphi_N(a)}f)\circ\varphi_N,
\]
viewing $\Alt_{\varphi_N(a)}f$ as a nilsequence of degree $2$ by Lemma \ref{lem:SXF Alt reduce}, Lemma \ref{lem:LQ, large N => Q, inverse} is applicable; applying Lemma \ref{lem:LQ, large N => Q, inverse} to \eqref{eq:(3,2)-*-reducibility, a count}, there exist $\{t_a\}\subset\R$ and $\{\xi_a\}\subset\R^2$ such that
\[
\#\{a\in[2N]^2:|\jp{\Alt_a(f\circ\varphi_N),e^{i(t_a|x|^2+\xi_a\cdot x)}}_{\ell^2([N]^2)}|\gtrsim_\epsilon N^2\}\gtrsim_\epsilon N^2.
\]
Then, by Lemma \ref{lem:Alt 4}, we have
\[
\norm{f\circ\varphi_N}_{U^3([N]^2)}\gtrsim_\epsilon1,
\]
which implies $\norm{f}_{U^3([N+\tilde NN])}\gtrsim_\epsilon1$ by Lemma \ref{lem:Ud pullback} and finishes the proof.
\end{proof}
\subsubsection{Proof of Proposition \ref{prop:conc to Q}}\label{subsubsec:proof of conc to Q}

By Definition \ref{defn:N-norm} and the estimate of the Fej\'{e}r kernel $|F_M(t)|\gtrsim M$ for $|t|\le 1/M$, it suffices to show the following:
\begin{prop}\label{prop:conc to Q wrt N}
Let $\epsilon>0$. Let $\{M_N\}$ be any sequence such that $M_N/\log N\rightarrow\infty$. Then, there exists $J\in\N$ satisfying the following:

Let $N\gg1$ and $f:[N]^2\rightarrow\D$. There exist $c_j\in\D,t_j\in\R,\xi_j\in\R^2$, $j=1,\ldots,J$ such that
\begin{equation}\label{eq:final approx}
\norm{f-\sum_{j\le J}c_j e^{i(t_j|x|^2+\xi_j\cdot x)}}_{\mc N_{M_N,N}}\le\epsilon.
\end{equation}
\end{prop}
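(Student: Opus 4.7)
The plan is to chain three approximations in the $\mc N_{M_N,N}$-norm: reduce $f$ to a single degree-$7$ nilsequence via a $U^8$-profile decomposition, degree-lower it to a bounded combination of degree-$2$ nilsequences using the machinery of Section \ref{app-sec:deg}, and finally approximate each degree-$2$ piece by the desired quadratic phases $e^{iQ}$ via Lemma \ref{lem:LQ, large N => Q, inverse}(i). Each stage introduces only $O_\epsilon(1)$-many summands, so the final count $J$ depends only on $\epsilon$.

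Concretely, for the first stage I would apply Lemma \ref{lem:profile decomp Ud} with $d=7$ and a small threshold $\delta=\delta(\epsilon)$ to the $1$D extension $\iota_N f:[N+\tilde NN]\to\D$, obtaining $\iota_N f=h_0\chi_{\supp(\iota_N f)}+e_1$ with $h_0\in\mc S_{X,\F}$ a degree-$7$ nilsequence and $\norm{e_1}_{U^8}<\delta$. Pulling back through $\varphi_N$ and using \eqref{eq:transfer N^d}, the residual $F:=f-h_0\circ\varphi_N$ satisfies $\norm{F}_{U^8([N]^2)}\lesssim\delta$, and the contrapositive of Lemma \ref{lem:U^8 large} applied to $F/2$ delivers $\norm{F}_{\mc N_{M_N,N}}<\epsilon/2$ once $\delta$ is small enough. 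For the second stage, Lemma \ref{lem:N-norm is alt-stable} and Lemma \ref{lem:(3,2)-*-reducibility} show that $\{\ol{\mc N_{M_N,N}}\}_N$ is alt-stable and $(3,2)$-*-reducible, so Theorem \ref{thm:degree reduction} yields $(d,2)$-reducibility for every $d\ge 3$. Then at each level $d=7,6,5,4,3$, starting from a bounded combination of degree-$d$ nilsequences, I would (i) apply Lemma \ref{lem:SXF Fourier decomp} to $\ell^\infty$-approximate each summand by $C^0_*$-pieces, (ii) discard those pieces whose $U^d([N+\tilde NN])$-norm is below a small threshold $\eta_d$ (they have small $U^3$-norm by \eqref{eq:Ud monotone} and hence small $\ol{\mc N_{M_N,N}}$-norm by the contrapositive of $(d,2)$-reducibility), and (iii) apply Lemma \ref{lem:Cd Ud Sd-1} to extend each surviving piece to a degree-$(d-1)$ nilsequence. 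After five iterations this yields a bounded collection $\{\tilde h_j\}_{j\le J_2}$ of degree-$2$ nilsequences with $\norm{h_0\circ\varphi_N-\sum_j\tilde c_j(\tilde h_j\circ\varphi_N)}_{\mc N_{M_N,N}}<\epsilon/4$. Finally, each $\tilde h_j$ either has negligible $\mc N_{M_N,N}$-norm (in which case it is approximated by zero) or satisfies the hypothesis of Lemma \ref{lem:LQ, large N => Q, inverse}(i) with $\mc R=[N]^2$, in which case that lemma yields $\tilde h_j\circ\varphi_N\approx\sum_k c_{j,k}e^{iQ_{j,k}}$ in $\mc N_{M_N,N}$ up to $\epsilon/(4J_2)$; three triangle inequalities then give the desired approximation of $f$ with $J=O_\epsilon(1)$ terms.

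The main technical obstacle lies in the second stage: one must choose the thresholds $\eta_d$ so that the discarded low-$U^d$ pieces contribute negligibly to the $\mc N$-approximation, while simultaneously keeping the number of summands and the complexity of the nilmanifolds produced by iterated applications of Lemma \ref{lem:Cd Ud Sd-1} and Lemma \ref{lem:SXF Fourier decomp} bounded in terms of $\epsilon$ alone. This is essentially the bookkeeping already performed in the proof of Lemma \ref{lem:d*->d}, adapted to the norm $\ol{\mc N_{M_N,N}}$ and transported back to $[N]^2$ via $\varphi_N$.
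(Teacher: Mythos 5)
Your proof is correct in outline and shares the beginning (approximate $\iota_Nf$ by a degree-$7$ nilsequence via $U^8$, transfer via $\varphi_N$, apply Lemma~\ref{lem:U^8 large}) and the end (apply Lemma~\ref{lem:LQ, large N => Q, inverse}(i) to each degree-$2$ piece) with the paper's argument, but the middle step takes a genuinely different route. You manually iterate the degree-lowering across levels $d=7,6,5,4,3$ --- Fourier-decompose via Lemma~\ref{lem:SXF Fourier decomp}, discard pieces with small $U^d$-norm, extend survivors via Lemma~\ref{lem:Cd Ud Sd-1} --- which is in effect re-deriving the content of Lemma~\ref{lem:d*->d} and Theorem~\ref{thm:degree reduction} in-line, so the $J$-count, threshold choices, and nilmanifold complexity must all be tracked by hand across five iterations (and you acknowledge this is the main overhead). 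The paper instead applies Lemma~\ref{lem:profile decomp Ud} a second time with $d=2$ to the degree-$7$ nilsequence $g$, producing a \emph{single} degree-$2$ nilsequence $h$ with $\|g-h\|_{U^3([N+\tilde NN])}\le\delta$, and then observes that $g-h$ is itself a degree-$7$ nilsequence (by Lemma~\ref{lem:SXF linear}) with small $U^3$-norm, so the contrapositive of $(7,2)$-reducibility --- which you already correctly obtain from Theorem~\ref{thm:degree reduction} --- immediately gives $\|g-h\|_{\ol{\mc N_{M_N,N}}}=o_\delta(1)$. The advantage of the paper's route is that it never needs to \emph{express} $g$ as a bounded sum of degree-$2$ nilsequences: a single $U^3$-approximant plus abstract reducibility on the residual suffices, so there is no multiplicative blow-up of piece counts and no per-level threshold bookkeeping. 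Your version works, but you are essentially re-proving the abstract degree-lowering machinery that you have already invoked; once $(7,2)$-reducibility is available, a second one-shot application of Lemma~\ref{lem:profile decomp Ud} makes the five-step iteration unnecessary.
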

\begin{proof}
Let $\delta=\delta(\epsilon)>0$ be a number to be fixed later.
By Lemma \ref{lem:profile decomp Ud}, there exist a filtered nilmanifold $X$ of degree $6$ and a compact set $\F\subset C^0(X;\C)$ such that for each $N$, there exists $g\in\mc S_{X,\F}$ such that $\norm{\iota_Nf-\chi_{[N]+\tilde N[N]}g}_{U^7([N+\tilde NN])}\le\delta$, which can be transferred to $\norm{f-g\circ\varphi_N}_{U^7([N]^2)}\les\delta$. Then, by Lemma \ref{lem:U^7 large} and \eqref{eq:N-norm large >> L4 large}, $\norm{f-g\circ\varphi_N}_{\mc N_{M_N,N}}=o_\delta(1)$ holds.

By Lemma \ref{lem:profile decomp Ud}, there exists a filtered nilmanifold $X'$ of degree $2$ and a compact set $\F'\subset C^0(X';\C)$ such that for each $N$, there exists $h\in\mc S_{X',\F'}$ such that $\norm{g-h}_{U^3([N+\tilde NN])}\le\delta$.
Since $\{\ol{\mc N_{M_N,N}}\}$ is alt-stable (Lemma \ref{lem:N-norm is alt-stable}) and $(3,2)$-*-reducible (Lemma \ref{lem:(3,2)-*-reducibility}), by Theorem \ref{thm:degree reduction}, $\{\ol{\mc N_{M_N,N}}\}$ is $(6,2)$-reducible. Thus, $\norm{(g-h)\circ\varphi_N}_{\mc N_{M_N,N}}\le\norm{g-h}_{\ol{\mc N_{M_N,N}}}=o_\delta(1)$ holds.

By Lemma \ref{lem:LQ, large N => Q, inverse}, $h\circ\varphi_N$ can be approximated in the form \eqref{eq:final approx}. Taking $\delta=\delta(\epsilon)>0$ small enough, by triangle inequalities, the proof finishes.
\end{proof}
\section{Global well-posedness of \eqref{eq:NLS}}\label{sec:GWP}
In this section, we prove the large data global well-posedness of
(\ref{eq:NLS}). This is consistent with the expectation that an inverse Strichartz estimate together with local well-posedness shown via a fixed-point argument leads to the large-data GWP based on the following corresponding GWP and $L^4$-norm bounds known on $\R^2$:
\begin{prop}[\cite{dodson-def}]
\label{prop:defocusing R^2}Let $M>0$. Let $u_{0}\in L^{2}(\R^{2})$
be a data such that $\norm{u_{0}}_{L^{2}}^{2}\le M$. There exists
a unique Duhamel solution $u\in C^{0}L^{2}\cap L_{t,x}^{4}(\R\times\R^{2})$
to the cubic defocusing NLS on $\R^{2}$, which is global and scatters
in both time directions. Moreover, we have
\[
\norm u_{L_{t,x}^{4}}\les_{M}1.
\]
\end{prop}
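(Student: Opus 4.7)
The plan is to argue by contradiction via concentration compactness, following the Kenig-Merle roadmap adapted to the mass-critical setting. Define
\[
L(M) := \sup\{ \|u\|_{L^4_{t,x}(I\times\R^2)} : u \text{ solves the defocusing cubic NLS on } I\subset\R \text{ with } \|u_0\|_{L^2}^2\le M\}.
\]
A fixed-point argument in Strichartz spaces using the endpoint $L^4_{t,x}$-Strichartz estimate on $\R^2$ gives $L(M)\lesssim M^{1/2}$ for small $M$, and a standard continuity/perturbation argument shows $L$ is finite on an open initial interval. Set $M_c := \sup\{M : L(M)<\infty\}$. Assuming $M_c<\infty$, I aim to produce a contradiction.

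First I would establish a linear profile decomposition: any $L^2(\R^2)$-bounded sequence $\{\phi_n\}$ admits, after extraction, a decomposition $\phi_n = \sum_{j\le J} g_n^j \psi^j + w_n^J$, where $g_n^j$ encodes orthogonal symmetry parameters (scale, space-time translation, Galilean boost), the parameter frames are pairwise orthogonal, and $\limsup_J \limsup_n \|e^{it\Delta}w_n^J\|_{L^4_{t,x}(\R\times\R^2)}=0$. The key input is the inverse $L^4$-Strichartz estimate on $\R^2$ (Bourgain, Merle-Vega, B\'egout-Vargas), which extracts a concentrating profile whenever $\|e^{it\Delta}f\|_{L^4_{t,x}}\gtrsim \epsilon\|f\|_{L^2}$. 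Combined with a stability lemma for NLS in $L^4_{t,x}$, this produces a minimal mass blowup solution $u$ with $\|u\|_{L^2}^2=M_c$, $\|u\|_{L^4_{t,x}(I_+\times\R^2)}=\infty$ on a maximal forward interval $I_+$, and whose orbit is \emph{almost periodic modulo symmetries}: there exist measurable parameters $N(t)>0$, $x(t)\in\R^2$, $\xi(t)\in\R^2$ such that $\{N(t)^{-1} e^{-ix\cdot\xi(t)} u(t, N(t)^{-1}\cdot + x(t))\}_{t\in I_+}$ is precompact in $L^2(\R^2)$.

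A further reduction (splitting into cases according to $\inf_t N(t)$ and using the evolution of $N(t)$) isolates three scenarios for the frequency scale: the \emph{soliton-like} case $N(t)\equiv 1$, the \emph{self-similar} case $N(t)\sim t^{-1/2}$, and the \emph{double high-to-low frequency cascade} case. The self-similar and cascade cases are precluded by an additional regularity upgrade---one shows, using almost periodicity and the conservation of mass, that $u(0)\in H^s$ for some $s>0$, then derives a contradiction with the scaling behaviour of $N(t)$ and conservation of the $\dot H^s$-norm (or energy after further upgrade), along the lines of Killip-Tao-Visan in the radial setting.

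The hardest step, and the main obstacle, is precluding the soliton-like scenario. Here I would follow Dodson's long-time Strichartz estimate, which controls $\|P_{>N}u\|_{L^2_t L^\infty_x}$-type norms on long time intervals in terms of frequency-localised mass, using a bilinear frequency decomposition combined with a bootstrap argument. This long-time Strichartz bound then feeds into a frequency-localised interaction Morawetz inequality: the defocusing sign produces a nonnegative space-time integral of $|u_{\le N}|^2 \,|u_{\le N}|^2 $ up to controllable error, and almost periodicity forces the Morawetz output to grow linearly in time while the a priori bound stays bounded, contradicting $M_c>0$. The principal technical work lies in making the long-time Strichartz / Morawetz pair compatible with the rough $L^2$-critical regularity and the full (non-radial) symmetry group.
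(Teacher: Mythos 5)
The paper does not prove this proposition: it is taken verbatim from Dodson \cite{dodson-def} and invoked as a black box in Section \ref{sec:GWP} so that the $\T^2$ problem can be transferred to the Euclidean theory via the inverse Strichartz inequality and the concentration/frame machinery. There is therefore no internal proof to compare against; your sketch is an account of Dodson's external argument, and as such it captures the correct roadmap: critical local theory and stability in $L^4_{t,x}$, linear profile decomposition built on the Euclidean inverse $L^4$-Strichartz estimate (B\'egout--Vargas, Merle--Vega), reduction to an almost-periodic minimal-mass blowup solution modulo the full symmetry group, Dodson's long-time Strichartz estimate, and the frequency-localized interaction Morawetz inequality that kills the soliton-type enemy.

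One inaccuracy worth flagging so you do not conflate two different reductions: the ``three enemies'' split (soliton-like with $N(t)\equiv 1$, self-similar $N(t)\sim t^{-1/2}$, double high-to-low cascade) is the Killip--Tao--Visan reduction used in the radial mass-critical setting and in the energy-critical literature, where additional regularity ($H^s$ or energy) is bootstrapped to preclude the non-soliton enemies. Dodson's non-radial $d=2$ argument proceeds differently: after rescaling the almost-periodic solution so that $N(t)\le 1$, the long-time Strichartz estimate is used uniformly, and the case analysis is essentially a dichotomy on whether $\int_0^{T_{\max}} N(t)^{3}\,dt$ is finite or infinite; the infinite case is ruled out by the frequency-localized interaction Morawetz estimate, while the finite case (rapid cascade) is ruled out by showing the solution would have to be identically zero via mass conservation and a regularity gain. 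So your overall outline is correct, but the intermediate three-scenario step should be replaced by Dodson's sharper reduction to make the sketch faithful to \cite{dodson-def}.
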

\begin{prop}[\cite{dodson-foc}]
\label{prop:focusing R^2}Let $0<M<\norm Q_{L^{2}}^{2}$. Let $u_{0}\in L^{2}(\R^{2})$
be a data such that $\norm{u_{0}}_{L^{2}}^{2}\le M$. Then, there
exists a unique Duhamel solution $u\in C^{0}L^{2}\cap L_{t,x}^{4}(\R\times\R^{2})$
to the cubic focusing NLS on $\R^{2}$, which is global and scatters
in both time directions. Moreover, we have
\[
\norm u_{L_{t,x}^{4}}\les_{M}1.
\]
\end{prop}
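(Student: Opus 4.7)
I would follow the concentration-compactness / rigidity roadmap adapted to the mass-critical setting, in the style of Kenig--Merle and the series of works by Tao--Visan--Zhang, Killip--Tao--Visan, and culminating in Dodson's resolution. The first step is a small-data theory: a contraction mapping in $L^4_{t,x}\cap C^0_t L^2_x$ driven by the $\R^2$ $L^4$-Strichartz estimate yields global solutions with $\|u\|_{L^4_{t,x}}\lesssim 1$ once $\|u_0\|_{L^2}$ is smaller than a universal constant, and perturbation theory lets one upgrade to the desired statement if any quantitative uniform bound $\|u\|_{L^4_{t,x}} \lesssim_M 1$ is available for every $M<\|Q\|_{L^2}^2$.

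Suppose the bound fails, and define
\[
M_* := \inf\bigl\{M : \exists\, u_0\in L^2(\R^2),\ \|u_0\|_{L^2}^2\le M,\ \|u\|_{L^4_{t,x}(\R\times\R^2)}=\infty\bigr\}.
\]
Then $0<M_*<\|Q\|_{L^2}^2$. Using the inverse $L^4$-Strichartz inequality on $\R^2$ of Bourgain and Merle--Vega and a B\'egout--Vargas linear profile decomposition, one extracts a minimal-mass critical element $u_c$ with $\|u_c(0)\|_{L^2}^2=M_*$ whose orbit is precompact in $L^2(\R^2)$ modulo the mass-critical symmetry group (scaling, spacetime translation, Galilean boost). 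After reduction via Keraani's compactness lemma, $u_c$ lies in one of three almost-periodic regimes: self-similar blowup, soliton-like, or inverse cascade. In the focusing regime with $\|u_c\|_{L^2}<\|Q\|_{L^2}$, Weinstein's sharp Gagliardo--Nirenberg inequality renders the Hamiltonian coercive on $\dot H^1$, which together with mass conservation and a double-Duhamel argument rules out the finite-time blowup and the inverse-cascade scenarios, leaving only the soliton-like case.

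The remaining soliton-like scenario is the hard part: here one must rule out the existence of a nonzero global almost-periodic $L^2$ solution. Following Dodson, I would combine a long-time Strichartz estimate (a bootstrap obtained from a delicate high-low frequency decomposition) with a frequency-localized interaction Morawetz inequality applied to the high-frequency component of $u_c$; running the two bootstraps in tandem over an arbitrarily long time interval forces $\nabla P_{>N}u_c \in L^2_{t,x}$ with a bound independent of the interval, which upgrades via interpolation to $u_c \in L^\infty_t \dot H^{1/2}_x$ and then ultimately to $u_c\equiv 0$ by the Galilean-invariant version of the Morawetz estimate. The main obstacle is the construction and control of the frequency-localized Morawetz quantity, since the cubic nonlinear interactions between high and low frequencies generate errors that must be absorbed using precisely the long-time Strichartz bounds being established; making this circular bootstrap close requires careful quantitative tracking of the almost-periodic compactness modulus of $u_c$ along the flow.
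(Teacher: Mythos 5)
The paper does not prove this proposition: it is quoted verbatim from Dodson's work \cite{dodson-foc} and used as a black box in the transference argument that converts the $\R^2$ theory to $\T^2$. Your sketch is therefore an attempt to re-derive Dodson's theorem itself, which is not what the paper does (nor what it needs to do).

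As a high-level outline of the actual literature argument your sketch is reasonable, but two points are imprecise enough to mislead. First, the ``three almost-periodic regimes'' (self-similar, soliton-like, inverse cascade) belong to the Killip--Tao--Visan radial classification; Dodson's non-radial treatment instead rests on a coarser dichotomy for the modulation scale $N(t)$, roughly whether $\int N(t)^3\,dt$ converges (rapid frequency cascade) or diverges (quasi-soliton), with each branch handled by a separate argument. Second, Weinstein's sharp Gagliardo--Nirenberg inequality cannot be invoked at the stage you place it to render the Hamiltonian coercive, because the critical element $u_c$ a priori lives only in $L^2$ and the energy is not even defined; the coercivity coming from $\|u_c\|_{L^2}<\|Q\|_{L^2}$ becomes usable only \emph{after} the long-time Strichartz bootstrap has upgraded the regularity of $u_c$ (to $\dot H^1$ in the cascade case, to $\dot H^{1/2}$ in the quasi-soliton case). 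Getting this order of operations right without circularity is exactly where the difficulty lies, so the sketch as written glides past the central obstruction. For the purposes of the present paper, however, none of this is needed: the proposition is correctly treated as a cited theorem and fed into Lemmas \ref{lem:weak lim is sol} and \ref{lem:scatter R^d}.
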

For the discussion on the energy-critical cases, see \cite{ionescu2012energy,ionescu2012global,YUE2021754,Kwak2024global}. In this section, we follow the argument in \cite{Kwak2024global}, for the benefit that we do not need to estimate interactions between profiles.
\subsection{Cutoff solutions}
To consider a solution $u$ to \eqref{eq:NLS} locally within a short time
interval $I\subset\R$, it is often convenient to consider an extension
of $u\mid_{I}$ by linear evolutions on $\R\setminus I$. For this,
we introduce the concept of a cutoff solution.

Denote by $\mc S'(\R^2)$ the space of tempered distributions. A pair $(u,I)$, where $u\in C^{0}\mc S'\cap L^3_{t,x,\loc}(\R\times\R^2)$
and $I\subset\R$ is an interval, is a \emph{cutoff solution }to \eqref{eq:NLS} if $u$ is a Duhamel solution to
\[
i\partial_t u+\De u=\chi_{I}\mc N(u),
\]
where $\mc N(u)=\pm|u|^2u$ denotes the nonlinearity of \eqref{eq:NLS}.
Here, $I$ is possibly empty or $\R$ itself (i.e., both linear evolutions
and global nonlinear solutions are cutoff solutions). A cutoff solution
on $\T^2$ is defined similarly. Equivalently, $u$ can be regarded
as the continuous extension of $u\mid_{I}$ by linear evolutions. We also denote by $\mc I_{\R^{2}}$ the retarded Schr\"{o}dinger operator on $\R^{2}$,
\[
\mc I_{\R^2}(f)(t):=-i\int_{-\infty}^t e^{i(t-t')\De}f(t')\,dt.
\]
Similarly, $\mc I$ denotes the retarded Schr\"{o}dinger operator on $\T^{2}$.

For $R>0$ and $x_{0}\in\R^{2}$, we define $B_{R}(x_{0})=\left\{ x\in\R^2:|x-x_{0}|<R\right\}$.
\begin{defn}
Let $q,r\in[1,\infty]$. A sequence of functions $\left\{ f_{n}\right\} $
in $L_{t,x,\loc}^{1}(\R\times\R^{2})$ is said to be \emph{uniformly
locally bounded }in $L^{q}L^{r}$ if
\[
\sup_{R\in2^{\N}}\limsup_{n\rightarrow\infty}\norm{f_{n}}_{L^{q}L^{r}\left(\R\times B_{R}(0)\right)}<\infty.
\]
\end{defn}
For instance, for any frame $\left\{ \mc O_{n}\right\} $ and any
bounded sequence of functions $f_{n}$ in $L^{q}L^{r}(\R\times\T^{2})$, $1/q+1/r=1/2$,
since the support of $\iota_{\mc O_{n}}(\chi_{\R\times[-\pi,\pi]^{2}})$
grows to $\R\times\R^{2}$ as $N_{n}\rightarrow\infty$, $\iota_{\mc O_{n}}f_{n}$
is uniformly locally bounded in $L^{q}L^{r}$.
\begin{lem}
\label{lem:weak lim is sol}Let $\left\{(v_{n},I_n)\right\} $ be a sequence
of cutoff solutions to the cubic NLS on $\R^{2}$ which is
uniformly locally bounded in $C^{0}L^{2}\cap L_{t,x}^{4}(\R\times\R^{2})$.
After passing to a subsequence, $v_{n}$ converges weakly and almost everywhere
to a cutoff solution $(v_*,I_*)$, $v_{*}\in C^{0}L^{2}\cap L_{t,x}^{4}(\R\times\R^{2})$
to the cubic NLS on $\R^{2}$, which scatters in both time directions.
Furthermore, for every $T\in\R$, we have $v_{n}(T)\weak v_{*}(T)$.
\end{lem}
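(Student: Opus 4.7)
The plan is the following. First, using that the sequence is bounded in $C^0L^2\cap L^4_{t,x}(\R\times\R^2)$, extract a subsequence (not relabeled) via Banach-Alaoglu so that $v_n\weak v_*$ weakly in $L^4_{t,x}$ and weakly-$*$ in $L^\infty_tL^2_x$, with $v_*$ inheriting the same bounds by lower semicontinuity. Passing to a further subsequence, one may also assume the endpoints of $I_n$ converge in $[-\infty,+\infty]$, defining a limit interval $I_*\subset\R$ such that $\chi_{I_n}\to\chi_{I_*}$ almost everywhere on $\R$.

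Next, upgrade to pointwise convergence. From $i\partial_tv_n+\Delta v_n=\chi_{I_n}\mc N(v_n)$ and the $L^4_{t,x}$ bound, $\chi_{I_n}\mc N(v_n)$ is uniformly bounded in $L^{4/3}_{t,x}$, so $\partial_t v_n$ is uniformly bounded in $L^{4/3}_{t,x}+L^\infty_tH^{-2}_x$. Combined with the Rellich-Kondrachov compactness $L^2_\loc\hook H^{-\epsilon}_\loc$ and the uniform $L^\infty_tL^2_x$ bound, an Aubin-Lions argument yields strong convergence of $v_n$ in $L^2_{t,\loc}H^{-\epsilon}_\loc$ and hence, along a further subsequence, the almost everywhere convergence $v_n\to v_*$ on $\R\times\R^2$. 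Together with the a.e.\ convergence of $\chi_{I_n}$ and the uniform $L^4_{t,x}$ bound, Vitali's theorem gives $\chi_{I_n}\mc N(v_n)\to\chi_{I_*}\mc N(v_*)$ strongly in $L^{4/3}_{t,x,\loc}$.

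Now pass to the limit in the Duhamel formula. Fix a reference time $T_0\in\R$ and, along a further subsequence, $v_n(T_0)\weak w_0$ in $L^2$ by Banach-Alaoglu. For any $T\in\R$,
\[
v_n(T)=e^{i(T-T_0)\Delta}v_n(T_0)-i\int_{T_0}^Te^{i(T-s)\Delta}\chi_{I_n}\mc N(v_n)\,ds;
\]
the free-evolution piece converges weakly in $L^2$ since $e^{i(T-T_0)\Delta}$ intertwines with weak convergence, while the Duhamel integral converges weakly in $L^2$ against any compactly-supported test function by the dual Strichartz estimate $\|\int e^{i(T-s)\Delta}F\,ds\|_{L^2}\les\|F\|_{L^{4/3}_{t,x}}$ together with the strong $L^{4/3}_{t,x,\loc}$ convergence of the nonlinearity. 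Identifying the weak limit with $v_*$ shows that $(v_*,I_*)$ is a cutoff solution in $C^0L^2\cap L^4_{t,x}(\R\times\R^2)$ and that $v_n(T)\weak v_*(T)$ for every $T$.

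Finally, scattering in both time directions follows directly from the global $L^4_{t,x}$ bound on $v_*$ combined with standard perturbative Strichartz theory: partition $\R$ into finitely many intervals on which $\|v_*\|_{L^4_{t,x}}$ is small, iterate the Duhamel estimate to control the scattering defect, and conclude that $e^{-it\Delta}v_*(t)\to\phi_\pm$ in $L^2$ as $t\to\pm\infty$. The main technical obstacle lies in the a.e.\ convergence step, where the lack of direct spatial compactness in $L^2(\R^2)$ must be overcome by exploiting the Schr\"odinger equation via Aubin-Lions; a secondary care is needed to ensure that the pointwise-in-time weak convergence holds for \emph{every} $T$, not merely almost every $T$, which is precisely why the Duhamel identity is invoked with a fixed reference time $T_0$.
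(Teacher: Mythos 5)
There is a genuine gap at the step you yourself flag as the main technical obstacle: the passage from compactness to almost everywhere convergence. You invoke Aubin--Lions with the compact embedding $L^2_{\loc}\hookrightarrow H^{-\epsilon}_{\loc}$, which gives precompactness of $\{v_n\}$ in $L^p_{t,\loc}H^{-\epsilon}_{\loc}$. But strong convergence in $L^p_t H^{-\epsilon}_{\loc}$, with \emph{negative} spatial regularity, does not yield pointwise almost everywhere convergence in $(t,x)$: functions can oscillate rapidly in $x$ (for instance $\sin(nx)$ in one variable) while converging strongly in $H^{-\epsilon}_{\loc}$ and not converging a.e. Since the entire subsequent argument (Vitali to get $\chi_{I_n}\mc N(v_n)\to\chi_{I_*}\mc N(v_*)$ in $L^{4/3}_{t,x,\loc}$, and then passing to the limit in Duhamel) rests on a.e.\ convergence of $v_n$, the proof collapses at this point. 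The uniform bound in $L^\infty_t L^2_x\cap L^4_{t,x}$ by itself gives no spatial compactness, and ``exploiting the Schr\"odinger equation via Aubin--Lions'' as you describe only controls the \emph{time} derivative; it does nothing to the spatial frequency content.

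What the paper actually does, and what you are missing, is the Kato local smoothing effect for the Schr\"odinger flow (homogeneous and retarded, Lemma \ref{lem:local smoothings}), which gives a gain of $1/2$ spatial derivative locally in space: after truncating by $\psi_R$, the ``convergent part'' $v_{R,n}^{\mathrm{conv}}$ is bounded in $L^2_t H^{1/2}\cap H^1_t H^{-3/2}$ locally, and this triple \emph{does} embed compactly into $L^2_{t,x,\loc}$ (positive spatial regularity). The leftover $v_{R,n}^{\mathrm{err}}$, driven by $\nabla\psi_R\cdot\nabla v_n$, is controlled by the inhomogeneous local smoothing estimate and made $o_R(1)$ by a summation-over-$R$ argument. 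Only after this dispersive gain of regularity can one extract strong $L^2_{t,x,\loc}$ convergence and hence a.e.\ convergence. Your remaining steps (Duhamel passage with a fixed reference time, Vitali, and scattering from the global $L^4$ bound by standard perturbation theory) are in line with the paper's approach, but the key ingredient of local smoothing is absent and your substitute does not close the argument.
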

The proof of Lemma \ref{lem:weak lim is sol} proceeds conventionally
by first showing the weak convergence to a distributional cutoff solution
$v_{*}$, then using a mollification argument to show that such $v_{*}$
is also a Duhamel solution. For a gain of regularity, we use local smoothing effects of the Schr{\"o}dinger operator, found by Sj{\"o}lin, Constantin-Saut, and Vega in \cite{sjolin, constantin-saut, vega}.
\begin{lem}\label{lem:local smoothings}
Let $\psi\in C^\infty_0(\R^2)$. For $\phi\in L^2(\R^2)$ and $f\in L^2H^{-1}(\R\times\R^2)$, we have the following homogeneous and retarded local smoothing inequalities:
\begin{equation}\label{eq:local smoothing hom}
\norm{\psi e^{it\De}\phi}_{L^2H^{1/2}}\les_\psi\norm{\phi}_{L^2(\R^2)}
\end{equation}
and
\begin{equation}\label{eq:local smoothing inhom}
\norm{\psi\mc I_{\R^2}(\psi f)}_{L^2_{t,x}(\R\times\R^2)}\les_\psi\norm{f}_{L^2H^{-1}(\R\times\R^2)}.
\end{equation}
\end{lem}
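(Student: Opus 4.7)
Both estimates are classical Kato-type smoothing bounds due to Sj\"{o}lin, Constantin--Saut, and Vega, and my plan is to reduce them to Plancherel on the paraboloid together with a Schur-type estimate for a restriction operator. For \eqref{eq:local smoothing hom}, the starting observation is that the spacetime Fourier transform of $e^{it\De}\phi$ is supported on the paraboloid $\tau=-|\xi|^2$; the substitution $\tau=-s^2$, which turns $\delta(\tau+|\xi|^2)$ into $(2s)^{-1}\delta(|\xi|-s)$, then gives, up to absolute constants,
$$\norm{\psi e^{it\De}\phi}_{L^2_tH^{1/2}_x}^2 \;\sim\; \int_0^\infty \!\!\int_{\R^2}|\psi(x)|^2\,\bigl|E_s\widehat\phi(x)\bigr|^2\,dx\,ds,$$
where $E_sg(x):=\int_{sS^1}g(\xi)e^{ix\cdot\xi}\,d\sigma_s(\xi)$ is extension from the circle $sS^1$. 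The factor $|\xi|^{1/2}$ from the $H^{1/2}$ norm precisely cancels the $s^{-1/2}$ from the Jacobian, so that no weight survives and integrating in $s$ via polar coordinates reproduces $\norm{\phi}_{L^2}^2$.

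It thus suffices to show $\norm{\psi E_sg}_{L^2(\R^2)}\les_\psi\norm{g}_{L^2(sS^1)}$ uniformly in $s>0$. By $TT^*$ this is equivalent to the bound $\norm{K_s}_{L^2(sS^1)\to L^2(sS^1)}\les_\psi 1$ for the kernel operator $K_sg(\eta) := \int_{sS^1}\widehat{|\psi|^2}(\eta-\xi)g(\xi)\,d\sigma_s(\xi)$. Since $\widehat{|\psi|^2}$ is Schwartz, Schur's test closes the estimate: parametrizing $sS^1$ by arc length and substituting $u=s\theta$ yields $\int_{sS^1}\langle\eta-\xi\rangle^{-N}\,d\sigma_s(\xi)\les_N 1$ uniformly in $s$ and $\eta\in sS^1$.

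For the retarded estimate \eqref{eq:local smoothing inhom}, my plan is to proceed on the spacetime Fourier side using the symbol $i(\tau+|\xi|^2+i0)^{-1}$ of $\mc I_{\R^2}$. The Plemelj--Sokhotski formula splits this into a principal-value non-retarded contribution and a $-\pi\delta(\tau+|\xi|^2)$ free-evolution piece; the second piece is handled by applying \eqref{eq:local smoothing hom} together with its dual $\norm{\int e^{-it\De}\bar\psi g(t)\,dt}_{L^2}\les_\psi\norm{g}_{L^2H^{-1/2}}$. The first piece is controlled by the same Plancherel--Schur computation applied to a dyadic decomposition in the distance from the paraboloid. The subtle point, which I expect to be the main technical obstacle, is that the immediate $TT^*$ composition of \eqref{eq:local smoothing hom} with its dual yields only the symmetric estimate $L^2H^{-1/2}\to L^2H^{1/2}$, whereas \eqref{eq:local smoothing inhom} has the asymmetric form $L^2H^{-1}\to L^2L^2$ (of the same overall scaling). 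One may reach the asymmetric form either by performing the Plancherel computation directly for $\mc I_{\R^2}$ and extracting the full unit of smoothing to one side, or by applying the Christ--Kiselev lemma (legitimate here since both temporal exponents equal $2$) to upgrade the non-retarded bilinear bound to the retarded form required.
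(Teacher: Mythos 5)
Your treatment of the homogeneous bound \eqref{eq:local smoothing hom} is essentially workable (the paper itself simply cites \cite[Theorem 2.1]{kenig-ponce-vega}), but note two details your sketch glosses over: the $H^{1/2}$ weight is $\langle\xi\rangle^{1/2}$, not $|\xi|^{1/2}$, so the advertised ``exact cancellation'' with the Jacobian only happens for radii $s\ge 1$; for $s\le 1$ a factor $s^{-1}$ survives and you must use the improved extension bound $\norm{\psi E_s g}_{L^2(\R^2)}\les_\psi s^{1/2}\norm{g}_{L^2(sS^1)}$ (Cauchy--Schwarz against the circle's measure) to make the low-frequency integral converge. Also, passing between $\norm{\psi u(t)}_{H^{1/2}}$ and $\norm{\psi\langle D\rangle^{1/2}u(t)}_{L^2}$ needs a commutator argument that retains spatial localization, since the crude commutator bound produces $\norm{u}_{L^2_{t,x}(\R\times\R^2)}$, which is infinite for a free wave; this is standard but not free. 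These are repairable with the same tools.

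The genuine gap is in the retarded estimate \eqref{eq:local smoothing inhom}, and it is twofold. First, the Christ--Kiselev fallback is backwards: the lemma requires the target temporal exponent to be \emph{strictly larger} than the source exponent, and $p=q=2$ is exactly the excluded case; this is precisely why the retarded bound cannot be deduced from the homogeneous one by $TT^*$ plus Christ--Kiselev, and why the paper instead quotes \cite[Theorem 2.3 (b)]{kenig-ponce-vega} (up to duality). Second, the direct route is not closed as described. After Plancherel in $t$ the claim is the uniform-in-$\lambda$ bound $\norm{\psi(|D|^2-\lambda\mp i0)^{-1}\psi}_{H^{-1}\to L^2}\les 1$. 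For the delta piece, composing \eqref{eq:local smoothing hom} with its dual only yields a bound by $\norm{f}_{L^2H^{-1/2}}$, which does not imply the stated $H^{-1}$ bound; the asymmetry has to be produced at fixed temporal frequency, where the delta localizes $|\xi|=\sqrt{-\tau}$ so the extra half derivative converts into $\langle\sqrt{-\tau}\rangle^{1/2}$ and is absorbed by $\norm{\psi\,dE(\lambda)\,\psi}_{L^2\to L^2}\les\min(1,\lambda^{-1/2})$ together with the small-radius gain above. More seriously, for the principal-value piece a dyadic decomposition in $||\xi|^2-\lambda|\sim 2^k$ produces pieces whose $H^{-1}\to L^2$ norms are each of size $\sim 1$ (roughly $\log\lambda$ of them at unit scales and infinitely many at scales $2^k\le 1$, each contributing comparably), so the absolute sum diverges; the estimate only closes by exploiting the sign cancellation between the two sides of the characteristic circle (equivalently, H\"older continuity of $\lambda\mapsto\psi\,dE(\lambda)\,\psi$, i.e., a limiting-absorption ingredient), which your ``same Plancherez--Schur computation'' does not supply. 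Without repairing these points, \eqref{eq:local smoothing inhom} is not proved; the paper's own proof avoids all of this by citing Kenig--Ponce--Vega directly.
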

\begin{proof}
\eqref{eq:local smoothing hom} follows from \cite[Theorem 2.1]{kenig-ponce-vega}.
The retarded estimate \eqref{eq:local smoothing inhom} is shown in \cite[Theorem 2.3 (b)]{kenig-ponce-vega} (up to a duality argument).
\end{proof}
\begin{proof}[Proof of Lemma \ref{lem:weak lim is sol}]
For $n\in\N$, since $(v_n,I_n)$ is a cutoff solution, we can write
\begin{equation}
(i\d_{t}+\De)v_{n}=\chi_{I_{n}}\mc N(v_{n}).\label{eq:cutoff NLS R^2}
\end{equation}
Let $\psi_{1}\in C_{0}^{\infty}(\R^{2})$ be a function such that
$\psi_{1}(x)=1$ holds for $x\in B_{1}(0)$ and $\supp(\psi_1)\subset B_{2}(0)$.
For $R\in\N$, denote $\psi_{R}(x):=\psi_{1}(x/R)$. Multiplying $\psi_{R}$
to (\ref{eq:cutoff NLS R^2}), we have
\begin{equation}
(i\d_{t}+\De)(\psi_{R}v_{n})=\psi_{R}\chi_{I_{n}}\mc N(v_{n})+2\na\psi_{R}\cdot\na v_{n}+(\De\psi_{R})v_{n}.\label{eq:cutoff NLS R^2 psi}
\end{equation}
Let $I\subset\R$ be a bounded interval. For each $R\in2^\N$, by the estimate $\norm{\na\psi_R\cdot\na \phi}_{H^{-1}(\R^2)}\les R^{-1}\norm{\phi}_{L^2(\R^2)}$, we have
\begin{equation}\label{eq:R psi v sum1}
R\norm{\na\psi_R\cdot\na v_n}_{L^2H^{-1}(I\times\R^2)}\les\norm{v_n}_{L^2_{t,x}(I\times\supp(\na\psi_R))}.
\end{equation}
For $R_0\in2^\N$, taking a square summation of \eqref{eq:R psi v sum1} over $R\le R_0$ yields
\begin{equation}\label{eq:R psi v sum2}
\sum_{R\le R_0}\left(R\norm{\na\psi_R\cdot\na v_n}_{L^2H^{-1}(I\times\R^2)}\right)^2\les\sum_{R\le R_0}\norm{v_n}_{L^2_{t,x}(I\times\supp(\na\psi_R))}^2.
\end{equation}
The right-hand side of \eqref{eq:R psi v sum2} is asymptotically $O_I(1)$ as $n\rightarrow\infty$ since $\supp(\na\psi_R)$ are disjoint for $R\le R_0$ and $\{v_n\}$ is uniformly locally bounded in $C^0L^2\hook L^2_{t,x}(I\times\R^2)$. Thus, we have
\begin{equation}\label{Eq:R psi v sum3}
\sup_{R_0}\limsup_{n\rightarrow\infty}\sum_{R\le R_0}\left(R\norm{\na\psi_R\cdot\na v_n}_{L^2H^{-1}(I\times\R^2)}\right)^2\les_I1.
\end{equation}
We pass to a subsequence of $n$, so that $\limsup$ in \eqref{Eq:R psi v sum3} is replaced by $\lim$, i.e.,
\begin{equation}\label{Eq:R psi v sum4}
\sup_{R_0}\lim_{n\rightarrow\infty}\sum_{R\le R_0}\left(R\norm{\na\psi_R\cdot\na v_n}_{L^2H^{-1}(I\times\R^2)}\right)^2\les_I1.
\end{equation}
By the Monotone Convergence Theorem and \eqref{Eq:R psi v sum4}, we have
\begin{equation}
\lim_{R\rightarrow\infty}\lim_{n\rightarrow\infty}R\norm{\na\psi_{R}\cdot\na v_{n}}_{L^{2}H^{-1}(I\times\R^{2})}=0.\label{eq:d psi}
\end{equation}

For each $R\in2^{\N}$, connecting the homogeneous local smoothing estimate (rescaled version of \eqref{eq:local smoothing hom})
\begin{equation}
\norm{\psi_{2R}e^{it\De}}_{L^{2}\rightarrow L^{2}H^{1/2}}\les R^{1/2}\label{eq:free smoothing}
\end{equation}
and the Strichartz estimate
\[
\norm{e^{it\De}}_{L^{2}\rightarrow L_{t,x}^{4}}\les1
\]
by the $TT^{*}$ argument and the Christ-Kiselev Lemma, we have
\begin{equation}
\norm{\psi_{2R}\mc I_{\R^{2}}f}_{L^{2}H^{1/2}}\les R^{1/2}\norm f_{L_{t,x}^{4/3}}.\label{eq:stri smoothing}
\end{equation}
Rewriting \eqref{eq:cutoff NLS R^2 psi} in the Duhamel form yields
\begin{align*}
\psi_{R}v_{n}=\psi_{2R}\psi_{R}v_{n}&=\psi_{2R}\left(e^{it\De}\left(\psi_{R}v_{n}(0)\right)+\mc I_{\R^{2}}\left(\psi_{R}\chi_{I_{n}}\mc N(v_{n})+(\De\psi_{R})v_{n}\right)\right)
\\&+\psi_{2R}\mc I_{\R^{2}}\left(2\na\psi_{R}\cdot\na v_{n}\right)=:v_{R,n}^{\text{conv}}+v_{R,n}^{\text{err}}.
\end{align*}
By \eqref{eq:local smoothing inhom} and \eqref{eq:d psi},
we have
\begin{align*}
\limsup_{n\rightarrow\infty}\norm{v_{R,n}^{\err}}_{L_{t,x}^{2}(I\times\R^2)}&\les\limsup_{n\rightarrow\infty}R\norm{\na\psi_{R}\cdot\na v_{n}}_{L^{2}H^{-1}(I\times\R^2)}\xrightarrow{R\rightarrow\infty}0.
\end{align*}
By (\ref{eq:free smoothing}), (\ref{eq:stri smoothing}), and the
uniform local boundedness of $\left\{ v_{n}\right\} $ in $C^{0}L^{2}\cap L_{t,x}^{4}$,
we have
\begin{align}\label{eq:L2 H1/2}
&\limsup_{n\rightarrow\infty}\norm{v_{R,n}^{\text{conv}}}_{L^{2}H^{1/2}(I\times\R^{2})}
\\\les_{I,R}&\limsup_{n\rightarrow\infty}\norm{\psi_{R}v_{n}(0)}_{L^{2}(\R^{2})}+\norm{\psi_{R}\chi_{I_{n}}\mc N(v_{n})+(\De\psi_{R})v_{n}}_{L_{t,x}^{4/3}(I\times\R^{2})}\les_{I,R}1.
\nonumber
\end{align}
By the identity
\[
i\d_{t}v_{R,n}^{\text{conv}}=-\De v_{R,n}^{\text{conv}}+\psi_{R}\chi_{I_{n}}\mc N(v_{n})+(\De\psi_{R})v_{n}
\]
and (\ref{eq:L2 H1/2}), we also have
\begin{align}\label{eq:H1 H-3/2}
&\limsup_{n\rightarrow\infty}\norm{\d_{t}v_{R,n}^{\text{conv}}}_{L^{2}H^{-3/2}(I\times\R^{2})}
\\\nonumber\le&\limsup_{n\rightarrow\infty}\norm{v_{R,n}^{\text{conv}}}_{L^{2}H^{1/2}(I\times\R^{2})}+\norm{\psi_{R}\chi_{I_{n}}\mc N(v_{n})+(\De\psi_{R})v_{n}}_{L^{2}H^{-3/2}(I\times\R^{2})}\les_{I,R}1.
\end{align}
By (\ref{eq:L2 H1/2}), (\ref{eq:H1 H-3/2}), and the compactness
of the embedding $L^{2}H^{1/2}\cap H^{1}H^{-3/2}(I\times\R^{2})\hook L_{\loc}^{2}(I\times\R^2)$,
passing to a subsequence of $n$, $v_{R,n}^{\text{conv}}$ is convergent
in $L_{t,x}^{2}(I\times\R^{2})$ for each $R\in2^\N$.
Therefore, $\psi_{R}v_{n}$ is a sum of a convergent sequence plus
an $o_{R}(1)$ error term in $L^{2}(I\times\R^{2})$. Taking $R\rightarrow\infty$
and $I\uparrow\R$, $v_{n}$ is convergent in $L_{\loc}^{2}(\R\times\R^{2})$;
let $v_{n}\rightarrow v_{*}$ be the limit. By the uniform local boundedness
of $\left\{ v_{n}\right\}$, we have $v_{*}\in L^{\infty}L^{2}\cap L_{t,x}^{4}$.

Since $i\d_{t}v_{n}=-\De v_{n}+\mc N(v_{n})$ is bounded in $C^{0}H_{\loc}^{-2}+L_{t,x,\loc}^{4/3}$,
$\left\{ v_{n}\right\} $ is uniformly equicontinuous in $(H^{-2}_\loc+L^{4/3}_\loc)(\R^{2})$.
Thus, for every $T\in\R$, $v_{n}(T)\weak v_{*}(T)$ holds once we
show the continuity of $v_{*}$, which follows immediately once
we show that $v_{*}$ is a Duhamel solution.

It only remains to show that $v_{*}$ is a Duhamel cutoff solution.
We proceed conventionally by first showing that $v_{*}$ is a distributional
cutoff solution, then using a mollification argument to show that
such $v_{*}$ is also a solution in Duhamel sense.

Firstly, we show that $v_{*}$ is a distributional cutoff solution. Passing to a subsequence, there exists an interval $I$ such that
$\chi_{I_{n}}$ converges almost everywhere to $\chi_{I}$. Since
$v_{n}$ is bounded in $L_{t,x,\loc}^{4}$ and converges almost everywhere
to $v_{*}$, $\mc N(v_{n})$ is bounded in $L_{t,x,\loc}^{4/3}$ and
converges almost everywhere to $\mc N(v_{*})$. Thus, taking weak
limits of both sides of \eqref{eq:cutoff NLS R^2} gives
\begin{equation}
(i\d_{t}+\De)v_{*}=\chi_{I}\cdot\mc N(v_{*}).\label{eq:dist}
\end{equation}
We check that $v_{*}$ is a Duhamel cutoff solution. We use a mollification
argument. Let $\phi\in C_{0}^{\infty}(\R\times\R^{2})$ be a function
such that $\int_{\R\times\R^{2}}\phi dxdt=1$. For $n\in\N$, let
$\phi_{n}(t,x):=n^{3}\phi(nt,nx)$. Taking a convolution with $\phi_{n}$
to \eqref{eq:dist}, we have
\[
(i\d_{t}+\De)\left(v_{*}*\phi_{n}\right)=\left(\chi_{I}\cdot\mc N(v_{*})\right)*\phi_{n},
\]
which can be rewritten in the Duhamel form
\begin{equation}
\left(v_{*}*\phi_{n}\right)(t)=e^{it\De}\left(v_{*}*\phi_{n}\right)(0)-i\int_{0}^{t}e^{i(t-s)\De}\left(\left(\chi_{I}\cdot\mc N(v_{*})\right)*\phi_{n}\right)(s)ds.\label{eq:weak Duhamel}
\end{equation}
Since $v_{*}\in L^{\infty}L^{2}$ is continuous in $\left(H^{-2}+L^{4/3}\right)_{\loc}(\R^{2})$,
we have $\left(v_{*}*\phi_{n}\right)(0)\weak v_{*}(0)$. Thus, taking a weak limit by $n\rightarrow\infty$
of (\ref{eq:weak Duhamel}) yields
\[
v_{*}=e^{it\De}v_{*}(0)-i\int_{0}^{t}e^{i(t-s)\De}\left(\chi_{I}\cdot\mc N(v_{*})\right)(s)ds,
\]
which implies that $v_{*}$ is a Duhamel cutoff solution. In the case of an unbounded interval $I^*$ the scattering claim follows from the standard $L^4$ Strichartz estimate, see e.g. \cite[Theorem 1.22]{DodBook}.
\end{proof}

\subsection{The solution norm and its inverse property}
In this subsection, we recall and employ the space $Z_R$ used in \cite{herr2024strichartz}. Based on this $Z_R$ space, we provide a modified version of the cubic nonlinear estimate in \cite{herr2024strichartz} and an inverse property for the $L^4$-estimate of $Z_R$.

We briefly introduce the relevant definitions and elementary properties and refer to \cite{herr2024strichartz} for more details.
For $1\leq p<\infty$ let\[\|u\|_{V^p} =\sup \Big(\sum_{k=1}^K|u(t_k)-u(t_{k-1})|^p\Big)^{\frac1p}\]
be the $p-$variation of $u:\R\to\C$, where the supremum is taken over all finite increasing sequences $(t_k)$, and let $V^p$ be the space of all functions with finite $p-$variation which tend to zero at $-\infty$. For given Sobolev regularity parameter $s\in \R$, we define $Y^s$ as the space of all $u:\R\times \T^2\to \C$ such that $t\mapsto\widehat{e^{-it\Delta} u(t)}(\xi)\in V^2$ for any fixed $\xi \in \Z^2$ and
\[
\|u\|_{Y^s}=\sup \Big(\sum_{\xi \in \Z^2}(1+|\xi|^2)^s\|e^{it|\xi|^2} \widehat{u(t)}(\xi)\|_{V^2}^2\Big)^{\frac12}<\infty.
\]
For a given interval $I\subset \R$, let  $Y^s(I)$ be the corresponding restriction space.
The space $Y^{s}$ is used in \cite{herr2011global} and later works on critical regularity theory of Schr{\"o}dinger equations on periodic domains. Some well-known properties are the following:
\begin{prop}[{\cite[Section 2]{herr2011global}}]\label{prop:Ys props}
For $s\in\R$, the $Y^{s}$-norm has the following properties:
\begin{itemize}
\item For an interval $I\subset\R$ and $u\in Y^s$, we have
\[
\norm{\chi_I u}_{Y^s}\le2\norm u_{Y^s}.
\]
\item For a function $\phi\in L^2(\T^2)$, we have
\begin{equation}
\norm{e^{it\De}\phi}_{Y^s}\sim\norm{\phi}_{H^s}
\end{equation}
and for a function $u\in Y^s$,
\begin{equation}
\norm{u}_{Y^s}\gtrsim\norm{u}_{L^\infty H^s}.
\end{equation}
\item For $T>0$ and a function $f\in L^{1}H^{s}$,
we have 
\begin{equation}
\norm{\chi_{[0,T)}\cdot\mc I(f)}_{Y^{s}}\les\sup_{v\in Y^{-s}:\norm v_{Y^{-s}}\le1}\left|\int_0^T\int_{\T^{2}}f\overline{v}dxdt\right|.\label{eq:U2V2}
\end{equation}
By a density argument, $f$ is further allowed to be just integrable with bounded right-hand side. By \cite[Proposition 2.4]{hadac2009well}, the left-hand side is a continuous function of $T$ and $\int_{0}^{t}e^{i(t-t')\De}f(t')dt'$ is a continuous function of $t$ in $H^s$.
\end{itemize}
\end{prop}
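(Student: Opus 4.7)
The plan is to verify each item from the atomic/duality-based definition of $Y^s$ from \cite{herr2011global, hadac2009well}. Recall that $Y^s$ is built on the Banach space $V^2_{rc}$ of right-continuous functions of bounded quadratic variation: $u \in Y^s$ precisely when $\jp{\nabla}^s e^{-it\De}u$ lies in $V^2_{rc}(\R; L^2(\T^2))$. Since $e^{it\De}$ acts isometrically on $H^s$, all three properties will be reduced to standard statements about $V^2$ and its predual $U^2$.

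For the first bullet, under the correspondence $u \leftrightarrow \jp{\nabla}^s e^{-it\De}u \in V^2_{rc}$, the truncation $\chi_I u$ corresponds to sharp truncation of a $V^2$-function to the interval $I$. A direct inspection of the quadratic variation shows that restricting to $I$ can create at most two new endpoint jumps, so the $V^2$-norm at most doubles; hence $\|\chi_I u\|_{Y^s}\le 2\|u\|_{Y^s}$. For the second bullet, $e^{-it\De}(e^{it\De}\phi)=\phi$ is constant in $t$, so its $V^2$-norm coincides with $\|\phi\|_{H^s}$, giving the first equivalence; the embedding $Y^s\hookrightarrow L^\infty H^s$ then follows from the fact that $V^2$-functions are bounded, with $L^\infty$-norm controlled by the $V^2$-norm plus the value at any single time, together with isometry of $e^{it\De}$ on $H^s$.

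For the third bullet, the strategy is the standard duality between the Duhamel operator and the predual structure of $Y^s$. Specifically, the dual pairing $\jp{Y^s, Y^{-s}}$ realizes the $Y^s$-norm as a supremum over unit-norm test functions $v\in Y^{-s}$, so it suffices to pair the Duhamel integral against $\bar v$, swap the order of integration, and recognize the resulting integral as the $\int f \bar v\,dx\,dt$ expression in \eqref{eq:U2V2}; the truncation $\chi_{[0,T)}$ inside $Y^s$ is handled by the first bullet. Continuity of the integral as a map $T\mapsto Y^s$ and $t \mapsto H^s$ follows from absolute continuity of Bochner integrals combined with the atomic characterization, exactly as in \cite[Proposition 2.4]{hadac2009well}.

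The only nontrivial input is the atomic/duality machinery itself, which is developed in \cite{hadac2009well} and imported into the periodic Schr\"odinger setting in \cite{herr2011global}; once that framework is in place, the listed properties are essentially bookkeeping, and the intended proof in the paper is just to quote those two references. Consequently, the expected obstacle is not mathematical depth but simply the need to unpack enough of the $V^p/U^p$ formalism to make the reductions above rigorous without rewriting the foundational theory.
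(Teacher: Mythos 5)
Your reasoning is essentially sound and matches the intent of the paper, which offers no proof of this proposition and simply cites \cite{herr2011global}; the three bullets really are the basic $U^2$--$V^2$ bookkeeping you describe. The one point worth correcting is your characterization of $Y^s$: you write that $u\in Y^s$ iff $\jp{\nabla}^s e^{-it\De}u\in V^2_{rc}(\R;L^2)$, but in \cite{herr2011global} the norm is the $\ell^2_\xi$-sum of frequency-localized $V^2_t$-norms,
\[
\norm{u}_{Y^s}^2=\sum_{\xi\in\Z^2}\jp{\xi}^{2s}\norm{e^{it|\xi|^2}\widehat{u(t)}(\xi)}_{V^2_t}^2,
\]
which is strictly larger than $\norm{\jp{\nabla}^se^{-it\De}u}_{V^2(\R;L^2)}$ (each mode takes its own near-optimal partition), so the two spaces are not the same. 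This misidentification does not break any of your three reductions: the truncation estimate, the identity for $e^{it\De}\phi$, and the $U^2$--$V^2$ duality all propagate mode-by-mode in $\xi$ and then $\ell^2$-sum, so the arguments you sketch go through unchanged once phrased for the twisted coefficients $t\mapsto e^{it|\xi|^2}\widehat{u(t)}(\xi)$. With that correction the proof is the standard one.
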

Recall that  $\mc I$ denotes the retarded Schr\"{o}dinger operator on $\T^{2}$,
\[
\mc I(f)(t):=-i\int_{-\infty}^t e^{i(t-t')\De}f(t')\,dt.
\]

Now we introduce the space $Z_R$. Given $s>0$, let $Z_R=Z_R^s$ be the norm
\[
\norm{u}_{Z_R}:=\norm{u}_{Y^0}+R^{-s}\norm{u}_{Y^s}.
\]
The norm $Z_R$ was used as a solution norm in \cite{herr2024strichartz}. We point out that in \cite{herr2024strichartz} sharp Littlewood-Paley cutoffs were used, while smooth Littlewood-Paley cutoffs are used in our setting here. The analysis does not change, however, since all estimates in  \cite{herr2024strichartz} were $L^4$ and $L^2$-based and do not change (up to comparability) by replacing  sharp by smooth Fourier cutoffs.
\begin{lem}[{\cite[(4.10) and (4.11)]{herr2024strichartz}}]\label{lem:Z_R L4 stri}
For $N,M\in2^{\N}$ and an interval $I\subset\R$ such
that $\left|I\right|\le\frac{1}{\log N}$, we have
\begin{equation}
\norm{P_{\le M}u}_{L^{4}_{t,x}(I\times\T^{2})}\les\left(1+\frac{\log M}{\log N}\right)^{1/4}\norm u_{Y^{0}}\label{eq:Y^0 Stri}
\end{equation}
and
\begin{equation}\label{eq:Z_R stri}
\norm{u}_{L_{t,x}^{4}(I\times\T^2)}\les\norm u_{Z_N}.
\end{equation}
\end{lem}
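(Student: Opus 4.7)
The plan is to establish \eqref{eq:Y^0 Stri} first, as a direct consequence of Proposition \ref{prop:L4 Stri} and the atomic structure of $Y^0$, and then to deduce \eqref{eq:Z_R stri} via Littlewood-Paley decomposition. For \eqref{eq:Y^0 Stri}, I begin by noting that $P_{\le M}u$ has Fourier support in a set $S\subset\Z^2$ with $\#S\les M^2$, so that $\log\#S\les\log M$. By the standard atomic transfer principle (every $Y^0$-atom is of the form $\sum_k\chi_{[t_{k-1},t_k)}e^{it\De}\phi_k$ with $\sum_k\|\phi_k\|_{L^2}^2\le1$), Proposition \ref{prop:L4 Stri} yields, for any subinterval $J$ of length $|J|\le c/\log M$, the clean bound $\|P_{\le M}u\|_{L^4_{t,x}(J\times\T^2)}\les \|u\|_{Y^0}$. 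When $M\le N$, the interval $I$ itself satisfies $|I|\le 1/\log N\le c/\log M$ after adjusting constants, and the factor $(1+\log M/\log N)^{1/4}$ is harmlessly $O(1)$. When $M\ge N$, I would partition $I$ into $K\sim\log M/\log N$ subintervals $J_1,\ldots,J_K$ of length $\le c/\log M$ and sum in $L^4$:
\[
\|P_{\le M}u\|_{L^4_{t,x}(I\times\T^2)}^{4}=\sum_{k=1}^{K}\|P_{\le M}u\|_{L^4_{t,x}(J_k\times\T^2)}^{4}\les K\|u\|_{Y^0}^{4},
\]
producing the required $(\log M/\log N)^{1/4}$.

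For \eqref{eq:Z_R stri}, I would decompose $u=\sum_{M\in2^{\N}}P_Mu$ and invoke the space-time Littlewood-Paley square function estimate followed by Minkowski's inequality in $L^2$:
\[
\|u\|_{L^4_{t,x}(I\times\T^2)}^{2}\les\Big\|\sum_M|P_Mu|^2\Big\|_{L^2_{t,x}(I\times\T^2)}\le\sum_M\|P_Mu\|_{L^4_{t,x}(I\times\T^2)}^{2}.
\]
Each dyadic summand is then controlled by \eqref{eq:Y^0 Stri}. For the low regime $M\le N$, the logarithmic factor is $O(1)$, and the $\ell^2$-orthogonality of dyadic Littlewood-Paley blocks in the atomic $Y^0$-norm yields $\sum_{M\le N}\|P_Mu\|_{Y^0}^{2}\les\|u\|_{Y^0}^{2}$. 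For the high regime $M>N$, I trade regularity via $\|P_Mu\|_{Y^0}\le M^{-s}\|P_Mu\|_{Y^s}$, so the corresponding contribution is bounded by
\[
N^{-2s}\|u\|_{Y^s}^{2}\sum_{M>N}(1+\log M/\log N)^{1/2}(M/N)^{-2s},
\]
and since $s>0$ the weighted geometric tail is uniformly $O_s(1)$. The two regimes together give $\|u\|_{L^4}^{2}\les\|u\|_{Y^0}^{2}+N^{-2s}\|u\|_{Y^s}^{2}\sim\|u\|_{Z_N}^{2}$.

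The main obstacle is the treatment of the low-frequency pile $M\le N$: the naive triangle inequality $\|u\|_{L^4}\le\sum_M\|P_Mu\|_{L^4}$ would incur an extra factor of $\log N$ from the $O(\log N)$ dyadic blocks with $M\le N$, which would destroy the bound. The square-function step is therefore essential, and it relies on the $Y^0$-analogue of Littlewood-Paley orthogonality, a standard but nontrivial consequence of the $U^2$-based atomic structure. Above $N$ the geometric gain $M^{-2s}$ absorbs the mild $(\log M/\log N)^{1/2}$ weight comfortably, so no further refinement is needed there.
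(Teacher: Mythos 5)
Your argument is correct, and since the paper only cites \cite[(4.10)--(4.11)]{herr2024strichartz} without reproducing a proof, your reconstruction matches the canonical route: transfer Proposition \ref{prop:L4 Stri} to the $Y^{0}$-norm, partition $I$ into $O(1+\log M/\log N)$ subintervals of length $\sim1/\log M$ and sum fourth powers for \eqref{eq:Y^0 Stri}; then for \eqref{eq:Z_R stri} use the spatial Littlewood--Paley square function to obtain $\|u\|_{L^{4}}^{2}\les\sum_{M}\|P_{M}u\|_{L^{4}}^{2}$, split at $M=N$, use the $\ell^{2}$-orthogonality of $Y^{0}$ below $N$ and trade $\|P_{M}u\|_{Y^{0}}\le M^{-s}\|P_{M}u\|_{Y^{s}}$ above $N$, absorbing the mild $(\log M/\log N)^{1/2}$ weight into the geometric decay. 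One small slip worth correcting: $Y^{0}$ is not $U^{2}$-atomic but the coefficientwise $V^{2}$-type space of \cite{herr2011global}, so the atoms you describe are genuinely $U^{2}_{\De}$-atoms of a \emph{smaller} space; the transfer to $Y^{0}$ instead goes through the embedding $V^{2}\hookrightarrow U^{4}$ (available because the target $L^{4}_{t,x}$ has exponent $p=4<\infty$), i.e.\ $\|\cdot\|_{L^{4}(J)}\les\|\cdot\|_{U^{4}_{\De}}\les\|\cdot\|_{V^{2}_{\De}}\les\|\cdot\|_{Y^{0}}$, after which your argument is sound.
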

We show our main nonlinear estimate:
\begin{lem}\label{lem:nu bound}
Let $s>0$. For $C,R\in2^\N$ such that $R\gg_C 1$ and $u\in Y^s$ supported in $[0,1/\log R)\times\T^2$, we have
\begin{equation}\label{eq:trilinear}
\norm{\mc I(|u|^2u)}_{Z_R}\les_s\left(C^{-s}\norm{u}_{Z_R}^2+\norm{u}_{L^4_{t,x}}^2\right)\norm{u}_{Z_R}.
\end{equation}
\end{lem}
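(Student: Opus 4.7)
The plan is to apply the duality--atomic-space estimate \eqref{eq:U2V2} separately to the $Y^0$- and $R^{-s}Y^s$-components of $\norm{u}_{Z_R}$, reducing the lemma to a pair of trilinear estimates of the form
\[
\Bigl|\int_{[0,1/\log R)\times\T^2}|u|^2u\,\ol v\,dx\,dt\Bigr|\les_s\bigl(C^{-s}\norm{u}_{Z_R}^2+\norm{u}_{L^4_{t,x}}^2\bigr)\norm{u}_{Z_R}
\]
for $v$ in the unit balls of $Y^0$ and (with an extra factor $R^{-s}$ on the left-hand side) of $Y^{-s}$. I would then split $u=u_{\le C}+u_{>C}$ via a smooth Littlewood--Paley cutoff and expand $|u|^2u$ as the all-low cube $|u_{\le C}|^2u_{\le C}$ plus seven cross terms each carrying a factor of $u_{>C}$; the all-low part should produce the $\norm{u}_{L^4}^2\norm{u}_{Z_R}$ summand, and the cross terms the $C^{-s}\norm{u}_{Z_R}^3$ summand.

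For the all-low cube, H\"older gives
\[
\Bigl|\int|u_{\le C}|^2u_{\le C}\,\ol v\Bigr|\le\norm{u_{\le C}}_{L^4_{t,x}(I)}^3\,\norm{P_{\le 10C}v}_{L^4_{t,x}(I)}.
\]
Boundedness of the smooth Littlewood--Paley projection on $L^4$ replaces $\norm{u_{\le C}}_{L^4}$ by $\norm{u}_{L^4}$; \eqref{eq:Z_R stri} absorbs one of the three $\norm{u}_{L^4}$-factors into $\norm{u}_{Z_R}$; and \eqref{eq:Y^0 Stri} applied with $M=10C$ on the interval $I=[0,1/\log R)$ controls $\norm{P_{\le 10C}v}_{L^4}$ by $\norm{v}_{Y^0}$ with no logarithmic loss, since the hypothesis $R\gg_C 1$ forces $\log(10C)/\log R\ll 1$. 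In the $Y^{-s}$-dual case, the Bernstein-type bound $\norm{P_{\le 10C}v}_{L^4}\les C^s\norm{v}_{Y^{-s}}$ together with the outer $R^{-s}$ yields a harmless $(C/R)^s\le 1$.

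For each cross term, at least one factor admits a dyadic piece $u_N$ with $N>C$. Using the Besov-type identity $\norm{u_N}_{Y^0}\les N^{-s}\norm{u_N}_{Y^s}$ together with \eqref{eq:Y^0 Stri} on the short interval produces a factor $N^{-s}\le C^{-s}$ on the $L^4$-norm of that piece. Square-summing over $N>C$ via $\ell^2$-Littlewood--Paley orthogonality of $Y^0$ returns $C^{-s}\norm{u}_{Y^s}$; the spurious $R^s$ in the bound $\norm{u}_{Y^s}\le R^s\norm{u}_{Z_R}$ is cancelled by the outer $R^{-s}$-weight in the $Y^s$-component of $Z_R$ in the $Y^s$-output case, or by the $N_0^s$ factor coming from the test function $v$ in the $Y^{-s}$-dual estimate. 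The remaining two $u$-factors are placed, one into $\norm{u}_{Z_R}$ via \eqref{eq:Z_R stri} and the other paired with $v$ in an $L^4$--$L^4$ H\"older bound controlled by \eqref{eq:Y^0 Stri}.

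The hard part will be the bookkeeping in the cross-term analysis: for each of the seven cross terms and each of the two dual tests, one must decide which of the three $u$-factors absorbs into $\norm{u}_{Z_R}$, which produces the $C^{-s}$-gain, and which pairs with $v$, so that all powers of $R^{\pm s}$ cancel and the dyadic sums close without logarithmic losses. The shortness of the time interval $|I|\le 1/\log R$ is essential: it is precisely the regime where \eqref{eq:Y^0 Stri} applies with no loss for frequencies up to $R$, and without it the logarithmic factors would accumulate and destroy the multiplicative structure of the right-hand side.
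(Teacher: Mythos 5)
Your dualization to $Y^0$- and $Y^{-s}$-test functions is the same first step the paper takes, but from there the arguments diverge in a structurally significant way, and your proposal has a genuine gap. The paper never decomposes $u$ at the threshold $C$; it first splits the \emph{dual} function $v=v_{<R}+v_{\ge R}$ at the natural Strichartz threshold $R$ ((\ref{eq:bootstrap1}) and (\ref{eq:bootstrap2})). For $v_{<R}$ the estimate closes in $L^4$ alone, with no reference to $C$. For $v_{\ge R}$, one of the three $u$-factors is forced to live at frequency $\gtrsim K/4\ge R/4$ (the factor $w_L$, $L\gtrsim K$); that high factor is paired with $v_K$ in a bilinear $L^2$-estimate, and the \emph{product} of the remaining two $u$-factors is then split at the scale $CR$: the part $P_{<CR}(u_1u_2)$ gives the $\|u\|_{L^4}^2\|u\|_{Z_R}$ summand ((\ref{eq:quar1})), while the part $P_{\ge M}(u_1u_2)$ with $M\ge CR$ forces one of $u_1,u_2$ to be at frequency $\ge M/4\ge CR/4$; its Bernstein gain $(R/M)^s$, summed against the bilinear loss $\log M/\log R$ over $M\ge CR$, is what yields exactly $C^{-s}$ ((\ref{eq:quar2})). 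The $C^{-s}$-gain is tied to the scale $CR$, not $C$.

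By contrast, your split of $u$ at frequency $C$ and the claim that all seven cross terms feed the $C^{-s}\|u\|_{Z_R}^3$ summand do not hold in the regime $C<N<CR$: there $\|u_N\|_{Y^0}\lesssim N^{-s}\|u_N\|_{Y^s}\le N^{-s}R^s\|u\|_{Z_R}=(R/N)^s\|u\|_{Z_R}$ gives a factor that is $\le C^{-s}$ only when $N\ge CR$; for $N<R$ it is even a loss, $(R/N)^s\ge 1$. The claimed cancellation of the spurious $R^s$ is only substantiated for the $R^{-s}Y^s$-component of $Z_R$; in the $Y^0$-output there is no outer $R^{-s}$, and the pairing $v$ lies in the unit ball of $Y^0$, not $Y^{-s}$. (One can trade $\|v_{N_0}\|_{Y^{-s}}\lesssim N_0^{-s}\|v_{N_0}\|_{Y^0}$, but the frequency $N_0\sim N$ forced by the cross term lies \emph{below} $R$ exactly in the problematic range, so $N_0^{-s}>R^{-s}$ and the $R^s$ survives.) In addition, for cross terms whose output forces $v$ to frequency $K\ge R$, the Strichartz $L^4$-bound (\ref{eq:Y^0 Stri}) and the bilinear estimate carry $(\log K/\log R)$-losses which your write-up does not account for; the paper tames them by pairing them against the $(R/M)^s$-gains from the $M\ge CR$ split. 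What you flag as ``hard bookkeeping'' is not merely hard, but for intermediate frequencies the attribution in your plan is wrong; a correct argument needs a split of $v$ at $R$, a split of the two-factor $u$-product at $CR$, and the accompanying geometric summation over $K\ge R$, $L\gtrsim K$, $M\ge CR$, which is essentially the paper's scheme.
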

\begin{proof}
In this proof, every comparability depends on $s$.
By \eqref{eq:U2V2}, \eqref{eq:trilinear} is reduced to showing
\begin{equation}\label{eq:bootstrap1}
\left|\int_{\R\times\T^2}\overline{v_{<R}}|u|^2u dxdt\right|\les \norm{u}_{L^4_{t,x}}^2\norm{u}_{Z_R}\norm{v}_{Y^0}
\end{equation}
and
\begin{equation}\label{eq:bootstrap2}
\left|\int_{\R\times\T^2}\overline{v_{\ge R}}|u|^2u dxdt\right|\les R^s\left(C^{-s}\norm{u}_{Z_R}^2+\norm{u}_{L^4_{t,x}}^2\right)\norm{u}_{Z_R}\norm{v}_{Y^{-s}}.
\end{equation}
\eqref{eq:bootstrap1} holds directly by \eqref{eq:Y^0 Stri} and \eqref{eq:Z_R stri}. We prove \eqref{eq:bootstrap2} mimicking the proof in \cite{herr2024strichartz}. In \cite[(4.12)]{herr2024strichartz}, it was shown that for $M\in2^{\N}$ and $u,v\in Y^{0}$, we have
\begin{equation}\label{eq:bilinear}
\norm{P_{\le M}\left(uv\right)}_{L_{t,x}^{2}([0,\frac{1}{\log R})\times\T^2)}\les\left(1+\frac{\log M}{\log R}\right)^{1/2}\norm u_{Y^{0}}\norm v_{Y^{0}}.
\end{equation}
Let $C\in2^\N$. By \eqref{eq:bilinear} and Young's convolution inequality on $(L,K)$ using that $\sum_{R\in2^\N}R^{-s}\les 1$, for $R\gg_C1$, we have
\begin{align}
 & \sum_{K\ge R}\sum_{L\gtrsim K}\left|\int_{[0,\frac{1}{\log R})\times\T^{2}}P_{<CR}(u_1u_2)P_{<CR}(w_{ L}v_{ K})dxdt\right|\label{eq:quar1}\\
  \les&\norm {u_1}_{L^4_{t,x}}\norm {u_2}_{L^4_{t,x}}\sum_{K\ge R}\sum_{L\gtrsim K}\norm{w_{ L}}_{Y^{0}}\norm{v_{ K}}_{Y^{0}}\nonumber \\
  \les&\norm {u_1}_{L^4_{t,x}}\norm {u_2}_{L^4_{t,x}}\sum_{K\ge R}\sum_{L\gtrsim K}(L/K)^{-s}\norm{w_{ L}}_{Y^{s}}\norm{v_{ K}}_{Y^{-s}}\nonumber \\
  \les&\norm {u_1}_{L^4_{t,x}}\norm {u_2}_{L^4_{t,x}} \norm w_{Y^{s}} \norm v_{Y^{-s}}.\nonumber
\end{align}
\eqref{eq:bilinear} also yields the following version of \cite[(4.14)]{herr2024strichartz}:
\begin{align}\label{eq:quar2}
 & \sum_{M\ge CR}\sum_{K\ge R}\sum_{L\gtrsim K}\left|\int_{[0,\frac{1}{\log R})\times\T^{2}}P_{ M}\left(u_1u_2\right)P_{ M}\left(w_{ L}v_{ K}\right)dxdt\right|\\ \nonumber
\les &\sum_{M\ge CR}\frac{\log M}{\log R}(\norm{P_{\ge M/4}u_1}_{Y^0}\norm{u_2}_{Y^0}+\norm{u_1}_{Y^0}\norm{P_{\ge M/4}u_2}_{Y^0})\sum_{K\ge R}\sum_{L\gtrsim K}\norm{w_{ L}}_{Y^0}\norm{v_{ K}}_{Y^0}\\ \nonumber
\les & \sum_{M\ge CR}\frac{\log M}{\log R}\frac{R^s}{M^s}\norm {u_1}_{Z_R} \norm {u_2}_{Z_R}\sum_{K\ge R}\sum_{L\gtrsim K}\norm{w_{ L}}_{Y^{0}}\norm{v_{ K}}_{Y^{0}},\nonumber
\end{align}
from which we continue to estimate
\[
\les C^{-s}\norm {u_1}_{Z_R} \norm {u_2}_{Z_R}\sum_{K\ge R}\sum_{L\gtrsim K}(L/K)^{-s}\norm{w_{ L}}_{Y^s}\norm{v_{ K}}_{Y^{-s}}\les C^{-s}\norm {u_1}_{Z_R} \norm {u_2}_{Z_R} \norm w_{Y^s} \norm v_{Y^{-s}}.
\]
By \eqref{eq:quar1}, \eqref{eq:quar2}, and that $\norm{u}_{Y^s}\le R^s\norm{u}_{Z_R}$, we have
\begin{align}\label{eq:quar 1+2}
&\sum_{K\ge R}\sum_{L\gtrsim K}\left|\int_{[0,\frac{1}{\log R})\times\T^{2}}(u_1u_2)w_{ L}v_{ K}dxdt\right|
\\\nonumber \les& R^s\left(C^{-s}\norm{u_1}_{Z_R}\norm{u_2}_{Z_R}+\norm{u_1}_{L^4_{t,x}}\norm{u_2}_{L^4_{t,x}}\right)\norm{w}_{Z_R}\norm{v}_{Y^{-s}}.
\end{align}
Note that in \eqref{eq:bilinear}, \eqref{eq:quar1}, \eqref{eq:quar2}, and \eqref{eq:quar 1+2}, each function on the left-hand side could be replaced by its complex conjugate. Since $u$ is supported in $[0,\frac{1}{\log R})\times\T^2$, applying \eqref{eq:quar 1+2} to each term of the bound
\begin{align*} 
 \left|\int_{\R\times\T^{2}}\overline{v_{\ge R}}|u|^2u dxdt\right|
 \le&\sum_{K\ge R}\left|\int_{\R\times\T^{2}}P_{\ge K/4}u\cdot \overline{u}\cdot u\cdot  v_{ K}dxdt\right|
\\
& +\sum_{K\ge R}\left|\int_{\R\times\T^{2}}P_{<K/4}u\cdot \overline{P_{\ge K/4}u}\cdot u\cdot v_{ K}dxdt\right|
\\
& +\sum_{K\ge R}\left|\int_{\R\times\T^{2}}P_{<K/4}u\cdot \overline{P_{<K/4}u}\cdot P_{\ge K/4}u\cdot v_{ K}dxdt\right|,
\end{align*}
we conclude \eqref{eq:bootstrap2}.
\end{proof}
\begin{lem}\label{lem:inverse Z_R stri}
Let $s,\epsilon>0$ and $\{R_n\}$ be a sequence in $2^\N$ such that $R_n\rightarrow\infty$. Let $\{I_n\}$ be a sequence of intervals such that $|I_n|\cdot\log R_n\rightarrow0$. Let $\{\phi_n\}$ be a sequence in $H^s(\T^2)$ such that
\begin{equation}\label{eq:inverse Z_R stri assump}
\norm{\phi_n}_{L^2(\T^2)}+R_n^{-s}\norm{\phi_n}_{H^s(\T^2)}\le 1
\end{equation}
and
\[
\norm{e^{it\De}\phi_n}_{L^4(I_n\times\T^2)}\ge\epsilon.
\]
Then, there exists a frame $\{\mc O_n\}$ such that both $\{\iota_{\mc O_n}(e^{it\De}\phi_n)(0)\}$ and $\{\chi_{\tilde{I_n}}\}$ are weakly nonzero. Here, $\tilde{I_n}$ denotes the interval mapped from $I_n$ by $\mc O_n$.
\end{lem}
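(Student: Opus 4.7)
The plan is to reduce directly to Theorem~\ref{thm:inverse L4 Stri} by truncating the Fourier support of $\phi_n$ to a size compatible with the small time interval $I_n$. I will choose $K_n\to\infty$ slowly enough that $K_n|I_n|\log R_n\to 0$, set $M_n:=R_n^{K_n}$, and take $\psi_n:=P_{\le M_n}\phi_n$. From \eqref{eq:inverse Z_R stri assump} one has $\norm{P_N\phi_n}_{L^2}\les (R_n/N)^s$ for dyadic $N>R_n$, and covering $I_n$ by $O(1+|I_n|\log N)$ intervals of length $1/\log N$ and applying \eqref{eq:L4 Stri} on each gives
\[
\norm{e^{it\De}(\phi_n-\psi_n)}_{L^4(I_n\times\T^2)}\les\sum_{N>M_n}(1+|I_n|\log N)^{1/4}(R_n/N)^s\xrightarrow{n\to\infty}0.
\]
Hence $\norm{e^{it\De}\psi_n}_{L^4(I_n\times\T^2)}\ge\epsilon/2$ and, again by \eqref{eq:L4 Stri}, $\norm{\psi_n}_{L^2}\gtrsim\epsilon$, while $\log\#\supp(\wt{\psi_n})\les\log M_n$ satisfies $|I_n|\log\#\supp(\wt{\psi_n})\to 0$, so the hypothesis of Theorem~\ref{thm:inverse L4 Stri} is met for all large $n$.

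Theorem~\ref{thm:inverse L4 Stri} then produces $(N_n,t_n,x_n,\xi_n)\in\N\times[0,2\pi)\times\T^2\times\Z^2$ such that the profile $\psi_n^{\mathrm{prof}}(x):=N_n^{-1}e^{i\xi_n\cdot x}P_{N_n}\delta(x-x_n)$ satisfies $|\jp{\psi_n^{\mathrm{prof}},e^{it_n\De}\psi_n}_{L^2(\T^2)}|\gtrsim_\epsilon 1$. Since $\psi_n^{\mathrm{prof}}$ is localized near dyadic frequency $N_n$ while $\phi_n-\psi_n$ has Fourier support beyond $M_n\gg N_n$, the inner product is unchanged (up to $o(1)$) when $\psi_n$ is replaced by $\phi_n$. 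Rounding $N_n$ up to the nearest power of two and setting $\mathcal{O}_n:=(N_n,t_n,x_n,\xi_n)$, a change of variables identifies
\[
\jp{\psi_n^{\mathrm{prof}},e^{it_n\De}\phi_n}_{L^2(\T^2)}=c_n\,\jp{g_0,\,\iota_{\mathcal{O}_n}(e^{it\De}\phi_n)(0)}_{L^2(\R^2)}+o(1),
\]
for a fixed rescaled bump $g_0$ on $\R^2$ and unimodular $c_n$. Passing to a subsequence with $\iota_{\mathcal{O}_n}(e^{it\De}\phi_n)(0)\weak\psi^\infty$ in $L^2(\R^2)$ then forces $\psi^\infty\ne 0$, yielding weak non-vanishing of this sequence.

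For the weak non-vanishing of $\chi_{\tilde I_n}$, suppose toward contradiction that $\chi_{\tilde I_n}\weak 0$ along a subsequence. Then $|\tilde I_n\cap K|\to 0$ for every compact $K\subset\R$, so rewriting the inner-product lower bound in the rescaled coordinates and passing $n\to\infty$ yields $\norm{e^{it\De}\psi^\infty}_{L^4(K\times\R^2)}=0$ for every $K$, contradicting $\psi^\infty\ne 0$. A further subsequence extraction delivers both weak non-vanishing assertions simultaneously.

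The main obstacle is ensuring $N_n\to\infty$ so that $\mathcal{O}_n$ is a frame, i.e.\ ruling out that Theorem~\ref{thm:inverse L4 Stri} returns only bounded scales along a subsequence. I expect to handle this by a compactness/subtraction argument: if $N_n\le N^*$, then modulo the $(t_n,x_n,\xi_n)$-symmetries the profile $e^{-it_n\De}\psi_n^{\mathrm{prof}}$ lies in a precompact subset of $L^2(\T^2)$, the inner-product bound yields a nontrivial fixed-scale weak limit $\phi^\infty$ of $\phi_n$, and $|I_n|\to 0$ together with the smoothness of $\phi^\infty$ forces $\norm{e^{it\De}\phi^\infty}_{L^4(I_n\times\T^2)}\to 0$. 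Subtracting the corresponding piece from $\phi_n$ and iterating at most $O_\epsilon(1)$ times reduces either to a residual whose $L^4$-Strichartz norm violates our initial lower bound, or to a stage where the extracted scale $N_n$ genuinely diverges. This iterative reduction is the delicate point of the argument.
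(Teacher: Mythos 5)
Your truncation of the Fourier support is fine (the paper simply uses $P_{\le R_n^2}$, which already works since $\norm{P_{>R_n^2}\phi_n}_{L^2}\le R_n^{-s}\to 0$, so the extra parameter $K_n$ is unnecessary), but the subsequent route through Theorem~\ref{thm:inverse L4 Stri} applied directly does not deliver the conclusion. The paper instead invokes Proposition~\ref{prop:profile'} and then Lemma~\ref{lem:inverse thm for almost sum of profiles}, and this is not an interchangeable choice: Theorem~\ref{thm:inverse L4 Stri} hands you a single profile with an $L^2$ inner-product lower bound at one time $t_0\in[0,2\pi)$, with no control relating $t_0$ to the interval $I_n$ or to where the $L^4_{t,x}$ mass actually sits.

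The concrete gap is in your contradiction argument for $\{\chi_{\tilde I_n}\}$. You claim that $\chi_{\tilde I_n}\weak 0$ together with ``rewriting the inner-product lower bound in rescaled coordinates'' forces $\norm{e^{it\De}\psi^\infty}_{L^4(K\times\R^2)}=0$, but the bound $|\jp{\psi_n^{\mathrm{prof}},e^{it_n\De}\phi_n}|\gtrsim 1$ is a pairing at a single time, not an $L^4_{t,x}$ statement on the rescaled interval, and it imposes no constraint on the size of $\tilde I_n$ near the rescaled time origin. What is actually needed is an $L^4$-in-time lower bound on $\tilde I_{n}\times\R^2$, and this comes from the $\ell^4$-decoupling/Brezis--Lieb step inside Lemma~\ref{lem:bdd sum conc}/\ref{lem:inverse thm for almost sum of profiles}: from the $L^4(I_n)$ approximation by a bounded sum of profiles (Proposition~\ref{prop:profile'}) one extracts an index $j_0$ with $\norm{e^{it\De}\psi_{j_0}}_{L^4(\tilde I_{j_0}\times\R^2)}\gtrsim_\epsilon 1$, which is what forbids $\chi_{\tilde I_n}\weak 0$ along a comparable frame. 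That mechanism is precisely what your proposal skips.

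Your remaining worry about $N_n\to\infty$ is well-founded but your proposed fix is speculative, and it is also resolved by the same switch of tools: in Proposition~\ref{prop:profile'} the profiles with $N_j=O(1)$ have $\norm{N_j^{-1}e^{i(t-t_j)\De}e^{i\xi_j\cdot x}P_{N_j}\delta(\cdot-x_j)}_{L^4(I_n\times\T^2)}\les N_j|I_n|^{1/4}\to 0$ since $|I_n|\to 0$, so they can be absorbed into the $\epsilon$-error, and all surviving $\mc O_{n,j}$ are genuine frames as Lemma~\ref{lem:inverse thm for almost sum of profiles} requires. I would recommend replacing the direct use of Theorem~\ref{thm:inverse L4 Stri} by Proposition~\ref{prop:profile'} followed by Lemma~\ref{lem:inverse thm for almost sum of profiles}, which closes both gaps at once and recovers the paper's proof.
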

\begin{proof}
By \eqref{eq:inverse Z_R stri assump}, we have
\[
\lim_{n\rightarrow\infty}\norm{P_{>R_n^2}\phi_n}_{L^2(\T^2)}=0.
\]
Thus, any sequential weak limits of $\iota_{\mc O_n}(e^{it\De}\phi_n)(0)$ and $\iota_{\mc O_n}(e^{it\De}P_{\le R_n^2}\phi_n)(0)$ are equal; we may reduce to the case $\supp(\widehat{\phi_n})\subset[R_n^2]^2$. Now applying Lemma \ref{lem:inverse thm for almost sum of profiles} and Proposition \ref{prop:profile'} finishes the proof.
\end{proof}
\subsection{Weak scattering behavior}
In this subsection, we show uniform convergences of scattering limits over sequences of solutions.
For $\R^{2}$, we show that for any bounded sequence of solutions to
\eqref{eq:NLS} on $\R^{2}$, weak convergence to the scattering limit as $t\rightarrow-\infty$
uniformly occurs. A similar result is shown for $\T^{2}$ with respect to any
frame $\left\{ \mc O_{n}\right\} $.
\begin{lem}
\label{lem:scatter R^d}Let $\left\{(v_n,I_n)\right\} $ be a bounded
sequence of Duhamel cutoff solutions to (\ref{eq:NLS}) in $C^{0}L^{2}\cap L_{t,x}^{4}(\R\times\R^{2})$.
Then, 
\[
\left\{ e^{iT\De}v_{n}(-T)\right\} _{n\in\N}
\]
is uniformly convergent (in the weak sense) as $T\rightarrow\infty$.
\end{lem}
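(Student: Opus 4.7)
The plan is to reduce the uniform weak convergence to a Duhamel-plus-Strichartz tail estimate that is $n$-independent. Applying the Duhamel representation of a cutoff solution, I would write
\[
e^{iT\De}v_n(-T)=v_n(0)+i\int_{-T}^{0}e^{-is\De}\chi_{I_n}(s)\mc N(v_n)(s)\,ds,
\]
so that for any $\phi\in L^2(\R^2)$ and any $T'>T>0$,
\[
\langle e^{iT'\De}v_n(-T')-e^{iT\De}v_n(-T),\phi\rangle_{L^2}=i\int_{-T'}^{-T}\langle \chi_{I_n}(s)\mc N(v_n)(s),\,e^{is\De}\phi\rangle_{L^2}\,ds.
\]
Showing that this oscillation is small as $T,T'\to\infty$, uniformly in $n$, is exactly the uniform weak Cauchy property being claimed.

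Next I would bound the right-hand side by H\"older in $(t,x)$ with the conjugate exponents $(4/3,4)$ and the pointwise estimate $|\mc N(v_n)|=|v_n|^3$:
\[
\bigl|\langle e^{iT'\De}v_n(-T')-e^{iT\De}v_n(-T),\phi\rangle_{L^2}\bigr|\le \|v_n\|_{L^4_{t,x}(\R\times\R^2)}^{3}\cdot\|e^{is\De}\phi\|_{L^4_{t,x}((-T',-T)\times\R^2)}.
\]
The first factor is bounded uniformly in $n$ by the hypothesis $\sup_n\|v_n\|_{L^4_{t,x}}<\infty$. The second factor is independent of $n$, and by the sharp $L^4$-Strichartz estimate on $\R^2$ we have $\|e^{is\De}\phi\|_{L^4_{t,x}(\R\times\R^2)}\les \|\phi\|_{L^2}<\infty$, so dominated convergence forces $\|e^{is\De}\phi\|_{L^4_{t,x}((-T',-T)\times\R^2)}\to 0$ as $T,T'\to\infty$. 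Combining the two factors yields
\[
\lim_{T_0\to\infty}\sup_{T',T\ge T_0}\sup_n\bigl|\langle e^{iT'\De}v_n(-T')-e^{iT\De}v_n(-T),\phi\rangle_{L^2}\bigr|=0.
\]

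Finally, because $e^{iT\De}$ is unitary and $\{v_n\}$ is uniformly bounded in $C^0L^2$, the family $\{e^{iT\De}v_n(-T)\}_{n,T}$ lies in a common ball of $L^2(\R^2)$. The pointwise-in-$\phi$ uniform Cauchyness above then produces, for each $n$, a weak limit $\psi_n\in L^2(\R^2)$ with $e^{iT\De}v_n(-T)\weak \psi_n$, and letting $T'\to\infty$ in the Cauchy estimate upgrades this to the uniform weak convergence
\[
\sup_n\bigl|\langle e^{iT\De}v_n(-T)-\psi_n,\phi\rangle_{L^2}\bigr|\xrightarrow{T\to\infty}0,\qquad \phi\in L^2(\R^2).
\]
There is no real obstacle in this argument: the entire estimate is driven by the $n$-independent decay of $\|e^{is\De}\phi\|_{L^4_{t,x}}$ on half-lines, which on $\R^2$ is available without any logarithmic loss, so the uniform Strichartz bound closes the estimate immediately. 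The only point to be slightly careful about is keeping track of the cutoff factor $\chi_{I_n}$, which is harmless since $|\chi_{I_n}|\le 1$ absorbs into the $L^{4/3}_{t,x}$-norm.
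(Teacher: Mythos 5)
Your proof is correct and follows essentially the same route as the paper: write the difference $e^{iT'\De}v_n(-T')-e^{iT\De}v_n(-T)$ via Duhamel, pair with a test function, apply H\"older with $(4/3,4)$ against $\|\mc N(v_n)\|_{L^{4/3}_{t,x}}\le\|v_n\|_{L^4_{t,x}}^3$, and use the tail decay of $\|e^{is\De}\phi\|_{L^4_{t,x}}$ on half-lines from the global $\R^2$ Strichartz estimate. The only cosmetic difference is that you test against $\phi\in L^2$ directly while the paper restricts to $\phi\in C_0^\infty$, which is equivalent here since the family is uniformly bounded in $L^2$.
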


\begin{proof}
Explicitly, we show that for every $\phi\in C_{0}^{\infty}(\R^{2})$,
\begin{equation}
\limsup_{T_{1},T_{2}\rightarrow\infty}\sup_{n\in\N}\left|\jp{\phi,e^{iT_{1}\De}v_{n}(-T_{1})-e^{iT_{2}\De}v_{n}(-T_{2})}_{L^{2}}\right|=0.\label{eq:weak scatter R^d claim}
\end{equation}
For $T_{1},T_{2}>0$ and $n\in\N$, we have
\begin{align*}
\left|\jp{\phi,e^{iT_{1}\De}v_{n}(-T_{1})-e^{iT_{2}\De}v_{n}(-T_{2})}_{L^{2}}\right| & =\left|\jp{\phi,\int_{[T_1,T_2]\cap I_n}e^{is\De}\mc N(v_{n}(-s))ds}_{L^{2}}\right|\\
 & =\left|\int_{[T_1,T_2]\cap I_n}\jp{e^{-is\De}\phi,\mc N(v_{n}(-s))}_{L^{2}}ds\right|,
\end{align*}
then by using that $\left\{ v_{n}\right\} $ is bounded in $C^{0}L^{2}\cap L_{t,x}^{4}(\R\times\R^{2})$,
we continue to estimate
\begin{align}
 & \les\norm{\chi_{[-T_{1},-T_{2}]}e^{it\De}\phi}_{L_{t,x}^{4}}\norm{\mc N(v_{n})}_{L_{t,x}^{4/3}}\nonumber \\
 & \les\norm{\chi_{[-T_{1},-T_{2}]}e^{it\De}\phi}_{L_{t,x}^{4}}.\label{eq:T1T2}
\end{align}
Taking a limit $T_{1},T_{2}\rightarrow\infty$ to (\ref{eq:T1T2})
yields
\begin{align*}
 & \limsup_{T_{1},T_{2}\rightarrow\infty}\sup_{n\in\N}\left|\jp{\phi,e^{iT_{1}\De}v_{n}(-T_{1})-e^{iT_{2}\De}v_{n}(-T_{2})}_{L^{2}}\right|\\
  \les&\limsup_{T_{1},T_{2}\rightarrow\infty}\norm{\chi_{[-T_{1},-T_{2}]}e^{it\De}\phi}_{L_{t,x}^{4}},
\end{align*}
which is $0$ since $e^{it\De}\phi$ lies in the Strichartz space
$L^4(\R\times\R^2)$, finishing the proof.
\end{proof}
The following states the weak scattering property on $\T^{2}$. Because of the resonances we assume the time-convergence $t_{n}\rightarrow0$ of the frame and the negative-time linearity.
\begin{lem}
\label{lem:scatter T^d}Let $s>0$ and $C<\infty$. Let $\{R_n\}$ be a sequence of dyadic numbers such that $R_n\rightarrow\infty$. Let $\left\{(u_{n},I_n)\right\} $ be
a sequence of cutoff solutions to \eqref{eq:NLS}
in $C^{0}H^{s}\cap Y^{s}(\R\times\T^{2})$. Let $\left\{ \mc O_{n}\right\} =\left\{ (N_{n},t_{n},x_{n},\xi_{n})\right\}$
be a frame such that $t_{n}\cdot\log R_{n}\rightarrow0$. If
\begin{equation}
\sup_{n}\norm{u_{n}}_{Z_{R_{n}}}\le C\label{eq:scatter T^d f bound}
\end{equation}
and
\begin{equation}
u_{n}(t)=e^{it\De}u_{n}(0)\text{ holds for }t\le0,\label{eq:t<0 zero}
\end{equation}
then
\[
\left\{ e^{iT\De}\left(\iota_{\mc O_{n}}u_{n}\right)(-T)\right\} _{n\in\N}
\]
is uniformly convergent (in weak sense) as $T\rightarrow\infty$.
\end{lem}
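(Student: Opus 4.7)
The plan is to mirror the proof of Lemma \ref{lem:scatter R^d}, with the Schr\"odinger equation transferred to the large torus $\T^2_{N_n}:=\R^2/(2\pi N_n\Z^2)$ via the symmetry $\iota_{\mc O_n}$. Since the rescaling, translation and Galilean symmetries underlying $\iota_{\mc O_n}$ preserve \eqref{eq:NLS}, the function $v_n:=\iota_{\mc O_n}u_n$ is a cutoff Duhamel solution on $\T^2_{N_n}$ with rescaled cutoff interval $\tilde I_n:=\{N_n^2(\tau-t_n):\tau\in I_n\}$, and Duhamel gives, for $T_1<T_2$,
\[
e^{iT_1\De}v_n(-T_1)-e^{iT_2\De}v_n(-T_2)=i\int_{[-T_2,-T_1]\cap\tilde I_n}e^{-is\De_{\T^2_{N_n}}}\mc N(v_n)(s)\,ds.
\]
Hypothesis \eqref{eq:t<0 zero} forces $I_n\subset[0,\infty)$ and hence $\tilde I_n\subset[-N_n^2t_n,\infty)$, so the integrand is supported on $J_n:=[-T_2,-T_1]\cap\tilde I_n\subset[-N_n^2t_n,-T_1]$, and the entire difference vanishes identically as soon as $T_1>N_n^2t_n$.

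Next, I fix a test function $\phi\in C_0^\infty(\R^2)$ which, for $n$ large, sits inside a single fundamental domain of $\T^2_{N_n}$, and apply H\"older to bound
\[
\left|\jp{\phi,e^{iT_1\De}v_n(-T_1)-e^{iT_2\De}v_n(-T_2)}\right|\le\norm{e^{is\De_{\T^2_{N_n}}}\phi}_{L^4_{s,x}(J_n\times\T^2_{N_n})}\norm{\mc N(v_n)}_{L^{4/3}_{s,x}(J_n\times\T^2_{N_n})}.
\]
The nonlinear factor is scale invariant: pulling back by $\iota_{\mc O_n}^{-1}$ it equals $\norm{u_n}_{L^4(\hat J_n\times\T^2)}^3$ with $\hat J_n\subset I_n\cap[0,t_n]$ of length at most $t_n$. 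Decomposing $\hat J_n$ into subintervals of length $1/\log R_n$ and applying Lemma \ref{lem:Z_R L4 stri} piece by piece produces
\[
\norm{u_n}_{L^4(\hat J_n\times\T^2)}\les(1+t_n\log R_n)^{1/4}\norm{u_n}_{Z_{R_n}}\les C
\]
uniformly in $n$, by \eqref{eq:scatter T^d f bound} and $t_n\log R_n\to 0$.

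The linear factor is handled by transferring $\R^2$-dispersion via the periodization identity $e^{is\De_{\T^2_{N_n}}}\phi(x)=\sum_{k\in 2\pi N_n\Z^2}(e^{is\De_{\R^2}}\phi)(x+k)$, which holds for $\phi$ supported in one fundamental domain. Because $t_n\to 0$ (from $t_n\log R_n\to 0$ and $\log R_n\to\infty$), for $n$ large one has $J_n\subset\{|s|\le cN_n^2\}$, on which the $\R^2$-wavepacket $e^{is\De_{\R^2}}\phi$ is concentrated in a ball of radius $O(|s|^{1/2})=O(\sqrt{c}\,N_n)$, far smaller than the torus period $2\pi N_n$. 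Hence at most $O(1)$ lattice translates contribute significantly at any $x\in\T^2_{N_n}$, so the standard $\R^2$-dispersive bound $\norm{e^{is\De_{\R^2}}\phi}_{L^\infty}\les_\phi|s|^{-1}$ transfers to $\norm{e^{is\De_{\T^2_{N_n}}}\phi}_{L^\infty(\T^2_{N_n})}\les_\phi|s|^{-1}$. Interpolating with $L^2$-conservation yields $\norm{e^{is\De_{\T^2_{N_n}}}\phi}_{L^4}\les_\phi|s|^{-1/2}$, and integrating over $|s|\ge T_1$ gives $\norm{e^{is\De_{\T^2_{N_n}}}\phi}_{L^4_{s,x}(J_n\times\T^2_{N_n})}\les_\phi T_1^{-1/4}$. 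Combining the two factors produces $|\jp{\phi,\cdot}|\les_\phi T_1^{-1/4}C^3\to 0$ as $T_1\to\infty$, uniformly in $n$; for the finitely many initial $n$ the quantity is eventually constant in $T$ by the vanishing from the first paragraph. The main obstacle is the linear estimate: if $J_n$ extended past scale $N_n^2$, the torus propagator would develop logarithmic Strichartz losses and the $\R^2$-dispersive picture would collapse, and the hypothesis $t_n\log R_n\to 0$ is used precisely to keep $J_n$ inside this dispersive regime while simultaneously preserving the short-time Strichartz bound that controls the nonlinear factor.
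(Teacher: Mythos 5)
Your decomposition and the treatment of the nonlinear factor are fine, but the linear factor is where the argument breaks down, and it breaks in exactly the regime that makes this lemma nontrivial.

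The claim that $e^{is\De_{\R^2}}\phi$ is ``concentrated in a ball of radius $O(|s|^{1/2})$'' is wrong: for a fixed smooth compactly supported $\phi$ (hence with frequency scale $\sim 1$), the free evolution spreads at speed $\sim 1$, i.e.\ it is concentrated in a ball of radius $O(|s|)$, not $O(|s|^{1/2})$. The correct pointwise estimate is $|(e^{is\De_{\R^2}}\phi)(y)|\les_{\phi,M}|s|^{-1}\jp{y/s}^{-M}$. Summing this over the lattice $2\pi N_n\Z^2$ gives a bounded number of significant terms only when $|s|\ll N_n$; for $|s|\gtrsim N_n$ one has roughly $(s/N_n)^2$ contributing lattice points, and the periodized sum is no longer $O(|s|^{-1})$. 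Your time window $J_n\subset[-N_n^2t_n,-T_1]$ reaches times $|s|$ up to $N_n^2t_n$, and since $t_n\to 0$ does not control $N_n^2t_n$, this can vastly exceed $N_n$ (e.g.\ $t_n\sim(\log N_n)^{-2}$, $R_n\sim N_n$). So the inequality $\norm{e^{is\De_{\T^2_{N_n}}}\phi}_{L^\infty(\T^2_{N_n})}\les_\phi |s|^{-1}$ is simply false on most of the range you integrate over, and the subsequent $L^4$-in-time bound $T_1^{-1/4}$ does not follow.

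What is really at stake is that for $|s|\gtrsim N_n$ the large-torus propagator enters the number-theoretic regime where the naive dispersive decay fails and resonant recurrences appear. The paper handles this with the extinction lemma (Lemma~\ref{lem:extinct}), which controls $N^{-1}\norm{e^{it\De}\delta_N}_{L^4_{t,x}([TN^{-2},\,\epsilon/\log N]\times\T^2)}$ uniformly; in rescaled variables this covers $|s|$ up to $\epsilon N^2/\log N$, and its proof rests on Bourgain's kernel bound (Proposition~\ref{prop:kernel bound bourgain}), which is a genuine number-theoretic input with no analogue in your argument. Without something of that strength, the linear estimate cannot reach the needed time scales. There is also a secondary omission: the paper's proof splits into the cases $N_n>R_n^2$ and $N_n\le R_n^2$, because the extinction lemma needs $t_n\log N_n\to 0$, which only follows from $t_n\log R_n\to 0$ when $N_n\les R_n^2$; the complementary case is handled by the high-frequency decay built into $Z_{R_n}$. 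Your proposal does not make this case split, so even with a corrected linear estimate the argument would not cover all frames.
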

\begin{proof}
Explicitly, we show that for every Schwartz class function $\phi\in \mc S(\R^2)$,
\begin{equation}
\left\{ \jp{\phi,e^{iT\De}\left(\iota_{\mc O_{n}}u_{n}\right)(-T)}_{L^{2}(\R^{2})}\right\} _{n\in\N}\label{eq:scatter T^d claim0}
\end{equation}
is uniformly convergent as $T\rightarrow\infty$. Since the span of
$\left\{ \delta_{N_{*}}(\cdot-x_{*}):N_{*}\in2^{\Z}\text{ and }x_{*}\in\R^{2}\right\} $
is dense in the Schwartz space $\mc S(\R^{2})$, one may assume further that 
\[
\phi=\delta_{N_{*}}(\cdot-x_{*}),\qquad N_{*}\in2^{\Z}\text{ and }x_{*}\in\R^{2}.
\]
Up to a comparable choice of frame $\tilde{\mc O_n}=\left\{ (N_{n}N_{*},t_{n},x_{n}+N_{n}^{-1}x_{*},\xi_{n})\right\} $,
we can further reduce to the case $(N_{*},x_{*})=(1,0)$.

Along the subset of $n\in\N$ such that $N_{n}>R_{n}^{3}$, \eqref{eq:scatter T^d claim0} is bounded by the sum of
\[
|\jp{\delta_1,e^{iT\De}(\iota_{\mc O_n}P_{\le R_n^2}u_n)(-T)}_{L^2(\R^2)}|\les (R_n^2/N_n)\norm{u_n}_{C^0L^2}\les (R_n^2/N_n)\norm{u_n}_{Z_{R_n}}
\]
(where we used the Bernstein inequality) and
\[
|\jp{\delta_1,e^{iT\De}(\iota_{\mc O_n}P_{>R_n^2}u_n)(-T)}_{L^2(\R^2)}|\les\norm{P_{>R_n^2}u_n}_{C^0L^2}\les R_n^{-s}\norm{u_n}_{Z_{R_n}},
\]
both of which are $o_n(1)$ by \eqref{eq:scatter T^d f bound} and $N_n,R_n\rightarrow\infty$. Also, for each fixed $n\in\N$, by the negative-time linearity \eqref{eq:t<0 zero}, $\jp{\delta_1,e^{iT\De}(\iota_{\mc O_n}u_n)(-T)}_{L^2}$ is a constant for $T\gg_n1$. Thus, the uniform convergence holds.

Along the subset of $n\in\N$ such that $N_n\le R_n^3$, since $-T$ and $P_{1;\R^{2}}$ on $\R\times\R^2$ correspond to $t_{n}-TN_{n}^{-2}$
and $P_{N_{n};\T^{2}}$ on $\R\times\T^2$, it suffices to show
the uniform convergence as $T\rightarrow\infty$ of
\[
\left\{ \jp{\delta_{N_{n}}(\cdot-x_{n}),N_{n}^{-1}I_{\xi_n}e^{iTN_{n}^{-2}\De}u_{n}(t_{n}-TN_{n}^{-2})}_{L^{2}(\T^{2})}\right\}_{n\in\N:N_n\le R_n^3}.
\]
For $n\in\N$ and $T_{1},T_{2}>0$, denoting $f_{n}=\chi_{I_n}\cdot\mc N(u_{n})$,
we have
\begin{align*}
 & \left|\jp{\delta_{N_{n}}(\cdot-x_{n}),I_{\xi_n}e^{iT_{1}N_{n}^{-2}\De}u_{n}(t_{n}-T_{1}N_{n}^{-2})-I_{\xi_n}e^{iT_{2}N_{n}^{-2}\De}u_{n}(t_{n}-T_{2}N_{n}^{-2})}_{L^{2}(\T^{2})}\right|\nonumber \\
  =&\left|\jp{\delta_{N_{n}}(\cdot-x_{n}),\int_{t_{n}-T_{1}N_{n}^{-2}}^{t_{n}-T_{2}N_{n}^{-2}}I_{\xi_n}e^{i(t_{n}-s)\De}f_{n}(s)ds}_{L^{2}(\T^{2})}\right|\nonumber \\
  =&\left|\int_{t_{n}-T_{1}N_{n}^{-2}}^{t_{n}-T_{2}N_{n}^{-2}}\jp{I_{-\xi_n}e^{i(s-t_{n})\De}\delta_{N_{n}}(\cdot-x_{n}),f_{n}(s)ds}_{L^{2}(\T^{2})}\right|\nonumber \\
  \les&\norm{\chi_{[-T_{1}N_{n}^{-2},-T_{2}N_{n}^{-2}]\cap[-t_{n},\infty)}I_{-\xi_n}e^{it\De}\delta_{N_{n}}}_{L_{t,x}^{4}}\norm{f_{n}}_{L_{t,x}^{4/3}([0,t_{n}]\times\T^{2})},
\end{align*}
where the restrictions $[-t_{n},\infty)$ and $[0,t_{n}]$ are obtained from \eqref{eq:t<0 zero}. Thus, it only remains to show
\begin{equation}\label{eq:scatter T^2 really last}
\limsup_{T_1,T_2\rightarrow\infty}\limsup_{n\rightarrow\infty} N_{n}^{-1}\norm{\chi_{[-T_{1}N_{n}^{-2},-T_{2}N_{n}^{-2}]\cap[-t_{n},\infty)}I_{-\xi_n}e^{it\De}\delta_{N_{n}}}_{L_{t,x}^{4}}\norm{f_{n}}_{L_{t,x}^{4/3}([0,t_{n}]\times\T^{2})}=0.
\end{equation}
Since $t_{n}\cdot\log N_{n}\le t_{n}\cdot \log R_{n}^{3}=3t_{n}\cdot\log R_{n}\rightarrow0$, by \eqref{eq:kernel 0} and the invariance of the $L_{t,x}^{4}$-norm
under the Galilean transform $I_{-\xi_n}$, we have
\begin{align}\label{eq:scatter T^2 ingred1}
&\limsup_{T\rightarrow\infty}\limsup_{n\rightarrow\infty}N_n^{-1}\norm{I_{-\xi_n}e^{it\De}\delta_{N_{n}}}_{L_{t,x}^{4}([-t_{n},-TN_n^{-2}]\times\T^{2})}
\\\nonumber
\le&\limsup_{T\rightarrow\infty}\limsup_{n\rightarrow\infty}N_n^{-1}\norm{e^{it\De}\delta_{N_{n}}}_{L_{t,x}^{4}([-t_n,-TN_n^{-2}]\times\T^{2})}=0.
\end{align}
By \eqref{eq:Z_R stri}, we also have
\begin{equation}
\norm{f_{n}}_{L_{t,x}^{4/3}([0,t_{n}]\times\T^{2})}=\norm{u_{n}}_{L_{t,x}^{4}([0,t_{n}]\times\T^{2})}^{3}\les\norm{u_{n}}_{Z_{R_{n}}}^{3}\les1.\label{eq:scatter T^2 ingred2}
\end{equation}
By \eqref{eq:scatter T^2 ingred1} and \eqref{eq:scatter T^2 ingred2},
\eqref{eq:scatter T^2 really last} holds,
finishing the proof.
\end{proof}
\subsection{\label{sec:Proof-of-Theorem}Proof of Theorem \ref{thm:GWP defocus}
and Theorem \ref{thm:GWP focus}}

In this section, we show Theorem \ref{thm:GWP defocus} and Theorem
\ref{thm:GWP focus}. Since the proofs are almost identical, for the sake of conciseness,
we prove only the focusing case, i.e.\ Theorem \ref{thm:GWP focus}.
\begin{prop}
\label{prop:main}Let $s>0$ and $M<\norm Q_{L^{2}}^{2}$. There exist $L<\infty$
and $\epsilon>0$ satisfying the following:

Let $R\gg1$ be a dyadic number. Then, for every $u_0\in H^s(\T^2)$ such that
\[
\norm{u_0}_{L^2}^2\le M
\]
and
\[
R\le 1+\norm{u_0}_{H^s}^{1/s}<2R,
\]
the solution $u\in C^0H^s\cap Y^s([0,T))$ to \eqref{eq:NLS} with $u(0)=u_0$ satisfies
\[
\norm{u}_{Z_{R}([0,\frac{\epsilon}{\log R}))}\le L.
\]
\end{prop}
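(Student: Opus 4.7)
I will argue by contradiction, extracting a concentration profile from a hypothetical blow-up sequence and then using Dodson's theory on $\R^{2}$ (Proposition~\ref{prop:focusing R^2}) to refute concentration. Suppose the statement fails. Then there exist sequences $\epsilon_{n}\downarrow 0$ and $L_{n}\uparrow\infty$, dyadic $R_{n}\to\infty$, and data $u_{0,n}\in H^{s}(\T^{2})$ with $\|u_{0,n}\|_{L^{2}}^{2}\le M$ and $R_{n}\le 1+\|u_{0,n}\|_{H^{s}}^{1/s}<2R_{n}$, such that the corresponding solutions $u_{n}$ (which exist locally by the small-data theory of \cite{herr2024strichartz}) satisfy $\|u_{n}\|_{Z_{R_{n}}([0,\epsilon_{n}/\log R_{n}))}>L_{n}$. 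Fix a large constant $L_{0}=L_{0}(s,M)$. Choosing $C=C(s,M)$ in Lemma~\ref{lem:nu bound} so large that $C^{-s}L_{0}^{2}\ll 1$, the trilinear bound \eqref{eq:trilinear} together with Proposition~\ref{prop:Ys props} gives the bootstrap
\[
\|u_{n}\|_{Z_{R_{n}}(J)}\lesssim \|u_{n}(t_{J})\|_{H^{s}}R_{n}^{-s}+\|u_{n}\|_{Z_{R_{n}}(J)}\|u_{n}\|_{L^{4}_{t,x}(J\times\T^{2})}^{2},
\]
on any subinterval $J$ where $\|u_{n}\|_{Z_{R_{n}}(J)}\le L_{0}$. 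Using continuity of the $Z_{R_{n}}$-norm in the interval and the initial bound $\|u_{n}(0)\|_{L^{2}}\le M$, standard continuous induction yields an interval $J_{n}\subset[0,\epsilon_{n}/\log R_{n})$ on which $\|u_{n}\|_{Z_{R_{n}}(J_{n})}\sim L_{0}$ and $\|u_{n}\|_{L^{4}_{t,x}(J_{n}\times\T^{2})}\gtrsim_{s,M}1$.

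Writing $u_{n}=e^{it\De}u_{n}(t_{n}^{\ast})+\text{(Duhamel term)}$ on $J_{n}$, where $t_{n}^{\ast}\in J_{n}$ is chosen via pigeonhole, the trilinear estimate forces the linear piece to satisfy $\|e^{it\De}u_{n}(t_{n}^{\ast})\|_{L^{4}_{t,x}(J_{n}\times\T^{2})}\gtrsim_{s,M}1$. Since $|J_{n}|\log R_{n}\le\epsilon_{n}\to 0$ and $\|u_{n}(t_{n}^{\ast})\|_{L^{2}}+R_{n}^{-s}\|u_{n}(t_{n}^{\ast})\|_{H^{s}}\lesssim_{M,L_{0}}1$, Lemma~\ref{lem:inverse Z_R stri} applies and furnishes a frame $\{\mc O_{n}\}=\{(N_{n},t_{n},x_{n},\xi_{n})\}$ with $N_{n}\to\infty$ such that, along a subsequence, $\iota_{\mc O_{n}}(e^{it\De}u_{n}(t_{n}^{\ast}))(0)$ converges weakly to a nonzero $\psi_{\ast}\in L^{2}(\R^{2})$ and $\{\chi_{\tilde J_{n}}\}$ is weakly nonzero, where $\tilde J_{n}$ is the image of $J_{n}$ under the time-rescaling of $\mc O_{n}$. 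In particular $N_{n}^{2}|J_{n}|\gtrsim 1$, so $N_{n}^{2}\cdot(\epsilon_{n}/\log R_{n})$ stays bounded below; combined with $\epsilon_{n}\to 0$ this forces $t_{n}\cdot\log R_{n}\to 0$, a hypothesis we will invoke via Lemma~\ref{lem:scatter T^d}.

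Next, consider the rescaled sequence $v_{n}:=\iota_{\mc O_{n}}u_{n}$, naturally viewed as a sequence of cutoff solutions to the cubic NLS on $\R\times\R^{2}$ (where the cutoff interval is the $\mc O_{n}$-image of the lifespan of $u_{n}$). By $L^{2}$ conservation $\|v_{n}\|_{L^{\infty}L^{2}(\R\times\R^{2})}\le M^{1/2}$, and by \eqref{eq:Z_R stri} the $L^{4}_{t,x}$-norm on any bounded spacetime box is controlled uniformly by $\|u_{n}\|_{Z_{R_{n}}([0,\epsilon_{n}/\log R_{n}))}\lesssim L_{0}$. Lemma~\ref{lem:weak lim is sol} then produces a limiting cutoff solution $v_{\ast}\in C^{0}L^{2}\cap L^{4}_{t,x}(\R\times\R^{2})$ with $\|v_{\ast}\|_{L^{2}}\le M^{1/2}<\|Q\|_{L^{2}(\R^{2})}$ and $v_{\ast}(0)\neq 0$ (nontriviality comes from $\psi_{\ast}\neq 0$ after propagating via the weak scattering comparisons of Lemmas~\ref{lem:scatter R^d} and \ref{lem:scatter T^d}). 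Dodson's theorem (Proposition~\ref{prop:focusing R^2}) upgrades $v_{\ast}$ to a global scattering solution with $\|v_{\ast}\|_{L^{4}_{t,x}(\R\times\R^{2})}\lesssim_{M}1$, and in particular $\|v_{\ast}(0)\|_{L^{2}}^{2}\ge c(M,s)>0$ by quantitative dependence.

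To close the argument I would iterate the extraction: after subtracting the localized profile corresponding to $\mc O_{n}$ from $u_{n}(t_{n}^{\ast})$ (using Proposition~\ref{prop:profile'} for a full profile decomposition in $L^{4}$), the residue has strictly smaller $L^{2}$-mass, shrinking by at least $c(M,s)$ each step. Since the total $L^{2}$-mass is at most $M$, the iteration terminates after $O_{M,s}(1)$ steps, at which point the residue has $\|\cdot\|_{L^{4}_{t,x}}$ below the threshold needed to close the bootstrap; summing the finitely many profile contributions (each uniformly bounded by Dodson) yields a uniform $Z_{R_{n}}$-bound that contradicts $\|u_{n}\|_{Z_{R_{n}}}>L_{n}\to\infty$. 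The main obstacle, which occupies the bulk of the work, is the transfer step: identifying the weak limit of the nonlinear periodic flows $\iota_{\mc O_{n}}u_{n}$ with a genuine $\R^{2}$-NLS solution (not merely a linear evolution on a tiny sub-interval), showing that the cutoff window $\tilde I_{n}$ covers all of $\R$ in the limit, and carefully applying the weak-scattering comparison (Lemma~\ref{lem:scatter T^d}) to transport the scattering data of $v_{\ast}$ back to $u_{n}$ with vanishing error.
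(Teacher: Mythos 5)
Your broad setup matches the paper's --- argue by contradiction, extract a ``good'' interval from a hypothetical blow-up sequence via the nonlinear estimate (Lemma~\ref{lem:nu bound}), apply the inverse Strichartz estimate (Lemma~\ref{lem:inverse Z_R stri}) to get a frame, pass to a weak limit of rescaled cutoff solutions (Lemma~\ref{lem:weak lim is sol}), and invoke Dodson. But the endgame you propose is genuinely different from the paper's and has a real gap.

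You propose to close the argument by iterated profile subtraction: extract a profile of mass $\ge c(M,s)$, subtract it from $u_n(t_n^\ast)$, and repeat; since total mass is $\le M$ this stops after $O_{M,s}(1)$ steps. The problem is that once you subtract a profile from the data at time $t_n^\ast$, the nonlinear evolution of the residue is no longer the restriction of $u_n$ nor any controlled perturbation of it --- to compare the two you would need to show that the profile and the residue evolve almost independently under the nonlinear flow, i.e., you need estimates on the interaction between profiles through the nonlinearity. That is a substantial piece of analysis that you do not supply, and it is precisely what the paper's argument (following \cite{Kwak2024global}, as the paper remarks at the start of Section~\ref{sec:GWP}) is designed to avoid. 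Without it the ``shrinking residue'' step is unjustified.

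The paper instead avoids profile interactions entirely. It does not subtract anything. Rather, it extracts a growing family of \emph{disjoint} subintervals $I_{n,j}\subset[0,T_n)$, $j=1,\ldots,j_n$ with $j_n\to\infty$, on each of which the solution deviates from its local linear evolution by exactly a small constant in $Z_{R_n^2}$. Each $j$ gives a frame $\mc O_{n,j}$ and a nonlinear weak limit $v_j$ on $\R^2$, uniformly bounded by Dodson. One then takes a further weak limit in $j$ and, crucially, evaluates the scattering limit $\lim_{T\to\infty}e^{iT\De}\bigl(\iota_{\mc O_{n,j_k}}u_{n,j_k}\bigr)(-T)$ \emph{in two ways}: via the weak-scattering comparison lemmas (Lemmas~\ref{lem:scatter R^d}, \ref{lem:scatter T^d}) it is $\lim_T e^{iT\De}v_*(-T)\neq 0$; on the other hand, since $u_{n,j}$ is constructed to be a free evolution for $t<0$, it collapses to $\lim_k\lim_n\iota_{\mc O_{n,j_k}}(e^{it\De}u_n(0))(0)$. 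Combining this with the weak nonzeroness of $\chi_{\tilde I_{n,j_k}}$ forces $\liminf_n\|e^{it\De}u_n(0)\|_{L^4(I_{n,j_k}\times\T^2)}$ to be bounded below uniformly in $k$, and since the $I_{n,j_k}$ are disjoint, Fatou makes $\|e^{it\De}u_n(0)\|_{L^4([0,T_n)\times\T^2)}$ blow up --- contradicting the $L^4$-Strichartz estimate (Lemma~\ref{lem:Z_R L4 stri}) for the \emph{linear} flow. Your proposal contains nothing that plays the role of this ``same free data seen on infinitely many disjoint intervals'' mechanism.

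One more minor but real error: you claim $t_n\cdot\log R_n\to 0$ follows from ``$N_n^2\cdot(\epsilon_n/\log R_n)$ stays bounded below; combined with $\epsilon_n\to 0$''. That chain of implications does not work: $N_n^2 t_n$ is not controlled by $N_n^2 |J_n|$ in any useful way, and the conclusion about $t_n$ is not what follows from a lower bound on $N_n^2|J_n|$. The correct reason $t_n\log R_n\to 0$ (which is indeed needed to invoke Lemma~\ref{lem:scatter T^d}) is simply that the frame time parameters produced by Lemma~\ref{lem:inverse Z_R stri} lie, up to $O(N_n^{-2})$ corrections, inside the original interval $[0,T_n)$ with $T_n\log R_n\to 0$.
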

Proposition \ref{prop:main} is actually stronger than Theorem \ref{thm:GWP focus}.
Once we show Proposition \ref{prop:main}, Theorem \ref{thm:GWP focus}
can be proven as follows:
\begin{proof}[Proof of Theorem \ref{thm:GWP focus} assuming Proposition \ref{prop:main}]
Firstly, we show the global well-posedness.
Let $u$ be a Duhamel solution to \eqref{eq:NLS} in the sense of LWP
on $\T^{2}$ with initial data $u(0)=u_{0}$ of mass $\norm{u_{0}}_{L^{2}}^{2}\le M$
and positive lifespan $[0,T_{\max})$. If $T_{\max}=\infty$ there is nothing to prove. A function in $V^2$ (i.e.\ of bounded $2-$variation) can be extended continuously, hence so can a function in $Y^s$. Thus, if $T_{\max} <\infty$, we must have $\norm{u}_{Y^s([0,T_{\max}))}=\infty$.

Let $L$ and $\epsilon$ be defined in Proposition \ref{prop:main}. Define $t_{n}\in[0,T_{\max})$ and $R_{n}\in2^{\N}$ recursively as
sequences of $n\in\N$ such that $t_{0}:=0$,
\[
R_{n}\le1+\norm{u(t_{n})}_{H^{s}}^{1/s}<2R_{n},
\]
\[
\norm u_{Z_{R_{n}}([t_{n},t_n^+))}=L,
\]
and
\[
t_{n+1}:=\min\{t_n+\frac{1}{\log R_n},t_n^+\}.
\]
Note that since
\[
\norm{u(t_{n+1})}_{H^{s}}\le\norm u_{C^{0}H^{s}([t_{n},t_{n+1}))}\le R_{n}^{s}\norm u_{Z_{R_{n}}([t_{n},t_{n+1}))}\le R_{n}^{s}L,
\]
we have
\[
R_{n+1}\les_{L,s} R_{n},
\]
and thus
\[
\log R_{n}-\log R_0\les_{L,s}n.
\]
By Proposition \ref{prop:main}, we have
\[
t_{n+1}-t_n>\frac{\epsilon}{\log R_n}\gtrsim_{L,s,\epsilon}\frac{1}{n+\log R_0}\gtrsim_s\frac{1}{n+\log \norm{u_0}_{H^s}},
\]
which implies
\begin{equation}\label{eq:tn telescope}
t_n\gtrsim_{L,s,\epsilon}\sum_{k=1}^n\frac{1}{k+\log\norm{u_0}_{H^s}}\gtrsim\log\left(1+\frac{n}{\log\norm{u_0}_{H^s}}\right).
\end{equation}
Since \eqref{eq:tn telescope} is divergent, $t_n$ grows higher than $T_{\max}<\infty$ within a finite $n$. This contradicts $\norm{u}_{Y^s([0,T_{\max}))}=\infty$ and thus concludes the GWP.

The proof of \eqref{eq:GWP focus L4 bound} proceeds similarly. Let $T>0$. Let $n_T$ be the first index such that $t_n\ge T$. \eqref{eq:tn telescope} yields that 
\[
\frac{n_T-1}{\log\norm{u_0}_{H^s}}\les_{L,s,\epsilon} 2^{O_{L,s,\epsilon}(T)}-1.
\]
By \eqref{eq:Z_R stri}, we have
\begin{equation}\label{eq:prec bound precursor}
\norm{u}_{L^4_{t,x}([0,T)\times\T^2)}\le\norm{u}_{L^4_{t,x}([t_0,t_{n_T})\times\T^2)}\les\left(\sum_{n\le n_T}\norm{u}_{L^4_{t,x}([t_n,t_{n+1})\times\T^2)}^4\right)^{1/4}\les n_T^{1/4}L.
\end{equation}
Since $L$ and $\epsilon$ depend only on $M$ and $s$, \eqref{eq:prec bound precursor} can be rewritten as
\begin{equation}\label{eq:prec-bound}
\norm{u}_{L^4_{t,x}([0,T)\times\T^2)}\les_{M,s}1+\left(\left(2^{O_{M,s}(T)}-1\right)\log\norm{u_0}_{H^s}\right)^{1/4}.
\end{equation}
This finishes the proof of Theorem \ref{thm:GWP focus}.
\end{proof}
\begin{proof}[Proof of Proposition \ref{prop:main}]
Assume there is no such $(L,\epsilon)$. Then, there exists a sequence $\left\{(u_{n},I_n)\right\}$ of cutoff solutions to \eqref{eq:NLS}, a sequence $\{R_n\}$ of dyadic numbers such that $R_n\rightarrow\infty$, and a sequence $\{T_n\}$ of positive numbers such that
\[
\norm{u_n(0)}_{L^2}^2\le M,
\]
\[
T_n\cdot\log R_n\rightarrow0,
\]
\begin{equation}\label{eq:Rn def}
R_{n}\le1+\norm{u_n(0)}_{H^{s}}^{1/s}<2R_{n},
\end{equation}
and
\begin{equation}\label{eq:ZR->infty}
\norm{u_n}_{Z_{R_n}([0,T_n))}\rightarrow\infty.
\end{equation}
We will show a contradiction. Choosing smaller $T_n$ if necessary, we may assume further that
\begin{equation}\label{eq:Z norm bound}
\norm{u_n}_{Z_{R_n}([0,T_n))}\le R_n^s.
\end{equation}
By Lemma \ref{lem:nu bound}, there exists $C_0=O_s(1)$ such that for any $C>0$, for every $n\gg_C1$ and $0\le t_0\le t_1<T_n$, denoting $\varphi_n(t):=\norm{u_n}_{Z_{R_n}([0,t))}$, we have
\begin{align}\label{eq:continuity argument on Z}
\varphi_n(t_1)=\norm{u_n}_{Z_{R_n}([0,t_1))}&\le\norm{u_n}_{Z_{R_n}([0,t_0))}+\norm{u_n}_{Z_{R_n}([t_0,t_1))}
\\&\nonumber\le\varphi_n(t_0)+\norm{e^{i(t-t_0)\De}u_n(t_0)}_{Z_{R_n}}+\norm{\mc I(\chi_{[t_0,t_1)}|u_n|^2u_n)}_{Z_{R_n}}
\\&\nonumber\le C_0\left(\varphi_n(t_0)+\left(C^{-s}\varphi_n(t_1)^2+\norm{u}_{L^4_{t,x}([t_0,t_1)\times\T^2))}^2\right)\varphi_n(t_1)\right).
\end{align}
Also, for each $n\in\N$, by \eqref{eq:Rn def} and the continuity of the nonlinear flow, we have
\begin{equation}\label{eq:bdd at zero}
\lim_{t\rightarrow0+}\varphi_n(t)\les\norm{u_n(0)}_{L^2}+R_n^{-s}\norm{u_n(0)}_{H^s}\les1.
\end{equation}
Assume there exists $K<\infty$ such that along a subsequence of $n$, $\norm{u_n}_{L^4_{t,x}([0,T_n)\times\T^2)}\le K$ holds. Let $J=(2C_0)^2K^4$. For each $n$, there exists a sequence of times $0=t_{n,0}<t_{n,1}<t_{n,2}<\cdots<t_{n,J}=T_n$ such that $\norm{u_n}_{L^4_{t,x}([t_{n,j-1},t_{n,j}))}^2\le \frac{1}{2C_0}$. Then, for each $j=1,\ldots,J$ and $t\in[t_{n,j-1},t_{n,j}]$, \eqref{eq:continuity argument on Z} can be rewritten as
\[
\varphi_n(t)\le C_0\varphi_n(t_{n,j-1})+C_0C^{-s}\varphi_n(t)^3+\frac{1}{2}\varphi_n(t),
\]
where $C$ can be chosen as an arbitrary number independent of $n$. Thus, by a continuity argument, one can show
\begin{equation}\label{eq:varphi bootstrap}\limsup_{n\rightarrow\infty}\varphi_n(t_{n,j})\les\limsup_{n\rightarrow\infty}\varphi_{n}(t_{n,j-1}).
\end{equation}
Starting with \eqref{eq:bdd at zero}, repeating \eqref{eq:varphi bootstrap} $J$ times shows
\[
\limsup_{n\rightarrow\infty}\norm{u_n}_{Z_{R_n}([0,T_n))}=\limsup_{n\rightarrow\infty}\varphi_n(t_{n,J})<\infty,
\]
contradicting \eqref{eq:ZR->infty}.

Therefore, $\norm{u_n}_{L^4_{t,x}([0,T_n)\times\T^2)}\rightarrow\infty$ holds. By \eqref{eq:Z_R stri}, we have
\begin{equation}\label{eq:ZR^2->infty}
\norm{u_n}_{Z_{R_n^2}([0,T_n))}\rightarrow\infty.
\end{equation}
By \eqref{eq:ZR^2->infty} and the $H^s(\T^2)$-continuity of each $u_n(t)$, for each $j\in\N$, there exists $n(j)<\infty$ such that for $n\ge n(j)$, there exist $0=T_{n,0}<T_{n,1}<T_{n,2}<\cdots <T_{n,j}<T_{n}$ such that
\begin{equation}
\norm{u_n-\Phi_{n,j-1}}_{Z_{R_n^2}(I_{n,j})}=\epsilon_0,\label{eq:Z1>1}
\end{equation}
where $\epsilon_0\le 1$ is a universal constant to be fixed shortly and we denoted
\[
\Phi_{n,j-1}:=e^{i(t-T_{n,j-1})\De}u(T_{n,j-1})
\]
and
\[
I_{n,j}=[T_{n,j-1},T_{n,j}).
\]
Denote $\mc A:=\{(n,j)\in\N^2:n\ge n(j)\}$. By \eqref{eq:Z1>1} and \eqref{eq:Rn def}, we have
\begin{align}\label{eq:Z R^2 bound}
\norm{u}_{Z_{R_n^2}(I_{n,j})}&\le \epsilon_0+\norm{\Phi_{n,j-1}}_{Z_{R_n^2}(I_{n,j})}
\\&\nonumber\les 1+\norm{u_n(T_{n,j-1})}_{L^2}+R_n^{-2s}\norm{u(T_{n,j-1})}_{H^s}
\\&\nonumber\les 1+R_n^{-s}\norm{u_n}_{Z_{R_n}}\les1,
\end{align}
in the last line of which the $L^2$-conservation of \eqref{eq:NLS} and \eqref{eq:Z norm bound} are used.

By \eqref{eq:Z1>1} and Lemma \ref{lem:nu bound}, for each $C\in2^\N$ and $n\gg_C1$, we have
\begin{align*}
\epsilon_0=\norm{u_n-\Phi_{n,j-1}}_{Z_{R_{n}^2}(I_{n,j})}
&=\norm{\mc I(\chi_{I_{n,j}}|u_n|^2u_n)}_{Z_{R_n^2}(I_{n,j})}
\\&\les\left(C^{-s}\norm{u_n}_{Z_{R_n^2}(I_{n,j})}^2+\norm{u_n}^2_{L^4_{t,x}(I_{n,j}\times\T^2)}\right)\norm{u_n}_{Z_{R_n^2}(I_{n,j})},
\end{align*}
which implies by \eqref{eq:Z R^2 bound} that
\[
\epsilon_0\les C^{-s}+\norm{u_n}_{L^4_{t,x}(I_{n,j}\times\T^2)}^2.
\]
Thus, choosing $C$ big enough, we have
\begin{align*}
\sqrt{\epsilon_0}&\les\norm{u_n}_{L^4_{t,x}(I_{n,j}\times\T^2)}
\\&\les\norm{u_n-\Phi_{n,j-1}}_{L^4_{t,x}(I_{n,j}\times\T^2)}+\norm{\Phi_{n,j-1}}_{L^4_{t,x}(I_{n,j}\times\T^2)}
\\&\les\epsilon_0+\norm{\Phi_{n,j-1}}_{L^4_{t,x}(I_{n,j}\times\T^2)},
\end{align*}
in the last line of which we used \eqref{eq:Z_R stri} and \eqref{eq:Z1>1}. Thus, choosing $\epsilon_0>0$ small enough, we have
\begin{equation}\label{eq:Phi large for GWP}
\norm{\Phi_{n,j-1}}_{L^4_{t,x}(I_{n,j}\times\T^2)}\gtrsim_{\epsilon_0}1.
\end{equation}
For $(n,j)\in\mc A$, let $(u_{n,j},[0,T_{n,j-1}))$, $u_{n,j}\in C^{0}H^{s}\cap Y^{s}$ be the cutoff
solution
\[
u_{n,j}:=\begin{cases}
e^{it\De}u_{n}(0) & ,\qquad t<0\\
u_{n}(t) & ,\qquad t\in[0,T_{n,j-1})\\
e^{i(t-T_{n,j-1})\De}u_{n}(T_{n,j-1})=\Phi_{n,j-1} & ,\qquad t\ge T_{n,j-1}
\end{cases}.
\]
By Lemma \ref{lem:inverse Z_R stri} and \eqref{eq:Phi large for GWP}, there exists a frame $\left\{ \mc O_{n,j}\right\}_{(n,j)\in\mc A} =\left\{ (N_{n,j},t_{n,j},x_{n,j},\xi_{n,j})\right\}_{(n,j)\in\mc A}$
such that
\[
\{ \iota_{\mc O_{n,j}}\left(\Phi_{n,j-1}\right)(0)\} _{(n,j)\in\mc A}\text{ and }\{\chi_{\tilde{I_{n,j}}}\}_{(n,j)\in\mc A}
\]
are both weakly nonzero, where we denoted by $\tilde{I_{n,j}}$ the time interval mapped from $I_{n,j}$ by $\mc O_{n,j}$.
Passing to a subsequence of $n$, by Lemma \ref{lem:weak lim is sol},
we have the following weak convergence for each $j\in\N$:
\begin{equation}
\iota_{\mc O_{n,j}}u_{n,j}\weak v_{j},\label{eq:->v}
\end{equation}
where $v_{j}\in C^{0}L^{2}\cap L_{t,x}^{4}(\R\times\R^{2})$ is a
cutoff solution to NLS on $\R^{2}$ and for each $T\in\R$,
\begin{equation}
\left(\iota_{\mc O_{n,j}}u_{n,j}\right)(T)\weak v_{j}(T).\label{eq:->v(T)}
\end{equation}
By Proposition \ref{prop:focusing R^2}, we have an a priori bound
\begin{equation}
\sup_{j\in\N}\norm{v_{j}}_{C^{0}L^{2}\cap L_{t,x}^{4}}\les_M1.\label{eq:vj bound}
\end{equation}
By (\ref{eq:vj bound}) and Lemma \ref{lem:weak lim is sol}, there
exists a cutoff solution $v_{*}\in C^{0}L^{2}\cap L_{t,x}^{4}(\R\times\R^{2})$
to NLS on $\R^{2}$ that scatters and a subsequence $\left\{ j_{k}\right\} $
such that for $T\in\R$, 
\[
v_{j_{k}}(T)\weak v_{*}(T)
\]
holds. Since $\{ \iota_{\mc O_{n,j}}u_{n,j}(0)\} _{(n,j)\in\mc A}$
is weakly nonzero, so is $\{ v_{j}(0)\} _{j\in\N}$, thus
$v_{*}(0)\neq0$.

We evaluate the following (distributional) limit in two ways:
\begin{equation}
\lim_{k\rightarrow\infty}\lim_{n\rightarrow\infty}\lim_{T\rightarrow\infty}e^{iT\De}\left(\iota_{\mc O_{n,j_{k}}}u_{n,j_{k}}\right)(-T).\label{eq:limlimlim}
\end{equation}
Since $\limsup_{n\rightarrow\infty}\text{\ensuremath{\norm{u_{n,j}}}}_{Z_{R_{n}}}$
is finite for each $j\in\N$ and $\left\{ v_{j}\right\} $ is bounded
in $C^{0}L^{2}\cap L_{t,x}^{4}(\R\times\R^{2})$, by Lemma \ref{lem:scatter T^d}
and Lemma \ref{lem:scatter R^d}, (\ref{eq:limlimlim}) equals
\begin{equation}
\lim_{k\rightarrow\infty}\lim_{T\rightarrow\infty}e^{iT\De}v_{j_{k}}(-T)=\lim_{T\rightarrow\infty}e^{iT\De}v_{*}(-T),\label{eq:limlimlim 1}
\end{equation}
which exists and is nonzero since $v_{*}\neq0$ scatters.

Since $u_{n, j}=e^{it\De}u_{n}(0)$ holds for $t<0$ and $\iota_{\mc O_{n,j_{k}}}\left(e^{it\De}u_{n}(0)\right)$
is a free evolution, (\ref{eq:limlimlim}) can also be rewritten as
\begin{equation}
\lim_{k\rightarrow\infty}\lim_{n\rightarrow\infty}\lim_{T\rightarrow\infty}e^{iT\De}\left(\iota_{\mc O_{n,j_{k}}}\left(e^{it\De}u_{n, j_{k}}(0)\right)\right)(-T)=\lim_{k\rightarrow\infty}\lim_{n\rightarrow\infty}\iota_{\mc O_{n,j_{k}}}\left(e^{it\De}u_{n}(0)\right)(0).\label{eq:limlimlim 2}
\end{equation}
Thus, we have
\begin{equation}
\lim_{k\rightarrow\infty}\lim_{n\rightarrow\infty}\iota_{\mc O_{n,j_{k}}}\left(e^{it\De}u_{n}(0)\right)(0)\neq0.\label{eq:!=00003D0}
\end{equation}
By \eqref{eq:!=00003D0} and the weak nonzeroness of $\{ \chi_{\tilde{I_{n,j_{k}}}}\} _{(n,j_k)\in\mc A}\subset\{ \chi_{\tilde{I_{n,j}}}\} _{(n,j)\in\mc A}$,
the family of cutoff free evolutions $\{ \iota_{\mc O_{n,j_{k}}}(\chi_{I_{n,j_{k}}}e^{it\De}u_{n}(0))\}_{(n,j_k)\in\mc A}=\{ \chi_{\tilde{I_{n,j_{k}}}}\iota_{\mc O_{n,j_{k}}}(e^{it\De}u_{n}(0))\}_{(n,j_k)\in\mc A}$
is also weakly nonzero. Thus, we have the critical norm bound
\begin{equation}
\liminf_{k\rightarrow\infty}\liminf_{n\rightarrow\infty}\norm{e^{it\De}u_{n}(0)}_{L_{t,x}^{4}(I_{n,j_{k}}\times\T^{2})}>0,\label{eq:lim>0}
\end{equation}
By (\ref{eq:lim>0}) and the Fatou's Lemma, we have
\begin{align*}
 & \liminf_{n\rightarrow\infty}\norm{e^{it\De}u_{n}(0)}_{L_{t,x}^{4}([0,T_{n})\times\T^{2})}\\
  \ge&\liminf_{n\rightarrow\infty}\norm{\norm{e^{it\De}u_{n}(0)}_{L_{t,x}^{4}(I_{n,j_{k}}\times\T^{2})}}_{\ell_{k}^{4}}\\
  \ge&\norm{\liminf_{n\rightarrow\infty}\norm{e^{it\De}u_{n}(0)}_{L_{t,x}^{4}(I_{n,j_{k}}\times\T^{2})}}_{\ell_{k}^{4}}=\infty,
\end{align*}
which contradicts Lemma \ref{lem:Z_R L4 stri}. Thus, the assumption
of this proposition cannot hold and we finish the proof.
\end{proof}
\section{Proof of Theorem \ref{thm:L4 counterexample lem} and its consequences}\label{sec:proof-approx}

In this section, we provide proofs of Theorem \ref{thm:L4 counterexample lem} and its Corollaries \ref{cor:unif-cont} and \ref{cor:sharp-bound}.

\begin{proof}[Proof of Theorem \ref{thm:L4 counterexample lem}]
In this proof, every comparability depends in default on $\la$ and $T$. For dyadic numbers $N\gg1$, denote
\[
U^N:=e^{\mp 3it \la^2\ln N}e^{it\De}u^N_0
\]
and
\[
\mc E^N:=u^N-U^N,
\]
where $u^N$ is the solution provided by \cite[Theorem 1.4]{herr2024strichartz}. Note that $u_0^N\in H^s(\T^2)$ for every $s>0$ and in \cite[Theorem 1.4]{herr2024strichartz} the smallness threshold is independent of $s$, therefore $u^N\in C^\infty_{t,x,\loc}(\R\times\T^2)$ by applying the Sobolev embedding and using that $u^N$ is a solution.

To prove Theorem \ref{thm:L4 counterexample lem}, it suffices to show
\begin{equation}\label{eq:long time bound}
\norm{\mc E^N}_{C^0 L^2 \cap L^4_{t,x}([0,\frac{T}{\log N})\times\T^2)}=o_N(1).
\end{equation}
In the rest of this proof, we prove \eqref{eq:long time bound}. We denote $R:=N^{13}$ and work with the norm $Z_R=Z_R^1$.
For each $\xi\in N^{10}\Z^2$, we have
\begin{align*}
\norm{\chi_{[0,\frac{T}{\log N})}e^{it|\xi|^2}\widehat{U^N(t)}(\xi)}_{V^2_t}
&\les \norm{e^{\mp 3it \la^2\ln N}\widehat{u^N_0}(\xi)}_{W^{1,1}_t([0,\frac{T}{\log N}))}\\
& \les N^{-1}e^{-|\xi/N^{11}|^2}
\end{align*}
because of $W^{1,1}\hook V^2$,
which implies
\begin{align}\label{eq:U bound}
\nonumber \norm{U^N}_{Z_R([0,\frac{T}{\log N}))}&\les\norm{U^N}_{Y^0([0,\frac{T}{\log N}))}+R^{-1}\norm{U^N}_{Y^1([0,\frac{T}{\log N}))}
\\&\nonumber
\les\left(\sum_{\xi\in N^{10}\Z^2}N^{-2}e^{-2|\xi/N^{11}|^2}\right)^{1/2}+R^{-1}\left(\sum_{\xi\in N^{10}\Z^2}N^{-2}|\xi|^2e^{-2|\xi/N^{11}|^2}\right)^{1/2}
\les 1.\nonumber
\end{align}
Following the proof of Lemma \ref{lem:nu bound}, for $C>0$ and any interval $I\subset [0,\frac{T}{\log N})$, under the assumption $R=N^{13}\gg_C1$, we also have
\begin{equation}\label{eq:Z1 tri}
\norm{u_1\overline{u_2}u_3}_{Z_R(I)}\les\sum_{\{j,k,l\}=\{1,2,3\}}\left(C^{-1}\norm{u_j}_{Z_R}\norm{u_k}_{Z_R}+\norm{u_j}_{L^4_{t,x}}\norm{u_k}_{L^4_{t,x}}\right)\norm{u_l}_{Z_R}.
\end{equation}
By \eqref{eq:Z1 tri} and \eqref{eq:Z_R stri}, we have
\begin{equation}\label{eq:UUE}
\norm{\mc I(|U^N|^2\mc E^N)}_{Z_R(I)}\les\left(C^{-1}+\norm{U^N}_{L^4_{t,x}(I\times\T^2)}\right)\norm{\mc E^N}_{Z_R(I)}
\end{equation}
and the same estimate holds for $(U^N)^2\overline{\mc E^N}$.

By Proposition \ref{prop:L4 Stri}, we have
\[
\norm{U^N}_{L^4_{t,x}([0,\frac{T}{\log N})\times \T^2)}\les 1,
\]
thus $[0,\frac{T}{\log N})$ can be partitioned into a finite number of intervals $I_1=[t_0,t_1),\ldots,I_J=[t_{J-1},t_J)$, where $t_0=0$ and $J=O_T(1)$, such that
\[
\sup_{j\le J}\norm{U^N}_{L^4_{t,x}(I_j\times \T^2)}\ll1.
\]
Then, $\mc E^N(t_0)=0$ holds and for $j=1,\ldots,J$, we have
\begin{align}
\nonumber
\norm{\mc E^N}_{Z_R(I_j)}={}&\norm{\pm\mc I(\chi_{I_j}|u^N|^2u^N)+e^{i(t-t_{j-1})\De}u^N(t_{j-1})-U^N}_{Z_R(I_j)}
\\ \nonumber
\les{}&
\norm{\pm\mc I(\chi_{I_j}|U^N|^2U^N)+e^{i(t-t_{j-1})\De}U^N(t_{j-1})-U^N}_{Z_R(I_j)}
\\&\label{eq:u-U1}
{}+2\norm{\mc I(\chi_{I_j}|U^N|^2\mc E^N)}_{Z_R(I_j)}+\norm{\mc I(\chi_{I_j}(U^N)^2\overline{\mc E^N})}_{Z_R(I_j)}
\\&\label{eq:u-U1b}
{}+\norm{U^N}_{Z_R(I_j)}\norm{\mc E^N}_{Z_R(I_j)}^2+\norm{\mc E^N}_{Z_R(I_j)}^3
\\&\label{eq:u-U2}
{}+\norm{e^{i(t-t_{j-1})\Delta}(u^N(t_{j-1})-U^N(t_{j-1}))}_{Z_R(I_j)}.
\end{align}
By \eqref{eq:UUE} with $C\gg1$, the quantities \eqref{eq:u-U1} and \eqref{eq:u-U1b} can further be bounded by
\[
\le\frac{1}{2}\norm{\mc E^N}_{Z_R(I_j)}+O(1)\cdot\norm{\mc E^N}_{Z_R(I_j)}^3
\]
and \eqref{eq:u-U2} has the trivial bound
\[
\les\norm{\mc E^N(t_{j-1})}_{L^2}+R^{-1}\norm{\mc E^N(t_{j-1})}_{H^1}\les\norm{\mc E^N}_{Z_R(I_{j-1})}.
\]
Thus, we have the bootstrapping bound
\begin{equation}\label{eq:u-U iter}
\begin{split}
&\norm{\mc E^N}_{Z_R(I_j)}\\
\les{}&\norm{\pm\mc I(\chi_{I_j}|U^N|^2U^N)+e^{i(t-t_{j-1})\De}U^N(t_{j-1})-U^N}_{Z_R(I_j)}+\norm{\mc E^N}_{Z_R(I_{j-1})}
\end{split}
\end{equation}
provided that the right-hand side is small enough. Here, the last term $\norm{\mc E^N}_{Z_R(I_{j-1})}$ is void if $j=1$. 
Iterating \eqref{eq:u-U iter} on $j=1,\ldots,J$ yields our goal \eqref{eq:long time bound} (since $J=O_T(1)$) once we show
\begin{equation}\label{eq:counter claim}
\sup_{j\le J}\norm{\mathcal{V}^N_j}_{Z_R(I_j)}=o_N(1),
\end{equation}
where
\[
\mathcal{V}^N_j:=\pm\mc I(\chi_{I_j}|U^N|^2U^N)+e^{i(t-t_{j-1})\De}U^N(t_{j-1})-U^N.
\]
For $Q=(\xi,\xi_1,\xi_2,\xi_3)$ we denote $\widehat{\phi^N}(Q\setminus\{\xi\})=\widehat{\phi^N}(\xi_1)\overline{\widehat{\phi^N}(\xi_2)}\widehat{\phi^N}(\xi_3)$ and define $f,g:I_j\times\Z^2\rightarrow\C$ to be the functions
\[
f(t,\xi):=\int_{t_{j-1}}^t e^{\mp 3is \la^2\ln N}\frac{\la^3}{N^3}\sum_{\substack{Q=(\xi,\xi_1,\xi_2,\xi_3)\in\mc Q(N^{10}\Z^2)\\ \tau(Q)\neq 0}}e^{is\tau(Q)}\widehat{\phi^N}(Q\setminus\{\xi\})ds
\]
and
\begin{align*}
g(t,\xi)&:=\int_{t_{j-1}}^t e^{\mp 3is \la^2\ln N}\Big(\frac{\la^3}{N^3}\sum_{Q=(\xi,\xi_1,\xi_2,\xi_3)\in\mc Q^0(N^{10}\Z^2)}\widehat{\phi^N}(Q\setminus\{\xi\})
-3\la^3\ln N\cdot N^{-1}\widehat{\phi^N}(\xi)\Big)ds.
\end{align*}
Then, for $t\in I_j$ and $\xi\in N^{10}\Z^2$, we have
\begin{align*}
&i\d_t\left(e^{it|\xi|^2}\widehat{\mathcal{V}^N_j}(\xi)\right)
\\
=&\pm e^{it|\xi|^2}\sum_{Q=(\xi,\xi_1,\xi_2,\xi_3)\in\mc Q(N^{10}\Z^2)}\widehat{U^N(t)}(Q\setminus\{\xi\})
\mp 3\la^2\ln N e^{\mp 3it \la^2\ln N}\widehat{u^N(0)}(\xi)
\\
=&\pm e^{\mp 3it \la^2\ln N}\left(\frac{\la^3}{N^3}\sum_{Q=(\xi,\xi_1,\xi_2,\xi_3)\in\mc Q(N^{10}\Z^2)}e^{it\tau(Q)}\widehat{\phi^N}(Q\setminus\{\xi\})
-3\la^2\ln N\cdot \frac{\la}{N}\widehat{\phi^N}(\xi)\right)
\\
=&\pm\left(\d_tf(t,\xi)+\d_tg(t,\xi)\right).
\end{align*}
Since $\mathcal{V}^N_j(t_{j-1})=0$ holds, our goal \eqref{eq:counter claim} can be rephrased as
\begin{equation}\label{eq:counter Z_R}
\norm{\F^{-1}f}_{Z_R}+\norm{\F^{-1}g}_{Z_R}=o_N(1).
\end{equation}
We actually show only the $Y^0$-part of \eqref{eq:counter Z_R} (which is the essential part)
\begin{equation}\label{eq:counter f+g claim}
\norm{f}_{\ell^2_{\xi}V^2_s(I_j)}+\norm{g}_{\ell^2_{\xi}V^2_s(I_j)}=o_N(1).
\end{equation}
One can check that the estimates to be provided also yield the $Y^1$-norm bound in \eqref{eq:counter Z_R}.

We first show that the non-rectangular-resonant part $f$ is negligibly small. For $L\in 2^\N$, the number of $Q\in\mc Q(N^{10}\Z^2)$ such that $Q\ni \xi$ and $\max_{\xi'\in Q\setminus\{\xi\}}|\xi'|\sim N^{10}L$ is $O(L^4)$. One can also check $\max_{\xi'\in Q\setminus\{\xi\}}|\xi'|\ge|\xi|/10$ always holds.
Thus, we have
\begin{align}\label{eq:f W11}
\norm{f(t,\xi)}_{W^{1,1}_t(I_j)}&\les\frac{\la^3}{N^3}\cdot\sum_{L\in2^\N}\sum_{\substack{Q=(\xi,\xi_1,\xi_2,\xi_3)\in\mc Q(N^{10}\Z^2)\\ N^{10}L<\max_{\xi'\in Q\setminus\{\xi\}}|\xi'|\le 2N^{10}L}}\widehat{\phi^N}(Q\setminus\{\xi\})
\\&\nonumber
\les\frac{\la^3}{N^3}\sum_{L\in2^\N}L^4\cdot e^{-\frac12(N^{10}L/N^{11})^2}\cdot e^{-\frac{1}{2}|\xi/10N^{11}|^2}
\\&\nonumber
\les\la^3N\cdot e^{-\frac{1}{200}|\xi/N^{11}|^2}.
\end{align}
We then measure the $L^\infty$-norm of $f(\cdot,\xi)$. For any parallelogram $Q\in\mc Q(N^{10}\Z^2)$, since all vertices of $Q$ lie in $N^{10}\Z^2$, $N^{20}\mid\tau(Q)$ holds. Thus, the oscillation estimate
\[
\left|\int_{t_{j-1}}^t e^{\mp3is\la^2\ln N+is\tau(Q)}ds\right|\les\frac{1}{|\tau(Q)|-|3\la^2\ln N|}\les\frac{1}{N^{20}}
\]
leads to an additional decay by $N^{-20}$ to \eqref{eq:f W11}, i.e.,
\begin{equation}\label{Eq:f L infty}
\norm{f(t,\xi)}_{L^\infty_t(I_j)}\les\la^3 N^{-19}\cdot e^{-\frac{1}{200}|\xi/N^{11}|^2}.
\end{equation}
Interpolating between \eqref{eq:f W11} and \eqref{Eq:f L infty}, we have
\[
\norm{f(t,\xi)}_{V^2_t(I_j)}\les\la^3 N^{-9}\cdot e^{-\frac{1}{200}|\xi/N^{11}|^2}.
\]
Thus, noting that $f$ is supported on $I_j\times N^{10}\Z^2$, we have
\begin{equation}\label{eq:fs small Y}
\norm{f}_{\ell^2_\xi V^2_t(I_j)}\les N^{-9}\cdot\sum_{\xi\in N^{10}\Z^2} e^{-\frac{1}{200}|\xi/N^{11}|^2}\les N^{-7}=o_N(1),
\end{equation}
providing the first half of \eqref{eq:counter f+g claim}.

Now we estimate $g$, which is indeed the main part. Since $W^{1,1}\hook V^2$, for each $\xi\in N^{10}\Z^2$, we have
\[
\norm{g(t,\xi)}_{V^2_t}\les\int_{I_j}\left|\frac{\la^3}{N^3}\sum_{\substack{Q=(\xi,\xi_1,\xi_2,\xi_3)\in\mc Q^0(N^{10}\Z^2)}}\widehat{\phi^N}(Q\setminus\{\xi\})
-3\la^3\ln N\cdot N^{-1}\widehat{\phi^N}(\xi)\right|ds,
\]
whose integrand is independent of $s$ and thus can be bounded by
\[
\les\frac{1}{\log N}\left|\frac{1}{N^3}\sum_{\substack{Q=(\xi,\xi_1,\xi_2,\xi_3)\in\mc Q^0(N^{10}\Z^2)}}\widehat{\phi^N}(Q\setminus\{\xi\})
-3\ln N\cdot N^{-1}\widehat{\phi^N}(\xi)\right|,
\]
where we used $|I_j|\le\frac{T}{\log N}\les\frac{1}{\log N}$.
Thus, we only need to show
\[
\sum_{\xi\in N^{10}\Z^2}\left|\frac{1}{N^3\ln N}\sum_{\substack{Q=(\xi,\xi_1,\xi_2,\xi_3)\in\mc Q^0(N^{10}\Z^2)}}\widehat{\phi^N}(Q\setminus\{\xi\})
-3 N^{-1}\widehat{\phi^N}(\xi)\right|^2=o_N(1).
\]
Rescaling the coordinates mapping $N^{10}\Z^2$ to $\Z^2$, this can be rewritten as
\begin{equation}\label{eq:counter final claim}
\sum_{\xi\in \Z^2}\left|\frac{1}{N^3\ln N}\sum_{\substack{Q=(\xi,\xi_1,\xi_2,\xi_3)\in\mc Q^0(\Z^2)}}\mathfrak{g}(Q\setminus\{\xi\})
-3N^{-1}\mathfrak{g}(\xi)\right|^2=o_N(1),
\end{equation}
where $\mathfrak{g}(\xi):=e^{-|\xi/N|^2}$.
For $\eta\in \Z^2_\irr$, let $\mc Q_\eta^0$ be the set
\[
\mc Q_\eta^0:=\{(x_1,x_2,x_3,x_4)\in\mc Q^0:0\neq x_1-x_2\parallel\eta\text{ or }0\neq x_1-x_4\perp\eta\}.
\]
Then, $\mc Q^0$ is the union of $\mc Q_\eta^0$ for $\eta\in\Z^2_\irr$ plus the singleton case $\{(\xi_0,\xi_0,\xi_0,\xi_0):\xi_0\in\Z^2\}$, counting each element twice (concerning $\pm\eta$).

We have
\begin{align*}
\sum_{\substack{Q=(\xi,\xi_1,\xi_2,\xi_3)\in\mc Q^0_\eta(\Z^2)}}\mathfrak{g}(Q\setminus\{\xi\})
&
=\sum_{(m,n)\neq(0,0)}e^{(-|\xi+m\eta|^2-|\xi+n\eta^\perp|^2-|\xi+m\eta+n\eta^\perp|^2)/N^2}
\\&
=\sum_{(m,n)\neq(0,0)}e^{(-|\xi|^2-2|\xi+m\eta+n\eta^\perp|^2)/N^2}.
\end{align*}
In the case $|\eta|\le N$, since the Gaussian function $e^{-2|x|^2}$ is Lipschitz on $\R^2$, we obtain
\begin{align*}
\sum_{(m,n)\neq(0,0)}e^{(-|\xi|^2-2|\xi+m\eta+n\eta^\perp|^2)/N^2}&=\frac{N^2}{|\eta|^2}\int_{\R^2}e^{-2|x|^2}dx\cdot e^{-|\xi/N|^2}+O\left(\frac{N}{|\eta|}\right)\cdot e^{-|\xi/N|^2}
\\&
=\frac{\pi}{2}\cdot\frac{N^2}{|\eta|^2}\mathfrak{g}(\xi)+O\left(\frac{N}{|\eta|}\right)\cdot e^{-|\xi/N|^2}.
\end{align*}
In the case $|\eta|>N$ we have
\begin{align*}
&\sum_{(m,n)\neq(0,0)}e^{(-|\xi|^2-2|\xi+m\eta+n\eta^\perp|^2)/N^2}
\\
\le&\sup_{(m,n)\neq(0,0)}e^{(-|\xi|^2-|\xi+m\eta+n\eta^\perp|^2)/N^2}\cdot \sum_{(m,n)\neq(0,0)}e^{(-|\xi+m\eta+n\eta^\perp|^2)/N^2}
\\
\le& e^{-|\xi/N|^2/2-|\eta/N|^2/10}\cdot O(1),
\end{align*}
in the last line of which we used $|\xi|^2+|\xi+m\eta+n\eta^\perp|^2\ge|\xi|^2/2+|m\eta+n\eta^\perp|^2/10$ and $|m\eta+n\eta^\perp|\ge|\eta|$.

Thus, for $\xi\in\Z^2$, we conclude
\[2\cdot\sum_{\substack{Q=(\xi,\xi_1,\xi_2,\xi_3)\in\mc Q^0\\ Q\neq(\xi,\xi,\xi,\xi)}}\mathfrak{g}(Q\setminus\{\xi\})
=\sum_{\substack{\eta\in\Z^2_\irr\\ |\eta|\le N}}\frac{\pi}{2}\cdot\frac{N^2}{|\eta|^2}\mathfrak{g}(\xi)+e^{-|\xi/N|^2/2}\cdot O(N^2).
\]
Then, with the density $d_\irr$ of the coprime integers, see Remark \ref{rem:dens}, we compute
\begin{align*}
\lim_{N\rightarrow\infty}\frac{1}{\ln N}\sum_{\substack{\eta\in\Z^2_\irr\\|\eta|\le N}}\frac{1}{|\eta|^2}
&
=d_{\irr}\cdot\lim_{N\rightarrow\infty}\frac{1}{\ln N}\int_{1\le |\eta|\le N}\frac{d\eta}{|\eta|^2}
=2\pi d_{\irr},
\end{align*}
Remark \ref{rem:dens} below implies that  $d_\irr \frac{\pi^2}{2}=3$, therefore we have
\begin{equation}\label{eq:g approx}
\sum_{\substack{Q=(\xi,\xi_1,\xi_2,\xi_3)\in\mc Q^0\\ Q\neq(\xi,\xi,\xi,\xi)}}\mathfrak{g}(Q\setminus\{\xi\})=3 N^2\ln N\cdot\mathfrak{g}(\xi)+e^{-|\xi/N|^2/2}\cdot O(N^2).
\end{equation}
For the fully degenerate case $Q=(\xi,\xi,\xi,\xi)$, we have $\mathfrak{g}(Q\setminus\{\xi\})=\mathfrak{g}^3(\xi)=e^{-3|\xi/N|^2}$, thus we may drop the condition $Q\neq(\xi,\xi,\xi,\xi)$ in \eqref{eq:g approx}.

Since this holds for every $\xi$, \eqref{eq:counter final claim} immediately follows, finishing the proof.
\end{proof}

\begin{rem}\label{rem:dens}
\begin{enumerate}
\item 
We have used the asymptotic density of coprime integer points, i.e.
\[
d_{_\irr}:=\lim_{N\rightarrow\infty}\frac{\#\{\eta\in\Z^2_\irr:|\eta|\le N\}}{\pi N^2}.
\]
It is well-known that $d_{\irr}=\frac{1}{\zeta(2)}=\frac{6}{\pi^2}$. If the density is computed with respect to large squares this is a classical result of Mertens \cite{Mertens}, see also \cite[Thm. 3.9]{apostol}. For the case of large discs, which is considered here, this fact can be found in \cite{DicksPorti} or \cite[Prop. 6]{Baake}.
\item 
 In the proof of Theorem \ref{thm:L4 counterexample lem} above  the phase correction factor $3$ is a result of a subtle computation. More precisely, $$\pi d_{\irr}\int_{\R^2}e^{-2|x|^2}dx=3.$$
\end{enumerate}
\end{rem}

\begin{lem}\label{lem:free evol sharp 1/log}
Let $N\in\N$ and $T\ge1$. Let $\phi^N:=\F^{-1}(\chi_{N^{10}\Z^2}\cdot e^{-|\xi/N^{11}|^2})$. We have
$$ \|\phi^N\|_{L^2(\T^2)} \sim N$$
and
\begin{equation}\label{eq:free evol sharp 1/log}
\norm{e^{it\De}\phi^N}_{L^4_{t,x}([0,\frac{T}{\log N})\times\T^2)}\gtrsim N T^{1/4}.
\end{equation}
\end{lem}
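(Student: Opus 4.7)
The $L^2$-bound $\|\phi^N\|_{L^2(\T^2)} \sim N$ is immediate from Plancherel and Riemann-sum comparison: substituting $\eta = \xi/N^{10}$, $\|\phi^N\|_{L^2}^2$ becomes (up to absolute constants) $\sum_{\eta \in \Z^2} e^{-2|\eta/N|^2}$, which is $\sim N^2$ by comparison with $\int_{\R^2} e^{-2|y/N|^2}\,dy$.

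For the $L^4$-lower bound, my plan is to expand the fourth power as a weighted sum over parallelograms in $N^{10}\Z^2$:
\[
\|e^{it\De}\phi^N\|_{L^4_{t,x}([0,T/\log N)\times\T^2)}^4 \sim \sum_{Q \in \mc Q(N^{10}\Z^2)} e^{-\sum_j |\xi_j/N^{11}|^2} \int_0^{T/\log N} e^{-it\tau_Q}\,dt,
\]
and to split into diagonal ($\tau_Q = 0$) and off-diagonal contributions. Since every vertex lies in $N^{10}\Z^2$, $\tau_Q = 2(\xi_1-\xi_2)\cdot(\xi_1-\xi_4) \in N^{20}\Z$, so in the off-diagonal case $|\tau_Q| \ge N^{20}$ and the time integral obeys $|\int_0^{T/\log N} e^{-it\tau_Q}\,dt| \le 2/N^{20}$. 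Combined with the crude bound $\sum_Q e^{-\sum_j|\xi_j/N^{11}|^2} \sim \|\phi^N\|_{L^4(\T^2)}^4 \sim N^6$ (from the Gaussian-comb structure of $\phi^N$: $\sim N^{20}$ peaks of height $\sim N^2$ and width $\sim N^{-11}$), the off-diagonal contribution is $\lesssim N^{-14}$ and thus negligible compared to the target $N^4 T$.

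The diagonal part equals (up to constants) $(T/\log N)\, S_N$ with
\[
S_N = \sum_{Q \in \mc Q^0(\Z^2)} e^{-\sum_j |\eta_j/N|^2}
\]
after the substitution $\eta_j = \xi_j/N^{10}$. I will parametrize each rectangle as $(\eta_1,\,\eta_1+v,\,\eta_1+v+w,\,\eta_1+w)$ with $v, w \in \Z^2$, $v \perp w$, using the identity $\sum_j |\eta_j|^2 = 4|\eta_1 + (v+w)/2|^2 + |v|^2 + |w|^2$. Restricting to $0 < |v|,|w| \le N/10$, the $\eta_1$-sum is $\gtrsim N^2$ uniformly by Riemann-sum comparison with the Gaussian integral, so the question reduces to proving
\[
\Sigma_N := \sum_{\substack{v, w \in \Z^2\setminus\{0\}\\ v \perp w,\ |v|,|w| \le N/10}} e^{-(|v|^2+|w|^2)/N^2} \gtrsim N^2 \log N.
\]
To this end I will parametrize perpendicular pairs by primitive directions, writing $v = m\eta$ and $w = n\eta^\perp$ with $\eta \in \Z^2_{\irr}$ and $m, n \in \Z\setminus\{0\}$. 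By Remark \ref{rem:dens}, primitive integer points have positive asymptotic density, so for each dyadic $k \in [1, N/40]$ the count $\#\{\eta \in \Z^2_{\irr}: |\eta| \sim k\}$ is $\sim k^2$; for each such $\eta$, the $\sim (N/k)^2$ choices $|m|, |n| \le N/(20k)$ yield $e^{-(m^2+n^2)|\eta|^2/N^2} \gtrsim 1$, contributing $\sim k^2 \cdot (N/k)^2 = N^2$ per dyadic scale. Summing over the $\sim \log N$ dyadic scales gives $\Sigma_N \gtrsim N^2 \log N$, and combining yields $\|e^{it\De}\phi^N\|_{L^4_{t,x}}^4 \gtrsim (T/\log N)\cdot N^2 \cdot N^2 \log N = N^4 T$, as required.

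The main technical point is the extraction of the $\log N$ factor from $\Sigma_N$, which genuinely requires the full primitive-direction sum rather than just horizontal and vertical axes; this is the same mechanism responsible for the logarithmic loss in the sharp Strichartz estimate (Proposition \ref{prop:L4 Stri}) and for the phase correction factor $3$ in Theorem \ref{thm:L4 counterexample lem}.
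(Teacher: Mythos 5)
Your proposal is correct, and it reaches the same endpoint as the paper (reduce to the diagonal sum over $\mc Q^0$, then count degenerate rectangles to extract the $\log N$ factor), but it gets there by a slightly different mechanism at each of the two key steps. To isolate the diagonal $\tau_Q=0$ contribution, you expand the $L^4$-norm in full and kill the off-diagonal terms with the oscillation bound $|\int_0^{T/\log N}e^{-it\tau_Q}\,dt|\le 2/N^{20}$ together with the crude estimate $\|\phi^N\|_{L^4(\T^2)}^4\sim N^6$; the paper instead exploits that $g(t)=\int_{\T^2}|e^{it\De}\phi^N|^4\,dx$ has $\supp(\widehat g)\subset N^{20}\Z$, hence is $2\pi N^{-20}$-periodic, so that $\int_0^{T/\log N}g\,dt$ is essentially $T'\widehat g(0)$ for $T'\sim T/\log N$, and $\widehat g(0)$ is exactly the diagonal sum — no off-diagonal error to control. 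For the lower bound on the diagonal sum, the paper cites \cite[eq.~(2.2)]{kishimoto2014remark} for $\#\mc Q^0([N]^2)\gtrsim N^4\log N$, whereas you prove it from scratch by centering out $\eta_1$ with the identity $\sum_j|\eta_j|^2=4|\eta_1+\tfrac{v+w}{2}|^2+|v|^2+|w|^2$ and then summing perpendicular pairs $(v,w)=(m\eta,n\eta^\perp)$ over primitive $\eta$ and dyadic scales in $|\eta|$. The paper's periodicity step is cleaner (one line, no auxiliary $L^4$ bound), while your direct count of $\mc Q^0$ via primitive directions is a nice, self-contained substitute for the citation and makes transparent where the $\log N$ comes from. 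Both routes are valid; the proposal is sound.
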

\begin{proof}
Let $g:\R/2\pi\Z\rightarrow[0,\infty)$ be the function
\[
g(t):=\int_{\T^2}\left|e^{it\De}\phi^N\right|^4dx.
\]
Since $\supp(\widehat{\phi^N})\subset N^{10}\Z^2$, we have
\[
\supp (\widehat{g})\subset N^{20}\Z.
\]
Thus, $g$ is $2\pi N^{-20}$-periodic. Let $T'<\frac{T}{\log N}$ be the largest multiple of $2\pi N^{-20}$ less than $\frac{T}{\log N}$, so that $g$ is periodic on $[0,T']\subset[0,\frac{T}{\log N}]$. We have
\begin{equation}\label{eq:int g(t)}
\norm{e^{it\De}\phi^N}_{L^4_{t,x}([0,\frac{T}{\log N})\times\T^2)}^4=\int_0^{\frac{T}{\log N}}g(t)dt\ge\int_0^{T'}g(t)dt=T'\widehat g(0)
\end{equation}
Since $T\ge 1$, $T'\sim \frac{T}{\log N}$ holds. Thus, we can continue the estimate as
\[
\gtrsim\frac{T}{\log N}\sum_{Q\in\mc Q^0}\widehat{\phi^N}(Q).
\]
Here, since $\widehat{\phi^N}(\xi)\gtrsim1$ holds for $\xi\in[N^{11}]^2$, we have
\begin{equation}\label{eq:sum Q over N11}
\sum_{Q\in\mc Q^0}\widehat{\phi^N}(Q)\gtrsim\#\mc Q^0([N^{11}]^2\cap N^{10}\Z^2)\gtrsim\#\mc Q^0([N]^2).
\end{equation}
It is known that $\#\mc Q^0([N]^2)\gtrsim N^4\log N$; see, e.g., \cite[(2.2)]{kishimoto2014remark}. We also compute that $\|\phi^N\|_{L^2}\sim N$. Plugging \eqref{eq:sum Q over N11} into \eqref{eq:int g(t)} yields \eqref{eq:free evol sharp 1/log}, finishing the proof.
\end{proof}

The following proposition implies Corollary \ref{cor:unif-cont} and contains more details.
\begin{prop}\label{prop:unif counterexample}
Let $\epsilon>0$. There exist sequences $\{u_{0n}\}$ in $C^\infty(\T^2)$ and $\{t_n\},\{\la_n\}$ of positive reals satisfying the following:
\[
\lim_{n\rightarrow\infty}\la_n=0,
\]
\[
\lim_{n\rightarrow\infty}t_n=0,
\]
\[
\lim_{n\rightarrow\infty}\norm{u_{0n}}_{L^2(\T^2)}=\epsilon,
\]
and
\begin{equation}\label{eq:unif counterexample main}
\lim_{n\rightarrow\infty}\norm{u_n(t_n)-\tilde{u_n}(t_n)}_{L^2(\T^2)}=2\epsilon,
\end{equation}
where $u_n$ and $\tilde{u_n}$ are the solutions to \eqref{eq:NLS}, either defocusing or focusing, with the initial data $u_{0n}$ and $\tilde{u_{0n}}=(1+\la_n)u_{0n}$, respectively.
In particular, the $L^2(\T^2)$-flow map is not uniformly continuous on any neighborhood of $0$.
\end{prop}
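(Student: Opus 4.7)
The plan is to exploit the explicit phase correction $e^{\mp 3it\lambda^2 \ln N}$ in Theorem~\ref{thm:L4 counterexample lem}, which is quadratic in the overall amplitude $\lambda$. A multiplicative perturbation $\lambda \mapsto (1+\lambda_n)\lambda$ of the data therefore accumulates, over time $t$, an extra phase $\mp 3t\lambda^2((1+\lambda_n)^2 - 1)\ln N$. Choosing $t = t_n$ so that this extra phase equals $\pi$ produces two approximate solutions that are $180^{\circ}$ out of phase and thus $L^2$-separated by twice their common mass, while the initial data themselves are separated only by $\lambda_n\|u_{0n}\|_{L^2}$.

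For the parameter choice, I would fix $\lambda > 0$ strictly below the smallness threshold of \cite[Theorem~1.4]{herr2024strichartz} and such that $\lim_{N\to\infty}\|\lambda N^{-1}\phi^N\|_{L^2(\T^2)} = \epsilon$; a Riemann-sum computation as in Lemma~\ref{lem:free evol sharp 1/log} shows that this limit is a positive constant times $\lambda$, so the normalization is simply a choice of $\lambda$. Set $u_{0n} := \lambda N_n^{-1}\phi^{N_n}$ and $\tilde u_{0n} := (1+\lambda_n)u_{0n}$. Take $T_n = n$ and let $\lambda_n$ be the small positive root of $3T_n\lambda^2\bigl((1+\lambda_n)^2 - 1\bigr) = \pi$, so $\lambda_n = \pi/(6T_n\lambda^2) + O(T_n^{-2}) \to 0$. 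Finally, via a diagonal argument, pick $N_n$ large enough that (i) $t_n := T_n/\log N_n \le 1/n$ and (ii) Theorem~\ref{thm:L4 counterexample lem}, applied with parameters $(T_n, \lambda)$ and $(T_n, \lambda(1+\lambda_n))$, yields $C^0 L^2$-approximation error at most $1/n$ on $[0, T_n/\log N_n)$ in both cases; such $N_n$ exists because for each fixed $T$ and each fixed amplitude in the smallness regime, the approximation error tends to zero as $N\to\infty$.

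The main computation is then direct: Theorem~\ref{thm:L4 counterexample lem} gives
\begin{align*}
u_n(t_n) &= e^{\mp 3it_n\lambda^2 \ln N_n}\, e^{it_n\Delta}u_{0n} + o(1)_{L^2}, \\
\tilde u_n(t_n) &= (1+\lambda_n)\, e^{\mp 3it_n\lambda^2(1+\lambda_n)^2 \ln N_n}\, e^{it_n\Delta}u_{0n} + o(1)_{L^2}.
\end{align*}
By construction of $\lambda_n$, the relative phase factor is $e^{\mp i\pi} = -1$, so
\[
u_n(t_n) - \tilde u_n(t_n) = (2 + \lambda_n)\, e^{\mp 3it_n\lambda^2 \ln N_n}\, e^{it_n\Delta}u_{0n} + o(1)_{L^2}.
\]
Since $e^{it_n\Delta}$ is an $L^2$-isometry, $\|u_n(t_n) - \tilde u_n(t_n)\|_{L^2} = (2+\lambda_n)\|u_{0n}\|_{L^2} + o(1) \to 2\epsilon$, which is \eqref{eq:unif counterexample main}. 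The failure of local uniform continuity of the flow is then immediate, since $\|u_{0n} - \tilde u_{0n}\|_{L^2} = \lambda_n\|u_{0n}\|_{L^2} \to 0$ while $\|u_n(t_n) - \tilde u_n(t_n)\|_{L^2} \to 2\epsilon$ at vanishing times $t_n \to 0$.

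The main obstacle is that Theorem~\ref{thm:L4 counterexample lem} is stated for each fixed $T$, whereas my construction demands $T_n = t_n\log N_n \to \infty$ (otherwise $\lambda_n \to 0$ would force the accumulated phase to vanish). This is the role of the diagonal selection: the convergence in Theorem~\ref{thm:L4 counterexample lem} is $\limsup_{N\to\infty} = 0$ for every fixed $T$, so for any sequence $T_n \to \infty$ I can take $N_n$ growing fast enough (e.g.\ $\log N_n \ge n T_n$) both to preserve the approximation and to force $t_n \to 0$. The only minor additional point is that the second application of the theorem uses amplitude $\lambda(1+\lambda_n)$, but this stays uniformly within the smallness regime because $\lambda_n \to 0$.
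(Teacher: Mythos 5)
Your proof is correct and takes essentially the same approach as the paper: both exploit the amplitude-dependence of the phase correction $e^{\mp 3it\la^2\ln N}$ from Theorem~\ref{thm:L4 counterexample lem}, so that a multiplicative perturbation of the data accumulates a relative phase of $\pi$ (mod $2\pi$) over the natural time scale $t\sim(\ln N)^{-1}$, whence the two approximate solutions become anti-aligned in $L^2$. The difference is in the order of limits. The paper fixes $\la'$ first and chooses $t_n := \pi/\bigl(((\la(1+\la'))^2-\la^2)\ln N_n\bigr)$ so the relative phase is exactly an odd multiple of $\pi$; applying Theorem~\ref{thm:L4 counterexample lem} with a fixed amplitude $\la(1+\la')$ then gives $\limsup_n\|u_n(t_n)-\tilde u_n(t_n)\|_{L^2}=2\epsilon+O(\la')$, followed by a diagonal passage $\la'\to0$. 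You instead let $\la_n\to0$ directly (driven by $T_n\to\infty$) and diagonalize over $N_n$ to absorb the $T$-dependence of the convergence rate in Theorem~\ref{thm:L4 counterexample lem}, acknowledging (correctly) that the second application of the theorem uses an $n$-dependent amplitude $\la(1+\la_n)$ lying in a fixed compact subset of the smallness regime. Both arrangements work; the paper's keeps the theorem's parameters fixed during each application, which sidesteps any need to argue uniformity in $\la$. One small imprecision in your version: taking $t_n=T_n/\log N_n$ with the paper's convention $\log x:=1+\log^+x$ makes the accumulated relative phase $\pi\cdot(\ln N_n/\log N_n)$, which is only asymptotically equal to $\pi$; the coefficient $(2+\la_n)$ you write should therefore be $(2+\la_n+o(1))$, but this does not affect the conclusion since you are proving a limit statement. (Choosing $t_n = T_n/\ln N_n$ and applying the theorem with $2T_n$ would make the phase exact and match the paper's bookkeeping.)
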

\begin{proof}
Let $\phi^N,N\in2^\N$ be defined in Theorem \ref{thm:L4 counterexample lem}. Let $\la$ be the normalizing constant
\[
\la=\epsilon\cdot\lim_{N\rightarrow\infty}N\norm{\phi^N}_{L^2(\T^2)}^{-1}.
\]
Let $u^N_0\in C^\infty(\T^2)$ be as in Theorem \ref{thm:L4 counterexample lem}. We have
\begin{equation}\label{eq:la is normalizer}
\lim_{N\rightarrow\infty}\norm{u^N_0}_{L^2(\T^2)}=\lim_{N\rightarrow\infty}\la N^{-1}\norm{\phi^N}_{L^2(\T^2)}=\epsilon.
\end{equation}
Let $\{N_n\}$ be a sequence of dyadic numbers to be fixed later such that $\ln N_n\ge n^2$.
We provide explicitly the parameters $\la_n,t_n,u_n,\tilde{u_n}$.
For $n\in\N$, we set
\[
\la_n:=1/n,
\]
\[
t_n:=\frac{\pi}{((\la(1+\la_n))^2-\la^2)\ln N_n},
\]
\[
u_{0n}:=u^{N_n}_0\text{, and }\tilde{u_{0n}}:=(1+\la_n)u^{N_n}_0.
\]
Then, all conditions in Proposition \ref{prop:unif counterexample} but \eqref{eq:unif counterexample main} are immediate. By Theorem \ref{thm:L4 counterexample lem}, we have
\begin{equation}\label{eq:u^N_n}
\lim_{n\rightarrow\infty}\norm{u_n(t_n)-e^{\mp3it_n\la^2\ln N_n}e^{it_n\De}u_{0n}}_{L^2(\T^2)}=0
\end{equation}
and, choosing a rapidly increasing sequence $\{N_n\}$,
\begin{equation}\label{eq:tilde u^N_n}
\lim_{n\rightarrow\infty}\norm{\tilde{u_n}(t_n)-e^{\mp3it_n(\la(1+\la_n))^2\ln N_n}e^{it_n\De}\tilde{u_{0n}}}_{L^2(\T^2)}=0.
\end{equation}
By a triangle inequality on \eqref{eq:u^N_n} and \eqref{eq:tilde u^N_n}, we have
\begin{align}\label{eq:u-tilde u^N_n}
&\limsup_{n\rightarrow\infty}\norm{\tilde{u_n}(t_n)+u_n(t_n)}_{L^2(\T^2)}
\\ \nonumber
\le&\limsup_{n\rightarrow\infty}\norm{e^{\mp3it_n(\la(1+\la_n))^2\ln N_n}e^{it_n\De}\tilde{u_{0n}}+e^{\mp3it_n\la^2\ln N_n}e^{it_n\De}u_{0n}}_{L^2(\T^2)}
\\ \nonumber
\les&\limsup_{n\rightarrow\infty}\left|e^{\mp3it_n(\la(1+\la_n))^2\ln N_n}+e^{\mp3it_n\la^2\ln N_n}\right|\le0.
\end{align}
By \eqref{eq:la is normalizer}, \eqref{eq:u-tilde u^N_n}, and the $L^2$-conservation of \eqref{eq:NLS}, we conclude \eqref{eq:unif counterexample main}.
\end{proof}

\begin{proof}[Proof of Corollary \ref{cor:sharp-bound}]
By Theorem \ref{thm:L4 counterexample lem}, we have
\begin{equation}\label{eq:L4 approx for unbdd}
\limsup_{N\rightarrow\infty}\norm{u^N-e^{\mp 3it\la^2\ln N}e^{it\De}u^N_0}_{L^4_{t,x}([0,\frac{T}{\log N})\times\T^2)}=0.
\end{equation}
By Lemma \ref{lem:free evol sharp 1/log}, we have
\begin{align}\label{eq:L4 large for unbdd}
&\limsup_{N\rightarrow\infty}\norm{e^{\mp 3it\la^2\ln N}e^{it\De}u^N_0}_{L^4_{t,x}([0,\frac{T}{\log N})\times\T^2)}
\\=&\limsup_{N\rightarrow\infty}\norm{\la N^{-1}e^{it\De}\phi^N}_{L^4_{t,x}([0,\frac{T}{\log N})\times\T^2)}\gtrsim \la T^{1/4}.\nonumber
\end{align}
Applying the triangle inequality to \eqref{eq:L4 large for unbdd} and \eqref{eq:L4 approx for unbdd}, we finish the proof.
\end{proof}

\section*{Acknowledgements}
The authors would like to thank the anonymous referees for their careful reports which helped to improve the paper.

\section*{Declaration}
Funded by the Deutsche Forschungsgemeinschaft (DFG, German Research Foundation) -- IRTG 2235 -- Project-ID 282638148.
The second author is partially supported by National Research Foundation of Korea, RS-2019-NR040050.
 
\bibliographystyle{abbrv}
\bibliography{citationforTd}
\end{document}